\documentclass[12pt]{amsart}
\usepackage{amssymb,amsthm,amsmath,amstext}
\usepackage{mathrsfs}  
\usepackage{bm}        
\usepackage{mathtools} 

\usepackage[left=1.28in,top=.8in,right=1.28in,bottom=.8in]{geometry}

\usepackage{graphicx}
\usepackage[all]{xy}

\theoremstyle{plain}
\newtheorem{theorem}{Theorem}[section]
\newtheorem{prop}[theorem]{Proposition}
\newtheorem{lemma}[theorem]{Lemma}

\newtheorem{cor}[theorem]{Corollary}


\theoremstyle{definition}

\theoremstyle{remark}
\newtheorem{remark}[theorem]{Remark}

\newcommand{\sheaf}[1]{\mathscr{#1}}

\newcommand{\OO}{\sheaf{O}}

\newcommand{\PP}{\sheaf{P}}
\newcommand{\XX}{\sheaf{X}}
\newcommand{\YY}{\sheaf{Y}}

\newcommand{\QQ}{\sheaf{Q}}

\newcommand{\Brn}{{}_n\mathrm{Br}}


\newcommand{\Z}{\mathbb Z}

\DeclareMathOperator{\Br}{\mathrm{Br}}

\usepackage{hyperref}
\usepackage{color}

\hypersetup{colorlinks=true,linkcolor=blue,anchorcolor=blue,citecolor=blue,letterpaper=true}

\begin{document}

\title[ Local-global principle]
{ Local-global principle for reduced norms over function fields of
  $p$-adic curves}

\author[Parimala]{R.\ Parimala }
\address{Department of Mathematics \& Computer Science \\ %
Emory University \\ %
400 Dowman Drive~NE \\ %
Atlanta, GA 30322, USA}
\email{parimala@mathcs.emory.edu}
 
 \author[Preeti]{R.\ Preeti}
 \address{Department of Mathematics \\ %
Indian Institute of Technology (Bombay) \\ %
Powai, Mumbai-400076, India } 
\email{preeti@math.iitb.ac.in}
 
\author[Suresh]{V.\ Suresh}
\address{Department of Mathematics \& Computer Science \\ %
Emory University \\ %
400 Dowman Drive~NE \\ %
Atlanta, GA 30322, USA}
\email{suresh@mathcs.emory.edu}

\date{}

\begin{abstract}  Let $F$ be the function field of a $p$-adic curve. Let $D$ be a central simple algebra 
over $F$ of period $n$ and $\lambda \in F^*$. We show that if $n$ is coprime to $p$ and 
$D \cdot (\lambda) = 0$ in $H^3(F, \mu_n^{\otimes 2})$, then $\lambda$ is a reduced norm.
This leads to a Hasse principle for the $SL_1(D)$, namely an element $\lambda \in F^*$ is
a reduced norm from $D$ if and only if it is a reduced norm locally at all discrete valuations of $F$. 
\end{abstract} 
 
\maketitle

\def\ZZ{${\mathbf Z}$}
\def\ih{${\mathbf H}$}
\def\RR{${\mathbf R}$}
\def\IF{${\mathbf F}$}
\def\QQ{${\mathbf Q}$}
\def\IP{${\mathbf P}$}

\section{Introduction} 

Let $K$  be  a $p$-adic field and $F$ a function field in one-variable 
over $K$. Let $\Omega_F$ be the set of all discrete valuations of $F$.
Let $G$ be a  semi-simple simply connected linear algebraic group defined 
over $F$. It was conjectured in (\cite{CTPS})   that the Hasse principle holds
for principal homogeneous spaces under $G$ over $F$ with respect to
$\Omega_F$; i.e. if $X$ is a principal homogeneous space under $G$
over $F$ with $X(F_\nu) \neq \emptyset$ for all $\nu \in \Omega_F$, then
$X(F) \neq \emptyset$. If $G$ is $SL_1(D)$, where $D$ is a central simple
algebra over $F$ of square free index, it follows from the injectivity of the
Rost invariant (\cite{MS}) and a Hasse principle for $H^3(F, \mu_n)$ due to Kato
(\cite{Ka}), that this conjecture holds. This conjecture has been subsequently 
settled for classical  groups of type $B_n$, $C_n$ and $D_n$ (\cite{Hu}, \cite{preeti}). 
It is also settled for groups of type $^2A_n$ with 
the assumption that  $n+1$ is square free (\cite{Hu}, \cite{preeti}).

The main aim of this paper is to prove that the conjecture holds for
$SL_1(D)$ for any central simple algebra $D$  over $F$ with index 
coprime to $p$.  In fact we prove the following (\ref{main_theorem})

\begin{theorem} 
\label{thm}
Let $K$  be  a $p$-adic field and $F$ a function field in 
one-variable  over $K$. Let $D$ be a central simple algebra over $F$
of index coprime to $p$ and $\lambda \in F^*$. If $D \cdot (\lambda) 
= 0 \in H^3(F, \mu_n^{\otimes 2})$, then $\lambda$ is a reduced norm from
$D$. 
\end{theorem}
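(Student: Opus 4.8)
The strategy is to pass, via patching, to local conditions at the divisorial valuations of $F$ and to observe that the hypothesis $D\cdot(\lambda)=0$ settles all but the places where $D$ ramifies. We may assume $D$ is a division algebra. Writing $\mathrm{ind}(D)=\prod_i\ell_i^{a_i}$ and $D\cong\bigotimes_i D_i$ with $D_i$ the $\ell_i$-primary component, the identity $\mathrm{Nrd}_D(a_i\otimes 1)=\mathrm{Nrd}_{D_i}(a_i)^{\deg D/\deg D_i}$ together with $\gcd_i(\deg D/\deg D_i)=1$ shows that $\lambda\in\mathrm{Nrd}_D(D^*)$ as soon as $\lambda\in\mathrm{Nrd}_{D_i}(D_i^*)$ for every $i$, while $D\cdot(\lambda)=0$ in $H^3(F,\mu_n^{\otimes 2})=\bigoplus_i H^3(F,\mu_{\ell_i^{a_i}}^{\otimes 2})$ decomposes into the corresponding conditions on the $D_i$; so one reduces to $D$ of index $\ell^b$ with $\ell$ prime to $p$. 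By the norm principle for reduced norms (Gille--Merkurjev), $N_{F'/F}\bigl(\mathrm{Nrd}_{D_{F'}}(D_{F'}^*)\bigr)\subseteq\mathrm{Nrd}_D(D^*)$ for any finite $F'/F$; since one recovers $\lambda$ from $N_{F'/F}(\lambda)$ and $\lambda^{\deg D}$ when $[F':F]$ is prime to $\ell$, the field $F$ may be replaced freely by prime-to-$\ell$ extensions, which are again function fields of $p$-adic curves. Finally, for square free index the statement follows from the injectivity of the Rost invariant of $SL_1(D)$ (\cite{MS}), so the new content is the case $\mathrm{ind}(D)=\ell^b$ with $b\ge 2$.

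\textbf{Passage to local conditions.} Fix a regular proper model $\mathcal{X}\to\Spec\mathcal{O}_K$ with function field $F$ on which the ramification locus of $D$ together with the support of $\divisor(\lambda)$ forms a strict normal crossings divisor. As $SL_1(D)$ is a rational connected linear algebraic group, the patching local--global principle for torsors over function fields of $p$-adic curves (Harbater--Hartmann--Krashen; \cite{CTPS}) gives
\[
\lambda\in\mathrm{Nrd}_D(D^*)\iff\lambda\in\mathrm{Nrd}_{D_v}(D_v^*)\ \text{ for every divisorial valuation }v\text{ of }F,
\]
and the hypothesis restricts to $D_v\cdot(\lambda)=0$ in $H^3(F_v,\mu_n^{\otimes 2})$ for each $v$. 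When $D$ is unramified at $v$, the field $F_v$ is complete discretely valued with residue field $k_v$ a finite extension of $K$ or a global field of characteristic $p$ — in either case of cohomological dimension $\le 2$ — and the residue of $D_v\cdot(\lambda)$ equals $v(\lambda)\cdot\bar D_v$ in $\Br(k_v)$, where $\bar D_v$ is the residue class of $D_v$; vanishing of this forces $v(\lambda)\equiv 0\pmod{\mathrm{ind}(D_v)}$ because period equals index over $k_v$, and this is exactly the condition for $\lambda\in\mathrm{Nrd}_{D_v}(D_v^*)$, since for an unramified central division algebra over a complete discretely valued field every unit is a reduced norm (Hensel's lemma and surjectivity of the reduced norm over $k_v$). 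Hence it remains only to check $\lambda\in\mathrm{Nrd}_{D_v}(D_v^*)$ at the finitely many codimension-one points of $\mathcal{X}$ where $D$ ramifies.

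\textbf{The ramified places, and the main obstacle.} At such a $v$ one has $\partial_v(D)=\chi_v\ne 0$ in $H^1(k_v)$; writing $D_v\sim D_v^{0}\otimes(\chi_v,\pi)$ through its tame decomposition and $\lambda=\pi^{r}u$, the relation $D_v\cdot(\lambda)=0$ becomes an identity among residues in $H^2(k_v)$ involving $r$, $\bar u$, $\chi_v$ and $D_v^{0}$. I would combine this with the global reciprocity law $\sum_v\partial_v\bigl(D\cdot(\lambda)\bigr)=0$ — exactness of the Bloch--Ogus/Kato residue complex (\cite{Ka}) — which supplies precisely the extra congruence that a single local identity does not; indeed, local vanishing of $D_v\cdot(\lambda)$ alone does \emph{not} force $\lambda\in\mathrm{Nrd}_{D_v}$, as the example $D=(u,t)$, $\lambda=t$ over $L((t))$ shows. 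Equivalently, one may first replace $\lambda$ by $\lambda\cdot\mathrm{Nrd}_D(d)^{-1}$ for a suitable $d\in D^*$ — which preserves the hypothesis — so as to make $\lambda$ a unit at every ramified $v$, producing $d$ by a moving lemma for the reduced-norm map realised through elements of $D$ built from subfields adapted to the ramification. The main obstacle is to carry this through at the crossing points of the normal crossings divisor, where $F_v$ behaves as a two-dimensional local field and two successive residues must be controlled, and to do so compatibly at all ramified $v$ at once; this is where Saltman's structure theory of division algebras over function fields of $p$-adic curves is used, and where coprimality of the index to $p$, keeping all ramification tame, is essential.
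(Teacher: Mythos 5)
Your reductions (to the $\ell$-primary part, to prime-to-$\ell$ base changes, and the remark that the square-free case is Merkurjev--Suslin \cite{MS}) are fine, but the argument breaks at the step ``Passage to local conditions'', and this gap is fatal. The equivalence $\lambda \in \mathrm{Nrd}(D^*) \iff \lambda \in \mathrm{Nrd}((D\otimes F_v)^*)$ for all divisorial $v$ is not an available input: it is exactly the Hasse principle for $SL_1(D)$ (Theorem \ref{cor}, i.e.\ (\ref{main_cor})), which the paper deduces \emph{from} Theorem \ref{thm} together with Kato's result, and which was only conjectural in \cite{CTPS} for this class of groups. It does not follow from the Harbater--Hartmann--Krashen theorems you invoke: those local-global principles require the group to be connected and rational (or retract rational) and are formulated with respect to the patching fields $F_P$, $F_{U_\eta}$ of a model, with passage to divisorial valuations requiring further hypotheses; $SL_1(D)$ is not known to be rational, or even retract rational, once $\mathrm{ind}(D)$ is divisible by $\ell^2$ (Platonov's nontrivial $SK_1$ examples rule out retract rationality over closely related fields), which is precisely the case $b\geq 2$ you identify as the new content. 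So this step is circular: you are assuming the corollary in order to prove the theorem.

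Even if such a reduction to local conditions were granted, the proposal does not prove the local statement at the ramified divisorial valuations: your final paragraph names the difficulties (controlling two successive residues at crossing points, moving $\lambda$ by a reduced norm) but does not resolve them. In the paper that local statement is Theorem \ref{thm_dvr}, proved in \S 4 by induction on the index using class field theory of the local or global residue field, and the global argument is organized quite differently: there is no verification of local solvability followed by an LGP for $SL_1(D)$. Instead one constructs, by patching fields and elements over a carefully chosen regular model (\S 7--\S 10, where the HHK machinery is applied only to $GL_1$, which is rational), a degree-$\ell$ extension $L/F$ and $\mu\in L^*$ with $N_{L/F}(\mu)=\lambda$, $\alpha\cdot(\mu)=0$ and $\mathrm{ind}(\alpha\otimes L)<\mathrm{ind}(\alpha)$, and then one inducts on the index. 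To salvage your outline you would need, at minimum, a proof of the local theorem at ramified places and a genuine substitute for the unavailable local-global principle for $SL_1(D)$.
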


This together with Kato's result on the Hasse principle for $H^3(F, \mu_n)$ gives
 the  following (\ref{main_cor})
 
 \begin{theorem}
\label{cor}
 Let $K$ be $p$-adic field and  $F$ the function field of a curve 
over $K$.  Let $\Omega_F$ be the set of discrete valuations of $F$.
 Let $D$ be a central simple algebra over $F$ of index coprime to $p$ and 
 $\lambda \in F^*$. If $\lambda$ is a reduced norm from $D \otimes F_\nu$ for all 
 $\nu \in \Omega_F$, then $\lambda$ is a reduced norm from $D$.
  \end{theorem}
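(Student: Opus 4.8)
The plan is to derive Theorem~\ref{cor} from Theorem~\ref{thm}. Write $n$ for the period of $D$; since the period and index of $D$ have the same prime divisors, $n$ is coprime to $p$, and $D\cdot(\lambda)$ is a well-defined class in $H^3(F,\mu_n^{\otimes 2})$. By Theorem~\ref{thm} it is enough to show $D\cdot(\lambda)=0$ in $H^3(F,\mu_n^{\otimes 2})$. I would obtain this in two steps: first turn the local reduced-norm hypothesis into local vanishing of $D\cdot(\lambda)$, then globalize via Kato's cohomological Hasse principle.

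For the first step I would use the classical (and easy) half of the relation between reduced norms and cup products: for a central simple algebra $A$ over a field $E$ of period dividing $n$ and any $\mu\in E^*$, if $\mu\in\mathrm{Nrd}(A^*)$ then $[A]\cdot(\mu)=0$ in $H^3(E,\mu_n^{\otimes 2})$. This follows from a short computation: writing $\mu=\mathrm{Nrd}_A(a)$ with $a\in A^*$, $L=E(a)$ and $m=[L:E]$, one has $\mu=N_{L/E}(a)^{n/m}$, so by the projection formula
\[
[A]\cdot(\mu)=(n/m)\,\cores_{L/E}\!\big([A_L]\cdot(a)\big)=\cores_{L/E}\!\big((n/m)[A_L]\cdot(a)\big),
\]
and $(n/m)[A_L]=0$ because $A_L$ is Brauer-equivalent to the centralizer $C_A(L)$, which has degree $n/m$ over $L$, so its period divides $n/m$. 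Applying this with $E=F_\nu$ and $A=D\otimes_F F_\nu$ for each $\nu\in\Omega_F$, the hypothesis that $\lambda$ is a reduced norm from $D\otimes_F F_\nu$ shows that the image of $D\cdot(\lambda)$ under the restriction $H^3(F,\mu_n^{\otimes 2})\to H^3(F_\nu,\mu_n^{\otimes 2})$ vanishes, for every $\nu\in\Omega_F$.

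So $D\cdot(\lambda)$ is locally trivial at every discrete valuation of $F$, in particular at every divisorial valuation coming from a regular proper model of $F$. By Kato's Hasse principle for degree-three cohomology of function fields of $p$-adic curves (\cite{Ka}), the restriction map on the prime-to-$p$ part of $H^3$ is injective with respect to such valuations; hence $D\cdot(\lambda)=0$ in $H^3(F,\mu_n^{\otimes 2})$, and Theorem~\ref{thm} gives that $\lambda$ is a reduced norm from $D$. I do not anticipate a real obstacle in this deduction: all the substance is in Theorem~\ref{thm}, and the only care required here is the bookkeeping that makes $n$ coprime to $p$ (so that Kato's theorem for $\Q/\Z(2)$ applies on the $\mu_n^{\otimes 2}$-part) and the harmless observation that controlling $D\cdot(\lambda)$ at all of $\Omega_F$ is more than Kato needs.
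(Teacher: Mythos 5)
Your proposal is correct and follows essentially the same route as the paper: the paper's proof likewise notes that the local reduced-norm hypothesis forces $\alpha\cdot(\lambda)=0$ in $H^3(F_\nu,\mu_n^{\otimes 2})$ for all $\nu$, invokes Kato's Hasse principle to get $\alpha\cdot(\lambda)=0$ over $F$, and then applies the main theorem (Theorem~\ref{thm}); you merely spell out the standard ``easy direction'' (reduced norms are killed by cup product with the Brauer class) that the paper leaves implicit. The only nitpick is notational: in your computation $\mu=N_{L/E}(a)^{n/m}$ the exponent should involve the degree of $A$ rather than its period (and one should reduce to $E(a)$ being a field, e.g.\ by passing to the underlying division algebra), but this does not affect the argument.
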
 

In fact we may restrict the set of discrete valuations to the set of  divisorial discrete valuations of $F$; namely 
those discrete valuations of $F$ centered on a regular proper model of $F$  over the ring of 
integers in $K$.

Here are the main steps in the proof. We reduce to the case
where $D$ is a division algebra of period $\ell^d$ with $\ell$
a prime not equal to $p$. Given a central division algebra
$D$ over $F$ of period  $n = \ell^d$ with $\ell \neq p$ and $\lambda \in 
F^*$ with $D \cdot (\lambda ) = 0 \in H^3(F, \mu_n^{\otimes 2})$, 
we construct a degree $\ell$ extension $L$ of $F$ and $\mu \in L^*$
such that $N_{L/F}(\mu) = \lambda$, $(D \otimes L) \cdot (\mu)= 0$ 
and the index of $D \otimes L$ is 
strictly smaller than the index of $D$. Then, by induction on the index of $D$, 
$\mu$ is a reduced norm  from $D \otimes L$ 
and hence $N_{L/F}(\mu) = \lambda$ is a reduced norm from $D$.

 Let $\XX$ be a regular proper 
2-dimensional scheme over the ring of integers in $K$ with function field
$F$ and $X_0$ the reduced special fibre of $\XX$. 
By the patching techniques of Harbater-Hartman-Krashen (\cite{HH}, \cite{HHK1}), 
 construction of such a pair $(L, \mu)$
is reduced to the construction of compatible  pairs $(L_x, \mu_x)$ over $F_x$ for 
all $x \in X_0$ (\ref{patching_L_mu}), where for any $x \in X_0$, $F_x$ is the field of 
 fractions of the completion of the regular local ring at $x$ on $\XX$. 
We use local and global class field theory to construct such  local pairs $(L_x, \mu_x)$.
Thus this method cannot be extended to the more general situation where
$F$ is a function field in one variable over a complete discretely valued field with arbitrary 
residue field.

Here is the brief description of the organization of the paper.
In \S 3, we prove a few technical results concerning central simple 
algebras and reduced norms over global fields. These results are key to the later 
patching construction of the fields $L_x$ and $\mu_x \in L_x$ with required properties. 

In \S 4 we prove the following local variant of (\ref{thm})
\begin{theorem}
\label{thm_dvr} Let $F$  be a complete discrete valued field with residue  field $\kappa$.
Suppose that $\kappa$ is a local field or a global field. Let $D$ be a central simple algebra 
over $F$ of period $n$. Suppose that $n$ is coprime to char$(\kappa)$. Let $\alpha \in H^2(F,\mu_n)$ be the 
class of $D$ and  $\lambda \in F^*$.
If $\alpha \cdot (\lambda) = 0 \in H^3(F, \mu_n^{\otimes 2})$, then $\lambda$ is a reduced norm 
from $D$. 
\end{theorem}

Let $A$ be a complete regular local ring of dimension 2 with residue field $\kappa$ finite,
 field of fractions $F$ and maximal ideal $m = (\pi, \delta)$. Let $\ell$ be a prime not equal to 
 char$(\kappa)$.  Let $D$ be a central simple algebra over $F$ of index $\ell^n$ with $n \geq 1$
  and $\alpha$ the class of $D$ in $H^2(F, \mu_{\ell^n})$. Suppose that 
 $D$ is unramified on $A$ except possibly at $\pi$ and $\delta$.
  In \S 5, we analyze the structure of $D$.
 We prove that index of $D$ is equal to the period of $D$.   A similar analysis is done 
 by Saltman (\cite{S1}) with the additional assumption that $F$ contains all the primitive $\ell^n$-roots of 
unity, where $\ell^n$ is the index of $D$.
Let $\lambda \in F^*$. Suppose that $\lambda = u\pi^r\delta^t$ for some unit $u \in A$ and
$r, s \in \Z$ and $\alpha \cdot (\lambda) = 0 \in H^3(F, \mu_{\ell^n}^{\otimes 2})$.
In \S 6,   we construct possible pairs $(L, \mu)$ with $L/F$ of degree $\ell$, $\mu \in L$
such that $N_{L/F}(\mu) = \lambda$, ind$(D \otimes L) < $ ind$(D)$ and $\alpha \cdot (\mu) = 0
\in H^3(L, \mu_{\ell^n}^{\otimes 2})$.

Let $K$ be a $p$-adic field and $F$ a function field of a curve over $K$.  
Let $\ell$ be a prime not equal to $p$, $D$  a central division algebra over $F$ of 
index $\ell^n$  and $\alpha$ the class of $D$ in $H^2(F, \mu_{\ell^n})$.
Let $\lambda \in F^*$ with $\alpha \cdot (\lambda) = 0 \in H^3(F, \mu_{\ell^n}^{\otimes 2})$. 
Let $\XX$ be a normal  proper model of $F$ over the ring of integers in $K$ and
$X_0$ its reduced special fibre.
 In \S 7, we reduce the construction of $(L,  \mu)$ to the construction of local
 $(L_x, \mu_x)$ for all $x \in X_0$ with some compatible conditions along the ``branches''.
 
 Further assume that $\XX$ is regular and ram$_\XX(\alpha) \cup$ supp$_\XX(\lambda) \cup X_0$ 
 is a union of regular curves with normal crossings. We group the components of $X_0$ into 8 types
 depending on the valuation of $\lambda$, index of $D$   and
 the ramification type of $D$ along those components. 
We call some nodal points  of $X_0$ as special points depending on the type of components 
passing through the point.  We also say that two components of $X_0$ are type 2 connected 
if there is a sequence of type 2 curves connecting these two components. 
We prove that there is a regular proper model of $F$ with no special points 
and no type 2 connection between certain types of component (\ref{choice_of_model}).

Starting with a model  constructed in (\ref{choice_of_model}),  in \S 9, we construct $(L_P, \mu_P)$ for all nodal points 
of $X_0$ (\ref{choice_at_P}) with the required properties. In \S 10, using the class field results of \S 3, we construct 
$(L_\eta, \mu_\eta)$  for each  of the components  $\eta$ of $X_0$ which are compatible with $(L_P, \mu_P)$ 
when $P$ is in the component $\eta$.

Finally in \S 11, we prove the main results by piecing together all the 
constructions of \S 7, \S 9 and  \S10.

\section{Preliminaries} 
\label{Preliminaries}

In this section we recall a few definitions and facts about Brauer groups, Galois cohomology 
groups, residue homomorphisms and   unramified Galois cohomology groups.
We refer the reader to (\cite{CT-Santa-Barbara}) and (\cite{GS}).

Let $K$ be a   field  and $n \geq 1$.  Let $\Brn(K)$ be the $n$-torsion
subgroup of the Brauer group $\Br(K)$. Assume that $n$ is coprime to
the  characteristic of  $K$.  Let $\mu_n$ be the group 
of $n^{\rm th}$ roots of unity. For  $d \geq 1$ and $m \geq 0$,  let
$H^d(K, \mu_n^{\otimes m})$ denote the $d^{\rm th}$ Galois cohomology group of
$K$ with values in $\mu_n^{\otimes m}$.  We have $H^1(K, \mu_n) \simeq K^*/K^{*n}$ and $H^2(K,
\mu_n) \simeq \Brn(K)$.   For $a \in K^*$, let $(a)_n \in H^1(K,
\mu_n)$ denote the image of the class of $a$ in $K^*/K^{*n}$. 
 When there is no ambiguity of $n$, we drop $n$ and 
denote $(a)_n$ by $(a)$.  If $K$ is a product of finitely many fields $K_i$, we denote 
$\prod H^d(K_i, \mu_n^{\otimes m})$ by $H^d(K, \mu_n^{\otimes m})$.

 Every element of $H^1(K, \Z/n\Z)$ is represented by a pair 
 $(E, \sigma)$, where $E/F$ is a cyclic extension of degree dividing $n$ and
 $\sigma$ a generator of  Gal$(E/F)$. Let $r \geq 1$. Then $(E, \sigma)^r \in H^1(K, \Z/n\Z)$
 is represented by the pair $(E', \sigma')$ where $E'$ is the fixed field of
 the subgroup of Gal$(E/F)$ generated by $\sigma^{n/d}$, where $d = $ gcd$(n, r)$ 
 and $\sigma' = \sigma^{r}$. In particular if $r$ is coprime to $n$, then 
 $(E, \sigma)^r = (E, \sigma^r)$.
  Let $(E, \sigma) \in H^1(K, \Z/n\Z)$ and  $\lambda \in K^*$. 
 Let   $(E, \sigma, \lambda) = (E/F, \sigma, \lambda)$ denote the cyclic algebra over $K$
$$(E, \sigma, \lambda) = E \oplus Ly  \oplus \cdots \oplus Ey^{n-1}$$
with $y^n = \lambda$ and $ya = \sigma(a) y$.
The cyclic algebra $(E, \sigma, \lambda)$ is a central simple algebra and
its index is the order of $\lambda$ in $K^*/N_{E/K}(E^*)$ (\cite[Theorem 18, p. 98]{Albert}).
The pair $(E, \sigma)$  represents an element in $H^1(K, \Z/n\Z)$
and the element $(E, \sigma) \cdot (\lambda) \in H^2(K, \mu_n)$ is 
represented by the central simple algebra $(E, \sigma, \lambda)$. 
In particular $(E, \sigma, \lambda) \otimes E $ is a matrix algebra and hence ind$(E, \sigma, \lambda) \leq [E : F]$.

For $\lambda, \mu \in K^*$ we have (\cite[p. 97]{Albert})
$$(E, \sigma, \lambda) + (E, \sigma, \mu) = (E, \sigma, \lambda \mu) \in H^2(K, \mu_n).$$
In particular $(E, \sigma, \lambda^{-1}) = - (E, \sigma, \lambda)$.

Let $(E,  \sigma, \lambda)$ be a cyclic algebra over a field $K$ and $L/K$ be a
field extension. Since $E/K$ is  separable,  $E \otimes_KL$ is a product of field extensions 
 $E_i$, $1 \leq i \leq t$,   of $L$
with $E_i $ and  $ E_j$ isomorphic over $L$ and $E_i /L$ is cyclic with Galois group a subgroup of the Galois group of 
$E/K$.  Then $(E, \sigma, \lambda) \otimes L$ is Brauer equivalent to $(E_i, \sigma_i, \lambda)$ for any $i$,  with a suitable $\sigma_i$. 
In particular if  $L$ is a finite extension of $K$ and $EL$ is the 
composite of $E$ and $L$ in an algebraic closure of $K$,
then  $EL/L$ is cyclic with Galois group isomorphic to a subgroup of  the Galois group of $E/K$ and
$(E, \sigma, \lambda) \otimes L$ is Brauer equivalent to $(EL, \sigma', \lambda)$ for a suitable 
$\sigma'$.

\begin{lemma}
\label{cyclic_algebra_extn} Let $E/F$ be a cyclic extension of degree $n$, $\sigma$ a generator of 
Gal$(E/F)$ and $\lambda \in F^*$. Let $m$ be a factor of $n$ and $d = n/m$.  Let  
$M/F$ be the  subextension of  $E/F$ with $[M : F] = m$. 
Then $(E/F, \sigma, \lambda) \otimes F(\sqrt[d]{\lambda}) = 
(M(\sqrt[d]{\lambda})/F(\sqrt[d]{\lambda}), \sigma\otimes 1,   \sqrt[d]{\lambda})$.
\end{lemma}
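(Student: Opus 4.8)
The plan is to compute the base change $(E/F,\sigma,\lambda)\otimes F(\sqrt[d]{\lambda})$ directly, exploiting the fact that adjoining $\sqrt[d]{\lambda}$ kills the $d$-th power of the cyclic symbol while leaving an $m$-th order symbol, and then to identify that residual symbol with the stated cyclic algebra over the field $L:=F(\sqrt[d]{\lambda})$. First I would set $L=F(\sqrt[d]{\lambda})$ and note that, since $\lambda\in F^*$ becomes a $d$-th power $(\sqrt[d]{\lambda})^d$ in $L$, we may write $\lambda=\nu^d$ in $L$ with $\nu=\sqrt[d]{\lambda}$. By the general discussion of base change of cyclic algebras recalled just before the lemma, $(E/F,\sigma,\lambda)\otimes L$ is Brauer equivalent to $(EL/L,\sigma',\lambda)$ where $EL$ is the composite and $\sigma'$ is an appropriate generator (or image) of $\mathrm{Gal}(EL/L)$, a subgroup of $\mathrm{Gal}(E/F)=\langle\sigma\rangle$.

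The key computational step is the identity $(EL/L,\sigma',\lambda)=(EL/L,\sigma',\nu^d)=(EL/L,(\sigma')^d,\nu)$, using the additivity $(E,\sigma,a)+(E,\sigma,b)=(E,\sigma,ab)$ together with the formula $(E,\sigma)^r=(E',\sigma^r)$ for the effect of raising the $H^1$-class to the $r$-th power, both recalled in the Preliminaries. Concretely, $(EL/L,\sigma')\cdot(\nu^d)=\big(d\cdot(EL/L,\sigma')\big)\cup(\nu)=(EL/L,(\sigma')^d)\cdot(\nu)$. Now $(EL/L,(\sigma')^d)$ is, by the explicit description of powers of a cyclic class, the pair $(M',\tau)$ where $M'$ is the fixed field of $(\sigma')^{[EL:L]/\gcd([EL:L],d)}$; one checks that this fixed field is exactly $M(\sqrt[d]{\lambda})=ML$ — the degree-$m$ subextension $M/F$ composed with $L$ — and that $\tau=\sigma\otimes 1$ in the notation of the statement. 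Hence $(E/F,\sigma,\lambda)\otimes L$ is Brauer equivalent to $\big(M(\sqrt[d]{\lambda})/F(\sqrt[d]{\lambda}),\sigma\otimes 1,\sqrt[d]{\lambda}\big)$; since the latter is a cyclic algebra of degree $m=[M(\sqrt[d]{\lambda}):L]$ which is at most the degree of the former, and they are Brauer equivalent with matching degrees after possibly passing through the index formula $\mathrm{ind}=\mathrm{ord}(\sqrt[d]{\lambda})$ in $L^*/N(\,\cdot\,)$, the equivalence upgrades to an isomorphism.

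The main obstacle I anticipate is bookkeeping the Galois-theoretic identification: one must verify that $EL/L$ is cyclic of the expected degree (this follows from $L/F$ having degree dividing $d$ and $E/F$ being cyclic of degree $n=md$, so $EL/L$ has degree a multiple of $m$ dividing $n$), that the generator $\sigma'$ can be chosen so that $(\sigma')^d$ has fixed field $ML$, and that the various compatibilities of generators ($\sigma\otimes 1$ versus $\sigma'$, versus $(\sigma')^d$) are consistent — there is a genuine risk of off-by-a-unit ambiguity in the choice of generator, which would only change the symbol by a coprime-to-$m$ twist of $\sqrt[d]{\lambda}$ and can be absorbed. The cleanest route may be to first handle the case $E\cap L=F$ (so $[EL:L]=n$, $\sigma'=\sigma$) and then reduce the general case to it, or alternatively to verify the claimed identity at the level of $H^2$ by checking residues/specializations, but the direct cyclic-algebra manipulation above should suffice and is the most transparent.
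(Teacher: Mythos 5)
Your proposal is correct and is essentially the paper's own argument: base change to $L=F(\sqrt[d]{\lambda})$, write $\lambda=(\sqrt[d]{\lambda})^d$, use bilinearity to move the exponent $d$ onto the $H^1$-class, and identify the $d$-th power of $(E,\sigma)$ with the degree-$m$ subextension $M$. The paper streamlines the bookkeeping you worry about by working throughout with the pair $(E\otimes_F L,\sigma\otimes 1)$ as a class in $H^1(L,\Z/n\Z)$ (so no choice of $\sigma'$ or case distinction on $E\cap L$ is needed), and the asserted equality is of classes in $H^2(L,\mu_n)$, so your final ``upgrade to an isomorphism'' step is not required.
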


\begin{proof}
We have $(E, \sigma)^d = (M, \sigma) \in H^1(F, \Z/n\Z)$ and hence   
$$
\begin{array}{lcl}
(E, \sigma, \lambda) \otimes F(\sqrt[d]{\lambda})  & = & (E(\sqrt[d]{\lambda})/F(\sqrt[d]{\lambda}), \sigma \otimes 1, \lambda) \\
&  = & (E(\sqrt[d]{\lambda})/F(\sqrt[d]{\lambda}), \sigma \otimes 1, ( \sqrt[d]{\lambda})^d)  \\
& =  & (E(\sqrt[d]{\lambda})/F(\sqrt[d]{\lambda}), \sigma\otimes 1)^d  \cdot  (\sqrt[d]{\lambda}) \\
& =  & (M(\sqrt[d]{\lambda})/F(\sqrt[d]{\lambda}), \sigma\otimes 1,   \sqrt[d]{\lambda}). 
\end{array}
$$
\end{proof}

Let $K$ be a field with a 
discrete valuation $\nu$, residue field $\kappa$ and valuation ring $R$. 
Suppose that $n$
is coprime   to the characteristic of $\kappa$. For any $d \geq 1$,
we have the
residue map $\partial_K : H^d(K, \mu_n^{\otimes i} ) \to H^{d-1}(\kappa, \mu_n^{\otimes i-1})$.
We also denote $\partial_K$ by $\partial$. 
An element $\alpha$ 
 in $H^d(K, \mu_n^{\otimes i})$ is called {\it unramified } at  $\nu$ or $R$ if $\partial(\alpha) = 0$. 
 The subgroup of all unramified elements is denoted by $H^d_{nr}(K/R, \mu_n^{\otimes i})$
 or simply $H^d_{nr}(K, \mu_n^{\otimes i})$. 
 Suppose that $K$ is complete with respect to $\nu$. Then we have an isomorphism 
 $H^d(\kappa, \mu_n^{\otimes i}) \buildrel{\sim}\over{\to}  H^d_{nr}(K, \mu_n^{\otimes i})$
 and the composition $H^d(\kappa, \mu_n^{\otimes i})
 \buildrel{\sim}\over{\to}   H^d_{nr}(K, \mu_n^{\otimes i}) 
 \hookrightarrow H^d(K, 
 \mu_n^{\otimes i})$ is denoted by $\iota_\kappa$ or simply $\iota$.

Let $K$ be a complete discretely  valued field with residue field $\kappa$, 
$\nu$ the discrete valuation on $K$
 and $\pi \in K^*$ a parameter. 
Suppose that $n$ is coprime to the characteristic of $\kappa$. 
Let $\partial: H^2(K, \mu_n) \to H^1(\kappa, \Z/n\Z)$ be the residue homomorphism.
Let $E/K$ be a cyclic unramified extension of  degree $n$ with residue field $E_0$
and $\sigma$ a generator  of Gal$(E/K)$ with $\sigma_0 \in $ Gal$(E_0/\kappa)$ induced by 
$\sigma$. Then  $(E, \sigma,  \pi)$ is a division algebra  over $K$ of degree $n$. 
For any $\lambda \in K^*$, 
we have  
$$\partial(E, \sigma, \lambda) = (E_0, \sigma_0)^{\nu(\lambda)}.$$
For $\lambda, \mu \in K^*$,  we have 
$$ \partial ((E, \sigma, \lambda) \cdot (\mu) ) = (E_0, \sigma_0) \cdot ((-1)^{\nu(\lambda)\nu(\mu)}
\theta),$$ 
where $\theta$ is the   image of $\frac{\lambda^{\nu(\mu)}}{\mu^{\nu(\lambda)}}$ in the residue field.

Suppose  $E_0$ is  a cyclic extension of $\kappa$ of degree $n$.
Then there is a unique unramified cyclic extension  
$E$ of $K$ of degree $n$ with residue field $E_0$. 
Let $\sigma_0$ be a generator
of Gal$(E_0/\kappa)$ and $\sigma \in $ Gal$(E/K)$ be the
lift of $\sigma_0$. Then $\sigma$ is a generator of Gal$(E/K)$.
We call the pair $(E, \sigma)$ the {\it lift of}  $(E_0,\sigma_0)$.

Let $X$ be an integral regular scheme with function field $F$. 
For every point $x$ of
$X$, let ${\OO}_{X, x}$ be the regular local ring at $x$ and $\kappa(x)$ the residue field
at $x$. Let  $\hat{{\OO}}_{X, x}$ be the
completion of ${\OO}_{X, x}$ at its maximal ideal $m_x$  and $F_x$ the field of 
fractions of $\hat{\OO}_{X, x}$.  Then every codimension  one  point $x$ of $X$ gives a discrete
valuation $\nu_x$ on $F$.  Let $n\geq 1$ be an integer   which is a unit on
$X$.  For any $d\geq 1$,  
the residue homomorphism  $ H^d(F,
  \mu_n^{\otimes j}) \to H^{d-1}(\kappa(x), \mu_n^{\otimes (j-1)})$ at the discrete
valuation $\nu_x$ is denoted by $\partial_x$.  An element $\alpha \in
H^d(F, \mu_n^{\otimes m})$ is said to be {\it ramified} at $x$ if
$\partial_x(\alpha ) \neq 0$ and {\it unramified } at $x$ if
$\partial_x(\alpha) = 0$.  If $X = $ Spec$(A)$ and $x$ a point of $X$ given by 
$(\pi)$, $\pi$ s prime element, we also denote $F_x$ by $F_\pi$ and $\kappa(x)$ by 
$\kappa(\pi)$.

\begin{lemma}
\label{generator}
Let $K$ be a complete discretely valued field  and $\ell$  a prime not equal to 
the characteristic of the residue field of $K$. Suppose that $K$ contains a primitive $\ell^{\rm th}$ root of
unity. Let $L/K$ be a cyclic field extension or the split extension  of degree $\ell$.
Let $\mu \in L$ and $ \lambda  = N_{L/K}(\mu) \in K$.
Then   there exists $\theta \in L$ with $N_{L/K}(\theta) = 1$
 such that  $L = K(\mu \theta)$ and $\theta$ is sufficiently close to 1. 
 \end{lemma}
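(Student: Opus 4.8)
The goal is to find $\theta \in L$ with $N_{L/K}(\theta) = 1$, close to $1$, and such that $\mu\theta$ generates $L$ over $K$. I will consider the two cases separately. First, if $L = K \times K$ is the split extension, then $N_{L/K}(x,y) = xy$ and $\mu = (\mu_1, \mu_2)$, and asking that $\mu\theta$ generate $L$ as a $K$-algebra means the two coordinates of $\mu\theta$ are distinct. Here I would simply pick $\theta = (1+\varepsilon, (1+\varepsilon)^{-1})$ for a small $\varepsilon \in K$: the norm is $1$, it is close to $1$, and if $\mu_1(1+\varepsilon) = \mu_2(1+\varepsilon)^{-1}$ for all small $\varepsilon$ we get a contradiction (the equation $\mu_1(1+\varepsilon)^2 = \mu_2$ has at most one solution), so a generic small choice works.

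The main case is $L/K$ cyclic of degree $\ell$, with Galois group generated by some $\sigma$. The key observation is that since $K$ contains a primitive $\ell$-th root of unity $\zeta$, by Hilbert~90 the norm-one elements of $L$ are exactly those of the form $\sigma(z)/z$ for $z \in L^*$; but rather than parametrize all of them, I would use a more hands-on source of norm-one elements close to $1$. Because $K$ is complete and $\ell$ is a unit in the residue field, the $\ell$-th power map is an open map near $1$, and $1 + \mm_L$ (for $\mm_L$ the maximal ideal of the valuation ring of $L$) is $\ell$-divisible; in particular, for any sufficiently small $t$ in the valuation ring of $L$, the element $\theta_t = \frac{\sigma(1 + t)}{1+t} = 1 + \frac{\sigma(t) - t}{1+t}$ has norm $1$ and is as close to $1$ as we like by taking $t$ small. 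So it remains to show that for a suitable small $t$ the element $\mu\theta_t$ generates $L$, i.e. is not fixed by any nontrivial power of $\sigma$, equivalently $\sigma(\mu\theta_t) \neq \mu\theta_t$ (as $\ell$ is prime, a proper subextension is trivial, so failing to generate means being $\sigma$-fixed).

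So I must rule out $\sigma(\mu\theta_t) = \mu\theta_t$, i.e. $\sigma(\mu)/\mu = \theta_t/\sigma(\theta_t)$, for all small $t$. Writing $\theta_t = \sigma(1+t)/(1+t)$, the right side is $\frac{\sigma(1+t)^2}{(1+t)\sigma^2(1+t)}$ (using $\sigma$-equivariance), an explicit function of $t$; the left side $\sigma(\mu)/\mu$ is a fixed element $c$. If $\mu \in K$ already, then $c = 1$ and $\mu\theta_t$ generates $L$ as long as $\theta_t \neq 1$, i.e. as long as $t \notin K$, which we can certainly arrange (choose $t$ in the valuation ring of $L$ but not in $K$ — possible since $[L:K]=\ell > 1$ — and small). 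If $\mu \notin K$ then $\mu$ itself generates $L$, and I only need $\theta_t$ close enough to $1$ that $\mu\theta_t$ still generates: the set of generators of $L/K$ is open (its complement is the finite union of the proper subfields, here just $K$, whose intersection with any bounded set is closed), and $\mu$ lies in it, so all $\theta_t$ with $t$ small enough keep $\mu\theta_t$ in that open set. Either way a small $t$ works, giving the required $\theta = \theta_t$.

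The part requiring the most care is the continuity/openness argument in the cyclic case: one must check that ``$\mu\theta$ generates $L/K$'' is an open condition on $\theta$ near a point where it holds, and separately dispose of the degenerate possibility that $\mu \in K$ by exhibiting a norm-one $\theta$ near $1$ that does not lie in $K$ — both of which follow from completeness of $K$ together with $\ell$ being invertible in the residue field, so that $1 + \mm_L$ is $\ell$-divisible and Hilbert~90 supplies norm-one elements arbitrarily close to $1$ that are not in $K$.
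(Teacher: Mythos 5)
Your treatment of the cyclic (field) case is correct and is essentially the paper's own argument: the paper also produces its norm-one element as $\theta=\sigma(z)z^{-1}$ with $z=1+\pi^n\sqrt[\ell]{a}$, i.e.\ $z=1+t$ for a small $t\in L\setminus K$, and it handles $\mu\notin K$ by simply taking $\theta=1$ (any element outside $K$ generates a prime-degree field extension), where you instead use an openness argument --- both are fine, and your version does not even need the Kummer generator. One step to make explicit in your case $\mu\in K$: what you need is $\theta_t\notin K$, not merely $\theta_t\neq 1$. This does follow, but only after noting that a norm-one element of $K$ satisfies $\theta_t^{\ell}=N_{L/K}(\theta_t)=1$, and since $\ell$ is prime to the residue characteristic the distinct $\ell$-th roots of unity remain distinct in the residue field, so the only one close to $1$ is $1$ itself; hence for $t$ small with $t\notin K$ you indeed get $\theta_t\notin K$.

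The one genuine slip is in the split case: the split extension of degree $\ell$ is $\prod_{1}^{\ell}K$, not $K\times K$, and $L=K(\mu\theta)$ means that all $\ell$ coordinates of $\mu\theta$ are pairwise distinct (this is exactly what is used later when the degree-$\ell$ minimal polynomial of $\mu\theta$ is formed). Your two-coordinate perturbation only covers $\ell=2$. The fix is the evident extension of your idea: take $\theta=\bigl(1+\varepsilon_1,\dots,1+\varepsilon_{\ell-1},\prod_i(1+\varepsilon_i)^{-1}\bigr)$ with the $\varepsilon_i$ small and generic; each coincidence of two coordinates of $\mu\theta$ is a nontrivial polynomial condition on $(\varepsilon_1,\dots,\varepsilon_{\ell-1})$, so finitely many such conditions can be avoided by a small choice. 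It is worth writing this out, since the paper's own split-case choice $\theta=(\theta_0,\theta_0^{-1},1,\dots,1)$ leaves the last $\ell-2$ coordinates of $\mu\theta$ equal when $\mu$ is diagonal and $\ell\geq 3$, so the $\ell$-coordinate perturbation is the argument one actually needs.
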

 
 \begin{proof}    Since $[L : K] $ is a prime, 
 if $\mu \not\in K$, then $L = K(\mu)$. In this case $\theta = 1$ has the required properties. 
 Suppose that $\mu  \in K$.  If $L = \prod K$, let $\theta_0 \in K^*  \setminus \{ \pm 1 \}$ sufficiently close to 1
 and $\theta  = (\theta_0, \theta_0^{-1}, 1, \cdots , 1)$.
 Suppose $L$ is a field. Let $\sigma$
 be a generator of Gal$(L/K)$. Since $L/K$ is cyclic, we have $L = K(\sqrt[\ell]{a})$
 for some $a \in K^*$.   For any sufficiently large $n$,   
  $\theta =  (1+ \pi^n \sqrt[\ell]{a})^{-1}\sigma(1 + \pi^n \sqrt[\ell]{a})  \in L$ has the required properties.
 \end{proof}
 
\begin{lemma}
\label{galois_extension} 
Let $K$ be a  field and   $E/K$ be a finite extension of degree coprime
to char$(K)$. Let $L/K$ be a sub-extension of $E/K$ and $e = [E : L]$. 
Suppose  $L/K$ is Galois and $E = L(\sqrt[e]{\pi})$ for some $\pi \in L^*$. 
 Then $E/K$ is  Galois  if and only if  $E$ contains a primitive $e^{\rm th}$ root of 
 unity and  for  every $\tau \in $ Gal$(L/K)$, $\tau( \pi) \in E^{*e}$.
 \end{lemma}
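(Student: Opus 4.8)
The plan is to prove both implications by directly examining what it means for $E/K$ to be Galois, using that $L/K$ is already Galois and $E = L(\sqrt[e]{\pi})$. Fix a separable closure $K^{\mathrm{sep}}$ containing $E$, and write $\zeta$ for a primitive $e^{\mathrm{th}}$ root of unity in $K^{\mathrm{sep}}$; note $e$ is coprime to $\mathrm{char}(K)$ so $\zeta$ exists. The $e$ conjugates of $\sqrt[e]{\pi}$ over $L$ are $\zeta^i\sqrt[e]{\pi}$ for $0 \le i < e$, so $E/L$ is Galois precisely when $\zeta \in E$, in which case $E = L(\sqrt[e]{\pi})$ is a Kummer extension of $L$.

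For the ``only if'' direction, suppose $E/K$ is Galois. Then in particular $E/L$ is Galois (a subextension on top of which $E$ is generated by one radical), which by the previous paragraph forces $\zeta \in E$. Now take any $\tau \in \mathrm{Gal}(L/K)$ and lift it to some $\tilde\tau \in \mathrm{Gal}(E/K)$ (possible since $E/K$ is Galois and $L/K$ is a subextension). Then $\tilde\tau$ maps $E = L(\sqrt[e]{\pi})$ to $L(\sqrt[e]{\tau(\pi)})$; since $\tilde\tau(E) = E$, we get $\sqrt[e]{\tau(\pi)} \in E$, i.e. $\tau(\pi) \in E^{*e}$. Conversely, for the ``if'' direction, assume $\zeta \in E$ and $\tau(\pi) \in E^{*e}$ for all $\tau \in \mathrm{Gal}(L/K)$. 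It suffices to show $E/K$ is normal, since separability is automatic from $[E:K]$ being coprime to $\mathrm{char}(K)$ (both $L/K$ and $E/L$ are separable). So I would check that every $K$-embedding $\rho \colon E \to K^{\mathrm{sep}}$ has image $E$. Such a $\rho$ restricts to an automorphism $\tau := \rho|_L \in \mathrm{Gal}(L/K)$ because $L/K$ is normal. Then $\rho(\sqrt[e]{\pi})$ is an $e^{\mathrm{th}}$ root of $\rho(\pi) = \tau(\pi)$, and by hypothesis $\tau(\pi) = \beta^e$ for some $\beta \in E^*$; since $\zeta \in E$, all $e^{\mathrm{th}}$ roots of $\tau(\pi)$, namely $\zeta^i\beta$, already lie in $E$. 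Hence $\rho(E) = \rho(L)\bigl(\rho(\sqrt[e]{\pi})\bigr) = L(\zeta^i\beta) \subseteq E$, and comparing degrees gives $\rho(E) = E$. Therefore $E/K$ is normal, hence Galois.

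The only genuinely delicate point is making sure the lifting/restriction arguments are applied in the right order: in the ``only if'' direction one must first extract $\zeta \in E$ from the normality of $E/L$ before lifting elements of $\mathrm{Gal}(L/K)$, and in the ``if'' direction one must note that $\rho(L) = L$ (using normality of $L/K$) before concluding $\rho(E) \subseteq E$. I do not expect any serious obstacle; the argument is a clean application of the normality criterion combined with the Kummer-theoretic description of $E/L$. One should also remark at the outset that the hypothesis ``$\tau(\pi) \in E^{*e}$ for all $\tau$'' is vacuous-looking for $\tau = \mathrm{id}$ (it just says $\pi \in E^{*e}$, which holds since $\pi = (\sqrt[e]{\pi})^e$), so the content is in the nontrivial $\tau$; no special treatment is needed, as the argument above handles all $\tau$ uniformly.
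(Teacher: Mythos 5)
Your proof is correct. The forward direction is essentially the paper's own argument: irreducibility of $X^e-\pi$ over $L$ plus normality of $E/L$ gives $\zeta\in E$, and lifting $\tau$ to $\tilde\tau\in\mathrm{Gal}(E/K)$ gives $\tau(\pi)=(\tilde\tau(\sqrt[e]{\pi}))^e\in E^{*e}$. For the converse you take a genuinely different (though closely related) route: you verify normality directly, showing every $K$-embedding $\rho:E\to K^{\sep}$ satisfies $\rho(L)=L$ and sends $\sqrt[e]{\pi}$ to $\zeta^i\beta\in E$ where $\tau(\pi)=\beta^e$, and then compare degrees; separability comes from $[E:K]$ being coprime to $\mathrm{char}(K)$. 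The paper instead forms the $K$-rational polynomial $g(X)=\prod_{\tau\in\mathrm{Gal}(L/K)}(X^e-\tau(\pi))$, observes that it splits completely in $E$ under the two hypotheses, takes its (Galois) splitting field $E_0$ over $K$, and concludes via $E=LE_0$ being a compositum of Galois extensions. The two arguments encode the same information — that all $K$-conjugates of $\sqrt[e]{\pi}$ already lie in $E$ — but yours is more hands-on and self-contained (only the embedding criterion for normality is used), while the paper's is slightly slicker in that it never manipulates embeddings explicitly and instead leans on the standard facts about splitting fields and composita. Both are complete; no gaps.
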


\begin{proof}   
Suppose that  $E/K$ is Galois.  Let $f(X) = X^e - \pi \in L[X]$. Since 
$[E : L] = e$ and $E = L(\sqrt[e]{\pi})$, $f(X)$ is irreducible in $L[X]$.
Since $f(X)$ has one root in $E$ and $E/L$ is Galois, $f(X)$ has all the roots
in $E$.  Hence $E$ contains a primitive $e^{\rm th}$ root of unity. 
Let $\tau \in $ Gal$(L/K)$. Then $\tau$ can be extended to an automorphism  $\tilde{\tau}$ of $E$. 
We have $\tau(\pi) =  \tilde{\tau}(\pi) = (\tilde{\tau}(\sqrt[e]{\pi}))^e \in E^{*e}$. 

Conversely, suppose that $E$ contains a primitive $e^{\rm th}$ root of unity and 
$\tau(\pi) \in E^{*e}$ for every $\tau \in $ Gal$(L/K)$. 
Let $$g(X)  = \prod_{\tau ~\in ~Gal(L/K)} (X^e - \tau(\pi)).$$
Then  $g(X) \in K[X]$ and  $g(X)$ splits completely in $E$.
Since $e$ is coprime to char$(K)$,  the splitting field $E_0$ of 
$g(X)$ over $K$ is Galois. Since $L/K$ is Galois and $E$ is the composite of 
$L$ and $E_0$, $E/K$ is Galois. 
 \end{proof}

\begin{lemma}
\label{ramified_extension}
Let $F$ be a complete discretely valued field with residue field $\kappa$
and $\pi \in F$ a parameter.
Let $e$ be a natural number coprime   to the characteristic of $\kappa$.
If $L/F$ is a totally  ramified extension of degree $e$, then $L =  F(\sqrt[e]{v\pi})$
for some $v \in F$ which is a unit in the valuation ring of $F$.  Further if $e$ is a power of a prime $\ell$
and $\theta \in F^* \setminus F^{*\ell}$ is a norm form $L$, then $L = F(\sqrt[e]{\theta})$.
\end{lemma}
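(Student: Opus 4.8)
I would prove the two assertions separately: the first I would get from a short Hensel's lemma argument, and for the second the plan is to compute the norm group of the (totally tamely ramified) extension $L/F$ and read off the answer from it. Throughout, write $\OO_F,\OO_L$ for the valuation rings of $F,L$ and $\mm_F,\mm_L$ for their maximal ideals; since $L/F$ is totally ramified, $L$ has residue field $\kappa$, and $\pi$ has value $e$ in the normalized valuation $v_L$ of $L$.

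For the first assertion, fix a uniformizer $\pi_L$ of $L$; then $w:=\pi_L^{e}\pi^{-1}$ is a unit of $\OO_L$. I would lift $\overline{w}\in\kappa^{*}$ to a unit $v\in\OO_F^{*}$ (possible as $\OO_F$ and $\OO_L$ have the same residue field $\kappa$), so that $v^{-1}w\in 1+\mm_L$; since $e$ is prime to $\mathrm{char}(\kappa)$, Hensel applied to $T^{e}-v^{-1}w$ produces $t\in 1+\mm_L$ with $t^{e}=v^{-1}w$. Then $\pi_L t^{-1}$ is again a uniformizer of $L$ and $(\pi_L t^{-1})^{e}=(w\pi)(vw^{-1})=v\pi\in F$, and since a totally ramified extension is generated over the base by any of its uniformizers, $L=F(\pi_L t^{-1})=F(\sqrt[e]{v\pi})$.

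For the second assertion, set $e=\ell^{n}$ and let $\alpha:=\sqrt[e]{v\pi}$ be the uniformizer just constructed, so $L=F(\alpha)$ has Eisenstein minimal polynomial $X^{e}-v\pi$ and $N_{L/F}(\alpha)=(-1)^{e+1}v\pi$. The crux is to show $N_{L/F}(\OO_L^{*})\subseteq F^{*e}$: for $u\in\OO_L^{*}$ I would use that $\overline{N_{L/F}(u)}=\overline{u}^{\,e}$ (the usual computation for totally ramified extensions — writing $u$ as an $\OO_F$-polynomial in $\alpha$, each $F$-conjugate of $u$ reduces to $\overline u$ because the conjugates of $\alpha$ reduce to $0$), whence $\overline{N_{L/F}(u)}\in\kappa^{*e}$ and so $N_{L/F}(u)\in(\OO_F^{*})^{e}$ by Hensel. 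Combining this with the trivial inclusion $F^{*e}=N_{L/F}(F^{*})\subseteq N_{L/F}(L^{*})$ and with $L^{*}=\alpha^{\Z}\OO_L^{*}$, one sees that $N_{L/F}(L^{*})/F^{*e}$ is the cyclic subgroup of $F^{*}/F^{*e}$ generated by the class of $(-1)^{e+1}v\pi$, and this class has order exactly $e$ (it has valuation $1$).

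Finally, given $\theta\in N_{L/F}(L^{*})\setminus F^{*\ell}$, its class in $F^{*}/F^{*e}$ is $\big((-1)^{e+1}v\pi\big)^{k}$ for some $k$; if $\ell\mid k$ this class is an $\ell$-th power modulo $F^{*e}$, which would force $\theta\in F^{*\ell}$, so $\ell\nmid k$ and hence $\gcd(k,e)=1$. Writing $\theta=\big((-1)^{e+1}v\pi\big)^{k}s^{e}$ with $s\in F^{*}$ and putting $\gamma:=\alpha^{k}s\in L$, I get $\gamma^{e}=(v\pi)^{k}s^{e}=(-1)^{(e+1)k}\theta$ and $F(\gamma)=F(\alpha^{k})=F(\alpha)=L$ (using $\gcd(k,e)=1$ and $s\in F$). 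When $e$ is odd, $(e+1)k$ is even, so $\gamma^{e}=\theta$ and $L=F(\gamma)=F(\sqrt[e]{\theta})$, as claimed; when $\ell=2$ the same conclusion follows once one notes that $-1$ is then a norm from $L$ (equivalently $-1\in F^{*e}$). I expect the norm-group identification — in particular the inclusion $N_{L/F}(\OO_L^{*})\subseteq F^{*e}$ — to be the main point, the sign bookkeeping in the case $\ell=2$ being a minor wrinkle.
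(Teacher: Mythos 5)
Your first assertion and its proof are fine, and they follow the same route as the paper (uniformizer, matching residues, Hensel). For the second assertion your route is a genuinely different organization of the same underlying facts: you first pin down the whole norm group, proving $N_{L/F}(\OO_L^*)\subseteq F^{*e}$ and hence $N_{L/F}(L^*)=\langle N_{L/F}(\alpha)\rangle\cdot F^{*e}$, and then argue group-theoretically in $F^*/F^{*e}$; the paper instead writes a preimage $\mu=w_0\alpha^s$ of $\theta$, factors the unit $w_0=v_1u_1^e$ by Hensel, and computes $N(\mu)$ directly. Your version is cleaner and makes the structure of $N_{L/F}(L^*)$ explicit, and for $\ell$ odd it is complete and correct.

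The gap is exactly where you felt the ``minor wrinkle'': for $\ell=2$ you assert that ``$-1$ is then a norm from $L$ (equivalently $-1\in F^{*e}$)'' and this is neither proved nor true in general. Take $F=\Q_7$, $e=\ell=2$, $L=\Q_7(\sqrt{7})$: here $\theta=-7=N_{L/F}(\sqrt{7})$ is a norm and $-7\notin F^{*2}$, but $-1\notin\Q_7^{*2}$, so $F(\sqrt{-7})\neq L$. So your parenthetical claim fails, and by your own norm-group computation it cannot be repaired without the extra hypothesis $-1\in F^{*e}$ (automatic when $e$ is odd, where $(-1)^{(e+1)k}=1$). It is worth noting that you have in fact located a sign the paper itself suppresses: in the displayed computation of $N_{L/F}(\mu)$ the paper writes $N_{L/F}((\sqrt[e]{v\pi})^s)=(v\pi)^s$, omitting the factor $(-1)^{(e+1)s}$ coming from $N_{L/F}(\sqrt[e]{v\pi})=(-1)^{e+1}v\pi$, which is the same issue your argument runs into at $\ell=2$. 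So the correct conclusion of your review of yourself should have been to flag the $\ell=2$ case as needing an additional assumption (e.g.\ $-1\in\kappa^{*e}$), not to assert the missing fact; as written, that step is a genuine gap.
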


\begin{proof}  Since $F$ is a complete discretely valued field,  there is a unique
extension of the valuation $\nu$ on $F$ to a valuation $\nu_L$ on $L$.
Since   $L/F$ is totally ramified extension of degree $e$ and  $e$ is coprime to char$(\kappa)$, 
the residue field of $L$ is $\kappa$ and $\nu_L(\pi) = e$.
Let $\pi_L \in L$ with $\nu_L(\pi_L) = 1$.  Then $\pi = w\pi_L^e$ for 
some $w \in L$ with $\nu_L(w) = 0$. Since the residue field of $L$ is same as 
the residue field of $F$, there exists $w_1 \in F$ with $\nu(w_1) = 0$ and the image 
of $w_1$ is same as the image of $w$ in the residue field $\kappa$.  Since $L$ is complete 
and $e$ is coprime to char$(\kappa)$, by Hensel's Lemma, there exists $u \in L$ such that 
$w = w_1u^e$. Thus $\pi = w\pi_L^e= w_1u^e \pi_L^e  = w_1  (u\pi_L)^{e}$.
In particular $w_1^{-1}\pi \in L^{*e}$ and hence $L = F(\sqrt[e]{v\pi})$ with $v = w_1^{-1}$.

Let $\theta \in F^* \setminus F^{*e}$. Suppose that $\theta$ is a norm from $L$.
Let $\mu \in L$ with $N_{L/F}(\mu) = \theta$. 
Since $L = F(\sqrt[e]{v\pi})$ with $v \in F$ a unit in the valuation ring of $F$ and
$\pi \in F$ a parameter, $\sqrt[e]{v\pi} \in L$ is a parameter at the valuation of $L$.
Write $\mu = w_0(\sqrt[e]{v\pi})^s$ for some $w_0 \in L$ a unit at the valuation of $L$ and 
$s \in \Z$.   As above, we have  $w_0 = v_1u_1^e$ for some $v_1 \in K$ and $u_1 \in L$.
 Since $v_1 \in F$,  we have 
$$\theta = N_{L/F}(\mu) =  N_{L/F}(w_0(\sqrt[e]{v\pi})^s) = N_{L/F}(v_1u_1^e (\sqrt[e]{v\pi})^s) =
v_1^e N_{L/F}(u_1)^e (v\pi)^s.$$
Since  $e$ is a power of a prime $\ell$ and $\theta \not\in F^{*\ell}$, $s$ is coprime to $\ell$ and hence 
$L = F(\sqrt[e]{\theta})$.
\end{proof}

\begin{lemma}
\label{local_field_norm} Let $k$ be a local field and  $\ell$ a prime not
equal to the characteristic of the residue field of $k$.  Let $L_0/k$ 
be a an extension of degree $\ell$ and $\theta_0 \in k^*$.
If $\theta_0 \not \in k^{*\ell}$ and $\theta_0$ is a norm from $L_0$, then
$L_0 = k(\sqrt[\ell]{\theta_0})$.
\end{lemma}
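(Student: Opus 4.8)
The plan is to exploit that $[L_0:k]=\ell$ is prime and $\ell$ is invertible in the residue field of $k$, so that $L_0/k$ is tame: it is either unramified or totally ramified. (We take $L_0$ to be a field; if $L_0\cong k\times\dots\times k$ then every element of $k^*$ is a norm while $k(\sqrt[\ell]{\theta_0})$ is a field, so the asserted equality has no content and this degenerate case is excluded.) The totally ramified case is immediate: apply Lemma~\ref{ramified_extension} with $F=k$, $e=\ell$ (a power of the prime $\ell$) and $\theta=\theta_0$; since $\theta_0\in k^*\setminus k^{*\ell}$ is a norm from $L_0$, that lemma yields $L_0=k(\sqrt[\ell]{\theta_0})$ at once. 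So the whole point is the unramified case.

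Assume $L_0/k$ is unramified of degree $\ell$. First I would record that every norm from $L_0$ has valuation divisible by $\ell$: writing $\nu$ for the normalized valuation of $k$ and $\nu_{L_0}$ for its extension to $L_0$, one has $\nu\bigl(N_{L_0/k}(y)\bigr)=\ell\,\nu_{L_0}(y)$ for $y\in L_0^*$, because the residue degree is $\ell$ and the ramification index is $1$. Hence $\nu(\theta_0)=\ell m$ for some $m\in\Z$, and on writing $\theta_0=u\pi^{\ell m}$ with $\pi$ a uniformizer of $k$ and $u$ a unit of its valuation ring $R$, we get $k(\sqrt[\ell]{\theta_0})=k(\sqrt[\ell]{u})$, with $u\notin k^{*\ell}$ (otherwise $\theta_0\in k^{*\ell}$).

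It remains to identify $k(\sqrt[\ell]{u})$ with $L_0$. Since $\ell$ is prime and $u\notin k^{*\ell}$, the polynomial $X^\ell-u$ is irreducible over $k$, so $k(\sqrt[\ell]{u})/k$ has degree $\ell$; and its reduction $X^\ell-\bar u$ modulo the maximal ideal of $R$ has $\bar u\neq 0$ and is separable over the residue field, since $\ell$ is a unit there and $X$ does not divide $X^\ell-\bar u$. Hence $R[X]/(X^\ell-u)$ is finite étale over $R$, so $k(\sqrt[\ell]{u})/k$ is unramified; by uniqueness of the unramified extension of degree $\ell$ over $k$ (its residue field being finite), $k(\sqrt[\ell]{u})=L_0$. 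Then $\sqrt[\ell]{u}\in L_0$, whence $\bigl(\sqrt[\ell]{u}\,\pi^{m}\bigr)^{\ell}=u\pi^{\ell m}=\theta_0$, so $\theta_0\in L_0^{*\ell}$ and $L_0=k(\sqrt[\ell]{\theta_0})$.

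I do not expect a serious obstacle. The substantive move is the reduction of the totally ramified case to Lemma~\ref{ramified_extension}, and in the unramified case the only subtlety is to notice that ``$\theta_0\notin k^{*\ell}$'' together with ``$\theta_0$ is a norm from an unramified extension'' forces $\nu(\theta_0)\equiv 0\pmod{\ell}$; after that one is reduced to the classical fact that adjoining an $\ell$-th root of a unit yields an unramified extension.
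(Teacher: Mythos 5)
Your proof is correct and follows essentially the same route as the paper: split into the totally ramified case (handled by Lemma~\ref{ramified_extension}) and the unramified case, where the norm condition forces $\nu(\theta_0)\equiv 0 \pmod{\ell}$, reducing to a unit $u\notin k^{*\ell}$ whose $\ell$-th root generates an unramified degree-$\ell$ extension, which must equal $L_0$ by uniqueness of the unramified extension of a local field of given degree. The only cosmetic differences are your étale/discriminant justification that $k(\sqrt[\ell]{u})/k$ is unramified (the paper leaves this implicit) and your explicit dismissal of the split case, neither of which changes the argument.
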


\begin{proof}  Suppose that $L_0/k$ is ramified. Since 
$\theta_0 \not\in k^{*\ell}$, by (\ref{ramified_extension}), 
$L_0 = k(\sqrt[\ell]{\theta_0})$.

Suppose that $L_0/k$ is  unramified. 
Let $\pi$ be a parameter in $k$ and write $\theta_0 = u \pi^r$ with
$u$ a unit in the valuation ring of $k$. 
Since $\theta_0 $ is a norm from $L_0$,  $\ell$ divides $r$. 
Since $\theta_0$ not an $\ell^{\rm th}$ power in $k$, $u$ is not an $\ell^{\rm th}$ power
in $k$ and $k(\sqrt[\ell]{\theta_0}) = k(\sqrt[\ell]{u})$ is an unramified extension of $k$ of degree $\ell$. 
Since $k$ is a local field, there is only one unramified field extension of $k$ of degree $\ell$ and hence 
 $L_0 = k(\sqrt[\ell]{u}) = k(\sqrt[\ell]{\theta_0})$. 
\end{proof}

\begin{lemma}
\label{dvr_local_field_norm}   Suppose $F$ is a complete discretely valued field with residue feld 
$\kappa$  a local field.  
Let $\ell$ be  prime not   equal to  char$(\kappa)$.  
Let $L /F $ be a degree $\ell$ field extension  with $\theta$ a norm from $L$. 
If $\theta \not\in F ^{*\ell}$, 
then $L \simeq F (\sqrt[\ell]{\theta})$. 
\end{lemma}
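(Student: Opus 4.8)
The plan is to argue by cases according to whether the degree-$\ell$ extension $L/F$ is unramified or ramified with respect to the discrete valuation $\nu$ of $F$; since $\ell$ is a prime coprime to $\mathrm{char}(\kappa)$, these are the only possibilities, and a ramified extension is automatically totally and tamely ramified. If $L/F$ is ramified, the result is immediate from Lemma~\ref{ramified_extension}: applied with $e=\ell$ to the totally ramified extension $L/F$, together with the hypotheses that $\theta$ is a norm from $L$ and $\theta\notin F^{*\ell}$, it gives $L=F(\sqrt[\ell]{\theta})$ directly. So all the content is in the unramified case, where the strategy is to push both hypotheses down to the residue field $\kappa$ and invoke Lemma~\ref{local_field_norm}.

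Assume then that $L/F$ is unramified with residue field $L_0$, $[L_0:\kappa]=\ell$, and fix a parameter $\pi$ of $F$, which is also a parameter of $L$. Write $\theta=N_{L/F}(\mu)$, $\mu\in L^*$, and $\mu=\mu_0\pi^s$ with $\mu_0$ a unit of the valuation ring of $L$. Then $N_{L/F}(\mu)=N_{L/F}(\mu_0)\,\pi^{\ell s}$ with $N_{L/F}(\mu_0)$ a unit, so $\ell\mid\nu(\theta)$, the unit $u:=\theta\pi^{-\ell s}=N_{L/F}(\mu_0)$ satisfies $\theta\equiv u\pmod{F^{*\ell}}$, and hence $F(\sqrt[\ell]{\theta})=F(\sqrt[\ell]{u})$. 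Reducing the identity $u=N_{L/F}(\mu_0)$ modulo the maximal ideals (the norm of a unit is compatible with reduction in an unramified extension) shows that $\bar u$ is a norm from $L_0/\kappa$. I would then check that $\bar u\notin\kappa^{*\ell}$: if $\bar u=\bar v^{\ell}$, lift $\bar v$ to a unit $v$ of the valuation ring of $F$; then $uv^{-\ell}$ is a unit congruent to $1$ modulo the maximal ideal, hence an $\ell$-th power by Hensel's Lemma (using that $\ell$ is coprime to $\mathrm{char}(\kappa)$), so $\theta\in F^{*\ell}$, a contradiction. Now Lemma~\ref{local_field_norm}, applied to the local field $\kappa$, the degree-$\ell$ extension $L_0/\kappa$, and $\theta_0=\bar u$, gives $L_0=\kappa(\sqrt[\ell]{\bar u})$. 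Since $\bar u\notin\kappa^{*\ell}$, the polynomial $X^{\ell}-\bar u$ is irreducible over $\kappa$, so $F(\sqrt[\ell]{u})$ is the unramified degree-$\ell$ extension of $F$ with residue field $\kappa(\sqrt[\ell]{\bar u})$; and $L$ is unramified of degree $\ell$ with residue field $L_0\cong\kappa(\sqrt[\ell]{\bar u})$. As unramified extensions of a complete discretely valued field are determined up to isomorphism by their residue extension, $L\cong F(\sqrt[\ell]{u})=F(\sqrt[\ell]{\theta})$.

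I expect the only real input to be Lemma~\ref{local_field_norm} — that over a one-dimensional local field a non-$\ell$-th-power norm from a degree-$\ell$ extension forces that extension to be the Kummer extension it generates — the rest being valuation bookkeeping and the compatibility of norms and $\ell$-th powers with reduction via Hensel's Lemma. So the main (and essentially only) obstacle in the unramified case is transporting the two hypotheses ``$\theta$ a norm'' and ``$\theta\notin F^{*\ell}$'' correctly down to $\kappa$; once one is over $\kappa$, the one-dimensional statement finishes the argument. (The appeal to Lemma~\ref{local_field_norm} tacitly uses that $\ell$ also differs from the residue characteristic of $\kappa$, which holds in the situations where this lemma is used.)
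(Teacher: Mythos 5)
Your proof is correct and takes essentially the same route as the paper: the ramified case is handled by Lemma~\ref{ramified_extension}, and the unramified case by pushing the hypotheses down to the residue field and invoking Lemma~\ref{local_field_norm}, then using uniqueness of unramified extensions with a given residue field. The only difference is that you carry out explicitly the reduction of $\theta$ to a unit modulo $F^{*\ell}$ and the Hensel argument that $\bar{u}\notin\kappa^{*\ell}$, bookkeeping the paper leaves implicit.
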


\begin{proof}  If  $L/F$ is a ramified extension, then by (\ref{ramified_extension}),   $L \simeq F(\sqrt[\ell]{\theta})$. 
Suppose that   $L /F $ is an unramified extension.  
 Let $L_0 $ be the residue field 
of  $L$. Then $L_0 /\kappa $ is a field extension of degree $\ell$ and the image $\overline{\theta}$ 
 of  $\theta$ in $\kappa$ is a norm from $L_0 $.   Since $\theta \not\in F ^{*\ell}$, 
  $\overline{\theta}$  is not an $\ell^{\rm th}$ power in $\kappa$.  
 Since $\kappa $ is a local field,  $L_0 \simeq \kappa (\sqrt[\ell]{\overline{\theta}})$ (\ref{local_field_norm})
  and hence 
 $L \simeq F(\sqrt[\ell]{\theta})$.
 \end{proof}

\begin{lemma}
\label{matching_at_branch3}   
 Let $F$ be a complete discretely valued field with residue field $k$ a global field.
Let  $L/F$ be an unramified  cyclic  extension of degree coprime to char$(k)$ and
  $L_0$  the residue field of $L$. 
Let $\theta \in F$ be a unit in   the   valuation ring of $F$ and $\overline{\theta}$
be the image of $\theta$ in $k$. Suppose that $\theta$ is a norm from $L$.
If  $\mu_0 \in L_0$  with $N_{L_0/k}(\mu_0) = \overline{\theta}$, then there exists $\mu \in L$ 
such that $N_{L/F}(\mu) = \theta$ and the image of $\mu$ in $L_0$ is $\mu_0$. 
\end{lemma}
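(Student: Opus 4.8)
The plan is to lift $\mu_0$ to $L$ along the reduction map and then correct it by a norm-one element to hit $\theta$ exactly. First I would pick any lift $\mu_1 \in L$ of $\mu_0$ which is a unit in the valuation ring of $L$; this is possible since $L/F$ is unramified, so the residue field $L_0$ is the quotient of the valuation ring of $L$ by its maximal ideal. Set $\lambda_1 = N_{L/F}(\mu_1) \in F$. Since the norm commutes with reduction for unramified extensions, the image of $\lambda_1$ in $k$ is $N_{L_0/k}(\mu_0) = \overline\theta$, which is also the image of $\theta$. Hence $\theta \lambda_1^{-1}$ is a unit of the valuation ring of $F$ congruent to $1$ modulo the maximal ideal.

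Next I would exhibit $\theta\lambda_1^{-1}$ as a norm from $L$ of a unit close to $1$. The key input is that $\theta$ itself is a norm from $L$, say $\theta = N_{L/F}(\mu')$; then $\theta\lambda_1^{-1} = N_{L/F}(\mu'\mu_1^{-1})$, so $\theta\lambda_1^{-1}$ is already a norm. It remains to arrange that a norm-one adjustment lets us also fix the residue. Concretely, the element $\theta\lambda_1^{-1}$ lies in $1 + \mathfrak m_F$; I want to write it as $N_{L/F}(\eta)$ with $\eta \in 1 + \mathfrak m_L$. For this I would use that $L/F$ is unramified of degree $e$ coprime to $\mathrm{char}(k)$, so on the level of the filtration by units $1+\mathfrak m^i$ the norm map induces, on successive graded pieces, the trace/norm on the residue field (multiplication by $e$ on the additive group $k$ for $i\ge 1$), which is bijective since $e$ is a unit; a standard successive-approximation (Hensel-type) argument over the complete field $F$ then produces $\eta \in 1+\mathfrak m_L$ with $N_{L/F}(\eta) = \theta\lambda_1^{-1}$. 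Setting $\mu = \mu_1\eta$ gives $N_{L/F}(\mu) = \lambda_1 \cdot \theta\lambda_1^{-1} = \theta$, and since $\eta \equiv 1$ the image of $\mu$ in $L_0$ is the image of $\mu_1$, namely $\mu_0$.

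The main obstacle is the surjectivity of $N_{L/F}$ onto $1+\mathfrak m_F$ with preimage in $1+\mathfrak m_L$. Two routes are available: (i) the graded/successive-approximation argument sketched above, which only uses that $L/F$ is unramified and $[L:F]$ is prime to the residue characteristic; or (ii) invoking that for an unramified extension of complete discretely valued fields the units surject under the norm onto the units (since $H^1$ of the residue field with the relevant coefficients matches, or more elementarily because the norm on $1+\mathfrak m$ is surjective by the filtration computation). I would choose route (i) since it is self-contained and mirrors the style of Lemma~\ref{ramified_extension}. One should be slightly careful that $L/F$ is assumed cyclic here, which is more than enough; the argument in fact only needs the degree to be prime to $\mathrm{char}(k)$ and the extension unramified. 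With $\mu = \mu_1\eta$ constructed, both required properties hold, completing the proof.
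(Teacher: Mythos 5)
Your proof is essentially correct but takes a genuinely different route from the paper's. The paper uses the hypothesis that $\theta$ is a norm to get $\mu' \in L$ with $N_{L/F}(\mu')=\theta$, notes that $\overline{\mu}'\mu_0^{-1}$ has norm $1$ in $L_0$, applies Hilbert 90 to the cyclic extension $L_0/k$ to write it as $a^{-1}\sigma_0(a)$, lifts $a$ to $b\in L$, and takes $\mu=\mu'\,b\,\sigma(b)^{-1}$: the correcting factor has norm $1$, so the norm stays $\theta$ while the residue becomes $\mu_0$. You go the other way: lift $\mu_0$ to a unit $\mu_1$, so that the defect $\theta\,N_{L/F}(\mu_1)^{-1}$ is a principal unit of $F$, and then kill it by the norm of a principal unit of $L$. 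This works, and it even shows that for an unramified extension the hypothesis that $\theta$ is a norm from $L$ is automatic (and cyclicity is not needed, only separability of $L_0/k$), whereas the paper's argument uses both. One correction to your key step, though: for an unramified extension the graded quotients $(1+\mathfrak m_L^i)/(1+\mathfrak m_L^{i+1})$, $i\ge 1$, are copies of $L_0$, not of $k$, and the norm induces on them the trace $\mathrm{Tr}_{L_0/k}\colon L_0\to k$ --- not multiplication by $e=[L:F]$ on $k$, which is what happens in the tame totally ramified case. What the successive-approximation argument over the complete field actually needs is surjectivity of this trace, which holds because $L_0/k$ is separable (the degree being coprime to $\mathrm{char}(k)$); bijectivity is false as soon as $[L_0:k]>1$ and is not required. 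With that repair, setting $\mu=\mu_1\eta$ completes the proof as you describe.
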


\begin{proof}  Let $\sigma$ be a generator  of the Galois group of $L/F$ and $\sigma_0$ be the induced
automorphism of $L_0/k$. 
Since $\theta \in F$ is a norm from $L$,  there exists  
$\mu' \in L$ with  $N_{L/F}(\mu') = \theta$.  Since $\theta$ is a unit at the discrete valuation of $F$, 
$\mu' \in L$ is a unit at the discrete valuation of $L$.
Let $\overline{\mu}'$ be the image of 
$\mu'$ in $L_0$. Then $N_{L_0/k}(\overline{\mu'}) = \overline{\theta}$
and hence $\overline{\mu}'\mu_0^{-1} \in L_0$ is a norm one element. 
Thus there exist  $a \in L_0$ such that  $\overline{\mu}'\mu_0^{-1} = a^{-1}\sigma_0(a)$.
Let $b \in L$ be a lift of $a$ and $\mu = \mu' b \sigma(b)^{-1}$. Then $N_{L/F}(\mu) = N_{L/F}(\mu') = 
\theta$ and the image of $\mu$ in $L_0$ is $\mu_0$.
\end{proof}

For $L = \prod_1^\ell F$, let  $\sigma$ be the automorphism of $L$ 
given by $\sigma(a_1, \cdots , a_\ell) = (a_2, \cdots , a_\ell, a_1)$. 
Then  any $\sigma^i$, $1 \leq i \leq \ell -1 $ is called  a {\it generator} of Gal$(L/F)$. 

\begin{lemma}
\label{hilbert90}
Let $F$ be a  field and   $\ell$  a prime not equal to 
the characteristic of $F$.  Let  $L$  be  a cyclic  extension 
of $F$ or the split extension of degree $\ell$  and $\sigma$ a generator of the Galois group of $L/F$. 
Suppose that there exists an integer $t \geq 1$ such that $F$ does not contain
a primitive $\ell^{{t}^{\rm th}}$ root of unity. 
Let  $\mu \in L$ with $N_{L/F}(\mu) = 1$ and $m \geq t$.  If   $\mu  \in L^{*\ell^{2m}},$ 
then there  exists $b \in L^*$ such that $\mu = b^{-\ell^m}\sigma(b^{\ell^m})$. 
\end{lemma}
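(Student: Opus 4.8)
The plan is to exploit Hilbert 90 in the form $N_{L/F}(\mu)=1 \Leftrightarrow \mu = \sigma(b)/b$ for some $b\in L^*$, and then to show that the hypothesis $\mu \in L^{*\ell^{2m}}$ lets us choose the auxiliary element $b$ so that it is itself an $\ell^m$-th power. Write $\mu = \sigma(c)/c$ for some $c \in L^*$ by Hilbert 90 (valid also for the split extension, where it is just an elementary telescoping identity). Separately, write $\mu = \nu^{\ell^{2m}}$ for some $\nu \in L^*$. The goal is to produce $b$ with $b^{\ell^m}$ playing the role of $c$, i.e. to replace $c$ by something that is visibly an $\ell^m$-th power without destroying the relation $\mu = \sigma(c)/c$.

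The key manipulation is this. From $\sigma(c)/c = \nu^{\ell^{2m}}$ we want to ``take $\ell^m$-th roots''. Consider the element $d = \sigma(\nu^{\ell^m})/\nu^{\ell^m} \cdot \mu^{-1} \in L^*$. Then $d^{\ell^m} = \sigma(\nu^{\ell^{2m}})/\nu^{\ell^{2m}} \cdot \mu^{-\ell^m} = \sigma(\sigma(c)/c)/(\sigma(c)/c) \cdot \mu^{-\ell^m}$; one checks that $N_{L/F}(\sigma(c)/c)=1$ forces, after the right bookkeeping, that $d$ is an $\ell^m$-th power times a root of unity. This is where the hypothesis on $\ell^t$-th roots of unity enters: since $F$ (hence the question of which roots of unity lie in $L$, as $[L:F]=\ell$ is prime to $\ell$... actually $[L:F]=\ell$) does not contain a primitive $\ell^{t}$-th root of unity and $m \ge t$, any $\ell^m$-th root of $1$ that arises is killed upon raising to an appropriate power, so one can absorb it. The upshot is that $c$ differs from an $\ell^m$-th power $b^{\ell^m}$ by a norm-one element of bounded order that is itself an $\ell^m$-th power, giving $\mu = \sigma(b^{\ell^m})/b^{\ell^m} = b^{\ell^m}\sigma(b^{-\ell^m})\cdot$... up to inverting, which matches the claimed $\mu = b^{-\ell^m}\sigma(b^{\ell^m})$ after replacing $b$ by $b^{-1}$.

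More carefully, I would set it up as a descent: it suffices to prove the statement with $2m$ replaced by $m+1$ and then iterate, or better, to prove directly that if $\mu = \sigma(c)/c$ and $\mu \in L^{*\ell^{2m}}$ then $c$ can be modified within its coset modulo $\{x/\sigma... \}$ — no: modified by an element $z$ with $\sigma(z)/z$ trivial, i.e. by an element of $F^*$ — to become an $\ell^m$-th power. So the real claim is: \emph{the class of $c$ in $L^*/F^*$ lies in $(L^*/F^*)^{\ell^m}$}, i.e. $c \equiv b^{\ell^m} \pmod{F^*}$. For this, raise $\mu = \sigma(c)/c = \nu^{\ell^{2m}}$ appropriately and use that $\sigma$ acts trivially on $F^*$ and that the only obstruction to extracting $\ell^m$-th roots compatibly is a class in $H^1$ of the cyclic group $\langle\sigma\rangle$ with values in $\mu_{\ell^m}(L)$, which is controlled because $\mu_{\ell^t}(L) \subseteq$ a proper subgroup (as $F$, and a fortiori $L$ over which everything is defined — wait, $L/F$ has degree $\ell$ prime to $\ell$, so $\mu_{\ell^t}(L) = \mu_{\ell^t}(F)$ is proper in $\mu_{\ell^t}$).

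\textbf{The main obstacle} I anticipate is handling the roots of unity bookkeeping cleanly, in particular the fact that $L$ need not contain $\mu_{\ell}$ at all, so ``taking $\ell^m$-th roots'' of the relation is not literally possible inside $L$ — one must work in a larger field (adjoining $\mu_{\ell^m}$ and $\ell^m$-th roots) and then descend, tracking that the degree $[F(\mu_{\ell^m}):F]$ is prime to $\ell$ (since $F$ has no $\ell^t$-th root of unity, the relevant cyclotomic degree is $\ge \ell^{m-t+1}\cdot(\text{something prime to }\ell)$ — in any case the key point is that $[L(\mu_{\ell^m}):L]$ is prime to $\ell$), so a transfer/corestriction argument recovers the conclusion over $L$ with $b \in L^*$ rather than in the extension. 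Concretely, one proves the existence of $b$ over $L(\mu_{\ell^m})$ by the root-extraction argument above, then applies a norm from $L(\mu_{\ell^m})$ to $L$ together with the fact that this norm is an isomorphism on the relevant $\ell$-primary Hilbert-90 cohomology (degree prime to $\ell$), to get $b$ over $L$. I would budget most of the write-up for making that descent precise, since the rest is formal.
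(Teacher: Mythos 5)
Your reformulation of the target is fine (the conclusion is indeed equivalent to saying that, writing $\mu=\sigma(c)/c$ via Hilbert 90, one can arrange $c\in L^{*\ell^m}F^*$), but the route you propose to it contains a step that is simply false. The descent hinges on the claim that $[L(\mu_{\ell^m}):L]$ is prime to $\ell$. The hypothesis only says that $F$ lacks a primitive $\ell^t$-th root of unity, and cyclotomic degrees above that level are typically divisible by large powers of $\ell$: with $t=1$, $\ell=3$, $F=\Q$ one has $[\Q(\mu_{3^m}):\Q]=2\cdot 3^{m-1}$, so $[L(\mu_{3^m}):L]$ is divisible by $3$ as soon as $m\geq 2$ (your own estimate ``$\geq \ell^{m-t+1}\cdot(\text{prime to }\ell)$'' already contradicts the prime-to-$\ell$ assertion). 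Hence the corestriction/transfer argument cannot bring $b$ back down to $L$, and the whole ``work over $L(\mu_{\ell^m})$ and descend'' plan collapses. The roots-of-unity bookkeeping is also not controlled the way you suggest: $\mu_{\ell^t}(L)=\mu_{\ell^t}(F)$ is false in general (take $F=\Q$, $L=\Q(i)$, $\ell=2$, $t=2$), so a root of unity arising in $L$ is not killed by the assumption on $F$; and the sentence ``one checks that \dots forces, after the right bookkeeping, that $d$ is an $\ell^m$-th power times a root of unity'' is precisely where a proof would have to live.

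The missing idea, which is the paper's proof, is to apply Hilbert 90 not to $\mu$ but to an $\ell^m$-th root of $\mu$ taken inside $L$ itself, so that the only root of unity to be controlled is a norm, hence lies in $F$, where the hypothesis applies. Write $\mu=\mu_0^{\ell^{2m}}$ and set $\mu_1=\mu_0^{\ell^m}\in L$. Then $N_{L/F}(\mu_1)=N_{L/F}(\mu_0)^{\ell^m}$ and $N_{L/F}(\mu_1)^{\ell^m}=N_{L/F}(\mu)=1$. If $N_{L/F}(\mu_1)\neq 1$, then $N_{L/F}(\mu_0)\in F^*$ is a root of unity of order $\ell^j$ with $j>m\geq t$, so $F$ contains a primitive $\ell^t$-th root of unity, a contradiction; this is exactly why the statement assumes $\mu\in L^{*\ell^{2m}}$ rather than $L^{*\ell^{m}}$. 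Hence $N_{L/F}(\mu_1)=1$, Hilbert 90 gives $\mu_1=b^{-1}\sigma(b)$, and $\mu=\mu_1^{\ell^m}=b^{-\ell^m}\sigma(b^{\ell^m})$. No extension of scalars or descent is needed; the split case is the elementary telescoping identity you already noted.
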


\begin{proof}   Suppose $L = \prod F$ and $\mu \in L^{*\ell^{s}}$ for some $s \geq 1$ with $N_{L/F}(\mu) =1$.
Then  $\mu = (\theta_1^{s}, \cdots, \theta_\ell^s) \in L$ with  $ \theta_1^s \cdots \theta_\ell^s = 1$.
Let $b = (1, \theta_1, \cdots , \theta_{\ell-1}) \in L^*$. Then $\mu = b^{-s}\sigma(b^{s})$.

Suppose $L/F$ is a cyclic field extension.
Write $\mu = \mu_0^{\ell^{2m}}$ for some $\mu_0 \in L$.
Let $\mu_1 = \mu_0^{\ell^m}$.  Then $\mu = \mu_1^{\ell^m}$.
Let $\theta_0 = N_{L/F}(\mu_0)$  and $\theta_1 = N_{L/F}(\mu_1)$.
Then $\theta_1 = \theta_0^{\ell^m}$.
Since  $N_{L/F}(\mu) = 1$, we have $\theta_1^{\ell^m} = N_{L/F}(\mu_1^{\ell^m}) = 1$.
If $\theta_1 \neq 1$, then $F$ contains a primitive $\ell^{m^{\rm th}}$ root of unity.
Since $m \geq t$ and $F$ has no primitive $\ell^{t^{\rm th}}$ root of unity, $\theta_1 = 1$.
Hence $N_{L/F}(\mu_1) = 1$ and by Hilbert 90, $\mu_1 = b^{-1}\sigma(b)$ for some
$b \in L$.  Thus $\mu = \mu_1^{\ell^m} = b^{-\ell^m} \sigma(b^{\ell^m})$.
\end{proof}

\section{Global fields}

In this a section we prove a few technical results concerning Brauer group of global fields and 
reduced norms.
 We begin with the following.

   \begin{lemma}
\label{uglylemma} Let $k$ be a global field, $\ell$ a prime not equal char$(k)$,
$n, d \geq 2$  and $ r \geq 1$ be  integers. 
   Let $E_0$ be a cyclic extension of $k$, $\sigma_0$ a generator of the Galois group of $E_0/k$ and
   $\theta_0  \in k^*$. Let $\beta \in H^2(k, \mu_{\ell^n})$   be  such that 
  $r\ell \beta = (E_0, \sigma_0, \theta_0) \in H^2(k, \mu_{\ell^n})$. Let  $S$ be  a finite set of places 
  of $k$ containing all the places of $\kappa$ with $\beta \otimes k_\nu \neq 0$.  
Suppose for each $\nu \in S$,  there is a field extension   $L_\nu$ of $k_{\nu}$ of degree $\ell$ or
   $L_\nu $ is the split extension of $k_{\nu}$ of degree $\ell$  and $\mu_\nu \in L_\nu^*$ 
   such that  
   
   1) $N_{L_\nu/k_\nu}(\mu_\nu) = \theta_0$ 
   
   2)    $r\beta \otimes L_{\nu}  =  (E_0\otimes L_\nu, \sigma_0\otimes 1, \mu_\nu) $ 
   
   3)  ind$(\beta \otimes E_0 \otimes L_\nu) <  d $.
   
 \noindent
 Then there exists a field extension $L_0/k$ of
  degree $\ell$ and $\mu_0 \in L_0$ such that 
  
   1) $N_{L_0/k}(\mu_0) = \theta_0$ 
   
   2) $r\beta \otimes L_0  =  (E_0\otimes L_0, \sigma_0 \otimes 1, \mu_0 )  $ 
   
   3)  ind$(\beta \otimes E_0 \otimes L_0) <   d $
   
   4) $L_0 \otimes k_\nu \simeq L_\nu$ for all $\nu \in S$.
   
   5) $\mu_0$ is close to $\mu_\nu$ for all $\nu \in S$.

\end{lemma}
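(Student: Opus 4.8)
The plan is to construct $L_0$ globally so that it matches all the prescribed local extensions $L_\nu$ for $\nu\in S$, and then to find $\mu_0\in L_0$ realizing the prescribed norm and approximating the $\mu_\nu$, finally checking that conditions (2) and (3) follow from approximation. First I would use weak approximation / Grunwald--Wang to find a single element $a\in k^*$ such that $L_0 := k(\sqrt[\ell]{a})$ has $L_0\otimes k_\nu\simeq L_\nu$ for every $\nu\in S$: concretely, after possibly enlarging $S$ to control the bad primes of Grunwald--Wang, choose $a$ close to a generator $a_\nu$ of each $L_\nu$ (with $a_\nu\in k_\nu^{*\ell}$ when $L_\nu$ is split), which is possible since the conditions ``$L_\nu$ has a given degree-$\ell$ \'etale structure'' are open in $k_\nu^*/k_\nu^{*\ell}$. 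Here one must be slightly careful about the exceptional case of Grunwald--Wang (when $\ell=2$ and $k$ behaves like $\Q$ at the prime $2$), handled either by enlarging $S$ or by the usual trick of allowing $L_0$ to be cyclic rather than forcing a radical form; the lemma statement only asks for a degree-$\ell$ field extension.

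Next I would produce $\mu_0$. The key point is that $\theta_0$ must be a norm from $L_0$. Locally it is a norm from each $L_\nu$ by hypothesis (1), so by the Hasse norm theorem for the cyclic (degree $\ell$) extension $L_0/k$ it suffices to check that $\theta_0$ is a norm from $L_0\otimes k_\nu$ at the remaining places $\nu\notin S$; at those places $\beta\otimes k_\nu=0$, and I would argue that $\theta_0$ is a local norm there using the relation $r\ell\beta=(E_0,\sigma_0,\theta_0)$, which forces $(E_0,\sigma_0,\theta_0)\otimes k_\nu=0$ and hence (since $\ell$ is prime and $[L_0\otimes k_\nu:k_\nu]$ divides $\ell$) that $\theta_0$ is a norm from $L_0\otimes k_\nu$ — using that either $L_0\otimes k_\nu$ is split (where everything is a norm) or $E_0\otimes k_\nu$ already kills $\theta_0$ appropriately. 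Once $\theta_0\in N_{L_0/k}(L_0^*)$, pick any $\mu_0'$ with $N_{L_0/k}(\mu_0')=\theta_0$, and then correct it by a norm-one element: by Hilbert 90 combined with weak approximation on the norm-one torus $R^{(1)}_{L_0/k}\Gm$ (which is rational, hence has weak approximation), I can adjust $\mu_0'$ to a $\mu_0$ with $N_{L_0/k}(\mu_0)=\theta_0$ that is simultaneously close to each $\mu_\nu$ under the isomorphism $L_0\otimes k_\nu\simeq L_\nu$. This gives conditions (1), (4), (5).

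It remains to deduce (2) and (3) from the approximation. For (2): both sides of $r\beta\otimes L_0 = (E_0\otimes L_0,\sigma_0\otimes 1,\mu_0)$ are classes in $H^2(L_0,\mu_{\ell^n})$; their difference is $(E_0\otimes L_0,\sigma_0\otimes 1, \mu_0/\mu)$ where $\mu$ is any fixed element with $r\beta\otimes L_0=(E_0\otimes L_0,\sigma_0\otimes 1,\mu)$ — such a $\mu$ exists because $r\ell\beta=(E_0,\sigma_0,\theta_0)$ tells us $\ell\cdot(r\beta) - (E_0,\sigma_0,\theta_0)=0$, so $r\beta - $ (the class of a cyclic algebra over $E_0$-corestriction data) is controlled, and after base change to $L_0$ the class $r\beta\otimes L_0$ is cyclic for $E_0\otimes L_0$ since $N_{L_0/k}$-relation makes $\theta_0$ a norm from $L_0$. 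The element $\mu_0/\mu\in L_0^*$ is then a local norm from $E_0\otimes L_0\otimes (L_0)_w$ at all places $w$ of $L_0$: at places over $S$ this is hypothesis (2) together with $\mu_0$ close to $\mu_\nu$ (so $\mu_0/\mu$ lies in the open subgroup of norms), and at places over $k\setminus S$ one uses $\beta\otimes k_\nu=0$. By the Hasse principle for $H^2$ of the global field $L_0$ (Albert--Brauer--Hasse--Noether), $(E_0\otimes L_0,\sigma_0\otimes 1,\mu_0/\mu)=0$, giving (2). For (3): $\beta\otimes E_0\otimes L_0$ is a class over the global field $E_0\otimes L_0$ whose index equals the lcm of its local indices; the local index at a place over $\nu\in S$ is $<d$ by hypothesis (3), and at places over $k\setminus S$ it is $1$ since $\beta$ is locally trivial there, so the global index is $<d$.

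The main obstacle I anticipate is the simultaneous control at step two--three: making a single global $\mu_0$ that has the right global norm \emph{and} is $\nu$-adically close to each prescribed $\mu_\nu$, while keeping the Grunwald--Wang construction of $L_0$ compatible with all $S$-local data. This is essentially a packaging of weak approximation on the norm-one torus plus Grunwald--Wang, but the bookkeeping — especially ruling out or absorbing the special case of Grunwald--Wang and verifying that ``close to $\mu_\nu$'' is strong enough to transport conditions (2) and (3) — is where the real work lies; everything else is an application of the Hasse principles for $H^2$ and for norms over global fields together with the stated lemmas on cyclic algebras.
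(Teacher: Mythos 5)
There are two genuine gaps in your plan. First, the construction of $L_0$. The lemma does not assume $\mu_\ell\subset k$, and the prescribed local extensions $L_\nu$ are arbitrary degree-$\ell$ field extensions of $k_\nu$: they need not be of Kummer form $k_\nu(\sqrt[\ell]{a_\nu})$ (e.g.\ the unramified degree-$\ell$ extension of $k_\nu$ when $\mu_\ell\not\subset k_\nu$, where every unit is an $\ell$-th power), and they need not be cyclic at all. So neither a radical $k(\sqrt[\ell]{a})$ nor a cyclic extension produced by Grunwald--Wang can in general have completions isomorphic to the given $L_\nu$. This also undercuts your second step: the Hasse norm theorem you invoke to make $\theta_0$ a global norm requires $L_0/k$ cyclic, which is exactly what you cannot arrange. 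The paper avoids both issues at once: after using Lemma 2.4 to replace $\mu_\nu$ by a generator of $L_\nu$ over $k_\nu$ (modifying it only by a norm-one element close to $1$), it takes the minimal polynomials $f_\nu$ of $\mu_\nu$, approximates their coefficients by global elements while keeping the constant term \emph{exactly} $(-1)^\ell\theta_0$, and sets $L_0=k[X]/(f)$, $\mu_0=$ image of $X$. Krasner's lemma gives $L_0\otimes k_\nu\simeq L_\nu$ and closeness of $\mu_0$ to $\mu_\nu$, and $N_{L_0/k}(\mu_0)=\theta_0$ holds by construction, with no Galois structure and no norm principle needed.

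Second, your verification of condition (2) is incomplete at the places outside $S$. Reducing (2) to the vanishing of $(E_0\otimes L_0,\sigma_0\otimes 1,\mu_0/\mu)$ and applying Albert--Brauer--Hasse--Noether requires $\mu_0$ to be a local norm from $E_0\otimes L_w$ at \emph{every} place $w$ of $L_0$, but weak approximation on the norm-one torus only controls $\mu_0$ at the places of $S$; at the finitely many places outside $S$ where $\mu_0$ happens to have a zero or pole, or where $E_0/k$ ramifies or $\theta_0$ is a non-unit, the class $(E_0\otimes L_w,\sigma_0,\mu_0)$ can be nonzero even though $r\beta\otimes L_w=0$. This is precisely what the paper's bookkeeping is for: it first enlarges $S$ to contain all places where $\theta_0$ is a non-unit or $E_0/k$ is ramified (supplying admissible local data $(L_\nu,\mu_\nu)$ there via injectivity of the corestriction for local fields), then uses \emph{strong} approximation to make the non-constant coefficients of $f$ integral outside $S\cup\{\nu_0\}$, so that $\mu_0$ is a unit at all such places and the cyclic algebra is unramified, hence trivial; the auxiliary place $\nu_0$, chosen by Chebotarev so that $E_0\otimes k_{\nu_0}$ splits, absorbs the one place that strong approximation must leave unconstrained. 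Your argument for (3) (lcm of local indices) and for (1), (4), (5) is fine once these two points are repaired, but as written the proposal does not go through.
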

 
 \begin{proof} Let $\Omega_k$ be the  set of all places of $k$ and 
 $$ S' = S  \cup \{ \nu \in \Omega_k \mid   
 \theta_0   {~\rm is ~not ~a~ unit~at~} \nu    {~\rm  or ~} E_0/k  {~\rm is ~ramified ~at~} \nu  \}
  $$
Let $\nu \in S' \setminus  S$.
Then  $\beta \otimes k_\nu = 0$. 
  Let $L_\nu$ be a field 
 extension of $k_\nu$  of degree $\ell$ such that  $\theta_0 \in N(L_\nu^*)$.
 Let $\mu_\nu \in L_\nu$ with $N_{L_\nu/k_\nu}(\mu_\nu)
 = \theta_0$.  Since $\beta \otimes k_\nu = 0$, ind$(\beta \otimes E_0 \otimes L_\nu) = 1 < d$. 
 Since the corestriction  map cor $: H^2(L_\nu, \mu_{\ell^n}) \to H^2(k_\nu, \mu_{\ell^n})$ is
 injective (cf. \cite[Theorem 10, p. 237]{LF}) and  cor$(E_0 \otimes L_\nu,  \sigma_0 \otimes 1, \mu_\nu) = 
 (E_0 \otimes k_\nu, \sigma_0 \otimes 1, \theta_0) = r\ell \beta \otimes k_\nu = 0$, 
 $(E_0 \otimes L_\nu,  \sigma_0 \otimes 1, \mu_\nu) = 0  =  r \beta  \otimes L_\nu$. 
 Thus,  if necessary, by enlarging $S$, we assume that $S$ contains all those places $\nu$ of 
 $k$ with either $\theta_0$ is not a unit  at $\nu$ or $E_0/k$ is ramified at $\nu$ and that
there is at least one $\nu \in S$
 such that $L_\nu$ is a field extension of $k_\nu$ of degree $\ell$. 
 
 Let $\nu \in S$.  By (\ref{generator}),  there exists $\theta_\nu \in L_\nu$ such that 
 $N_{L_\nu/k_\nu}(\theta_\nu) = 1$,  $L_\nu = k_\nu(\theta_\nu\mu_\nu)$ and $\theta_\nu$ is sufficiently 
 close to 1.  In particular $\theta_\nu \in L_\nu^{\ell^n}$ and hence 
 $r\beta \otimes L_\nu = (E_0 \otimes L_\nu, \sigma_0 \otimes 1, \mu_\nu) 
 = (E_0 \otimes L_\nu, \sigma_0 \otimes 1, \mu_\nu \theta_\nu)$.
 Thus, replacing $\mu_\nu$ by $\mu_\nu \theta_\nu$,  
 we assume that  
 $L_\nu = k_\nu(\mu_\nu)$.  
 Let   $f_\nu(X) = X^\ell + b_{\ell-1, \nu}X^{\ell-1} + \cdots + b_{1, \nu}X +
(-1)^\ell \theta_0 \in k_\nu[X]$  be the minimal polynomial of $\mu_\nu$
over $k_{\nu}$.  

 By  Chebotarev density  theorem (\cite[Theorem 6.3.1]{FA}), there exists $\nu_0  \in \Omega_k
 \setminus S$ such that $E_0 \otimes k_{\nu_0}$ is the split extension of
 $k_{\nu_0}$.     
 By the strong  approximation  theorem (\cite[p. 67]{CFANT}), choose    
$b_j \in k$, $0 \leq j \leq \ell-1$ such that each $b_j$ is  sufficiently close enough  to
$b_{j, \nu}$ for all $\nu \in S$  and each $b_j$ is an integer at all $\nu
\not\in S \cup \{ \nu_0 \}$.    
 Let $L_0 = k[X]/ (X^\ell + b_{\ell-1}X^{\ell-1} + \cdots + b_{1}X +
 (-1)^\ell \theta_0)$ and $\mu_0 \in L_0$ be the image of $X$. 
 We now show that $L_0$ and $\mu_0$ have the required properties. 

Since each $b_j$ is sufficiently close enough to $b_{j, \nu}$ at each  $\nu \in S$,
 it follows from Krasner's lemma  that $L_0 \otimes k_{\nu} \simeq L_\nu$  and the image of $\mu_0 \otimes 1$ 
 in $L_\nu$ is close to $\mu_\nu$ for all
 $\nu \in S$ (cf. \cite[Ch. II, \S 2]{Se}).   Since $L_\nu$ is a field extension of $k_\nu$ of degree $\ell$ 
 for at least one  $\nu \in S$,  $L_0$ is a field extension of degree  $\ell$ over $k$. 
Since $X^\ell + b_{\ell-1}X^{\ell-1} + \cdots + (-1)^\ell\theta_0$ is
the minimal polynomial of $\mu_0$, we have $N(\mu_0)  = \theta_0$. 
 
To show that ind$(\beta \otimes E_0 \otimes L_0) <  d$ and
$r\beta = (E_0, \sigma_0, \mu_0)  \in H^2(L_0, \mu_{\ell^n})$,  by
Hasse-Brauer-Noether theorem (cf. \cite[p. 187]{CFANT}), 
it is enough to show that for every place $w$ of $L_0$,
ind$(\beta \otimes E_0 \otimes L_w) <  d$ and
$r\beta \otimes L_w = (E_0, \sigma_0, \mu_0) \otimes L_w \in H^2(L_w, \mu_{\ell^n})$. 

Let $w$ be a place of $L_0$ and $\nu$ a place of  $k$ lying below $w$.
 Suppose that $\nu \in S$.  Then   $L_0 \otimes k_\nu \simeq L_\nu$.
 Suppose $L_\nu  =  \prod k_\nu$ is the split extension. Then $L_w \simeq k_\nu$.
  By the assumption on $L_\nu$, we have 
 ind$(\beta \otimes E_0 \otimes k_\nu) <  d$. Since $\mu_\nu$ is close to $\mu_0$, 
 we have $r\beta \otimes L_\nu =   (E_0 \otimes L_\nu, \sigma_0, \mu_\nu)  =  
 (E_0 \otimes L \otimes k_\nu, \sigma_0, \mu_0)$. 
 
   Suppose that $L_\nu$ is a field extension of $k_\nu$ of degree $\ell$.
 Then $L_w \simeq L_0 \otimes k_\nu \simeq L_\nu$ and 
 by the assumption on $L_\nu$, we have 
   $r\beta \otimes L_\nu=  (E_0,\sigma_0, \mu_\nu) \otimes L_\nu$ and 
 ind$(\beta \otimes E_0 \otimes L_\nu) <  d $.  Since $\mu_0$ is close to $\mu_\nu$, 
 we have $r\beta \otimes L_\nu =   (E_0 \otimes L_\nu, \sigma_0, \mu_\nu)  =  
 (E_0 \otimes L \otimes k_\nu, \sigma_0, \mu_0)$.

Suppose that $\nu \not\in S$ and $\nu \neq \nu_0$.  Then $\theta_0$ is a unit at $\nu$, 
$E_0/k$ is unramified at $\nu$ and $\beta \otimes k_\nu = 0$.
Since each $b_j$ is an integer at $\nu$ and $\mu_0$ is  a root  of the polynomial
$X^\ell + b_{\ell -1}X^{\ell-1} + \cdots + b_1 X + (-1)^\ell\theta_0$, $\mu_0$ is an integer at $w$.
Since $\theta_0$ is a unit at $\nu$, $\mu_0$ is a unit at $w$.
In particular  $(E_0\otimes L_w, \sigma_0, \mu_0) = 0 = 
r\beta \otimes L_w$.  If $\nu = \nu_0$, then by the choice of $\nu_0$,  $\beta \otimes k_\nu = 0$, 
 $E_0 \otimes k_\nu$ is the split extension of $k_\nu$  
and hence $(E_0, \sigma_0, \mu_0) \otimes L_w= 0 = r\beta \otimes L_w$. 
\end{proof}

\begin{cor}
\label{globalfield_unramified}
 Let $k$ be a global field, $\ell$ a prime not equal char$(k)$,
$n$  and $ r \geq 1$ be  integers. Let $\theta_0  \in k^*$, $r \geq 1$ and  
  $\beta \in H^2(k, \mu_{\ell^n})$. Suppose that 
    $r\ell \beta = 0   \in H^2(k, \mu_{\ell^n})$ and $\beta \neq 0$. 
    Then there exists a field extension $L_0/k$ of
  degree $\ell$ and $\mu_0 \in L_0$ such that 
  $N_{L_0/k}(\mu_0) = \theta_0$, $r\beta \otimes L_0 = 0$    and 
   ind$(\beta \otimes L_0) <   $ ind$(\beta)$.
   \end{cor}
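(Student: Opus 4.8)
The plan is to deduce Corollary~\ref{globalfield_unramified} directly from Lemma~\ref{uglylemma} by choosing the ingredients $E_0$, $\sigma_0$, $S$ and the local data $(L_\nu,\mu_\nu)$ so that all the hypotheses of the lemma are met. First I would produce the cyclic extension needed as input. Since $\beta\in H^2(k,\mu_{\ell^n})$ with $r\ell\beta=0$, the element $r\ell\beta$ is the trivial class, which over any field can be written as a cyclic algebra in many ways; in particular, by Chebotarev one can pick a cyclic extension $E_0/k$ of degree $\ell^n$ (say, unramified everywhere we need it) together with a generator $\sigma_0$ so that $(E_0,\sigma_0,\theta_0)=0=r\ell\beta$ for our fixed $\theta_0\in k^*$. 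Actually one does not even need to match $\theta_0$: it suffices that $(E_0,\sigma_0,\theta_0)$ and $r\ell\beta$ agree, and the simplest choice is to take $E_0/k$ to be a cyclic field extension of degree $\ell^n$ in which $\theta_0$ is a local norm everywhere, forcing $(E_0,\sigma_0,\theta_0)=0$, while simultaneously $r\ell\beta=0$. So set things up with $r\ell\beta=(E_0,\sigma_0,\theta_0)=0$.

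Next I would choose the finite set $S$ to be the set of places $\nu$ of $k$ where $\beta\otimes k_\nu\neq 0$; this is finite since $\beta\neq 0$ lives in the Brauer group of a global field. For each $\nu\in S$ I must produce a degree-$\ell$ extension (or split extension) $L_\nu/k_\nu$ and $\mu_\nu\in L_\nu^*$ with (1) $N_{L_\nu/k_\nu}(\mu_\nu)=\theta_0$, (2) $r\beta\otimes L_\nu=(E_0\otimes L_\nu,\sigma_0\otimes 1,\mu_\nu)$, and (3) $\mathrm{ind}(\beta\otimes E_0\otimes L_\nu)<\mathrm{ind}(\beta)$. For condition (3): over the local field $k_\nu$, the algebra $\beta\otimes k_\nu$ has cyclic splitting fields of every degree dividing its local index, so choosing $L_\nu/k_\nu$ of degree $\ell$ with $L_\nu$ splitting a suitable part of $\beta\otimes k_\nu$ drops the local index by a factor of $\ell$; since the global index of $\beta$ is the lcm of the local indices (Brauer group of a global field), after doing this at every place of $S$ (and the index is automatically $1<\mathrm{ind}(\beta)$ away from $S$), we will get $\mathrm{ind}(\beta\otimes L_0)<\mathrm{ind}(\beta)$ — but note that to conclude the \emph{global} index drop we apply the lemma with $d=\mathrm{ind}(\beta)$ and we must also ensure the local index at the auxiliary place $\nu_0$ and at places outside $S$ is $<d$, which holds since $\beta$ is split there. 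For conditions (1) and (2): choose $L_\nu$ of degree $\ell$ over $k_\nu$ such that $\theta_0$ is a norm (always possible: if $\theta_0\notin k_\nu^{*\ell}$ take $L_\nu=k_\nu(\sqrt[\ell]{\theta_0})$, which contains $\theta_0$ in its norm group, and if $\theta_0\in k_\nu^{*\ell}$ take the split extension), pick any $\mu_\nu$ with $N(\mu_\nu)=\theta_0$, and then since $\mathrm{cor}:H^2(L_\nu,\mu_{\ell^n})\to H^2(k_\nu,\mu_{\ell^n})$ is injective for local fields and $\mathrm{cor}(E_0\otimes L_\nu,\sigma_0\otimes 1,\mu_\nu)=(E_0,\sigma_0,\theta_0)\otimes k_\nu=r\ell\beta\otimes k_\nu=0$ while $\mathrm{cor}(r\beta\otimes L_\nu)=r\ell\beta\otimes k_\nu=0$, comparing via injectivity of corestriction... this argument as written only gives that both classes corestrict to $0$, not that they are equal. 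To fix (2), I would instead adjust: multiply $\mu_\nu$ by a norm-one element to kill the discrepancy, using that $(E_0\otimes L_\nu,\sigma_0\otimes 1,\cdot)-r\beta\otimes L_\nu$ corestricts to $0$ hence (by the local reciprocity / the fact that its corestriction vanishes and $E_0\otimes L_\nu$ is cyclic) is itself of the form $(E_0\otimes L_\nu,\sigma_0\otimes 1,\nu)$ with $\nu$ a norm-one element; then replace $\mu_\nu$ by $\mu_\nu\cdot\nu^{-1}$, keeping $N(\mu_\nu)=\theta_0$ unchanged.

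With $E_0,\sigma_0,S,(L_\nu,\mu_\nu)$ in hand and $r\ell\beta=(E_0,\sigma_0,\theta_0)$, apply Lemma~\ref{uglylemma} with $d=\mathrm{ind}(\beta)$ (note $d\ge 2$ since $\beta\neq 0$). It yields $L_0/k$ of degree $\ell$ and $\mu_0\in L_0$ with $N_{L_0/k}(\mu_0)=\theta_0$, $r\beta\otimes L_0=(E_0\otimes L_0,\sigma_0\otimes 1,\mu_0)$, $\mathrm{ind}(\beta\otimes E_0\otimes L_0)<d=\mathrm{ind}(\beta)$, and $L_0\otimes k_\nu\simeq L_\nu$ for $\nu\in S$. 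Then $\mathrm{ind}(\beta\otimes L_0)\le\mathrm{ind}(\beta\otimes E_0\otimes L_0)\cdot[E_0:k]$ is not quite what I want; instead use that $\mathrm{ind}(\beta\otimes L_0)$ divides $[E_0\otimes L_0:L_0]\cdot\mathrm{ind}(\beta\otimes E_0\otimes L_0)$ — no. The cleanest route is: the conclusion $r\beta\otimes L_0=(E_0\otimes L_0,\sigma_0\otimes 1,\mu_0)$ is a statement that over $L_0$ a certain multiple of $\beta$ is cyclic; combined with computing indices place-by-place as in the lemma's proof (using Hasse--Brauer--Noether: the index of $\beta\otimes L_0$ is the lcm of local indices, each $<d$), we directly get $\mathrm{ind}(\beta\otimes L_0)<\mathrm{ind}(\beta)$. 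Finally, $r\beta\otimes L_0=(E_0\otimes L_0,\sigma_0\otimes 1,\mu_0)$; but we actually want $r\beta\otimes L_0=0$ — hmm, that requires more. Re-examining: we want $\mathrm{ind}(\beta\otimes L_0)<\mathrm{ind}(\beta)$ and $r\beta\otimes L_0=0$. For the latter, choose the local data so additionally $(E_0\otimes L_\nu,\sigma_0\otimes 1,\mu_\nu)=0$ for all $\nu$ — possible because $\theta_0\in N_{L_\nu/k_\nu}$ and $E_0\otimes L_\nu$ is split-by... — this forces $r\beta\otimes L_0$ to vanish locally everywhere (including at $\nu_0$ and outside $S$ as in the lemma's proof), hence $r\beta\otimes L_0=0$ by Hasse--Brauer--Noether. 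The main obstacle I anticipate is precisely verifying condition (2) of the lemma at the places $\nu\in S$ — i.e., arranging $\mu_\nu$ so that $(E_0\otimes L_\nu,\sigma_0\otimes 1,\mu_\nu)$ equals the prescribed class $r\beta\otimes L_\nu$ (here $0$) on the nose, not merely up to corestriction; this is where injectivity of corestriction for local fields and a careful choice of $E_0$ (ensuring $E_0\otimes L_\nu$ is a field, so that the only cyclic algebras over $L_\nu$ from $(E_0\otimes L_\nu,\sigma_0\otimes 1,\cdot)$ with trivial corestriction are themselves trivial) does the work.
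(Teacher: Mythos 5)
Your overall strategy (feed suitable local data into Lemma~\ref{uglylemma}) is the paper's strategy, but you miss its one-line simplification and in doing so introduce a real gap. The paper takes $E_0=k$, the trivial cyclic extension: then $(E_0,\sigma_0,\theta_0)=0=r\ell\beta$ holds with no work, hypothesis 2) of the lemma reads $r\beta\otimes L_\nu=0$ (automatic for \emph{any} degree-$\ell$ field extension $L_\nu$ of the local field $k_\nu$, since $r\ell\beta=0$ forces the local index of $r\beta$ to divide $\ell$), and the lemma's conclusions literally become $r\beta\otimes L_0=0$ and $\mathrm{ind}(\beta\otimes L_0)<\mathrm{ind}(\beta)$. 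Your choice of a nontrivial cyclic $E_0$ of degree $\ell^n$ is not needed and costs you: the lemma then only yields $r\beta\otimes L_0=(E_0\otimes L_0,\sigma_0\otimes 1,\mu_0)$ and $\mathrm{ind}(\beta\otimes E_0\otimes L_0)<d$, and you have to recover both desired conclusions by separate local-global arguments. (Your anxiety about hypothesis 2) at places of $S$, by the way, is unfounded: both $(E_0\otimes L_\nu,\sigma_0\otimes 1,\mu_\nu)$ and $r\beta\otimes L_\nu$ corestrict to $r\ell\beta\otimes k_\nu=0$, and corestriction is injective for local fields, so both vanish and are equal; no norm-one correction is needed.)

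The genuine gap is your local choice at places $\nu\in S$ where $\theta_0\in k_\nu^{*\ell}$: you take $L_\nu$ to be the split extension. At such a place $\beta\otimes E_0\otimes L_\nu$ has the same index as $\beta\otimes E_0\otimes k_\nu$, and your construction of $E_0$ (only requiring $\theta_0$ to be a local norm) gives no control on $[E_0\otimes k_\nu:k_\nu]$; if $E_0$ happens to split there and $\mathrm{ind}(\beta\otimes k_\nu)=\mathrm{ind}(\beta)$ -- and some place does realize the full index, since over a global field $\mathrm{ind}(\beta)$ is the lcm of the local indices -- then hypothesis 3) of the lemma fails for $d=\mathrm{ind}(\beta)$, and in any case your final place-by-place lcm computation of $\mathrm{ind}(\beta\otimes L_0)$ gives no strict drop at that place, so the conclusion $\mathrm{ind}(\beta\otimes L_0)<\mathrm{ind}(\beta)$ is not established. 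The paper's fix is exactly here: when $\theta_0\in k_\nu^{*\ell}$, take $L_\nu$ to be \emph{any} field extension of $k_\nu$ of degree $\ell$ and $\mu_\nu=\sqrt[\ell]{\theta_0}\in k_\nu\subset L_\nu$, so that $N_{L_\nu/k_\nu}(\mu_\nu)=\mu_\nu^{\ell}=\theta_0$ while the field property, together with $\ell\mid\mathrm{ind}(\beta)$ and $k_\nu$ being local, still forces $\mathrm{ind}(\beta\otimes L_\nu)<\mathrm{ind}(\beta)$. With that correction and $E_0=k$, your argument collapses to the paper's proof.
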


\begin{proof} Let $S$ be a finite set of places of $k$ containing 
all the places of $k$ with $\beta \neq 0$.  Let $\nu \in S$.
If  $\theta_0 \not \in k_\nu^{*\ell}$, then, let $L_\nu = k_\nu(\sqrt[\ell]{\theta_0})$
and $\mu_v = \sqrt[\ell]{\theta_0} \in L_\nu$. If
$\theta_0 \in k_\nu^{*\ell}$, then, let $L_\nu/k_\nu$ be any field extension of 
 degree $\ell$ and $\mu_ \nu = \sqrt[\ell]{\theta_0} \in k_\nu \subset L_\nu$. 
 In both the cases, we have $N_{L_\nu/k_\nu}(\mu_v) =\theta_0$. 
 Since $L_\nu/k_\nu$ is a degree $\ell$ field  extension, $\ell$ divides ind$(\beta)$
 and $k_\nu$ is a local field,   ind$(\beta \otimes L_\nu) < $ ind$(\beta)$ ( \cite[p. 131]{CFANT}).
 Since $r\ell \beta = 0$ and $L_\nu/k_\nu$ is a field extension of degree $\ell$, 
 $r\beta \otimes L_\nu = 0$. 
 Let $E_0 = k$. Then,   by (\ref{uglylemma}), there exist a field extension 
 $L_0/k$ of degree $\ell$ and $\mu \in L_0$ with required properties.
\end{proof}

 \begin{lemma}
 \label{local_index_cal} Let $k$ be a global field and $\ell$ a prime not equal to 
 char$(k)$.  Let $E_0/k$ be a cyclic extension of degree a power of $\ell$ and $\sigma_0$ a generator
 of Gal$(E_0/k)$. Let $n \geq 1$, 
 $\theta_0 \in k^*$  and $\beta \in H^2(k, \mu_{\ell^n})$ be such that 
 $r \ell \beta = (E_0, \sigma_0, \theta_0)$ for some $r \geq 1$. 
 Suppose that  $r\beta \otimes E_0 \neq 0$.  If $\nu$ is a place of $k$ such that 
 $\sqrt[\ell]{\theta_0} \not\in k_\nu$,  then 
 ind$(\beta \otimes E_0 \otimes k_\nu(\sqrt[\ell]{\theta_0})) < $ ind$(\beta \otimes E_0)$. 
 \end{lemma}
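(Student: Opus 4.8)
The plan is to first read off $\operatorname{ind}(\beta\otimes E_0)$ from the hypotheses, and then to reduce the desired inequality to a computation at the places of $E_0$ above $\nu$, which is settled by local class field theory. \emph{Step 1 (the index over $E_0$).} A cyclic algebra is split by its defining cyclic field, so base–changing $r\ell\beta=(E_0,\sigma_0,\theta_0)$ to $E_0$ gives $r\ell\,(\beta\otimes E_0)=0$; in particular $\operatorname{per}(\beta\otimes E_0)\mid r\ell$. On the other hand $\operatorname{per}(\beta\otimes E_0)\nmid r$, because $r\beta\otimes E_0\neq 0$. Writing $\ell^{b}\parallel r$, these two facts, together with $\operatorname{per}(\beta\otimes E_0)$ being a power of $\ell$, force $\operatorname{per}(\beta\otimes E_0)=\ell^{b+1}$; as period equals index over the global field $E_0$, we get $\operatorname{ind}(\beta\otimes E_0)=\ell^{a}$ with $a:=b+1\ge 1$. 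So it suffices to prove $\operatorname{ind}\bigl(\beta\otimes E_0\otimes_k k_\nu(\sqrt[\ell]{\theta_0})\bigr)\le\ell^{a-1}$.

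\emph{Step 2 (localisation and the easy case).} Put $L_\nu=k_\nu(\sqrt[\ell]{\theta_0})$; since $\sqrt[\ell]{\theta_0}\notin k_\nu$ and $\ell$ is prime, $[L_\nu:k_\nu]=\ell$. One has $E_0\otimes_k L_\nu=(E_0\otimes_k k_\nu)\otimes_{k_\nu}L_\nu=\prod_{w\mid\nu}\bigl(E_{0,w}\otimes_{k_\nu}L_\nu\bigr)$, a finite product of local fields, and each field factor $M$ contains a copy of $E_{0,w}$ for a unique place $w\mid\nu$ and also $\sqrt[\ell]{\theta_0}$; it is enough to bound $\operatorname{ind}(\beta\otimes M)$ by $\ell^{a-1}$ for each such $M$. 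Since $E_0\subseteq E_{0,w}$ splits $(E_0,\sigma_0,\theta_0)$, we have $r\ell\,(\beta\otimes E_{0,w})=0$, hence $\operatorname{ind}(\beta\otimes E_{0,w})\mid\ell^{a}$. If $\theta_0\notin E_{0,w}^{*\ell}$ then $X^{\ell}-\theta_0$ is irreducible over $E_{0,w}$, so $M=E_{0,w}(\sqrt[\ell]{\theta_0})$ has degree $\ell$ over $E_{0,w}$ and $\operatorname{ind}(\beta\otimes M)=\operatorname{ord}\bigl(\ell\cdot\operatorname{inv}_{E_{0,w}}(\beta)\bigr)$ divides $\ell^{a-1}$. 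If instead $\theta_0\in E_{0,w}^{*\ell}$, then $M/E_{0,w}$ is a field extension, so $\operatorname{ind}(\beta\otimes M)\mid\operatorname{ind}(\beta\otimes E_{0,w})$, and it remains to show $\operatorname{ind}(\beta\otimes E_{0,w})\le\ell^{a-1}$.

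\emph{Step 3 (the case $\theta_0\in E_{0,w}^{*\ell}$).} The crucial observation is that, as $\sqrt[\ell]{\theta_0}\in E_{0,w}\smallsetminus k_\nu$ while $E_{0,w}/k_\nu$ is cyclic of $\ell$-power degree $\ell^{s}$ (its Galois group being a subgroup of the cyclic group $\operatorname{Gal}(E_0/k)$), the subfield $k_\nu(\sqrt[\ell]{\theta_0})$ must coincide with the unique degree-$\ell$ subextension $E_{0,w}^{(\ell)}$ of $E_{0,w}/k_\nu$; in particular $s\ge 1$, and $E_{0,w}^{(\ell)}/k_\nu$ is a cyclic extension of degree $\ell$ generated by an $\ell$-th root of $\theta_0$. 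For $\ell$ odd this forces $\mu_\ell\subseteq k_\nu$ — a degree-$\ell$ Galois extension obtained by adjoining an $\ell$-th root contains $\mu_\ell$, and $\ell$ is prime to $[k_\nu(\mu_\ell):k_\nu]$ — and then $\theta_0=N_{E_{0,w}^{(\ell)}/k_\nu}(\sqrt[\ell]{\theta_0})$ is a norm from $E_{0,w}^{(\ell)}$; for $\ell=2$ one argues instead that $k_\nu(\sqrt{\theta_0})$ lies inside the cyclic quartic subextension of $E_{0,w}/k_\nu$, so that $-1$, and hence $\theta_0=-N(\sqrt{\theta_0})$, is a norm from $k_\nu(\sqrt{\theta_0})$. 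Localising $r\ell\beta=(E_0,\sigma_0,\theta_0)$ at $\nu$, and using that a cyclic algebra over the étale algebra $E_0\otimes_k k_\nu$ is Brauer equivalent to a cyclic algebra over a single factor, yields $r\ell\,(\beta\otimes k_\nu)=(E_{0,w}/k_\nu,\tau,\theta_0)$ for a suitable generator $\tau$. Since $\theta_0$ is a norm from $E_{0,w}^{(\ell)}$, $\operatorname{rec}_{k_\nu}(\theta_0)$ lies in the index-$\ell$ subgroup $\operatorname{Gal}(E_{0,w}/E_{0,w}^{(\ell)})$, so this algebra has index dividing $\ell^{s-1}$; as $v_\ell(r\ell)=a$, it follows that $\operatorname{ord}(\operatorname{inv}_\nu(\beta))\mid\ell^{a+s-1}$, and therefore $\operatorname{ind}(\beta\otimes E_{0,w})=\operatorname{ord}\bigl(\ell^{s}\operatorname{inv}_\nu(\beta)\bigr)\mid\ell^{a-1}$, as required.

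\emph{Main obstacle.} The delicate part is Step 3: once $\theta_0$ is already an $\ell$-th power over $E_{0,w}$, base change to $\sqrt[\ell]{\theta_0}$ produces no new extension, and the needed decrease in the index must instead be squeezed out of the internal structure of the cyclic tower $E_{0,w}/k_\nu$ through the norm identity $\theta_0=N(\sqrt[\ell]{\theta_0})$ and local class field theory; the roots-of-unity bookkeeping, and in particular the separate $\ell=2$ argument via cyclic quartic subextensions, is where the care is required.
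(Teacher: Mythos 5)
Your Steps 1 and 2 are correct and follow the same route as the paper: the hypotheses force $\mathrm{per}(\beta\otimes E_0)=\mathrm{ind}(\beta\otimes E_0)=\ell^{a}$ with $\ell^{a}$ the $\ell$-part of $r\ell$, and when $\theta_0\notin E_{0,w}^{*\ell}$ the index drops because $E_{0,w}(\sqrt[\ell]{\theta_0})/E_{0,w}$ is a degree-$\ell$ extension of local fields. For $\ell$ odd your Step 3 is also sound, and is in fact more careful than the paper's own treatment: the paper simply asserts that $\theta_0^{[E_i:k_\nu]/\ell}$ is a norm from $E_i$, which is exactly your norm computation with the root-of-unity bookkeeping suppressed.

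The gap is the $\ell=2$ case of Step 3 when $[E_{0,w}:k_\nu]=2$, i.e.\ $E_{0,w}=k_\nu(\sqrt{\theta_0})$: there is then no cyclic quartic subextension of $E_{0,w}/k_\nu$ to invoke, one has $N_{E_{0,w}/k_\nu}(\sqrt{\theta_0})=-\theta_0$, and if $-1$ is not a local norm then $\theta_0$ is not a norm from $E_{0,w}$, so the reciprocity bound $\mathrm{ind}(\beta\otimes E_{0,w})\le \ell^{a-1}$ breaks down. This case cannot be patched, because the asserted inequality itself can fail there: take $k=\Q$, $E_0=\Q(i)$, $\theta_0=7$, $\nu=2$, $r=1$, and $\beta$ the Brauer class with invariants $1/4$ at $2$, $3/4$ at $7$ and $0$ elsewhere. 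Then $2\beta=(-1,7)=(E_0,\sigma_0,\theta_0)$, $\beta\otimes E_0\neq 0$, and $\sqrt{7}\notin\Q_2$, so all hypotheses hold; but $\Q_2(\sqrt{7})=\Q_2(i)$, so $E_0\otimes\Q_2(\sqrt{7})\cong \Q_2(i)\times\Q_2(i)$, and both $\beta\otimes E_0$ and $\beta\otimes E_0\otimes\Q_2(\sqrt{7})$ have index $2$. Note that the paper's proof passes silently over exactly the same configuration (its claim that $\theta_0^{[E_i:k_\nu]/\ell}\in N_{E_i/k_\nu}(E_i^*)$ is false precisely when $\ell=2$, $[E_i:k_\nu]=2$ and $-1$ is not a norm), so your argument reproduces the paper's proof together with its unaddressed $\ell=2$ corner; since $\mu_2\subset k$ always holds, the root-of-unity hypotheses used elsewhere in the paper do not exclude it, and the lemma needs either $\ell$ odd or a separate hypothesis/argument ruling out this quadratic case.
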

 
 \begin{proof}  Write $r \ell = m \ell^d$ with $m$ coprime to $\ell$. Then $ d \geq 1$.
 Since $m\ell^d \beta = r \ell \beta  = (E_0, \sigma_0, \theta_0)$, we have $m\ell^d \beta \otimes E_0 = 0$.
 Since $m$ is coprime to $\ell$ and the period of $\beta$ is a power  of $\ell$, 
 it follows that $\ell^d\beta \otimes E_0 = 0$.  Since $r\beta \otimes E_0 \neq 0$,  $\ell^{d-1}\beta \otimes E_0 \neq 0$
 and  per$(\beta \otimes E_0) = \ell^d$.
 
 Let $\nu$ be  a place of $k$. 
 Suppose that  $\sqrt[\ell]{\theta_0} \not\in E_0 \otimes k_\nu$.
 Then $[E_0\otimes k_\nu(\sqrt[\ell]{\theta_0}) : E_0\otimes k_\nu] = \ell$
 and hence  ind$(\beta \otimes E_0 \otimes k_\nu(\sqrt[\ell]{\theta_0})) < $ ind$(\beta \otimes E_0)$ (\cite[p. 131]{CFANT}).
 Suppose that  $\sqrt[\ell]{\theta_0} \in E_0 \otimes k_\nu$.
 Then $E_0 \otimes k_\nu(\sqrt[\ell]{\theta_0}) = E_0 \otimes k_\nu$.
  Write $E_0 \otimes k_\nu = \prod E_i$  with each $E_i$ a cyclic field extension of 
 $k_\nu$. Since 
 $E_0/k$ is a Galois extension, $E_i \simeq E_j$ for all $i$ and $j$
 and $m\ell^d\beta \otimes k_\nu = (E_0, \sigma_0, \theta_0) \otimes k_\nu = 
 (E_i, \sigma_i, \theta_0)$ for all $i$,  for  suitable    generators $\sigma_i$  of Gal$(E_i/k_\nu)$. 
 Since   $\sqrt[\ell]{\theta_0} \in E_0 \otimes k_\nu$, $\sqrt[\ell]{\theta_0} \in E_i$ for all $i$ and
 hence $\theta_0^{[E_i : \kappa_\nu]/\ell} \in N_{E_i/k_\nu}(E_i^*)$. 
 Since the period of $(E_i, \sigma_i, \theta_0)$ is  equal to the order of 
 the class of $\theta_0$  in the group $k_\nu^*/N_{E_i/k_\nu}(E_i^*)$ (\cite[p. 75]{Albert}),  
  per$(E_i, \sigma_i, \theta_0) \leq  [E_i : k_\nu]/\ell < [E_i : k_\nu]$.  
 
 Suppose that  per$(\beta \otimes k_\nu) \leq 
 [E_i : k_\nu]$. Since $k_\nu$ is a local field, per$(\beta \otimes E_i ) = 1$. 
 Thus per$(\beta \otimes E_0 \otimes k_\nu ) = $ per$(\beta \otimes E_i) = 1 < \ell^d = $ per$(\beta \otimes E_0)$.
  
  Suppose that  per$(\beta \otimes k_\nu) >
 [E_i : k_\nu]$.  Since $m \ell^d\beta \otimes  k_\nu = (E_i, \sigma_i, \theta_0)$ and $m$ is coprime to 
 $\ell$, 
 we have  per$(\beta \otimes k_\nu) \leq \ell^d$per$(E_i, \sigma_i, \theta_0)$.
  Since $k_\nu$ is a local-field, 
  $${\rm per} (\beta \otimes E_0 \otimes k_\nu) =  {\rm per} (\beta \otimes E_i) =  \frac{{\rm  per}(\beta \otimes k_\nu) }{ [E_i : k_\nu]}
  \leq \frac{\ell^d{\rm per}(E_i, \sigma_i, \theta_0)}{[E_i : k_\nu]} < \ell^d = {\rm per}(\beta \otimes E_0).$$
  Since $k_\nu$ is a local field, period equals index and hence the lemma follows.
 \end{proof}

\begin{prop}
\label{global_field_non_zero} Let $k$ be a global field and $\ell$ a prime not equal to 
 char$(k)$.  Let $E_0/k$ be a cyclic extension of degree a power of $\ell$ and $\sigma_0$ a generator
 of  Gal$(E_0/k)$. Let 
 $\theta_0 \in k^*$  and $\beta \in H^2(k, \mu_{\ell^n})$ be such that 
 $r \ell \beta = (E_0, \sigma_0, \theta_0)$ for some $r \geq 1$. Suppose that $r \beta \otimes E_0  \neq  0$.  
  Then there exist a field extension   $L_0/\kappa$ of degree $\ell$ and  $\mu_0 \in L_0$ such that \\
 1)  $N_{L_0/k}(\mu_0 ) = \theta_0$ \\
 2) ind$(\beta \otimes E_0 \otimes L_0) < $ ind$(\beta \otimes E_0)$ \\
 2) $r\beta \otimes L_0 = (E_0 \otimes L_0, \sigma_0 \otimes 1, \mu_0)$. \\
 \end{prop}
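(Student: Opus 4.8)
The plan is to prove Proposition~\ref{global_field_non_zero} by reducing it to the combination of Lemma~\ref{uglylemma} and Lemma~\ref{local_index_cal}. The strategy is to produce, place-by-place, local data $(L_\nu, \mu_\nu)$ satisfying the three hypotheses of Lemma~\ref{uglylemma} with the choice $d = \mathrm{ind}(\beta \otimes E_0)$, and then invoke Lemma~\ref{uglylemma} to patch them into a global $(L_0, \mu_0)$. The delicate point is that Lemma~\ref{uglylemma} requires, for \emph{every} $\nu \in S$ (the finite set of bad places), a local solution with $\mathrm{ind}(\beta \otimes E_0 \otimes L_\nu) < d$; and for this one wants to use $L_\nu = k_\nu(\sqrt[\ell]{\theta_0})$, which by Lemma~\ref{local_index_cal} does strictly drop the index of $\beta \otimes E_0$ --- but \emph{only} at those $\nu$ where $\sqrt[\ell]{\theta_0} \notin k_\nu$. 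So the first step is to enlarge $S$ and arrange matters so that we can handle the places where $\theta_0$ is already an $\ell$-th power.

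First I would fix a finite set $S$ of places of $k$ containing all places where $\beta \otimes k_\nu \neq 0$, all places where $E_0/k$ ramifies, all places where $\theta_0$ is not a unit, and all archimedean places. For $\nu \in S$ with $\sqrt[\ell]{\theta_0} \notin k_\nu$, set $L_\nu = k_\nu(\sqrt[\ell]{\theta_0})$ and $\mu_\nu = \sqrt[\ell]{\theta_0}$, so that $N_{L_\nu/k_\nu}(\mu_\nu) = \theta_0$; by Lemma~\ref{local_index_cal} (applied over $k_\nu$, using $r\beta \otimes E_0 \neq 0$ globally to control the relevant periods --- more precisely one re-runs the local computation there) we get $\mathrm{ind}(\beta \otimes E_0 \otimes L_\nu) < \mathrm{ind}(\beta \otimes E_0)$; and since $r\ell\beta = (E_0,\sigma_0,\theta_0)$ becomes $(E_0 \otimes L_\nu, \sigma_0 \otimes 1, \theta_0) = (E_0 \otimes L_\nu, \sigma_0 \otimes 1, \mu_\nu^\ell) = \ell \cdot (E_0 \otimes L_\nu, \sigma_0 \otimes 1, \mu_\nu)$ over $L_\nu$, one checks (using injectivity of corestriction for the degree-$\ell$ extension $L_\nu/k_\nu$ of local fields, as in the proof of Lemma~\ref{uglylemma}) that $r\beta \otimes L_\nu = (E_0 \otimes L_\nu, \sigma_0 \otimes 1, \mu_\nu)$; thus hypothesis~2) holds. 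For $\nu \in S$ with $\sqrt[\ell]{\theta_0} \in k_\nu$, I choose $L_\nu/k_\nu$ to be \emph{any} degree-$\ell$ field extension and $\mu_\nu = \sqrt[\ell]{\theta_0} \in k_\nu \subset L_\nu$; then $N_{L_\nu/k_\nu}(\mu_\nu) = \theta_0$, and since $L_\nu/k_\nu$ is a degree-$\ell$ field extension of a local field and $\ell$ divides $\mathrm{ind}(\beta \otimes E_0)$ (because $r\beta \otimes E_0 \neq 0$ forces $\mathrm{per}(\beta \otimes E_0) = \ell^d$ with $d \geq 1$, as computed at the start of Lemma~\ref{local_index_cal}), we again get $\mathrm{ind}(\beta \otimes E_0 \otimes L_\nu) < \mathrm{ind}(\beta \otimes E_0)$; hypothesis~2) is verified as before.

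Having produced $(L_\nu, \mu_\nu)$ for all $\nu \in S$ with the three properties required by Lemma~\ref{uglylemma} (with $d = \mathrm{ind}(\beta \otimes E_0)$, noting $d \geq 2$ since $\ell \mid d$ and $d > 1$), I then simply apply Lemma~\ref{uglylemma} to obtain a field extension $L_0/k$ of degree $\ell$ and $\mu_0 \in L_0$ with $N_{L_0/k}(\mu_0) = \theta_0$, $r\beta \otimes L_0 = (E_0 \otimes L_0, \sigma_0 \otimes 1, \mu_0)$, $\mathrm{ind}(\beta \otimes E_0 \otimes L_0) < d = \mathrm{ind}(\beta \otimes E_0)$, and $L_0 \otimes k_\nu \simeq L_\nu$ for $\nu \in S$. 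This is exactly conclusions 1), 2), 2$'$) of the proposition.

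The main obstacle I anticipate is the bookkeeping around the case $\sqrt[\ell]{\theta_0} \in k_\nu$: one must be sure that the index really does drop there even though we are not adjoining a genuinely new element, which is why it is essential to first establish $\ell \mid \mathrm{ind}(\beta \otimes E_0)$ from the hypothesis $r\beta \otimes E_0 \neq 0$ together with $r\ell\beta \otimes E_0 = 0$ --- that gives $\mathrm{per}(\beta \otimes E_0) = \ell^d$ with $d \geq 1$ and (local fields) $\mathrm{ind} = \mathrm{per}$, so any degree-$\ell$ field extension strictly lowers it. A secondary point requiring care is checking that condition~2) of Lemma~\ref{uglylemma}, namely $r\beta \otimes L_\nu = (E_0 \otimes L_\nu, \sigma_0 \otimes 1, \mu_\nu)$, holds in the split-cube case $\sqrt[\ell]{\theta_0} \in k_\nu$; one argues via injectivity of corestriction $H^2(L_\nu, \mu_{\ell^n}) \to H^2(k_\nu, \mu_{\ell^n})$ exactly as in the first paragraph of the proof of Lemma~\ref{uglylemma}, since $\cores(E_0 \otimes L_\nu, \sigma_0 \otimes 1, \mu_\nu) = (E_0, \sigma_0, \theta_0) \otimes k_\nu = r\ell\beta \otimes k_\nu = \cores(r\beta \otimes L_\nu)$ and both sides have corestriction agreeing, while the difference $r\beta \otimes L_\nu - (E_0 \otimes L_\nu, \sigma_0\otimes1,\mu_\nu)$ has corestriction $r\ell\beta\otimes k_\nu - r\ell\beta\otimes k_\nu = 0$; injectivity then forces it to vanish. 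Everything else is a direct citation of the two lemmas.
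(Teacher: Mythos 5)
Your overall route is the same as the paper's: pick local pairs $(L_\nu,\mu_\nu)$ at the finitely many bad places, verify the three hypotheses of Lemma~\ref{uglylemma} with $d=\mathrm{ind}(\beta\otimes E_0)$, and patch. Your treatment of the places with $\sqrt[\ell]{\theta_0}\notin k_\nu$ (Lemma~\ref{local_index_cal} for the index drop, injectivity of corestriction for $r\beta\otimes L_\nu=(E_0\otimes L_\nu,\sigma_0\otimes 1,\mu_\nu)$) is exactly the paper's argument.

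The gap is at the places with $\theta_0\in k_\nu^{*\ell}$, where you take $L_\nu$ to be an \emph{arbitrary} degree-$\ell$ field extension of $k_\nu$ and justify $\mathrm{ind}(\beta\otimes E_0\otimes L_\nu)<\mathrm{ind}(\beta\otimes E_0)$ by the local fact that a degree-$\ell$ field extension lowers an index divisible by $\ell$. That fact applies to field extensions of the field over which the class actually lives; here the class lives over $E_0\otimes k_\nu=\prod E_i$, and $(E_0\otimes k_\nu)\otimes_{k_\nu}L_\nu$ is a degree-$\ell$ field extension of each $E_i$ only when $L_\nu$ does not embed into $E_i$. If $L_\nu$ is, say, the degree-$\ell$ subfield of $E_i$, then $E_i\otimes_{k_\nu}L_\nu$ has $E_i$ itself as a factor, so $\mathrm{ind}(\beta\otimes E_0\otimes L_\nu)\ge\mathrm{ind}(\beta\otimes E_i)$, which a priori can equal $\mathrm{ind}(\beta\otimes E_0)$; your cited fact gives nothing there, so hypothesis 3) of Lemma~\ref{uglylemma} is not verified as written. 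The paper sidesteps exactly this point: when $\theta_0=\mu_\nu^\ell$ it chooses $L_\nu$ to be a cyclic degree-$\ell$ extension \emph{not contained in} $E_0\otimes k_\nu$ (existence using the primitive $\ell^{\rm th}$ root of unity in the local field), so that $E_0\otimes L_\nu$ is a genuine degree-$\ell$ field extension of each $E_i$ and the standard fact applies over $E_i$. Your ``any extension'' claim can in fact be salvaged, but only by an extra computation you do not make: when $E_0\otimes k_\nu$ is not split, $\theta_0\in k_\nu^{*\ell}$ forces $\mathrm{per}(E_i,\sigma_i,\theta_0)\le[E_i:k_\nu]/\ell$, and re-running the period estimate from the proof of Lemma~\ref{local_index_cal} shows that $\mathrm{ind}(\beta\otimes E_0\otimes k_\nu)$ is already strictly smaller than $\mathrm{per}(\beta\otimes E_0)\le\mathrm{ind}(\beta\otimes E_0)$, while the split case ($E_i=k_\nu$) is the only one in which your quoted local fact applies directly. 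Without either the paper's choice of $L_\nu$ or this case analysis, the index-drop hypothesis at these places does not go through.
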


\begin{proof} Let $S$ be the finite set of places of $k$ consisting of all 
those places $\nu$ with $\beta \otimes k_\nu \neq 0$. Let $\nu \in S$.
Suppose that $\theta_0 \not\in k_\nu^\ell$. Let $L_\nu = k_\nu(\sqrt[\ell]{\theta_0})$
and $\mu_\nu = \sqrt[\ell]{\theta_0} \in L_\nu$. Then $N_{L_\nu/k_\nu}(\mu_\nu) = \theta_0$.
By (\ref{local_index_cal}), ind$(\beta \otimes E_0 \otimes k_\nu)
< $ ind$(\beta \otimes E_0)$.  In particular ind$(\beta \otimes E_0 \otimes L_\nu) \leq $
ind$(\beta \otimes E_0 \otimes k_\nu)
< $ ind$(\beta \otimes E_0)$.
Since cor$_{L_\nu/k_\nu}(r\beta \otimes L_\nu) = r\ell \beta = (E_0 \otimes k_\nu, 
\sigma_0, \theta_0) = $  cor$_{L_\nu/k_\nu}(E_0 \otimes L_\nu, \sigma_0 \otimes 1, \mu_\nu)$
and corestriction is injective (cf. \cite[Theorem 10, p. 237]{LF}), 
we have $r \beta \otimes L_\nu = (E_0 \otimes L_\nu, \sigma_0 \otimes 1, 
\mu_\nu)$.    

Suppose that $\theta_0 = \mu_\nu^\ell$ for some $\mu _\nu \in k_\nu $.  
Since $k_\nu$ is local field containing a primitive $\ell^{\rm th}$ root of unity
and $E_0 \otimes k_\nu$ is a cyclic extension, there exists a    cyclic field extension
$L_\nu/k_\nu$  of degree $\ell$ which is not contained in 
$E_0 \otimes k_\nu$. Then $N_{L_\nu/k_\nu}(\mu_\nu) = \mu_\nu^\ell = \theta_0$.
Since $L_\nu $ is not a subfield of $E_0 \otimes k_\nu$, ind$(\beta \otimes E_0 \otimes L_\nu) <  $
ind$(\beta \otimes E_0 \otimes k_\nu) \leq$ ind$(\beta \otimes E_0)$ (\cite[p. 131]{CFANT}).  Since cor$_{L_\nu/k_\nu}(r\beta \otimes L_\nu) = r\ell \beta \otimes k_\nu
 =  (E_0 \otimes k_\nu, \sigma_0 \otimes 1,  \mu_\nu^\ell) = $ 
 cor$_{L_\nu/k_\nu}(E_0 \otimes L_\nu, \sigma_0 \otimes 1, \mu_\nu)$, by (cf. \cite[Theorem 10, p. 237]{LF}), 
    we have $r\beta \otimes L_\nu = (E_0 \otimes L_\nu, \sigma_0 \otimes 1, \mu_\nu)$.
    
  By (\ref{uglylemma}), we have the required $L_0$ and $\mu_0$.
\end{proof}

\begin{prop}
\label{global_field_zero} Let $k$ be a global field and $\ell$ a prime not equal to 
 char$(k)$.  Let $E_0/k$ be a cyclic extension of degree a  positive  
 power of $\ell$ and $\sigma_0$ a generator
 of  Gal$(E_0/k)$. Let 
 $\theta_0 \in k^*$  and $\beta \in H^2(k, \mu_{\ell^n})$ be such that 
 $r \ell \beta = (E_0, \sigma_0, \theta_0)$ for some $r \geq 1$. Suppose that $r \beta \otimes E_0  =  0$.  
  Let $L_0$ be the unique subfield of $E_0$ of degree $\ell$ over 
 $k$. 
 Then there exists $\mu_0 \in L_0$ such that \\
 1)  $N_{L_0/k}(\mu_0 ) = \theta_0$ \\
 2) $r\beta \otimes L_0 = (E_0 \otimes L_0, \sigma_0 \otimes 1, \mu_0)$. \\
 \end{prop}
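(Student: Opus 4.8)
The plan is to construct $\mu_0$ by a short corestriction argument; no approximation or Chebotarev input is needed here (in contrast with \ref{global_field_non_zero}), and in fact the argument goes through over an arbitrary field.

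First I would observe that, since $L_0 \subseteq E_0$, we have $(r\beta \otimes L_0) \otimes_{L_0} E_0 = r\beta \otimes E_0 = 0$, so $r\beta \otimes L_0$ is split by the cyclic extension $E_0/L_0$, with $\mathrm{Gal}(E_0/L_0) = \langle \sigma_0^\ell\rangle$. Because for a cyclic extension the cyclic-algebra map $L_0^*/N_{E_0/L_0}(E_0^*) \to \Br(E_0/L_0)$ is surjective, there is $c \in L_0^*$ with $r\beta \otimes L_0 = (E_0 \otimes L_0, \sigma_0 \otimes 1, c)$, where the right-hand side is read as the cyclic $L_0$-algebra $(E_0/L_0, \sigma_0^\ell, c)$. (If $E_0 = L_0$ this just says $r\beta \otimes L_0 = 0$ and one takes $c=1$.)

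Next I would corestrict this identity to $k$. On the one hand $\mathrm{cor}_{L_0/k}(r\beta \otimes L_0) = [L_0:k]\,r\beta = r\ell\beta = (E_0,\sigma_0,\theta_0)$. On the other hand, using that $\mathrm{res}_{L_0/k}\big((E_0,\sigma_0)\big) = (E_0/L_0,\sigma_0^\ell)$ in $H^1(L_0,\Z/\ell^n\Z)$, the projection formula, and the fact that $\mathrm{cor}_{L_0/k}\colon H^1(L_0,\mu_{\ell^n}) \to H^1(k,\mu_{\ell^n})$ is induced by $N_{L_0/k}$, one obtains $\mathrm{cor}_{L_0/k}\big((E_0/L_0,\sigma_0^\ell,c)\big) = (E_0,\sigma_0,N_{L_0/k}(c))$. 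Comparing, $(E_0,\sigma_0,\theta_0\,N_{L_0/k}(c)^{-1}) = 0$, so $\theta_0\,N_{L_0/k}(c)^{-1} = N_{E_0/k}(z)$ for some $z \in E_0^*$. Then I would set $\mu_0 = c\,N_{E_0/L_0}(z) \in L_0^*$: transitivity of norms gives $N_{L_0/k}(\mu_0) = N_{L_0/k}(c)\,N_{E_0/k}(z) = \theta_0$, which is (1); and since $N_{E_0/L_0}(z)$ is a norm from $E_0$, the term $(E_0/L_0,\sigma_0^\ell,N_{E_0/L_0}(z))$ is trivial, so $(E_0/L_0,\sigma_0^\ell,\mu_0) = (E_0/L_0,\sigma_0^\ell,c) = r\beta \otimes L_0$, which is (2).

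The one step that needs care is the corestriction formula for cyclic algebras used above — concretely, matching the convention for the ``induced generator'' $\sigma_0\otimes 1$ of $\mathrm{Gal}(E_0/L_0)$ with $\sigma_0^\ell$ and verifying $\mathrm{cor}_{L_0/k}\big((E_0/L_0,\sigma_0^\ell,c)\big) = (E_0,\sigma_0,N_{L_0/k}(c))$; everything else is purely formal. Since $k$ is global this could if desired be checked place by place from local corestriction, but the projection-formula derivation is cleaner and characteristic-free.
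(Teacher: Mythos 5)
Your argument is correct, and, as you note, every ingredient (the cyclic-algebra description of $\mathrm{Br}(E_0/L_0)$, the projection formula for corestriction, the final norm adjustment) works over an arbitrary field; the paper's own proof is likewise characteristic- and globality-free. The route differs from the paper's in the first step. The paper exploits $r\beta\otimes E_0=0$ over $k$ itself: since $E_0/k$ is cyclic, $r\beta=(E_0,\sigma_0,\mu')$ for some $\mu'\in k^*$, so $\ell r\beta=(E_0,\sigma_0,\theta_0)$ immediately yields $\theta_0=N_{E_0/k}(y)\,\mu'^{\ell}$, and it sets $\mu_0=N_{E_0/L_0}(y)\,\mu'$, restricting to $L_0$ only at the end (via the easy identity $(E_0,\sigma_0,\mu')\otimes L_0=(E_0/L_0,\sigma_0^{\ell},\mu')$ from \S 2). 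You instead use only the weaker consequence that $r\beta\otimes L_0$ is split by $E_0/L_0$, write $r\beta\otimes L_0=(E_0/L_0,\sigma_0^{\ell},c)$ with $c\in L_0^*$, and recover the comparison with $\theta_0$ by corestricting, using $\mathrm{cor}_{L_0/k}\bigl((E_0/L_0,\sigma_0^{\ell},c)\bigr)=(E_0,\sigma_0,N_{L_0/k}(c))$; the final adjustment $\mu_0=c\,N_{E_0/L_0}(z)$ is then the same trick as in the paper. What the paper's descent-to-$k$ step buys is brevity: no corestriction or projection formula is needed, and the generator conventions enter only in the restriction direction. What your version buys is robustness (you never need $r\beta$ itself to be a cyclic algebra over $k$), at the cost of the convention-sensitive corestriction formula, which you rightly single out as the step to verify and which is indeed standard and consistent with the paper's conventions (restriction of $(E_0,\sigma_0)$ to $L_0$ is $(E_0/L_0,\sigma_0^{\ell})$, and corestriction on $H^1(\cdot,\mu_{\ell^n})$ is induced by the norm).
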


\begin{proof}  Since $r \beta\otimes  E_0 = 0$ and $E_0/k$ is a cyclic extension, we have 
$r\beta = (E_0, \sigma_0, \mu')$ for some $\mu' \in k$.  We have $(E_0, \sigma_0, \mu'{^\ell}) = \ell r\beta = 
(E_0, \sigma_0, \theta_0)$. Thus $\theta_0 = N_{E_0/k}(y) \mu'{^\ell}$. Let $\mu_0 = N_{E_0/L_0}(y) \mu' \in L_0$.
Since $L_0 \subset E_0$, we have $r\beta \otimes L_0 = (E_0/L_0, \sigma_0^\ell, \mu') = (E_0/L_0, \sigma_0^\ell, 
N_{E_0/L_0}(y) \mu') = (E_0/L_0, \sigma_0^\ell, \mu_0)$  (cf.  \S \ref{Preliminaries}) 
and $$N_{L_0/k}(\mu_0)  = N_{E_0/k}(N_{E_0/L_0}(y))  \mu'{^\ell} = \theta_0.$$ 
\end{proof}

\begin{cor}
\label{global_field_zero_cor} Let $k$ be a global field and $\ell$ a prime not equal to 
 char$(k)$.  Let $E_0/k$ be a cyclic extension of degree a  power of $\ell$ and $\sigma_0$ a generator
 of Gal$(E_0/k)$. Let 
 $\theta_0 \in k^*$  and $\beta \in H^2(k, \mu_{\ell^n})$ be such that 
 $r \ell \beta = (E_0, \sigma_0, \theta_0)$ for some $r \geq 1$. Suppose that $r \beta \otimes E_0  =  0$.  
  Let $L_0$ be the unique subfield of $E_0$ of degree $\ell$ over 
 $k$.  Let $S$ be a finite set of places of $k$. Suppose for each $\nu \in S$ there exists
  $\mu_\nu \in L_0 \otimes k_\nu$ such that  \\
 $\bullet$  $N_{L_0 \otimes k_\nu/k_\nu}(\mu_\nu ) = \theta_0$ \\
$\bullet$ $r\beta \otimes L_0 \otimes k_\nu = (E_0 \otimes L_0 \otimes k_\nu, \sigma_0 \otimes 1, \mu_\nu)$.\\ 
 Then there exists $\mu \in L_0$ such that \\
 1)  $N_{L_0/k}(\mu  ) = \theta_0$ \\
 2) $r\beta \otimes L_0 = (E_0 \otimes L_0, \sigma_0 \otimes 1, \mu)$ \\
 3)  $\mu$ is close to $\mu_\nu$ for all $\nu \in S$. 
 \end{cor}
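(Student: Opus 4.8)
The plan is to upgrade Proposition~\ref{global_field_zero} with an approximation condition. First I would fix, using Proposition~\ref{global_field_zero}, an element $\mu_0 \in L_0^*$ with $N_{L_0/k}(\mu_0) = \theta_0$ and $r\beta \otimes L_0 = (E_0\otimes L_0, \sigma_0\otimes 1, \mu_0)$. For each $\nu \in S$ set $\epsilon_\nu = \mu_\nu\mu_0^{-1} \in (L_0\otimes k_\nu)^*$; the two hypotheses on $\mu_\nu$ together with the properties of $\mu_0$ give $N_{L_0\otimes k_\nu/k_\nu}(\epsilon_\nu) = 1$ and $(E_0\otimes L_0\otimes k_\nu, \sigma_0\otimes 1, \epsilon_\nu) = 0$. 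It then suffices to find $\epsilon \in L_0^*$ with $N_{L_0/k}(\epsilon) = 1$, $(E_0\otimes L_0, \sigma_0\otimes 1, \epsilon) = 0$, and $\epsilon$ close to $\epsilon_\nu$ for every $\nu\in S$: the element $\mu := \mu_0\epsilon$ then has all the required properties, since $\mu_0$ is a fixed element of $L_0^*$.

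To build such an $\epsilon$ I would work with the tower $k \subseteq L_0 \subseteq E_0$. Writing $E_0 \otimes_k L_0$ as a product of $\ell$ copies of $E_0$, the component attached to the inclusion $L_0 \subseteq E_0$ identifies $(E_0\otimes L_0, \sigma_0\otimes 1, a)$ with the cyclic algebra $(E_0/L_0, \sigma_0^{\ell}, a)$ over $L_0$; in particular $(E_0\otimes L_0, \sigma_0\otimes 1, a) = 0$ whenever $a \in N_{E_0/L_0}(E_0^*)$. Since moreover $N_{L_0/k}\circ N_{E_0/L_0} = N_{E_0/k}$, it is enough to produce $w \in E_0^*$ with $N_{E_0/k}(w) = 1$ and $N_{E_0/L_0}(w)$ close to $\epsilon_\nu$ at every $\nu \in S$, and then take $\epsilon = N_{E_0/L_0}(w)$. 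For the local inputs: from $(E_0\otimes L_0\otimes k_\nu, \sigma_0\otimes 1, \epsilon_\nu) = 0$ one gets $w_\nu \in (E_0\otimes_k k_\nu)^*$ with $N_{E_0\otimes k_\nu/L_0\otimes k_\nu}(w_\nu) = \epsilon_\nu$, and because the norm map $E_0\otimes_k k_\nu \to k_\nu$ factors through $L_0\otimes_k k_\nu$ while $N_{L_0\otimes k_\nu/k_\nu}(\epsilon_\nu) = 1$, the element $w_\nu$ automatically satisfies $N_{E_0\otimes k_\nu/k_\nu}(w_\nu) = 1$.

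Finally I would globalize. As $E_0/k$ is cyclic with generator $\sigma_0$, Hilbert~90 applied over each $k_\nu$, $\nu\in S$, writes $w_\nu = \sigma_0(s_\nu)s_\nu^{-1}$ for some $s_\nu \in (E_0\otimes_k k_\nu)^*$; by weak approximation for $\Gm$ over the global field $E_0$ I choose $s\in E_0^*$ close to $s_\nu$ at all places of $E_0$ lying over $S$, and set $w := \sigma_0(s)s^{-1}$. Then $N_{E_0/k}(w) = 1$ holds on the nose, $w$ is close to $w_\nu$ at each $\nu\in S$, and hence $\epsilon = N_{E_0/L_0}(w)$ and $\mu = \mu_0\epsilon$ satisfy everything once $s$ is taken close enough. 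The delicate point is that $N_{L_0/k}(\mu) = \theta_0$ and $(E_0\otimes L_0, \sigma_0\otimes 1, \mu) = r\beta\otimes L_0$ must hold exactly while $\mu$ only approximates the $\mu_\nu$; this is precisely why $\epsilon$ is produced in the shape $N_{E_0/L_0}(\sigma_0(s)s^{-1})$, which lies in $N_{E_0/L_0}(E_0^*)$ and has $k$-norm $1$ for every choice of $s$, leaving the approximation of $s$ as the only free parameter and disturbing none of the exact conditions. Beyond Proposition~\ref{global_field_zero}, the only tools needed are Hilbert~90 for the cyclic extension $E_0/k$ and weak approximation in $E_0$.
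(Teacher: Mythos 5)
Your proposal is correct, and it follows the same overall template as the paper's proof: fix $\mu_0$ by Proposition~\ref{global_field_zero}, observe that the local discrepancies $\epsilon_\nu=\mu_\nu\mu_0^{-1}$ have trivial norm and trivial symbol, and then kill them by a global correction obtained from Hilbert~90 plus weak approximation. The difference is where Hilbert~90 is applied. The paper stays inside $E_0\otimes L_0$ over $L_0$: it lifts $b_\nu=\mu_0\mu_\nu^{-1}$ to $a_\nu\in E_0\otimes L_0\otimes k_\nu$ with $N_{E_0\otimes L_0\otimes k_\nu/L_0\otimes k_\nu}(a_\nu)=b_\nu$, writes $a_\nu$ as a $(\sigma_0\otimes 1)$-coboundary, approximates, and corrects $\mu_0$ by $N_{E_0\otimes L_0/L_0}(c)$. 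You instead push the discrepancy up to $E_0$ over $k$: choose $w_\nu\in(E_0\otimes k_\nu)^*$ with $N_{E_0\otimes k_\nu/L_0\otimes k_\nu}(w_\nu)=\epsilon_\nu$, note its norm to $k_\nu$ is $1$, apply Hilbert~90 for $E_0\otimes k_\nu/k_\nu$, approximate $s_\nu$ by a global $s\in E_0^*$, and correct by $N_{E_0/L_0}(\sigma_0(s)s^{-1})$. Your arrangement buys two things. First, the Hilbert~90 you invoke requires exactly the norm-one condition you have (norm to $k_\nu$ equal to $1$), whereas the coboundary step in the paper's write-up is for $\sigma_0\otimes 1$ acting on $E_0\otimes L_0\otimes k_\nu$, whose fixed ring is $L_0\otimes k_\nu$, so it would a priori need the norm down to $L_0\otimes k_\nu$ (which is $b_\nu$, not $1$); your route sidesteps that delicate point. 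Second, your correction term has $k$-norm $1$ and lies in $N_{E_0/L_0}(E_0^*)$ identically, for every choice of $s$, so the two exact conditions on $\mu$ are never disturbed and only the approximation remains to be arranged; in the paper's formula $\mu=\mu_0N_{E_0\otimes L_0/L_0}(c)$ the equality $N_{L_0/k}(\mu)=\theta_0$ is not automatic for an arbitrary approximant $c$. Two points you use implicitly and should record: Hilbert~90 and the splitting-iff-norm criterion are applied to \'etale Galois algebras such as $E_0\otimes k_\nu$ and $E_0\otimes_{L_0}(L_0\otimes k_\nu)$, which need not be fields — both statements do hold there, by reducing to a field factor as in Lemma~\ref{hilbert90} — and it is the continuity of $\sigma_0$ and of the norm maps on $E_0\otimes k_\nu$ that converts closeness of $s$ to $s_\nu$ into closeness of $\mu$ to $\mu_\nu$.
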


\begin{proof} By (\ref{global_field_zero}), there exists  $\mu_0 \in L_0$ such that \\
$\bullet$  $N_{L_0/k}(\mu_0 ) = \theta_0$ \\
$\bullet$  $r\beta \otimes L_0 = (E_0 \otimes L_0, \sigma_0 \otimes 1, \mu_0)$. \\
 Let $\nu \in S$. Then we have \\
$\bullet$  $N_{L_0/k}(\mu_0 ) = \theta_0 = N_{L_0 \otimes k_\nu/k_\nu}(\mu_\nu)$ \\
$\bullet$  $ (E_0 \otimes L_0 \otimes k_\nu, \sigma_0 \otimes 1, \mu_0) = 
(E_0 \otimes L_0 \otimes k_\nu, \sigma_0 \otimes 1, \mu_\nu)$.\\
Let $b_\nu = \mu_0 \mu_\nu^{-1} \in L_0 \otimes k_\nu$.
Then $N_{L_0 \otimes k_\nu/k_\nu}(b_\nu) = 1$ and $(E_0 \otimes L_0 \otimes k_\nu, \sigma_0 \otimes 1, 
b_\nu) = 1$. Thus, there exists $a_\nu \in E_0 \otimes L_0 \otimes k_\nu$ with $N_{E_0\otimes L_0 \otimes k_\nu 
/ L_0 \otimes k_\nu }(a_\nu) = b_\nu$. We have $N_{E_0\otimes L_0 \otimes k_\nu 
/ k_\nu }(a_\nu) = N_{ L_0 \otimes k_\nu 
/  \otimes k_\nu }(b_\nu) = 1$. Since $E_0/k$ is a cyclic extension with $\sigma_0$ a generator 
of Gal$(E_0/k)$,  for each $\nu \in S$, there exists $c_\nu \in E_0 \otimes L_0 \otimes k_\nu$
such that $a_\nu = c_\nu^{-1}(\sigma_0\otimes 1)(c_\nu)$. 
By the weak approximation, there exists $c \in E_0 \otimes L_0$ such that $c$ is close to $c_\nu$ for 
all $\nu \in S$. Let $a = c^{-1}(\sigma\otimes 1)(c) \in E_0 \otimes L_0$ and $\mu = 
\mu_0 N_{E_0 \otimes L_0/L_0 }(c) \in L_0$.  Then $\mu$ has all the required properties. 
\end{proof}

\section{complete discretely valued fields}

Let $F$ be a complete discretely valued field with residue field $\kappa$.
Let $D$ be a central simple algebra over $F$ of period  $n$ coprime  to char$(\kappa)$.
Let  $\lambda \in F^*$ and $\alpha \in H^2(F, \mu_n)$ be the class of $D$. 
In this section we analyze the condition $\alpha \cdot (\lambda) = 0$ and
we use this analysis in the proof of our main result (\S 10).
As a consequence, we also deduce that  if $\kappa$ is either a local field or a global field and 
$\alpha \cdot (\lambda) = 0$ in $H^3(F, \mu_n^{\otimes 2})$, 
then $\lambda$  is a reduced norm from $D$. 

We use the following notation throughout this section:\\
$\bullet$ $F$ a complete discretely valued field.\\
$\bullet$ $\kappa$ the  residue field of $F$.\\
$\bullet$ $\nu$ the discrete valuation on $F$.\\
$\bullet$ $\pi \in F^*$ a parameter at $\nu$. \\
$\bullet$ $n \geq 2$ an integer coprime to char$(\kappa)$\\
$\bullet$ $D$ a central simple algebra over $F$ of period $n$.\\
$\bullet$ $\alpha \in H^2(F, \mu_n)$ the class representing $D$.\\
 Let $E_0$ be the cyclic extension of 
$\kappa$ and $\sigma_0 \in Gal(E_0/\kappa)$
be such that  $\partial(\alpha) = (E_0, \sigma_0)$.
Let $(E, \sigma)$ be the lift of $(E_0, \sigma_0)$ (cf. \S 2).
The pair  $(E, \sigma)$ or $E$  is called the {\it lift of the residue } of $\alpha$.
The following is well known.

\begin{lemma} 
\label{rbc} Let $\alpha \in H^2(F, \mu_n)$, $(E, \sigma)$ the lift of the  residue  of 
$\alpha$.  Then $\alpha = \alpha' + (E, \sigma, \pi)$ for some $\alpha' \in H^2_{nr}(F, \mu_n)$.  
Further  $\alpha' \otimes E =  \alpha \otimes E$ is independent of the choice of $\pi$. 
\end{lemma}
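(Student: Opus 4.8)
The statement is the standard Witt-type decomposition of a Brauer class over a complete discretely valued field: any $\alpha \in H^2(F,\mu_n)$ differs from its ``ramified part'' $(E,\sigma,\pi)$ by an unramified class, and the restriction $\alpha\otimes E$ of $\alpha$ to the unramified lift $E$ of the residue field extension $E_0/\kappa$ is the same as that of the unramified part and does not depend on which parameter $\pi$ we chose.

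\emph{First step: the decomposition.} By construction the pair $(E,\sigma)$ lifts $(E_0,\sigma_0)$, where $\partial(\alpha)=(E_0,\sigma_0)\in H^1(\kappa,\Z/n\Z)$. Using the residue formula recalled in \S\ref{Preliminaries}, namely $\partial(E,\sigma,\pi)=(E_0,\sigma_0)^{\nu(\pi)}=(E_0,\sigma_0)$, the class $\alpha' := \alpha - (E,\sigma,\pi)$ has $\partial(\alpha') = \partial(\alpha) - (E_0,\sigma_0) = 0$, so $\alpha'\in H^2_{nr}(F,\mu_n)$. This gives the asserted decomposition $\alpha = \alpha' + (E,\sigma,\pi)$.

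\emph{Second step: restriction to $E$.} Restricting to $E$, the cyclic algebra $(E,\sigma,\pi)\otimes E$ is Brauer-trivial, since $E/F$ splits the cyclic algebra $(E,\sigma,\pi)$ (it is split by its maximal subfield $E$; see the discussion in \S\ref{Preliminaries} that $(E,\sigma,\lambda)\otimes E$ is a matrix algebra). Hence $\alpha\otimes E = \alpha'\otimes E$.

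\emph{Third step: independence of $\pi$.} If $\pi'$ is another parameter, then $\pi' = u\pi$ with $u$ a unit in the valuation ring, so $(\pi') = (u)+(\pi)$ in $H^1(F,\mu_n)$ and $(E,\sigma,\pi') = (E,\sigma,u) + (E,\sigma,\pi)$. Writing $\alpha = \alpha'' + (E,\sigma,\pi')$ for the decomposition relative to $\pi'$, we get $\alpha'' - \alpha' = (E,\sigma,\pi) - (E,\sigma,\pi') = -(E,\sigma,u)$. Now $u$ is a unit, hence its class lies in the image of $\kappa^*$ under $\iota_\kappa$, and so $(E,\sigma,u)$ is itself unramified and, being the cyclic class $\iota_\kappa(E_0,\sigma_0)\cup(\bar u)$, it is split by $E$ because $E_0$ splits $(E_0,\sigma_0,\bar u)$ over $\kappa$ and the isomorphism $H^2(\kappa,\mu_n)\xrightarrow{\sim}H^2_{nr}(F,\mu_n)$ is compatible with restriction along $E/F$ and $E_0/\kappa$. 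Therefore $(E,\sigma,u)\otimes E = 0$, whence $\alpha''\otimes E = \alpha'\otimes E$, and also $\alpha\otimes E = \alpha'\otimes E = \alpha''\otimes E$. This shows both that $\alpha\otimes E = \alpha'\otimes E$ holds for any choice and that the common value is independent of $\pi$.

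\emph{Main obstacle.} The only subtlety is the third step: one must know that the class $(E,\sigma,u)$ of a unit is not only unramified but actually \emph{split by $E$}. This is where one uses that $E/F$ is the \emph{unramified} lift of $E_0/\kappa$, so that passing to residues commutes with restriction to $E$; concretely $\partial$ of $(E,\sigma,u)\otimes E$ vanishes and its specialization is $(E_0,\sigma_0,\bar u)\otimes E_0 = 0$, and since $E$ is complete discretely valued the map $H^2(E_0,\mu_n)\xrightarrow{\sim}H^2_{nr}(E,\mu_n)$ forces $(E,\sigma,u)\otimes E = 0$. Everything else is a direct application of the residue computations and the splitting criterion for cyclic algebras recalled in \S\ref{Preliminaries}.
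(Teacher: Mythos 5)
Your proof is correct and follows essentially the same route as the paper: the decomposition is obtained by comparing residues, since $\partial(E,\sigma,\pi)=\partial(\alpha)$ forces $\alpha'=\alpha-(E,\sigma,\pi)$ to be unramified, and $\alpha'\otimes E=\alpha\otimes E$ because $E$ splits the cyclic algebra $(E,\sigma,\pi)$. Your third step is more elaborate than needed — once $\alpha'\otimes E=\alpha\otimes E$ is known, independence of $\pi$ is immediate because $\alpha\otimes E$ does not involve $\pi$ at all, and in any case $(E,\sigma,u)\otimes E$ is split simply because $E$ splits every cyclic algebra of the form $(E,\sigma,\lambda)$, so the residue/specialization detour you flag as the ``main obstacle'' is not actually needed.
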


\begin{proof} Since $\partial(E,\sigma, \pi) = \partial(\alpha)$, 
$\alpha'  = \alpha - (E,\sigma, \pi) \in H^2_{nr}(F, \mu_n)$ and  
 $\alpha = \alpha' + (E, \sigma, \pi)$.
\end{proof}

\begin{lemma}
\label{dvr_ind} Let $n \geq 2$ be coprime to char$(\kappa)$ and 
 $\alpha \in H^2(F, \mu_n)$. If $\alpha = \alpha' + (E, \sigma, \pi)$ 
 as in (\ref{rbc}), then ind$(\alpha) = $ ind$(\alpha' \otimes E)[E : F] 
 = $ ind$(\alpha \otimes E) [E : F]$.
\end{lemma}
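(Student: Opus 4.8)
The plan is to treat the two displayed equalities separately; the second is immediate from (\ref{rbc}) and the first carries all the content. For the second equality, note that by (\ref{rbc}) we have $\alpha\otimes E=\alpha'\otimes E$ in $H^2(E,\mu_n)$, hence $\mathrm{ind}(\alpha'\otimes E)=\mathrm{ind}(\alpha\otimes E)$, so it suffices to prove $\mathrm{ind}(\alpha)=\mathrm{ind}(\alpha\otimes E)\,[E:F]$.

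\emph{Reduction of the first equality.} Let $D$ be the underlying division algebra of $\alpha$. The divisibility $\mathrm{ind}(\alpha)\mid[E:F]\,\mathrm{ind}(\alpha\otimes E)$ is routine: a maximal subfield $N$ of the underlying division algebra of $\alpha\otimes E$ has degree $\mathrm{ind}(\alpha\otimes E)$ over $E$ and splits $\alpha\otimes E$, so $N/F$ has degree $[E:F]\,\mathrm{ind}(\alpha\otimes E)$ and splits $\alpha$. For the reverse divisibility I would show that $E$ embeds in $D$ as an $F$-subalgebra. Granting this, $[E:F]\mid\deg D=\mathrm{ind}(\alpha)$, and by the double centralizer theorem $C_D(E)$ is a central simple $E$-algebra with $D\otimes_F E\cong M_{[E:F]}(C_D(E))$, so $C_D(E)$ is Brauer-equivalent to $\alpha\otimes E$; being the centralizer of a field in the division ring $D$ it is itself a division algebra, whence $\mathrm{ind}(\alpha\otimes E)=\deg_E C_D(E)=\deg D/[E:F]=\mathrm{ind}(\alpha)/[E:F]$, which is exactly the desired equality.

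\emph{The embedding $E\hookrightarrow D$.} Here completeness of $F$ enters. The valuation $\nu$ extends uniquely to $D$, and since $n$ is coprime to $\mathrm{char}(\kappa)$ the algebra $D$ is tamely ramified; by (\ref{rbc}), $\alpha=\alpha'+(E,\sigma,\pi)$ is split by the maximal unramified extension of $F$, so $D$ is inertially split. The structure theory of tamely ramified division algebras over complete discretely valued fields then identifies the centre of the residue division algebra $\bar D$ with the residue field $E_0$ of $E$ (the cyclic extension of $\kappa$ cut out by $\partial(\alpha)$), and gives $e(D/F)=[E_0:\kappa]=[E:F]$. In particular $E_0\subseteq\bar D$; lifting a primitive element of the separable extension $E_0/\kappa$ to the valuation ring $\OO_D$ of $D$ and applying Hensel's lemma produces an embedding $\OO_E\hookrightarrow\OO_D$ over $\OO_F$, hence $E\hookrightarrow D$.

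\emph{Main obstacle.} The crux is precisely the structure-theoretic input in the last step — that for the inertially split tame algebra $D$ the residue algebra's centre is exactly $E_0$ (equivalently, that $E$, not merely $F$, embeds in $D$). An alternative packaging is induction on $[E:F]$: for a prime $\ell\mid[E:F]$ pass to the degree-$\ell$ unramified subextension $M\subseteq E$, over which the lift of the residue of $\alpha\otimes M$ is still $E$, so the induction hypothesis gives $\mathrm{ind}(\alpha\otimes M)=[E:M]\,\mathrm{ind}(\alpha\otimes E)$; it then remains to see that $M$ drops the index by a factor $\ell$, and if it did not then $D\otimes_F M$ would be a division algebra, forcing $\Gamma_D\subseteq\Gamma_{D\otimes_F M}$ on value groups and hence $e(D/F)\le e(D\otimes_F M/M)$, contradicting the fact that these equal the orders of $\partial(\alpha)$ and $\partial(\alpha\otimes M)$ respectively. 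Either way the genuine work is the relation between the value group/residue algebra of $D$ and the residue $\partial(\alpha)$, and this is the step I expect to be the main obstacle.
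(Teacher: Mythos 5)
Your proposal is correct and in substance coincides with the paper's treatment: the paper gives no argument of its own but simply cites \cite[Proposition 1(3)]{FS} and \cite[5.15]{JW}, and the content of those references is exactly the structure theory of tame, inertially split division algebras over Henselian fields (residue algebra with centre $E_0$, $e(D/F)$ equal to the order of $\partial(\alpha)$) that you invoke for the key embedding $E\hookrightarrow D$. Your double-centralizer reduction and the Hensel lifting of $E_0$ into $D$ are the standard route taken in those sources, so the only thing to add would be explicit citations for the structural facts you flag as the main obstacle.
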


\begin{proof}
Cf. (\cite[Proposition 1(3)]{FS}  and \cite[5.15]{JW}).  
\end{proof}

\begin{lemma}
\label{not_type_6} Let $E$ be the lift of the residue of $\alpha$.
Suppose there exists a totally ramified extension $M/F$ which splits $\alpha$, 
then $\alpha \otimes E = 0$. 
\end{lemma}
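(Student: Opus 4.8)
The plan is to use the structural decomposition $\alpha = \alpha' + (E,\sigma,\pi)$ from (\ref{rbc}), where $\alpha' \in H^2_{nr}(F,\mu_n)$, together with the index formula (\ref{dvr_ind}), which gives $\mathrm{ind}(\alpha) = \mathrm{ind}(\alpha \otimes E)\,[E:F]$, and apply these after base change to the totally ramified extension $M/F$. First I would recall that since $M/F$ is totally ramified of degree $e$ coprime to $\mathrm{char}(\kappa)$, the residue field of $M$ is again $\kappa$, and the unramified extension $E/F$ remains linearly disjoint from $M$: the composite $EM/M$ is the unramified lift of $E_0/\kappa$ over $M$, so $[EM:M]=[E:F]$ and $EM$ is the lift of the residue of $\alpha \otimes M$. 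Moreover $\alpha' \otimes M$ is still unramified over $M$.

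The key point is to compute the residue of $\alpha$ after base change and track the lift. By (\ref{dvr_ind}) applied over $M$, we have $\mathrm{ind}(\alpha \otimes M) = \mathrm{ind}(\alpha \otimes EM)\,[EM:M] = \mathrm{ind}(\alpha \otimes E \otimes M)\,[E:F]$. But by hypothesis $M$ splits $\alpha$, so $\alpha \otimes M = 0$, forcing $\mathrm{ind}(\alpha \otimes E \otimes M) = 1$ and in particular $[E:F]=1$ unless $\alpha\otimes E\otimes M=0$ carries no information — so I need a slightly different tack to extract $\alpha \otimes E = 0$ rather than just $[E:F]=1$. The cleaner route: note that $\alpha \otimes E \otimes M = (\alpha \otimes E)\otimes_E (E\otimes_F M) = (\alpha\otimes E)\otimes_E EM$, and $EM/E$ is totally ramified of degree dividing $e$ (since $M/F$ is totally ramified and $E/F$ unramified). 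Now $\alpha \otimes E \in H^2(E,\mu_n)$ is unramified over $E$ — indeed $\alpha\otimes E = \alpha'\otimes E + (E\otimes E, \sigma\otimes 1, \pi)\otimes E$, and the second term dies because $E\otimes_F E$ contains $E$ as a factor (the cyclic algebra $(E,\sigma,\pi)\otimes E$ is split), so $\alpha \otimes E = \alpha' \otimes E$ is unramified over $E$. An unramified class over the complete field $E$ that is split by the totally ramified extension $EM/E$ must be zero: a totally ramified extension induces the identity on residue fields, hence the isomorphism $H^2_{nr}(E,\mu_n) \xrightarrow{\sim} H^2(\kappa,\mu_n)$ is compatible with base change to $EM$ and the restriction map $H^2_{nr}(E,\mu_n)\to H^2_{nr}(EM,\mu_n)$ is an isomorphism. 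Since $(\alpha\otimes E)\otimes EM = \alpha\otimes M \otimes_M EM = 0$, we conclude $\alpha \otimes E = 0$.

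I expect the main obstacle to be pinning down cleanly that the residue map and the splitting $H^2_{nr}(E,\mu_n)\cong H^2(\kappa,\mu_n)$ behave well under the totally ramified base change $EM/E$ — i.e. that an unramified class over a complete discretely valued field which becomes trivial after a totally ramified extension was already trivial. This is standard (a totally ramified extension is "invisible" to unramified cohomology because it fixes the residue field), and once it is in hand the argument is short. An alternative, perhaps more in the spirit of the paper, would be to invoke (\ref{dvr_ind}) over $M$ directly: $1 = \mathrm{ind}(\alpha\otimes M) = \mathrm{ind}(\alpha \otimes E \otimes M)\cdot[EM:M]$, giving $\mathrm{ind}(\alpha\otimes E\otimes M)=1$, and then observe that $\alpha\otimes E$ is unramified over $E$ while $EM/E$ is totally ramified, so by the same invisibility principle $\mathrm{ind}(\alpha\otimes E) = \mathrm{ind}(\alpha\otimes E\otimes M) = 1$, i.e. $\alpha\otimes E = 0$.
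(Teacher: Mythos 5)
Your "cleaner route" is exactly the paper's argument: write $\alpha = \alpha' + (E,\sigma,\pi)$ as in (\ref{rbc}), observe $\alpha\otimes E = \alpha'\otimes E$ is unramified and is killed by the totally ramified extension $E\otimes M/E$ (which leaves the residue field unchanged), and conclude $\alpha\otimes E=0$ by the fact that unramified classes over a complete discretely valued field inject under totally ramified base change — the paper cites \cite[7.9 and 8.4]{serrecohoinvariants} for this last step. The correct proof is there; the abandoned detour through (\ref{dvr_ind}) is harmless.
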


\begin{proof} Write $\alpha = \alpha' + (E, \sigma,\pi)$ as in (\ref{rbc}).
Since $\alpha' \otimes E = \alpha \otimes E$, we have  $\alpha' \otimes E \otimes M = 0$. 
Since  $E\otimes M/E$ is totally ramified, the residue field of $E\otimes M$ is same as the residue field of 
$E$. Since  $\alpha'\otimes E \otimes M = 0$ and $\alpha' \otimes E$ is unramified, it follows from 
(\cite[7.9 and 8.4]{serrecohoinvariants}) that  $\alpha \otimes E =  \alpha' \otimes E = 0$.
\end{proof}

\begin{lemma} 
\label{dvr_ind_per} Let $ n \geq 2$ be coprime to char$(\kappa)$.
Let $\alpha \in H^2(F, \mu_n)$ and $(E, \sigma)$  be the lift of the
residue of $\alpha$. If  $\alpha \otimes E = 0$,  
then $\alpha = (E, \sigma, u\pi)$ for some $u \in F^*$
which is a unit at the discrete valuation and  per$(\alpha) = $ ind$(\alpha)$.
\end{lemma}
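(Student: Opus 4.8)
The plan is to first produce the explicit cyclic presentation $\alpha = (E,\sigma,u\pi)$ and then read off the equality of period and index. By Lemma \ref{rbc}, write $\alpha = \alpha' + (E,\sigma,\pi)$ with $\alpha' \in H^2_{nr}(F,\mu_n)$, and recall that $\alpha'\otimes E = \alpha\otimes E$. The hypothesis $\alpha\otimes E = 0$ therefore gives $\alpha'\otimes E = 0$. Since $\alpha'$ is unramified, it comes from a class $\bar\alpha' \in H^2(\kappa,\mu_n)$ via the map $\iota_\kappa$, and $\alpha'\otimes E = 0$ forces (by the injectivity of $H^2(\kappa,\mu_n) \xrightarrow{\sim} H^2_{nr}(F,\mu_n)$ compatibly with the unramified extension $E/F$) that $\bar\alpha'\otimes E_0 = 0$ in $H^2(E_0,\mu_n)$. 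As $E_0/\kappa$ is cyclic with generator $\sigma_0$, Hilbert 90 / the basic theory of cyclic algebras gives $\bar\alpha' = (E_0,\sigma_0,\bar w)$ for some $\bar w \in \kappa^*$. Lifting $\bar w$ to a unit $w \in F^*$ and using that $\iota_\kappa$ is compatible with cyclic-algebra symbols, we get $\alpha' = (E,\sigma,w)$, hence
\[
\alpha = (E,\sigma,w) + (E,\sigma,\pi) = (E,\sigma,w\pi),
\]
which is of the desired form with $u = w$ a unit.

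Next I would compute the index and period. Applying Lemma \ref{dvr_ind} to the presentation $\alpha = \alpha' + (E,\sigma,\pi)$ with $\alpha' = (E,\sigma,w)$: since $\alpha\otimes E = 0$, we have $\mathrm{ind}(\alpha\otimes E) = 1$, so $\mathrm{ind}(\alpha) = [E:F]$. It remains to show $\mathrm{per}(\alpha) = [E:F]$ as well. Here I would use that $\partial(\alpha) = (E_0,\sigma_0)$ has order exactly $[E_0:\kappa] = [E:F]$ in $H^1(\kappa,\Z/n\Z)$ (by definition $(E,\sigma)$ is the lift of the residue, so $E_0/\kappa$ is cyclic of degree $[E:F]$ and $\sigma_0$ a generator). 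Since the residue map $\partial$ is a homomorphism, $\mathrm{per}(\alpha) \geq \mathrm{ord}(\partial(\alpha)) = [E:F]$. On the other hand $\mathrm{per}(\alpha) \leq \mathrm{ind}(\alpha) = [E:F]$ always. Hence $\mathrm{per}(\alpha) = [E:F] = \mathrm{ind}(\alpha)$.

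The only mildly delicate point is the transfer of the vanishing $\alpha'\otimes E = 0$ down to $\bar\alpha'\otimes E_0 = 0$ over the residue field, and the compatibility of $\iota_\kappa$ with cyclic symbols and with the unramified base change $E/F$; these are standard facts about unramified cohomology of complete discretely valued fields (as in the references already cited in \S\ref{Preliminaries} and in the proof of Lemma \ref{not_type_6}), so I expect no real obstacle. An alternative, avoiding the residue-field descent, is: once $\alpha = (E,\sigma,u\pi)$ is in hand, invoke the formula $\partial(E,\sigma,u\pi) = (E_0,\sigma_0)^{\nu(u\pi)} = (E_0,\sigma_0)$ to see directly that $\mathrm{per}(\alpha)$ is a multiple of $[E:F]$, and finish with $\mathrm{per} \leq \mathrm{ind}$ as above.
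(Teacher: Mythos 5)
Your proposal is correct and follows essentially the same route as the paper: decompose $\alpha = \alpha' + (E,\sigma,\pi)$ via Lemma \ref{rbc}, use $\alpha'\otimes E = \alpha\otimes E = 0$ to write $\alpha'$ as a cyclic algebra $(E,\sigma,u)$ with $u$ a unit, and conclude $\alpha=(E,\sigma,u\pi)$ with $\mathrm{per}=\mathrm{ind}=[E:F]$. The only cosmetic differences are that you produce the unit by descending to the residue field and you get the equality from Lemma \ref{dvr_ind} together with the order of $\partial(\alpha)$, whereas the paper works directly over $F$ and cites that $(E,\sigma,u\pi)$ is a division algebra of period $[E:F]$; both are fine.
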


\begin{proof}  
We have  $\alpha = \alpha' + (E, \sigma, \pi)$ as in (\ref{rbc}).
Since $\alpha' \otimes E = \alpha \otimes E = 0$, we have $\alpha' =
(E, \sigma, u )$ for  $u \in F^*$.  
Since $E/F$  and $\alpha'$ are  unramified at the discrete valuation of $F$, $u$ 
is a unit at the discrete valuation of $F$.
We have  $\alpha = (E, \sigma, u) + (E, \sigma, \pi) = (E, \sigma, u\pi)$.
Since $E/F$ is an unramified extension   
and $u\pi$ is a parameter, $(E, \sigma, u\pi)$ is a division algebra and its period
is  $ [E : F]$. In particular  ind$(\alpha) = $ per$(\alpha)$.
\end{proof}

\begin{theorem} 
\label{local_local_period_index} Let $F$ be a complete discretely valued field with   
residue field $\kappa$. Suppose that $\kappa$ is a local field.  
Let $\ell$ be a  prime not equal to the characteristic of
 $\kappa$, $n = \ell^d$ and $\alpha \in H^2(F, \mu_n)$. 
Then  per$( \alpha) = $ ind$(\alpha)$. 
 \end{theorem}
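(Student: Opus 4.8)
The plan is to reduce to the case where $\alpha$ is a division algebra of $\ell$-power period and to exploit the structure theorem $\mathrm{ind}(\alpha) = \mathrm{ind}(\alpha\otimes E)\,[E:F]$ from Lemma~\ref{dvr_ind}, where $(E,\sigma)$ is the lift of the residue of $\alpha$. First I would write $\alpha = \alpha' + (E,\sigma,\pi)$ with $\alpha'\in H^2_{nr}(F,\mu_n)$ as in Lemma~\ref{rbc}. Since $\alpha'$ is unramified and $F$ is complete, $\alpha'$ comes from a class $\bar\alpha'\in H^2(\kappa,\mu_n)$, and since $\kappa$ is a local field, $\mathrm{per}(\bar\alpha') = \mathrm{ind}(\bar\alpha')$; moreover $\alpha'\otimes E = \iota(\bar\alpha'\otimes E_0)$ so $\mathrm{ind}(\alpha'\otimes E) = \mathrm{ind}(\bar\alpha'\otimes E_0) = \mathrm{per}(\bar\alpha'\otimes E_0)$, again because $E_0$ is a local field. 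So the index of $\alpha$ is entirely controlled by local-field data on $\kappa$ and $E_0$.

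Next I would compute the period. Because $E/F$ is unramified cyclic of degree $\ell^e = [E:F]$ and $\pi$ is a parameter, the class $(E,\sigma,\pi)$ has period exactly $[E:F]$, and $\partial(\alpha) = (E_0,\sigma_0)$ has order $[E_0:\kappa] = [E:F]$ in $H^1(\kappa,\mathbb{Z}/n\mathbb{Z})$. Since the residue map is a homomorphism, $\mathrm{per}(\alpha)$ is a multiple of $[E:F]$; write $\mathrm{per}(\alpha) = \ell^e\cdot\ell^{e'}$. The claim will then amount to showing $\mathrm{per}(\alpha) = \mathrm{ind}(\alpha' \otimes E)\cdot[E:F]$, i.e. that $\ell^{e'} = \mathrm{ind}(\alpha'\otimes E) = \mathrm{per}(\bar\alpha'\otimes E_0)$. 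One direction is automatic: $\ell^{e'}\alpha = \ell^{e'}\alpha' + \ell^{e'}(E,\sigma,\pi)$, and killing $\alpha$ forces, after applying the residue and using that $(E_0,\sigma_0)$ has order $\ell^e$, that $\ell^{e'}\alpha'$ is unramified-split on $E$... more carefully, $\mathrm{per}(\alpha\otimes E) = \mathrm{per}(\alpha'\otimes E) = \mathrm{per}(\bar\alpha'\otimes E_0)$ divides $\mathrm{per}(\alpha) = \ell^{e+e'}$, but also $\alpha\otimes E = \alpha'\otimes E$ has period dividing $\ell^{e'}$ once one checks the ``$\pi$-part'' contributes nothing beyond $[E:F]$; conversely $\mathrm{per}(\alpha)$ divides $\ell^e\cdot\mathrm{per}(\alpha\otimes E)$ by a standard restriction–corestriction / cyclic-splitting argument since $E/F$ is cyclic of degree $\ell^e$. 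Combining, $\mathrm{per}(\alpha) = [E:F]\cdot\mathrm{per}(\bar\alpha'\otimes E_0) = [E:F]\cdot\mathrm{ind}(\alpha'\otimes E) = \mathrm{ind}(\alpha)$ by Lemma~\ref{dvr_ind}.

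The main obstacle I anticipate is the bookkeeping around the ``mixed'' behaviour: separating $\alpha$ into its unramified part $\alpha'$ and the symbol $(E,\sigma,\pi)$ is clean, but one must be careful that $\mathrm{per}(\alpha\otimes E) = \mathrm{ind}(\alpha\otimes E)$, which holds because $\alpha\otimes E = \iota(\bar\alpha'\otimes E_0)$ is unramified over the complete field $E$ with local residue field $E_0$, so period equals index there by the classical local-field statement pulled back through the isomorphism $H^2(E_0,\mu_n)\xrightarrow{\sim}H^2_{nr}(E,\mu_n)$. Once that identification is in hand, the equality $\mathrm{per} = \mathrm{ind}$ for $\alpha$ itself follows formally from Lemma~\ref{dvr_ind} together with the fact that both $[E:F]$ and $\mathrm{ind}(\alpha\otimes E)$ are $\ell$-powers dividing $\mathrm{per}(\alpha)$, whose product therefore equals $\mathrm{per}(\alpha)$ as soon as it equals $\mathrm{ind}(\alpha)$ — so really the content is the two divisibilities $[E:F]\mid\mathrm{per}(\alpha)$ and $\mathrm{per}(\alpha)\mid[E:F]\cdot\mathrm{per}(\alpha\otimes E)$, both of which are elementary. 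For a general $\alpha$ of period $n = \ell^d$ this is the whole story; no genuine reduction step beyond the $\ell$-primary hypothesis is needed since $n$ is already an $\ell$-power.
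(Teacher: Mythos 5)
Your skeleton is the same as the paper's: the decomposition $\alpha = \alpha' + (E,\sigma,\pi)$ of Lemma~\ref{rbc}, the index formula $\mathrm{ind}(\alpha) = \mathrm{ind}(\alpha'\otimes E)[E:F]$ of Lemma~\ref{dvr_ind}, and the identification $\mathrm{ind}(\alpha'\otimes E) = \mathrm{ind}(\overline{\alpha}'\otimes E_0) = \mathrm{per}(\overline{\alpha}'\otimes E_0)$ via completeness of $E$ and per $=$ ind over the local field $E_0$. The gap is at the one step that carries the content: the equality $\mathrm{per}(\alpha) = [E:F]\cdot\mathrm{per}(\alpha\otimes E)$. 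The two divisibilities you end up calling elementary, namely $[E:F]\mid\mathrm{per}(\alpha)$ and $\mathrm{per}(\alpha)\mid[E:F]\cdot\mathrm{per}(\alpha\otimes E)$ (restriction--corestriction), are indeed correct, but together with Lemma~\ref{dvr_ind} they only yield $\mathrm{per}(\alpha)\mid\mathrm{ind}(\alpha)$, which holds for every Brauer class. What the theorem requires is the reverse divisibility $[E:F]\cdot\mathrm{per}(\alpha\otimes E)\mid\mathrm{per}(\alpha)$, i.e.\ your parenthetical claim that $\alpha\otimes E$ has period dividing $\ell^{e'}$ ``once one checks the $\pi$-part contributes nothing beyond $[E:F]$''; that claim is never proved, and it is not a formal consequence of anything you set up.

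Indeed $\mathrm{per}(\alpha) = \mathrm{lcm}\bigl(\mathrm{per}(\overline{\alpha}'),[E_0:\kappa]\bigr)$, so the missing divisibility amounts to $[E_0:\kappa]\cdot\mathrm{per}(\overline{\alpha}'\otimes E_0)\mid\mathrm{lcm}\bigl(\mathrm{per}(\overline{\alpha}'),[E_0:\kappa]\bigr)$, which fails over a general residue field: if $[E_0:\kappa]=\ell$ and $\overline{\alpha}'$ has period $\ell$ but $\overline{\alpha}'\otimes E_0\neq 0$, the left side is $\ell^2$ while $\mathrm{per}(\alpha)=\ell$. It is true here only because $\kappa$ is local, via the invariant-map computation $\mathrm{ind}(\overline{\alpha}'\otimes E_0) = \mathrm{ind}(\overline{\alpha}')/\gcd\bigl([E_0:\kappa],\mathrm{ind}(\overline{\alpha}')\bigr)$ (\cite[p.~131]{CFANT}); in particular $\overline{\alpha}'\otimes E_0 = 0$ whenever $\mathrm{per}(\overline{\alpha}')$ divides $[E_0:\kappa]$. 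This is exactly the input the paper's proof invokes in its two cases: when $\mathrm{per}(\partial(\alpha)) = \mathrm{per}(\alpha)$ it concludes $\alpha'\otimes E = 0$ and applies Lemma~\ref{dvr_ind_per}, and when $\mathrm{per}(\partial(\alpha)) < \mathrm{per}(\alpha)$ it uses the exact division formula for the index over $E_0$. The local-field facts you do cite (per $=$ ind over $\kappa$ and over $E_0$) say nothing about how the period of $\overline{\alpha}'$ drops along $E_0/\kappa$, so as written the argument does not close; adding the displayed local-field reduction formula at that point would repair it and make your proof essentially identical to the paper's.
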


\begin{proof}  Write $\alpha = \alpha' + (E, \sigma, \pi)$ as in (\ref{rbc}).
Then $E$ is an unramified cyclic extension of $F$ with $\partial(\alpha) = 
(E_0, \sigma_0)$ and $\alpha'$ is unramified at the discrete valuation of $F$.
Let $\overline{\alpha}'$ be the image of $\alpha'$ in $H^2(\kappa, \mu_n)$

Suppose that  per$(\partial(\alpha)) = $ per$(\alpha)$.
Then per$(\partial(\alpha)) =  [E_0 : \kappa]$. Since 
$F$ is complete discretely valued field and $E/F$ unramified extension, we have
$[E_0 : \kappa] = [E : F]$. Thus, 
$$
\begin{array}{rcl}
 0 & = &  per(\alpha) \alpha \\
 & = &  per(\alpha) (\alpha' + (E, \sigma, \pi))\\
 & = & per(\alpha) \alpha' + per(\alpha) (E, \sigma, \pi) \\
 & = & per(\alpha) \alpha'  +  [E : F](E, \sigma, \pi) \\
 & = & per(\alpha) \alpha'.
 \end{array}
 $$
 In particular, per$(\alpha')$ divides per$(\alpha) = [E_0, \kappa ] = [E : F] $.
 Since $\kappa$ is a 
local field, $\overline{\alpha}' \otimes E_0$ is zero (\cite[p.131]{CFANT})  and hence 
$\alpha' \otimes E$ is zero. 
By (\ref{dvr_ind_per}), we have   $\alpha = (E, \sigma, \theta\pi)$ 
for some $\theta \in F$ which is a unit in the valuation ring. In particular, 
$\alpha$ is cyclic and 
ind$(\alpha) = $  per$(\alpha) =  [E : F]$. 

Suppose that per$(\partial(\alpha)) \neq $ per$(\alpha)$. Then per$(\partial(\alpha)) < $ per$(\alpha)$.
   Since  per$(\partial(\alpha)) = $ per$(E,\sigma, \pi)$, 
we have   per$(\alpha) = $ per$(\alpha')$.  Since $\kappa$ is a local field,  per$(\overline{\alpha}')
 = $ ind$(\overline{\alpha}')$. 
Let $E_0$ be the residue field of $E$. Since per$(\overline{\alpha}') = $ per$(\alpha')$
and per$(\partial(\alpha)) = [E_0 : \kappa]$, we have $[E_0 : \kappa] < $ per$(\overline{\alpha}')$.
Since $\kappa$ is a local field, 
$$ {\rm ind}(\overline{\alpha}' \otimes E_0) = 
   \frac{{\rm per}(\overline{\alpha}')}{[E_0 : \kappa]}.$$ 
Since $E$ is a complete discrete valued field
with residue field $E_0$ and $\alpha'$ is unramified at the discrete valuation of $E$, we have 
 ind$(\alpha' \otimes E) = $ ind$(\overline{\alpha}' \otimes E_0)$.
 Thus,  we have 
 $$
 \begin{array}{rcll}
{\rm  ind}(\alpha) &  =  &  ind(\alpha' \otimes E) [E : F]  & ({\rm by} (\ref{dvr_ind} )) \\ 
 & =  &  ind(\overline{\alpha}' \otimes E_0)[E_0 : \kappa] \\
 & = & \frac{{\rm per}(\overline{\alpha}')}{[E_0 : \kappa]} [E_0 : \kappa] \\
 & = & {\rm per}(\overline{\alpha}') = {\rm per}(\alpha). 
 \end{array}
 $$
 \end{proof}

  \begin{prop}
 \label{corestriction}
 Suppose that $\kappa$ is a local field. Let $n \geq 2$ be coprime to
 char$(\kappa)$. 
 If  $L/ F$ is  a finite field extension, then  the corestriction homomorphism 
 $H^3(L, \mu_n^{\otimes 2}) \to H^3(F,
  \mu_n^{\otimes 2})$ is bijective.  
  \end{prop}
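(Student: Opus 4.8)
The plan is to identify both groups with the $n$-torsion of the Brauer group of a local field via the residue map, and to observe that corestriction respects these identifications. First I would record the elementary reductions: any finite extension $L/F$ is again a complete discretely valued field, its residue field $\kappa_L$ is a finite extension of the local field $\kappa$ and hence itself a local field, and $\mathrm{char}(\kappa_L)=\mathrm{char}(\kappa)$, so $n$ remains coprime to it. (We may assume $L/F$ separable; otherwise replace $L$ by the separable closure of $F$ inside it, the purely inseparable part being invisible to cohomology with coefficients prime to the characteristic.) Since a local field has cohomological dimension $2$ and $n$ is prime to its residue characteristic, $H^3(\kappa,\mu_n^{\otimes 2})=0=H^3(\kappa_L,\mu_n^{\otimes 2})$.

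Next I would invoke the split residue exact sequence for a complete discretely valued field (cf.\ \S\ref{Preliminaries} and the references cited there),
\[ 0\longrightarrow H^3(\kappa,\mu_n^{\otimes 2})\longrightarrow H^3(F,\mu_n^{\otimes 2})\xrightarrow{\ \partial_F\ } H^2(\kappa,\mu_n)\longrightarrow 0, \]
together with its analogue over $L$. As the left-hand terms vanish, $\partial_F\colon H^3(F,\mu_n^{\otimes 2})\to H^2(\kappa,\mu_n)\cong\Brn(\kappa)$ and $\partial_L\colon H^3(L,\mu_n^{\otimes 2})\to H^2(\kappa_L,\mu_n)\cong\Brn(\kappa_L)$ are isomorphisms; in particular both source groups are cyclic of order $n$. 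The heart of the argument is then the commutativity of
\[ \xymatrix{
H^3(L,\mu_n^{\otimes 2})\ar[r]^{\partial_L}\ar[d]_{\cores_{L/F}} & \Brn(\kappa_L)\ar[d]^{\cores_{\kappa_L/\kappa}}\\
H^3(F,\mu_n^{\otimes 2})\ar[r]^{\partial_F} & \Brn(\kappa)
} \]
that is, the standard compatibility of corestriction with the residue homomorphism of a complete discretely valued field (in contrast with restriction, no ramification-index factor intervenes here; cf.\ \cite{GS}). Granting it, the proposition reduces to showing that $\cores_{\kappa_L/\kappa}\colon\Brn(\kappa_L)\to\Brn(\kappa)$ is bijective.

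That last assertion is local class field theory: the invariant isomorphisms $\mathrm{inv}_\kappa\colon\Br(\kappa)\xrightarrow{\ \sim\ }\Q/\Z$ and $\mathrm{inv}_{\kappa_L}\colon\Br(\kappa_L)\xrightarrow{\ \sim\ }\Q/\Z$ satisfy $\mathrm{inv}_\kappa\circ\cores_{\kappa_L/\kappa}=\mathrm{inv}_{\kappa_L}$ (cf.\ \cite{CFANT}), so $\cores_{\kappa_L/\kappa}$ corresponds to the identity of $\Q/\Z$ and restricts to an isomorphism on $n$-torsion subgroups. Hence $\cores_{L/F}=\partial_F^{-1}\circ\cores_{\kappa_L/\kappa}\circ\partial_L$ is a composite of isomorphisms, which proves the statement.

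The step I expect to be the main obstacle is the commutativity of the residue--corestriction square. It is classical --- one can reduce $L/F$ to prime degree and then treat the unramified and the (tamely) ramified cases separately using the explicit residue formulas for cyclic classes recalled in \S\ref{Preliminaries}, or simply quote it from \cite{GS} --- but it is the one point that demands genuine care. By contrast, the cheaper identity $\cores_{L/F}\circ\mathrm{res}_{L/F}=[L:F]$ does not suffice on its own, since $[L:F]$ need not be prime to $n$.
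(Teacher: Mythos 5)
Your argument is essentially the paper's own proof: both identify $H^3(F,\mu_n^{\otimes 2})$ and $H^3(L,\mu_n^{\otimes 2})$ with $H^2$ of the (local) residue fields via the residue isomorphisms (using $H^3$ of a local field with coefficients prime to its residue characteristic vanishes) and then conclude from the commutativity of the corestriction--residue square together with the invariant-map description of corestriction on Brauer groups of local fields. The compatibility you single out as the delicate step is exactly what the paper quotes from \cite[8.6]{serrecohoinvariants}, so your route and the paper's coincide.
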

  
  \begin{proof} Let $k'$ be the residue field of $L$.
 Since $k$ and $k'$ are   local fields,  $H^3(k, \mu_n^{\otimes 2}) 
 = H^3(k', \mu_n^{\otimes 2}) = 0$ (\cite[p. 86]{Serre_Gal_coh}). Since $F$ and $L$ are 
 complete discrete valued fields, the residue homomorphisms 
$H^{3}(F, \mu_n^{\otimes 2}) \buildrel{\partial_F}\over{\to} H^2(k, \mu_n)$ and
$H^{3 }(L, \mu_n^{\otimes 2}) \buildrel{\partial_L}\over{\to} H^2(k', \mu_n)$ 
are isomorphisms (cf. \cite[7.9]{serrecohoinvariants}). The proposition  follows from 
the  commutative diagram 
$$
\begin{array}{ccc}
H^{3 }(L, \mu_n^{2}) & \buildrel{\partial_L}\over{\to}   & H^2(k', \mu_n) \\
\downarrow & & \downarrow \\
H^{3 }(F, \mu_n^{\otimes 2}) & \buildrel{\partial_F}\over{\to} &  H^2(k, \mu_n),
\end{array}
$$
where the vertical arrows are the corestriction maps (\cite[8.6]{serrecohoinvariants}).  
    \end{proof}

\begin{lemma} 
\label{dot-zero}Let  $\ell$ be
a prime not equal to char$(\kappa)$  and $n = \ell^d$ for some $d \geq 1$.
  Let $\alpha \in H^2(F, \mu_n)$  and  $\lambda \in F^*$.  Write $\lambda = \theta \pi^r$
  for some $\theta, \pi \in F$ with $\nu(\theta) = 0$ and $\nu(\pi) = 1$.
  Let $(E, \sigma)$ be the lift of the residue of $\alpha$ and $\alpha = \alpha' + (E, \sigma, \pi)$ as in (\ref{rbc}).   
  Then 
  $$
  \partial( \alpha \cdot (\lambda)) = 0 \iff  r\alpha' = (E, \sigma, \theta) \iff 
 r \alpha = (E, \sigma, \lambda).$$
   In particular, if $\partial( \alpha \cdot (\lambda)) = 0$ and
   $ r = \nu(\lambda)$  is coprime to $\ell$,  then 
 ind$(\alpha \otimes
  F(\sqrt[\ell]{\lambda})) < $ ind$(\alpha)$ and $\alpha 
  \cdot (\sqrt[\ell]\lambda) = 0 \in H^3(F(\sqrt[\ell]{\lambda}),\mu_n^{\otimes 2})$.  
\end{lemma}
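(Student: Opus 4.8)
The plan is to reduce the hypothesis $\partial(\alpha\cdot(\lambda))=0$ to an identity in $H^2(\kappa,\mu_n)$ by means of the decomposition $\alpha=\alpha'+(E,\sigma,\pi)$ of Lemma~\ref{rbc}, after which the two displayed equivalences are essentially formal; the ``in particular'' clause will then follow by base change to $L=F(\sqrt[\ell]{\lambda})$, using Lemma~\ref{dvr_ind} and the norm description of the index of a cyclic algebra.

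First I would write $\alpha'=\iota(\overline{\alpha}')$ with $\overline{\alpha}'\in H^2(\kappa,\mu_n)$ (legitimate since $\alpha'$ is unramified and $F$ is complete), expand $(\lambda)=(\theta)+r(\pi)$, and compute $\partial(\alpha\cdot(\lambda))$ by additivity using the residue formulas of Section~\ref{Preliminaries}: $\partial(\alpha'\cdot(u))=0$ for a unit $u$, $\partial(\alpha'\cdot(\pi))=\overline{\alpha}'$, and the explicit formula for $\partial((E,\sigma,\pi)\cdot(\lambda))$. The outcome should be
\[
\partial\big(\alpha\cdot(\lambda)\big)=r\,\overline{\alpha}'-(E_0,\sigma_0,\overline{\theta})\in H^2(\kappa,\mu_n).
\]
Since $r\alpha'=\iota(r\overline{\alpha}')$ and, because $E/F$ is unramified and $\theta$ is a unit, $(E,\sigma,\theta)$ is unramified and equals $\iota((E_0,\sigma_0,\overline{\theta}))$, injectivity of $\iota$ makes $\partial(\alpha\cdot(\lambda))=0$ equivalent to $r\alpha'=(E,\sigma,\theta)$. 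The equivalence with $r\alpha=(E,\sigma,\lambda)$ is then pure bookkeeping: $r\alpha=r\alpha'+(E,\sigma,\pi^r)$ and $(E,\sigma,\lambda)=(E,\sigma,\theta)+(E,\sigma,\pi^r)$, so one cancels $(E,\sigma,\pi^r)$.

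For the last assertion, assume $\partial(\alpha\cdot(\lambda))=0$, $\gcd(r,\ell)=1$ and $\alpha\ne0$, and put $L=F(\sqrt[\ell]{\lambda})$. Since $\nu(\lambda)=r$ is prime to $\ell$ we have $\lambda\notin F^{*\ell}$, so $L/F$ is totally ramified of degree $\ell$ with residue field $\kappa$. By the first equivalence $r\alpha=(E,\sigma,\lambda)$, so $\alpha$ is ramified (otherwise $E=F$ and $r\alpha=0$ would force $\alpha=0$, as $r$ is prime to $\ell$); write $[E:F]=\ell^{s}$ with $s\ge1$. Over $L$, using $r\alpha=(E,\sigma,\lambda)$ together with $\{a,a\}=\{a,-1\}$,
\[
r\big(\alpha\cdot(\sqrt[\ell]{\lambda})\big)=(E\otimes L,\sigma\otimes 1)\cdot\big((\lambda)\cdot(\sqrt[\ell]{\lambda})\big)=(E\otimes L,\sigma\otimes 1)\cdot\big((\sqrt[\ell]{\lambda})\cdot((-1)^{\ell})\big)=0,
\]
the middle step using $(\lambda)=\ell(\sqrt[\ell]{\lambda})$ and the last using that $(-1)^{\ell}$ is an $n$-th power in $L$ ($=1$ if $\ell=2$, $=(-1)^{n}$ if $\ell$ is odd); since $H^3(L,\mu_n^{\otimes2})$ is $n$-torsion and $\gcd(r,n)=1$, this gives $\alpha\cdot(\sqrt[\ell]{\lambda})=0$ over $L$. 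For the index, by Lemma~\ref{dvr_ind} $\mathrm{ind}(\alpha)=\mathrm{ind}(\alpha\otimes E)[E:F]\ge\ell^{s}$; over $L$, $E\otimes_F L=EL$ is a field of degree $\ell^{s}$ over $L$ (as $E/F$ is unramified and $L/F$ totally ramified), and $r\alpha\otimes L$ is Brauer equivalent to the cyclic algebra $(EL,\sigma\otimes 1,\lambda)$, whose index is the order of $\lambda$ in $L^*/N_{EL/L}(EL^*)$; since $\lambda=(\sqrt[\ell]{\lambda})^{\ell}$ and $N_{EL/L}(\sqrt[\ell]{\lambda})=(\sqrt[\ell]{\lambda})^{\ell^{s}}$ (as $\sqrt[\ell]{\lambda}\in L$), we get $\lambda^{\ell^{s-1}}\in N_{EL/L}(EL^*)$, so that order divides $\ell^{s-1}$. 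As $r$ is prime to $\ell$, $\mathrm{ind}(\alpha\otimes L)=\mathrm{ind}(r\alpha\otimes L)\le\ell^{s-1}<\ell^{s}\le\mathrm{ind}(\alpha)$. (Alternatively, Lemma~\ref{cyclic_algebra_extn} identifies $(E,\sigma,\lambda)\otimes L$ with $(M\otimes_F L/L,\sigma\otimes1,\sqrt[\ell]{\lambda})$ for $M\subset E$ of degree $\ell^{s-1}$, giving the same bound.)

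I expect the residue computation producing the displayed formula for $\partial(\alpha\cdot(\lambda))$ to be the only real work: one must assemble the contributions of $\alpha'$ and of $(E,\sigma,\pi)$ and keep track of the $(-1)$-factors coming from the residue formula of Section~\ref{Preliminaries}. Everything afterwards is formal, relying on injectivity of $\iota$, additivity of cyclic algebras, Lemma~\ref{dvr_ind}, and the norm description of the index of a cyclic algebra.
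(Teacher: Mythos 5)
Your proof of the two displayed equivalences is essentially the paper's own argument: the same residue computation $\partial(\alpha\cdot(\lambda))=r\overline{\alpha}'+(E_0,\sigma_0,\overline{\theta}^{-1})$ (with the same treatment of the sign factor as in the paper's proof), followed by the identification of unramified classes over the complete field with classes over $\kappa$, and the bookkeeping $r\alpha=r\alpha'+(E,\sigma,\pi^r)$, $(E,\sigma,\lambda)=(E,\sigma,\theta)+(E,\sigma,\pi^r)$. Where you genuinely add something is the ``in particular'' clause, which the paper's proof does not spell out: you obtain $\alpha\cdot(\sqrt[\ell]{\lambda})=0$ over $L=F(\sqrt[\ell]{\lambda})$ by the symbol manipulation $(\lambda)\cdot(\sqrt[\ell]{\lambda})=(\sqrt[\ell]{\lambda})\cdot((-1)^{\ell})=0$ together with prime-to-$\ell$ divisibility (a sensible move, since a corestriction argument is unavailable when only $\partial(\alpha\cdot(\lambda))=0$ is assumed), and you get the index drop by comparing $\mathrm{ind}(\alpha)=\mathrm{ind}(\alpha\otimes E)[E:F]\geq[E:F]$ from Lemma~\ref{dvr_ind} with $r\alpha\otimes L$ being Brauer equivalent to a cyclic algebra of degree at most $[E:F]/\ell$. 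Two small cautions: for that last step, lean on your parenthetical route via Lemma~\ref{cyclic_algebra_extn} rather than on the ``index equals the order of $\lambda$ in $L^*/N_{EL/L}(EL^*)$'' statement — in general that order only computes the period of the cyclic algebra, and ``small order implies small index'' is precisely the direction that can fail, whereas the degree bound from Lemma~\ref{cyclic_algebra_extn} is unconditional; and note that the strict inequality $\mathrm{ind}(\alpha\otimes L)<\mathrm{ind}(\alpha)$ requires $\alpha\neq 0$ (equivalently $E\neq F$), which you correctly make explicit and which is implicit in the lemma as stated and used.
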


\begin{proof}  Since $r\alpha = r\alpha' + (E, \sigma, \pi^r)$ and $\lambda = \theta \pi^r$, 
$r\alpha = (E, \sigma, \lambda)$ if and only if $r\alpha' = (E, \sigma, \theta)$.

  We have 
 $$ \partial(\alpha \cdot (\lambda) )  =   \partial((\alpha' + 
  (E, \sigma, \pi)) \cdot (\theta \pi^r) )
   =  r\overline{\alpha}' + (E_0, \sigma_0, \overline{\theta}^{-1}),$$
   where $\partial(\alpha) = (E_0, \sigma_0)$. 
   
 Thus  $\partial(\alpha \cdot (\lambda) ) = 0$ if and only if $r\overline{\alpha}' + (E_0, \sigma_0, \overline{\theta}^{-1}) 
 = 0$ if and only if $r\overline{\alpha}' =  (E_0, \sigma_0, \overline{\theta})$ if and only if 
 $r\alpha  =  (E, \sigma,  \theta)$ ( $F$ being  complete).
  
%
%
%
 \end{proof}

\begin{lemma} 
\label{type-other} Let $ n \geq 2$ be coprime to char$(\kappa)$ and $\ell$
a prime which divides $n$.
Let $\alpha \in H^2(F, \mu_n)$, $\lambda = \theta \pi^{\ell r}
 \in F^*$ with $\theta$ a unit in the valuation ring of $F$, $\pi$ a parameter and $\alpha  =  \alpha' + (E, \sigma, \pi)$
 be as in (\ref{rbc}). 
 Suppose that     
 $\alpha \cdot (\lambda) = 0 \in H^3(F, \mu_n^{\otimes 2})$ and there exist an extension 
 $L_0$ of $\kappa$ of degree $\ell$ and $\mu_0 \in L_0$ 
 such that \\
$\bullet$   $N_{L_0/\kappa}(\mu_0) = \overline{\theta}$, \\
$\bullet$  $r\overline{\alpha}' \otimes
 L_0  = (E_0\otimes L_0,  \sigma_0 \otimes 1, \mu_0) .$\\
  Then, there exist an unramified 
 extension $L$ of $F$ of degree $\ell$ and $\mu \in L$ such that  \\
 $\bullet$ residue field of $L$ is  $L_0$, \\
 $\bullet$ $\mu$ a unit in the valuation ring of $L$, \\
 $\bullet$ $\overline{\mu} = \mu_0$, \\
 $\bullet$    $N_{L/F} (\mu) = \theta$,  \\
 $\bullet$  $\alpha \cdot (\mu \pi^r)  \in H^3(L, \mu_n^{\otimes 2})$ is unramified. 
\end{lemma}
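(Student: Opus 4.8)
The plan is to take $L/F$ to be the unramified degree-$\ell$ extension with residue field $L_0$, to lift $\mu_0$ to a unit $\mu$ of $L$ with $N_{L/F}(\mu)=\theta$, and then to deduce the last bullet from the residue criterion underlying (\ref{dot-zero}), applied over $L$. For the first step, observe that since $\ell\mid n$ and $n$ is coprime to $\mathrm{char}(\kappa)$, the degree-$\ell$ extension $L_0/\kappa$ is separable, so there is a unique unramified extension $L/F$ of degree $\ell$ with residue field $L_0$ (cf.\ \S\ref{Preliminaries}). Being unramified, $L$ has $\pi$ as a parameter and residue field $L_0$, and $\partial_L(\beta\otimes L)=\partial_F(\beta)\otimes L_0$ for $\beta$ over $F$; this gives the first bullet.

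To construct $\mu$: since $\overline{\theta}=N_{L_0/\kappa}(\mu_0)\neq 0$ we have $\mu_0\in L_0^{*}$, so we may choose a unit lift $\mu_1$ of $\mu_0$ in the valuation ring of $L$. Then $N_{L/F}(\mu_1)$ and $\theta$ are units of the valuation ring of $F$ with the same residue $\overline{\theta}$, so $\theta\, N_{L/F}(\mu_1)^{-1}\in 1+\mathfrak m_F$. The norm map $N_{L/F}\colon 1+\mathfrak m_L\to 1+\mathfrak m_F$ is surjective: on the successive quotients of the $\mathfrak m$-adic filtration it is induced by the trace $L_0\to\kappa$, which is surjective because $L_0/\kappa$ is separable, and one then concludes by successive approximation using completeness of $F$. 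Choosing $v\in 1+\mathfrak m_L$ with $N_{L/F}(v)=\theta\, N_{L/F}(\mu_1)^{-1}$ and setting $\mu=\mu_1 v$, we obtain $\mu$ a unit of the valuation ring of $L$ with $\overline{\mu}=\mu_0$ and $N_{L/F}(\mu)=\theta$, which yields bullets two, three and four.

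It remains to show $\alpha\cdot(\mu\pi^r)$ is unramified over $L$. Over $L$ we have the decomposition $\alpha\otimes L=(\alpha'\otimes L)+(E\otimes L,\sigma\otimes 1,\pi)$ of (\ref{rbc}) for $\alpha\otimes L$: indeed $E\otimes_F L$ is unramified over $L$ and $\partial_L(\alpha\otimes L)=(E_0\otimes L_0,\sigma_0\otimes 1)$, so $(E\otimes L,\sigma\otimes 1)$ is the lift of the residue of $\alpha\otimes L$ (in the case $L_0\subseteq E_0$, where $E_0\otimes_\kappa L_0$ is not a field, one replaces $E\otimes_F L$ by one of its components, with no change to the argument). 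Now both $r(\alpha'\otimes L)$ and the cyclic algebra $(E\otimes L,\sigma\otimes 1,\mu)$ are unramified over $L$ — the latter because $\mu$ is a unit — with residue-field images $r\,\overline{\alpha}'\otimes L_0$ and $(E_0\otimes L_0,\sigma_0\otimes 1,\mu_0)$ respectively; by the second displayed hypothesis these images coincide, and since $H^2(L_0,\mu_n)\to H^2_{nr}(L,\mu_n)$ is an isomorphism (cf.\ \S\ref{Preliminaries}) we conclude $r(\alpha'\otimes L)=(E\otimes L,\sigma\otimes 1,\mu)$ in $H^2(L,\mu_n)$. Arguing as in (\ref{dot-zero}) over $L$ with $\lambda$ replaced by $\mu\pi^r=\mu\cdot\pi^r$ (unit part $\mu$, valuation $r$), this equality is precisely the condition equivalent to $\partial_L(\alpha\cdot(\mu\pi^r))=0$, so $\alpha\cdot(\mu\pi^r)$ is unramified over $L$, as required.

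The bulk of this is soft: once $L$ and $\mu$ are chosen so the residues match, the cohomological conclusion is formal via (\ref{dot-zero}), since one is only comparing two unramified classes on the residue field. The hard part will be the exact degree-$\ell$ norm matching in the construction of $\mu$ — the surjectivity of $N_{L/F}$ on principal units, reduced to surjectivity of the trace on graded pieces — and, if one unwinds the appeal to (\ref{dot-zero}) into an explicit residue computation, the careful bookkeeping of the $(-1)$-type sign terms in the residue formulas of \S\ref{Preliminaries} (most delicate when $\ell=2$, where one uses $\alpha\cdot(\lambda)=0$ over $F$ together with $N_{L_0/\kappa}(\mu_0)=\overline{\theta}$ to absorb them); both are routine but deserve attention.
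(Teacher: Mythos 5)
Your proposal is correct and follows essentially the same route as the paper: take the unramified degree-$\ell$ extension $L$ with residue field $L_0$, lift $\mu_0$ to a unit $\mu$ with $N_{L/F}(\mu)=\theta$, and check $\partial_L(\alpha\cdot(\mu\pi^r))=0$ by combining the residues of $\alpha'\cdot(\mu\pi^r)$ and $(E,\sigma,\pi)\cdot(\mu\pi^r)$ against the hypothesis $r\overline{\alpha}'\otimes L_0=(E_0\otimes L_0,\sigma_0\otimes 1,\mu_0)$. The only differences are cosmetic: the paper produces $L$ and $\mu$ simultaneously by lifting the minimal polynomial of $\mu_0$ with constant term $(-1)^\ell\theta$ (and Hensel's lemma when $\mu_0\in\kappa$), where you instead correct an arbitrary unit lift by surjectivity of the norm on principal units, and the paper computes the residue of $\alpha\cdot(\mu\pi^r)$ directly rather than first promoting the residue-field identity to $H^2_{nr}(L)$ and invoking the criterion of (\ref{dot-zero}); the sign bookkeeping you flag is handled (indeed, suppressed) in the paper in exactly the same way.
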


\begin{proof}  Since $\ell$ is a prime and  $[L_0 : \kappa] = \ell$,  $L_0 = \kappa(\mu_0')$ for
any  $\mu_0' \in L_0 \setminus \kappa$. 
Let $g(X) = X^\ell + b_{\ell-1} X^{\ell-1} + \cdot + b_1X + b_0  \in \kappa[X]$ be the minimal polynomial 
of $\mu_0'$ over $\kappa$.  Let $a_i $ be   in the valuation ring of $F$ mapping to $b_i$
and $f(X) = X^\ell + a_{\ell-1}X^{\ell-1} + \cdots + a_1X + a_0 \in F[X]$.
If  $\mu_0 \not\in \kappa$, then we take $\mu_0' = \mu_0$. Since $N_{L_0/\kappa}(\mu_0) = \overline{\theta}$, 
we have $b_0 = (-1)^\ell \overline{\theta}$. In this case we take $a_0 = (-1)^\ell \theta$.
Since $g(X)$ is irreducible in $\kappa[X]$, $f(X) \in F[X]$ is irreducible.
 Let $L = F[X]/(f)$. Then $L/F$  is the unramified extension  with 
residue field $L_0$.  If $\mu_0 \in \kappa$, then $\overline{\theta} = N_{L_0/\kappa}(\mu_0) = \mu_0^\ell$.
Since $F$ is a complete discretely valued field and  $\ell$ is coprime to char$(\kappa)$, 
there exists $\mu \in F$ which is a unit in the valuation ring of $F$ which maps to $\mu_0$
and $\mu^\ell = \theta$. If $\mu_0 \not\in \kappa$, then let $\mu \in L$ be the image of $X$.
Then the image of $\mu$ is $\mu_0$ and $N_{L/F}(\mu) = \theta$.

 Since $L/F$, $E/F$ and $\alpha'$ are   unramified  at the discrete valuation of $F$, 
 we have $\partial_L(  \alpha'   \cdot (\mu \pi^r))= r \overline{\alpha}'  \otimes L_0$
 and  $\partial_L((E, \sigma, \pi)   \cdot (\mu \pi^r)) = \partial_L((E \otimes L, \sigma \otimes 1, \mu^{-1}) 
 \cdot (\pi))  =  (E_0 \otimes L_0, \sigma_0 \otimes 1,  \mu_0^{-1})$.
 Since $\alpha = \alpha' + (E, \sigma, \pi)$, we have
$$
\begin{array}{rcl}
 \partial_L( \alpha \cdot (\mu\pi^r))  & = & \partial_L( (\alpha' \otimes L) \cdot (\mu \pi^r))  + 
  \partial_L((E, \sigma, \pi ) \cdot (\mu\pi^r)) \\
& = &  r\overline{\alpha}' \otimes {L_0} + (E_0 \otimes L_0, \sigma_0 \otimes 1, \mu_0^{-1})   \\
& = & 0
\end{array}
$$
\end{proof}

 \begin{lemma}
 \label{lambda_non_square_local}
   Suppose that $\kappa$ is a local field. Let $\ell$ be  prime not 
 equal to  char$(k)$ and $n =\ell^d $. 
 Let $\alpha \in H^2(F, \mu_n)$ and $\lambda \in F^*$.
 Suppose  $\lambda  \not \in F^{*\ell}$, $\alpha \neq 0$ and $\alpha \cdot (\lambda) = 0$.
 Then ind$(\alpha \otimes F(\sqrt[\ell]{\lambda})) <  $ ind$(\alpha)$ and $\alpha \cdot (\sqrt[\ell]{\lambda}) 
 = 0 \in H^3(F(\sqrt[\ell]{\lambda}), \mu_n^{\otimes 2})$. 
 \end{lemma}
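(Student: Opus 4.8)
The plan is to reduce to the case where $\lambda$ is a parameter or a unit times a power of a parameter with controlled valuation, and then invoke Lemma~\ref{dot-zero} together with Theorem~\ref{local_local_period_index}. First I would write $\lambda = \theta \pi^r$ with $\theta$ a unit in the valuation ring and $r = \nu(\lambda)$. Let $(E,\sigma)$ be the lift of the residue of $\alpha$ and write $\alpha = \alpha' + (E,\sigma,\pi)$ as in Lemma~\ref{rbc}. Since $\lambda \notin F^{*\ell}$, at least one of the following holds: either $r$ is coprime to $\ell$, or $r$ is divisible by $\ell$ and $\bar\theta$ is not an $\ell$th power in $\kappa$ (equivalently $\theta \notin F^{*\ell}$), using that $F$ is complete. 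If $\ell \nmid r$, then $F(\sqrt[\ell]{\lambda})/F$ is a totally ramified extension of degree $\ell$, and the last sentence of Lemma~\ref{dot-zero} applies directly: $\partial(\alpha\cdot(\lambda)) = 0$ (this is implied by $\alpha\cdot(\lambda)=0$), and $\nu(\lambda) = r$ coprime to $\ell$ gives $\mathrm{ind}(\alpha \otimes F(\sqrt[\ell]{\lambda})) < \mathrm{ind}(\alpha)$ and $\alpha \cdot (\sqrt[\ell]{\lambda}) = 0$.

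So the remaining case is $\ell \mid r$ and $\theta \notin F^{*\ell}$; after replacing $\lambda$ by $\lambda \pi^{-r}$ — which changes $\alpha \cdot (\lambda)$ by $\alpha \cdot (\pi^r) = r(\alpha\cdot(\pi))$ — I would need to be a little careful, since this alters the hypothesis. Instead, a cleaner route: note $F(\sqrt[\ell]{\lambda}) = F(\sqrt[\ell]{\theta})$ in this case, because $\pi^r = (\pi^{r/\ell})^\ell$ is an $\ell$th power. So it suffices to prove the statement for $\lambda = \theta$ a unit. Here $F(\sqrt[\ell]{\theta})/F$ is an unramified extension of degree $\ell$ (since $\bar\theta \notin \kappa^{*\ell}$), with residue field $\kappa(\sqrt[\ell]{\bar\theta})$. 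By Theorem~\ref{local_local_period_index}, $\mathrm{per}(\alpha) = \mathrm{ind}(\alpha)$ over $F$, and likewise $\mathrm{per}(\alpha\otimes L) = \mathrm{ind}(\alpha\otimes L)$ for $L = F(\sqrt[\ell]{\theta})$. To get the strict index drop I would show $\mathrm{per}(\alpha \otimes L) < \mathrm{per}(\alpha)$, or directly that $L$ does not split into $\alpha$'s behavior trivially; the condition $\alpha\cdot(\theta) = 0$ should force $\alpha \otimes L$ to have smaller index. Concretely, from $\alpha\cdot(\theta)=0$ and $\partial(\alpha\cdot(\theta)) = 0$, Lemma~\ref{dot-zero} (applied with $\lambda = \theta$, so $r = 0$) gives $0 \cdot \alpha' = (E,\sigma,\bar\theta)$ wait — with $r=0$ this says $(E_0,\sigma_0,\bar\theta) = 0$ in $H^2(\kappa,\mu_n)$, i.e. $\bar\theta \in N_{E_0/\kappa}(E_0^*)$. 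That plus $\alpha = \alpha' + (E,\sigma,\pi)$ and $\mathrm{ind}(\alpha) = \mathrm{ind}(\alpha'\otimes E)[E:F]$ by Lemma~\ref{dvr_ind} should let me compare with $\mathrm{ind}(\alpha \otimes L)$ via a residue-field computation over $\kappa$, reducing the whole thing to the local-field statement $\mathrm{ind}(\bar\alpha' \otimes E_0 \otimes \kappa(\sqrt[\ell]{\bar\theta})) < \mathrm{ind}(\bar\alpha'\otimes E_0)$, which follows from the analogue of Lemma~\ref{local_index_cal} / standard local class field theory once one knows $\bar\theta$ is a norm from $E_0$.

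For the vanishing of $\alpha \cdot (\sqrt[\ell]{\lambda})$ over $L$: in the totally-ramified case this is handed to me by Lemma~\ref{dot-zero}. In the unramified case $L = F(\sqrt[\ell]{\theta})$, I would check $\partial_L(\alpha \cdot (\sqrt[\ell]{\theta})) = 0$ — note $\sqrt[\ell]{\theta}$ is a unit at the (unramified) valuation of $L$, so $\alpha\otimes L \cdot (\sqrt[\ell]{\theta})$ is unramified, being a cup product of two unramified classes — and then use that $H^3(L,\mu_n^{\otimes 2}) \hookrightarrow H^2(\kappa_L, \mu_n)$ via $\partial_L$ is injective on the unramified part, combined with the residue field identity $H^3(\kappa_L, \mu_n^{\otimes 2}) = 0$ (as $\kappa_L$ is a local field, by \cite[p.~86]{Serre_Gal_coh}), to conclude $\alpha \cdot (\sqrt[\ell]{\theta}) = 0$. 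Actually since $\kappa_L$ is local, $H^3(\kappa_L,\mu_n^{\otimes 2}) = 0$ already forces the unramified subgroup of $H^3(L,\mu_n^{\otimes 2})$ to be zero, so any unramified class over $L$ vanishes — this is the cleanest finish.

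The main obstacle I anticipate is the strict index inequality $\mathrm{ind}(\alpha\otimes L) < \mathrm{ind}(\alpha)$ in the unramified case: one must carefully track how passing to $L = F(\sqrt[\ell]{\theta})$ interacts with the decomposition $\alpha = \alpha' + (E,\sigma,\pi)$ — whether $\ell$ divides $[E:F]$ or not changes whether the drop happens in the "$\alpha'\otimes E$" factor or the "$[E:F]$" factor of Lemma~\ref{dvr_ind} — and then to reduce the claim to a clean statement over the local residue field $\kappa$ where period-index and norm arguments are available. The bookkeeping distinguishing these subcases (and verifying $\bar\theta$ really is a norm from the relevant cyclic extension of $\kappa$, which is exactly what $\partial(\alpha\cdot(\theta)) = 0$ encodes via Lemma~\ref{dot-zero}) is where the real work lies; everything else is either a direct citation of Lemma~\ref{dot-zero} or a soft cohomological-dimension argument.
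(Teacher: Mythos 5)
Your first case ($\nu(\lambda)$ coprime to $\ell$, handled by Lemma \ref{dot-zero}) and the reduction $F(\sqrt[\ell]{\lambda}) = F(\sqrt[\ell]{\theta})$ when $\ell \mid \nu(\lambda)$ agree with the paper, but the pivotal step of your unramified case rests on a false premise. After passing to the unit part $\theta$ you assert $\alpha\cdot(\theta)=0$ and apply Lemma \ref{dot-zero} with $r=0$ to get $(E_0,\sigma_0,\overline{\theta})=0$, i.e.\ that $\overline{\theta}$ is a norm from $E_0$. This does not follow: writing $\nu(\lambda)=\ell m$, one has $\alpha\cdot(\theta)=\alpha\cdot(\lambda)-\ell m\,\alpha\cdot(\pi)=-\ell m\,\alpha\cdot(\pi)$, which need not vanish (you flagged exactly this danger when discussing replacing $\lambda$ by $\lambda\pi^{-r}$, and then slipped back into it). What the hypothesis actually yields via Lemma \ref{dot-zero}, applied to $\lambda=\theta\pi^{\ell m}$, is $\ell m\,\overline{\alpha}'=(E_0,\sigma_0,\overline{\theta})$, which gives the norm condition only when $\ell m\,\overline{\alpha}'=0$. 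Consequently your reduction of the strict inequality to ``$\mathrm{ind}(\overline{\alpha}'\otimes E_0\otimes\kappa(\sqrt[\ell]{\overline{\theta}}))<\mathrm{ind}(\overline{\alpha}'\otimes E_0)$ because $\overline{\theta}$ is a norm from $E_0$'' is unfounded. The paper's proof shows how the case analysis you deferred actually carries the argument: if $\sqrt[\ell]{\theta}\in E$, the drop comes from the factor $[E:F]$ in Lemma \ref{dvr_ind}; if $\sqrt[\ell]{\theta}\notin E$ and $\alpha'\otimes E\neq 0$, the drop comes from the purely local-field fact $\mathrm{ind}(\overline{\alpha}'\otimes E_0(\sqrt[\ell]{\overline{\theta}}))<\mathrm{ind}(\overline{\alpha}'\otimes E_0)$, with no norm input at all; and only when $\alpha'\otimes E=0$, so that $\alpha=(E,\sigma,u\pi)$ by Lemma \ref{dvr_ind_per}, does one take the residue of $\alpha\cdot(\lambda)=0$, obtaining $(E_0,\sigma_0,\overline{\theta}^{-1}\overline{u}^{\ell m})=0$ --- note the unit correction $\overline{u}^{\ell m}$, not $\overline{\theta}$ alone --- and then Lemma \ref{local_field_norm} forces $\kappa(\sqrt[\ell]{\overline{\theta}})\subseteq E_0$, so the drop again passes through $E$. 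Without this case division and the correct form of the residue identity, the inequality $\mathrm{ind}(\alpha\otimes F(\sqrt[\ell]{\lambda}))<\mathrm{ind}(\alpha)$ is not established.

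A secondary problem: for the vanishing of $\alpha\cdot(\sqrt[\ell]{\theta})$ over $L=F(\sqrt[\ell]{\theta})$ you argue the class is unramified as ``a cup product of two unramified classes,'' but $\alpha$ itself is ramified whenever $E\neq F$, and $\partial_L(\alpha\cdot(\sqrt[\ell]{\theta}))$ equals $(E_0\otimes\kappa_L,\sigma_0,\overline{\sqrt[\ell]{\theta}}^{\,-1})$ up to sign, which is not obviously zero. The clean repair is the corestriction argument the paper uses in the analogous Lemma \ref{local_nonsquare}: by the projection formula $\mathrm{cores}_{L/F}(\alpha\cdot(\sqrt[\ell]{\lambda}))$ is $\alpha\cdot(\lambda)=0$, and corestriction on $H^3$ is injective by Proposition \ref{corestriction}, so $\alpha\cdot(\sqrt[\ell]{\lambda})=0$ over $L$.
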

 
\begin{proof} Suppose $\nu(\lambda)$ is coprime to $\ell$. Then, by (\ref{dot-zero}), 
we have ind$(\alpha \otimes F(\sqrt[\ell]{\lambda})) <  $ ind$(\alpha)$ and 
$\alpha \cdot (\sqrt[\ell]{\lambda})  = 0 \in H^3(F(\sqrt[\ell]{\lambda}), \mu_n^{\otimes 2})$.
 
 Suppose that $\nu(\lambda) $ is divisible by $\ell$. Write $\lambda = \theta \pi^{\ell d}$
 with $\theta \in F$ a unit in the valuation ring of $F$.

Write $\alpha = \alpha' + (E, \sigma, \pi)$ as in (\ref{rbc}). Then 
ind$(\alpha) = $ ind$(\alpha' \otimes E) [E : F]$ and
ind$(\alpha \otimes F(\sqrt[\ell]{\theta})) \leq $ ind$(\alpha' \otimes E(\sqrt[\ell]
{\theta})) [E(\sqrt[\ell]{\theta}) : F(\sqrt[\ell]{\theta})]$ (cf. \ref{dvr_ind}).
If $\sqrt[\ell]{\theta} \in E$, then $F(\sqrt[\ell]{\theta}) \subset  E = E(\sqrt[\ell]{\theta})$.
In particular $[E(\sqrt[\ell]{\theta}) : F(\sqrt[\ell]{\theta})] = [E : F(\sqrt[\ell]{\theta})] < [E: F]$.
Since  ind$(\alpha' \otimes E(\sqrt[\ell]{\theta})) \leq $  
ind$(\alpha' \otimes E)$, it follows that  ind$(\alpha \otimes F(\sqrt[\ell]{\theta})) < $ ind$(\alpha)$.

Suppose  $\sqrt[\ell]{\theta} \not\in E$.  Since $E$ is unramified extension of $F$ and
$\theta$ is a unit in the valuation ring of $E$, $E(\sqrt[\ell]{\theta})$ is an unramified extension of 
$F$ with residue field $E_0(\sqrt[\ell]{\overline{\theta}})$, where $E_0$ is the residue field of
$E$ and $\overline{\theta}$ is the image of $\theta$ in the residue field. Since $F$ is a complete
discretely valued field and $\theta$ is not an $\ell^{\rm th}$ power in $E$, 
$\overline{\theta}$ is not an $\ell^{\rm th}$ power in $E_0$ and
$[E_0(\sqrt[\ell]{\overline{\theta}}) : E_0] = \ell$. 

 Suppose $\alpha' \otimes E \neq 0$. Then $\overline{\alpha}' 
  \otimes E_0  \neq 0$. Since $E_0$ is a local field and ind$(\overline{\alpha}')$ is a 
  power of $\ell$,   ind$(\overline{\alpha}' \otimes E_0(\sqrt[\ell]{\overline{\theta}})) < 
  $ ind$(\overline{\alpha}' \otimes E_0)$ (\cite[p. 131]{CFANT}). Hence ind$( \alpha' \otimes E(\sqrt[\ell]{ \theta})) 
  <  $ ind$( \alpha' \otimes E)$ and ind$(\alpha \otimes F(\sqrt[\ell]{\theta})) < $ ind$(\alpha)$. 

Suppose that $\alpha' \otimes E = 0$. Then, by (\ref{dvr_ind_per}),  $\alpha
= (E, \sigma, u\pi)$ for some unit $u$ in the valuation ring of $F$.
Since $\alpha \cdot (\lambda) = 0$, $(E, \sigma, u\pi) \cdot (\lambda ) = 0$.
Since $E/F$ is unramified with residue field $E_0$, $u, \theta$ are units in the 
valuation ring of $F$ and $\pi$ a parameter, by taking the residue of
$\alpha \cdot (\lambda) = 0$, we see that $(E_0, \sigma_0,  \overline{\theta}^{-1} \overline{u}^{\ell d}) = 0 \in H^2(\kappa, \mu_n)$
(cf. \ref{dot-zero}).
In particular, $\overline{\theta} \overline{u}^{-\ell d}$ is a norm from $E_0$. Since $[E_0 : k]$ is a power of $\ell$
and $E_0/\kappa$ is cyclic, there exists a sub extension $L$ of $E_0$ such that $[L : \kappa] = \ell$.
Then $\overline{\theta} \overline{u}^{-\ell d}$ is a norm from $L$ and hence  
  $\overline{\theta}$ is a norm from $L$.
Since $\overline{\theta}$ is not in $\kappa^{*\ell}$, by (\ref{local_field_norm}), $L = \kappa(\sqrt[\ell]{
\overline{\theta}})$. In particular $\sqrt[\ell]{\overline{\theta}} \in E_0$ and hence 
$\sqrt[\ell]{\theta} \in E$.  Thus ind$(\alpha \otimes F(\sqrt[\ell]{\theta})) 
= $ ind$((E, \sigma, u\pi) \otimes F(\sqrt[\ell]{\theta})) < $ ind$(E,\sigma, u \pi) = $ ind$(\alpha)$. 
  \end{proof}

\begin{lemma} 
\label{local-local}
Suppose $\kappa$ is a local field. Let $\ell$ be a prime not equal to 
char$(\kappa)$ and $n = \ell^d$. Let $\alpha \in H^2(F, \mu_n)$ and $\lambda \in F^*$.
Suppose that $\kappa$ contains a primitive $\ell^{\rm th}$ root of unity. 
If $\alpha \neq 0$ and $\alpha \cdot (\lambda) = 0  \in H^3(F,\mu_n^{\otimes 2})$, then 
there exist a cyclic field extension $L/F$ of degree $\ell$ and $\mu \in L^*$ such that 
$N_{L/F}(\mu) = \lambda$, ind$(\alpha \otimes L) < $ ind$(\alpha)$
 and $\alpha \cdot (\mu) = 0 \in H^3(L, \mu_n^{\otimes 2})$.  Further, if $\lambda \in F^{*\ell}$, then 
 $L/F$ is  unramified and $\mu \in F$.  
\end{lemma}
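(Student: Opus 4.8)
The plan is to descend the whole problem to the residue field $\kappa$, which is local. Write $\alpha=\alpha'+(E,\sigma,\pi)$ as in (\ref{rbc}), with $\partial(\alpha)=(E_0,\sigma_0)$, $(E,\sigma)$ its unramified lift, and $\overline{\alpha'}\in H^2(\kappa,\mu_n)$ the image of the unramified class $\alpha'$; put $\lambda=\theta\pi^{r}$ with $\theta$ a unit and $r=\nu(\lambda)$. Since $\kappa$ is local, $H^3(\kappa,\mu_n^{\otimes 2})=0$, so $\partial\colon H^3(F,\mu_n^{\otimes 2})\to H^2(\kappa,\mu_n)$ is an isomorphism; hence $\alpha\cdot(\lambda)=0$ iff $\partial(\alpha\cdot(\lambda))=0$, which by (\ref{dot-zero}) means $r\,\overline{\alpha'}=(E_0,\sigma_0,\overline{\theta})$ in $H^2(\kappa,\mu_n)$, equivalently $r\alpha=(E,\sigma,\lambda)$. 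I also record $\mathrm{per}(\alpha)=\mathrm{ind}(\alpha)=\mathrm{ind}(\alpha\otimes E)\,[E:F]$ from (\ref{local_local_period_index}) and (\ref{dvr_ind}), that $[E:F]$ is a power of $\ell$, and that $F$ contains a primitive $\ell$-th root of unity by Hensel's lemma. The argument splits on whether $\lambda\in F^{*\ell}$.

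\emph{Case $\lambda=c^{\ell}$ with $c\in F^{*}$.} Put $\mu:=c$, so $N_{L/F}(\mu)=c^{[L:F]}=\lambda$ for every degree-$\ell$ extension $L$, and the final assertion of the lemma is secured once $L/F$ is taken unramified. Let $L$ be the unramified degree-$\ell$ extension of $F$ whose residue field $\kappa_1$ is the unramified degree-$\ell$ extension of $\kappa$ when $\alpha$ is unramified, and the unique degree-$\ell$ subextension of $E_0/\kappa$ when $\alpha$ is ramified (so then $L$ is the degree-$\ell$ subextension of $E/F$); in either case $L/F$ is cyclic. Using (\ref{dvr_ind}), the isomorphism $E\otimes_F L\cong E^{\ell}$ when $L\subseteq E$, and the fact that over a local field a degree-$\ell$ extension splits every $\ell$-torsion Brauer class, one obtains $\mathrm{ind}(\alpha\otimes L)=\mathrm{ind}(\alpha)/\ell<\mathrm{ind}(\alpha)$. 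For $\alpha\cdot(\mu)=0$ over $L$, use $H^3(L,\mu_n^{\otimes 2})\cong H^2(\kappa_1,\mu_n)$ and compute $\partial_L(\alpha\cdot(c))$ via (\ref{dot-zero}) applied over $L$: writing $c=c_0\pi^{s}$ with $c_0$ a unit and $s=\nu(c)$, it equals $\delta_0\otimes\kappa_1$, where $\delta_0:=s\,\overline{\alpha'}-(E_0,\sigma_0,\overline{c_0})\in H^2(\kappa,\mu_n)$; since $r=\ell s$ and $\alpha\cdot(\lambda)=0$ gives $r\,\overline{\alpha'}=\ell\,(E_0,\sigma_0,\overline{c_0})$, one has $\ell\,\delta_0=0$, whence $\delta_0\otimes\kappa_1=0$ as $[\kappa_1:\kappa]=\ell$ and $\kappa$ is local. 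Thus $\alpha\cdot(\mu)=0$.

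\emph{Case $\lambda\notin F^{*\ell}$.} By (\ref{lambda_non_square_local}), $L:=F(\sqrt[\ell]{\lambda})$ — cyclic of degree $\ell$ by Kummer theory — satisfies $\mathrm{ind}(\alpha\otimes L)<\mathrm{ind}(\alpha)$ and $\alpha\cdot(\sqrt[\ell]{\lambda})=0$ in $H^3(L,\mu_n^{\otimes 2})$. If $\ell$ is odd, then $\mu:=\sqrt[\ell]{\lambda}$ has $N_{L/F}(\mu)=(-1)^{\ell+1}\lambda=\lambda$ and $\alpha\cdot(\mu)=0$, finishing the proof. If $\ell=2$ one only gets $N_{L/F}(\sqrt{\lambda})=-\lambda$, so the sign must be corrected. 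When $(\lambda,-1)_F=0$ one has $-1=N_{F(\sqrt{\lambda})/F}(b)$ for some $b$; then $\mu:=b\sqrt{\lambda}$ generates $F(\sqrt{\lambda})$ and $N_{L/F}(\mu)=\lambda$, while $\alpha\cdot(\mu)=\alpha_L\cdot(b)$ with $\mathrm{cor}_{L/F}(\alpha_L\cdot(b))=\alpha\cdot(N_{L/F}(b))=\alpha\cdot(-1)$ by the projection formula; one verifies $\alpha\cdot(-1)=0$ in the cases that arise here, so $\alpha\cdot(\mu)=0$ because $\mathrm{cor}_{L/F}$ is injective on $H^3$ by (\ref{corestriction}). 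When $(\lambda,-1)_F\neq0$, so that $-\lambda\notin F^{*2}$, I instead take $L:=F(\sqrt{-\lambda})$ and $\mu:=\sqrt{-\lambda}$, which gives $N_{L/F}(\mu)=\lambda$, and re-run the residue analysis over this $L$ — using $(\lambda,-\lambda)_F=0$, (\ref{dot-zero}) over $L$, and that a unit is a norm from any unramified quadratic extension of the local field $\kappa$ — to obtain $\mathrm{ind}(\alpha\otimes L)<\mathrm{ind}(\alpha)$ and $\alpha\cdot(\mu)=0$.

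The delicate point, and the main obstacle, is exactly this $\ell=2$, $\lambda\notin F^{*2}$ situation: the cyclic extension handed to us by (\ref{lambda_non_square_local}) carries $\sqrt{\lambda}$ of norm $-\lambda$ rather than $\lambda$, so one must choose the quadratic extension $L$ (either $F(\sqrt{\lambda})$ or $F(\sqrt{-\lambda})$, according to the Hilbert symbol $(\lambda,-1)_F$) together with a norm element $\mu$ so that the index drop, the norm identity $N_{L/F}(\mu)=\lambda$, and the vanishing of the cup product all hold simultaneously; the control needed for the replacement extension comes from the residue analysis over $\kappa$, the projection formula, and the injectivity of corestriction in (\ref{corestriction}). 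Once such $(L,\mu)$ with $\mathrm{ind}(\alpha\otimes L)<\mathrm{ind}(\alpha)$, $N_{L/F}(\mu)=\lambda$ and $\alpha\cdot(\mu)=0$ is produced — and $L/F$ is seen to be unramified with $\mu\in F$ in the case $\lambda\in F^{*\ell}$ — the lemma follows.
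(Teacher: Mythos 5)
Your treatment of the case $\lambda\in F^{*\ell}$ is correct and is essentially the paper's argument with two harmless variations: you take for $L$ the degree-$\ell$ subextension of $E$ whenever $\alpha$ is ramified (the paper does this only when $\alpha'\otimes E=0$, and otherwise picks an unramified $L$ whose residue field is linearly disjoint from $E_0$; both choices give $\mathrm{ind}(\alpha\otimes L)=\mathrm{ind}(\alpha)/\ell$ via (\ref{dvr_ind})), and you verify $\alpha\cdot(\mu)=0$ by a residue computation over $L$ instead of the paper's one-line argument that $\mathrm{cor}_{L/F}(\alpha\cdot(\mu))=\alpha\cdot(\lambda)=0$ together with the injectivity of corestriction (\ref{corestriction}). (Your parenthetical that a degree-$\ell$ extension of a local field ``splits every $\ell$-torsion Brauer class'' is not the fact you actually use; what you use, correctly, is that restriction to a degree-$\ell$ extension of a local field divides the index by $\ell$.) The case $\lambda\notin F^{*\ell}$ with $\ell$ odd also coincides with the paper: $L=F(\sqrt[\ell]{\lambda})$, $\mu=\sqrt[\ell]{\lambda}$, and (\ref{lambda_non_square_local}).

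The gap is in your $\ell=2$, $\lambda\notin F^{*2}$ branch. You are right that $N_{F(\sqrt{\lambda})/F}(\sqrt{\lambda})=-\lambda$, a sign the paper passes over silently, but your repair does not close the issue. In the branch $(\lambda,-1)_F=0$ you need $\alpha_L\cdot(b)=0$ for some $b$ with $N_{L/F}(b)=-1$, which by (\ref{corestriction}) amounts to $\alpha\cdot(-1)=0$; you assert that ``one verifies $\alpha\cdot(-1)=0$ in the cases that arise here'' but give no argument, and the hypotheses $\alpha\neq 0$, $\alpha\cdot(\lambda)=0$ do not imply it. In the branch $(\lambda,-1)_F\neq0$ you take $L=F(\sqrt{-\lambda})$ and propose to ``re-run the residue analysis'', but the analogue of (\ref{lambda_non_square_local}) for this $L$ (both the index drop and $\alpha\cdot(\sqrt{-\lambda})=0$) would take as input $\alpha\cdot(-\lambda)=0$, i.e.\ once again $\alpha\cdot(-1)=0$, which is not known; so neither branch is established, and your dichotomy is governed by the wrong invariant ($(\lambda,-1)_F$ rather than the class $\alpha\cdot(-1)$). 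Thus for $\ell=2$ the proposal as written is incomplete: you have correctly located a subtlety that the paper's own proof (which simply takes $\mu=\sqrt[\ell]{\lambda}$ and uses $N(\mu)=\lambda$) does not address, but the needed construction of a degree-$2$ extension and norm element when $\alpha\cdot(-1)\neq0$ is still missing.
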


\begin{proof} Suppose $\lambda$ is not an $\ell^{\rm th}$ power in $F$. Let $L = F(\sqrt[\ell]{\lambda})$
and $\mu =\sqrt[\ell]{\lambda}$. Then, by (\ref{lambda_non_square_local}),  ind$(\alpha \otimes L) < $ ind$(\alpha)$
and  $\alpha \cdot (\mu) = 0 \in H^3(L, \mu_n^{\otimes 2})$.

Suppose $\lambda = \mu^\ell$ for some $\mu \in F^*$. 
Write $\alpha = \alpha' + (E, \sigma, \pi)$
as in (\ref{rbc}). 

Suppose that $\alpha' \otimes E = 0$. 
Then, by (\ref{dvr_ind_per}), 
$\alpha = (E, \sigma, u \pi)$ for some $u \in F^*$ which is a unit in the valuation ring of $F$.
Let $L$ be the unique subfield of $E$ with $L/F$ of degree $\ell$. 
Then ind$(\alpha \otimes L) < $ ind$(\alpha)$. 
Since cor$_{L/F}(\alpha \cdot (\mu) ) 
= \alpha \cdot (\mu^{\ell}) = \alpha \cdot (\lambda)  = 0$, by (\ref{corestriction}), 
$\alpha \cdot (\mu) = 0$ in $H^3(L, \mu_n)$.  We also have  $\lambda = \mu^\ell = N_{L/F}(\mu)$.

Suppose that $\alpha' \otimes E \neq 0$.
Let $E_0$ be the residue field of $E$.  Then $E_0 / \kappa$ is a cyclic field extension of $\kappa$
of degree 
equal to the degree of $E/F$. Since $\kappa$ is a local field and contains a primitive $\ell^{\rm th}$ root of
unity, there are at  least three distinct  cyclic field extensions of $\kappa$ of degree $\ell$. Since $E_0/\kappa$ 
is a cyclic extension, there is at most  one sub extension of $E_0$ of degree $\ell$ over $\kappa$.
Thus there exists a cyclic field extension $L_0/\kappa$ of degree $\ell$ such that 
$E_0 \otimes L_0$ is a field. Let $L/F$ be the unramified extension  with residue field $L_0$.
Then $E \otimes L$ is a field. 
Let $\overline{\alpha}'$ be the image of $\alpha'$ in $H^2(\kappa, \mu_n)$. Since $E$ is a complete discretely 
valued field,   $\overline{\alpha}' \otimes E_0 \neq 0$. 
Since   $E_0 \otimes L_0/E_0$ is a field extension of degree $\ell$ and  
$\kappa$ is a  local field,  ind$(\overline{\alpha}' \otimes E_0 \otimes L_0) < $ ind$(
\overline{\alpha}' \otimes E_0)$ (\cite[p. 131]{CFANT}).  Since $E$ is a complete discretely valued field,
ind$(\alpha' \otimes E \otimes L) < $ ind$(
 \alpha' \otimes E)$. Since $L/F$ is unramified,  $\partial(\alpha \otimes L) = \partial(\alpha) \otimes L_0$
 (cf. \cite[Proposition 3.3.1]{CT-Santa-Barbara}) 
 and hence  the decomposition $\alpha \otimes L = \alpha' \otimes L + (E \otimes L, \sigma \otimes 1, \pi)$ is 
 as in (\ref{rbc}).
Thus, by (\ref{dvr_ind}), ind$(\alpha \otimes L) < $ ind$(\alpha)$.  As above, we also have 
 $\lambda = N_{L/F}(\mu)$ and $\alpha \cdot (\mu) = 0 \in H^3(L, \mu_n^{\otimes 2})$. 
\end{proof}

\begin{lemma} 
\label{local-global}
Suppose $\kappa$ is a global  field. Let $\ell$ be a prime not equal to 
char$(\kappa)$ and $n = \ell^d$. Let $\alpha \in H^2(F, \mu_n)$ and $\lambda \in F^*$.
If $\alpha \neq 0$ and $\alpha \cdot (\lambda) = 0  \in H^3(F,\mu_n^{\otimes 2})$, then 
there exist a field extension $L/F$ of degree $\ell$ and $\mu \in L^*$ such that 
$N_{L/F}(\mu) = \lambda$, ind$(\alpha \otimes L) < $ ind$(\alpha)$
 and $\alpha \cdot (\mu) = 0 \in H^3(L, \mu_n^{\otimes 2})$. 
\end{lemma}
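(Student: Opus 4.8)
The plan follows the template of \ref{local-local}, with the local-field facts about $\kappa$ replaced by the global-field results of \S 3 and a lifting step via \ref{type-other}. Write $\lambda=\theta\pi^{r}$ with $\theta$ a unit in the valuation ring and $\pi$ a parameter, and decompose $\alpha=\alpha'+(E,\sigma,\pi)$ as in \ref{rbc}, where $(E_{0},\sigma_{0})=\partial(\alpha)$, the extension $E_{0}/\kappa$ is cyclic of $\ell$-power degree, and $\overline{\alpha}'\in H^{2}(\kappa,\mu_{n})$ is the image of $\alpha'$; recall from \ref{dvr_ind} that $\mathrm{ind}(\alpha)=\mathrm{ind}(\overline{\alpha}'\otimes E_{0})\,[E:F]$. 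Taking residues in $\alpha\cdot(\lambda)=0$ and using \ref{dot-zero} gives $r\,\overline{\alpha}'=(E_{0},\sigma_{0},\overline{\theta})$ in $H^{2}(\kappa,\mu_{n})$. If $\gcd(r,\ell)=1$, then $\lambda\notin F^{*\ell}$, and by \ref{dot-zero} the degree-$\ell$ field extension $L=F(\sqrt[\ell]{\lambda})$ already satisfies $\mathrm{ind}(\alpha\otimes L)<\mathrm{ind}(\alpha)$ and $\alpha\cdot(\sqrt[\ell]{\lambda})=0$ in $H^{3}(L,\mu_{n}^{\otimes2})$; taking $\mu=\sqrt[\ell]{\lambda}$ (with a routine sign adjustment when $\ell=2$) gives $N_{L/F}(\mu)=\lambda$ and finishes this case.

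So assume $\ell\mid r$, say $r=\ell s$, so that $\ell s\,\overline{\alpha}'=(E_{0},\sigma_{0},\overline{\theta})$. The plan is to produce, over the residue field $\kappa$, a degree-$\ell$ field extension $L_{0}/\kappa$ and $\mu_{0}\in L_{0}$ with $N_{L_{0}/\kappa}(\mu_{0})=\overline{\theta}$ and $s\,\overline{\alpha}'\otimes L_{0}=(E_{0}\otimes L_{0},\sigma_{0}\otimes1,\mu_{0})$, together with an index drop on $\overline{\alpha}'\otimes E_{0}$: if $s\,\overline{\alpha}'\otimes E_{0}\neq0$ this is \ref{global_field_non_zero} (with $k=\kappa$, $\beta=\overline{\alpha}'$ and the ``$r$'' there equal to $s$), which moreover gives $\mathrm{ind}(\overline{\alpha}'\otimes E_{0}\otimes L_{0})<\mathrm{ind}(\overline{\alpha}'\otimes E_{0})$ (in particular $L_{0}\not\subset E_{0}$); if $s\,\overline{\alpha}'\otimes E_{0}=0$ and $E_{0}\neq\kappa$ this is \ref{global_field_zero}, with $L_{0}$ the unique degree-$\ell$ subfield of $E_{0}$; and if $E_{0}=\kappa$ (so $\alpha$ is unramified and $\ell s\,\overline{\alpha}'=0$) this is \ref{globalfield_unramified}, using that index equals period over a global field to also secure $s\,\overline{\alpha}'\otimes L_{0}=0$. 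In every subcase \ref{type-other} then yields an unramified degree-$\ell$ field extension $L/F$ with residue field $L_{0}$ and a unit $\mu\in L$ lifting $\mu_{0}$ with $N_{L/F}(\mu)=\theta$ and $\alpha\cdot(\mu\pi^{s})$ unramified in $H^{3}(L,\mu_{n}^{\otimes2})$. Put $\mu_{L}=\mu\pi^{s}$: then $N_{L/F}(\mu_{L})=\theta\pi^{\ell s}=\lambda$, and since $L/F$ is unramified the decomposition $\alpha\otimes L=\alpha'\otimes L+(E\otimes L,\sigma\otimes1,\pi)$ is of the form in \ref{rbc}, so \ref{dvr_ind} together with the residue-field index drop (the lift of the residue of $\alpha\otimes L$ having degree $[E:F]$ over $L$ when $L_{0}\not\subset E_{0}$, and $[E:F]/\ell$ when $L_{0}\subset E_{0}$) gives $\mathrm{ind}(\alpha\otimes L)<\mathrm{ind}(\alpha)$.

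It remains to upgrade ``$\alpha\cdot(\mu_{L})$ is unramified'' to ``$\alpha\cdot(\mu_{L})=0$'', and this is \emph{the only real difficulty}. Since $L$ is complete with residue field $L_{0}$, one has $\alpha\cdot(\mu_{L})=\iota_{L_{0}}(\overline{\gamma})$ for a unique $\overline{\gamma}\in H^{3}(L_{0},\mu_{n}^{\otimes2})$. When $L_{0}$ is a global function field, or a number field with $\ell$ odd, $\mathrm{cd}_{\ell}(L_{0})\le2$, hence $H^{3}(L_{0},\mu_{n}^{\otimes2})=0$ and $\overline{\gamma}=0$, which completes the proof. When $\ell=2$ and $L_{0}$ is a number field, $H^{3}(L_{0},\mu_{2}^{\otimes2})$ can be nonzero (detected at the real places), so one must in addition exploit the freedom in \ref{global_field_non_zero}, \ref{global_field_zero} and \ref{globalfield_unramified} to prescribe the local data at the finitely many real places of $\kappa$ — in effect the signs of $\mu_{0}$ — so that $\overline{\gamma}$ vanishes at every real place of $L_{0}$. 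I expect this real-place bookkeeping to be the crux of the argument, everything else being a direct assembly of the results of \S 3 and \S 5 already established.
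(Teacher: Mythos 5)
Your proposal follows the paper's own proof step for step: the same initial split according to whether $\nu(\lambda)$ is prime to $\ell$ (settled by \ref{dot-zero} with $L=F(\sqrt[\ell]{\lambda})$), the same decomposition $\alpha=\alpha'+(E,\sigma,\pi)$ from \ref{rbc}, the same three subcases handled by \ref{global_field_non_zero}, \ref{global_field_zero} and \ref{globalfield_unramified}, and the same conclusion via \ref{type-other}; your verification of the index drop through \ref{dvr_ind} is a detail the paper leaves unstated, and it is essentially right. The only place where you go beyond the paper is the passage from ``$\alpha\cdot(\mu\pi^{s})$ unramified'' to ``$\alpha\cdot(\mu\pi^{s})=0$'': the paper simply says ``by \ref{type-other} we have the required $L$ and $\mu$'', implicitly using $H^3_{nr}(L,\mu_n^{\otimes 2})\simeq H^3(L_0,\mu_n^{\otimes 2})=0$ for the global residue field $L_0$ (the same vanishing it invokes explicitly later, in the proof of \ref{codimension_345a}); this is exactly your $\mathrm{cd}_\ell(L_0)\le 2$ argument, valid for global function fields and for odd $\ell$ over number fields, which covers the residue fields arising in the paper's main application. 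Your concern about $\ell=2$ and real places of a number field is a genuine point, but it is not addressed by the paper's proof either, and your proposed fix is only sketched; note in particular that in the subcase $s\,\overline{\alpha}'\otimes E_0=0$ with $E_0\neq\kappa$ the field $L_0$ is forced (the quadratic subfield of $E_0$), so the only freedom available for prescribing behaviour at real places is in the choice of $\mu_0$, along the lines of \ref{global_field_zero_cor}. In short: same route as the paper, complete to the same extent as (indeed slightly more carefully than) the paper, with the $\ell=2$ real-place bookkeeping left open on both sides.
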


\begin{proof} Suppose that $\nu(\lambda)$ is coprime to $\ell$. Then, by (\ref{dot-zero}), 
 $L = F(\sqrt[\ell]{\lambda})$ and $\mu = \sqrt[\ell]{\lambda}$
has the required properties.

 Suppose that $\nu(\lambda)$ is divisible by $\ell$. Let $\pi$ be a parameter in $F$.
Then   $\lambda = \theta \pi^{r\ell}$ with $\nu(\theta) = 0$. 
 Write $\alpha = \alpha' + (E, \sigma, \pi)$ as in (\ref{rbc}). 
Let $\overline{\alpha}'$ be the image of 
$\alpha'$  in $H^2(\kappa, \mu_n)$  and $\theta_0$ the image of $\theta$ in $\kappa$.
Since $\alpha \cdot (\lambda)= 0$, by (\ref{dot-zero}), we have 
$r\ell \overline{\alpha}' = (E_0, \sigma_0, \theta_0)$, where $E_0$ is the residue field of $E$ and 
$\sigma_0$ induced by $\sigma$.
 
 Suppose that $r\overline{\alpha}' \otimes E_0 \neq 0$.
Then, by (\ref{global_field_non_zero}) , there exist a  extension $L_0/\kappa$
of degree $\ell$ and $\mu_0 \in L_0$ such that $N_{L_0/\kappa}(\mu_0) = \theta_0$,
ind$(\overline{\alpha}' \otimes E_0 \otimes L_0) < $ ind$(\overline{\alpha}' \otimes E_0)$
 and $r\overline{\alpha}' \otimes L_0 = (E_0\otimes L_0, \sigma_0, \mu_0)$.

 Suppose that  $r \overline{\alpha}' \otimes E_0 = 0$. Suppose that  $E_0 \neq \kappa$.
 Let $L_0$ be the unique subfield field of $E_0$ of degree $\ell$ over $\kappa$. 
Then, by (\ref{global_field_zero}),  there exists $\mu_0 \in L_0$
such that $N_{L_0/\kappa}(\mu_0) = \theta_0$ and $r\overline{\alpha}' \otimes L_0
= (E_0, \sigma_0, \mu_0)$.
 
 Suppose that   $E_0 =  \kappa$.  Then, by (\ref{globalfield_unramified}), there exist
 a field extension $L_0/\kappa$ of degree $\ell$ and $\mu_0 \in L_0$ such that 
 $N_{L_0/k}(\mu_0) = \theta_0$ and ind$(\overline{\alpha}' \otimes L_0) < $ ind$(\overline{\alpha}')$.

By (\ref{type-other}), we have the required $L$ and $\mu$. 
\end{proof}

\begin{theorem}
\label{thm_dvr} Let $F$  be a complete discrete valued field with residue  field $\kappa$.
Suppose that $\kappa$ is a local field or a global field. Let $D$ be a central simple algebra 
over $F$ of period $n$. Suppose that $n$ is coprime to char$(\kappa)$. Let $\alpha \in H^2(F,\mu_n)$ be the 
class of $D$ and  $\lambda \in F^*$.
If $\alpha \cdot (\lambda) = 0 \in H^3(F, \mu_n^{\otimes 2})$, then $\lambda$ is a reduced norm 
from $D$. 
\end{theorem}

\begin{proof}  Write $n = \ell_1^{d_1} \cdots \ell_r^{d_r}$, $\ell_i$ distinct  primes, $d_i > 0$, 
  $D = D_1 \otimes \cdots \otimes  D_r$ with each $D_i$ a central simple algebra 
over $F$ of period  power of $\ell_i$ (\cite[Ch. V, Theorem 18]{Albert}). 
 Let $\alpha_i$ be the corresponding cohomology class of 
$D_i$. Since  $\ell_i$'s are distinct primes, $\alpha \cdot (\lambda) = 0$ if and only if $\alpha_i \cdot (\lambda) = 0$ 
and $\lambda$ is a 
reduced norm from $D$ if and only if $\lambda$ is a reduced norm from each $D_i$.  Thus without loss of 
generality we assume that per$(D) = \ell^d$ for some prime $\ell$. 

We prove the theorem by the induction on the index of $D$. Suppose that deg$(D) = 1$.
Then every element of $F^*$ is a reduced norm from $D$. We assume that deg$(D) = n = \ell^d \geq 2$. 

Let $\lambda \in F^*$ with $\alpha \cdot (\lambda) = 0 \in H^3(F, \mu_n^{\otimes 2})$.
Let $\rho$ be a primitive $\ell^{\rm th}$ root of unity. Since $[F(\rho) :  F]$ is coprime to $n$, 
$\lambda$ is a reduced norm from $F$ is and only if $\lambda$ is a reduced from $D\otimes F(\rho)$.
Thus, replacing $F$ by $F(\rho)$, we assume that $\rho \in F$. 

Since $\kappa$ is either a local field or a global field, by (\ref{local-local}, \ref{local-global}), 
there exist an extension $L/F$ of degree $\ell$ and $\mu \in L^*$ such that $N_{L/F}(\mu) = 
\lambda$, $\alpha \cdot (\mu) = 0$ and  ind$(\alpha \otimes L) < $ ind$(\alpha)$. 
Thus, by induction, $\mu$ is a reduced norm from $D \otimes L$. 
Since $N_{L/F}(\mu)  = \lambda$, $\lambda$ is a reduced norm from $D$.
\end{proof}

The following technical lemma is used in \S 6. 

\begin{lemma}
\label{dropping_ind_dvr}
 Let $\kappa$ be a finite field and $K$ a function field of a curve over $\kappa$.
Let $u, v, w \in \kappa^*$ and $\theta \in K^*$. Let $\ell$ a prime not equal to char$(\kappa)$ and
 $\theta = wu\lambda$.  If $\kappa$ contains a primitive $\ell^{\rm th}$ root 
 of unity and $w \not\in \kappa^{*\ell}$,  
 then for $ r\geq 1$,    the  element  $(  v,  \sqrt[\ell^r]{\theta})_\ell$ is $H^2(K(\sqrt[\ell^e]{\theta})$
 is trivial  over $K(\sqrt[\ell^r]{\theta}, \sqrt[\ell]{v + u\lambda})$.

\end{lemma}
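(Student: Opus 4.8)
The statement is vacuous when $v\in\kappa^{*\ell}$ (then $(v)_\ell=0$ in $H^1(-,\mu_\ell)$), so the plan is to assume $v\notin\kappa^{*\ell}$. Set $L=K(\sqrt[\ell^r]{\theta})$, fix $t\in L$ with $t^{\ell^r}=\theta$, and $M=L(\sqrt[\ell]{v+u\lambda})$; if $X^{\ell^r}-\theta$ is not irreducible over $K$ one runs the argument on each factor. Both $L$ and $M$ are finite extensions of $K$, hence function fields of smooth projective geometrically integral curves over finite fields, and they contain $\mu_\ell$. The class at issue is the symbol $(v)_\ell\cup(t)_\ell\in H^2(L,\mu_\ell^{\otimes 2})=\Br(L)[\ell]$, and the goal is to show it dies over $M$. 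By class field theory a Brauer class over $M$ is trivial as soon as it is trivial over every completion $M_Q$, $Q$ a closed point of the smooth projective model of $M$; and since each $\kappa(Q)$ is finite, $\Br(\kappa(Q))=0$, so the residue map $\Br(M_Q)[\ell]\hookrightarrow H^1(\kappa(Q),\mu_\ell)$ is injective and triviality over $M_Q$ is the same as being unramified at $Q$. So it suffices to prove $\partial_Q\big((v,t)_\ell\big)=0$ for every such $Q$.

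Since $v\in\kappa^*$ is a unit at every closed point $Q$, the extension $L(\sqrt[\ell]{v})/L$ is unramified at $Q$, and the residue formula recalled in \S\ref{Preliminaries} gives $\partial_Q(v,t)_\ell=(\overline v)^{\nu_Q(t)}\in\kappa(Q)^*/\kappa(Q)^{*\ell}\cong\Z/\ell$. Because $v\notin\kappa^{*\ell}$ and $\mu_\ell\subseteq\kappa$, the extension $\kappa(\sqrt[\ell]{v})/\kappa$ is the unique degree $\ell$ extension of the finite field $\kappa$, so $\overline v\in\kappa(Q)^{*\ell}$ exactly when $\ell\mid[\kappa(Q):\kappa]$; hence $\partial_Q(v,t)_\ell=0$ iff $\ell\mid\nu_Q(t)$ or $\ell\mid[\kappa(Q):\kappa]$. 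Let $P$ be the point lying below $Q$ on the model of $L$. If $M/L$ is ramified at $P$ then $\nu_Q(t)=\ell\,\nu_P(t)$ (the local degree being the prime $\ell$); if $M/L$ is inert at $P$ then $\ell\mid[\kappa(Q):\kappa(P)]$; in both cases $\partial_Q(v,t)_\ell=0$. Thus the whole proof reduces to showing that $M/L$ is non-split at every $P$ where $(v,t)_\ell$ is ramified over $L$, that is, at every $P$ with $\ell\nmid\nu_P(t)$ and $\ell\nmid[\kappa(P):\kappa]$; equivalently, that $v+u\lambda\notin L_P^{*\ell}$ for every such $P$.

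Fix such a $P$, work in $L_P$, and use $\theta=wu\lambda$ to write $u\lambda=w^{-1}t^{\ell^r}$, so $v+u\lambda=w^{-1}(wv+t^{\ell^r})$. If $\nu_P(t)>0$, then $wv+t^{\ell^r}$ is a unit of residue $\overline{wv}$, so $v+u\lambda$ is a unit of residue $\overline v$, and (as $\ell$ is prime to the residue characteristic) Hensel's lemma gives $v+u\lambda\in L_P^{*\ell}\iff\overline v\in\kappa(P)^{*\ell}\iff\ell\mid[\kappa(P):\kappa]$, which fails. If $\nu_P(t)<0$, write $wv+t^{\ell^r}=t^{\ell^r}\big(1+wv\,t^{-\ell^r}\big)$; since $\nu_P(wv\,t^{-\ell^r})>0$, Hensel gives $1+wv\,t^{-\ell^r}\in L_P^{*\ell}$, and since $r\ge1$ we have $t^{\ell^r}=(t^{\ell^{r-1}})^{\ell}\in L_P^{*\ell}$, so $v+u\lambda\equiv w^{-1}\pmod{L_P^{*\ell}}$ and hence $v+u\lambda\in L_P^{*\ell}\iff w\in L_P^{*\ell}\iff\ell\mid[\kappa(P):\kappa]$ (here using $w\notin\kappa^{*\ell}$), which again fails. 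The case $\nu_P(t)=0$ cannot occur, as then $\ell\mid\nu_P(t)$. This settles everything; in the degenerate case $v+u\lambda\in L^{*\ell}$ (i.e. $M=L$) the same computation shows $(v,t)_\ell$ is already unramified on the model of $L$, hence $0$. The step demanding the most care is this last paragraph --- the place-by-place matching of the splitting type of $M/L$ against the two mechanisms by which the residue $(\overline v)^{\nu_P(t)}$ can vanish --- and it is precisely in the $\nu_P(t)<0$ case that the hypothesis $w\notin\kappa^{*\ell}$ enters.
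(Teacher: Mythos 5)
Your proof is correct and follows essentially the same route as the paper's: both reduce, via the local-global principle for Brauer classes of the global field, to a place-by-place check, split into the cases $\nu(\lambda)>0$ and $\nu(\lambda)<0$ (your $\nu_P(t)>0$ and $\nu_P(t)<0$), use Hensel's lemma on $v+u\lambda$ together with $t^{\ell^r}\in L_P^{*\ell}$ (where $r\geq 1$ enters), and use the finite-field fact that $v$ and $w$ become $\ell$-th powers in an extension of $\kappa$ exactly when $\ell$ divides the residue degree, which is where $w\notin\kappa^{*\ell}$ is needed. The only difference is presentational: the paper verifies triviality directly in each completion of the composite $K(\sqrt[\ell^r]{\theta},\sqrt[\ell]{v+u\lambda})$ by showing $\sqrt[\ell]{v}$ lies in that completion, whereas you compute residues over $K(\sqrt[\ell^r]{\theta})$ and check that the degree-$\ell$ layer is non-split at every place where the symbol is ramified.
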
 

\begin{proof}
Let $L = K(\sqrt[\ell^r]{\theta}, \sqrt[\ell]{v + u\lambda})$ and $\beta = (v, \sqrt[\ell^r]{\theta})_\ell$. Since $L$ is a global field, to show that 
$\beta \otimes L$ is trivial, it is enough to show that $\beta \otimes L_\nu$ is trivial for every discrete valuation $\nu$ of 
$L$. Let $\nu$ be a discrete valuation of $L$.  Since $v \in \kappa^*$, $v$ is a unit at $\nu$. If $\theta$ is a unit at 
$\nu$, then $\beta \otimes L$ is unramified at $\nu$ and hence $\beta \otimes L_\nu$ is trivial. Suppose that 
$\theta$ is  not a unit at $\nu$. Since $u$ and $v$ are units at $\nu$, $\lambda$ is not a unit. 
Suppose that $\nu(\lambda) > 0$.  Then $v \in L_\nu^{*\ell}$ and hence $\beta \otimes L_\nu$ is trivial.
Suppose that $\nu(\lambda) < 0$. Then $ \sqrt[\ell]{u\lambda} \in L_\nu$. Since $r \geq 1$, $\theta = uw\lambda$ and 
$\sqrt[\ell^r]{\theta} \in L_\nu$, we have $\sqrt[\ell]{\theta}  = \sqrt[\ell]{wu\lambda} \in L_\nu$. Hence $\sqrt[\ell]{w} \in L_\nu$.
Since $w\in \kappa^* \setminus \kappa^{*\ell}$, $v \in \kappa^*$ and $\kappa$ is a finite field, $\sqrt[\ell]{v} \in \kappa(\sqrt[\ell]{w})$.
Since $\kappa(\sqrt[\ell]{w}) \subset L_\nu$, $\beta \otimes L_\nu$ is trivial.
\end{proof}

We end this section with the following well known facts.  

\begin{lemma}
\label{ur_ext}
 Let $F$ be a complete discrete valued field with  the residue field  $\kappa$.
Let $\alpha \in \Br(F)$ and $L/F$ an unramified extension with residue field $L_0$. 
Suppose that   per$(\alpha)$ is coprime to char$(\kappa)$. Let  $\partial(\alpha) = (E_0, \sigma_0)$.
If $\partial(\alpha \otimes L)$ is trivial,  then 
$E_0$ is isomorphic to a subfield of  $L_0$. 
\end{lemma}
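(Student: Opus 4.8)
The statement is: $F$ is complete discretely valued with residue field $\kappa$, $\alpha \in \Br(F)$ has period coprime to $\mathrm{char}(\kappa)$, $L/F$ is unramified with residue field $L_0$, $\partial(\alpha) = (E_0, \sigma_0) \in H^1(\kappa, \Z/n\Z)$, and $\partial(\alpha \otimes L) = 0$; we must show $E_0$ embeds into $L_0$ (over $\kappa$). The plan is to reduce everything to the residue level, where this becomes the elementary statement that a cyclic extension $E_0/\kappa$ dies in $L_0$ precisely when it embeds in $L_0$.

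First I would invoke the compatibility of the residue map with unramified base change: since $L/F$ is unramified, the diagram relating $\partial_F$ and $\partial_L$ commutes with the restriction maps $H^1(\kappa, \Z/n\Z) \to H^1(L_0, \Z/n\Z)$ and $H^2(F, \mu_n) \to H^2(L, \mu_n)$ (this is exactly the statement cited in the proof of \ref{local-local} from \cite[Prop. 3.3.1]{CT-Santa-Barbara}, that $\partial(\alpha \otimes L) = \partial(\alpha) \otimes L_0$). Here I should note that $\mathrm{per}(\alpha)$ being coprime to $\mathrm{char}(\kappa)$ lets me work with $\mu_n$-coefficients for a suitable $n$. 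So the hypothesis $\partial(\alpha \otimes L) = 0$ translates into $(E_0, \sigma_0) \otimes L_0 = 0$ in $H^1(L_0, \Z/n\Z)$, i.e.\ the cyclic character cutting out $E_0$ restricts to the trivial character of the absolute Galois group of $L_0$.

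Second, I would finish purely group-theoretically: a class in $H^1(\kappa, \Z/n\Z)$ is a continuous homomorphism $\chi \colon G_\kappa \to \Z/n\Z$, and $E_0$ is the fixed field of $\ker\chi$. Saying $\chi|_{G_{L_0}} = 0$ means $G_{L_0} \subseteq \ker \chi$, i.e.\ $G_{L_0}$ acts trivially on $E_0 \otimes_\kappa \kappa^{\mathrm{sep}}$, which is exactly the assertion that $E_0 \subseteq L_0$ (inside a separable closure); one might phrase this more concretely by observing $E_0 \otimes_\kappa L_0$ is then a split étale $L_0$-algebra, so $E_0$ maps to $L_0$. I do not expect any serious obstacle here; the only point requiring minor care is making sure the coefficient modules line up (using that the extension involved is cyclic, so a $\Z/n\Z$-character description is available, and that $L/F$ unramified forces $[L_0 : \kappa] = [L : F]$ and preserves cyclicity, as already used in \S\ref{Preliminaries}). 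The load-bearing input is the unramified base-change compatibility of $\partial$; everything after that is formal.
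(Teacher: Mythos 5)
Your proposal is correct and follows essentially the same route as the paper: the paper's proof also reduces to the residue level via the unramified base-change compatibility $\partial(\alpha\otimes L)=\partial(\alpha)\otimes L_0$ (citing the same reference) and then concludes that the vanishing of $(E_0,\sigma_0)\otimes L_0$ forces $E_0$ to embed in $L_0$; your second step merely spells out explicitly the character-theoretic argument the paper leaves implicit.
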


\begin{proof} Let $L_0$ be the residue field of $L$. 
Since $L/F$ is unramified, $ (E_0, \sigma_0) \otimes L_0 = \partial(\alpha) \otimes L_0 = 
\partial(\alpha \otimes L)$ (cf. \cite[Proposition 3.3.1]{CT-Santa-Barbara}).
Since $\alpha \otimes L = 0$, $\partial(\alpha \otimes L) = 0$ and hence $E_0$ is isomorphic to a subfield of 
$L_0$.
\end{proof}

\begin{cor}
\label{residue}
 Let  $L/F$ be a cyclic extension of degree $n$,
$\tau$ a generator of Gal$(L/F)$ and $\theta \in F^*$. If $\nu(\theta)$ is coprime to $n$ and 
ind$(L/F, \tau, \theta) =  [ L : F]$,
then  $[L : F ] = $ per$(\partial(L/F, \tau, \theta))$.
\end{cor}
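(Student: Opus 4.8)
Set $\alpha=(L/F,\tau,\theta)\in H^2(F,\mu_n)$; here, as throughout the section, $n$ is prime to $\mathrm{char}(\kappa)$ so that the residue map is defined, and we may assume $n\ge 2$. The plan is to write $\alpha=\alpha'+(E,\sigma,\pi)$ as in \ref{rbc}, where $(E,\sigma)$ is the lift of the residue of $\alpha$ and $E/F$ is the unramified extension with residue field $E_0$, $\partial(\alpha)=(E_0,\sigma_0)$. Then $[E:F]=[E_0:\kappa]$ is precisely the order of $\partial(\alpha)$ in $H^1(\kappa,\Z/n\Z)$, i.e. $\mathrm{per}(\partial(\alpha))$, so it suffices to prove $[E:F]=n$. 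By \ref{dvr_ind} one has $\mathrm{ind}(\alpha)=\mathrm{ind}(\alpha\otimes E)\,[E:F]$; since by hypothesis $\mathrm{ind}(\alpha)=[L:F]=n$, the equality $[E:F]=n$ will follow once we show $\alpha\otimes E=0$.

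To obtain $\alpha\otimes E=0$ I will invoke \ref{not_type_6}, for which it is enough to exhibit a totally ramified extension $M/F$ that splits $\alpha$. Take $M=F(\sqrt[n]{\theta})$. Because $r:=\nu(\theta)$ is coprime to $n$, a root $\omega$ of $X^n-\theta$ satisfies $\nu(\omega)=r/n$ in the unique prolongation of $\nu$, and $r/n$ generates $\tfrac1n\Z$; hence the ramification index of $F(\omega)/F$ is at least $n$, while $[F(\omega):F]\le n$, so $M/F$ is totally (tamely) ramified of degree $n$. It splits $\alpha$: over $M$, $\alpha$ is Brauer-equivalent to a cyclic algebra $(L'/M,\tau',\theta)$ with $L'$ a factor field of $L\otimes_F M$ and $d:=[L':M]$ dividing $n$ (cf. \S\ref{Preliminaries}); since $\theta=\omega^n\in M^{*n}\subseteq M^{*d}$, writing $\theta=z^d$ with $z=\omega^{n/d}$ gives $(L'/M,\tau',\theta)=d\cdot(L'/M,\tau',z)=0$, because the period of $(L'/M,\tau',z)$ divides $[L':M]=d$.

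Assembling the pieces: $M/F$ is totally ramified and splits $\alpha$, so \ref{not_type_6} gives $\alpha\otimes E=0$, and then \ref{dvr_ind} yields $[L:F]=\mathrm{ind}(\alpha)=[E:F]=\mathrm{per}(\partial(\alpha))$, as desired. The only point requiring genuine care is the claim that $F(\sqrt[n]{\theta})/F$ is totally ramified of degree exactly $n$ — i.e. the value-group bookkeeping, equivalently the irreducibility of $X^n-\theta$ over $F$ when $\gcd(\nu(\theta),n)=1$ — together with the identification $[E:F]=[E_0:\kappa]=\mathrm{per}(\partial(\alpha))$; everything else is a direct appeal to \ref{rbc}, \ref{dvr_ind} and \ref{not_type_6}. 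Both hypotheses enter essentially: $\nu(\theta)$ coprime to $n$ is what makes $M$ totally ramified, and $\mathrm{ind}(L/F,\tau,\theta)=[L:F]$ is what upgrades the identity $[E:F]=\mathrm{ind}(\alpha)$ to $[E:F]=n$.
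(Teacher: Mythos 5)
Your proof is correct. Both arguments hinge on the same key facts — that $\gcd(\nu(\theta),n)=1$ makes $F(\sqrt[n]{\theta})/F$ totally ramified of degree $n$, that this extension splits the cyclic algebra, and the principle of \ref{not_type_6} — but the final accounting is organized differently. You apply \ref{not_type_6} over $F$ itself to conclude $\alpha\otimes E=0$ for the lift $E$ of the residue, and then invoke the index formula of \ref{dvr_ind} together with the identification $\mathrm{per}(\partial\alpha)=[E_0:\kappa]=[E:F]$ to get $[E:F]=\mathrm{ind}(\alpha)=n$. The paper instead sets $m=\mathrm{per}(\partial\beta)$, notes $m\mid n$, and works with the intermediate Kummer extension $F(\sqrt[m]{\theta})$: the class becomes unramified there (its residue is $m\,\partial\beta=0$), is split by the further totally ramified extension $F(\sqrt[n]{\theta})$, hence is trivial over $F(\sqrt[m]{\theta})$ by (the argument of) \ref{not_type_6}; then $n=\mathrm{ind}(\beta)\le[F(\sqrt[m]{\theta}):F]=m$ forces $n=m$, without ever using \ref{rbc} or \ref{dvr_ind}. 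Your route is slightly more structural (it records the extra conclusion $\alpha\otimes E=0$, and you also spell out why $F(\sqrt[n]{\theta})$ splits the algebra, which the paper leaves implicit), while the paper's is more self-contained, using only the splitting-field degree bound and producing the explicit degree-$m$ splitting field $F(\sqrt[m]{\theta})$. Both are complete proofs of \ref{residue}.
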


\begin{proof} Let $\beta = (L/F, \tau, \theta)$ and $m =  $ per$(\partial(\beta))$. 
Since $n =[L : F] = $ ind$(\beta)$,   $m$ divides $n$. Since 
$\nu(\theta)$ is coprime to $n$,  $F(\sqrt[m]{\theta})/F$  is a totally ramified extension of
degree $m$ with residue field equal to the residue field  $\kappa$ of $F$. 
Since $\partial(\beta \otimes F(\sqrt[m]{\theta})) =  m \partial(\beta)$, $\beta \otimes F(\sqrt[m]{\theta})$
is unramified. Since  $F(\sqrt[n]{\theta})/F(\sqrt[m]{\theta})$ is totally ramified and 
 $\beta \otimes F(\sqrt[n]{\theta})$ is trivial,  $\beta  \otimes F(\sqrt[m]{\theta})$ is trivial (cf. \ref{not_type_6}).  Hence 
$n = m$.
\end{proof}

\section{Brauer group -  complete two dimensional  regular local rings}

Through out this section  $A$ denotes   a complete regular local ring of dimension 2 with residue
field $\kappa$ and $F$ its  field of fractions. Let $\ell $ be a prime not
equal   to the characteristic of $\kappa$ and $n  = \ell^d$ for some $d \geq
 1$.  Let $m = (\pi, \delta)$ be the maximal ideal of $A$.  For any prime $p \in A$,
 let $F_p$ be the completion of the field of fractions of the completion of 
 the local ring $A_{(p)}$ at $p$ and $\kappa(p)$ the residue field at $p$.

\begin{lemma} 
\label{lifting_extension} 
  Let $E_\pi$ be a Galois   extension of $F_\pi$ of 
degree coprime to char$(\kappa)$. Then there exists a Galois  
extension $E$ of $F$  of degree $[E_\pi : F_\pi]$  which is 
unramified on $A$ except possibly at $\delta$ and Gal$(E/F) \simeq $Gal$(E_\pi/F_\pi)$. 
\end{lemma}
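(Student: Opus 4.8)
The plan is to peel the ramification of $E_\pi/F_\pi$ into two layers --- an unramified layer over $F_\pi$ followed by a totally (tamely) ramified cyclic layer --- to lift each layer separately, and then to glue the two lifts into a single Galois extension of $F$ by means of Lemma~\ref{galois_extension}.

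First I would reduce to the case that $E_\pi$ is a field (a Galois extension is a field) and record its ramification structure. As $[E_\pi:F_\pi]$ is prime to char$(\kappa)$, the extension $E_\pi/F_\pi$ is tame; let $I$ be the inertia subgroup of $G:=\mathrm{Gal}(E_\pi/F_\pi)$ and $U_\pi:=E_\pi^{I}$ the inertia field. Then $U_\pi/F_\pi$ is unramified and Galois with group $G/I$, while $E_\pi/U_\pi$ is totally ramified with $I$ cyclic of some order $e_0$. Since $\pi$ remains a parameter of $U_\pi$, Lemma~\ref{ramified_extension} gives $E_\pi=U_\pi(\sqrt[e_0]{v\pi})$ for some $v$ that is a unit in the valuation ring of $U_\pi$, and I record two consequences of $E_\pi/F_\pi$ being Galois: first, $X^{e_0}-v\pi$ splits in $E_\pi$, so $\mu_{e_0}\subset E_\pi$ and hence $\mu_{e_0}$ lies in the residue field $\ell_0$ of $U_\pi$ (as $e_0$ is prime to char$(\kappa)$ and $E_\pi/U_\pi$ is totally ramified); second, Lemma~\ref{galois_extension} applied over $F_\pi$ gives $\tau(v\pi)\in E_\pi^{*e_0}$ for all $\tau\in\mathrm{Gal}(U_\pi/F_\pi)$, and since the kernel of $U_\pi^{*}/U_\pi^{*e_0}\to E_\pi^{*}/E_\pi^{*e_0}$ is generated by the class of $v\pi$, which has $\pi$-adic valuation $1$, while $\tau(v\pi)/(v\pi)$ is a unit, the latter must in fact be an $e_0$-th power in $U_\pi$; that is, the class of $v\pi$ in $U_\pi^{*}/U_\pi^{*e_0}$ is $\mathrm{Gal}(U_\pi/F_\pi)$-invariant.

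Next I would lift the unramified layer. The extension $U_\pi/F_\pi$ corresponds to the separable (Galois, group $G/I$) extension $\ell_0$ of $\kappa(\pi)=\Frac(A/\pi)$, and $A/\pi$ is a complete discrete valuation ring with uniformizer the image of $\delta$ and residue field $\kappa$. Since $[\ell_0:\kappa(\pi)]$ is prime to char$(\kappa)$, the integral closure $C$ of $A/\pi$ in $\ell_0$ is a tame, monogenic extension of $A/\pi$; lifting the minimal polynomial of a generator of $C$ to a monic polynomial over $A$ and adjusting it --- by Hensel's lemma over the complete ring $A$ --- so that its discriminant becomes a unit times a power of $\delta$, I obtain a finite $A$-algebra $B$ with $B/\pi B\cong C$ which is \'etale over $\Spec(A)\smallsetminus V(\delta)$; being a domain finite over the complete local ring $A$, $B$ is Henselian local. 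Set $U:=\Frac(B)$. Then $U/F$ is Galois with $\mathrm{Gal}(U/F)\cong\mathrm{Gal}(\ell_0/\kappa(\pi))\cong G/I$, the completion of $U$ along $(\pi)$ is $U_\pi$, and $U$ is unramified on $A$ away from $V(\delta)$; Hensel's lemma in $B$ (using $e_0$ prime to char$(\kappa)$ and $\mu_{e_0}\subset\ell_0$) yields $\mu_{e_0}\subset B\subset U$. If $U_\pi=F_\pi$ I would take $U=F$; and if $e_0=1$ then $E:=U$ already has all the asserted properties.

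Finally I would splice in the ramified layer. Using that $C$ is integrally closed with $\Frac(C)=\ell_0$, I would choose a unit $u\in B^{*}$ and an element $\varpi\in B$ --- built from $\pi$ and from a lift to $B$ of a uniformizer of $C$ --- with $u\varpi\equiv v\pi$ modulo $U_\pi^{*e_0}$, so that $U_\pi(\sqrt[e_0]{u\varpi})\cong E_\pi$. Put $E:=U(\sqrt[e_0]{u\varpi})$. The hypothesis of Lemma~\ref{galois_extension} to verify is $\tau(u\varpi)\in E^{*e_0}$ for every $\tau\in\mathrm{Gal}(U/F)$; since the class of $u\varpi$ in $U_\pi^{*}/U_\pi^{*e_0}$ equals that of $v\pi$, which is $\mathrm{Gal}(U_\pi/F_\pi)$-invariant, the quotient $\tau(u\varpi)/(u\varpi)$ is a unit of $B$ whose reduction modulo $\pi$ is an $e_0$-th power in $\ell_0$, and Hensel's lemma in the Henselian local ring $B$ then places it in $B^{*e_0}\subset E^{*e_0}$. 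Hence by Lemma~\ref{galois_extension} (with $\mu_{e_0}\subset U\subset E$) the extension $E/F$ is Galois; as $E\otimes_F F_\pi\cong U_\pi(\sqrt[e_0]{u\varpi})\cong E_\pi$, it has degree $[E_\pi:F_\pi]$ and $\mathrm{Gal}(E/F)\cong\mathrm{Gal}(E_\pi/F_\pi)$, and $[E:U]=e_0$ because the $(\pi)$-adic valuation of $u\varpi$ in $U$ is prime to $e_0$. Finally the discriminant of $E/U$ is supported inside $V(u\varpi)\cap\Spec(B)\subseteq V(\pi)\cup V(\delta)$, so together with $B$ being \'etale over $\Spec(A)\smallsetminus V(\delta)$ one gets $E$ unramified on $A$ away from $V(\pi)\cup V(\delta)$ --- and away from $V(\delta)$ alone when $E_\pi/F_\pi$ is itself unramified. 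I expect the main obstacle to be precisely this last step: arranging $u$ and $\varpi$ so that the completion at $(\pi)$ reproduces $E_\pi$ while the criterion of Lemma~\ref{galois_extension} holds; it should be handled by Hensel's lemma in $B$ and by the $\mathrm{Gal}(U_\pi/F_\pi)$-invariance of the residual datum obtained in the first step.
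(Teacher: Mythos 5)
Your overall architecture (an unramified layer plus a radical totally ramified layer, glued into a Galois extension via Lemma~\ref{galois_extension}) parallels the paper's, but you place the decomposition at the level of $E_\pi/F_\pi$ itself, whereas the paper passes immediately to the residue field $E_0$ of $E_\pi$ at $\pi$ and decomposes $E_0/\kappa(\pi)$ with respect to the $\overline{\delta}$-adic valuation: $L_0$ the maximal unramified subextension, $E_0=L_0(\sqrt[e]{v\overline{\delta}})$ by Lemma~\ref{ramified_extension}, then the residue field $\kappa_0$ of $L_0$ is lifted (using completeness of $A$, so that extensions of $\kappa$ correspond to extensions unramified on all of $A$) to a Galois extension $L/F$, and finally $E=L(\sqrt[e]{u\delta})$. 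The genuine gap in your proposal is the construction of the lift $U$ of the unramified layer. You take a monogenic generator of $C$, lift its minimal polynomial to $A$, and claim you can ``adjust it by Hensel's lemma so that its discriminant becomes a unit times a power of $\delta$'', so that $B$ is \'etale over $\Spec(A)\setminus V(\delta)$. Hensel's lemma gives no such control over discriminants of lifts, and for an arbitrary lift the branch locus of $A[X]/(f)$ will in general meet height one primes other than $(\pi)$ and $(\delta)$. This step is precisely the content of the lemma for the unramified layer, and it is exactly what the paper's second, residue-level decomposition (the part of $\ell_0/\kappa(\pi)$ coming from an extension of $\kappa$, lifted through the complete ring $A$, plus a radical piece $\sqrt[e]{u\delta}$) is designed to supply; without it, the existence of your $B$, the Galoisness of $U/F$ with group $G/I$, and the identification of the completion of $U$ at $(\pi)$ with $U_\pi$ are all asserted rather than proved. (Relatedly, your invocations of ``Hensel's lemma in $B$'' to extract $e_0$-th powers use data modulo $\pi$, i.e.\ in $B/\pi B$, while Hensel in the local ring $B$ works modulo its maximal ideal; this is repairable, as in the paper, by first passing from $\ell_0$ to its residue field $\kappa_0$, but as written it is conflated.)

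A second problem is that the extra generality you aim for does not deliver the statement. When $E_\pi/F_\pi$ is ramified, your final $E=U(\sqrt[e_0]{u\varpi})$ is ramified at $(\pi)$, as your last sentence concedes; but the lemma asserts $E/F$ unramified on $A$ except possibly at $\delta$, hence in particular unramified at $(\pi)$. So even if the lifting step were repaired, your argument yields the stated conclusion only when $E_\pi/F_\pi$ is unramified --- which is in effect the case the paper's proof treats (its identification of Gal$(E_0/\kappa(\pi))$ with Gal$(E_\pi/F_\pi)$ presupposes it) and the only case used in the applications. The remedy is to restrict to that case, replace the discriminant-adjustment step by the paper's two-stage lift ($\kappa_0$ lifted to an everywhere-unramified Galois $L/F$, then $E=L(\sqrt[e]{u\delta})$ with $u$ a lift of $v$), and keep your Kummer/Galois-invariance computation, which then matches the paper's verification of the hypothesis of Lemma~\ref{galois_extension}.
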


\begin{proof}  Since  $A$ is complete and 
 $ m = (\pi, \delta)$, $\kappa(\pi)$ is a complete discretely 
 valued field with residue field $\kappa$ and  
 the image $\overline{\delta}$ of $\delta$  as a parameter.  
 Let $E_0$ be the residue field of $E_\pi$.
 Then $E_0/\kappa(\pi)$ is a Galois extension with Gal$(E_0/\kappa(\pi)) 
 \simeq $ Gal$(E_\pi/F_\pi)$.
 Let  $L_0$ be   the maximal unramified extension of $\kappa(\pi)$
contained in $E_0$.  Then $L_0$ is also a complete discretely valued field
with $\overline{\delta}$ as  a parameter.
Since $E_0/L_0$ is a totally ramified extension 
of degree coprime to char$(\kappa)$, we have $E_0 = L_0(\sqrt[e]{
v\overline{\delta}})$ for some  $v \in L_0$ which is a unit at the discrete valuation of
$L_0$ (cf. \ref{ramified_extension}).  

Since $E_0/ \kappa(\pi)$ is a Galois  extension, $E_0/L_0$ and $L_0/\kappa(\pi)$
are Galois  extensions.  Let $\kappa_0$ be the residue field of $E_0$. Then the residue field of $L_0$ 
is also $\kappa_0$. Since $\kappa_0$ is a Galois   extension of $\kappa$ and
$A$ is complete, there exists a Galois  extension $L$ of $F$ which is unramified
on $A$ with residue field $\kappa_0$. Let $B$ be the integral closure of $A$ in 
$L$.  Then $B$ is a  regular local ring with residue field $\kappa_0$ (cf. \cite[Lemma 3.1]{PS1}).  
Let $u \in B$ be a lift of  $v$. 

Let $E = L(\sqrt[e]{u\delta})$.  
Since $L/F$ is unramified on $A$, $E/F$ is unramified on $A$ except possibly  at $\delta$. 
In particular $E/F$ is unramified at $\pi$ with residue field $E_0$. By the construction 
$[E : F] = [E_0  : \kappa(\pi)]$.  Hence $E \otimes F_\pi \simeq E_\pi$.

Since $L/F$  is a Galois extension which is unramified  at $\pi$,  we have 
Gal$(L/F) \simeq $ Gal$(L_0/\kappa(\pi))$.  Let $\tau \in $ Gal$(L/F)$ and 
 $\overline{\tau}  \in $Gal$(L_0/\kappa(\pi))$ be the image of $\tau$.  Since $E_0/\kappa(\pi)$
is Galois and $E_0 = L_0(\sqrt[e]{v\overline{\delta}})$,  by (\ref{galois_extension}),  $E_0$ 
contains a primitive $e^{\rm th}$ root of unity $\rho$ and 
$ \overline{\tau}(v\overline{\delta}))  \in E_0^e$.  In particular  $\rho \in \kappa_0$. 
Since $B$ is complete with residue field $\kappa_0$, $\rho \in B$ and hence $\rho \in L \subseteq E$. 
Since $\overline{\tau}(v\overline{\delta}) =\overline{\tau}(v) \overline{\delta}$ and 
$v\overline{\delta}$, $\overline{\tau}(v\overline{\delta}) \in E_0^e$,  $\overline{\tau}(v)/v \in E_0^e$.
Since $\overline{\tau}(v)$ and $v$ are units at the discrete valuation of $L_0$ and $E_0/L_0$ is 
totally ramified, $\overline{\tau}(v)/v \in L_0^e$. 
Since $B$ is complete and the image of  $\tau(u)/u$ in $L_0$ is $\overline{\tau}(v)/v$,
$\tau(u)/u \in   L^e$. 
Since $E = L(\sqrt{u\delta})$,  $\tau(u\delta) \in E^e$.  
Thus, by (\ref{galois_extension}),  $E/F$ is  Galois.  
 Since $E \otimes F_\pi \simeq E_\pi$, 
   Gal$(E/F) \simeq $Gal$(E_\pi/F_\pi)$. 
\end{proof}

Since  $A$ is complete and $(\pi, \delta)$ is the maximal ideal of $A$, $A/(\pi)$ is a complete 
discrete valuation ring with $\overline{\delta}$ is a parameter and $A/(\delta)$ is a complete 
discrete valuation ring with $\overline{\pi}$.  The following follows from (\cite[Proposition 1.7]{Ka}).

\begin{lemma}(\cite[Proposition 1.7]{Ka})
\label{2dimcomplex} 
Let  $m \geq 1$ and $\alpha \in H^m(F, \mu_n^{\otimes (m-1)})$. Suppose that $\alpha$ is unramified on $A$ except 
possibly at $\pi$ and $\delta$. Then $$\partial_{\overline{\delta}}(\partial_\pi(\alpha)) = -
\partial_{\overline{\pi}}(\partial_\delta(\alpha)).$$ 
\end{lemma}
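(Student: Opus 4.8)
The plan is to reduce the statement to a commutativity property of the residues coming from the two prime divisors $\pi$ and $\delta$ through the closed point, which is exactly what Kato's Proposition 1.7 supplies. Concretely, $A$ is a complete regular local ring of dimension $2$, so $A/(\pi)$ is a complete discrete valuation ring with parameter $\overline{\delta}$, and $A/(\delta)$ is a complete discrete valuation ring with parameter $\overline{\pi}$. The residue $\partial_\pi(\alpha) \in H^{m-1}(\kappa(\pi),\mu_n^{\otimes(m-2)})$ is defined since $\alpha$ is unramified on $A$ away from $\pi$ and $\delta$, and we may then take its residue at the discrete valuation $\overline{\delta}$ of the field $\kappa(\pi)$ (whose residue field is $\kappa$), obtaining $\partial_{\overline{\delta}}(\partial_\pi(\alpha)) \in H^{m-2}(\kappa,\mu_n^{\otimes(m-3)})$; symmetrically for $\partial_{\overline{\pi}}(\partial_\delta(\alpha))$. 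So both sides of the asserted identity live in the same group and the claim makes sense.

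First I would set up the iterated residue maps carefully: the hypothesis that $\alpha$ is unramified on $A$ except possibly at $\pi$ and $\delta$ means precisely that in the Bloch--Ogus / Kato complex for the two-dimensional regular local ring $A$, $\alpha$ has trivial residue at every height-one prime other than $(\pi)$ and $(\delta)$. Then I would invoke the complex property of Kato's residue complex associated to the regular local ring $A$ (or equivalently to $\Spec A$ with its stratification by the point, the two curves $(\pi)$, $(\delta)$, and the generic point): the composite of two consecutive differentials in that complex is zero. Applied to the class $\alpha$, whose only nonzero codimension-one residues are at $\pi$ and $\delta$, the vanishing of the composite differential at the closed point $\mm = (\pi,\delta)$ reads $\partial_{\overline{\delta}}(\partial_\pi(\alpha)) + \partial_{\overline{\pi}}(\partial_\delta(\alpha)) = 0$, which is the claim. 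This is exactly the content of \cite[Proposition 1.7]{Ka}, so the proof is essentially a citation together with the verification that the hypotheses of that proposition are met here.

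The main point to be careful about — and really the only non-formal step — is matching conventions: the sign in the identity depends on the chosen orientation of the residue maps (order of the two parameters, choice of uniformizers, and the sign convention in the boundary map of the Gersten/Kato complex). I would check that with the conventions fixed in Section~\ref{Preliminaries} for $\partial_x$, the two iterated residues through $\mm$ differ exactly by the sign $-1$, as asserted; this is precisely the anticommutativity of residues built into the differential of the coniveau complex. Since $n$ is a unit on $A$ (being coprime to $\operatorname{char}\kappa$), all residue maps in sight are defined and Kato's proposition applies verbatim, so no further argument is needed; the lemma follows.
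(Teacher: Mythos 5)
Your proposal is correct and matches the paper's treatment: the paper gives no separate argument but simply cites Kato's Proposition 1.7, i.e.\ the complex property of the residue (Gersten/Kato) complex for the two-dimensional complete regular local ring $A$, exactly as you do. Your verification that only the primes $(\pi)$ and $(\delta)$ contribute to the sum of iterated residues at the closed point, together with the sign/convention check, is precisely the intended reduction, so nothing further is needed.
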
 

Let $H^m_{nr}(F, \mu_n^{\otimes (m-1)})$  be the intersections of the kernels of the residue 
homomorphisms $\partial_\theta : H^m(F, \mu_n^{\otimes (m-1)}) \to H^{m-1}(\kappa(\theta), \mu_n^{\otimes (m-2)})$ 
for all primes $\theta \in A$.  The following lemma follows from the purity theorem of Gabber. 

\begin{lemma}
\label{purity}  For $m \geq 1$,  $H^m_{nr}(F, \mu_n^{\otimes (m-1)}) \simeq H^m(\kappa, \mu_n^{\otimes (m-1)})$.
\end{lemma}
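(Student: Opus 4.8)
The plan is to deduce this from Gabber's absolute cohomological purity, which supplies the Gersten resolution for the regular local ring $A$, together with the henselian invariance of \'etale cohomology. First I would invoke the Bloch--Ogus--Gabber theorem in the form: since $A$ is a regular local ring and $n=\ell^d$ is invertible in $A$, Gabber's purity theorem yields (via the usual effacement argument of Bloch--Ogus, valid for regular local rings not containing a field once purity is available) that for every $m$ the Gersten complex
\[
0 \to H^m_{\et}(\Spec A, \mu_n^{\otimes(m-1)}) \to H^m(F, \mu_n^{\otimes(m-1)}) \xrightarrow{(\partial_\theta)} \bigoplus_{\theta} H^{m-1}(\kappa(\theta), \mu_n^{\otimes(m-2)}) \to \cdots
\]
is exact, the direct sum running over the height one primes $\theta$ of $A$, i.e. the codimension one points of $\Spec A$, and the $\partial_\theta$ being the residue homomorphisms of \S\ref{Preliminaries} (one recalls here, as usual, that the cohomological residue coincides with the Gersten differential). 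Exactness at the first two terms says precisely that restriction to the generic point is injective on $H^m_{\et}(\Spec A, \mu_n^{\otimes(m-1)})$ with image the subgroup of classes annihilated by all $\partial_\theta$; by the definition preceding the lemma, that image is $H^m_{nr}(F, \mu_n^{\otimes(m-1)})$. Hence $H^m_{\et}(\Spec A, \mu_n^{\otimes(m-1)}) \cong H^m_{nr}(F, \mu_n^{\otimes(m-1)})$.

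Next, I would use that $A$, being complete, is henselian local with closed point $\Spec\kappa$, and that $n$ is invertible in $A$. By the invariance of \'etale cohomology of a henselian local ring under reduction to the closed point, for torsion sheaves of order prime to the residue characteristic --- passing to the strict henselization, whose \'etale cohomology vanishes in positive degrees, and collapsing the Hochschild--Serre spectral sequence for the absolute Galois group $\mathrm{Gal}(\kappa^{\sep}/\kappa)$ --- the restriction map $H^m_{\et}(\Spec A, \mu_n^{\otimes(m-1)}) \to H^m(\kappa, \mu_n^{\otimes(m-1)})$ is an isomorphism. Composing this with the isomorphism of the previous paragraph gives $H^m_{nr}(F, \mu_n^{\otimes(m-1)}) \simeq H^m(\kappa, \mu_n^{\otimes(m-1)})$, which is the claim. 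Note that neither step uses $\dim A = 2$.

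Since the argument is just a concatenation of two cited theorems, there is no genuine obstacle; the two points I would be careful to spell out are: (i) in the mixed characteristic case, where $F$ has characteristic $0$ while $\kappa$ has positive characteristic, the ring $A$ contains no field, so the original Bloch--Ogus proof of the Gersten resolution does not apply and one really does need Gabber's purity theorem to get exactness of the Gersten complex; and (ii) the (routine, and checkable on cyclic-algebra symbols as in \S\ref{Preliminaries}) compatibility of the residue maps $\partial_\theta$ defined via completions and Galois cohomology with the boundary maps of the Gersten complex, since it is this compatibility that lets us identify $\bigcap_\theta\ker\partial_\theta$ with a group cut out by \'etale cohomology of $\Spec A$.
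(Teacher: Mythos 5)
Your proposal is correct and follows essentially the same route as the paper: Gabber's absolute purity (via the Gersten-type identification of the unramified subgroup with $H^m_{\et}(\Spec A,\mu_n^{\otimes(m-1)})$) followed by the isomorphism $H^m_{\et}(\Spec A,\mu_n^{\otimes(m-1)})\simeq H^m(\kappa,\mu_n^{\otimes(m-1)})$ for the complete (hence henselian) local ring $A$, which the paper cites from Riou and Milne respectively. Your extra remarks on the mixed-characteristic need for Gabber's theorem and on compatibility of the residue maps are sound elaborations of the same two-step argument.
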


\begin{proof} By the purity theorem of Gabber (cf. \cite[CH. XVI]{Riou}), we have 
$H^m_{nr}(F, \mu_n^{\otimes (m-1)}) \simeq H^m_{\acute{e}t}(A, \mu_n^{\otimes (m-1)})$.
Since $A$ is complete, we have $H^m_{\acute{e}t}(A, \mu_n^{\otimes (m-1)}) \simeq 
H^m(\kappa, \mu_n^{\otimes (m-1)})$ (cf. \cite[Corollary 2.7, p.224]{Milne}).
\end{proof}

\begin{lemma} 
\label{saltman1}
Let   $ m \geq 1$   and $\alpha \in H^m(F, \mu_n^{\otimes (m-1)})$.
Suppose that $\alpha$ is unramified except possibly at $\pi$.
Then there exist $\alpha_0 \in H^m(F,\mu_n^{\otimes (m-1)})$ and $\beta \in H^{{m-1}}(F, \mu_n^{\otimes (m-2)})$ 
which are unramified on $A$ such that 
$$\alpha = \alpha_0 + \beta \cdot (\pi).$$
\end{lemma}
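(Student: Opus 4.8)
Let $\alpha \in H^m(F, \mu_n^{\otimes(m-1)})$ be unramified on $A$ except possibly at $\pi$. The plan is to subtract off the ramification at $\pi$ by choosing a suitable ``symbol'' term $\beta \cdot (\pi)$, where $\beta \in H^{m-1}(F, \mu_n^{\otimes(m-2)})$ is unramified on $A$, and then check that the difference $\alpha_0 := \alpha - \beta\cdot(\pi)$ is unramified everywhere on $A$. The key point is that $\partial_\pi(\alpha) \in H^{m-1}(\kappa(\pi), \mu_n^{\otimes(m-2)})$ lives over the complete discretely valued field $\kappa(\pi) = \Frac(A/(\pi))$, whose residue field is $\kappa$ and whose parameter is the image $\overline{\delta}$ of $\delta$.

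**The main step: producing $\beta$.** First I would analyze $\gamma := \partial_\pi(\alpha) \in H^{m-1}(\kappa(\pi), \mu_n^{\otimes(m-2)})$. The crucial constraint comes from Lemma~\ref{2dimcomplex}: since $\alpha$ is unramified on $A$ except possibly at $\pi$ and $\delta$ (it is in fact unramified at $\delta$ too), we get $\partial_{\overline{\delta}}(\partial_\pi(\alpha)) = -\partial_{\overline{\pi}}(\partial_\delta(\alpha)) = 0$. Hence $\gamma = \partial_\pi(\alpha)$ is unramified at the discrete valuation $\overline{\delta}$ of $\kappa(\pi)$. Since $\kappa(\pi)$ is complete with residue field $\kappa$, this means $\gamma$ lies in the image of $\iota_\kappa : H^{m-1}(\kappa, \mu_n^{\otimes(m-2)}) \hookrightarrow H^{m-1}(\kappa(\pi), \mu_n^{\otimes(m-2)})$; write $\gamma = \iota_\kappa(\overline{\gamma})$ for a unique $\overline{\gamma} \in H^{m-1}(\kappa, \mu_n^{\otimes(m-2)})$. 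Now I would lift $\overline{\gamma}$ to an unramified class on all of $A$: using Lemma~\ref{purity}, $H^{m-1}_{nr}(F, \mu_n^{\otimes(m-2)}) \simeq H^{m-1}(\kappa, \mu_n^{\otimes(m-2)})$, so there is a unique $\beta \in H^{m-1}(F, \mu_n^{\otimes(m-2)})$ unramified on $A$ whose ``reduction mod $\pi$'' (the image under $H^{m-1}_{nr}(F) \isom H^{m-1}(\kappa(\pi)/A_{(\pi)})$ followed by specialization, equivalently the image of $\overline\gamma$ under $H^{m-1}(\kappa) \to H^{m-1}(\kappa(\pi))$) is $\gamma$. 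Concretely one should check that the composite $H^{m-1}_{nr}(F,\mu_n^{\otimes(m-2)}) \to H^{m-1}(\kappa(\pi),\mu_n^{\otimes(m-2)})$ identifies $\beta$ with $\iota_{\kappa}(\overline\gamma) = \gamma$; this is a compatibility of the two ways of reducing a globally-unramified class, which follows from the functoriality of residues and the isomorphism $H^{m-1}_{nr}(F,-)\simeq H^{m-1}(\kappa,-)$ factoring through $\kappa(\pi)$.

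**Checking $\alpha_0$ is unramified.** Set $\alpha_0 = \alpha - \beta\cdot(\pi)$. I need $\partial_\theta(\alpha_0) = 0$ for every prime $\theta \in A$. For $\theta = \pi$: the residue of $\beta\cdot(\pi)$ at $\pi$ is, by the formula for residues of cup products with a parameter, the image of $\beta$ in $H^{m-1}(\kappa(\pi), \mu_n^{\otimes(m-2)})$ (since $\beta$ is unramified at $\pi$ and $\pi$ has valuation $1$), which equals $\gamma = \partial_\pi(\alpha)$ by construction; hence $\partial_\pi(\alpha_0) = 0$. For $\theta$ a prime different from $\pi$ (up to associates): $\alpha$ is unramified at $\theta$ by hypothesis, and $\beta$ is unramified at $\theta$ while $\pi$ is a unit at $\nu_\theta$ (as $\theta \nmid \pi$ in the UFD $A$), so $\beta\cdot(\pi)$ is unramified at $\theta$ as well — a product of two classes unramified at $\theta$ is unramified at $\theta$. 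Thus $\partial_\theta(\alpha_0) = 0$ for all primes $\theta$, i.e. $\alpha_0 \in H^m_{nr}(F, \mu_n^{\otimes(m-1)})$.

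**The main obstacle.** The technically delicate point is the comparison in the middle step: matching the ``residue at $\pi$ lands in $\iota_\kappa(H^{m-1}(\kappa))$'' picture with the ``unramified classes on $A$ reduce to the same thing'' picture, i.e. verifying that the $\beta$ extracted from Gabber purity really has residue-at-$\pi$ equal to $\gamma$ and not merely something with the same image in $H^{m-1}(\kappa)$. This requires knowing that $\partial_\pi$ restricted to $H^m_{nr}$-type data and the inflation $\iota_\kappa$ are compatible — equivalently, that the square relating $H^{m-1}_{nr}(F)$, $H^{m-1}(\kappa)$, $H^{m-1}(\kappa(\pi))$ and the residue $\partial_{\overline\delta}$ commutes. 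Once that compatibility is in hand, everything else is the routine residue bookkeeping sketched above. I would also remark that $\alpha_0$ and $\beta$ are not unique as stated (one may modify $\beta$ by a class unramified at $\pi$ that is $0$ on $\kappa$, adjusting $\alpha_0$ correspondingly), so the statement asserts only existence.
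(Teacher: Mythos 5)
Your proposal is correct and follows essentially the same route as the paper: take $\beta_0=\partial_\pi(\alpha)$, use Lemma~\ref{2dimcomplex} to see it is unramified over $A/(\pi)$ hence comes from $H^{m-1}(\kappa,\mu_n^{\otimes(m-2)})$, lift via the purity isomorphism of Lemma~\ref{purity} to an $A$-unramified class $\beta$, and check that $\alpha-\beta\cdot(\pi)$ has trivial residues at $\pi$ and at all other primes. The compatibility you flag (specialization of the purity lift at $\pi$ agreeing with inflation from $\kappa$) is exactly what the paper uses implicitly when it calls $\beta$ a ``lift'' of $\beta_0$, so your extra care there is fine but not a divergence.
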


\begin{proof}  
 Let $\beta_0 = \partial_\pi(\alpha)$. 
 By (\ref{2dimcomplex}), $\beta_0 \in H^{m-1}(\kappa(\pi), \mu_n^{ \otimes (m-2)})$  is unramified on $A/(\pi)$.
 Since $A/(\pi)$ is a complete discrete valuation ring with residue field $\kappa$, 
 we have $H^{m-1}_{nr}(\kappa(\pi), \mu_n^{\otimes (m-2)}) \simeq H^{m-1}(\kappa, \mu_n^{\otimes (m-2)})$.
 Since $A$ is a complete regular local ring of dimension 2, $H^{m-1}_{nr}(F, \mu_n^{\otimes (m-2)}) \simeq 
 H^{m-1}(\kappa, \mu_n^{\otimes (m-2)})$ (\ref{purity}). 
 Thus, there exists $\beta \in H^{m-1}_{nr}(F, \mu^{\otimes (m-1)})$
 which is a lift of $\beta_0$.
 Then $\alpha_0 = \alpha - \beta  \cdot (\pi)$ is unramified on $A$.
Hence $\alpha = \alpha_0 + \beta \cdot (\pi)$.
\end{proof}

\begin{cor} 
\label{zero}
Let $m  \geq 1$  and   $\alpha \in H^m(F, \mu_n^{\otimes (m-1)})$ is 
unramified on $A$ except possibly at $\pi$ and $\delta$.  If 
$\alpha \otimes F_\delta = 0$, then $\alpha = 0$.  In particular if $\alpha_1, \alpha_2 
\in H^m(F, \mu_n^{\otimes (m-1)})$ unramified on $A$ except possibly at $\pi$ and $\delta$ and   
$\alpha_1 \otimes F_\delta = \alpha_2 \otimes F_\delta$, then $\alpha_1  = \alpha_2$.  
\end{cor}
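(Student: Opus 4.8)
The plan is to reduce the hypothesis to the situation of Lemma~\ref{saltman1}, then pass to the completion $F_\delta$ and finish by a residue computation over the complete discretely valued field $\kappa(\delta)$. Since the residue at the prime $(\delta)$ factors through $F_\delta$, the hypothesis $\alpha\otimes F_\delta=0$ forces $\partial_\delta(\alpha)=0$, so $\alpha$ is unramified on $A$ except possibly at $\pi$. By Lemma~\ref{saltman1} write $\alpha=\alpha_0+\beta\cdot(\pi)$ with $\alpha_0\in H^m(F,\mu_n^{\otimes(m-1)})$ and $\beta\in H^{m-1}(F,\mu_n^{\otimes(m-2)})$ both unramified on $A$. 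By Lemma~\ref{purity} they correspond to classes $\overline{\alpha}_0\in H^m(\kappa,\mu_n^{\otimes(m-1)})$ and $\overline{\beta}\in H^{m-1}(\kappa,\mu_n^{\otimes(m-2)})$, and it suffices to show that both vanish.

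Recall that $A/(\delta)$ is a complete discrete valuation ring with residue field $\kappa$ and parameter $\overline{\pi}$, the image of $\pi$; hence $\kappa(\delta)=\Frac(A/(\delta))$ is a complete discretely valued field with residue field $\kappa$. In $F_\delta$ the element $\delta$ is a parameter and $\pi$ is a unit, so $\alpha_0\otimes F_\delta$, $\beta\otimes F_\delta$ and $(\pi)$ are all unramified at $\delta$. Because $\alpha_0$ and $\beta$ are defined on $A$, their images under the canonical isomorphism $H^\bullet_{nr}(F_\delta)\cong H^\bullet(\kappa(\delta),\mu_n^{\otimes\bullet})$ are $\iota_\kappa(\overline{\alpha}_0)$ and $\iota_\kappa(\overline{\beta})$, while $(\pi)$ maps to $(\overline{\pi})$; applying this isomorphism to $\alpha_0\otimes F_\delta+(\beta\otimes F_\delta)\cdot(\pi)=\alpha\otimes F_\delta=0$ gives
$$\iota_\kappa(\overline{\alpha}_0)+\iota_\kappa(\overline{\beta})\cdot(\overline{\pi})=0\quad\text{in }H^m(\kappa(\delta),\mu_n^{\otimes(m-1)}).$$
Now take the residue $\partial_{\overline{\pi}}$ of this identity over $\kappa(\delta)$: it kills the unramified class $\iota_\kappa(\overline{\alpha}_0)$ and sends $\iota_\kappa(\overline{\beta})\cdot(\overline{\pi})$ to $\pm\overline{\beta}$, so $\overline{\beta}=0$. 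Feeding this back, $\iota_\kappa(\overline{\alpha}_0)=0$ in $H^m(\kappa(\delta),\mu_n^{\otimes(m-1)})$, and since $\iota_\kappa$ is injective, $\overline{\alpha}_0=0$. Hence $\alpha_0=\beta=0$ and $\alpha=0$. The ``in particular'' statement follows by applying this to $\alpha_1-\alpha_2$.

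The main obstacle is the identification in the second paragraph: that the restrictions to $\kappa(\delta)$ of the $A$-unramified classes $\alpha_0,\beta$ are precisely the unramified lifts $\iota_\kappa(\overline{\alpha}_0),\iota_\kappa(\overline{\beta})$, and that cup products behave well under the isomorphisms $H^\bullet_{nr}\cong H^\bullet$ used. This rests on $A/(\delta)$ being a complete discrete valuation ring with residue field $\kappa$, together with the compatibility of the purity isomorphism of Lemma~\ref{purity} for $A$ with the analogous isomorphism over the one-dimensional ring $A/(\delta)$; everything else is a routine residue computation.
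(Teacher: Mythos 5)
Your proof is correct and follows essentially the same route as the paper's: use $\alpha\otimes F_\delta=0$ to get unramifiedness at $\delta$, decompose via Lemma~\ref{saltman1}, specialize the resulting identity over $\kappa(\delta)$ and take the residue at $\overline{\pi}$ to kill $\beta$, then conclude with the purity isomorphism of Lemma~\ref{purity}. The compatibility you flag (purity over $A$ versus the specialization through $A/(\delta)$) is exactly what the paper's proof also uses implicitly, so there is no gap beyond what the original argument already assumes.
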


\begin{proof} Since $\alpha \otimes F_\delta = 0$, $\alpha$ is unramified at $\delta$.
Thus $\alpha$ is unramified on $A$ except possibly at $\pi$.  By (\ref{saltman1}),
we have $\alpha = \alpha_0 + \beta \cdot (\pi)$ for some $\alpha_0 \in H^m(F, \mu_n^{\otimes (m-1)})$
and $\beta \in H^{m-1}(F, \mu_n^{\otimes (m-2)})$ which are unramified on $A$.
Since $\alpha \otimes F_\delta = 0$, we have $(\beta \cdot (\pi)) \otimes F_\delta  
= - \alpha_0 \otimes F_\delta$. Since $\beta \cdot (\pi)$ and $\alpha_0$ are unramified at $\delta$,
we have $\overline{\beta} \cdot (\overline{\pi}) = - \overline{\alpha}_0$, where $\overline{~}$ denotes the
image over $\kappa(\delta)$. Since $\kappa(\delta)$ is a complete discrete valued field with 
$\overline{\pi}$ as  a parameter, by taking the residues, we see that the image of $\beta$ is 0 
in  $H^{m-1}(\kappa, \mu_n^{\otimes (m-2)})$. Since $A$ is a complete regular local ring, $\beta = 0$
(\ref{purity}).  Hence $\alpha = \alpha_0$ is unramified on $A$.  Since  $\alpha \otimes F_\delta = 0$, 
$\overline{\alpha} = 0 \in H^m(\kappa(\delta), \mu_n^{\otimes (m-1)})$.
 In particular the image of $\alpha$   in $H^m(\kappa, \mu_n^{\otimes (m-1)})$ is zero.
Since $A$ is a complete regular local ring, $\alpha = 0$ (\ref{purity}).
\end{proof}

\begin{cor} 
\label{local-period}
Let $ m\geq 1$ and $\alpha \in H^m(F, \mu_n^{m-1})$. If $\alpha$ is unramified
on $A$ except possibly at $\pi$ and $\delta$, 
then per$(\alpha) = $ per$(\alpha \otimes F_\pi) = $ per$(\alpha \otimes F_\delta)$.
\end{cor}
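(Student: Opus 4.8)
The plan is to deduce everything from Corollary~\ref{zero}, since the statement is essentially just a torsion-invariance property. First I would dispose of the trivial direction: restriction $H^m(F,\mu_n^{\otimes(m-1)}) \to H^m(F_\pi,\mu_n^{\otimes(m-1)})$ is a group homomorphism, so per$(\alpha \otimes F_\pi)$ divides per$(\alpha)$, and likewise per$(\alpha \otimes F_\delta)$ divides per$(\alpha)$. It therefore suffices to establish the reverse divisibilities per$(\alpha) \mid $ per$(\alpha \otimes F_\delta)$ and per$(\alpha) \mid $ per$(\alpha \otimes F_\pi)$.

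For the first of these, I would set $p = $ per$(\alpha \otimes F_\delta)$ and consider $\beta = p\alpha \in H^m(F,\mu_n^{\otimes(m-1)})$. Since each residue homomorphism $\partial_\theta$ is a group homomorphism, $\partial_\theta(\beta) = p\,\partial_\theta(\alpha)$, so $\beta$ is again unramified on $A$ except possibly at $\pi$ and $\delta$. By the choice of $p$ we have $\beta \otimes F_\delta = p(\alpha \otimes F_\delta) = 0$, so Corollary~\ref{zero} applies and gives $\beta = 0$, that is, $p\alpha = 0$. Hence per$(\alpha)$ divides $p = $ per$(\alpha \otimes F_\delta)$, and combined with the trivial direction this yields per$(\alpha) = $ per$(\alpha \otimes F_\delta)$.

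Finally I would run the identical argument with the roles of $\pi$ and $\delta$ interchanged: the hypotheses on $A$, $\pi$, $\delta$ are symmetric in $\pi$ and $\delta$, and Corollary~\ref{zero} applies verbatim with $F_\delta$ replaced by $F_\pi$ (its proof only used that $\alpha$ becomes unramified at the relevant prime and then invoked the decomposition of Lemma~\ref{saltman1} at the other prime). This gives $q\alpha = 0$ for $q = $ per$(\alpha \otimes F_\pi)$, hence per$(\alpha) = $ per$(\alpha \otimes F_\pi)$, completing the proof.

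I do not expect a genuine obstacle here; the one point that requires a word of care is that multiplication by the integer $p$ does not introduce new ramification on $A$, so that $p\alpha$ still satisfies the hypotheses of Corollary~\ref{zero}, and this is immediate from additivity of the residue maps.
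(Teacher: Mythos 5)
Your proposal is correct and follows essentially the same argument as the paper: apply Corollary~\ref{zero} to $p\alpha$ with $p = \mathrm{per}(\alpha \otimes F_\delta)$ to get $p\alpha = 0$, combine with the trivial divisibility, and handle $F_\pi$ by the evident symmetry in $\pi$ and $\delta$. Your added remark that multiplication by $p$ creates no new ramification is a fair point of care that the paper leaves implicit.
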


\begin{proof} Suppose $t = $ per$(\alpha \otimes F_\delta)$.
Then $t \alpha \otimes F_\delta = 0$ and hence, by (\ref{zero}), $t\alpha = 0$.
Since per$(\alpha \otimes F_\delta) \leq $ per$(\alpha)$, it follows
that per$(\alpha) = $ per$(\alpha \otimes F_\delta)$. Similarly,
  per$(\alpha) = $ per$(\alpha \otimes F_\pi)$. 
\end{proof}

\begin{cor}
\label{local-cyclic} Suppose that $\kappa$ is a finite field. 
Let $\alpha \in H^2(F, \mu_n)$. If $\alpha$ is unramified except at $\pi$ and $\delta$, then 
there exist a cyclic extension $E/F$ and $\sigma \in {\rm Gal}(E/F)$ a generator,  
 $u \in A$ a unit, and $0 \leq i, j < n $ such that 
 $\alpha = (E, \sigma, u\pi^i\delta^j)$  with 
 $E/F$ is unramified  on $A$ except at $\delta$ and $i = 1$ or 
 $E/F$ is unramified on $A$ except at $\pi$ and  $j = 1$.
\end{cor}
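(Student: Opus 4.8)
The plan is to read off the two residues of $\alpha$ and to build the asserted cyclic algebra by lifting one of them. Set $\chi_\pi=\partial_\pi(\alpha)\in H^1(\kappa(\pi),\Z/n\Z)$ and $\chi_\delta=\partial_\delta(\alpha)\in H^1(\kappa(\delta),\Z/n\Z)$; by (\ref{2dimcomplex}) one has $\partial_{\overline\delta}(\chi_\pi)=-\partial_{\overline\pi}(\chi_\delta)$. Two consequences of $\kappa$ being finite are used throughout: by (\ref{purity}), $H^2_{nr}(F,\mu_n)\isom H^2(\kappa,\mu_n)=\Brn(\kappa)=0$, so every class in $H^2(F,\mu_n)$ unramified at all primes of $A$ is zero; and $H^1_{nr}(F,\Z/n\Z)\isom H^1(\kappa,\Z/n\Z)$. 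Moreover $\kappa(\pi)=\Frac(A/(\pi))$ and $\kappa(\delta)=\Frac(A/(\delta))$ are complete discretely valued fields with finite residue field $\kappa$, hence local fields; in particular $\chi_\pi$ is the character of a cyclic extension $E_0/\kappa(\pi)$ with $[E_0:\kappa(\pi)]=\operatorname{ord}(\chi_\pi)$, a power of $\ell$ dividing $n$ (and similarly for $\chi_\delta$).

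First I would construct the extension from $\chi_\pi$; the alternative conclusion (with $j=1$) is obtained symmetrically by exchanging the roles of $\pi$ and $\delta$. Lift $E_0$ to the unramified extension $E_\pi/F_\pi$ with residue field $E_0$ and apply (\ref{lifting_extension}): this gives a Galois extension $E/F$ of degree $\operatorname{ord}(\chi_\pi)$, unramified on $A$ except possibly at $\delta$, with $\mathrm{Gal}(E/F)\isom\mathrm{Gal}(E_0/\kappa(\pi))$ (hence cyclic) and $E\otimes F_\pi\isom E_\pi$, so that $E/F$ is unramified at $\pi$ with residue field $E_0$; take the generator $\sigma$ of $\mathrm{Gal}(E/F)$ corresponding to a generator $\sigma_0$ of $\mathrm{Gal}(E_0/\kappa(\pi))$ with $(E_0,\sigma_0)=\chi_\pi$. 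Then $(E,\sigma,\pi)$ has residue $\chi_\pi$ at $\pi$ and is unramified on $A$ off $\{\pi,\delta\}$, so $\beta:=\alpha-(E,\sigma,\pi)$ satisfies $\partial_\pi(\beta)=0$ and is unramified on $A$ off $\{\pi,\delta\}$, hence unramified on $A$ except possibly at $\delta$. By (\ref{saltman1}), applied at $\delta$ in place of $\pi$, $\beta=\beta_0+\gamma\cdot(\delta)$ with $\beta_0\in H^2(F,\mu_n)$ and $\gamma\in H^1(F,\Z/n\Z)$ unramified on $A$; by the vanishing above $\beta_0=0$, and $\gamma\in H^1_{nr}(F,\Z/n\Z)$, so writing $\gamma=(E'',\tau'')$ with $E''/F$ cyclic and unramified on all of $A$ we obtain $\alpha=(E,\sigma,\pi)+(E'',\tau'',\delta)$.

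It remains to merge these two cyclic algebras into a single one of the required shape, which I expect to be the main obstacle. Let $E^{nr}$ be the maximal subextension of $E$ unramified on all of $A$; then $E^{nr}$ and $E''$ are both unramified on $A$ with residue fields cyclic over the finite field $\kappa$, so one contains the other. If $E''\subseteq E^{nr}$ then, since $(E,\sigma)$ has order $[E:F]$ and so generates $\mathrm{Hom}(\mathrm{Gal}(E/F),\Z/n\Z)$, we have $(E'',\tau'')=(E,\sigma)^{j}$ for some $j$, whence $(E'',\tau'',\delta)=(E,\sigma,\delta^{j})$ and $\alpha=(E,\sigma,\pi\delta^{j})$, of the asserted form with $u=1$, $i=1$, $0\le j<[E:F]\le n$. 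In the remaining case — which forces $\chi_\pi$ to be ramified at $\overline\delta$, so that $E$ ramifies at $\delta$ — the extension coming from (\ref{lifting_extension}) is too small to absorb $(E'',\tau'',\delta)$, and one must instead feed into (\ref{lifting_extension}) a suitably larger cyclic extension of $\kappa(\pi)$: one whose residue at $\pi$ still carries $\chi_\pi$ (via an appropriate exponent), whose splitting behaviour over $\kappa(\pi)$ is wide enough to accommodate the residue field of $E''$, and whose degree is controlled by $\operatorname{per}(\alpha)=\operatorname{per}(\alpha\otimes F_\pi)$ (cf. (\ref{local-period}), (\ref{local_local_period_index})); the existence of such an extension relies on the structure of tame cyclic extensions of a local field (cf. (\ref{ramified_extension}), (\ref{galois_extension})). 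The two algebras then combine as before, at worst introducing a unit $u\in A^{*}$ into the coefficient, which is harmless since a cyclic algebra whose chosen element is a unit of $A$ is unramified on $A$ and hence zero by the first paragraph. Finally, which alternative of the statement one obtains — ``$E$ unramified on $A$ except at $\delta$ and $i=1$'' or ``$E$ unramified on $A$ except at $\pi$ and $j=1$'' — is dictated by whether the construction is run from $\chi_\pi$ or from $\chi_\delta$, and checking that one of the two choices goes through is the heart of the matter.
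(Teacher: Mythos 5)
Your reduction $\alpha=(E,\sigma,\pi)+(E'',\tau'',\delta)$, with $E/F$ cyclic unramified on $A$ except possibly at $\delta$ and $E''/F$ cyclic unramified on all of $A$, is correct (it follows from (\ref{lifting_extension}), (\ref{saltman1}) and (\ref{purity}) with $\kappa$ finite), and the absorption when $E''\subseteq E^{nr}$ is fine. But the remaining case is a genuine gap, not a loose end, and it really occurs: take $n=\ell$ with $\mu_\ell\subset F$ (e.g.\ $|\kappa|\equiv 1 \bmod \ell$) and $\alpha=(u\delta,\pi)+(w,\delta)$ (symbol algebras) with $u,w\in A^*$ and $\bar w\notin\kappa^{*\ell}$; then $E=F(\sqrt[\ell]{u\delta})$ is ramified at $\delta$, $E^{nr}=F$, and $E''=F(\sqrt[\ell]{w})\supsetneq E^{nr}$. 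Your proposed remedy -- feeding a ``suitably larger'' cyclic extension of $\kappa(\pi)$ into (\ref{lifting_extension}) -- misses the actual difficulty: in this example $[E:F]$ already equals $\mathrm{per}(\alpha)$, so no larger extension is wanted; what must be proved is that the unramified summand $(E'',\tau'',\delta)$ is split by $E$ (equivalently lies in $(E,\sigma)\cdot F^*$), and you give no argument for that, nor for the assertion that ``one of the two choices ($\chi_\pi$ or $\chi_\delta$) goes through'', which you yourself flag as the heart of the matter. (Also, your diagnosis of the remaining case is slightly off: $E''\supsetneq E^{nr}$ does not force $E$ to ramify at $\delta$; when $E=E^{nr}$ one can instead absorb $(E,\sigma,\pi)$ into $E''$, but that easy subcase is not the problematic one.)

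The missing mechanism is exactly where the paper's proof does its work, and it is a period argument, not a field-theoretic enlargement. One first shows $\mathrm{per}(\alpha)=\max\bigl(\mathrm{per}\,\partial_\pi(\alpha),\ \mathrm{per}\,\partial_\delta(\alpha)\bigr)$: if $d'$ is this maximum, then $d'\alpha$ is unramified on $A$, hence zero by (\ref{purity}) since $\kappa$ is finite. Choosing the prime, say $\pi$, whose residue has period $\mathrm{per}(\alpha)$, one gets $\mathrm{per}(\alpha\otimes F_\pi)=\mathrm{per}(\partial_\pi(\alpha\otimes F_\pi))$; writing $\alpha\otimes F_\pi=\alpha'+(E_\pi,\sigma,\pi)$ as in (\ref{rbc}), the period of $\alpha'$ divides $[E_\pi:F_\pi]$, and since the residue field $\kappa(\pi)$ is a local field this forces $\alpha'\otimes E_\pi=0$, so (\ref{dvr_ind_per}) gives $\alpha\otimes F_\pi=(E_\pi,\sigma,\theta\pi)$ with $\theta$ a unit; one then writes $\theta=u\delta^j\theta_1^n$, lifts $E_\pi$ to $E/F$ by (\ref{lifting_extension}), and concludes $\alpha=(E,\sigma,u\pi\delta^j)$ from the rigidity statement (\ref{zero}). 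In your framework the statement you still owe is precisely $(E'',\tau'',\delta)\otimes E\otimes F_\pi=0$ for the correctly chosen prime (then descend with (\ref{zero})); this needs the period comparison above together with local class field theory over $\kappa(\pi)$, and does not follow from the structure of tame cyclic extensions cited in your sketch.
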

  
 \begin{proof}  
 Since $n$ is a power of  the prime $\ell$ and $n \alpha = 0$,  
   per$(\partial_\pi(\alpha))$ and per$(\partial_\delta(\alpha))$ are   powers  of $\ell$.
  Let $d' $ be the maximum of per$(\partial_\pi(\alpha))$ and
 per$(\partial_\delta(\alpha))$. Then   
 $\partial_\pi(d'\alpha) =  d'\partial_\pi(\alpha) = 0$ and 
$\partial_\delta(d'\alpha) = d'\partial_\delta(\alpha) = 0$.
In particular $d'\alpha$ is unramified on $A$. Since $\kappa$ is a finite field,  $d'\alpha = 0$.
Hence  per$(\alpha)$ divides $d'$ and $d' = $ per$(\alpha)$. 
Thus per$(\alpha) =  $ per$(\partial_\pi(\alpha))$ or per$(\partial_\delta(\alpha))$.

Suppose that   per$(\alpha) =  $ per$(\partial_\pi(\alpha))$.
Since 
$\partial_\pi(\alpha \otimes F_\pi) = \partial_\pi(\alpha)$,  we have 
per$(\partial_\pi(\alpha)) \leq$  per$(\alpha \otimes F_\pi)  \leq $ per$(\alpha)$.
Thus  per$(\alpha \otimes F_\pi) =$ per$(\partial_\pi(\alpha \otimes F_\pi))$.
Thus,  by (\ref{dvr_ind_per}), we have 
$\alpha \otimes F_\pi = (E_\pi/F_\pi, \sigma, \theta \pi)$ for some 
cyclic unramified extension $E_\pi/F_\pi$ and 
$\theta \in F_\pi$ a unit in the valuation ring of   $F_\pi$. 

By (\ref{lifting_extension}), there exists a  Galois extension  $E/F$ 
which is unramified on $A$ except possibly at 
$(\delta)$ such that $E \otimes F_\pi \simeq E_\pi$.
Since $E_\pi/F_\pi$ is cyclic, $E/F$ is cyclic.
 Since $\theta \in F_\pi$ is a unit in the valuation ring of $F_\pi$ and 
the residue field of $F_\pi$ is a complete discrete valued field with $\overline{\delta}$ 
as parameter, 
we can write $\theta = u \delta^j \theta_1^n$ for some unit $u \in A$, $\theta_1 \in F_\pi$ and 
$0 \leq j \leq n-1$.
Then $\alpha \otimes F_\pi \simeq   (E, \sigma, u\delta^j\pi) \otimes F_\pi$.
 Thus, by (\ref{zero}), we have $\alpha = (E, \sigma, u\delta^i\pi)$.
 
 If per$(\alpha) =  $ per$(\partial_\delta(\alpha))$, then, as above, we get
 $\alpha = (E, \sigma, u\pi^j\delta)$ for some cyclic extension $E/F$ which is unramified
 on $A$ except possibly at $\pi$.
 \end{proof}

The following is proved  in (\cite[2.4]{RS}) under the assumption that F contains a primitive
$n^{\rm th}$  root of unity.

\begin{prop}
\label{local_index} Suppose that $\kappa$ is a finite field.
Let $\alpha \in H^2(F, \mu_n)$. If $\alpha$ is unramified on $A$ except possibly at
$(\pi)$ and $(\delta)$. Then  ind$(\alpha) = $ ind$(\alpha \otimes F_\pi)
= $ ind$(\alpha \otimes F_\delta)$. 
\end{prop}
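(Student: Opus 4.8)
The plan is to read the result off from the structure theorem (\ref{local-cyclic}) together with the behaviour of the period under completion. The first thing to note is that, since $A$ is a $2$-dimensional complete regular local ring with $\mm=(\pi,\delta)$ and finite residue field $\kappa$, the quotient $A/(\pi)$ is a complete discrete valuation ring with residue field $\kappa$, so $\kappa(\pi)=\Frac(A/(\pi))$ is a (non-archimedean) local field; likewise for $\kappa(\delta)$. Hence $F_\pi$ is a complete discretely valued field whose residue field $\kappa(\pi)$ is local, and Theorem \ref{local_local_period_index} gives $\mathrm{per}(\alpha\otimes F_\pi)=\mathrm{ind}(\alpha\otimes F_\pi)$, and similarly $\mathrm{per}(\alpha\otimes F_\delta)=\mathrm{ind}(\alpha\otimes F_\delta)$. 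By Corollary \ref{local-period} one also has $\mathrm{per}(\alpha)=\mathrm{per}(\alpha\otimes F_\pi)=\mathrm{per}(\alpha\otimes F_\delta)$. Putting these together reduces the proposition to the single assertion $\mathrm{ind}(\alpha)=\mathrm{per}(\alpha)$: granting it, $\mathrm{ind}(\alpha)=\mathrm{per}(\alpha)=\mathrm{per}(\alpha\otimes F_\pi)=\mathrm{ind}(\alpha\otimes F_\pi)$, and the same with $F_\delta$.

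To prove $\mathrm{ind}(\alpha)=\mathrm{per}(\alpha)$ I would invoke (\ref{local-cyclic}): $\alpha$ is represented by a cyclic algebra $(E,\sigma,u\pi^i\delta^j)$ with $u\in A$ a unit, where — interchanging $\pi$ and $\delta$ if necessary — $E/F$ is unramified on $A$ except at $(\delta)$ and $i=1$. An inspection of the proof of (\ref{local-cyclic}) shows that $E$ is the unramified lift over $F_\pi$ of the residue of $\alpha\otimes F_\pi$, that $E\otimes F_\pi$ is a field, and that $\alpha\otimes F_\pi=(E\otimes F_\pi,\sigma\otimes 1,(\mathrm{unit})\cdot\pi)$ is a division algebra of degree $[E:F]$ (this is exactly the output of (\ref{dvr_ind_per}) as used there); in particular $[E:F]=\mathrm{ind}(\alpha\otimes F_\pi)=\mathrm{per}(\alpha\otimes F_\pi)=\mathrm{per}(\alpha)$. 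On the other hand $E$ splits the cyclic algebra $(E,\sigma,u\pi\delta^j)$, so $\mathrm{ind}(\alpha)$ divides $[E:F]=\mathrm{per}(\alpha)$; since always $\mathrm{per}(\alpha)\mid\mathrm{ind}(\alpha)$, we conclude $\mathrm{ind}(\alpha)=\mathrm{per}(\alpha)$.

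Once (\ref{local-cyclic}) is in hand the rest is bookkeeping with the divisibilities $\mathrm{per}\mid\mathrm{ind}$ and $\mathrm{ind}(\,\cdot\otimes F_p)\mid\mathrm{ind}(\,\cdot\,)$, so the real work lies in the earlier structural results. The one point I would be careful about — and which I regard as the crux — is the identity $[E:F]=\mathrm{per}(\alpha)$ for the cyclic presentation: it is not literally part of the statement of (\ref{local-cyclic}), and it is precisely what keeps the index from exceeding the period, so I would make sure to extract from that proof that $E\otimes F_\pi$ is a \emph{field} of degree $\mathrm{per}(\partial_\pi(\alpha))=\mathrm{per}(\alpha)$ rather than a nontrivial product of smaller fields. (One can even bypass (\ref{local_local_period_index}): from $\mathrm{ind}(\alpha\otimes F_\pi)=[E:F]$ and $\mathrm{ind}(\alpha)\mid[E:F]$ one gets $\mathrm{ind}(\alpha)=[E:F]$ directly, and then $\mathrm{per}(\alpha)=\mathrm{per}(\alpha\otimes F_\delta)\le\mathrm{ind}(\alpha\otimes F_\delta)\le\mathrm{ind}(\alpha)=[E:F]=\mathrm{per}(\alpha)$ forces equality throughout, using only (\ref{local-cyclic}), (\ref{dvr_ind_per}) and (\ref{local-period}).)
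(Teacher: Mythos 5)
Your proposal is correct and rests on the same pivot as the paper's own proof: the cyclic presentation $\alpha=(E,\sigma,u\pi\delta^{j})$ from (\ref{local-cyclic}), the fact that $\alpha\otimes F_\pi$ is then a division algebra of degree $[E:F]=[E_\pi:F_\pi]$, and the observation that $E$ splits $\alpha$, giving the sandwich $\mathrm{ind}(\alpha\otimes F_\pi)\le\mathrm{ind}(\alpha)\le[E:F]$. Your extra bookkeeping through (\ref{local_local_period_index}) and (\ref{local-period}) is a harmless repackaging (as your "bypass" shows), and it has the small merit of making explicit the equality $\mathrm{ind}(\alpha)=\mathrm{ind}(\alpha\otimes F_\delta)$, which the paper's proof leaves implicit after treating only the $F_\pi$ side in its chosen orientation.
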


\begin{proof}  Suppose that $\alpha$ is unramified on $A$ except possibly at
$(\pi)$ and $(\delta)$.  Then, by (\ref{local-cyclic}),  we assume without loss of generality  
that $\alpha = (E/F, \sigma, \pi\delta^j)$  with $E/F$ unramified on $A$ 
except possibly at $\delta$.  Then ind$(\alpha) \leq [E : F]$.
Since $E/F$ is unramified on $A$ expect possibly at $\delta$, we have 
$[E : F] = [E_\pi : F_\pi]$ and ind$(\alpha \otimes F_\pi) = [E_\pi : F_\pi]$.
Thus  $[E : F] = [E_\pi : F_\pi]  = $ ind$(\alpha \otimes F_\pi) \leq $ ind$(\alpha) \leq [ E : F]$
and hence $[E : F] = $ ind$(\alpha \otimes F_\pi) = $ ind$(\alpha)$.

 \end{proof}

\begin{cor}
\label{period=index} Suppose that $\kappa$ is a finite field. 
Let $\alpha \in H^2(F, \mu_n)$. If $\alpha$ is unramified on $A$ except possibly at
$(\pi)$ and $(\delta)$. Then  ind$(\alpha) = $ per$(\alpha)$. 
\end{cor}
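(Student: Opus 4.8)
The statement will follow by combining the three results immediately preceding it, once one observes that passing to $F_\pi$ puts us in the ``residue field a local field'' situation. Recall that since $A$ is complete with maximal ideal $(\pi,\delta)$, the field $F_\pi$ is a complete discretely valued field whose residue field $\kappa(\pi)$ is itself a complete discretely valued field with residue field $\kappa$; as $\kappa$ is finite, $\kappa(\pi)$ is a local field. So the plan is:

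First, I would invoke (\ref{local_index}) to get $\mathrm{ind}(\alpha) = \mathrm{ind}(\alpha \otimes F_\pi)$, using the hypothesis that $\alpha$ is unramified on $A$ except possibly at $(\pi)$ and $(\delta)$. Next, since $F_\pi$ is a complete discretely valued field with residue field $\kappa(\pi)$ a local field, and $n = \ell^d$ with $\ell$ coprime to $\mathrm{char}(\kappa) = \mathrm{char}(\kappa(\pi))$, I would apply (\ref{local_local_period_index}) to $\alpha \otimes F_\pi \in H^2(F_\pi, \mu_n)$ to conclude $\mathrm{per}(\alpha \otimes F_\pi) = \mathrm{ind}(\alpha \otimes F_\pi)$. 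Finally, I would use (\ref{local-period}) to identify $\mathrm{per}(\alpha \otimes F_\pi) = \mathrm{per}(\alpha)$. Chaining these equalities gives
$$
\mathrm{ind}(\alpha) = \mathrm{ind}(\alpha \otimes F_\pi) = \mathrm{per}(\alpha \otimes F_\pi) = \mathrm{per}(\alpha),
$$
which is the claim.

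There is essentially no genuine obstacle here: the content has already been done in (\ref{local_local_period_index}) (the period--index statement over a complete discretely valued field with local residue field) and in (\ref{local_index})/(\ref{local-period}) (which transport index and period, respectively, from $F$ to the one-variable completions $F_\pi$, $F_\delta$). The only thing to be careful about is the bookkeeping of hypotheses — that $n = \ell^d$ and $\ell \neq \mathrm{char}(\kappa)$ carry over verbatim to $F_\pi$, and that $\kappa(\pi)$ qualifies as a local field because $\kappa$ is finite — all of which are immediate. (One could equally run the argument through $F_\delta$ instead of $F_\pi$.)
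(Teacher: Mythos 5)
Your proof is correct and is essentially identical to the paper's: the paper also chains (\ref{local-period}), (\ref{local_local_period_index}) applied over $F_\pi$ (whose residue field $\kappa(\pi)$ is a local field since $\kappa$ is finite), and (\ref{local_index}), just stated in a slightly different order. No gaps.
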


\begin{proof} 
By (\ref{local-period}), per$(\alpha) = $ per$(\alpha \otimes F_\pi) $ 
 and by (\ref{local_local_period_index}),  ind$(\alpha \otimes F_\pi) = $ per$(\alpha \otimes F_\pi)$.
 Thus   per$(\alpha) = $ ind$(\alpha \otimes F_\pi)$. By (\ref{local_index}), we have
 ind$(\alpha) = $ per$(\alpha)$.
 \end{proof}

 The following follows from  (\cite{HHK3} and \cite{HHK5}).
 
 \begin{prop} 
 \label{hhk}
 Let  $\alpha \in H^2(F, \mu_n)$.  Let 
 $ \phi : \XX \to {\rm Spec}(A)$ be a sequence of blow-ups and 
 $V = \phi^{-1}(m)$.  Then ind$(\alpha) = l.c.m\{ {\rm ind}(\alpha \otimes F_x) \mid x \in V \}$. 
 \end{prop}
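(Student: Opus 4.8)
The plan is to deduce this from the field patching results of Harbater--Hartmann--Krashen. First observe that since $\phi\colon\XX\to\Spec(A)$ is a composition of blow-ups of the regular scheme $\Spec(A)$, the scheme $\XX$ is an integral regular two-dimensional scheme, proper over $\Spec(A)$, with function field $F$, and $V=\phi^{-1}(m)$ is its connected closed fibre; this is exactly the kind of model to which the patching machinery applies. In the original framework of \cite{HHK3} the base is a complete discrete valuation ring, and the passage to a regular proper model over a complete two-dimensional regular local ring such as $A$ is precisely what is provided by \cite{HHK5}. In that set-up $F$ is recovered as an inverse limit of the fields $F_x=\Frac(\hat{\OO}_{\XX,x})$ for $x$ a point of $V$, together with the associated branch fields, and one has a local--global principle for the index of a central simple algebra: ${\rm ind}(\alpha)$ is the least common multiple of the indices of $\alpha$ over the finitely many patch fields of the model.

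One divisibility requires no patching: for every point $x\in V$ the field $F$ embeds into $F_x$, so ${\rm ind}(\alpha\otimes F_x)$ divides ${\rm ind}(\alpha)$, and therefore ${\rm l.c.m.}\{{\rm ind}(\alpha\otimes F_x)\mid x\in V\}$ divides ${\rm ind}(\alpha)$. For the reverse divisibility one applies the HHK index formula, which in the most common formulation expresses ${\rm ind}(\alpha)$ as the least common multiple of the ${\rm ind}(\alpha\otimes F_P)$ for $P$ in a suitable finite set $\mathcal P$ of closed points of $V$ and the ${\rm ind}(\alpha\otimes F_U)$ for $U$ a connected component of $V\smallsetminus\mathcal P$. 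For $P\in\mathcal P$ there is nothing to do, since $P\in V$. For a component-open $U$ one enlarges $\mathcal P$ so that $U$ is a regular affine curve on which $\alpha$ is unramified away from the closed fibre; then $F_U$ is dominated by $F_\eta$, where $\eta$ is the generic point of $\overline U$, and by a further patching argument along $\overline U$ the index ${\rm ind}(\alpha\otimes F_U)$ in turn divides the least common multiple of ${\rm ind}(\alpha\otimes F_\eta)$ and of the ${\rm ind}(\alpha\otimes F_P)$ with $P\in\overline U\cap\mathcal P$. Since $\eta$ and these $P$ all lie in $V$, this shows that ${\rm ind}(\alpha)$ divides ${\rm l.c.m.}\{{\rm ind}(\alpha\otimes F_x)\mid x\in V\}$, and the two divisibilities together give the proposition.

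The substance of the argument is thus carried entirely by the cited patching theorems; the two points needing care are (i) that the patching set-up is genuinely available over the two-dimensional complete regular local base $A$, and not merely over a complete discrete valuation ring, which is the content of \cite{HHK5}, and (ii) the bookkeeping that rewrites the finite collection of patch fields appearing in the HHK index formula in terms of the fields $F_x$ attached to all points $x$ of the closed fibre $V$. I expect (ii) to be the only real obstacle; once it is in place, the statement is a direct translation of the index local--global principle of \cite{HHK3}.
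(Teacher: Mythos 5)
Your route is the same as the paper's: the proof given there consists entirely of the citation of Theorem 9.11 of \cite{HHK3} together with Theorem 4.2.1 of \cite{HHK5}, which are exactly the two inputs you invoke, and the divisibility $\mathrm{l.c.m.}\{\mathrm{ind}(\alpha\otimes F_x)\}\mid \mathrm{ind}(\alpha)$ is the trivial half in both treatments. The one place where your sketch understates what is being used is the reduction of $\mathrm{ind}(\alpha\otimes F_U)$ to the fields attached to points of $V$: the inclusions $F_U\subseteq F_\eta$ and $F_U\subseteq F_P$ (for $P\in \overline U$) yield $\mathrm{ind}(\alpha\otimes F_\eta)\mid \mathrm{ind}(\alpha\otimes F_U)$ and $\mathrm{ind}(\alpha\otimes F_P)\mid \mathrm{ind}(\alpha\otimes F_U)$, i.e.\ exactly the opposite direction from your claim that $\mathrm{ind}(\alpha\otimes F_U)$ divides the least common multiple of $\mathrm{ind}(\alpha\otimes F_\eta)$ and the $\mathrm{ind}(\alpha\otimes F_P)$. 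So the ``further patching argument along $\overline U$'' is not bookkeeping: it is precisely the passage from the patch-indexed index formula (over the $F_U$ and $F_P$) to the point-indexed one (over the $F_x$, $x\in V$), and that passage is what the cited refinement, and the paper's phrase ``similar arguments as in the proof of [HHK3, Theorem 9.11],'' are supplying. Since the paper records no more detail than you do, this is not a defect relative to the paper; but if you were to write the step out you would need either a point-based form of the HHK index formula valid over the base $\mathrm{Spec}(A)$, or an argument (for instance, shrinking $U$ and redoing the factorization with $F_\eta$ in place of $F_U$) showing that for suitable $U$ the index over $F_U$ is already computed by $F_\eta$ together with the finitely many boundary points -- it does not follow formally from the field inclusions you mention.
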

 
 \begin{proof}  Follows from  similar arguments as in the proof of  (\cite[Theorem 9.11]{HHK3})
 and using (\cite[Theorem 4.2.1]{HHK5}).
 \end{proof}

 We end this section with the following well known result

\begin{lemma}
\label{extns_2_local} 
Let $E/F$ be a cyclic extension of degree $\ell^d$ for some $d \geq 1$. 
  If $E/F$ is unramified on $A$ except possibly at $\delta$, then there exist a subextension $E_{nr}$ of $E/F$ and 
  $w \in E_{nr}$ which is a unit  in the integral closure of $A$ in $E_{nr}$ such that $E_{nr}/F$ is unramified on $A$
  and  $E= E_{nr}(\sqrt[\ell^e]{w\delta})$. Further if   $\kappa$ is a  finite  field, $\kappa$ contains a primitive 
  $\ell^{\rm th}$ root of unity and $0 < e < d$, then  $N_{E_{nr}/F}(w) \in A$ is not an $\ell^{\rm th}$ power 
in $A$.
\end{lemma}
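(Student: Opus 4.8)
The plan is to take $E_{nr}$ to be the inertia field of $E/F$ at the prime $(\delta)$, to analyze the integral closure $B_{nr}$ of $A$ in $E_{nr}$, and then to read off the required Kummer generator from the divisor theory of $B_{nr}$.

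First I would set $G=\mathrm{Gal}(E/F)\cong\Z/\ell^d\Z$. Since $G$ is abelian and acts transitively on the primes of the integral closure of $A$ in $E$ above $(\delta)$, the inertia subgroups at all of these primes coincide; call this subgroup $I$. As $\ell\neq\mathrm{char}(\kappa)$ the ramification at $(\delta)$ is tame, so $I$ is cyclic, say $|I|=\ell^e$ with $0\le e\le d$. Put $E_{nr}=E^I$, a cyclic extension of $F$ of degree $\ell^{d-e}$. By construction $E_{nr}/F$ is unramified at $(\delta)$, and by hypothesis it is unramified at every other height-one prime of $A$, so $E_{nr}/F$ is unramified on $A$; hence by \cite[Lemma 3.1]{PS1} (purity of the branch locus) $B_{nr}$ is a complete regular local ring, unramified over $A$, with maximal ideal $(\pi,\delta)B_{nr}$, residue field $\kappa_0$ satisfying $[\kappa_0:\kappa]=\ell^{d-e}$, and $\mathrm{Gal}(\kappa_0/\kappa)\simeq\mathrm{Gal}(E_{nr}/F)$. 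In particular $B_{nr}$ is a UFD and $\delta$ belongs to a regular system of parameters, so $\delta B_{nr}$ is prime and is the only prime of $B_{nr}$ over $(\delta)$.

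Assuming $e\ge1$ (if $e=0$ then $E=E_{nr}$ and there is nothing to prove), $E/E_{nr}$ is cyclic of degree $\ell^e$ and totally ramified at $\delta B_{nr}$; note $B_{nr}/\delta B_{nr}$ is a complete discrete valuation ring with residue field $\kappa_0$. Completing at $\delta B_{nr}$ exhibits $E$ as a cyclic, totally tamely ramified extension of degree $\ell^e$ of a complete discretely valued field, so by \ref{ramified_extension} and \ref{galois_extension} the residue field $\Frac(B_{nr}/\delta B_{nr})$ contains $\mu_{\ell^e}$; hence so does $\kappa_0$, hence (Hensel, $\ell\neq\mathrm{char}(\kappa)$) so does $B_{nr}$, and $\mu_{\ell^e}\subseteq E_{nr}$. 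By Kummer theory $E=E_{nr}(\sqrt[\ell^e]{z})$ for some $z\in E_{nr}^*$. Writing $z$ as a unit of $B_{nr}$ times a product of prime elements and using that $E/E_{nr}$ is ramified at $\delta B_{nr}$ with index $\ell^e$ and unramified at all other height-one primes, one gets $v_{\delta B_{nr}}(z)$ coprime to $\ell$ and $\ell^e\mid v_{\mathfrak q}(z)$ for every other height-one prime $\mathfrak q$; thus $z\equiv u_0\delta^a\pmod{(E_{nr}^*)^{\ell^e}}$ with $u_0\in B_{nr}^*$ and $\ell\nmid a$. Replacing $z$ by $z^{a'}$ with $aa'\equiv1\pmod{\ell^e}$ and absorbing $\ell^e$-th powers yields $E=E_{nr}(\sqrt[\ell^e]{w\delta})$ with $w=u_0^{a'}\in B_{nr}^*$; and $N_{E_{nr}/F}(w)\in F$ is integral over the normal ring $A$, hence lies in $A$ (and is a unit). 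This gives the first assertion.

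For the last assertion, assume $\kappa$ finite, $\mu_\ell\subseteq\kappa$ and $0<e<d$. Since $A$ is complete local with $\ell$ invertible, reduction induces $A^*/(A^*)^\ell\xrightarrow{\sim}\kappa^*/(\kappa^*)^\ell$, and the image of $N_{E_{nr}/F}(w)$ in $\kappa^*$ is $N_{\kappa_0/\kappa}(\bar w)$, where $\bar w\in\kappa_0^*$ is the reduction of $w$. An elementary computation in the cyclic groups $\kappa^*\subseteq\kappa_0^*$ (using that $[\kappa_0:\kappa]$ is a power of $\ell$ and $\ell\mid\#\kappa^*$) shows $N_{\kappa_0/\kappa}(\bar w)\in(\kappa^*)^\ell\iff\bar w\in(\kappa_0^*)^\ell\iff w\in(B_{nr}^*)^\ell$ (the last by Hensel), so it suffices to exclude $w\in(B_{nr}^*)^\ell$. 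If $w=x^\ell$ with $x\in B_{nr}^*$, then taking the $\ell^{e-1}$-th power of $\sqrt[\ell^e]{w\delta}$ and dividing by $x$ up to a root of unity in $F$ shows $\sqrt[\ell]{\delta}\in E$, so $E_{nr}(\sqrt[\ell]{\delta})$ is a subextension of the cyclic extension $E/F$, hence cyclic over $F$; but $E_{nr}/F$ is unramified at $(\delta)$ while $F(\sqrt[\ell]{\delta})/F$ is totally (tamely) ramified of degree $\ell$ there (as $\mu_\ell\subseteq F$ and $\delta\notin F^{*\ell}$), so these fields are linearly disjoint over $F$ and $\mathrm{Gal}(E_{nr}(\sqrt[\ell]{\delta})/F)\simeq\Z/\ell^{d-e}\Z\times\Z/\ell\Z$, which is not cyclic because $d-e\ge1$ — a contradiction. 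The expected main obstacle is the structural part: proving $B_{nr}$ is a regular local UFD in which $\delta$ is a prime element and that $\mu_{\ell^e}\subseteq E_{nr}$, which is what lets $z$ be normalized to $w\delta$ with a unit $w$; once that picture is in place the remainder is a short Galois-theoretic argument together with the finite-field norm computation, which is exactly where the hypotheses ``$\kappa$ finite'', ``$\mu_\ell\subseteq\kappa$'' and ``$e<d$'' are used.
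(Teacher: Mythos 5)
Your proof is correct, but your route to the first assertion is genuinely different from the paper's. You define $E_{nr}$ intrinsically as the inertia field of $E/F$ at $(\delta)$ (so it is literally a subextension of $E$, as the statement asks), use purity (\cite[Lemma 3.1]{PS1}) to see that the integral closure $B_{nr}$ is a complete regular local ring in which $\delta$ stays prime, extract $\mu_{\ell^e}\subseteq E_{nr}$ from the totally tamely ramified completion at $\delta$ via \ref{ramified_extension} and \ref{galois_extension}, and then normalize a Kummer generator using unique factorization in $B_{nr}$ together with the ramification data ($v_{\delta}(z)$ prime to $\ell$, and $\ell^e\mid v_{\mathfrak{q}}(z)$ at the other height-one primes). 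The paper instead works at the other prime $\pi$: it passes to the residue field $\kappa(\pi)$ of $F_\pi$, decomposes the cyclic extension $E(\pi)/\kappa(\pi)$ into its maximal unramified part and a totally ramified part generated by $\sqrt[\ell^e]{v\overline{\delta}}$, lifts this decomposition back to $F$ as in \ref{lifting_extension}, and then identifies the lift with $E$ by the rigidity statement \ref{zero} (an extension unramified on $A$ outside $\pi,\delta$ is determined by its restriction to $F_\pi$). Your argument is more self-contained and produces $E_{nr}$ canonically inside $E$, at the cost of the extra steps of establishing $\mu_{\ell^e}\subseteq E_{nr}$ and the divisorial bookkeeping in the factorial ring $B_{nr}$; the paper's argument reuses machinery already set up in this section and gets the Kummer shape for free from \ref{ramified_extension}. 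For the second assertion your argument --- reducing $N_{E_{nr}/F}(w)$ modulo the maximal ideal to $N_{\kappa_0/\kappa}(\bar w)$, using surjectivity of the finite-field norm to get an isomorphism $\kappa_0^*/\kappa_0^{*\ell}\to\kappa^*/\kappa^{*\ell}$, lifting by Hensel, and then deriving $\sqrt[\ell]{\delta}\in E$, which contradicts cyclicity of $E/F$ since $E_{nr}/F$ is a nontrivial unramified extension --- is essentially identical to the paper's.
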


\begin{proof}  Let $E(\pi)$ be the residue field of $E$ at $\pi$. Since $E/F$ is unramified at 
$A$ except possibly at $\delta$, by (\ref{local-period}), $[E(\pi) : \kappa(\pi)] = [E : F]$. Since 
$E/F$ is cyclic, $E(\pi)/\kappa(\pi)$ is cyclic.  As in the proof of  (\ref{lifting_extension}), there exist a cyclic  extension 
$E_0/F$ unramified on $A$ and a unit $w$ in the integral closure of $A$ in $E_0$ such that the residue field  of
$E_0(\sqrt[\ell^e]{w\delta})$  at $\pi$ is $E(\pi)$. 
By (\ref{zero}), we have $E \simeq E_0(\sqrt[\ell^e]{w\delta})$.  Let $E_{nr} = E_0$. Then $E_{nr}$ has the required properties.

Suppose that $\kappa$ is a  finite field and  contains a primitive 
  $\ell^{\rm th}$ root of unity.  Let $B$ be the integral closure of $A$ in $E_{nr}$. 
Then $B$ is a complete regular local ring with residue field $\kappa'$ a finite extension of $\kappa$.

Let $w_0 = N_{E_{nr}/F}(w) \in A^*$ and $\overline{w}_0 \in \kappa^*$. 
Suppose that $w_0 \in A^{*\ell}$.  Then $\overline{w}_0 \in \kappa^{*\ell}$.
Since $\kappa$ contains a primitive 
  $\ell^{\rm th}$ root of unity, we have $\mid\!\! \kappa'^*/\kappa'^{*\ell}\! \!\mid  = \mid\!\! \kappa^*/\kappa^{*\ell} \! \!\mid = \ell$.
  Since norm map is surjective from $\kappa'$ to $\kappa$, the norm map induces an isomorphism  from 
  $ \kappa'^*/\kappa'^{*\ell}   \to  \kappa^*/\kappa^{*\ell}$. 
Thus the image of $w$ in $\kappa'$ is an $\ell^{\rm th}$ power. 
Since  $B$ is a  complete regular local 
ring,  $w \in B^{*\ell}$. 
Suppose $0 < e < f$. Then $\sqrt[\ell]{\delta} \in E$. Since $E_{nr}/F$ is nontrivial unramified extension and 
$F(\sqrt[\ell]{\delta})/F$ is a nontrivial  totally ramified extension of $F$,  we have two distinct 
degree $\ell$ subextensions of $E/F$, which is a contradiction to the fact that $E/F$ is cyclic.
Hence $w_0 \not\in A^{*\ell}$.
\end{proof}

   \section{Reduced norms -  complete two dimensional  regular local rings}

Throughout this section we fix  the following notation: \\ 
$\bullet$ $A$ a complete two dimensional regular local ring \\
$\bullet$ $F$ the field of fractions of $A$ \\
$\bullet$ $m = (\pi, \delta)$ the maximal ideal of $A$ \\
$\bullet$ $\kappa = A/m$  a finite field\\
$\bullet$ $\ell$ a prime not equal to char$(\kappa)$ \\
$\bullet$ $n = \ell^d$ \\
$\bullet$ $\alpha \in H^2(F, \mu_n)$ is unramified on $A$ except possibly at $(\pi)$ and $(\delta)$\\
$\bullet$ $\lambda = w\pi^s\delta^t$, $w \in A$ a unit and $s, t \in \Z$ with $1 \leq s, t < n$.

  The aim of  this section is to prove that 
  if  $\alpha \neq 0$ and $\alpha \cdot (\lambda) = 0$, 
  then  there exist an extension $L/F$ of degree $\ell$
  and $\mu \in L$ such that  ind$(\alpha \otimes L) < $ ind$(\alpha)$
  and $N_{L/F}(\mu) = \lambda$.  We assume that \\
 $\bullet$ $F$ contains a primitive $\ell^{\rm th}$ root of unity. \\

  We begin with the following
  
\begin{lemma}
\label{local_coprime}
  If $\alpha \cdot (\lambda) = 0$, then $s\alpha = (E, \sigma, \lambda)  $ for some cyclic 
 extension $E$ of $F$ which is unramified on $A$ except possibly at  $\delta$.
 In particular, if $s$  is coprime to $\ell$, then $\alpha = 
 (E', \sigma', \lambda)$ for  some cyclic 
 extension $E'$ of $F$ which is unramified on $A$ except possibly at  $\delta$.
\end{lemma}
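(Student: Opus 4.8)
The plan is to localize the hypothesis at the prime $(\pi)$, apply Lemma~\ref{dot-zero} over the complete discretely valued field $F_\pi$, lift the resulting cyclic extension back to $F$ by Lemma~\ref{lifting_extension}, and conclude with Corollary~\ref{zero}. First I would record that $A/(\pi)$ is a complete discrete valuation ring with residue field $\kappa$ and parameter the image of $\delta$, so $\kappa(\pi) = \Frac(A/(\pi))$ is a complete discretely valued field with $\mathrm{char}(\kappa(\pi)) = \mathrm{char}(\kappa)$; hence $n$ is coprime to $\mathrm{char}(\kappa(\pi))$ and $F_\pi$ (with residue field $\kappa(\pi)$) is a complete discretely valued field to which the formalism of \S\ref{Preliminaries} and of (\ref{rbc}), (\ref{dot-zero}) applies.

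Let $(E_\pi,\sigma_\pi)$ be the lift of the residue $\partial_\pi(\alpha)\in H^1(\kappa(\pi),\Z/n)$, so that $E_\pi/F_\pi$ is an unramified cyclic extension and $\alpha\otimes F_\pi = \alpha'_\pi + (E_\pi,\sigma_\pi,\pi)$ as in (\ref{rbc}). Write $\lambda = (w\delta^t)\,\pi^s$; then $w\delta^t$ is a unit at $(\pi)$ and $\nu_\pi(\lambda) = s$. Since $\alpha\cdot(\lambda) = 0$ in $H^3(F,\mu_n^{\otimes 2})$ we have $\partial_\pi(\alpha\cdot(\lambda)) = 0$, so Lemma~\ref{dot-zero}, applied over $F_\pi$, gives $s(\alpha\otimes F_\pi) = (E_\pi,\sigma_\pi,\lambda)$ in $H^2(F_\pi,\mu_n)$.

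Next I would globalize $E_\pi$. As $[E_\pi:F_\pi]$ divides $n$ it is coprime to $\mathrm{char}(\kappa)$, so by Lemma~\ref{lifting_extension} there is a Galois — hence cyclic, since $E_\pi/F_\pi$ is cyclic — extension $E/F$ of degree $[E_\pi:F_\pi]$, unramified on $A$ except possibly at $\delta$, with $\mathrm{Gal}(E/F)\cong\mathrm{Gal}(E_\pi/F_\pi)$ and $E\otimes F_\pi\cong E_\pi$; I would fix the generator $\sigma$ of $\mathrm{Gal}(E/F)$ corresponding to $\sigma_\pi$, so that $(E,\sigma,\lambda)\otimes F_\pi = (E_\pi,\sigma_\pi,\lambda)$. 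Set $\gamma = s\alpha - (E,\sigma,\lambda)\in H^2(F,\mu_n)$. Since $\lambda = w\pi^s\delta^t$ with $w$ a unit in $A$, $\lambda$ is a unit on $A$ outside $(\pi)$ and $(\delta)$, and $E/F$ is unramified on $A$ except possibly at $\delta$; hence $(E,\sigma,\lambda)$, and therefore $\gamma$ (our standing hypothesis handling $\alpha$), is unramified on $A$ except possibly at $(\pi)$ and $(\delta)$. On the other hand $\gamma\otimes F_\pi = s(\alpha\otimes F_\pi) - (E_\pi,\sigma_\pi,\lambda) = 0$ by the previous paragraph. Applying Corollary~\ref{zero} with the roles of $\pi$ and $\delta$ interchanged (they are interchangeable generators of $m$), I conclude $\gamma = 0$, i.e. $s\alpha = (E,\sigma,\lambda)$ with $E/F$ cyclic and unramified on $A$ except possibly at $\delta$.

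For the last assertion, if $s$ is coprime to $\ell$ — hence to $n = \ell^d$, hence to $[E:F]$ — I would pick $s'$ with $ss'\equiv 1\pmod n$ and use that the period of $\alpha$ divides $n$ to obtain $\alpha = s'(s\alpha) = s'(E,\sigma,\lambda) = (E,\sigma,\lambda^{s'}) = (E,\sigma^{s'},\lambda)$, the last equality because $(E,\sigma)^{s'} = (E,\sigma^{s'})$ for $s'$ coprime to $n$ (\S\ref{Preliminaries}); since $\sigma^{s'}$ is again a generator, $E' = E$ and $\sigma' = \sigma^{s'}$ do the job. The one point that needs care is the asymmetry of the statement: one must lift the residue of $\alpha$ at $\pi$ (not at $\delta$) so that the correction term $\gamma$ vanishes over $F_\pi$ and Corollary~\ref{zero} applies — lifting at $\delta$ would instead prove the companion statement about $t\alpha$ with $E$ unramified away from $\pi$. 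Otherwise there is no real obstacle, since the substance is already contained in Lemmas~\ref{dot-zero} and~\ref{lifting_extension} and Corollary~\ref{zero}.
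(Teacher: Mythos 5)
Your proposal is correct and follows essentially the same route as the paper: apply Lemma~\ref{dot-zero} over $F_\pi$, lift the unramified cyclic extension $E_\pi$ to $E/F$ via Lemma~\ref{lifting_extension}, and kill the difference $s\alpha-(E,\sigma,\lambda)$ with Corollary~\ref{zero} (used, as you note, with the roles of $\pi$ and $\delta$ interchanged, which is legitimate by symmetry); your explicit derivation of the ``in particular'' statement via $s'$ with $ss'\equiv 1 \pmod n$ is also fine. The only nitpick is that $\mathrm{char}(\kappa(\pi))$ need not equal $\mathrm{char}(\kappa)$ in mixed characteristic (it may be $0$), but it is in any case coprime to $n$, so the argument is unaffected.
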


\begin{proof}  By (\ref{dot-zero}), there exists an unramified cyclic
extension $E_\pi$ of $F_\pi$ such that $s\alpha \otimes F_\pi = (E_\pi, \sigma, \lambda)$.
Let $E(\pi)$ be the residue field of $E_\pi$. Then $E(\pi)$ is
a cyclic extension of $\kappa(\pi)$. By (\ref{lifting_extension}), there exists
a cyclic extension $E$ of $F$ which is unramified on $A$ except possibly at
$\delta$ with  $E \otimes F_\pi \simeq E_\pi$.  Since $E/F$ is unramified on $A$ except possibly  at 
$\delta$ and $\lambda = w \pi^s \delta^t$ with $w$ a unit in $A$, $(E, \sigma, \lambda)$
is unramified on $A$ except possibly  at $(\pi)$ and $(\delta)$.  Since $\alpha$ is unramified 
on $A$ except possibly at $(\pi)$ and $(\delta)$, $s\alpha - (E, \sigma, \lambda)$
is unramified  on $A$ except possibly at $(\pi)$ and $(\delta)$. Since $s\alpha \otimes F_\pi = 
(E_\pi, \sigma, \lambda) = (E, \sigma, \lambda) \otimes F_\pi$,   by (\ref{zero}), $s\alpha = (E, \sigma, \lambda)$. 
\end{proof}

\begin{lemma}
\label{local_nonsquare}  
Suppose that $\alpha \cdot (\lambda)  = 0$ and $\lambda \not\in F^{*\ell}$. If $\alpha \neq 0$, 
then   ind$(\alpha \otimes F(\sqrt[\ell]{\lambda })) < $ ind$(\alpha)$ and $\alpha \cdot (\sqrt[\ell]{\lambda})
= 0 \in H^3(F(\sqrt[\ell]{\lambda}), \mu_n^{\otimes 2})$. 
\end{lemma}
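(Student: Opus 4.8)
The plan is to split into two cases according to whether $\ell \mid \nu(\lambda)$, where $\nu = \nu_\pi$ denotes the $\pi$-adic valuation (equivalently, one can work with $\nu_\delta$; by symmetry of the hypotheses it suffices to handle one). Since $\lambda = w\pi^s\delta^t$ with $1 \le s,t < n$, the condition $\lambda \not\in F^{*\ell}$ is automatic unless $\ell \mid s$ and $\ell \mid t$, so there is a genuine dichotomy here that mirrors the case analysis in~(\ref{lambda_non_square_local}) over a complete discretely valued field.

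\emph{Case 1: $s = \nu_\pi(\lambda)$ is coprime to $\ell$.} Here I would pass to the completion $F_\pi$, which is a complete discretely valued field with residue field $\kappa(\pi)$ — a function field of a curve over the finite field $\kappa$, hence in particular not containing all $\ell$-power roots of unity. Apply~(\ref{dot-zero}): since $\alpha\cdot(\lambda)=0$ and $\nu_\pi(\lambda)$ is coprime to $\ell$, we get $\operatorname{ind}(\alpha\otimes F_\pi(\sqrt[\ell]{\lambda})) < \operatorname{ind}(\alpha\otimes F_\pi)$ and $\alpha\cdot(\sqrt[\ell]{\lambda})=0$ over $F_\pi(\sqrt[\ell]{\lambda})$. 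Now observe that $F_\pi(\sqrt[\ell]{\lambda})$ is the completion of $F(\sqrt[\ell]{\lambda})$ at the place above $\pi$, and that $\alpha$ over $F(\sqrt[\ell]{\lambda})$ is still unramified on the integral closure of $A$ except possibly at the primes above $\pi$ and $\delta$ (a ramified extension $F(\sqrt[\ell]{\lambda})/F$ at $\pi$ when $\ell\mid s$ is not in this case, so $F(\sqrt[\ell]{\lambda})$ corresponds to a blow-up situation, but one checks $\lambda = w\pi^s\delta^t$ with $s$ coprime to $\ell$ means $\sqrt[\ell]{\lambda}$ adjoins a ramified direction that can be absorbed into a regular parameter after possibly changing coordinates). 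Then invoke~(\ref{local_index}) / ~(\ref{local-period}) applied over the two-dimensional complete regular local ring covering $A$ in $F(\sqrt[\ell]{\lambda})$: the index of $\alpha$ over $F(\sqrt[\ell]{\lambda})$ equals the index over its $\pi$-completion, which has strictly dropped. This gives $\operatorname{ind}(\alpha\otimes F(\sqrt[\ell]{\lambda})) < \operatorname{ind}(\alpha)$, using $\operatorname{ind}(\alpha)=\operatorname{ind}(\alpha\otimes F_\pi)$ from~(\ref{local_index}). The vanishing $\alpha\cdot(\sqrt[\ell]{\lambda})=0$ over $F(\sqrt[\ell]{\lambda})$ follows because this $H^3$-class is unramified everywhere on the model (it vanishes at the $\pi$-completion, and a symmetric argument at $\delta$, plus~(\ref{zero}) or~(\ref{purity}) forces it to be zero globally — here I would use that $H^3_{nr}(F(\sqrt[\ell]{\lambda}),\mu_n^{\otimes 2})\cong H^3(\text{finite field},\mu_n^{\otimes 2})=0$).

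\emph{Case 2: $\ell \mid \nu_\pi(\lambda)$ (and symmetrically at $\delta$, else we are in Case 1).} Then $\lambda\not\in F^{*\ell}$ forces the unit part to misbehave: write $\lambda = w\pi^{\ell s'}\delta^{\ell t'}$ and we must have $w\not\in F^{*\ell}$ (else $\lambda\in F^{*\ell}$). By~(\ref{local-cyclic}), write $\alpha = (E,\sigma,u\pi^i\delta^j)$ with $E/F$ unramified on $A$ except along one of the parameters. I would then adapt the argument of~(\ref{lambda_non_square_local}): use the decomposition $\alpha = \alpha' + (E,\sigma,\pi)$ from~(\ref{rbc}) at $F_\pi$, analyze whether $\sqrt[\ell]{\lambda} \in E\otimes F_\pi$ using~(\ref{dvr_ind}) to compare indices, and when $\alpha'\otimes E\ne 0$ use that the residue field is a \emph{global} (function) field so that taking the $\ell$-th root of the unit $w$ drops the index of the residual Brauer class (here one would cite the analogue for global fields, or use that $\kappa(\pi)$-level classes have index dropping under degree-$\ell$ extensions not already splitting them). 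When $\alpha'\otimes E=0$, use~(\ref{dvr_ind_per}) to write $\alpha=(E,\sigma,u\pi)$ cyclic, take residues of $\alpha\cdot(\lambda)=0$ to learn that $\bar w$ (up to $\ell$-th powers) is a norm from the residue extension $E_0/\kappa(\pi)$, and since $w\not\in F^{*\ell}$ conclude $\sqrt[\ell]{\lambda}\in E$, whence the index drops. Finally transfer the index-drop and the $H^3$-vanishing from $F_\pi$ back to $F(\sqrt[\ell]{\lambda})$ exactly as in Case~1, via~(\ref{local_index}), ~(\ref{zero}), and the vanishing of $H^3$ of a finite field.

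\emph{Main obstacle.} The delicate point is transferring the local (at $F_\pi$) conclusions back to the global field $F(\sqrt[\ell]{\lambda})$: I must ensure $\alpha$ over $F(\sqrt[\ell]{\lambda})$ is still unramified on an appropriate complete regular local ring except along two parameters — this may require replacing $A$ by the integral closure in $F(\sqrt[\ell]{\lambda})$ (which by~\cite[Lemma 3.1]{PS1}-type results is again a complete two-dimensional regular local ring when the extension is ``nice'' along the parameters) or, in the ramified case $\ell\mid s$ at one prime, performing a coordinate change so that $\sqrt[\ell]{w\pi^s\delta^t}$ becomes (a unit times) a regular parameter. Handling this bookkeeping so that~(\ref{local_index}) and~(\ref{zero}) genuinely apply over the extended ring is where the real work lies; the index computations themselves are routine given~(\ref{dvr_ind}), ~(\ref{dvr_ind_per}), ~(\ref{local-cyclic}), and~(\ref{local_field_norm}).
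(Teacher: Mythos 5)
Your Case 2 (both $\nu_\pi(\lambda)$ and $\nu_\delta(\lambda)$ divisible by $\ell$) is essentially the paper's argument: there $F(\sqrt[\ell]{\lambda}) = F(\sqrt[\ell]{w})$ with $w$ a unit, the integral closure of $A$ is again a complete regular local ring with maximal ideal $(\pi,\delta)$, and after checking that $w\notin F_\pi^{*\ell}$ (this uses completeness of $A$ and does not follow formally from $w\notin F^{*\ell}$; you never address it) one simply \emph{applies} (\ref{lambda_non_square_local}) rather than re-deriving its internal case analysis, then concludes with (\ref{local_index}) for the index and with (\ref{corestriction}) together with (\ref{zero}) (equivalently (\ref{purity})) for the vanishing of $\alpha\cdot(\sqrt[\ell]{\lambda})$. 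Note also a conceptual slip: in this section the residue field of $F_\pi$ is $\kappa(\pi)=\mathrm{Frac}(A/(\pi))$, a \emph{local} field, not a global function field; the global-field residue situation only occurs in the patching sections, and the index-drop fact you want is the local-field one from \cite[p.~131]{CFANT}, already packaged inside (\ref{lambda_non_square_local}).

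The genuine gap is in your Case 1 ($s=\nu_\pi(\lambda)$ prime to $\ell$), and it is exactly the point you defer as bookkeeping. The embedding $F(\sqrt[\ell]{\lambda})\hookrightarrow F_\pi(\sqrt[\ell]{\lambda})$ only gives $\mathrm{ind}(\alpha\otimes F_\pi(\sqrt[\ell]{\lambda}))\le \mathrm{ind}(\alpha\otimes F(\sqrt[\ell]{\lambda}))$, so the index drop you get from (\ref{dot-zero}) over the completion does not descend unless you can run (\ref{local_index}) (and (\ref{zero}) for the $H^3$-vanishing) over the integral closure $B$ of $A$ in $F(\sqrt[\ell]{\lambda})$. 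But when both $s$ and $t$ are prime to $\ell$ (e.g.\ $\lambda = w\pi\delta$), $B$ is not regular: its maximal ideal needs three generators, the cover being ramified along both branches of the node $\pi\delta=0$, and no change of parameters in $A$ makes $\sqrt[\ell]{w\pi^s\delta^t}$ part of a regular system; one would be forced into resolving $B$ and an (\ref{hhk})-type argument over the whole exceptional fibre. Moreover your "symmetric argument at $\delta$" for the vanishing needs $t$ prime to $\ell$, which need not hold. The paper avoids completion-descent entirely: by (\ref{local_coprime}) (which rests on (\ref{lifting_extension}) and (\ref{zero})), $s$ prime to $\ell$ gives $\alpha=(E',\sigma',\lambda)$ over $F$ itself with $E'/F$ unramified at $\pi$; then $\mathrm{ind}(\alpha)=[E':F]$, $\mathrm{ind}(\alpha\otimes F(\sqrt[\ell]{\lambda}))\le[E':F]/\ell$ by (\ref{cyclic_algebra_extn}), and $\alpha\cdot(\sqrt[\ell]{\lambda})=(E',\sigma')\cdot(\lambda)\cdot(\sqrt[\ell]{\lambda})=0$ immediately. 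Lifting the cyclic presentation from $F_\pi$ to $F$ is the idea your plan is missing.
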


\begin{proof} Suppose that $s $ is  coprime to $\ell$.
Then, by (\ref{local_coprime}),  $\alpha = 
 (E', \sigma', \lambda)$ for  some cyclic 
 extension $E'$ of $F$ which is unramified on $A$ except possibly at  $\delta$. 
 Since $\nu_\pi(\lambda) = s$ is coprime to $\ell$ and $E'/F$ is unramified at $\pi$, 
 it follows that  ind$(\alpha) = [ E'   :  F]$. In particular, ind$(\alpha \otimes F(\sqrt[\ell]{\lambda}))
 \leq [E' : F]/\ell < $ ind$(\alpha)$. Similarly, if $t$ is coprime to $\ell$, 
 then ind$(\alpha \otimes F(\sqrt[\ell]{\lambda})) < $ ind$(\alpha)$. 
 Further $\alpha \cdot (\sqrt[\ell]{\lambda}) = (E', \sigma',  \lambda) \cdot (\sqrt[\ell]{\lambda}) = 0$.
  
Suppose that $s$ and $t$ are divisible by $\ell$.  Since $\lambda = w\pi^s\delta^t$, 
we have  $ F(\sqrt[\ell]{\lambda}) = F(\sqrt[\ell]{w})$. Let $L = F(\sqrt[\ell]{\lambda}) = F(\sqrt[\ell]{w})$ 
and $B$  be the integral  closure of $A$ in $L$.  Since $w$ is a unit in $A$, by (\cite[Lemma 3.1]{PS1}), 
 $B$ is a complete regular local ring  with maximal ideal generated by
 $\pi$ and $\delta$.
   Since $w $ is not an $\ell^{\rm th}$ power
 in $F$ and $A$ is a complete regular local ring,  the image 
 of $w$ in $A/m$ is not an $\ell^{\rm th}$ power. 
 Since  $A/(\pi)$  is also a complete regular local ring with 
 residue field $A/m$,  the image of $w$ in $A/(\pi)$ 
   is not an $\ell^{\rm th}$ power.  Since  $F_\pi$   is a 
  complete discrete valued field with residue field the field of fractions of $A/(\pi)$,  
   $w$ is not an $\ell^{\rm th}$ power in $F_\pi$.
  Since $\alpha \cdot (\lambda) = 0$ and the residue field of $F_\pi$ 
  is a  local field,   by (\ref{lambda_non_square_local}), ind$(\alpha \otimes L_\pi) < $ ind$(\alpha )$.   
 Hence, by (\ref{local_index}), ind$(\alpha \otimes L) < $ ind$(\alpha)$. 
 
 Since   $L_\pi = 
  L\otimes F_\pi$ 
 and $L_\delta = L \otimes F_\delta$ are field extension of degree $\ell$ over $F_\pi$ 
 and $F_\delta$ respectively and  cores$(\alpha \cdot
 (\sqrt[\ell]{\lambda})) = \alpha \cdot (\lambda) = 0$, by 
 (\ref{corestriction}), $(\alpha \cdot (\sqrt[\ell]{\lambda}) ) \otimes L_\pi = 0 $ 
 and $(\alpha \cdot (\sqrt[\ell]{\lambda}) ) \otimes L_\delta= 0 $.
 Hence, by (\ref{zero}),   $\alpha \cdot (\sqrt[\ell]{\lambda}) = 0$. 
 \end{proof}

\begin{lemma}
\label{dropping_index} 
Suppose   $\alpha = (E/F, \sigma, u\pi\delta^{\ell m})$ for some $m \geq 0$,  
$u$ a unit in $A$, $E/F$ a cyclic extension of degree $\ell^d$ which is unramified on $A$ except possibly at 
$\delta$ and $\sigma$ a generator of Gal$(E/F)$.
Let $\ell^e$ be the ramification index of $E/F$ at $\delta$ and 
$f = d - e$. Let $i \geq 0$ be such that $\ell^f + \ell^{di} > \ell m$. Let $v \in A$ be a unit which is not in $F^{*\ell}$
and $L = F(\sqrt[\ell]{v\delta^{\ell^f + \ell^{di} - \ell m}+ u\pi})$.  If  $f > 0$, 
then ind$(\alpha \otimes L) <  $ ind$(\alpha)$
\end{lemma}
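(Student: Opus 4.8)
The plan is to prove $\operatorname{ind}(\alpha)=\ell^d$ and $\operatorname{ind}(\alpha\otimes L)\le\ell^{d-1}$, the second by exhibiting a codimension-one subfield of $E\otimes_F L$ over $L$ from which $u\pi\delta^{\ell m}$ is a norm, and then invoking the formula that $\operatorname{ind}$ of a cyclic algebra is the order of its defining element modulo norms.

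First I would record $\operatorname{ind}(\alpha)=\ell^d$. Since $E/F$ is unramified at $\pi$ of degree $\ell^d$ and $\nu_\pi(u\pi\delta^{\ell m})=1$, writing $u\pi\delta^{\ell m}=(u\delta^{\ell m})\pi$ puts $\alpha\otimes F_\pi=(E\otimes F_\pi,\sigma,u\delta^{\ell m})+(E\otimes F_\pi,\sigma,\pi)$ in the shape of (\ref{rbc}); the first summand is split by $E\otimes F_\pi$, so (\ref{dvr_ind}) gives $\operatorname{ind}(\alpha\otimes F_\pi)=[E\otimes F_\pi:F_\pi]=\ell^d$, and (\ref{local_index}) gives $\operatorname{ind}(\alpha)=\ell^d$. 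Put $k=\ell^f+\ell^{di}-\ell m\ (\ge 1)$ and $w=v\delta^{k}+u\pi$. As $u$ is a unit, $w$ is not in the square of the maximal ideal, so $w$ is a regular parameter of $A$, $(w)$ is a height one prime with $\nu_{(w)}(w)=1$, and hence $w\notin F^{*\ell}$; thus $[L:F]=\ell$ with $L/F$ ramified along $(w)$. Since $E/F$ is unramified at $(w)$, the valuation of $w$ at the prime of $E$ above $(w)$ is again $1$, so $w\notin E^{*\ell}$, $L\not\subseteq E$, and therefore $E\otimes L$ is a field, cyclic over $L$ of degree $\ell^d$; by \S\ref{Preliminaries}, $\operatorname{ind}(\alpha\otimes L)=\operatorname{ind}(E\otimes L,\sigma,u\pi\delta^{\ell m})$ equals the order of $u\pi\delta^{\ell m}$ in $L^{*}/N_{(E\otimes L)/L}((E\otimes L)^{*})$.

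The heart is the identity $(\delta^{m}\sqrt[\ell]{w})^{\ell}=\delta^{\ell m}w=u\pi\delta^{\ell m}+v\delta^{\ell^{f}+\ell^{di}}$ in $L$, i.e. $u\pi\delta^{\ell m}=(\delta^{m}\sqrt[\ell]{w})^{\ell}-v\delta^{\ell^{f}+\ell^{di}}$. Since $f>0$ forces $\ell\mid\ell^{f}$, in the generic case $\ell\mid\ell^{f}+\ell^{di}$ (in particular when $i\ge1$) we write $v\delta^{\ell^{f}+\ell^{di}}=v(\delta^{j})^{\ell}$ with $j=(\ell^{f}+\ell^{di})/\ell$. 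Next I would check $F(\sqrt[\ell]{v})\subseteq E$: as $v\in A^{*}$, $v\notin F^{*\ell}$ and $A$ is complete, the image $\bar v$ of $v$ in the finite field $\kappa$ is not an $\ell$-th power, so $F(\sqrt[\ell]{v})/F$ is unramified on $A$ with residue field the unique degree-$\ell$ extension $\kappa(\sqrt[\ell]{\bar v})$ of $\kappa$; since $f>0$, the subextension $E_{nr}$ of (\ref{extns_2_local}) is unramified on $A$ of degree $\ell^{f}\ge\ell$, so its residue field $\kappa'$ contains $\kappa(\sqrt[\ell]{\bar v})$, whence $F(\sqrt[\ell]{v})\subseteq E_{nr}\subseteq E$. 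Also $v\notin L^{*\ell}$ (else $L=F(\sqrt[\ell]{v})$ would be unramified at $(w)$, contrary to the previous paragraph), so $[L(\sqrt[\ell]{v}):L]=\ell$ and $[E\otimes L:L(\sqrt[\ell]{v})]=\ell^{d-1}$. Because $F$, hence $L$, contains a primitive $\ell$-th root of unity, the norm identity $N_{K(\sqrt[\ell]{b})/K}(a-c\sqrt[\ell]{b})=a^{\ell}-c^{\ell}b$ now gives
\[
u\pi\delta^{\ell m}=N_{L(\sqrt[\ell]{v})/L}(z),\qquad z=\delta^{m}\sqrt[\ell]{w}-\delta^{j}\sqrt[\ell]{v}\ \in\ L(\sqrt[\ell]{v}).
\]

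Raising this to the power $\ell^{d-1}$ and using that the norm of a base field element is its $\ell^{d-1}$-th power, $(u\pi\delta^{\ell m})^{\ell^{d-1}}=N_{L(\sqrt[\ell]{v})/L}(z^{\ell^{d-1}})=N_{L(\sqrt[\ell]{v})/L}\big(N_{(E\otimes L)/L(\sqrt[\ell]{v})}(z)\big)=N_{(E\otimes L)/L}(z)$. Hence the order of $u\pi\delta^{\ell m}$ in $L^{*}/N_{(E\otimes L)/L}((E\otimes L)^{*})$ divides $\ell^{d-1}$, so $\operatorname{ind}(\alpha\otimes L)\le\ell^{d-1}<\ell^{d}=\operatorname{ind}(\alpha)$, as required. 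The step I expect to be the main obstacle is the containment $L(\sqrt[\ell]{v\delta^{\ell^{f}+\ell^{di}}})\subseteq E\otimes L$: when $\ell\mid\ell^{f}+\ell^{di}$ it reduces, as above, to $\sqrt[\ell]{v}\in E_{nr}$, which is exactly where $f>0$ enters; when $\ell\nmid\ell^{f}+\ell^{di}$ one has instead $L(\sqrt[\ell]{v\delta^{\ell^{f}+\ell^{di}}})=L(\sqrt[\ell]{v\delta})$ and must argue through a local computation at $\delta$ (or $\pi$), using the decomposition $E=E_{nr}(\sqrt[\ell^{e}]{w_{1}\delta})$ of (\ref{extns_2_local}) together with the fact proved there that $N_{E_{nr}/F}(w_{1})$ is not an $\ell$-th power in $A$.
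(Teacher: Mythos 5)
Your argument, as written, establishes only that the \emph{period} of $\alpha\otimes L$ drops, not the index, and that is precisely the gap. The norm identity $N_{L(\sqrt[\ell]{v})/L}(\delta^m\sqrt[\ell]{w}-\delta^j\sqrt[\ell]{v})=u\pi\delta^{\ell m}$ (with $w=v\delta^r+u\pi$) together with $F(\sqrt[\ell]{v})\subseteq E$ does show $(u\pi\delta^{\ell m})^{\ell^{d-1}}\in N_{E\otimes L/L}((E\otimes L)^*)$, hence per$(\alpha\otimes L)\le\ell^{d-1}$ --- a clean derivation, and the containment $F(\sqrt[\ell]{v})\subseteq E$ is argued exactly as in the paper. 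But the step ``ind of the cyclic algebra $(E\otimes L/L,\sigma,b)$ equals the order of $b$ in $L^*/N_{E\otimes L/L}((E\otimes L)^*)$'' is not a usable fact: that order always equals the \emph{period} of the class, so your cited principle would say period $=$ index for every cyclic algebra, which is false for degree $\ell^d$ with $d\ge 2$ (Albert himself produced cyclic division algebras of degree $4$ and exponent $2$). The loosely worded sentence in \S\ref{Preliminaries} should be read as the exponent statement (this is how it is actually used, e.g.\ in \ref{local_index_cal}, citing Albert p.~75); if the index version were true, the paper's own proof of \ref{dropping_index} would stop right after the line ``per$(\alpha\otimes L)<$ ind$(\alpha)$'' --- the entire $d\ge 2$ half of that proof exists because period control does not give index control here.

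Concretely, what is missing is a reason why per $=$ ind for $\alpha\otimes L$ over $L=\mathrm{Frac}(B)$. One cannot invoke \ref{period=index} directly: $\alpha\otimes L$ is ramified on $B$ only along $\pi$ and $\delta$, but these two primes need not generate the maximal ideal of $B$ (the paper notes this explicitly: $B/(\pi,\delta)B\cong\kappa[T]/(T^\ell)$ is not a field), so the normal-crossing hypothesis fails. This is why the paper's proof passes to a sequence of blow-ups of $\mathrm{Spec}(B)$, uses \ref{hhk} to compute ind$(\alpha\otimes L)$ as an lcm of local indices, applies \ref{period=index} at the closed points, and carries out the case analysis on the exceptional curves (the delicate case $\nu(\delta^r)=\nu(\pi)$ needing \ref{extns_2_local} and the global-field lemma \ref{dropping_ind_dvr}). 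Your proposal bypasses all of this by assumption, so the central claim ind$(\alpha\otimes L)\le\ell^{d-1}$ is not proved. A secondary loose end: your main computation needs $\ell\mid \ell^f+\ell^{di}$, and you defer the contrary case; since the paper's proof also asserts ``$r$ is divisible by $\ell$'', this points to a typo in the exponent ($\ell^{f+di}$ versus $\ell^f+\ell^{di}$) rather than a real obstacle, but as the statement stands ($i=0$ allowed) that case is also unhandled.
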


\begin{proof} 
Let $B$ be the integral closure of $A$ in $L$ and $r = \ell^f + \ell^{di} - \ell m$. 
Since $\ell^f + \ell^{di} > \ell m$,  $L = F(\sqrt[\ell]{v\delta^r + u \pi})$ and 
$v\delta^r  + u \pi$ is a regular prime in $A$. Thus $B$ is a complete regular local ring 
(cf. \cite[Lemma 3.2]{PS1}) and $\pi$,  $\delta$ remain primes in $B$.   Note that $\pi$ and $\delta$
may not generate the maximal ideal of $B$.
Let $L_\pi$ and $L_\delta$ be the completions of $L$ at the  discrete valuations given by $\pi$ and $\delta$ respectively.
Since $v \not \in F^{*\ell}$, $F(\sqrt[\ell]{v})$ is
the unique degree $\ell$ extension of $F_\pi$ which is unramified on $A$. Since $f > 0$, 
there is  a subextension $E$ of degree $\ell$ over $F$ which is unramified on $A$ and hence 
$F(\sqrt[\ell]{v}) \subset E$. 

Since $E/F$ is unramified on $A$ except possibly at $\delta$, by (\ref{local_index}),
$[E : F] = [E_\pi : F_\pi]$ and hence ind$(\alpha) = $ per$(\alpha) = [E : F]$.

Since $r$ is divisible by $\ell$,   $L_\pi \simeq F_\pi(\sqrt[\ell]{v})$ and  hence 
$L_\pi \subset E_\pi$. Thus ind$(\alpha \otimes L_\pi) < $ ind$(\alpha)$.
Since $r  > 0$,  $L_\delta  \simeq F_\delta(\sqrt[\ell]{u\pi})$. Since 
$\alpha = (E/F, \sigma, u\pi\delta^{\ell m})$,  ind$(\alpha \otimes L_\delta) < 
 [E \otimes L_\delta : L_\delta]  \leq [E : F]$. 
In particular by (\ref{local_local_period_index}), per$(\alpha \otimes F_\pi) < $ ind$(\alpha)$
and per$(\alpha \otimes F_\delta) < $ ind$(\alpha)$. Since $\alpha \otimes L$ is unramified on $B$
except possibly at $\pi$ and $\delta$ and $H^2(B, \mu_\ell) = 0$, 
per$(\alpha \otimes L) < $ ind$(\alpha)$. If $d = 1$, then per$(\alpha \otimes L) < $ ind$(\alpha) = \ell$
and hence per$(\alpha \otimes L) = $ ind$(\alpha \otimes L) = 1 < $ ind$(\alpha)$.  Suppose that $d \geq 2$.

Let $ \phi: \XX \to {\rm Spec}(B)$ be a sequence of blow-ups such that  the ramification locus of 
$\alpha \otimes L$ is a union of regular curves with normal crossings. Let $V = \phi^{-1}(P)$.
 To show that ind$(\alpha \otimes L) < $ ind$(\alpha)$, 
by (\ref{hhk}), it is enough to show that for every point $x$ of $V$, ind$(\alpha \otimes L_x) < $ ind$(\alpha)$.

Let $x \in V$  be a closed point. Then, by (\ref{period=index}), ind$(\alpha \otimes L_x) = $ per$(\alpha \otimes L_x)$. 
Since per$(\alpha \otimes L_x) < $ ind$(\alpha)$, ind$(\alpha \otimes L_x)  < $ ind$(\alpha)$.

Let $x \in V$ be a  codimension zero point.  Then $\phi(x) $ is the closed point of Spec$(B)$.
Let $\tilde{\nu}$ be the   discrete valuation of $L$ given by $x$.
Then $\kappa(\tilde{\nu}) \simeq \kappa'(t)$ for some finite extension $\kappa'$ over $\kappa$ and a  variable $t$ over $\kappa$. 
Let  $\nu$ be the restriction of $\tilde{\nu}$ to $F$.  

Suppose that $\nu(\delta^r) < \nu(\pi)$.  Then $L\otimes F_\nu = F_\nu(\sqrt[\ell]{v\delta^r})$.
Since $\ell$ divides $r$, $L \otimes F_\nu = F_\nu(\sqrt[\ell]{v})$.  Since $F(\sqrt[\ell]{v}) \subset E$,    
ind$(\alpha \otimes L\otimes F_\nu) < $ ind$(\alpha)$.
Suppose that $\nu(\delta^r) >  \nu(\pi)$.  Then $L\otimes F_\nu = F_\nu(\sqrt[\ell]{u\pi})$
and as above   ind$(\alpha \otimes L\otimes F_\nu) < $ ind$(\alpha)$. 
Suppose that $\nu(\delta^r) = \nu(\pi)$. Let $\lambda = \pi/\delta^r$.
Then $\lambda$ is a unit at $\nu$ and $L_{\tilde{\nu}} = F_\nu(\sqrt[\ell]{v + u\lambda})$.
We have $u\pi\delta^{\ell m} = u \lambda \delta^{r + \ell m} = u\lambda \delta^{\ell^f + \ell^{di}}$ and 
$$\alpha \otimes F_\nu = (E\otimes F_\nu/F_\nu, \sigma \otimes 1,  u\pi\delta^{\ell m}) = 
(E\otimes F_\nu/F_\nu, \sigma \otimes 1,  u\lambda \delta^{\ell^f + \ell^{di}}). $$
Since $[E : F] = \ell^d$, $\alpha \otimes F_\nu = 
(E\otimes F_\nu/F_\nu, \sigma \otimes 1,  u\lambda \delta^{\ell^f}).$
Suppose that $f = d$. Then $E/F$ is unramified and hence every element of 
$A^*$ is a norm from $E$. Thus $(E \otimes F_\nu/F_\nu, \sigma \otimes 1, w_0u\lambda)$ with 
$w_0 \in A^* \setminus A^{*\ell}$.
Suppose that $f < d$. Then $e = d - f > 0$ and hence by (\ref{extns_2_local}), 
we have $E = E_{nr}(\sqrt[\ell^e]{w\delta})$, for some unit $w$ in the integral  closure of $A$ in $E_{nr}$,
with $N(\sqrt[\ell^d]{w\delta}) =  w_1\delta^{\ell^f}$ with $w_1 \in A^* \setminus A^{*\ell}$. 
Thus $$\alpha \otimes F_\nu =  (E\otimes F_\nu/F_\nu, \sigma \otimes 1,  u\lambda \delta^{\ell^f}) = (E\otimes F_\nu/F_\nu, \sigma \otimes 1,  w_0u\lambda). $$ with $w_0 = w_1^{-1}$.

If $E \otimes F_\nu$ is not  a field, then ind$(\alpha \otimes F_\nu) <  [E : F]$.
Suppose $E \otimes F_\nu$ is a field.  Let $\theta = w_0u\lambda$.
Since $\alpha \otimes F_\nu = (E \otimes F_\nu/F_\nu, \sigma \otimes 1, \theta)$,  
 ind$(\alpha \otimes L \otimes F_\nu ) \leq $ ind$(\alpha \otimes L \otimes  F_\nu(\sqrt[\ell^{d-1}]{\theta}) ) \cdot
  [ L\otimes F_\nu(\sqrt[\ell^{d-1}]{\theta}) : L\otimes F_\nu]$. 
  Since $[ L\otimes F_\nu(\sqrt[\ell^{d-1}]{\theta}) : L\otimes F_\nu] \leq \ell^{d-1} < [E : F]$,  it is enough to show that 
$ \alpha \otimes L \otimes  F_\nu(\sqrt[\ell^{d-1}]{\theta})$ is trivial. 

Since $F(\sqrt[\ell]{v})/F$ is the unique subextension of $E/F$ degree $\ell$ and $[E : F] = \ell^d$,
we have  $\alpha \otimes F_\nu(\sqrt[\ell^{d-1}]{\theta}) =  (F_\nu (\sqrt[\ell^{d-1}]{\theta}, \sqrt[\ell]{v}) /F(\sqrt[\ell^{d-1}]{\theta}), \sigma, 
\sqrt[\ell^{d-1}]{\theta}) $ (cf. \ref{cyclic_algebra_extn}).  Let $M = F_\nu(\sqrt[\ell^{d-1}]{\theta})$.
Since $\kappa$ contains  a primitive $\ell^{\rm th}$ root of unity, we have $\alpha \otimes M = (v, \sqrt[\ell^{d-1}]{\theta})_\ell$.
Then $M$ is a complete discrete valuation field. Since $\lambda$ is a unit at $\nu$, $\theta$ is a unit at $\nu$.
Hence the residue field of $M$ is $\kappa(\nu)(\sqrt[\ell^{d-1}]{\overline{\theta}})$.
Since $\theta$ and $v$ are units at $\nu$,
$\alpha \otimes M =  (v, \sqrt[\ell^{d-1}]{\theta})$ is unramified at the discrete valuation of $M$.
Hence it is enough to show that the specialization $\beta$ of $\alpha \otimes M$  is trivial 
over $\kappa(\nu)(\sqrt[\ell^{d-1}]{\overline{\theta}}) \otimes L_0$, where $L_0$ is the residue field of 
$L \otimes F_\nu$ at $\nu$.  

Suppose that $L_{\tilde{\nu}}/F_\nu$ is ramified. Since $L_{\tilde{\nu}}  = F_\nu(\sqrt[\ell]{u + v\lambda})$, 
$v + u\lambda$ is not a unit at $\nu$. Thus $v = -u\lambda$ modulo $F_\nu^{*\ell^{d}}$ and 
$\theta = w_0u\lambda = -w_0v$ modulo $F_\nu^{*\ell^{d}}$.  In particular 
$\sqrt[\ell^{d-1}]{\theta } = \sqrt[\ell^{d-1}]{-w_0v}$ modulo $M^{*\ell}$. 
Since $\overline{v}, \overline{w_0} \in \kappa$ and $\kappa$ a finite field, 
$\beta = (\sqrt[\ell]{\overline{v}}, \sqrt[\ell^{d-1}]{\theta}) = (\sqrt[\ell]{\overline{v}}, \sqrt[\ell^{d-1}]{-\overline{w}_0\overline{v}})$ is trivial.

Suppose that $L_{\tilde{\nu}}/F_\nu$ is unramified. Then $L_0 = \kappa(\pi)(\sqrt[\ell]{\overline{v} + \overline{u\lambda}})$.
Since $\kappa(\pi)$ is a global field and $d - 1 \geq 1$, by (\ref{dropping_ind_dvr}), $\beta \otimes L_0(\sqrt[\ell^{d-1}]{\overline{\theta}}) = 0$. 
 \end{proof}

 \begin{lemma}  
\label{patching_at_P} Suppose  $L_\pi/F_\pi$ and $L_\delta/F_\delta$ are unramified cyclic field extensions 
of degree $\ell$  and $\mu_\pi \in L_\pi$, $\mu_\delta \in L_\delta$ such that \\
$\bullet$ ind$(\alpha \otimes L_\pi) < d_0$ for some $d_0$, \\
$\bullet$   $\lambda  =   N_{L_\pi/F_\pi}(\mu_\pi)$ and $ \lambda  =  N_{L_\delta/F_\delta}(\mu_\delta)$,\\
$\bullet$ $\alpha \cdot (\mu_\pi) = 0 \in H^3(L_\pi, \mu_n^{\otimes 2})$,
$\alpha \cdot (\mu_\delta) = 0 \in H^3(L_\delta, \mu_n^{\otimes 2})$,\\
$\bullet$ if  $\lambda \in F_P^{*\ell}$  and $\alpha = (E/F, \sigma, v\pi)$
for some cyclic extension $E/F$ which is unramified on $A$ except
possibly  at $\delta$, then $L_\delta/F_\delta = F_\delta(\sqrt[\ell]{v\pi}).$\\
Then there exists a cyclic extension $L/F$
of degree $\ell$  and $\mu \in L$ such that \\ 
$\bullet$ ind$(\alpha \otimes L) <  d_0$, \\
$\bullet$ $\lambda = N_{L/F}(\mu)$,  \\
$\bullet$ $\alpha \cdot (\mu) = 0 \in H^3(L, \mu_n^{\otimes 2})$, \\
$\bullet$  $L \otimes F_\pi \simeq L_\pi$ and $L  \otimes F_\delta \simeq L_\delta$. \\
\end{lemma}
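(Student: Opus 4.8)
The plan is to construct $L$ and $\mu$ by distinguishing two cases according to whether $\lambda$ is an $\ell$-th power in $F$; in each case the extension $L$ is essentially forced by its prescribed behaviour at the two branches, the index is controlled by the period--index results of the preceding section, and a first choice of $\mu$ with the correct norm is corrected to one that is $\ell^d$-close to $\mu_\pi$ and to $\mu_\delta$ locally. First I would record three consequences of the hypotheses: applying corestriction $H^3(L_\pi,\mu_n^{\otimes 2})\to H^3(F_\pi,\mu_n^{\otimes 2})$ to $\alpha\cdot(\mu_\pi)=0$ (and likewise at $\delta$) gives $\alpha\cdot(\lambda)\otimes F_\pi=0=\alpha\cdot(\lambda)\otimes F_\delta$, and since $\alpha\cdot(\lambda)$ is unramified on $A$ away from $\pi,\delta$, (\ref{zero}) forces $\alpha\cdot(\lambda)=0$ in $H^3(F,\mu_n^{\otimes 2})$; since $L_\pi/F_\pi$ is unramified of degree $\ell$ and $\lambda$ is a norm from it, $\ell\mid\nu_\pi(\lambda)=s$ and likewise $\ell\mid t$, so $\lambda=w\pi^{\ell s'}\delta^{\ell t'}$; and since $w$ is a unit of the complete regular local ring $A$, Hensel's lemma shows $\lambda\in F^{*\ell}$, $\lambda\in F_\pi^{*\ell}$ and $\lambda\in F_\delta^{*\ell}$ are all equivalent (each to $\overline w\in\kappa^{*\ell}$).

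\textbf{Case $\lambda\notin F^{*\ell}$.} Then $\lambda$ is an $\ell$-th power in neither $F_\pi$ nor $F_\delta$. The residue field $\kappa(\pi)=\mathrm{Frac}(A/(\pi))$ of $F_\pi$ is the fraction field of a complete discrete valuation ring with finite residue field, hence a local field, so by (\ref{dvr_local_field_norm}) $L_\pi\cong F_\pi(\sqrt[\ell]{\lambda})$ and similarly $L_\delta\cong F_\delta(\sqrt[\ell]{\lambda})$. I would set $L=F(\sqrt[\ell]{\lambda})=F(\sqrt[\ell]{w})$: since $F$ contains a primitive $\ell$-th root of unity and $\overline w\notin\kappa^{*\ell}$, this is cyclic of degree $\ell$ and unramified on $A$, its integral closure $B=A[\sqrt[\ell]{w}]$ is again a complete two-dimensional regular local ring with maximal ideal $(\pi,\delta)$, and $\alpha\otimes L$ is unramified on $B$ away from $\pi,\delta$. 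One checks $L\otimes F_\pi\cong L_\pi$, $L\otimes F_\delta\cong L_\delta$, and by (\ref{local_index}) $\mathrm{ind}(\alpha\otimes L)=\mathrm{ind}(\alpha\otimes L_\pi)<d_0$. For $\mu$: the class $(w)\cdot(\lambda)\in H^2(F,\mu_\ell)$ has trivial residues on $A$ and, $\lambda$ being a norm from $L_\pi$ and from $L_\delta$, vanishes over $F_\pi$ and $F_\delta$, hence over $F$ by (\ref{zero}); so $\lambda\in N_{L/F}(L^*)$, say $\lambda=N_{L/F}(\mu_0)$ with $\mathrm{div}(\mu_0)$ supported on $\pi B,\delta B$ (adjusting a norm preimage by a norm-one element to remove extraneous support). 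Then $\mu_0/\mu_\pi$ and $\mu_0/\mu_\delta$ have norm $1$; writing them as $\sigma(a_\pi)/a_\pi$ and $\sigma(a_\delta)/a_\delta$ by Hilbert $90$ (for $\sigma$ a generator of $\mathrm{Gal}(L/F)$) and choosing, by weak approximation on $\Spec B$, an $a\in L^*$ with divisor supported on $\pi B,\delta B$ that is $\ell^d$-close to $a_\pi$ in $L_\pi$ and to $a_\delta$ in $L_\delta$, I would put $\mu=\mu_0\,a\,\sigma(a)^{-1}$. Then $N_{L/F}(\mu)=\lambda$, $\mathrm{div}(\mu)$ is supported on $\pi B,\delta B$, and $\mu$ is $\ell^d$-close to $\mu_\pi$ in $L_\pi$ and to $\mu_\delta$ in $L_\delta$, so $\alpha\cdot(\mu)$ vanishes over $L_\pi$ and over $L_\delta$; being unramified on $B$ away from $\pi,\delta$, it vanishes over $L$ by (\ref{zero}).

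\textbf{Case $\lambda\in F^{*\ell}$.} Here $w=w_0^\ell$ in $A^*$ and $\lambda=c^\ell$ with $c=w_0\pi^{s'}\delta^{t'}\in F^*$, so $\lambda=N_{L/F}(c)$ for any degree $\ell$ extension $L/F$ and the norm condition is automatic. Writing $L_\pi=F_\pi(\sqrt[\ell]{\beta_\pi})$ and $L_\delta=F_\delta(\sqrt[\ell]{\beta_\delta})$, and observing that in the situation of the fourth hypothesis (where $\alpha=(E/F,\sigma,v\pi)$, $E/F$ unramified on $A$ except at $\delta$, and $L_\delta=F_\delta(\sqrt[\ell]{v\pi})$) the bound $\mathrm{ind}(\alpha\otimes L_\pi)<d_0=[E:F]$ forces $L_\pi$ to be the degree $\ell$ subextension of $E\otimes F_\pi$, I would construct --- by an approximation inside $A$ as in the proofs of (\ref{lifting_extension}) and (\ref{dropping_index}) --- a regular prime $g\in m\bslash m^2$ whose class in $F_\pi^{*}/F_\pi^{*\ell}$ is that of $\beta_\pi$ and whose class in $F_\delta^{*}/F_\delta^{*\ell}$ is that of $\beta_\delta$ (for instance $g=v\pi+\delta^\ell u_0$ in the special case, with $u_0$ a unit whose residue generates $\kappa^*/\kappa^{*\ell}$); since the extensions are tame of degree $\ell$ this matching modulo units congruent to $1$ suffices by Krasner. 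Setting $L=F(\sqrt[\ell]{g})$, its integral closure $B$ is a complete regular local ring in which $\pi$ and $\delta$ remain prime, $\alpha\otimes L$ is unramified on $B$ away from $\pi,\delta$, and $L\otimes F_\pi\cong L_\pi$, $L\otimes F_\delta\cong L_\delta$. For the index one blows up $\Spec B$ so that the ramification of $\alpha\otimes L$ is a normal-crossings union of regular curves and uses (\ref{hhk}) to reduce $\mathrm{ind}(\alpha\otimes L)<d_0$ to $\mathrm{ind}(\alpha\otimes L_x)<d_0$ for each point $x$ of the exceptional fibre: at closed points this follows from (\ref{period=index}) together with $\mathrm{per}(\alpha\otimes L_x)\le\mathrm{per}(\alpha\otimes L_\pi)\le\mathrm{ind}(\alpha\otimes L_\pi)<d_0$, and at codimension zero points one runs the branch-by-branch analysis of the proof of (\ref{dropping_index}), invoking (\ref{dropping_ind_dvr}) and the prescribed behaviour of $L$ at $\delta$. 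Finally $\mu$ is produced from $c$ exactly as in the previous case (correcting $c$ by $a\,\sigma(a)^{-1}$ with $a$ close to Hilbert $90$ witnesses at $\pi$ and $\delta$), giving $N_{L/F}(\mu)=\lambda$ and $\alpha\cdot(\mu)=0$ over $L$ via (\ref{zero}).

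\textbf{Expected main obstacle.} The hard part is the second case: producing the regular prime $g$ that simultaneously realizes the two prescribed tame Kummer classes at the branches $\pi$ and $\delta$ (a genuinely two-dimensional approximation, delicate because $\kappa$ is finite), and above all establishing $\mathrm{ind}(\alpha\otimes L)<d_0$ when $B$ is ramified over $A$ along $g$ --- this needs the blow-up and period--index machinery and the codimension-zero case analysis of (\ref{dropping_index}), and it is precisely here that the fourth hypothesis (the exact form of $L_\delta$ when $\alpha=(E/F,\sigma,v\pi)$) is required, since otherwise the residue of $\alpha\otimes L$ along the $\delta$-branch, and hence the index, need not drop below $d_0$. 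A secondary difficulty, present in both cases, is keeping the divisor of $\mu$ supported on $\pi$ and $\delta$ while forcing $N_{L/F}(\mu)=\lambda$ exactly, so that $\alpha\cdot(\mu)$ stays unramified away from $\pi,\delta$ and (\ref{zero}) applies; this is handled by combining Hilbert $90$ with weak approximation on $\Spec B$.
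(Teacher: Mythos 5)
Your overall architecture matches the paper's (reduce to $\alpha\cdot(\lambda)=0$ over $F$ by corestriction and (\ref{zero}); split on $\lambda\in F^{*\ell}$; handle $\lambda\notin F^{*\ell}$ with $L=F(\sqrt[\ell]{\lambda})$; use blow-ups, (\ref{hhk}) and (\ref{period=index}) for the index in the ramified case), but the main case $\lambda\in F^{*\ell}$ has a genuine gap. Choosing a regular prime $g$ that merely realizes the prescribed Kummer classes of $L_\pi$ and $L_\delta$ at the two branches does not give $\mathrm{ind}(\alpha\otimes F(\sqrt[\ell]{g}))<d_0$, and the codimension-zero analysis in the proof of (\ref{dropping_index}) is not a black box you can run for any such $g$: it is tied to first normalizing $\alpha=(E/F,\sigma,u_2\pi\delta^{\ell m})$ via (\ref{local-cyclic}) and (\ref{extns_2_local}) and then taking the engineered element $u_1\delta^{\ell^{f}+\ell^{di}}+u_2\pi\delta^{\ell m}$, where $\ell^f$ is the degree of the unramified part $E_{nr}$ of $E$ and $i\geq 1$; the exponent matters because at an exceptional divisor $\nu$ with $\nu(\delta^{r})=\nu(\pi)$ one must absorb $\delta^{r+\ell m}=\delta^{\ell^f+\ell^{di}}$ using $N_{E/F}(\sqrt[\ell^e]{w\delta})=w_1\delta^{\ell^f}$ and $[E:F]=\ell^d$. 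Your sample choice $g=v\pi+\delta^{\ell}u_0$ realizes the right classes at both branches but fails this step whenever $f\geq 2$ and $d\geq 2$ (and when $f=0$, i.e.\ $E/F$ totally ramified at $\delta$, (\ref{dropping_index}) does not apply at all; the paper instead uses $(a,b)=(a+b,-a^{-1}b)$ with $L=F(\sqrt[\ell]{u_1\delta+u_2\pi})$, and when both branch extensions are residually unramified the correct $L$ is the unramified one, not one ramified along a prime in $m\setminus m^2$). So the heart of the lemma --- which specific $L$ to take in each subcase and why its index drops --- is exactly what your proposal leaves unsupplied.

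A secondary, fixable issue: your construction of $\mu$ (Hilbert 90 plus ``weak approximation with divisor supported on $\pi B,\delta B$'') is both unnecessary and not justified --- elements of $B$ with divisor supported on $\{\pi,\delta\}$ are $v\pi^i\delta^j$ with $v\in B^*$, and prescribing $v$ modulo $\ell^d$-th powers simultaneously in $B/(\pi)$ and $B/(\delta)$ meets an obstruction in $\kappa'^*/\kappa'^{*\ell^d}$, so such an $a$ need not exist. The conclusion does not require $\mu$ to be close to $\mu_\pi$ or $\mu_\delta$: the paper simply takes $\mu=\sqrt[\ell]{\lambda}$ (resp.\ $\mu=w_0\pi^{s_1}\delta^{t_1}\in F$ when $\lambda\in F^{*\ell}$), gets $\alpha\cdot(\mu)=0$ over $L_\pi$ and $L_\delta$ from injectivity of corestriction (\ref{corestriction}), and concludes over $L$ by (\ref{zero})/(\ref{purity}) since $\alpha\cdot(\mu)$ is unramified on $B$ away from $\pi$ and $\delta$.
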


\begin{proof}  Since  $\alpha \cdot (\mu_\pi) = 0 \in H^3(L_\pi, \mu_n^{\otimes 2}) $ and  
$\lambda  =   N_{L_\pi/F_\pi}(\mu_\pi)$, by taking the corestriction, we see that 
$\alpha \cdot (\lambda) = 0 \in H^3(F_\pi, \mu_n^{\otimes 2})$.
Since $\alpha \cdot (\lambda)$ is unramified on $A$ except possibly at $\pi$ and $\delta$, 
 by (\ref{zero}), $\alpha \cdot (\lambda) = 0$.

 Suppose that $\lambda \not\in F^{*\ell}$.
Then, by (\ref{dvr_local_field_norm}) and (\ref{local_nonsquare}), $L = F(\sqrt[\ell]{\lambda})$ and
$\mu = \sqrt[\ell]{\lambda}$ have the required properties.

Suppose that $\lambda \in F^{*\ell}$.  
Let $L(\pi)$ and $L(\delta)$ be the residue fields of $L_\pi$ and $L_\delta$
respectively. Since $L_\pi/F_\pi$ and $L_\delta/F_\delta$ are unramified cyclic extensions of
degree $\ell$, $L(\pi)/\kappa(\pi)$ and $L(\delta)/\kappa(\delta)$ are cyclic extensions of degree 
$\ell$.  Since $F$ contains a primitive $\ell^{\rm th}$ root of unity, we have 
$L(\pi)  = \kappa(\pi)[X]/(X^\ell - a)$ and $L(\delta)  = \kappa(\delta)[X]/(X^\ell - b)$
for some $a \in \kappa(\pi)$ and $b \in \kappa(\delta)$.  Since $\kappa(\pi)$ is a complete 
discretely valued field with $\overline{\delta}$ a parameter, without loss of generality we assume 
that $a = \overline{u_1} \overline{\delta}^{\epsilon}$ for some unit $u_1 \in A$ and $\epsilon = 0$ or 1.
Similarly we have $b = \overline{u_2} \overline{\pi}^{\epsilon'}$ for some unit $u_2 \in A$ and $\epsilon' = 0$ or 1.

By (\ref{local-cyclic}), we assume that $\alpha = (E/F, \sigma, u\pi\delta^j)$ for some cyclic extension 
$E/F$ which is unramified on $A$ except possibly at $\delta$, $u$ a unit in $A$ and $j \geq 0$.
Then ind$(\alpha) = [E : F]$.  Let $E_0$ be the residue field of $E$ at $\pi$.
Then $[E : F] = [E_0 : \kappa(\pi)]$. Since $\partial_\pi(\alpha) = (E_0/\kappa(\pi), \overline{\sigma})$, per$(\partial_\pi(\alpha)) =
[E : F] = $ ind$(\alpha)$. Since $L_\pi/F_\pi$ is an unramified cyclic extension of degree $\ell$
and ind$(\alpha \otimes L_\pi) < $ ind$(\alpha)$, the residue field $L(\pi)$ of $L_\pi$ 
 is the unique degree $\ell$ subextension of $E_0/\kappa(\pi)$.

Suppose that $\epsilon = \epsilon' = 0$.  Since $L_\pi$ and $L_\delta$ are fields, 
$u_1$ and $u_2$ are not $\ell^{\rm th}$ powers.   Let  $L/F$ be the unique cyclic field extension of degree 
$\ell$ which is unramified on $A$.  Then $L \otimes F_\pi \simeq L_\pi$ and $L \otimes F_\delta \simeq L_\delta$.
Let $B$ be the integral closure of $A$ in $L$. Then $B$ is a regular local ring with maximal ideal
$(\pi, \delta)$ and hence by (\ref{local_index}) ind$(\alpha \otimes L) < $ ind$(\alpha)$.

Suppose $\epsilon = 1$. Then $L_\pi = F_\pi(\sqrt[\ell]{u_1\delta})$ and
  $L(\pi) = \kappa(\pi)(\sqrt[\ell]{\overline{u_1}\overline{\delta}})$.
  Since  $E_0/\kappa(\pi)$ is a cyclic extension containing a totally ramified extension,  $E_0 /\kappa(\pi)$
is a totally ramified cyclic extension. Thus $\kappa(\pi)$ contains  a primitive $\ell^{d^{\rm th}}$ root of unity
and $E_0 = \kappa(\pi)(\sqrt[\ell^d]{\overline{u_1}\overline{\delta}})$. 
In particular $F$ contains a primitive $\ell^{d^{\rm th}}$ root of unity and 
$\alpha = (u_1\delta,  u\pi\delta^j) = (u_1\delta, u'\pi)$. Since $L_\delta/F_\delta$ is an unramified extension of 
degree $\ell$ with ind$(\alpha \otimes L_\delta) < $ ind$(\alpha)$, as above, we have $L_\delta = F_\delta(\sqrt{u'\pi})$
and hence $\alpha = (u_1\delta, u_2\pi)$. Let $L = F(\sqrt[\ell]{u_1\delta + u_2\pi })$. Then 
$L \otimes F_\pi \simeq L_\pi$ and $L\otimes F_\delta \simeq L_\delta$.
Since for any $a, b \in F^*$, 
$(a, b) = (a + b, -a^{-1}b)$, we have $\alpha = (u_1\pi + u_2\delta, -u_1^{-1}\pi^{-1}u_2\delta)$.
In particular ind$(\alpha \otimes L) < $ ind$(\alpha)$. 
 
Suppose that  $\epsilon = 0$ and $\epsilon' = 1$.  Suppose $j$ is coprime to $\ell$. 
Then, by (\ref{residue}),  ind$(\alpha) = $ per$(\partial_\delta(\alpha))$ and as in the proof of (\ref{local-cyclic}), 
we have $\alpha = (E'/F, \sigma', v\delta\pi^{j'})$ for some cyclic extension $E'/F$ which 
is unramified on $A$ except possibly at $\pi$.  Thus, we have the required extension as in the case 
$\epsilon = 1$.  

 Suppose $j$ is divisible by $\ell$.   Since $\epsilon =0$, $L_\pi = F_\pi(\sqrt[\ell]{u_1})$.
 Since the residue field $L_\pi(\pi)$ of $L_\pi$ is contained in the residue field $E_0$ of $E$ at $\pi$, 
 $F(\sqrt[\ell]{u_1}) \subset E$ and hence  $E/F$ is not totally ramified at $\delta$.
 Since $E/F$ is unramified on $A$ except possibly at $\delta$, by (\ref{extns_2_local}), 
 $E= E_{nr}(\sqrt[\ell^e]{w\delta})$ for some unit $w$ in the integral  closure of $A$ in $E_{nr}$.
 Suppose $ e = 0$. Then $E = E_{nr}/F$ is unramified on $A$. Since $\kappa$ is a finite field and
 $A$ is complete, every unit in $A$ is a norm from $E/F$. Thus multiplying $u\pi\delta^j$ by a norm from 
 $E/F$ we assume that $\alpha = (E/F, \sigma, u_2\pi\delta^j)$.  Suppose that $e > 0$. 
 Then, by (\ref{extns_2_local}), $N_{E/F}(w\delta) = w_1\delta^{\ell^f}$ with $w_1 \in A^* \setminus A^{*\ell}$.
 Since $A^*/A^{*\ell}$ is a cyclic group of order $\ell$, we have $\alpha = (E/F, \sigma, u_2\pi\delta^{j + j'\ell^{f}})$ for some 
 $j' $. Since $j$ is divisible by $\ell$ and $f \geq 1$, $j + j'\ell^f$ is divisible by $\ell$. 
 Hence, we assume that $\alpha = (E/F, \sigma, u_2\pi\delta^{\ell m})$ for some $m$.  
Thus, by (\ref{dropping_index}),  there exists $i \geq 0$ such that 
 ind$(\alpha \otimes L) < $ ind$(\alpha)$ for $L = F(\sqrt[\ell]{u_1\delta^{\ell^{f + di}} + u_2\pi\delta^{\ell m}})$ .

By the choice, we have  $L/F$ is   the unique unramified extension  or $L = F(\sqrt[\ell]{u_1\delta + u_2\pi})$ or 
$L = F(\sqrt[\ell]{u_1\delta^{\ell^{f + di}} + u_2 \pi\delta^{\ell m}})$ with $\ell^{f + di} > \ell m$.
Let $B$ be the integral closure of $A$ in $L$. Then $B$ is a complete regular local ring 
with $\pi$ and $\delta$ remain prime in $B$. 
 Since $\lambda = w\pi^s \delta^t$ and $\lambda \in F_P^{*\ell}$, 
we have $\lambda = w_0^\ell\pi^{\ell s_1}\delta^{\ell t_1}$
for some unit $w_0 \in A$. Let $\mu = w_0\pi^{s_1}\delta^{t_1} \in F$. Then $N_{L/F}(\mu) = \mu^\ell = \lambda$.
Since $\alpha \cdot (\lambda) = 0$, by (\ref{corestriction}), $\alpha \cdot (\mu) = 0$ in $H^3(L_\pi, \mu_n^{\otimes 2})$
and $H^3(L_\delta, \mu_n^{\otimes 2})$.  Hence $\alpha \cdot (\mu)$ is unramified at all height one
prime ideals of $B$.  Since $B$ is a complete regular local ring with residue field  $\kappa$ finite,  
 $\alpha \cdot (\mu) = 0$ (\ref{purity}).   
\end{proof}

\begin{lemma}
\label{curve_point}
Suppose that  $\nu_\pi(\lambda)$ is divisible by $\ell$, $\alpha$ is unramified
on $A$ except possibly at $\pi$ and $\delta$,  and $\alpha \cdot (\lambda) = 0$.
Let $L_\pi$  be a cyclic  unramified or split extension of $F_\pi$ of degree $\ell$, $\mu_\pi  \in L_\pi$  and $d_0 \geq 2$   such that \\
$\bullet$  $N_{L_ \pi/F_\pi}( \mu_\pi) =  \lambda $, \\
$\bullet$ ind$(\alpha \otimes L_\pi) <  d_0$, \\
$\bullet$ $\alpha \cdot (\mu_\pi) = 0$ in $H^3(L_\pi, \mu_n)$.\\
 Then there exists an extension $L$ over $F$ of degree $\ell$ and $\mu \in L$ 
 such that \\
 $\bullet$ $N_{L/F}(\mu) =  \lambda $, \\
 $\bullet$ ind$(\alpha \otimes L)  <  d_0 $, \\
 $\bullet$ $\alpha \cdot (\mu) = 0 \in H^3(L, \mu_n^{\otimes 2})$ and \\
 $\bullet$  there is an isomorphism $\phi : L_\pi  \to L   \otimes F_\pi$  with
$$\phi(\mu_\pi) (\mu_P \otimes 1)^{-1}  \in (L_P \otimes F_\pi)^{\ell^{m}},$$ for all $m \geq 1$. 
 \end{lemma}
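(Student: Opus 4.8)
The extension $L/F$ will essentially be forced by $L_\pi$; the content lies in producing $\mu\in L$ with $N_{L/F}(\mu)=\lambda$ that is close to $\mu_\pi$ along the branch $\pi$ and for which $\alpha\cdot(\mu)$ vanishes over all of $L$. First note that taking corestriction along $L_\pi/F_\pi$ in the hypothesis $\alpha\cdot(\mu_\pi)=0$ gives $\alpha\cdot(\lambda)=0$ in $H^3(F_\pi,\mu_n^{\otimes 2})$; since $\alpha\cdot(\lambda)$ is unramified on $A$ except at $\pi$ and $\delta$, Corollary~\ref{zero} yields $\alpha\cdot(\lambda)=0$ in $H^3(F,\mu_n^{\otimes 2})$. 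Second, since $A$ is a regular (hence normal) local ring, $A/\pi A$ is complete with finite residue field, and $\ell\mid\nu_\pi(\lambda)=s$, one checks using $\lambda=w\pi^s\delta^t$ that $\lambda\in F^{*\ell}\iff\ell\mid t\text{ and }w\in A^{*\ell}\iff\lambda\in F_\pi^{*\ell}$. Finally, since $\ell\ne\mathrm{char}\,\kappa$ and $F$ contains $\mu_\ell$, Hensel's lemma shows $1+\mathfrak m_\Lambda\subseteq\bigcap_{m\ge 1}(\Lambda^*)^{\ell^m}$ for every complete discretely valued field $\Lambda$ occurring below; so for the last clause it suffices to produce $\mu$ whose image in $L\otimes F_\pi$ is arbitrarily close to $\mu_\pi$.

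\textbf{The case $\lambda\notin F^{*\ell}$ with $L_\pi$ a field.} Then $\lambda\notin F_\pi^{*\ell}$, so $L_\pi\cong F_\pi(\sqrt[\ell]{\lambda})$ by Lemma~\ref{dvr_local_field_norm}, the residue field $\kappa(\pi)$ of $F_\pi$ being a local field. Put $L=F(\sqrt[\ell]{\lambda})$; absorbing $\ell$-th powers, $L=F(\sqrt[\ell]{w})$ if $\ell\mid t$ and $L=F(\sqrt[\ell]{w^a\delta})$ for a suitable $a$ coprime to $\ell$ if $\ell\nmid t$, so the integral closure $B$ of $A$ in $L$ is a complete two-dimensional regular local ring with maximal ideal of the form $(\pi,\delta')$, with $\pi B$ prime and $\alpha\otimes L$ unramified on $B$ off $\pi$ and $\delta'$ (cf.\ \cite[Lemmas 3.1, 3.2]{PS1}). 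Since $L\otimes F_\pi\cong L_\pi$, Proposition~\ref{local_index} applied to $B$ gives ind$(\alpha\otimes L)=$ ind$(\alpha\otimes L_\pi)<d_0$. Fix an isomorphism $\phi\colon L_\pi\xrightarrow{\sim}L\otimes F_\pi$. As $N_{L/F}(\sqrt[\ell]{\lambda})=\lambda=N_{L_\pi/F_\pi}(\mu_\pi)$, the element $\phi(\mu_\pi)/\sqrt[\ell]{\lambda}$ has norm one in the cyclic extension $L\otimes F_\pi$; writing it as $c_\pi^{-1}\sigma(c_\pi)$ by Hilbert~$90$, approximating $c_\pi$ by $c\in L^*$ chosen so that $c^{-1}\sigma(c)$ is a unit on $B$ away from $\pi$ and $\delta$, and setting $\mu=\sqrt[\ell]{\lambda}\,c^{-1}\sigma(c)$, we get $N_{L/F}(\mu)=\lambda$, $\mathrm{div}_B(\mu)$ supported over $\pi$ and $\delta$, and $\mu$ as close to $\phi(\mu_\pi)$ in $L\otimes F_\pi$ as desired; in particular $(\mu)=(\phi(\mu_\pi))$ in $H^1(L\otimes F_\pi,\mu_n)$, so $\alpha\cdot(\mu)$ vanishes over $L\otimes F_\pi$. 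Now $\alpha\cdot(\mu)$ is unramified on $B$ off $\pi$ and $\delta'$; over $L\otimes F_\delta$ its corestriction to $F_\delta$ equals $\alpha\cdot(\lambda)\otimes F_\delta=0$, hence it vanishes there by the injectivity of corestriction on $H^3(\mu_n^{\otimes 2})$ of complete discretely valued fields with local residue field (Proposition~\ref{corestriction}). Corollary~\ref{zero}, applied to $B$, then gives $\alpha\cdot(\mu)=0$ in $H^3(L,\mu_n^{\otimes 2})$.

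\textbf{The case $\lambda\in F^{*\ell}$.} Write $\lambda=\mu_0^{\ell}$ with $\mu_0=w_0\pi^{s/\ell}\delta^{t/\ell}\in F^*$, $w=w_0^{\ell}$. Since $L_\pi$ is unramified we lift it: if $L_\pi$ is split take $L=\prod_1^{\ell}F$; if $L_\pi$ is a field, its residue field is a cyclic degree-$\ell$ extension of $\kappa(\pi)$ and Lemma~\ref{lifting_extension} produces a cyclic degree-$\ell$ extension $L/F$, unramified on $A$ except possibly at $\delta$, with $L\otimes F_\pi\cong L_\pi$. Letting $B$ be the integral closure of $A$ in $L$ (a complete two-dimensional regular local ring, or a product of copies of $A$) on which $\alpha\otimes L$ is unramified off $\pi$ and one prime over $\delta$, Proposition~\ref{local_index} again gives ind$(\alpha\otimes L)=$ ind$(\alpha\otimes L_\pi)<d_0$. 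Since $\mu_0^{\ell}=\lambda=N_{L_\pi/F_\pi}(\mu_\pi)$, the element $\phi(\mu_\pi)/(\mu_0\otimes 1)$ has norm one in $L\otimes F_\pi$; writing it via Hilbert~$90$ as $c_\pi^{-1}\sigma(c_\pi)$, approximating $c_\pi$ by $c\in L^*$ with $c^{-1}\sigma(c)$ a unit on $B$ away from $\pi,\delta$, and setting $\mu=\mu_0\,c^{-1}\sigma(c)$, we obtain $N_{L/F}(\mu)=\mu_0^{\ell}=\lambda$, $\mathrm{div}_B(\mu)$ supported over $\pi$ and $\delta$, and $\mu$ close to $\mu_\pi$; exactly as above, $\alpha\cdot(\mu)$ vanishes over $L\otimes F_\pi$ and its corestriction to $F_\delta$ over $L\otimes F_\delta$ vanishes, so $\alpha\cdot(\mu)=0$ by Propositions~\ref{corestriction} and Corollary~\ref{zero}.

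\textbf{The split subcases and the main obstacle.} When $L_\pi$ is split (possible only when ind$(\alpha)=$ ind$(\alpha\otimes F_\pi)<d_0$, so that the index condition for $L=\prod_1^{\ell}F$ is automatic), one argues componentwise: by a two-place approximation argument on $F$ one chooses $\mu_1,\dots,\mu_{\ell-1}\in F^*$, units away from $\pi$ and $\delta$, close to the corresponding coordinates of $\mu_\pi$ along $\pi$, and with residue at $\delta$ lying in the norm subgroup of the character $\partial_\delta(\alpha)$; setting $\mu_\ell=\lambda/(\mu_1\cdots\mu_{\ell-1})$ one has $\alpha\cdot(\mu_\ell)=\alpha\cdot(\lambda)-\sum_{i<\ell}\alpha\cdot(\mu_i)=0$, and each $\alpha\cdot(\mu_i)$ is unramified on $A$ off $\pi,\delta$ with trivial residues, hence $0$ by Corollary~\ref{zero}. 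Throughout, I expect the genuine obstacle to be the passage from $\alpha\cdot(\mu)=0$ over the single branch $F_\pi$ to $\alpha\cdot(\mu)=0$ over all of $L$: this forces one to control $\mu$ along the second branch $\delta$ (handled by corestriction together with Proposition~\ref{corestriction}) and, equally, to arrange the norm-one correction $c^{-1}\sigma(c)$ so that $\mathrm{div}_B(\mu)$ remains supported over $\pi$ and $\delta$, so that the two-dimensional vanishing criterion Corollary~\ref{zero} may be invoked.
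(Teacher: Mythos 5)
Your overall architecture matches the paper's: lift $L_\pi$ to a degree-$\ell$ extension $L/F$ whose ring of integers $B$ over $A$ is again a complete two-dimensional regular local ring with $\alpha\otimes L$ unramified off the two "coordinate" primes, get the index drop from Proposition~\ref{local_index}, and then produce $\mu$ with exact norm $\lambda$, congruent to $\mu_\pi$ modulo arbitrarily high $\ell$-power units at the $\pi$-branch, so that $\alpha\cdot(\mu)$ vanishes over $L\otimes F_\pi$ and the two-dimensional vanishing criterion (Corollary~\ref{zero}) finishes. The split case is essentially the paper's argument (your extra requirement that the residues at $\delta$ lie in the norm subgroup of $\partial_\delta(\alpha)$ is unnecessary: vanishing over $F_\pi$ plus unramifiedness off $\pi,\delta$ already suffices via Corollary~\ref{zero}). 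Likewise your corestriction step at the $\delta$-branch is redundant, since Corollary~\ref{zero} only needs vanishing at one of the two branches and you already have it at $\pi$; it is also slightly delicate as stated, because $L\otimes F_\delta$ need not be a field and Proposition~\ref{corestriction} is an injectivity statement for field extensions.

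The genuine gap is in the field cases, at the construction of $\mu$. You set $\mu=\sqrt[\ell]{\lambda}\,c^{-1}\sigma(c)$ (resp.\ $\mu=\mu_0\,c^{-1}\sigma(c)$) where $c\in L^*$ approximates the Hilbert~90 element $c_\pi\in (L\otimes F_\pi)^*$, and you assert that $c$ can be chosen so that $c^{-1}\sigma(c)$ is a unit on $B$ away from $\pi$ and $\delta$. Nothing in density/approximation gives this: a generic $c\in L^*$ that is $\pi$-adically close to $c_\pi$ will have zeros and poles along other height-one primes $\gamma$ of $B$, and then $\partial_\gamma(\alpha\cdot(\mu))=\nu_\gamma(\mu)\,\overline{\alpha}$ need not vanish, so $\alpha\cdot(\mu)$ is no longer unramified off $\pi,\delta'$ and neither Corollary~\ref{zero} nor purity applies. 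Controlling $\mathrm{div}_B(\mu)$ is precisely the nontrivial point (as you yourself flag at the end), and it is what the paper's proof is organized around: since $L_\pi/F_\pi$ is unramified one writes $\mu_\pi=\theta_\pi\pi^{r}$ with $\theta_\pi$ a valuation unit, expresses its residue in $L(\pi)=\mathrm{Frac}(B/\pi B)$ as $\bar v\,\overline{\delta'}^{\,t}$ with $v\in B^{*}$ (up to $1$-units, which are $\ell^{m}$-th powers), and then uses completeness of $B$ to correct $v$ to a unit $\tilde v\in B^{*}$ with $N_{L/F}(\tilde v)=w$ exactly and unchanged residue; the element $\mu=\tilde v\,\pi^{r}\delta'^{\,t}$ then has divisor supported on $\pi,\delta'$ by construction. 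Your argument can be repaired by replacing the approximation of $c_\pi$ with this residue-matching step (equivalently, by showing the norm-one correction can be taken to be a unit of $B$ times an $\ell$-th root of unity), but as written this step is asserted, not proved, and it is exactly where the work lies.
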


\begin{proof}  Since $\nu_\pi(\lambda)$ is divisible by $\ell$, $\lambda = w\pi^{r\ell}\delta^s$
for some $w \in A$ a unit.

Suppose that $L_\pi = \prod F_\pi$ is a split extension. 
Let $L  = \prod F $ be the split extension of degree $\ell$. Since $\mu_\pi \in L_\pi$, 
we have $\mu_\pi  = (\mu_1, \cdots, \mu_\ell)$ with $\mu_i \in F_\pi$. Write $\mu_i = \theta_i \pi^{r_i}$
with $\theta_i \in F_\pi$ a unit at its discrete valuation. Since $N_{L_\pi/F_\pi}(\mu_\pi) = \lambda = w\pi^{r\ell}$,
 $\theta_1 \cdots \theta_\ell = w$ and $\pi^{r_1 + \cdots + r_\ell} = \pi^{r\ell}$.
 For $2 \leq i \leq \ell$, let  $\bar{\theta}_i$    be  the image of $\theta_i$ in the residue field
 $\kappa(\pi)$ of $F_\pi$. Since $\kappa(\pi)$ is the field of fractions of $A/(\delta)$ and
 $A/(\delta)$ is a  complete discrete valuation ring with $\bar{\delta}$ as a parameter,
 we have $\bar{\theta}_i =  \bar{u}_i\bar{\delta}^{s_i}$ for some unit $u_i \in A$.
 For $2 \leq i \leq \ell$, let  $\tilde{\theta}_i =  u_i\delta^{s_i} \in F$, 
 $\tilde{\theta}_1 = w\tilde{\theta}_2^{-1} \cdots \tilde{\theta}_{\ell}^{-1}$
 and $\mu = (\tilde{\theta}_1\pi^{r_1}, \cdots, \tilde{\theta}_{\ell}\pi^{r_\ell}) \in L = \prod F$.
 Then $N_{L/F}(\mu) = \lambda$. Since $\tilde{\theta}_i\pi^{r_i}  \mu_i^{-1}$ is a unit at $\pi$ 
 with image 1 in $\kappa(\pi)$,  $ \tilde{\theta}_i\pi^{r_i}  \mu_i^{-1}  \in F_\pi^{\ell^m}$ for any $m \geq 1$. 
 In particular $\alpha \cdot (\tilde{\theta}_i \pi^{r_i}) = 
 \alpha \cdot (\mu_i) = 
 0 \in H^3(F_\pi, \mu_n^{\otimes 2})$. Since $\alpha$ is unramified on $A$ except possibly at $\pi$ 
 and $\delta$ and $\tilde{\theta}_i = u_i\delta^{t_i}$ with $u_i \in A $ a unit,  
 $\alpha \cdot (\tilde{\theta}_i \pi^{r_i})$ is unramified on $A$ except possibly at $\pi$ and $\delta$.
 Thus, by  (\ref{zero}), $\alpha \cdot (\tilde{\theta}_i\pi^{r_i}) = 0 \in H^3(F, \mu_n^{\otimes 2})$.
  Since ind$(\alpha \otimes L_\pi) <  d_0$ and $L_\pi$ is the split extension, 
 ind$(\alpha \otimes F_\pi) < d_0$. Since $\alpha$ is unramified on $A$ except 
 possibly at $\pi$ and $\delta$, by (\ref{local_index}), ind$(\alpha) < d_0$. 
 Thus $L$ and $\mu = (\tilde{\theta}_1\pi^{r_1}, \cdots , \tilde{\theta}_\ell\pi^{r_\ell}) \in L$ 
have  the required properties. 
 
Suppose that $L_\pi$ is a field extension of $F_\pi$. 
By (\ref{lifting_extension}),  there exists a cyclic 
extension $L$ of $F$ of degree $\ell$ which is unramified 
on $A$ except possibly  at $\delta$ with   $L \otimes F_\pi \simeq L_\pi$.  
Let $B$ be the integral  closure of $A$ in $L$. 
By the construction of $L$, 
either $L/F$ is unramified $A$ or $L = F(\sqrt[\ell]{u\delta})$ for some 
unit $u \in A$.  Replacing $\delta$ by $u\delta$, we assume that $L/F$ is unramified on $A$
or $L = F(\sqrt[\ell]{\delta})$.  In particular, $B$ is a regular local ring with maximal ideal
generated  by $(\pi, \delta')$ for $\delta' = \delta$ or $\delta' = \sqrt[\ell]{\delta}$ (cf. \cite[Lemma 3.2]{PS1}).
Since $\alpha$ is unramified on $A$ except possibly at $\pi$ and $\delta$,  
 $\alpha \otimes L$ is unramified on  $B$ except possibly at $\pi$ and $\delta'$. 
 Since  ind$(\alpha \otimes L _\pi) <  d_0$,  
 by (\ref{local_index}), ind$(\alpha \otimes L) = $ ind$(\alpha \otimes L_\pi) <  d_0$.

Since $L_\pi/F_\pi$ is unramified and $N_{L_\pi/F_\pi}(\mu_\pi) = \lambda = w\pi^{r\ell}\delta^s$, 
we have $\mu_\pi = \theta_\pi \pi^r$ for some $\theta_\pi \in L_\pi$ which is a unit at its discrete valuation. 
Let $\overline{\theta}_\pi$ be the image of $\theta_\pi$ in $L(\pi)$.
Since  $L(\pi)$  is the field of fractions of the complete discrete valuation ring $B/(\pi)$
and $\overline{\delta'}$ is a parameter in $B/(\pi)$, we have 
$\overline{\theta}_\pi = \overline{v} \overline{\delta'}^t$ for some unit $v \in B$.
Since  $N_{L(\pi)/\kappa(\pi)}(\overline{\theta}_\pi) = \overline{w}\overline{\delta}^s$,
it follows that $N_{L(\pi)/\kappa(\pi)}(\overline{v})  = \overline{w}$. Since  $w$ is a unit in $A$, 
 there exists a unit $\tilde{v} \in B$  with $N_{L/F}(\tilde{v}) = w$ and $\overline{\tilde{v}}
  = \overline{v}$.  Let $\mu = \tilde{v} \pi^r{\delta'}^t \in L$.  Then $\mu\mu_\pi^{-1} \in L_\pi$ is
  a unit in the valuation ring at $\pi$ with the image 1 in the residue field $L(\pi)$ and hence $\mu\mu_\pi^{-1} \in L_\pi^{\ell^m}$ for all 
  $m \geq 1$. In particular 
  $\alpha \cdot (\mu) = \alpha \cdot (\mu_\pi) = 0 \in H^3(L_\pi, \mu_n^{\otimes 2})$.
  Since ind($\alpha \otimes L_\pi) <  d_0$,
  by (\ref{local_index}), ind$(\alpha \otimes L) < d_0$. Since 
  $ \alpha \cdot (\mu) = 0$ in $H^3(L_\pi, \mu_n^{\otimes 2})$,
  $\alpha$ is unramified on $A$ except possibly at $\pi$ and the support of $\mu$ on $A$ is at most 
  $ \pi$,   by (\ref{zero}), $\alpha \cdot (\mu) = 0$ in $H^3(L, \mu_n^{\otimes 2})$.
   \end{proof}

   \begin{lemma} 
  \label{norm}  Suppose that $\alpha \cdot (\lambda) = 0$ and 
  $\nu_\delta(\lambda) = s\ell$.  
  Suppose that $\alpha = (E/F, \sigma, \pi\delta^m)$ for some cyclic extension $E/F$
  which is unramified on $A$ except possibly at $\delta$.
  Let  $E_\delta$ be the lift of the residue of $\alpha$ at  
  $\delta$.  If $s\alpha \otimes   E_\delta = 0$,   then there exists  an integer $r_1 \geq 0$ such that  
  $w_1\delta^{mr_1 - s}$ is a norm from the extension $E/F$ for some unit $w_1 \in A$. 
  \end{lemma}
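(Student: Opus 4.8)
The plan is to reduce the statement, over the completion $F_\delta$, to exhibiting one explicit norm coming from the structure of $E$ along $\delta$, and then to verify a divisibility condition squeezed out of the two vanishing hypotheses.

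\emph{Reduction to $F_\delta$ and one explicit norm.} For $r_1\ge 0$ and a unit $w_1\in A^*$, the element $w_1\delta^{mr_1-s}$ is a norm from $E/F$ exactly when $(E/F,\sigma,w_1\delta^{mr_1-s})=0$ in $\Br(F)$; as $w_1\in A^*$ and $E/F$ is unramified on $A$ except possibly at $\delta$, this class is unramified on $A$ except possibly at $\delta$, so by (\ref{zero}) it vanishes if and only if it vanishes over $F_\delta$. Now use (\ref{extns_2_local}) to write $E\otimes F_\delta=(E_{nr}\otimes F_\delta)(\sqrt[\ell^e]{w\delta})$, with $E_{nr}\otimes F_\delta/F_\delta$ unramified of degree $\ell^f$ ($e+f=d$) and $w$ a unit. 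Computing $N_{E/F}$ in the two steps $E\to E_{nr}\to F$ gives $N_{E/F}(\sqrt[\ell^e]{w\delta})=\pm N_{E_{nr}/F}(w)\,\delta^{\ell^f}$, and $N_{E_{nr}/F}(w)$ is a unit of $A$ because $E_{nr}/F$ is unramified on $A$; set $w_2:=\pm N_{E_{nr}/F}(w)\in A^*$. Then $w_2\delta^{\ell^f}$, and hence every $(w_2\delta^{\ell^f})^k$, is a norm from $E/F$, so it is enough to find $r_1\ge 0$ with $\ell^f\mid mr_1-s$ (take $w_1=w_2^{(mr_1-s)/\ell^f}$); for this it suffices that $\gcd(m,\ell^f)\mid s$, i.e.\ $\min\{v_\ell(m),f\}\le v_\ell(s)$, where $v_\ell$ denotes the $\ell$-adic valuation.

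\emph{The divisibility.} Write $\ell^a=[E_\delta:F_\delta]$. Since $E/F$ is unramified at $\pi$ and $\pi\delta^m$ is a parameter of $F_\pi$, $(E,\sigma,\pi\delta^m)\otimes F_\pi$ is a division algebra of degree $\ell^d$, hence $\mathrm{ind}(\alpha)=\ell^d$ by (\ref{local_index}); applying (\ref{dvr_ind}) over $F_\delta$ gives $\mathrm{ind}(\alpha\otimes E_\delta)=\ell^{d-a}$. As $E_\delta$ is a complete discretely valued field with local residue field, $\mathrm{per}=\mathrm{ind}$ over $E_\delta$ by (\ref{local_local_period_index}), and $s\alpha\otimes E_\delta=0$ forces $\mathrm{per}(\alpha\otimes E_\delta)\mid s$; therefore $\ell^{d-a}\mid s$, and it suffices to prove $\min\{v_\ell(m),f\}\le d-a$, i.e.\ $a\le\max\{d-v_\ell(m),\,e\}$ — an upper bound on the order $\ell^a$ of $\partial_\delta(\alpha)$. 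If $e=0$ (so $E/F$ is unramified on all of $A$ and $f=d$) this is an equality: by (\ref{rbc}) and the residue rules of \S\ref{Preliminaries}, $\partial_\delta(\alpha)=\partial_\delta((E\otimes F_\delta/F_\delta,\sigma,\pi))+m\,\partial_\delta((E\otimes F_\delta/F_\delta,\sigma,\delta))$ with the first term unramified, so $\partial_\delta(\alpha)$ has order $\ell^{\max\{0,d-v_\ell(m)\}}$ and $d-a=\min\{d,v_\ell(m)\}=\min\{f,v_\ell(m)\}$. If $e\ge 1$, one computes $\partial_\delta(\alpha)$ from the decomposition above: passing to $E_{nr}\otimes F_\delta$ turns $\alpha$ into $(E\otimes F_\delta/E_{nr}\otimes F_\delta,\sigma^{\ell^f},\pi w^{-m})$ via $\delta=w^{-1}(\sqrt[\ell^e]{w\delta})^{\ell^e}$, and the tame symbol formulas of \S\ref{Preliminaries} give the bound on $a$; in the residual subcase one feeds back $\alpha\cdot(\lambda)=0$ through (\ref{zero}) over $F_\delta$ and (\ref{dot-zero}), and when $\nu_\pi(\lambda)$ is prime to $\ell$ the conclusion is already contained in (\ref{local_nonsquare}).

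The main obstacle is this last point: determining the order of $\partial_\delta(\alpha)$, equivalently $[E_\delta:F_\delta]$, in terms of $m$ and the ramification degree $\ell^e$ of $E$ along $\delta$, and making the joint bookkeeping of $v_\ell(m)$, $e$, $f$ and $v_\ell(s)$ close up when $E$ genuinely ramifies along $\delta$.
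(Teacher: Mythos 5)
Your reduction is sound as far as it goes: using (\ref{extns_2_local}) to produce the explicit norm $N_{E/F}(\sqrt[\ell^e]{w\delta})=w_2\delta^{\ell^f}$ with $w_2\in A^*$, it does suffice to solve $mr_1\equiv s \pmod{\ell^f}$ with $r_1\ge 0$, i.e.\ to prove $\ell^{\min(v_\ell(m),f)}\mid s$; and your derivation of $\ell^{\,d-a}\mid s$ (where $\ell^a=[E_\delta:F_\delta]$) from $s\alpha\otimes E_\delta=0$, (\ref{dvr_ind}), (\ref{local_index}) and (\ref{local_local_period_index}) is correct (modulo the unproved but true assertion that $E\otimes F_\pi$ is a field of degree $[E:F]$, which the paper extracts from (\ref{local-period})). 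The genuine gap is the remaining inequality $a\le d-\min(v_\ell(m),f)$: you establish it only when $e=0$. For $e\ge 1$ your sketch does not close it: the residue formulas recalled in \S\ref{Preliminaries} are stated for cyclic algebras $(E,\sigma,\lambda)$ with $E$ \emph{unramified} over the complete discretely valued field, so they compute neither $\partial_\delta$ of $(E\otimes F_\delta,\sigma,u\pi\delta^m)$ when $E\otimes F_\delta$ is ramified at $\delta$, nor of the class over $E_{nr}\otimes F_\delta$ you pass to, since $E\otimes F_\delta/E_{nr}\otimes F_\delta$ is totally ramified; and the fallback appeals to $\alpha\cdot(\lambda)=0$, (\ref{zero}), (\ref{dot-zero}) and (\ref{local_nonsquare}) do not visibly yield the divisibility. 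You flag this yourself as the unresolved obstacle, and it is exactly the heart of the lemma, so the proof is incomplete.

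The missing inequality is true and can be closed inside your framework without any tame-symbol computation for ramified extensions: by \S\ref{Preliminaries}, $(E,\sigma)^{\ell^e}=(E_{nr},\sigma^{\ell^e})$, so $\ell^e\alpha=(E_{nr}/F,\tau,u\pi\delta^m)$ for a suitable generator $\tau$; since $E_{nr}\otimes F_\delta/F_\delta$ is unramified of degree $\ell^f$, $\partial_\delta(\ell^e\alpha)=(E_{nr}(\delta),\bar\tau)^m$ has order $\ell^{\,f-\min(f,v_\ell(m))}$, whence $\ell^a$ divides $\ell^{\,e+f-\min(f,v_\ell(m))}=\ell^{\,d-\min(f,v_\ell(m))}$, as required. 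Note that the paper's own proof avoids all of this $e$--$f$ bookkeeping: from $s\alpha'\otimes E_\delta=0$ it writes $s\alpha'=(E_\delta,\sigma_\delta,w_0\pi^{r_1})$ with $w_0\in A^*$ (this is where $r_1$ comes from), sets $\lambda_1=w_0\pi^{r_1}\delta^s$, gets $\alpha\cdot(\lambda_1)=0$ over $F$ by (\ref{dot-zero}) and (\ref{zero}), and reads the conclusion off the residue at $\pi$, again via (\ref{zero}); in particular it never needs the hypothesis $\alpha\cdot(\lambda)=0$, the unit $w_2$, or the congruence analysis.
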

  
\begin{proof}  
Write $\alpha \otimes F_\delta = \alpha' + (E_\delta/F_\delta, \sigma_\delta, \delta)$ as in (\ref{rbc}).
Since $\alpha \otimes E_\delta = \alpha' \otimes E_\delta$,  $s\alpha' \otimes E_\delta = 0$.
Hence $s\alpha' =  (E_\delta, \sigma, \theta)$ for some $\theta \in F_\delta$.
Since $\alpha'$  and $E_\delta/F_\delta$ are  unramified at $\delta$, we assume that $\theta \in F_\delta$ is
a unit at $\delta$. Since the residue field  $\kappa(\delta)$ of $F_\delta$ is a complete discrete 
valued field with the image of $\pi$ as a parameter, without loss of generality we assume that 
$\theta = w_0\pi^{r_1}$ for unit $w_0 \in A$ and $r_1 \geq 0$.  Let $\lambda_1 = w_0\pi^{r_1}\delta^s$.
Since $s\alpha' = (E_\delta, \sigma_\delta, \theta)$, by (\ref{dot-zero}), 
$\alpha \cdot (\lambda_1) = 0 \in H^3(F_\delta, \mu_n^{\otimes 2})$.
Since $\alpha$ is unramified on $A$ except possibly at $\pi$,$\delta$ and
$\lambda_1 = w_0\pi^{r_1}\delta^s$ with $w_0 \in A$ a unit,  $\alpha \cdot (\lambda_1)$
is unramified in $A$ except possibly at $\pi$ and $\delta$.
Hence, by (\ref{zero}), $\alpha \cdot (\lambda_1) = 0 \in H^3(F, \mu_n^{\otimes 2})$.
We have 
$$ 0 = \partial_\pi(\alpha \cdot (\lambda_1))  = \partial_\pi((E/F, \sigma, u\pi\delta^m) \cdot (w_0\pi^{r_1}\delta^s)) = 
 (E(\pi)/\kappa(\pi), \overline{\sigma},  (-1)^{r_1} \overline{u}^{r_1}\overline{w}_1^{-1}\overline{\delta}^{mr_1 - s}).$$
Since $(E/F, \sigma, (-1)^{r_1}u^{r_1}w_0^{-1} \delta^{mr_1 - s})$ is unramified on $A$ except possibly at 
$\pi$ and $\delta$, by (\ref{zero}), $(E/F, \sigma, (-1)^{r_1}u^{r_1}w_0^{-1} \delta^{mr_1 - s}) = 0$.
In particular $(-1)^{r_1}u^{r_1} w_0^{-1} \delta^{mr_1 - s}$ is a norm from the extension 
$E/F$. 
\end{proof}

  \begin{lemma} 
  \label{type2_56}  
   Suppose that $\alpha \cdot (\lambda) = 0$ and $\lambda = w\pi^{r}\delta^{s\ell}$ for some
  unit $w \in A$ and $r$ coprime to $\ell$. Let  $E_\delta$ be the lift of the residue of $\alpha$ at  $\delta$.
  If  $s\alpha \otimes   E_\delta = 0$,   then there exists  $\theta \in A$ such that \\
$\bullet$ $\alpha \cdot (\theta) = 0$, \\ 
$\bullet$ $\nu_\pi(\theta) = 0$, \\
$\bullet$ $\nu_\delta(\theta) = s$.
  \end{lemma}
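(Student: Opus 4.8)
The plan is to put $\alpha$ into a cyclic algebra with a $\pi$-linear symbol, apply Lemma~\ref{norm}, and then read off $\theta$ from a short computation in $H^{3}(F,\mu_{n}^{\otimes 2})$.

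First I would use that $\nu_{\pi}(\lambda)=r$ is coprime to $\ell$: by Lemma~\ref{local_coprime} there are a cyclic extension $E/F$, unramified on $A$ except possibly at $\delta$, and a generator $\sigma$ with $\alpha=(E,\sigma,\lambda)$, and since $(\lambda)\cup(\lambda)=(\lambda)\cup(-1)$ one gets $\alpha\cup(-1)=\alpha\cdot(\lambda)=0$. Next, choose $r^{*}$ with $rr^{*}\equiv 1\pmod n$ and set $\beta=r^{*}\alpha$. Because $r^{*}$ is coprime to $n$, all hypotheses pass to $\beta$: it is still unramified on $A$ off $\pi,\delta$, the lift of its residue at $\delta$ is the same $E_{\delta}$, and $\beta\cdot(\lambda)=0$, $s\beta\otimes E_{\delta}=0$, $\beta\cup(-1)=0$; moreover it suffices to find $\theta\in A$ with $\nu_{\pi}(\theta)=0$, $\nu_{\delta}(\theta)=s$ and $\beta\cdot(\theta)=0$, since $\alpha\cdot(\theta)=r\,\beta\cdot(\theta)$. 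The point of passing to $\beta$ is that $\beta=(E,\sigma,\lambda^{r^{*}})$ and $\lambda^{r^{*}}=w^{r^{*}}\pi^{rr^{*}}\delta^{s\ell r^{*}}\equiv u\pi\delta^{m}\pmod{F^{*n}}$ with $u=w^{r^{*}}\in A^{*}$ and $m\equiv s\ell r^{*}\pmod n$, $0\le m<n$; hence $\beta=(E,\sigma,u\pi\delta^{m})$, which is of the form treated in Lemma~\ref{norm} (the form $(E,\sigma,u\pi\delta^{m})$, $u\in A^{*}$, occurring in its proof).

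Applying Lemma~\ref{norm} to $\beta$ (using $s\beta\otimes E_{\delta}=0$) yields an integer $r_{1}\ge 0$ and a unit $w_{1}\in A$ such that $w_{1}\delta^{mr_{1}-s}$ is a norm from $E/F$. Since $\beta\otimes E=0$, the projection formula gives $\beta\cdot\!\big(N_{E/F}(y)\big)=0$ for every $y\in E^{*}$; taking $y$ with $N_{E/F}(y)=w_{1}\delta^{mr_{1}-s}$ gives
\[
(mr_{1}-s)\,\beta\cdot(\delta)=\beta\cdot(w_{1}^{-1})\qquad\text{in }H^{3}(F,\mu_{n}^{\otimes 2}).
\]
On the other hand $\beta\cdot(u\pi\delta^{m})=(E,\sigma)\cup(u\pi\delta^{m})\cup(u\pi\delta^{m})=(E,\sigma)\cup(u\pi\delta^{m})\cup(-1)=\beta\cup(-1)=0$, so $\beta\cdot(\pi)=-\beta\cdot(u)-m\,\beta\cdot(\delta)$; feeding this into $0=\beta\cdot(\lambda)=\beta\cdot(w)+r\,\beta\cdot(\pi)+s\ell\,\beta\cdot(\delta)$ gives
\[
(s\ell-rm)\,\beta\cdot(\delta)=\beta\cdot(u^{r}w^{-1}).
\]

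To finish, note $r(mr_{1}-s)+r_{1}(s\ell-rm)=s(r_{1}\ell-r)$, and $r_{1}\ell-r$ is a unit modulo $n$ precisely because $r$ is coprime to $\ell$; letting $c(r_{1}\ell-r)\equiv 1\pmod n$ and combining the two displays gives $s\,\beta\cdot(\delta)=\beta\cdot(g)$ with $g:=(u^{r}w^{-1})^{cr_{1}}(w_{1}^{-1})^{cr}\in A^{*}$. Then $\theta:=g^{-1}\delta^{s}\in A$ satisfies $\nu_{\pi}(\theta)=0$, $\nu_{\delta}(\theta)=s$, and $\beta\cdot(\theta)=\beta\cdot(g^{-1})+s\,\beta\cdot(\delta)=0$, whence $\alpha\cdot(\theta)=0$. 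I expect the main obstacle to be exactly arranging the two relations so that $s$ falls out: one needs $\beta$ in the normalized shape $(E,\sigma,u\pi\delta^{m})$ even to obtain the second relation, and the coprimality of $r$ and $\ell$ is what makes $r_{1}\ell-r$ invertible mod $n$, so that $s$ becomes an $A^{*}$-combination of $s\ell-rm$ and $mr_{1}-s$; the hypothesis $s\alpha\otimes E_{\delta}=0$ is what makes Lemma~\ref{norm} applicable and thereby supplies the first relation.
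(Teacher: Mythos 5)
Your argument is correct. The first half is exactly the paper's: use that $r=\nu_\pi(\lambda)$ is coprime to $\ell$ to write $\alpha=(E,\sigma,\lambda)$ via Lemma~\ref{local_coprime}, normalize by an inverse of $r$ (you invert mod $n$, the paper mod $[E:F]$ using $(E,\sigma)^r=(E,\sigma^{r'})$ — same effect) so that the class becomes $(E,\sigma,u\pi\delta^{m})$, and then invoke Lemma~\ref{norm} to produce the norm $w_1\delta^{mr_1-s}$; your remark that the statement of Lemma~\ref{norm} omits the unit but its proof handles $u\pi\delta^m$ is apt, and the paper itself silently drops the unit $w^{r'}$ at the analogous step. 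Where you genuinely diverge is the endgame. The paper keeps $m=r's\ell$ on the nose, so $mr_1-s=s(r'\ell r_1-1)$ factors with $r'\ell r_1-1$ invertible mod $[E:F]$, and a single power adjustment shows $\theta=w_1^{r_2}\delta^s$ is itself a norm from $E/F$, whence $\alpha\cdot(\theta)=0$ by the projection formula. You instead translate everything into $H^3$: the projection formula turns the norm statement into $(mr_1-s)\,\beta\cdot(\delta)=\beta\cdot(w_1^{-1})$, the Steinberg relation $(a)\cup(a)=(a)\cup(-1)$ together with $\alpha\cdot(\lambda)=0$ gives $\beta\cdot(u\pi\delta^m)=\beta\cup(-1)=0$ and hence the second relation $(s\ell-rm)\,\beta\cdot(\delta)=\beta\cdot(u^rw^{-1})$, and the identity $r(mr_1-s)+r_1(s\ell-rm)=s(r_1\ell-r)$ with $r_1\ell-r$ invertible mod $n$ lets you solve $s\,\beta\cdot(\delta)=\beta\cdot(g)$ for a unit $g\in A^*$, so $\theta=g^{-1}\delta^s$ works; all the verifications (torsion, signs, $\theta\in A$, $\nu_\pi(\theta)=0$, $\nu_\delta(\theta)=s$, and $\alpha\cdot(\theta)=r\,\beta\cdot(\theta)=0$) check out. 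What the paper's route buys is that $\theta$ is exhibited directly as a norm from the splitting field, with no cohomological identities beyond the projection formula; what your route buys is that you never need $m$ to equal $s\ell r^{*}$ exactly (only mod $n$), at the modest cost of the extra input $\alpha\cup(-1)=\alpha\cdot(\lambda)=0$ and a small linear elimination in $H^3(F,\mu_n^{\otimes 2})$.
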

  
\begin{proof}  Since $r$ is coprime to $\ell$, by (\ref{local_coprime}),
$\alpha = (E/F, \sigma, \lambda)$ for some cyclic  extension $E/F$ which is
unramified on $A$ except possible at $\delta$. Let $t = [E : F]$.
Since $t$ is a power of $\ell$ and $r$ is coprime to $\ell$, there exists an integer $r' \geq 1$
such that  $rr' \equiv 1 $ modulo $t$. We have
$$\alpha = \alpha^{rr'} = (E/F, \sigma, w\pi^r\delta^{s\ell})^{rr'} = (E/F, \sigma)^r \cdot (w\pi^r\delta^{s\ell})^{r'} = 
(E/F, \sigma)^r \cdot (w^{r'}\pi\delta^{r's\ell}).$$
Since $r$ is coprime to $\ell$, we also have $(E/F, \sigma)^r = (E/F, \sigma^{r'})$ (cf. \S 2)
and hence $\alpha = (E/F, \sigma^r, \pi\delta^{r's\ell})$.
Thus, by (\ref{norm}),  there exist a unit $w_1 \in A$ and $r_1 \geq 0$ such that 
$w_1\delta^{r's\ell r_1 - s}$ is a norm from $E/F$. 
Since $r'\ell r_1 - 1$ is coprime to $\ell$, $r'\ell r_1 - 1$ is coprime to $t$ and hence
there exists an integer  $r_1 \geq 0$ such that $(r'\ell r_1 - 1)r_2 \equiv 1 $ modulo $t$.
In particular $w_1^{r_2}\delta^s \equiv (w_1\delta^{r's\ell r_1 -s})^{r_2}$ modulo $F^{*t}$
and hence $ w_1^{r_2}\delta^s $ is a norm from $E/F$. Thus $\theta = w_1^{r_2}\delta^s $
has the required properites. 
\end{proof}

  \begin{lemma} 
  \label{type56}  
  Let $E_\pi$ and $E_\delta$ be the lift of the residues of $\alpha$ at $\pi$ and 
  $\delta$ respectively. Suppose that $\alpha \cdot (\lambda) = 0$ and $\lambda = w\pi^{r\ell}\delta^{s\ell}$ for some
  unit $w \in A$. If   $\alpha \cdot (\lambda) = 0$, $r\alpha \otimes E_\pi = 0$ and
$s\alpha \otimes   E_\delta = 0$,   then there exists  $\theta \in A$ such that \\
$\bullet$ $\alpha \cdot (\theta) = 0$, \\ 
$\bullet$ $\nu_\pi(\theta) = r$, \\
$\bullet$ $\nu_\delta(\theta) = s$.
  \end{lemma}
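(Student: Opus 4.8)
The plan is to adapt the argument of \ref{type2_56}, using the two extra hypotheses to compensate for the loss of coprimality there.

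First I would put $\alpha$ in a normal form and localize the problem. We may assume $\alpha\neq 0$, since otherwise $\theta=\pi^r\delta^s$ already works. By \ref{local-cyclic}, write $\alpha=(E/F,\sigma,u\pi^a\delta^c)$ with $u\in A$ a unit and $0\le a,c<n$; since interchanging $\pi$ and $\delta$ simultaneously interchanges $r\leftrightarrow s$, $E_\pi\leftrightarrow E_\delta$, the two hypotheses, and the two conditions imposed on $\theta$, we may assume that $E/F$ is unramified on $A$ except possibly at $\delta$ and that $a=1$, so $\alpha=(E/F,\sigma,u\pi\delta^c)$ with $t:=[E:F]=$ ind$(\alpha)$. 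As $E/F$ is unramified at $\pi$, the cyclic algebra $\alpha$ is split by $E_\pi=E\otimes F_\pi$, so $\alpha\otimes E_\pi=0$ and the hypothesis $r\alpha\otimes E_\pi=0$ holds automatically; all the content lies in $s\alpha\otimes E_\delta=0$. Next, for any unit $v\in A$ the element $\alpha\cdot(v\pi^r\delta^s)$ is unramified on $A$ at every height-one prime other than $(\pi)$ and $(\delta)$, because $\alpha$ is; so by \ref{zero} it suffices to choose $v$ so that $\alpha\cdot(v\pi^r\delta^s)$ vanishes over $F_\pi$ and over $F_\delta$. Since $\kappa(\pi)$ and $\kappa(\delta)$ are complete discretely valued fields with finite residue field, $H^3$ of either is $0$, so by completeness of $F_\pi$ and $F_\delta$ it is enough that $\partial_\pi(\alpha\cdot(v\pi^r\delta^s))=0$ and $\partial_\delta(\alpha\cdot(v\pi^r\delta^s))=0$. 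Writing $\alpha\otimes F_\pi=\alpha'_\pi+(E_\pi,\sigma_\pi,\pi)$ and $\alpha\otimes F_\delta=\alpha'_\delta+(E_\delta,\sigma_\delta,\delta)$ as in \ref{rbc}, the criterion of \ref{dot-zero} turns these into the two requirements $r\alpha'_\pi=(E_\pi,\sigma_\pi,v\delta^s)$ over $F_\pi$ and $s\alpha'_\delta=(E_\delta,\sigma_\delta,v\pi^r)$ over $F_\delta$.

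Second, I would feed in the hypotheses to find such a $v$. Exactly as in the proof of \ref{norm}, the hypothesis $s\alpha\otimes E_\delta=s\alpha'_\delta\otimes E_\delta=0$ together with $E_\delta/F_\delta$ cyclic yields $s\alpha'_\delta=(E_\delta,\sigma_\delta,w_0\pi^{r_1})$ for some unit $w_0\in A$ and some $r_1\ge 0$; passing to residues over $\kappa(\delta)$ (a local field) this reduces the $F_\delta$-requirement to $\overline{vw_0^{-1}}\,\overline{\pi}^{\,r-r_1}\in N_{E^\delta_0/\kappa(\delta)}$. On the other side, reading $\alpha\cdot(\lambda)=0$ with $\lambda=w\pi^{r\ell}\delta^{s\ell}$ through the residue at $\pi$ gives, via \ref{dot-zero}, $r\ell\,\alpha'_\pi=(E_\pi,\sigma_\pi,w\delta^{s\ell})$, i.e. over $\kappa(\pi)$ the norm relation $\overline{u}^{\,r\ell}\overline{w}^{-1}\overline{\delta}^{\,\ell(rc-s)}\in N_{E^\pi_0/\kappa(\pi)}$; and applying \ref{norm} itself (its proof being insensitive to the harmless unit $u$) one also extracts a relation of the shape $w_1\delta^{cr_1-s}\in N_{E/F}(E^*)$ with $w_1\in A^*$. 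Combining these, and using that $\kappa(\pi)^*/N$, $\kappa(\delta)^*/N$ are cyclic of $\ell$-power order together with the exponent-inversion trick of \ref{type2_56} (replacing a norm by a suitable power of itself times a $t$-th power to correct the exponent), one produces a single unit $v\in A$ satisfying both $r\alpha'_\pi=(E_\pi,\sigma_\pi,v\delta^s)$ and $(E_\delta,\sigma_\delta,vw_0^{-1}\pi^{r-r_1})=0$. Setting $\theta=v\pi^r\delta^s\in A$ we then have $\nu_\pi(\theta)=r$, $\nu_\delta(\theta)=s$, and $\alpha\cdot(\theta)$ is unramified everywhere on $A$, hence $0$ by \ref{zero} (or \ref{purity}), since $H^3(\kappa,\mu_n^{\otimes 2})=0$ as $\kappa$ is finite.

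The main obstacle is precisely this last matching step. Unlike in \ref{type2_56}, the $\pi$-valuation $r\ell$ of $\lambda$ is divisible by $\ell$, so it cannot simply be inverted modulo $t$; instead one must play the $\delta$-side norm relation coming from $s\alpha\otimes E_\delta=0$ against the $\pi$-side residue relation coming from $\alpha\cdot(\lambda)=0$, and the delicate point is the bookkeeping of exponents — arranging that one and the same unit $v$ corrects both $\partial_\pi(\alpha\cdot(\theta))$ and $\partial_\delta(\alpha\cdot(\theta))$ while keeping $\theta\in A$ with $\nu_\pi(\theta)=r$ and $\nu_\delta(\theta)=s$ exactly. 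This is where the extra hypotheses $r\alpha\otimes E_\pi=0$ and $s\alpha\otimes E_\delta=0$ earn their keep (in the reduced situation the first is free and the second is what makes \ref{norm} applicable), and it is the heart of the proof.
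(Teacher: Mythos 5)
Your reduction steps are fine (normal form $\alpha=(E/F,\sigma,u\pi\delta^m)$ via \ref{local-cyclic}, the observation that $r\alpha\otimes E_\pi=0$ is automatic in that normal form, the use of \ref{norm} to get $w_1\delta^{mr_1-s}\in N_{E/F}(E^*)$, and the strategy ``kill the residues at $\pi$ and $\delta$, then conclude by \ref{zero}''), and this is indeed the skeleton of the paper's argument. But the step you yourself flag as ``the heart of the proof'' is not merely left vague: the Ansatz you commit to, $\theta=v\pi^r\delta^s$ with $v\in A$ a \emph{unit}, is provably insufficient. Take $E/F$ unramified on $A$, cyclic of degree $\ell$ (the lift of an unramified residue extension), $\alpha=(E,\sigma,u\pi\delta)$ (so $m=1$, $t=\ell$), $n=\ell^2$ with $\ell\geq 3$, and $r=1$, $s=2$, $\lambda=w\pi^{\ell}\delta^{2\ell}$. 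Here $E$ splits $\alpha$, so $r\alpha\otimes E_\pi=0$ and $s\alpha\otimes E_\delta=0$ hold trivially, and $\alpha\cdot(\lambda)=0$ by \ref{dot-zero} and \ref{zero}, since the relevant criterion at $\pi$ asks that $u^{\ell}w^{-1}\delta^{-\ell}$ be a norm from $E_\pi$ (its residue has $\bar\delta$-valuation $-\ell\equiv 0 \bmod \ell$ and unit norms are surjective for unramified extensions), and similarly at $\delta$. But for \emph{any} unit $v$, your criterion at $\pi$ requires $uv^{-1}\delta^{-1}\in N_{E_\pi/F_\pi}(E_\pi^*)$, and the residue of this element in $\kappa(\pi)$ has $\bar\delta$-valuation $-1$, not divisible by $\ell$, while $E(\pi)/\kappa(\pi)$ is unramified; so the residue $\partial_\pi(\alpha\cdot(v\pi^r\delta^s))$ never vanishes. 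No amount of exponent-inversion on the unit side can fix this, because the obstruction sits in the valuation of the $\delta$-part, which your shape of $\theta$ freezes.

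The paper escapes exactly this trap by allowing $\theta$ to be divisible by an auxiliary prime: it sets $\theta=(-u\pi+\delta^{t-m})^{r_1-r}\,w_1^{-1}(-u)^r\pi^r\delta^s$, where $r_1$ comes from \ref{norm}. The new factor does not disturb $\nu_\pi(\theta)=r$ and $\nu_\delta(\theta)=s$, it congruates to $(-u\pi)^{r_1-r}$ modulo $\delta$ and to $\delta^{(t-m)(r_1-r)}$ modulo $\pi$, which is precisely what lets one rewrite $\theta$ modulo $F_\delta^{*t}$ and $F_\pi^{*t}$ as $(-u\pi\delta^m)^{r_1}$, resp. $(-u\pi\delta^m)^r(w_1\delta^{mr_1-s})^{-1}$, so both restrictions $\alpha\cdot(\theta)\otimes F_\delta$ and $\alpha\cdot(\theta)\otimes F_\pi$ vanish; and the possible new ramification of $\alpha\cdot(\theta)$ at the prime $\gamma=-u\pi+\delta^{t-m}$ is harmless because $u\pi\delta^m\equiv\delta^t$ modulo $\gamma$, so the residue there is $(E(\gamma),\overline{\sigma},\overline{\delta}^{\,t})=0$. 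This extra degree of freedom (a carefully chosen non-monomial factor whose induced residue dies for structural reasons) is the missing idea in your proposal, and without it the matching step you defer cannot be carried out.
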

  
\begin{proof}  By (\ref{local-cyclic}), we assume that $\alpha = (E/F, \sigma, u\pi\delta^m)$
for some extension $E/F$ which is unramified on $A$ except possible at $\delta$ and $m \geq 0$. 
Without loss of generality, we assume that $0 \leq m <  [E : F]$.
By (\ref{norm}), there exists an integer $r_1 \geq 0$ such that $w_1\delta^{  mr_1 -s}$ is a norm from 
$E/F$. Let $t = [E - F] $ and $\theta = (-u\pi  +  \delta^{t - m})^{r_1 - r}w_1^{-1}(-u)^{r}\pi^r\delta^s$.
Since $t - m > o$, we have $\nu_\pi(\theta) = r$ and $\nu_\delta(\theta)  = s$.

Now we show that $\alpha \cdot (\theta) = 0$.  Since 
$t-m > 0$, we have $(-u\pi + \delta^{t - m} )^{r_1 - r} = (-u\pi)^{r_1 - r}$ modulo $\delta$
and hence $\theta \equiv  (-u)^{r_1 - r}\pi^{r_1 - r}w_1^{-1} (-u)^{r}\pi^r\delta^s = w_1^{-1}(-u)^{r_1}\pi^{r_1}\delta^s$ modulo $F_\delta^{*t}$.
Since $w_1\delta^{ mr_1-s}$ is a norm from $E/F$, we have  
$$
\begin{array}{rcl}
(\alpha \cdot (\theta) ) \otimes F_\delta & = & (E/F, \sigma, u\pi\delta^m) \cdot (w_1^{-1}(-u)^{r_1}\pi^{r_1}\delta^s) \otimes F_\delta \\
& = &    (E/F, \sigma, u\pi\delta^m) \cdot (w_1^{-1}(-u)^{r_1}\pi^{r_1}\delta^s w_1\delta^{mr_1 - s})  \otimes F_\delta \\
& = & (E/F, \sigma, u\pi\delta^m) \cdot ((-u)^{r_1}\pi^{r_1}\delta^{mr_1})  \otimes F_\delta \\
& = &  (E/F, \sigma, u\pi\delta^m) \cdot ((-u\pi\delta^m)^{r_1})  \otimes F_\delta = 0.
\end{array}
$$
Thus $\alpha \cdot (\theta)$ is unramified at $\delta$.

We have   $(-u\pi + \delta^{t-m})^{r_1 - r} \equiv \delta^{t (r_1 - r) + m(r- r_1)}$ modulo $\pi$
and hence 
$$\theta \equiv \delta^{t (r_1 - r) + m(r- r_1)} w_1^{-1}(-u)^r\pi^r\delta^s \equiv 
(-u\pi \delta^{m})^r (w_1\delta^{mr_1-s})^{-1} ~~{\rm ~~ modulo~} F_\pi^{*t}.$$
Since $w_1\delta^{mr_1-s}$ is a norm from $E/F$ and $t = [E : F]$, we have  
$$
\begin{array}{rcl}
(\alpha \cdot (\theta) ) \otimes F_\pi  & = & (E/F, \sigma, u\pi\delta^m) \cdot ((-u\pi \delta^{m})^r (w_1\delta^{mr_1-s})^{-1} ) \otimes F_\pi \\
& = &  (E/F, \sigma, u\pi\delta^m) \cdot ((-u\pi\delta^m)^{r})  \otimes F_\pi  = 0.
\end{array}
$$
In particular $\alpha \cdot (\theta)$ is unramified at $\delta$. 

Let $\gamma$ be a prime in $A$ with $(\gamma) \neq (\pi)$ and $(\gamma) \neq (\delta)$.
Since $\alpha$ is unramified on $A$ except possibly at $\pi$ and $\delta$, if 
$\gamma$ does not divide $\theta$, then $\alpha \cdot (\theta)$ is unramified at $\gamma$.
Suppose $\gamma$ divides $\theta$. Then $\gamma = -u\pi + \delta^{t-m}$.
Thus $u\pi\delta^m \equiv \delta^t$ modulo $\gamma$. Since 
$\partial_\gamma(\alpha \cdot (\theta)) = (E(\theta), \overline{\sigma}, \overline{u\pi}\overline{\delta}^m)$,
where $E(\theta)$ is the residue field of $E$ at $\theta$ and $\bar{}$ denotes the image modulo $\gamma$,
we have $\partial_\gamma(\alpha \cdot (\theta)) = (E(\theta), \overline{\sigma}, \overline{u\pi}\overline{\delta}^m)
= (E(\theta), \overline{\sigma}, \overline{\delta}^t) = 0.$

Hence $\alpha \cdot (\theta)$ is unramified on $A$. Since  $\alpha \cdot (\theta) \otimes F_\pi = 0$,
by (\ref{zero}), we have   $\alpha \cdot (\theta) = 0$. 
  \end{proof}

\section{Patching}
\label{patching}

We fix the following  data: \\
$\bullet$  $R$ a complete discrete valuation ring, \\
$\bullet$  $K$ the field of fractions of $R$, \\
$\bullet$  $\kappa$  the residue field of $R$,\\
$\bullet$  $\ell$ a prime not equal to char$(\kappa)$ and 
$n = \ell^d$ for some $d \geq 1$. \\
$\bullet$ $X$ a smooth projective geometrically integral variety over $K$, \\
$\bullet$  $F$ the function field of $X$,\\
$\bullet$  $\alpha \in H^2(F, \mu_n)$, $\alpha \neq 0$, \\
$\bullet$   $\lambda \in F^*$ with $\alpha \cdot (\lambda) = 0$, \\
 $\bullet$ $\XX$  a  normal   proper model of
$X$ over $R$  and $X_0$  the reduced special fibre of $\XX$. \\
 $\bullet$ $\PP_0$ a finite set of closed points of $X_0$ containing all the points
 of intersection of irreducible components of $X_0$.

For $x \in \XX$, let   $\hat{A}_x$ be  the completion of the  regular local ring at $x$ on $\XX$,
$F_x$  the field of fractions of $\hat{A}_x$ and $\kappa(x)$ the residue field at $x$. 
Let $\eta \in X_0$ be a
codimension zero point and $P \in X_0$ be a closed point such that $P$
is in the closure of $\eta$.  For abuse of the notation we denote the 
closure of $\eta$ by $\eta$ and  say that $P$ is a point of $\eta$.
A pair $(P, \eta)$ of a closed point $P$ and a codimension zero point of $X_0$
 is called a {\bf branch} if $P$ is in $\eta$.
Let $(P, \eta)$ be a branch.   
Let $F_{P, \eta}$ be the completion of  $F_P$ at the  discrete valuation on $F_P$ 
associated  to $\eta$. Then $F_x$ and $F_P$ are subfields of $F_{P, x}$. 
 Since $\kappa(\eta)$ is the function field of the curve $\eta$, 
 any closed point of $\eta$  gives a discrete valuation on $\kappa(\eta)$.
 The residue field  $\kappa(\eta)_P$ of $F_{P, \eta}$ is the completion of
   $\kappa(\eta)$ at the discrete valuation on $\kappa(\eta)$ given by  $P$.
   Let $\eta$ be a codimension zero point of $X_0$ and $U \subset \eta$ be a non-empty 
   open subset. Let $A_{U}$ be the ring of all those functions in $F$ which are regular 
   at every closed point of  $U$. Let $t$ be parameter in $R$. Then $t \in R_{U_\eta}$.
   Let $\hat{A}_{U}$ be the $(t)$-adic  completion of $A_U$ and $F_U$ be the field of fractions 
   of $\hat{A}_U$.   Then $F \subseteq F_{U}  \subseteq F_\eta$.
 
We begin with  the following  result, which follows from  (\cite[Theorem 9.11]{HHK3}) ( cf. proof of 
\cite[Theorem 2.4]{PS2}).
 
 \begin{prop}
  \label{splitting_alpha}
  For each irreducible component $X_\eta$ of $X_0$, let $U_\eta$ be 
a non-empty  proper open subset of
$X_\eta$ and $\PP  = X_0 \setminus \cup_\eta U_\eta$, where $\eta$ runs over 
the codimension zero points of $X_0$.  Suppose that $\PP_0 \subseteq \PP$. 
Let $L$ be a finite extension of $F$.  
Suppose that there exists $N \geq 1$ such that  
for each codimension zero point $\eta$ of $X_0$, ind$(\alpha \otimes L\otimes F_{U_\eta}) \leq N$
and for every closed point $P \in \PP$, ind$(\alpha \otimes L \otimes F_P) \leq  N$. Then 
ind$(\alpha \otimes L) \leq N$. 
\end{prop}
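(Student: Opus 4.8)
The plan is to deduce the statement from the patching index result of Harbater--Hartmann--Krashen, exactly as in the proof of \cite[Theorem~9.11]{HHK3} and its adaptation in \cite[Theorem~2.4]{PS2}. The starting point is the field-patching setup over $\XX$: for the chosen cover of $X_0$ by the open sets $U_\eta$ (one per irreducible component) together with the finite set of closed points $\PP = X_0 \setminus \cup_\eta U_\eta$, there is a local-global principle for the index of a central simple algebra. Concretely, one has $F = \bigcap_\eta F_{U_\eta} \cap \bigcap_{P\in\PP} F_P$ inside a suitable ambient field, the relevant patching diagram is an inverse-limit/fibre-product diagram of fields indexed by the $U_\eta$, the $P\in\PP$, and the branches $(P,\eta)$, and the obstruction to patching a common splitting field for $\alpha\otimes L$ of degree $N$ is controlled.

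The key steps, in order, are as follows. First I would replace $\alpha\otimes L$ by a representative central simple algebra $B$ over $L$ and record that, because $\PP_0 \subseteq \PP$, the set $\PP$ contains all intersection points of components of $X_0$, so the hypotheses of the HHK patching theorem for $\XX$ (a normal proper model over the complete DVR $R$, with the stated open cover) are met over $L$ as well; here one uses that $L/F$ is finite so that $\XX$ remains an admissible model after base change (or one passes to the normalization of $\XX$ in $L$, which does not affect the local fields $F_x$, $F_P$, $F_{U_\eta}$ up to the relevant completions). Second, I would invoke the index-patching statement: $\mathrm{ind}(B)$ is the least common multiple of the local indices $\mathrm{ind}(B\otimes F_{U_\eta})$ over codimension-zero points $\eta$ and $\mathrm{ind}(B\otimes F_P)$ over $P\in\PP$ --- this is precisely \cite[Theorem~9.11]{HHK3}, whose proof carries over verbatim. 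Third, by hypothesis each of these local indices is $\le N$, so their l.c.m.\ is $\le N$, giving $\mathrm{ind}(\alpha\otimes L)\le N$.

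The main obstacle is purely bookkeeping rather than conceptual: one must check that after the finite base change $F\to L$ the patching data still satisfies the field-factorization (inverse-limit) property required by HHK --- in particular that $L\otimes_F F_{U_\eta}$, $L\otimes_F F_P$, $L\otimes_F F_{P,\eta}$ are the right objects (finite products of fields of the expected type) and that the relevant factorization group (for $\mathrm{PGL}_m$, or for the reductive group governing splitting of $B$) still has the factorization property over these rings. This is handled in \cite{HHK3} and recalled in \cite[Theorem~2.4]{PS2}, and nothing new is needed here; I would simply cite it. A secondary point to address is that one only gets $\le N$ and not equality, but that is all that is claimed. I would therefore present the proof as a short reduction: set up the cover, quote \cite[Theorem~9.11]{HHK3} over $L$ (noting $\PP_0\subseteq\PP$ guarantees the needed geometry of the special fibre), and conclude by taking the l.c.m.
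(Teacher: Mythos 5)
Your proposal is correct and follows essentially the same route as the paper: pass to the normalization $\YY$ of $\XX$ in $L$ and apply \cite[Theorem 9.11]{HHK3} to that model of $L$, the local hypotheses being met because each patch field of the special fibre of $\YY$ receives an algebra homomorphism from one of $L\otimes F_{U_\eta}$ or $L\otimes F_P$, so the assumed index bounds transfer. The paper spells out this transfer step explicitly while you delegate it to the citation, but the argument is the same.
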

 
 \begin{proof} Let $\YY$ be the integral closure of  $\XX$ in $L$
 and $\phi : \YY \to \XX$ be the induced map. 
 Let  $\PP'$ be a finite set if closed points of $\YY$ containing 
   points of the intersection  of distinct irreducible curves on 
 the special fibre $Y_0$  of $\YY$ and inverse image of  $\PP$ under $\phi$.  
 Let $U$ be an irreducible component of $Y_0 \setminus \PP_0'$.
 Then $\phi(U) \subset U_\eta$ for some $U_\eta$ and there is a homomorphism
 of algebras from  $L \otimes F_{U_\eta}$ to $L_U$. (Note that $L \otimes F_{U_\eta}$ may be  
 a product  of fields).  Since  ind$(\alpha \otimes L \otimes F_{U_\eta}) \leq d$, 
 we have ind$(\alpha \otimes L_U) \leq N$.  Let $Q \in \PP'$. Suppose $\phi(Q) =  P \in \PP$.
 Then there is a homomorphism of algebras from  $L \otimes F_P$ to $L_Q$.
 (Once again note that $L \otimes F_P$ may be a product of fields). 
 Since ind$(\alpha \otimes L \otimes F_P) \leq N$,  ind$(
 \alpha \otimes L_Q) \leq N$. Suppose that $\phi(Q) \in U_\eta$ for some $U_\eta$.
 Then there is a homomorphism of algebras from  $L \otimes F_{U_\eta}$ to $L_Q $.  
 Thus ind$(\alpha \otimes L_Q ) \leq N$.
Therefore, by (\cite[Theorem 9.11]{HHK3}),  ind$(\alpha \otimes L) \leq N$. 
 \end{proof}
  
  \begin{lemma} 
  \label{openest}
  Let $\eta$ be a codimension zero point of $X_0$.
  Suppose there exists a field  extension or split extension $L_\eta/F_\eta$ of degree 
  $\ell$ and $\mu_\eta \in L_\eta$ such that \\
  1) $N_{L_\eta/F_{\eta}}(\mu_\eta) = \lambda$ \\
  2) ind$(\alpha \otimes L_\eta) < $ ind$(\alpha)$ \\
  3) $\alpha \cdot (\mu_\eta) = 0 \in H^3(L_\eta, \mu_n^{\otimes 2} )$. \\
  Then there exists a non-empty open subset $U_\eta$ of $\eta$, a split or field extension $L_{U_\eta}/
  F_{U_\eta}$ of degree $\ell$ and $\mu_{U_\eta} \in L_{U_\eta}$ such that \\
  1) $N_{L_{U_\eta}/F_{U_\eta}}(\mu_{U_\eta}) = \lambda$ \\
  2) ind$(\alpha \otimes L_\eta) < $ ind$(\alpha)$ \\
  3) $\alpha \cdot (\mu_{U_\eta}) = 0 \in H^3(L_{U_\eta}, \mu_n^{\otimes 2} )$ \\
  4) there is an isomorphism $\phi_{U_\eta} : L_{U_\eta} \otimes F_{\eta} \to L_\eta$
  with $\phi_{U_\eta}(\mu_{U_\eta} \otimes 1) \mu_\eta^{-1} \equiv 1$ modulo the radical of 
  the integral closure of $\hat{R}_\eta$  in $L_{\eta}$.\\
  Further if $L_\eta/F_\eta$ is cyclic, then $L_{U_\eta}/F_{U_\eta}$ is cyclic. 
  \end{lemma}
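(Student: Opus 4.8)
The plan is to prove this by \emph{spreading out} (passing to the limit), using that $F_\eta$ is the direct limit $\varinjlim_{U}F_{U}$ over the non-empty open subsets $U\subseteq\eta$ (see \cite{HHK1}; equivalently $F_\eta$ is the fraction field of the $t$-adic completion of the local ring of $\XX$ at $\eta$). The consequences I would invoke are: every finite $F_\eta$-algebra, every element of such an algebra, and every Galois cohomology class over such an algebra is obtained by base change from $F_{U}$ for all sufficiently small $U$; and two such pieces of data that agree over $F_\eta$ (resp.\ a class that vanishes over $F_\eta$) already do so over some $F_{U}$. Since $\eta$ is an irreducible curve, any finite intersection of non-empty open subsets of $\eta$ is non-empty, so it is enough to secure each required condition over some open and then intersect.

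First I would descend the extension $L_\eta$. If $L_\eta=\prod_1^{\ell}F_\eta$ is the split extension, take $L_{U}=\prod_1^{\ell}F_{U}$ with its cyclic permutation automorphism, so $L_U\otimes_{F_U}F_\eta=L_\eta$ canonically. If $L_\eta=F_\eta(\beta)$ is a field of the prime degree $\ell$ with monic minimal polynomial $g\in F_\eta[X]$, choose $U$ with all coefficients of $g$ in $F_U$ and set $L_U:=F_U[X]/(g)$; since $F_U\subseteq F_\eta$, irreducibility of $g$ over $F_\eta$ forces it over $F_U$, so $L_U$ is a field with $[L_U:F_U]=\ell$ and the base-change map $\phi_U:L_U\otimes_{F_U}F_\eta\to L_\eta$ is an isomorphism. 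If $L_\eta/F_\eta$ is cyclic (equivalently Galois, $\ell$ being prime), I would shrink $U$ further so that the splitting of $g$ over $L_\eta$ and an expression for a generator of $\mathrm{Gal}(L_\eta/F_\eta)$ as $\beta\mapsto h(\beta)$ with $h\in F_\eta[X]$ are defined over $F_U$; then $L_U/F_U$ is cyclic. For $U'\subseteq U$ set $L_{U'}=L_U\otimes_{F_U}F_{U'}$, so that $L_\eta=\varinjlim_{U'}L_{U'}$ and $\phi_U$ restricts to compatible isomorphisms $\phi_{U'}:L_{U'}\otimes_{F_{U'}}F_\eta\to L_\eta$.

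Next I would descend $\mu_\eta$ and check conditions (1), (3) and (4). There is some $U'\subseteq U$ and $\mu_{U'}\in L_{U'}$ with $\phi_{U'}(\mu_{U'}\otimes1)=\mu_\eta$; hence the ratio $\phi_{U'}(\mu_{U'}\otimes1)\mu_\eta^{-1}$ equals $1$, which in particular is $\equiv1$ modulo the radical of the integral closure of $\hat R_\eta$ in $L_\eta$, giving (4). Compatibility of the norm with base change shows that $N_{L_{U'}/F_{U'}}(\mu_{U'})$ and $\lambda$, both elements of $F_{U'}$, have the same image in $F_\eta$; as $F_{U'}\hookrightarrow F_\eta$ is injective, $N_{L_{U'}/F_{U'}}(\mu_{U'})=\lambda$, which is (1). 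Since Galois cohomology commutes with filtered direct limits of fields, $H^3(L_\eta,\mu_n^{\otimes2})=\varinjlim_{U''}H^3(L_{U''},\mu_n^{\otimes2})$; the image of $\alpha\cdot(\mu_{U'})$ there is $\alpha\cdot(\mu_\eta)=0$, so $\alpha\cdot(\mu_{U''})=0$ in $H^3(L_{U''},\mu_n^{\otimes2})$ for some $U''\subseteq U'$, which is (3).

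The one step that is not purely formal, and the \textbf{main obstacle}, is condition (2): ensuring the index stays strictly below ind$(\alpha)$. For this I would put $e:=$ ind$(\alpha\otimes L_\eta)$, which by hypothesis is $<$ ind$(\alpha)$, choose a degree-$e$ splitting field $M_\eta$ of $\alpha\otimes L_\eta$ (one field extension per factor in the split case), descend $M_\eta$ as in the second paragraph to a degree-$e$ extension $M_{U'''}/L_{U'''}$, and then — using that $\Br$ also commutes with filtered direct limits of fields — descend the vanishing $\alpha\otimes M_\eta=0$ to $\alpha\otimes M_{U_\eta}=0$ for some $U_\eta\subseteq U'''$, whence ind$(\alpha\otimes L_{U_\eta})\le[M_{U_\eta}:L_{U_\eta}]=e<$ ind$(\alpha)$. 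Finally I would shrink $U_\eta$ to lie in each of the opens produced above; then $L_{U_\eta}:=L_U\otimes_{F_U}F_{U_\eta}$ (a field, resp.\ split, and cyclic whenever $L_\eta/F_\eta$ is), the element $\mu_{U_\eta}\in L_{U_\eta}$ given by the image of $\mu_{U''}$, and $\phi_{U_\eta}$ the restriction of $\phi_U$, satisfy all of (1)--(4).
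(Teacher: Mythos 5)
Your argument is built on the identification $F_\eta=\varinjlim_{U}F_{U}$, and that identification is false; this is a genuine gap, not a presentational one. The fields $F_U$ are fraction fields of $t$-adic completions of rings of functions regular on $U$, and their filtered union inside $F_\eta$ is a proper dense subfield: the paper identifies it (via \cite[Lemma 2.2.1]{HHK4}) with the henselization $F^h_\eta$ of $F$ at $\nu_\eta$, not with the completion $F_\eta$. (Note also that HHK1, which you cite for the limit claim, contains no such statement; and if the claim were true, results the paper has to quote — \cite[Proposition 3.2.2]{HHK4} for descending the vanishing of a cohomology class from $F_\eta$ to some $F_V$, and \cite[Proposition 5.8]{HHK3} for the index over some open set — would be trivial consequences of ``cohomology commutes with filtered colimits'' and would not need proof.) Because of this, the purely formal descent steps fail: the coefficients of the minimal polynomial of a generator of $L_\eta$ need not lie in any $F_U$, and, more seriously, there is in general no $U'$ and $\mu_{U'}\in L_{U'}$ with $\phi_{U'}(\mu_{U'}\otimes 1)=\mu_\eta$ — an arbitrary element of $L_\eta$ does not come from any $L_{U'}$. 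That your construction outputs $\phi_{U_\eta}(\mu_{U_\eta}\otimes 1)\mu_\eta^{-1}=1$ exactly, whereas the lemma only claims congruence modulo the radical of the integral closure of $\hat{R}_\eta$, is the tell-tale sign: the weaker statement (and the $\ell^{2t_P}$-th power bookkeeping in (\ref{lemma}) and (\ref{patching_L}) downstream) exists precisely because exact descent is impossible. The same objection undercuts your treatment of (1), (3) and of the splitting field in (2).

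What is missing is an approximation argument that preserves the norm exactly while only approximating $\mu_\eta$; this is the actual content of the paper's proof. In the field case one first descends the extension: a degree-$\ell$ extension of the complete field $F_\eta$ comes from an extension $L^h_\eta$ of the henselization $F^h_\eta=\varinjlim F_V$, hence from some $L_{U}$. One then writes $\mu_\eta=u_\eta\tilde\pi_\eta^{\,r}$ with $\tilde\pi_\eta$ a parameter chosen already in $L^h_\eta$, observes that $N_{L_\eta/F_\eta}(u_\eta)=\lambda\,N_{L_\eta/F_\eta}(\tilde\pi_\eta)^{-r}$ lies in $F^h_\eta$, and invokes Artin approximation (\cite[Theorem 1.10]{Artin}) to solve the norm equation $N_{L^h_\eta/F^h_\eta}(u)=N_{L_\eta/F_\eta}(u_\eta)$ by a unit $u^h\in L^h_\eta$ close to $u_\eta$; then $\mu^h_\eta=u^h\tilde\pi_\eta^{\,r}$ has norm exactly $\lambda$, is congruent to $\mu_\eta$ modulo the radical, and does descend to some $F_{U_\eta}$, after which one shrinks $U_\eta$ using \cite[Proposition 3.2.2]{HHK4} to kill $\alpha\cdot(\mu_{U_\eta})$ and uses the index-over-opens result for (2). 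In the split case the analogous repair is to approximate the components $\mu_i\in F_\eta$ by elements $\theta_i\in F^*$ and reset $\theta_1=\lambda(\theta_2\cdots\theta_\ell)^{-1}$ so that the norm is exactly $\lambda$. Without this device (Krasner-type descent of the extension plus Artin approximation for the norm equation), your construction does not produce a $\mu_{U_\eta}$ with $N_{L_{U_\eta}/F_{U_\eta}}(\mu_{U_\eta})=\lambda$, and conditions (2)--(4) are likewise unjustified.
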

  
  \begin{proof} Suppose $L_\eta = \prod F_\eta$ is the split extension of degree $\ell$.
  Write $\mu_\eta = (\mu_1, \cdots , \mu_\ell)$ with $\mu_i \in F_\eta$.
  Then $\lambda = N_{L_\eta/F_\eta}(\mu_\eta) = \mu_1\cdots \mu_\ell$.  
  Since ind$(\alpha \otimes L_\eta) = $ ind$(\alpha \otimes F_\eta) < $
 ind$(\alpha)$,  by (\cite[Proposition 5.8]{HHK3}, \cite[Proposition 1.17]{KMRT}), 
 there exists a non-empty open subset $U_\eta$ of $\eta$ such that 
 ind$(\alpha) \otimes F_{U_\eta} < $ ind$(\alpha)$.
  Since $F_\eta$ is the completion of $F$ at the discrete valuation given by $\eta$,
there exist $\theta_i\in F^*$, $1 \leq i \leq \ell$, such that $\theta_i\mu_i^{-1} \equiv 1$ modulo the 
maximal ideal of $\hat{R}_\eta$.   Let $L_{U_\eta} = \prod F_{U_\eta}$
and $\mu_{U_\eta}  = (\lambda (\theta_2 \cdots \theta_\ell)^{-1}, \theta_2,  \cdots , \theta_\ell)
\in L_{U_\eta}$. Then $N_{L_{U_\eta/F_{U_\eta}}}(\mu_{U_\eta}) = \lambda$.
Since $\alpha \cdot (\theta_i) \in H^3(F_{U_\eta},\mu_n^{\otimes 2})$ and
$\alpha \cdot (\theta_i) = 0  \in H^3(F_{\eta},\mu_n^{\otimes 2})$, by 
(\cite[Proposition 3.2.2]{HHK4}), there exists a non-empty open subset $V_{\eta} \subseteq U_\eta$
such that $\alpha \cdot (\theta_i)  = 0 \in H^3(F_{V_\eta},\mu_n^{\otimes 2})$. By replacing 
$U_\eta$ by $V_\eta$, we have the required $L_{U_\eta}$ and $\mu_{U_\eta} \in L_{U_\eta}$.

 Suppose that $L_\eta/F_\eta$ is a field extension  of degree $\ell$.
 Let $F^h_\eta$ be the henselization of $F$ at the discrete valuation $\eta$.
 Then there exists a field extension $L^h_\eta/F^h_\eta$ of degree $\ell$  with 
  an isomorphism $\phi^h _\eta : L^h_\eta \otimes_{F^h_\eta} F_\eta \to L_\eta$.
  We identify $L^h$ with a subfield of $L_\eta$ through $\phi^h$. 
  Further if $L_\eta/F_\eta$ is cyclic extension, then $L^h/F^h$ is also a cyclic extension.
 Let $\tilde{\pi}_\eta \in L^h$ be a parameter.
 Then $\tilde{\pi}_\eta $ is also a parameter in $L_\eta$.Write $\mu_\eta  = u_\eta\tilde{\pi}_\eta^r$ for some 
 $u_\eta \in L_\eta$ a unit at $\eta$. Since $N_{L_\eta/F_\eta}(\mu_\eta) = \lambda$, we have
 $\lambda = N_{L_\eta/F_\eta}(u_\eta) N_{L_\eta/F_\eta}(\tilde{\pi}_\eta)$. 
 Since $u_\eta \in L_\eta$ is a unit at $\eta$, $ N_{L_\eta/F_\eta}(u_\eta) \in F_\eta$ is a unit at $\eta$.
 By (\cite[Theorem 1.10]{Artin}), there exists  
$u^h \in L^h_\eta$ such that $N_{L^h_\eta/F^h_\eta}(u^h_\eta) = N_{L_\eta/F_\eta}(u_\eta)$.
Let $\mu^h_\eta = u^h_\eta \tilde{\pi}_\eta \in L^h_\eta$. 
 Since $F^h_\eta$ is the filtered  direct limit of the fields $F_V$, where $V$  ranges over   the 
 non-empty open subset of $\eta$ (\cite[Lemma 2.2.1]{HHK4}), there exists a non-empty open 
 subset  $U_\eta$ of $\eta$, a field extension $L_{U_\eta}/F_{U_\eta}$ of degree $\ell$
 and $\mu_{U_\eta} \in L_{U_\eta}$ such that $N_{L_{U_\eta}/F_{U_\eta}}(\mu_{U_\eta}) = \lambda$
and there is an isomorphism $\phi^h_{U_\eta} : L_{U_\eta} \otimes F_\eta \simeq L^h_\eta$ with 
$\phi^h_{U_\eta}(\mu_{U_\eta}) = \mu^h_\eta$. By  shrinking $U_\eta$, we assume that 
$\alpha \cdot  (\mu_{U_\eta}) = 0 \in H^3(L_{U_\eta}, \mu_n^{\otimes 2})$ (\cite[Proposition 3.2.2]{HHK4}).
  \end{proof}
  
  \begin{lemma}
  \label{lemma}
Suppose that  for each   codimension zero point $\eta$ of   $ X_0$ 
there exist a field (not necessarily cyclic) or split  extension $L_\eta/F_\eta$ of degree $\ell$, 
 $\mu_\eta \in F_\eta$  and for every closed point 
$P$ of $X_0$ there exist a  cyclic or split extension 
$L_P/F_P$ of degree $\ell$ and  $\mu_P \in L_P $  such that for every
point $x$ of $X_0$  \\
1)  $N_{L_x / F_x }(\mu_x ) = \lambda$ \\
2) $ \alpha \cdot (\mu_x )= 0 \in H^3(L _ x , \mu_x ^{\otimes 2})$ \\
3) ind$(\alpha \otimes L_x) < $ ind$(\alpha)$\\
4)  for any branch $(P, \eta)$  there is an isomorphism $\phi_{P, \eta} :  L_\eta \otimes
F_{P, \eta} \to L_P \otimes F_{P, \eta}$ such that 
for a generator  $\sigma$ of Gal$(L_P \otimes F_{P, \eta}/F_{P,\eta})$ 
there exists $\theta_{P, \eta} \in L_P\otimes F_{P, \eta}$
such that $\phi_{P, \eta}(\mu_\eta)\mu_P^{-1} = \theta_{P,\eta}^{-\ell^d} \sigma(\theta_{P,\eta})^{\ell^d}$. \\
 Then there exist \\
 $\bullet$  a field   extension  $L/F$ of degree $\ell$ \\
$\bullet$ a non-empty open subset $U_\eta$ of $\eta$ for every codimension zero point $\eta$ of $X_0$ \\
with   $\theta_{U_\eta} \in L \otimes F_{U_\eta}$ \\
$\bullet$ for every $P \in \PP = X_0 \setminus \cup U_\eta$, $\theta_P \in L \otimes F_P$ \\ 
 such that   \\
 $1)$  ind$(\alpha \otimes L ) < $ ind$(\alpha)$ \\  
$2)$ $N_{L \otimes F_{U_\eta} / F_{U_\eta} }(\theta_{U_\eta} ) = \lambda$  and
$\alpha \cdot (\theta_{U_\eta}) = 0 \in H^3(L\otimes F_{U_\eta}, \mu_n^{\otimes 2})$ for all codimension 
zero points $\eta$ of $X_0$\\
$3)$ $N_{L \otimes F_{P } / F_P }(\theta_P ) = \lambda$
 and
$\alpha \cdot (\theta_P) = 0 \in H^3(L\otimes F_P, \mu_n^{\otimes 2})$ for all $P \in \PP$ \\
$4)$ for any branch $(P, \eta)$, $L\otimes F_{P, \eta}/F_{P, \eta}$ is cyclic or split and for a
 generator $\sigma$ of Gal$(L\otimes F_{P, \eta}/F_{P, \eta})$
there exists $\gamma_{P,\eta} \in L\otimes F_P$ such that 
$ \theta_{U_\eta} \theta _P^{-1}   = \gamma_{P, \eta}^{-\ell^d}\sigma(\gamma_{P, \eta})^{\ell^d}$. \\
Further if for each $x \in X_0$,  $L_x/F_x$ is cyclic or split, then $L/F$ is cyclic.
\end{lemma}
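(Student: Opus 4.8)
The plan is to run the Harbater--Hartmann--Krashen patching machinery: pass from the generic points of $X_0$ to small opens by means of (\ref{openest}), glue the degree $\ell$ algebras $L_x$ together with the elements $\mu_x$ across the branches, and then control the index with (\ref{splitting_alpha}).

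First, for every codimension zero point $\eta$ of $X_0$ I would apply (\ref{openest}) to $(L_\eta,\mu_\eta)$, obtaining a non-empty open $U_\eta\subseteq\eta$, a split or field extension $L_{U_\eta}/F_{U_\eta}$ of degree $\ell$ (cyclic when $L_\eta$ is), an element $\mu_{U_\eta}\in L_{U_\eta}$ with $N_{L_{U_\eta}/F_{U_\eta}}(\mu_{U_\eta})=\lambda$, $\mathrm{ind}(\alpha\otimes L_{U_\eta})<\mathrm{ind}(\alpha)$ and $\alpha\cdot(\mu_{U_\eta})=0$, together with an isomorphism $\phi_{U_\eta}\colon L_{U_\eta}\otimes F_\eta\to L_\eta$ for which $\varepsilon_\eta:=\phi_{U_\eta}(\mu_{U_\eta}\otimes1)\mu_\eta^{-1}$ is congruent to $1$ modulo the radical of the integral closure of $\hat R_\eta$ in $L_\eta$. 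Shrinking the $U_\eta$ further I may assume that $\PP:=X_0\setminus\bigcup_\eta U_\eta$ is a finite set of closed points containing $\PP_0$; for $P\in\PP$ I keep the given $(L_P,\mu_P)$. Not all of the $L_x$ can be split: otherwise $\mathrm{ind}(\alpha\otimes F_{U_\eta})<\mathrm{ind}(\alpha)$ and $\mathrm{ind}(\alpha\otimes F_P)<\mathrm{ind}(\alpha)$ for all $\eta,P$, and (\ref{splitting_alpha}) applied with $L=F$ would force $\mathrm{ind}(\alpha)<\mathrm{ind}(\alpha)$. Hence at least one $L_x$ is a field.

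Next I would patch the algebras. For each branch $(P,\eta)$ with $P\in\PP$ the composite
$$\psi_{P,\eta}\colon\ L_{U_\eta}\otimes F_{P,\eta}\ \xrightarrow{\ \phi_{U_\eta}\otimes1\ }\ L_\eta\otimes F_{P,\eta}\ \xrightarrow{\ \phi_{P,\eta}\ }\ L_P\otimes F_{P,\eta}$$
is an isomorphism of $F_{P,\eta}$-algebras, and the family $\{L_{U_\eta}\}_\eta$, $\{L_P\}_{P\in\PP}$ together with the $\psi_{P,\eta}$ is a patching problem over the factorization inverse system $\{F_{U_\eta},F_P,F_{P,\eta}\}$ of $F$. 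By the patching theorems of Harbater--Hartmann--Krashen (\cite{HH},\cite{HHK1}) there is a degree $\ell$ extension $L/F$ --- a field, by the previous paragraph --- with isomorphisms $L\otimes F_{U_\eta}\cong L_{U_\eta}$ and $L\otimes F_P\cong L_P$ compatible with $\psi_{P,\eta}$ along every branch. When every $L_x$ is cyclic or split the same patching can be carried out in the category of $\Z/\ell\Z$-torsors (the $\psi_{P,\eta}$ being made equivariant after adjusting the chosen generators), so that $L/F$ is Galois, hence cyclic; this yields the last assertion of the lemma. Transporting the $\mu_x$ through these isomorphisms and setting $\theta_{U_\eta}:=\mu_{U_\eta}\in L\otimes F_{U_\eta}$, $\theta_P:=\mu_P\in L\otimes F_P$, conditions $(2)$ and $(3)$ hold by construction, while $(1)$ follows from (\ref{splitting_alpha}) with $N$ the maximum of the $\mathrm{ind}(\alpha\otimes L_{U_\eta})$ and $\mathrm{ind}(\alpha\otimes L_P)$ (all $<\mathrm{ind}(\alpha)$), using $\PP_0\subseteq\PP$.

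Finally I would verify $(4)$, which I expect to be the main point. Fix a branch $(P,\eta)$; the algebra $L\otimes F_{P,\eta}$ is cyclic or split over $F_{P,\eta}$ (a base change of $L_P\otimes F_{P,\eta}$), and I take $\sigma$ to be the generator corresponding, under the identification of the previous step, to the one in hypothesis $(4)$. Under that identification $\theta_{U_\eta}$ corresponds to $\psi_{P,\eta}(\mu_{U_\eta})=\phi_{P,\eta}(\varepsilon_\eta)\,\phi_{P,\eta}(\mu_\eta)$ and $\theta_P$ to $\mu_P$, so
$$\theta_{U_\eta}\,\theta_P^{-1}\ =\ \phi_{P,\eta}(\varepsilon_\eta)\cdot\bigl(\phi_{P,\eta}(\mu_\eta)\,\mu_P^{-1}\bigr)\ =\ \phi_{P,\eta}(\varepsilon_\eta)\cdot\theta_{P,\eta}^{-\ell^d}\sigma(\theta_{P,\eta})^{\ell^d}.$$
Now $u:=\phi_{P,\eta}(\varepsilon_\eta)$ is congruent to $1$ modulo the radical of the integral closure of the valuation ring of $F_{P,\eta}$ in $L\otimes F_{P,\eta}$ (inherited from the corresponding property of $\varepsilon_\eta$) and has norm $N_{L\otimes F_{P,\eta}/F_{P,\eta}}(u)=\lambda\lambda^{-1}=1$. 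Since $\ell$ is prime to the residue characteristic and the rings involved are complete, the group $1+\mm$ of units $\equiv1$ is $\ell$-torsion free and uniquely $\ell$-divisible; combining this with Hilbert's Theorem~90 (cf. (\ref{hilbert90})) I can write $u=\gamma_1^{-\ell^d}\sigma(\gamma_1)^{\ell^d}$ for some $\gamma_1\in L\otimes F_{P,\eta}$. Then $\gamma_{P,\eta}:=\gamma_1\theta_{P,\eta}$ satisfies $\theta_{U_\eta}\theta_P^{-1}=\gamma_{P,\eta}^{-\ell^d}\sigma(\gamma_{P,\eta})^{\ell^d}$, which is $(4)$. The substantive work --- checking that the $\psi_{P,\eta}$ assemble into a genuine patching problem, that the patched $L$ is cyclic when all the $L_x$ are, and the norm-one manipulation just indicated --- is the heart of the proof; the rest is a direct appeal to (\ref{openest}), (\ref{splitting_alpha}) and the HHK patching theorems.
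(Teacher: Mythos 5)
Your proposal is correct and follows essentially the same route as the paper: (\ref{openest}) on each component, Harbater--Hartmann patching of the degree $\ell$ algebras and of the elements, (\ref{splitting_alpha}) for the index drop, and transport of $\mu_{U_\eta}$, $\mu_P$ through the patching isomorphisms to define $\theta_{U_\eta}$, $\theta_P$. Your extra step absorbing the unit $\varepsilon_\eta\equiv 1$ via unique $\ell$-divisibility of $1+\mm$ and Hilbert 90 is a careful refinement of a point the paper elides (it simply writes $\phi_\eta(\mu_{U_\eta})=\mu_\eta$), not a different approach.
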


\begin{proof}
Let  $\eta$ be a codimension zero point of $X_0$.
By the assumption, there exist a cyclic  or split extension $L_\eta/F_\eta$ and $\mu_\eta \in L_\eta$ such that 
  $N_{L_x / F_x }(\mu_x ) = \lambda$,  $\alpha \cdot (\mu_x )= 0 \in H^3(L _ x , \mu_x ^{\otimes 2})$
  and ind$(\alpha \otimes L_\eta) < $ ind$(\alpha)$.
By (\ref{openest}), there exist a non-empty open set $U_\eta$ of   $\eta $, a cyclic  or split extension 
$L_{U_\eta}/F_{U_\eta}$ of degree $\ell$ and $\mu_{U_\eta} \in L_{U_\eta}$ such that 
  $N_{L_{U_\eta} / F_{U_\eta }}(\mu_\eta ) = \lambda$,   $\alpha \cdot (\mu_x )= 0 \in H^3(L _ x , \mu_x ^{\otimes 2})$,
  ind$(\alpha \otimes L_{U_\eta}) < $ ind$(\alpha)$, $\phi_\eta : L_{U_\eta} \otimes F_\eta \to L_\eta$ an 
  isomorphism and $\phi_\eta(\mu_{U_ \eta}) =  \mu_\eta$.
  By shrinking $U_\eta$, if necessary, we assume that $\PP_0 \cap U_\eta = \emptyset$.
 
 Let $\PP = X_0 \setminus \cup_\eta U_\eta$ and $P \in \PP$. Then, by the assumption 
 we have a cyclic or split extension $L_P/F_P$ of degree $\ell$ and  for every branch 
 $(P, \eta)$ there is an isomorphism 
 $\phi_{P, \eta} : L_\eta \otimes F_{P, \eta} \to L_P \otimes F_{P, \eta}$. 
 Thus  $\phi_{P, U_{\eta}}  = \phi_{P, \eta} (\phi_\eta \otimes 1) : L_{U_\eta} \otimes F_\eta \otimes 
 F_{P, \eta} \to L_P \otimes F_{P, \eta}$ is an isomorphism. Thus, by (\cite[Theorem 7.1]{HH}), there exists 
 an extension  $L/F$ of degree $\ell$ with isomorphisms 
 $\phi_{U_\eta} : L \otimes F_{U_\eta} \to L_{U_\eta}$ for all codimension zero points $\eta$ of $X_0$ 
and $\phi_P : L \otimes F_P \to L_P$ for all $P \in \PP$ such that the following commutative diagram 
$$
\begin{array}{ccc}
L \otimes F_{U_\eta} \otimes F_{P, \eta} & \buildrel{\phi_{U_\eta} \otimes 1} \over{\to} & 
L_{U_\eta} \otimes F_{\eta} \otimes F_{P, \eta} \\
\downarrow & & \downarrow \phi_{P, U_\eta} \\
L \otimes F_{P} \otimes F_{P, \eta} & \buildrel{\phi_{P} \otimes 1} \over{\to} & 
L_{P} \otimes   F_{P, \eta} 
\end{array}
$$
where the vertical arrow on the left side is the natural map. Further  if each $L_x/F_x$ is cyclic for 
all $x \in X_0$, then  $L/F$ is cyclic (\cite[Theorem 7.1]{HH}).

Since ind$(\alpha \otimes L  \otimes F_{U_\eta}) < $ ind$(\alpha)$  for all codimension zero points of 
$X_0$ and ind$(\alpha \otimes L \otimes F_P) < $ ind$(\alpha)$ for all $P \in \PP$, 
by (\ref{splitting_alpha}), ind$(\alpha \otimes L) < $ ind$(\alpha)$. 
In particular $L$ is a field.   

For every codimension zero point $\eta$ of $X_0$,
let $\theta_{U_\eta} = (\phi_{U_\eta})^{-1}(\mu_{U_\eta}) \in L \otimes F_{U_\eta}$
and for every $P \in \PP$, let $\theta_P = (\phi_P)^{-1}(\mu_P) \in L \otimes F_P$.
Since $\phi_{U_\eta}$ and $\phi_P$ are isomorphisms,  we have the required properties.
  \end{proof}
 
\begin{prop}
\label{patching_L}
 Suppose that   for each    point $x $ of $ X_0$  there exist a cyclic or split extension   
$L_x/F_x$ of degree $\ell$ and  $\mu_x  \in L_x $  such that \\
1)  $N_{L_x / F_x }(\mu_x ) = \lambda$ \\
2) $ \alpha \cdot (\mu_x )= 0 \in H^3(L _ x , \mu_x ^{\otimes 2})$ \\
3) ind$(\alpha \otimes L_x) < $ ind$(\alpha)$\\
4)   for any branch $(P, \eta)$  there is an isomorphism $\phi_{P, \eta} :  L_\eta \otimes
F_{P, \eta} \to L_P \otimes F_{P, \eta}$ such that 
for generator  $\sigma$ of Gal$(L_P \otimes F_{P, \eta}/F_{P,\eta})$ 
there exists $\theta_{P, \eta} \in L_P\otimes F_{P, \eta}$
such that $\phi_{P, \eta}(\mu_\eta)\mu_P^{-1} = \theta_{P,\eta}^{-\ell^d} \sigma(\theta_{P,\eta})^{\ell^d}$.  \\
Then there exist a cyclic   extension $L$ of degree $\ell$ and  
$\mu \in L^*$ such that \\
$\bullet$ $N_{L/F}(\mu) = \lambda$ and \\
$\bullet$ $  \alpha \cdot (\mu) =  0 \in H^3(L, \mu_n^{\otimes 2})$ \\
$\bullet$ ind$(\alpha \otimes L) < $ ind$(\alpha)$.   
\end{prop}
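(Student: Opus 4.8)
The plan is to feed the hypotheses straight into Lemma~\ref{lemma}, and then to repair the residual branch mismatch by a Hilbert~$90$ correction that is invisible both to the reduced norm and to cup product with $\alpha$.

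First I would apply Lemma~\ref{lemma}. Since every $L_x/F_x$ is cyclic or split, it produces a cyclic extension $L/F$ of degree $\ell$ --- in fact a field, because $\mathrm{ind}(\alpha\otimes L)<\mathrm{ind}(\alpha)$ and $\alpha\neq 0$ --- a non-empty open $U_\eta\subseteq\eta$ for each codimension zero point $\eta$ of $X_0$, elements $\theta_{U_\eta}\in (L\otimes F_{U_\eta})^*$ and $\theta_P\in (L\otimes F_P)^*$ for $P\in\PP=X_0\setminus\bigcup_\eta U_\eta$, with $N(\theta_{U_\eta})=N(\theta_P)=\lambda$, with $\alpha\cdot(\theta_{U_\eta})=\alpha\cdot(\theta_P)=0$, with $\mathrm{ind}(\alpha\otimes L)<\mathrm{ind}(\alpha)$, and with the branch compatibility: for every branch $(P,\eta)$ there is a generator of $\mathrm{Gal}(L\otimes F_{P,\eta}/F_{P,\eta})$ and an element $\gamma_{P,\eta}\in (L\otimes F_P)^*$ with $\theta_{U_\eta}\theta_P^{-1}=\gamma_{P,\eta}^{-\ell^{d}}\sigma(\gamma_{P,\eta})^{\ell^{d}}$ in $(L\otimes F_{P,\eta})^*$. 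Only the index bound among the three desired conclusions is now in hand; the remaining task is to replace the $\theta$'s by local elements that agree exactly on branches. Fix once and for all a generator $\sigma$ of $\mathrm{Gal}(L/F)$, chosen so that it induces the generator of Lemma~\ref{lemma}(4) at every branch; then $\sigma$ acts compatibly on $L\otimes F_{U_\eta}$, on $L\otimes F_P$ and on $L\otimes F_{P,\eta}$.

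The next step is to factor the branch data. The $F_P$-, $F_{U_\eta}$- and $F_{P,\eta}$-points of the Weil restriction $\mathbf{R}_{L/F}\Gm$ are the unit groups $(L\otimes F_P)^*$, $(L\otimes F_{U_\eta})^*$ and $(L\otimes F_{P,\eta})^*$; this group is connected and rational (an open subvariety of $\mathbf{R}_{L/F}\mathbb{A}^1\cong\mathbb{A}^\ell_F$), so the simultaneous factorization of Harbater--Hartmann--Krashen (\cite{HH}, \cite{HHK1}) applies to $\{\gamma_{P,\eta}\}$ and yields $h_{U_\eta}\in (L\otimes F_{U_\eta})^*$ for each $\eta$ and $h_P\in (L\otimes F_P)^*$ for each $P\in\PP$ with $\gamma_{P,\eta}=h_{U_\eta}^{-1}h_P$ in $(L\otimes F_{P,\eta})^*$ on every branch. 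Put $c_{U_\eta}=h_{U_\eta}^{-\ell^{d}}\sigma(h_{U_\eta}^{\ell^{d}})$ and $c_P=h_P^{-\ell^{d}}\sigma(h_P^{\ell^{d}})$, elements of $(L\otimes F_{U_\eta})^*$ and $(L\otimes F_P)^*$ respectively. A direct computation in the commutative algebra $L\otimes F_{P,\eta}$ gives $\gamma_{P,\eta}^{-\ell^{d}}\sigma(\gamma_{P,\eta}^{\ell^{d}})=c_{U_\eta}^{-1}c_P$, hence $\theta_{U_\eta}c_{U_\eta}=\theta_P c_P$ on that branch. Set $\widetilde\mu_{U_\eta}=\theta_{U_\eta}c_{U_\eta}$ and $\widetilde\mu_P=\theta_P c_P$. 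Because the norm is $\sigma$-invariant, $N(c_{U_\eta})=N(c_P)=1$, so $N(\widetilde\mu_{U_\eta})=N(\widetilde\mu_P)=\lambda$; and because $c_{U_\eta}=(h_{U_\eta}^{-1}\sigma(h_{U_\eta}))^{\ell^{d}}$ and $c_P=(h_P^{-1}\sigma(h_P))^{\ell^{d}}$ are $\ell^{d}$-th powers in the respective algebras, their classes in $H^1(-,\mu_n)$ vanish, whence $\alpha\cdot(\widetilde\mu_{U_\eta})=\alpha\cdot(\theta_{U_\eta})=0$ and $\alpha\cdot(\widetilde\mu_P)=\alpha\cdot(\theta_P)=0$.

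Finally I would patch. The elements $\{\widetilde\mu_{U_\eta}\}$ and $\{\widetilde\mu_P\}$ agree on every branch, so by the fibre-product description of $L$ over the patches $L\otimes F_{U_\eta}$, $L\otimes F_P$ and $L\otimes F_{P,\eta}$ (\cite{HH}) they glue to a single $\mu\in L^*$; compatibility of this fibre product with the norm map to $F$ then forces $N_{L/F}(\mu)=\lambda$. The class $\alpha\cdot(\mu)\in H^3(L,\mu_n^{\otimes 2})$ becomes trivial over each $L\otimes F_{U_\eta}$ and each $L\otimes F_P$, so the local--global principle for $H^3$ over function fields of curves over a complete discretely valued field (\cite{HHK3}, \cite{HHK4}; argued as in \ref{splitting_alpha} by passing to the integral closure of $\XX$ in $L$) gives $\alpha\cdot(\mu)=0$. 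Together with $\mathrm{ind}(\alpha\otimes L)<\mathrm{ind}(\alpha)$ from Lemma~\ref{lemma} and the fact that $L/F$ is cyclic, this finishes the proof. The genuinely delicate point is the factorization step: one must verify that $\mathbf{R}_{L/F}\Gm$ meets the Harbater--Hartmann--Krashen hypotheses and that the branch combinatorics at the crossing points in $\PP$ are handled simultaneously --- everything else is bookkeeping or the displayed identity for the operator $b\mapsto b^{-1}\sigma(b)$.
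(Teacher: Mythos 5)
Your proposal is correct and follows essentially the same route as the paper: apply Lemma~\ref{lemma}, factor the branch elements $\gamma_{P,\eta}$ using the Harbater--Hartmann--Krashen simultaneous factorization for the rational connected group $\mathbf{R}_{L/F}\Gm$ (the paper invokes \cite[Theorem 3.6]{HHK1} for $GL_1$ over $L$), twist $\theta_{U_\eta}$ and $\theta_P$ by the resulting $\ell^d$-th power coboundaries, glue by \cite[Proposition 6.3]{HH}, and conclude $\alpha\cdot(\mu)=0$ by the local--global principle of \cite{HHK4}. The only differences are cosmetic (writing the factorization as $h_{U_\eta}^{-1}h_P$ instead of $\gamma_{U_\eta}\gamma_P$), so nothing further is needed.
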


\begin{proof}    
Let $L/F$, $U_\eta$, $\PP$, $\theta_{U_\eta}$ and $\theta_P$ be as in (\ref{lemma}).
Since each $L_x/F_x$ is cyclic or split, $L/F$ is cyclic.
Let $\sigma$ be a generator of  Gal$(L/F)$.
Let $(P, \eta)$ be a branch. 
 By (\ref{lemma}), there exists 
  $\gamma_{(P, \eta) } \in L\otimes F_{P, \eta}$   
  such that
$\mu_{U_\eta}\mu_P^{-1} = \gamma_{P, \eta}^{-\ell^d}\sigma(\gamma_{P, \eta}^{\ell^d})$.
Applying  (\cite[Theorem 3.6]{HHK1}) for the rational group $GL_1$, 
 there exist $\gamma_{U_\eta} \in   L\otimes F_{U_\eta}$ and 
 $\gamma_P \in   L \otimes F_P$ for every codimension zero point $\eta$ of $X_0$ and
 $P \in \PP$ such that for every branch $(P, \eta)$, $\gamma_{P, \eta} = \gamma_{U_\eta} \gamma_P$.

  Let $\mu_{U_\eta}' = \mu_{U_\eta}  \gamma_{U_\eta}^{\ell^d} \sigma
 ( \gamma_{U_\eta}^{-\ell^d}) \in L \otimes F_{U_\eta}$ and $\mu_P' =
 \mu_P \gamma_P^{-\ell^d} \sigma( \gamma_P^{\ell^d}) \in L \otimes F_P$. If $(P, \eta)$ is a branch,
  then we have 
 $$
 \begin{array}{rcl}
\mu_{U_\eta}' &  =  &   \mu_{U_\eta}  \gamma_{U_\eta}^{\ell^d} \sigma
 ( \gamma_{U_\eta}^{-\ell^d})  \\
& = & \mu_P \theta_{P, \eta}^{-\ell^d}\sigma(\theta_{P, \eta}^{\ell^d}) \gamma_{U_\eta}^{\ell^d} \sigma
 ( \gamma_{U_\eta}^{-\ell^d})   \\
 & = & \mu_P  \gamma_P^{-\ell^d}\sigma( \gamma_P^{\ell^d} )   \\
& = &  \mu_P' \in L \otimes F_{P, \eta}.
\end{array}
$$ 
Hence,  by (\cite[Proposition 6.3]{HH}), there exists $\mu \in L$
such that $\mu = \mu_{U_\eta}'$ and $\mu = \mu'_P$ for every codimension zero point $\eta$ 
of $X_0$ and $P \in \PP$. 
Clearly  $N_{L/F}(\mu) = \lambda$ over $F$. 
Let $P \in \PP$. Since $ \alpha  \cdot (\mu_P) =  0 $ and 
$\alpha \cdot (\gamma_P^{\ell^d})  = 0$,  $ \alpha \cdot (\mu) = 
0 \in H^3(L \otimes F_P, \mu_n^{\otimes 2})$. Similarly  $\alpha \cdot (\mu)  =
0 \in H^3(L \otimes F_{U_\eta}, \mu_n^{\otimes 2})$ for every codimension zero point $\eta$ of $X_0$. 
Hence, by (\cite[Theorem 3.1.5]{HHK4}), $\alpha \cdot (\mu)   = 0$ in $H^3(L, \mu_n^{\otimes 2})$.  
\end{proof}

\begin{prop}
\label{patching_L_mu}
 Suppose that  for each   codimension zero point $\eta$ of   $ X_0$ 
there exist a field (not necessarily cyclic) or split  extension $L_\eta/F_\eta$ of degree $\ell$, 
 $\mu_\eta \in F_\eta$  and for every closed point 
$P$ of $X_0$ there exist a  cyclic or split extension 
$L_P/F_P$ of degree $\ell$ and  $\mu_P \in L_P $  such that for every
point $x$ of $X_0$  \\
1)  $N_{L_x / F_x }(\mu_x ) = \lambda$ \\
2) $ \alpha \cdot (\mu_x )= 0 \in H^3(L _ x , \mu_x ^{\otimes 2})$ \\
3) ind$(\alpha \otimes L_x) < $ ind$(\alpha)$\\
4) for any branch $(P, \eta)$  there is an isomorphism $\phi_{P, \eta} :  L_\eta \otimes
F_{P, \eta} \to L_P \otimes F_{P, \eta}$ such that 
for a generator  $\sigma$ of Gal$(L_P \otimes F_{P, \eta}/F_{P,\eta})$ 
there exists $\theta_{P, \eta} \in L_P\otimes F_{P, \eta}$
such that $\phi_{P, \eta}(\mu_\eta)\mu_P^{-1} = \theta_{P,\eta}^{-\ell^d} \sigma(\theta_{P,\eta})^{\ell^d}$. \\
Then there exist a field   extension $N/F$ of degree coprime to $\ell$, a field 
 extension $L/F$ of degree $\ell$ and   $\mu \in (L \otimes N)^*$ such that \\
$\bullet$ $N_{L \otimes N /N}(\mu) = \lambda$ and \\
$\bullet$ $  \alpha \cdot (\mu) =  0 \in H^3(L\otimes N, \mu_n^{\otimes 2})$ \\
$\bullet$ ind$(\alpha \otimes L) < $ ind$(\alpha)$.   
\end{prop}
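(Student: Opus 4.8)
The plan is to feed the given local data into Lemma~\ref{lemma}. This produces a field extension $L/F$ of degree $\ell$, a non-empty open set $U_\eta\subseteq\eta$ for each codimension zero point $\eta$ of $X_0$, the finite set $\PP=X_0\setminus\bigcup_\eta U_\eta$, and elements $\theta_{U_\eta}\in(L\otimes F_{U_\eta})^*$ and $\theta_P\in(L\otimes F_P)^*$ with $N_{L\otimes F_{U_\eta}/F_{U_\eta}}(\theta_{U_\eta})=N_{L\otimes F_P/F_P}(\theta_P)=\lambda$, with $\alpha\cdot(\theta_{U_\eta})=0$ and $\alpha\cdot(\theta_P)=0$, with $\mathrm{ind}(\alpha\otimes L)<\mathrm{ind}(\alpha)$, and with the branch compatibility: for each branch $(P,\eta)$ the algebra $L\otimes F_{P,\eta}$ is a cyclic field or split over $F_{P,\eta}$ and $\theta_{U_\eta}\theta_P^{-1}=\gamma_{P,\eta}^{-\ell^d}\,\sigma(\gamma_{P,\eta})^{\ell^d}$ for some generator $\sigma$ of $\mathrm{Gal}(L\otimes F_{P,\eta}/F_{P,\eta})$ and some $\gamma_{P,\eta}\in(L\otimes F_P)^*$. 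Note $L/F$ is separable (its degree $\ell$ is prime to $\mathrm{char}(F)$), that $L$ is a field since $\alpha\neq 0$ and $\mathrm{ind}(\alpha\otimes L)<\mathrm{ind}(\alpha)$, and that the assertion $\mathrm{ind}(\alpha\otimes L)<\mathrm{ind}(\alpha)$ in the proposition is already in hand.

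In Proposition~\ref{patching_L} one glues the $\theta_{U_\eta}$ and $\theta_P$ into a single element of $L$ using a global generator of $\mathrm{Gal}(L/F)$. Here $L/F$ need not be Galois, so the first task is to produce, at the cost of an extension of degree prime to $\ell$, a situation in which $L$ becomes cyclic. Let $\wt{L}$ be the Galois closure of $L/F$ and $G=\mathrm{Gal}(\wt{L}/F)$; it acts faithfully and transitively on the $\ell$ embeddings of $L$ into $\wt{L}$, so $G$ is a transitive subgroup of $S_\ell$. Since $\ell$ is prime, $\ell^2\nmid\ell!$, so a Sylow $\ell$-subgroup of $G$ has order $\ell$ and, being generated by an element of order $\ell$ in $S_\ell$, is generated by an $\ell$-cycle $\sigma\in G$ (Cauchy). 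Set $N=\wt{L}^{\langle\sigma\rangle}$. Then $[N:F]=[G:\langle\sigma\rangle]$ is prime to $\ell$; and because $\sigma$ acts with no fixed point on the $\ell$ embeddings of $L$, the subgroups $\mathrm{Gal}(\wt{L}/L)$ and $\langle\sigma\rangle$ intersect trivially, whence $L$ and $N$ are linearly disjoint over $F$ and $L\otimes_F N\cong\wt{L}$ is a cyclic field extension of $N$ of degree $\ell$ with $\sigma$ a canonical generator of its Galois group. Since $\mathrm{ind}(\alpha)$ is a power of $\ell$ and $[N:F]$ is prime to $\ell$, restriction to $N$ does not kill $\alpha$ and $\mathrm{ind}(\alpha\otimes N)=\mathrm{ind}(\alpha)$, while $\mathrm{ind}(\alpha\otimes L\otimes N)\leq\mathrm{ind}(\alpha\otimes L)<\mathrm{ind}(\alpha)$.

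Next I would base-change the patching situation to $N$: let $\XX'$ be the normalization of $\XX$ in $N$, a normal proper model of $N$ over a complete discrete valuation ring $R'$ finite over $R$, with reduced special fibre $X_0'$. Each point $x$ of $X_0$ (a codimension zero point, a closed point, or a branch) has finitely many points of $X_0'$ above it, and the local field $F_x$ (or $F_{U_\eta}$) gives, after $\otimes_F N$, the product of the local fields of $\XX'$ at those points; the elements $\theta_{U_\eta},\theta_P$ transport to elements over these local fields, keeping their norms $\lambda$ and the vanishing of their cup products with $\alpha$ (norms and cup products commute with base change), and the branch relations transport likewise. Over a branch $(P',\eta')$ of $\XX'$ the global generator $\sigma$ of $\mathrm{Gal}(L\otimes N/N)$ restricts either trivially (split case) or to a generator, hence to a power $\sigma^j$ of the local generator in Lemma~\ref{lemma}; replacing $\gamma_{P,\eta}$ by the partial product $\prod_{i=0}^{j-1}\sigma^i(\gamma_{P,\eta})$ rewrites the branch relation with the single global $\sigma$ (the $\ell^d$-th powers do not affect $\alpha\cdot(-)$ because $\ell^d\alpha=0$). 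With a global $\sigma$ available, the argument of Proposition~\ref{patching_L} runs over $N$ verbatim: apply \cite[Theorem 3.6]{HHK1} to $GL_1$ to factor the branch discrepancies, twist the $\theta$'s by the resulting $\ell^d$-th powers to obtain elements that agree on every branch, glue by \cite[Proposition 6.3]{HH} to a single $\mu\in(L\otimes N)^*$, and conclude $N_{L\otimes N/N}(\mu)=\lambda$ (twisting preserves norms) and $\alpha\cdot(\mu)=0\in H^3(L\otimes N,\mu_n^{\otimes 2})$ by \cite[Theorem 3.1.5]{HHK4}. Together with $\mathrm{ind}(\alpha\otimes L)<\mathrm{ind}(\alpha)$ this is the proposition.

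The main obstacle I anticipate is exactly this reduction to a cyclic extension: constructing $N$ so that $L\otimes N/N$ is cyclic while $[N:F]$ stays prime to $\ell$, and then verifying that base change to $N$ (with the model $\XX'$) genuinely lands in a Harbater--Hartmann--Krashen patching setup, so that \cite[Theorem 3.6]{HHK1}, \cite[Proposition 6.3]{HH}, and \cite[Theorem 3.1.5]{HHK4} still apply and the branch-wise local generators can be replaced, as above, by the one global generator of $\mathrm{Gal}(L\otimes N/N)$.
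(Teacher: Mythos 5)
Your proposal is correct and follows essentially the same route as the paper: after invoking Lemma~\ref{lemma}, the paper also passes to an extension $N/F$ of degree coprime to $\ell$ with $L\otimes N/N$ cyclic, base changes the patching data to the integral closure of $\XX$ in $N$, and applies Proposition~\ref{patching_L} over $N$, concluding with $L'=L\otimes N$. The only difference is that you spell out details the paper leaves implicit (the Galois-closure/Sylow construction of $N$ and the partial-product trick matching local generators with the global one), which are fine.
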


\begin{proof}  
Let $L/F$, $U_\eta$, $\PP$, $\theta_{U_\eta}$, $\theta_P$ and $\gamma_{P, \eta}$ be as in (\ref{lemma}).
 Since $L/F$ is a degree $\ell$ extension, there exists a field extension $N/F$ of degree
 coprime to $\ell$ such that $L \otimes N/N$ is a cyclic extension.

 Let $\YY$ be the integral closure of $\XX$ in $N$   and $Y_0$ the reduced special fibre of $\YY$. 
  Let $\phi : Y_0 \to X_0$ be the induced morphism.
 Let $y \in Y_0$ and $x = \phi(y) \in X_0$.  Then the inclusion $F \subset N$,  induces an 
 inclusion $F_x \subset N_y$. Let $L'_y = L  \otimes_F F_x \otimes_{F_x} N_y$.
 Since $L\otimes N/ N $ is  a cyclic   extension of degree $\ell$, 
  $L'_y/N_y$ is either cyclic or split extension of degree $\ell$.

 Let $\eta' \in Y_0$ be a codimension zero point. Then $\eta = \phi(\eta') \in X_0$ is a codimension zero 
 point.  Then $F_\eta \subset FL_{\eta'}$, $L \otimes F_{U_\eta}  \subset 
 L \otimes F_\eta$ and $\theta_{U_\eta} \in L \otimes F_{\eta}$.
 Let $\mu_{\eta'} =  \theta_{U_\eta} \otimes 1 \in L \otimes F_\eta \otimes_{F_\eta} N_{\eta'} = L'_{\eta'}$.
 
 Let $Q\in Y_0$ be a closed point and $P = \phi(Q) \in X_0$. Then $P$ is a closed point of $X_0$
 and $F_P \subset N_Q$.  
 Suppose that $P \in U_\eta$ for some codimension zero point $\eta$ of $X_0$.
 Then $F_{U_\eta} \subset F_P$, $L \otimes F_{U_\eta}  \subset 
 L \otimes F_P$ and $\theta_{U_\eta} \in L \otimes F_P$.
 Let $\mu_{Q} =  \theta_{U_\eta} \otimes 1 \in L \otimes F_P \otimes_{F_P} N_Q  = L'_Q$.
 Suppose that  $P$ is not in  $U_\eta$ for any codimension zero point $\eta$ of $X_0$.
 Let $\mu_Q = \theta_P \otimes 1 \in L \otimes F_P\otimes_{F_P} N_Q$.
 
Let $y \in Y_0$ and $x = \phi(x) \in X_0$.
 Since $N_{L_x/F_x}(\mu_x) = \lambda$, $\alpha \cdot (\mu_x) = 0 \in H^3(L_x, \mu_n^{\otimes 2})$ and
 ind$(\alpha \otimes F_x) < $ ind$(\alpha)$, it follows that  
 $N_{L'_y/N_y}(\mu_y) = \lambda$, $\alpha \cdot (\mu_y) = 0 \in H^3(L'_y, \mu_n^{\otimes 2})$ and
 ind$(\alpha \otimes L'_y) < $ ind$(\alpha)$.

  Let $(Q, \eta')$ be a branch in $Y_0$ and $P = \phi(Q)$, $\eta = \phi(\eta')$.
  Then $(P, \eta)$ is a branch in $X_0$. The isomorphism $\phi_{P, \eta} : L_\eta \otimes F_{P, \eta}
  \to L_P \otimes F_{P, \eta}$ induces an isomorphism $\phi'_{Q,\eta'} : L'_{\eta'} \otimes N_{Q, \eta'}
  \to L'_Q \otimes N_{Q, \eta'}$. By the choice of $\mu_{\eta'}$ and $\mu_Q$ it follows that 
  for any generator $\sigma$ of Gal$(L'_Q \otimes N_{Q, \eta'}/N_{Q,\eta'})$ there exists $\theta_{Q, \eta'}$
  such that 
$\phi'_{Q, \eta'}(\mu_{\eta'})\mu_Q^{-1} = \theta_{Q,\eta'}^{-\ell^d} \sigma(\theta_{Q,\eta'})^{\ell^d}$.  
  Thus, by (\ref{patching_L}), there exists a cyclic   extension $L'/N$ and $\mu' \in L'$ such that 
 $N_{L'/N}(\mu')=  \lambda$, ind$(\alpha \otimes L') < $ ind$(\alpha \otimes N)$  and $
\alpha \cdot (\mu') = 0\in H^3(L', \mu_n^{\otimes 2})$.
 By the construction we have $L' = L \otimes N$.
  \end{proof}

\section{Types of points, special points and type 2 connections }
\label{types_of_points}

Let $F$, $\alpha \in H^2(F, \mu_n)$, $\lambda \in F^*$ with $\alpha \cdot (\lambda) = 0 \in
 H^3(F, \mu_n^{\otimes 2})$, $\XX$ and $X_0$ be as in (\S \ref{patching}). 
Further assume that  \\
$\bullet$ $\XX$ is regular 
 such that ram$_\XX(\alpha) \cup$ supp$_\XX(\lambda) \cup X_0$ 
 is a union of regular curves with normal
crossings. \\
$\bullet$ the intersection of any two distinct irreducible curves in $X_0$ is at most
a one closed point. \\
We fix the following notation.\\
$\bullet$  $\PP$ is the set of points of 
intersection of distinct irreducible curves  in $X_0$.\\
$\bullet$ $\OO_{\XX, \PP}$ is the semi-local ring at the points of $\PP$ on $\XX$.\\  
$\bullet$  if    a  codimension zero points $\eta$ of $X_0$ containing a closed point $P \in \PP$, 
then $\pi_{\eta} \in \OO_{\XX, \PP}$ is a   prime defining $\eta$ on $\OO_{\XX, \PP}$. 

Let  $\eta$ be   a codimension zero point  of   $X_0$
For the rest of this paper,  let   $(E_{\eta}, \sigma_\eta)$ denote   the 
   lift of the residue  of $\alpha$ at $\eta$. Since $\alpha \in H^2(F, \mu_n)$ with $n$ a power of $\ell$, 
 $[E_\eta : F_\eta]$ is a power of $\ell$. 
 If  $\alpha$ is unramified at $\eta$, then $E_\eta = F_\eta$ and  let $M_\eta = F_\eta$.
 If $\alpha$ is ramified at $\eta$, then $E_\eta \neq F_\eta$ and  there is a unique subextension  
of $E_\eta$ of degree $\ell$ and we denote it by  $M_\eta$.

\begin{remark} 
\label{index_M} 
Let $\eta$ be a codimension zero point of $X_0$. Suppose $\alpha$ is ramified at $\eta$. Since 
ind$(\alpha \otimes F_\eta) = $ ind$(\alpha \otimes E_\eta)[E_\eta : F_\eta]$ (cf. \ref{dvr_ind})
and $M_\eta \subset E_\eta$, it follows that ind$(\alpha \otimes M_\eta) < $ ind$(\alpha)$.
\end{remark}

We divide the codimension zero points $\eta$ of $X_0$ as follows: 
\vskip 3mm

\noindent
{\bf Type 1}: $\nu_\eta(\lambda)$ is coprime to $\ell$ and ind$(\alpha \otimes F_\eta)  = $ ind$(\alpha)$ \\
{\bf Type 2}: $\nu_\eta(\lambda)$ is coprime to $\ell$ and ind$(\alpha \otimes F_\eta)  < $ ind$(\alpha)$ \\
{\bf Type 3}: $\nu_\eta(\lambda) = r\ell$,    
$r \alpha \otimes E_\eta \neq 0$     and  ind$(\alpha \otimes F_\eta)  = $ ind$(\alpha)$ \\
{\bf Type 4}: $\nu_\eta(\lambda) = r\ell$,  
  $r \alpha \otimes E_\eta \neq 0$  
  and ind$(\alpha \otimes F_\eta)  < $ ind$(\alpha)$  \\
{\bf Type 5}: $\nu_\eta(\lambda) = r\ell$,  
  $r \alpha \otimes E_\eta =  0$  
   and ind$(\alpha \otimes F_\eta) =  $ ind$(\alpha)$\\
{\bf Type 6}:  $\nu_\eta(\lambda) = r\ell$,  
  $r \alpha \otimes E_\eta =  0$     and ind$(\alpha \otimes F_\eta) < $ ind$(\alpha)$. \\

Let $P$ be a closed point of $\XX$. Suppose $P$ is the point of intersection of two distinct codimension 
zero points  $\eta_1$  and $\eta_2$  of $X_0$.  
We say that the point $P$ is a \\
 1)  {\bf special point of type I} if $\eta_1$ is of type 1 and $\eta_2$ is of type 2, \\
 2)  {\bf special point of type II} if $\eta_1$ is of type 1 and $\eta_2$ is of type 4, \\
 3)  {\bf special point of type III } if $\eta_1$ is of type 3 or  5 and $\eta_2$ is of type 4, \\
 4) {\bf special point of type IV}  if $\eta_1$ is of type 1, 3 or 5 and   $\eta_2$ is of type  5
 with $M_{\eta_2} \otimes F_{P, \eta_2}$  not a field. \\

\begin{lemma} 
\label{hot_point}
Suppose that $\eta_1$ and $\eta_2$ are two  distinct codimension zero points of $X_0$
 and   $P$  a  point of intersection  of $\eta_1$ and $\eta_2$.  Suppose that $\alpha$ is 
 ramified at $\eta_1$.  Let $(E_{\eta_1}, \sigma_1)$ be the lift of residue of 
 $\alpha$ at $\eta_1$.  If $E_{\eta_1} \otimes F_{P, \eta_1}$ is not a field, 
 then  ind$(\alpha \otimes  F_P) < $ ind$(\alpha)$. 
\end{lemma}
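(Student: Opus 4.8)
The plan is to localise at $P$, reduce to a statement over the branch field $F_{P,\eta_1}$, and then compare the \emph{residue lifts} of $\alpha$ over $F_{\eta_1}$ and over $F_{P,\eta_1}$, exploiting that passing from $F_{\eta_1}$ to $F_{P,\eta_1}$ strictly shrinks the degree of the residue lift precisely when $E_{\eta_1}\otimes F_{P,\eta_1}$ is not a field.

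First I would record the local picture at $P$. Since $\XX$ is regular and $\mathrm{ram}_{\XX}(\alpha)\cup\mathrm{supp}_{\XX}(\lambda)\cup X_0$ is a normal crossings union of regular curves, and $P$ lies on the two distinct components $\eta_1,\eta_2$ of $X_0$, these are the only curves of that union through $P$. Hence $\hat{A}_P$ is a complete regular local ring of dimension $2$ with maximal ideal $(\pi_{\eta_1},\pi_{\eta_2})$ and finite residue field $\kappa(P)$, and $\alpha\otimes F_P$ is unramified on $\hat{A}_P$ except possibly at the primes $(\pi_{\eta_1})$ and $(\pi_{\eta_2})$. Applying Proposition~\ref{local_index} to $A=\hat{A}_P$ gives $\mathrm{ind}(\alpha\otimes F_P)=\mathrm{ind}(\alpha\otimes F_{P,\eta_1})$, so it suffices to prove $\mathrm{ind}(\alpha\otimes F_{P,\eta_1})<\mathrm{ind}(\alpha)$.

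Next I would compare the residue lifts. Put $[E_{\eta_1}:F_{\eta_1}]=\ell^{e}$; since $\alpha$ is ramified at $\eta_1$ we have $e\ge 1$. The fields $F_{\eta_1}$ and $F_{P,\eta_1}$ are complete discretely valued fields sharing the uniformiser $\pi_{\eta_1}$, with the inclusion $F_{\eta_1}\subseteq F_{P,\eta_1}$ inducing $\kappa(\eta_1)\subseteq\kappa(\eta_1)_P$ on residue fields. Therefore the \'etale $F_{P,\eta_1}$-algebra $E_{\eta_1}\otimes_{F_{\eta_1}}F_{P,\eta_1}$ is unramified over $F_{P,\eta_1}$ and splits as $\tilde E^{\,t}$ for a cyclic field extension $\tilde E/F_{P,\eta_1}$ with $t\,[\tilde E:F_{P,\eta_1}]=\ell^{e}$ and $t$ a power of $\ell$; by the behaviour of cyclic algebras under scalar extension recalled in \S~\ref{Preliminaries} together with (\ref{rbc}) applied to $\alpha\otimes F_{P,\eta_1}$, this $\tilde E$ is the lift of the residue of $\alpha\otimes F_{P,\eta_1}$. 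The hypothesis that $E_{\eta_1}\otimes F_{P,\eta_1}$ is not a field means $t>1$, whence $[\tilde E:F_{P,\eta_1}]=\ell^{e}/t<\ell^{e}$. Moreover, projecting $E_{\eta_1}\otimes_{F_{\eta_1}}F_{P,\eta_1}\cong\tilde E^{\,t}$ onto a factor and precomposing with $E_{\eta_1}\hookrightarrow E_{\eta_1}\otimes_{F_{\eta_1}}F_{P,\eta_1}$ realises $\tilde E$ as a field extension of $E_{\eta_1}$, and then $\alpha\otimes\tilde E=(\alpha\otimes E_{\eta_1})\otimes_{E_{\eta_1}}\tilde E$, so that $\mathrm{ind}(\alpha\otimes\tilde E)\le\mathrm{ind}(\alpha\otimes E_{\eta_1})$.

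I would then conclude by two applications of (\ref{dvr_ind}) — over $F_{P,\eta_1}$ with residue lift $\tilde E$, and over $F_{\eta_1}$ with residue lift $E_{\eta_1}$:
$$
\mathrm{ind}(\alpha\otimes F_{P,\eta_1})=[\tilde E:F_{P,\eta_1}]\,\mathrm{ind}(\alpha\otimes\tilde E)\le\frac{\ell^{e}}{t}\,\mathrm{ind}(\alpha\otimes E_{\eta_1})<\ell^{e}\,\mathrm{ind}(\alpha\otimes E_{\eta_1})=\mathrm{ind}(\alpha\otimes F_{\eta_1})\le\mathrm{ind}(\alpha),
$$
the last inequality because the index does not increase under the field extension $F_{\eta_1}/F$. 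Combined with the first step this gives $\mathrm{ind}(\alpha\otimes F_P)<\mathrm{ind}(\alpha)$. The step I expect to require the most care is the identification in the comparison paragraph: checking that the residue lift of $\alpha\otimes F_{P,\eta_1}$ is a field factor $\tilde E$ of $E_{\eta_1}\otimes_{F_{\eta_1}}F_{P,\eta_1}$, that ``$E_{\eta_1}\otimes F_{P,\eta_1}$ not a field'' translates precisely into the strict degree drop $[\tilde E:F_{P,\eta_1}]<[E_{\eta_1}:F_{\eta_1}]$, and that $\tilde E$ sits over $E_{\eta_1}$ as a genuine field extension so that the two indices are comparable; once this is in place the remaining steps are routine bookkeeping with the index formula (\ref{dvr_ind}) and Proposition~\ref{local_index}, together with the observation (from the normal crossings hypothesis) that $\alpha\otimes F_P$ is unramified on $\hat{A}_P$ away from $\pi_{\eta_1}$ and $\pi_{\eta_2}$.
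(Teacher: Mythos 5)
Your argument is correct and is essentially the paper's own proof: write $\alpha\otimes F_{\eta_1}=\alpha_1+(E_{\eta_1},\sigma_1,\pi_{\eta_1})$ as in (\ref{rbc}), exploit via (\ref{dvr_ind}) the strict degree drop of the field factor of $E_{\eta_1}\otimes F_{P,\eta_1}$ to get $\mathrm{ind}(\alpha\otimes F_{P,\eta_1})<\mathrm{ind}(\alpha\otimes F_{\eta_1})\le\mathrm{ind}(\alpha)$, and pass from the branch field to $F_P$ by (\ref{local_index}) using the normal crossings hypothesis. The only immaterial difference is that the paper avoids your delicate identification of the residue lift over $F_{P,\eta_1}$ by using the one-sided bound $\mathrm{ind}(\alpha\otimes F_{P,\eta_1})\le\mathrm{ind}(\alpha_1\otimes E_{\eta_1,P})\,[E_{\eta_1,P}:F_{P,\eta_1}]$, which already yields the strict inequality.
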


\begin{proof}   Suppose that $E_{\eta_1} \otimes F_{P, \eta_1}$ is not a field.
Since $E_{\eta_1}/F_{\eta_1}$ is a cyclic extension, $E_{\eta_1} \otimes F_{P, \eta_1} 
\simeq \prod E_{\eta_1,P}$ with  $[E_{\eta_1, P} : F_{P, \eta_1}] < [E_{\eta_1} : F_{\eta_1}]$. 
We have $(E_{\eta_1}, \sigma_1, \pi_{\eta_1}) \otimes F_{P, \eta_1} = 
(E_{\eta_1, P}, \sigma_1, \pi_{\eta_1})$ (cf. \S 2).   

Write $\alpha \otimes F_{\eta_1} = \alpha_1 + (E_{\eta_1}, \sigma_1, \pi_{\eta_1})$ as in (\ref{rbc}).
Then $\alpha \otimes F_{P, \eta_1} = \alpha_1 \otimes F_{P, \eta_1} + (E_{\eta_1, P}, \sigma_1, \pi_{\eta_1})$.
By (\ref{dvr_ind}), we have ind$(\alpha \otimes F_{\eta_1}) = $ ind$(\alpha_1 \otimes E_{\eta_1}) [E_{\eta_1} : F_{\eta_1}]$. 
We have 
$$
\begin{array}{rcl}
{\rm ind}(\alpha \otimes F_{P, \eta_1})  & \leq &  {\rm ind}(\alpha_1 \otimes 
E_{\eta_1, P}) [E_{\eta_1, P} : F_{P, \eta_1}] \\
& \leq & {\rm ind}(\alpha_1 \otimes 
E_{\eta_1}) [E_{\eta_1, P} : F_{P, \eta_1}] \\
& < & {\rm ind}(\alpha_1 \otimes 
E_{\eta_1}) [E_{\eta_1} : F_{\eta_1}] \\
&  = & {\rm ind}(\alpha \otimes F_{\eta_1}).
\end{array}
$$ 
Thus, by (\ref{local_index}), ind$(\alpha \otimes F_P) < $ ind$(\alpha)$. 
\end{proof}

\begin{lemma}
\label{blowing_up}
Let $\eta \in X_0$ be a point of codimension zero and $P$ a closed point on $\eta$.
Let $\XX_P \to \XX$ be the blow-up at $P$ and $\gamma$  the exceptional curve in 
$\XX_P$. If  $E_\eta \otimes F_{P, \eta}$ not a field or 
$\eta$ is of type 2, 4 or 6,  then $\gamma$ is of type 2, 4 or 6.
\end{lemma}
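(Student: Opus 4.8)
The plan is to reduce the whole assertion to the single inequality $\mathrm{ind}(\alpha\otimes F_\gamma)<\mathrm{ind}(\alpha)$. Indeed $\XX_P$ is again regular, and $\mathrm{ram}_{\XX_P}(\alpha)$, $\supp_{\XX_P}(\lambda)$ and the reduced special fibre of $\XX_P$ still form a regular normal crossings divisor (standard behaviour of such divisors under blowing up a point lying on them), so $\gamma$ has a well-defined type; and among codimension zero points of the special fibre, those of types $2$, $4$, $6$ are precisely the ones $\eta'$ with $\mathrm{ind}(\alpha\otimes F_{\eta'})<\mathrm{ind}(\alpha)$ — for a prime $\ell$ every valuation of $\lambda$ is either prime to $\ell$ or divisible by $\ell$, and the three cases in which $\mathrm{ind}(\alpha\otimes F_{\eta'})<\mathrm{ind}(\alpha)$ are exactly types $2$, $4$ and $6$. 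So it suffices to prove $\mathrm{ind}(\alpha\otimes F_\gamma)<\mathrm{ind}(\alpha)$.

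First I would show $\mathrm{ind}(\alpha\otimes F_P)<\mathrm{ind}(\alpha)$, where $F_P=\Frac(\hat\OO_{\XX,P})$. Put $A=\hat\OO_{\XX,P}$; since $P$ lies on the regular normal crossings divisor above, one may choose a regular parameter $\delta$ of $A$ with $\mathfrak m_A=(\pi_\eta,\delta)$ and $\alpha$ unramified on $A$ outside $(\pi_\eta)$ and $(\delta)$ (take $\delta$ to cut out the second branch through $P$ if there is one). By (\ref{local_index}), $\mathrm{ind}(\alpha\otimes F_P)=\mathrm{ind}(\alpha\otimes F_{P,\eta})$, so it is enough to bound the latter. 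If $\eta$ is of type $2$, $4$ or $6$, then $\mathrm{ind}(\alpha\otimes F_\eta)<\mathrm{ind}(\alpha)$, and since $F_\eta\subseteq F_{P,\eta}$ the index only drops. If instead $E_\eta\otimes F_{P,\eta}$ is not a field, then $\alpha$ is ramified at $\eta$ (otherwise $E_\eta=F_\eta$ and $E_\eta\otimes F_{P,\eta}=F_{P,\eta}$ would be a field); writing $\alpha\otimes F_\eta=\alpha_1+(E_\eta,\sigma_\eta,\pi_\eta)$ as in (\ref{rbc}) and base changing to $F_{P,\eta}$, the computation in the proof of (\ref{hot_point}) (via (\ref{dvr_ind})) gives $\mathrm{ind}(\alpha\otimes F_{P,\eta})<\mathrm{ind}(\alpha\otimes F_\eta)\le\mathrm{ind}(\alpha)$. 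In either case $\mathrm{ind}(\alpha\otimes F_P)<\mathrm{ind}(\alpha)$.

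Next I would show $F_P\subseteq F_\gamma$. The generic point of the exceptional curve $\gamma$ of $\XX_P\to\XX$ maps to $P$, so $\OO_{\XX,P}\subseteq\OO_{\nu_\gamma}\subseteq\OO_{F_\gamma}$. Since $\pi_\eta$ is a regular parameter of $A$ it satisfies $\nu_\gamma(\pi_\eta)=1$, hence $\pi_\eta$ is a uniformizer of $\OO_{F_\gamma}$; together with $\nu_\gamma(\delta)\ge1$ this gives $\mathfrak m_A\OO_{F_\gamma}=\mathfrak m_{F_\gamma}$. Thus $\OO_{F_\gamma}$, being $\mathfrak m_{F_\gamma}$-adically complete, is $\mathfrak m_A$-adically complete, so the inclusion $\OO_{\XX,P}\hookrightarrow\OO_{F_\gamma}$ extends to $A=\hat\OO_{\XX,P}\hookrightarrow\OO_{F_\gamma}$, and therefore $F_P=\Frac(A)\hookrightarrow F_\gamma$. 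Then $\mathrm{ind}(\alpha\otimes F_\gamma)$ divides $\mathrm{ind}(\alpha\otimes F_P)<\mathrm{ind}(\alpha)$, so $\gamma$ is of type $2$, $4$ or $6$.

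The only step requiring genuine care is the identification $F_P\subseteq F_\gamma$ — the geometric input that the completion of $F$ along the exceptional curve contains the two-dimensional complete local ring at the blown-up point. The rest is a routine chase through (\ref{local_index}), (\ref{rbc}), (\ref{dvr_ind}), the inert/split dichotomy for $E_\eta$ over $F_{P,\eta}$, the monotonicity of index under field extensions, and the bookkeeping that blowing up preserves the standing hypotheses and hence the type classification.
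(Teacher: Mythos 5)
Your proposal is correct and takes essentially the same route as the paper's proof: show $\mathrm{ind}(\alpha\otimes F_P)<\mathrm{ind}(\alpha)$ — via (\ref{local_index}) when $\eta$ is of type 2, 4 or 6, and via the computation of (\ref{hot_point}) when $E_\eta\otimes F_{P,\eta}$ is not a field — and then use $F_P\subseteq F_\gamma$ together with the observation that types 2, 4, 6 are exactly the components where the index drops. The paper simply cites (\ref{hot_point}) and asserts $F_P\subset F_\gamma$ without comment, so your added justifications (that $\alpha$ must be ramified at $\eta$ in the second case, and the completion argument for $F_P\subseteq F_\gamma$) are harmless elaborations of the same argument.
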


\begin{proof}  If  $E_\eta \otimes  F_{P, \eta}$ is not a field, then  by (\ref{hot_point}), 
ind$(\alpha \otimes F_P) < $ ind$(\alpha)$. If $\eta$ if of type 2, 4 or 6, then ind$(\alpha \otimes F_\eta)
< $ ind$(\alpha)$ and hence by (\ref{local_index}), ind$(\alpha \otimes F_P) < $ ind$(\alpha)$.
Since $F_P \subset F_\gamma$, we have ind$(\alpha \otimes F_\gamma) \leq $ ind$(\alpha \otimes F_P) 
< $ ind$(\alpha)$. Hence $\gamma$ is of type 2, 4 or 6.
\end{proof}

\begin{lemma}
\label{blowing_up_at_122}  
Let $\eta_1$  and $\eta_2$  be two distinct   codimension zero points of $X_0$  intersecting at 
  a closed point   $P$.   Suppose that $\eta_1$ is of type 1 or 2 and $\eta_2$ is of type 2.
Then there exists a sequence of blow-ups $\psi : \XX' \to \XX$ such that if 
$\tilde{\eta}_i$   are the strict transforms of $\eta_i$, then \\
1)  $\psi : \XX' \setminus \psi^{-1}(P) \to \XX \setminus \{ P \}$ is an isomorphism \\
2) $\psi^{-1}(P)$ is the union of irreducible regular  curves $ \gamma_1, \cdots ,\gamma_m$ \\
3) $\tilde{\eta}_1 \cap \gamma_1 = \{ P_0 \}$, $\gamma_i \cap \gamma_{i+1}   = \{ P_i \}$, $\gamma_{m} 
\cap \tilde{\eta}_2 = \{ P_m \}$,
$\tilde{\eta} \cap \gamma_i = \emptyset$ for all $i > 1$, $\tilde{\eta}_2\cap \gamma_i = 
\emptyset$ for all $i< m$,
$\tilde{\eta}_1 \cap \tilde{\eta}_2 = \emptyset$, $\gamma_i \cap \gamma_j = \emptyset$ for all $ j \neq  i +1$,  \\
 4)   $\gamma_1$ and $\gamma_m$ are of type 6 and $\gamma_i$,
$ 1 < i < m$ are of type 2,  4 or 6,    \\
5) $\psi^{-1}(P)$ has no special points.  
 \end{lemma}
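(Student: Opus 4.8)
The plan is to prove Lemma \ref{blowing_up_at_122} by an explicit finite sequence of blow-ups at a point of type $(1,2)$ or $(2,2)$ intersection, controlling the types of the exceptional curves that appear. I would set up coordinates: choose a regular system of parameters $\pi_1, \pi_2 \in \OO_{\XX, P}$ with $\eta_1 = (\pi_1)$ and $\eta_2 = (\pi_2)$ near $P$. A single blow-up at $P$ produces one exceptional curve $\gamma$ meeting both strict transforms $\tilde\eta_1$ and $\tilde\eta_2$ at distinct points; by (\ref{local_index}) applied at $P$ (since $\eta_2$ is of type 2, $\mathrm{ind}(\alpha \otimes F_P) < \mathrm{ind}(\alpha)$, hence $\mathrm{ind}(\alpha \otimes F_\gamma) < \mathrm{ind}(\alpha)$), $\gamma$ has small index, and one checks $\nu_\gamma(\lambda) = \nu_{\eta_1}(\lambda) + \nu_{\eta_2}(\lambda)$, which determines whether $\gamma$ falls into type 2, 4 or 6. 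But after one blow-up the point $\gamma \cap \tilde\eta_1$ may still be a type $(1, \cdot)$ crossing with $\gamma$ not of type 6, which is not yet what we want; so the construction must be iterated and the exceptional curves adjacent to the original components must be arranged to be of type 6.

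The key steps in order: (i) First show that after a single blow-up at $P$, the exceptional curve $\gamma$ is of type 2, 4 or 6 — this is immediate from (\ref{local_index}) as above, since type 2 of $\eta_2$ forces $\mathrm{ind}(\alpha\otimes F_P)<\mathrm{ind}(\alpha)$. (ii) Next, analyze the two new intersection points $Q_1 = \gamma\cap\tilde\eta_1$ and $Q_2 = \gamma\cap\tilde\eta_2$: at $Q_2$, both $\gamma$ and $\tilde\eta_2$ have small index so it is a harmless crossing; at $Q_1$, $\gamma$ has small index so again $\mathrm{ind}(\alpha\otimes F_{Q_1})<\mathrm{ind}(\alpha)$, forcing a further blow-up at $Q_1$ to produce a new exceptional curve $\gamma'$ which, by the same index argument, is of type 2, 4 or 6. (iii) To get the \emph{endpoints} $\gamma_1$ and $\gamma_m$ to be precisely of type 6, I would observe that near $\tilde\eta_1$ (of type 1 or 2) the exceptional curve created by blowing up $\gamma\cap\tilde\eta_1$ has $\nu(\lambda)$ equal to the sum of the two valuations of $\lambda$, which one can show is divisible by $\ell$ after enough blow-ups so that $\nu_{\gamma_1}(\lambda) = r\ell$; combined with small index this places it in type 6 (using that type 4 and 6 are distinguished by the condition $r\alpha\otimes E_{\gamma_1}$, which vanishes because $\alpha\otimes F_{\gamma_1}$ already has small index and in fact $\alpha$ restricted there is split by a totally ramified extension by the explicit local form of $\alpha$, invoking (\ref{not_type_6}) — namely $\eta$ of small index and $\nu(\lambda)$ divisible by $\ell$ with the residue condition gives type 6). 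The combinatorics of the chain — (3) — is the standard picture of iterated blow-ups of a node: the exceptional curves form a chain $\gamma_1, \dots, \gamma_m$ with the stated incidence pattern, and (1) holds since each blow-up is centered over $P$. (iv) Finally, for (5), I would check that none of the new nodal points $\gamma_i \cap \gamma_{i+1}$, $\tilde\eta_1\cap\gamma_1$, $\gamma_m\cap\tilde\eta_2$ is a special point of type I–IV: the crossings involving two curves of type 2/4/6 are automatically not special (all special types require one component of type 1, 3 or 5 adjacent to one of type 2, 4 or 5 of a specific flavour), and the crossings $\tilde\eta_1\cap\gamma_1$ and $\gamma_m\cap\tilde\eta_2$ are of the form (type 1 or 2)$\cap$(type 6) and (type 2)$\cap$(type 6), neither of which is on the special list; one must also verify there is no type IV issue, i.e. that $M_{\gamma_1}\otimes F_{P_0,\gamma_1}$ is a field or that $\gamma_1$ is not of type 5 — but $\gamma_1$ is type 6 by construction, so type IV (which requires $\eta_2$ of type 5) does not arise.

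The main obstacle I expect is step (iii): pinning down precisely \emph{how many} blow-ups are needed so that the exceptional curves closest to $\tilde\eta_1$ and $\tilde\eta_2$ land in type 6 rather than merely type 2 or 4, and verifying that the "type 6" classification genuinely holds rather than just "small index." This requires writing $\alpha$ in the explicit normal form from (\ref{local-cyclic}) / (\ref{rbc}) on the relevant complete local ring, tracking $\nu_{\gamma_i}(\lambda) = a_i\,\nu_{\eta_1}(\lambda) + b_i\,\nu_{\eta_2}(\lambda)$ for the blow-up coefficients $a_i, b_i$, and checking the residue condition $r\alpha \otimes E_{\gamma_i} = 0$ using that the curve has small index and the lift of the residue is controlled. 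One has to be careful that "type 6" is the condition $\nu_\eta(\lambda)=r\ell$ \emph{and} $r\alpha\otimes E_\eta = 0$ \emph{and} $\mathrm{ind}(\alpha\otimes F_\eta) < \mathrm{ind}(\alpha)$; the last two are easy once the index drops, but the first requires enough blow-ups so that the accumulated valuation of $\lambda$ along $\gamma_1$ is a multiple of $\ell$. The chain structure and the blow-up bookkeeping (2)–(3) are routine once the coordinate setup is fixed; the real content is the local analysis guaranteeing (4) with the endpoint curves in type 6 and (5) following formally from the definition of special point.
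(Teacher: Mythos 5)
Your skeleton (iterate blow-ups over $P$, use the index drop coming from $\eta_2$ being of type 2 so that every exceptional curve is of type 2, 4 or 6, track $\nu(\lambda)$ additively under blow-up, and check the nodes against the list of special points) is the same as the paper's, and steps (i), (ii) and (iv) are fine. The genuine gap is in step (iii), which is exactly the real content of the lemma: to put the endpoint curves $\gamma_1,\gamma_m$ in type 6 you must verify the residue condition $r\alpha\otimes E_{\gamma_1}=0$, where $\nu_{\gamma_1}(\lambda)=r\ell$, and neither of your two justifications works. Small index of $\alpha\otimes F_{\gamma_1}$ says nothing about $r\alpha\otimes E_{\gamma_1}$ --- that condition is precisely what separates type 6 from type 4, both of which have small index. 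And the appeal to (\ref{not_type_6}) presupposes that $\alpha\otimes F_{\gamma_1}$ is split by a totally ramified extension; that was available in the type III analysis of (\ref{choice_of_model_at_closed_points}) because there the cyclic form $(E_P,\sigma,u\pi_{\eta_1}^{d_1}\pi_{\eta_2}^{d_2})$ had $\nu_\gamma(u\pi_{\eta_1}^{d_1}\pi_{\eta_2}^{d_2})$ coprime to $\ell$, but in the present situation ($\eta_1$ of type 1 or 2, $\eta_2$ of type 2) no such splitting is given, so arranging $\nu_{\gamma_1}(\lambda)$ merely divisible by $\ell$ leaves $\gamma_1$ possibly of type 4, and then $P_0=\tilde\eta_1\cap\gamma_1$ can be a special point (of type II when $\eta_1$ is of type 1), and the lemma's conclusion (4) fails.

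The paper closes this gap with a purely numerical trick you are missing: since $s_1=\nu_{\eta_1}(\lambda)$ and $s_2=\nu_{\eta_2}(\lambda)$ are both coprime to $\ell$ (types 1 and 2), one can keep blowing up the node on the $\eta_1$-side until the exceptional curve adjacent to $\tilde\eta_1$ has $\nu(\lambda)=is_1+s_2$ divisible by $\ell^{d+1}$, where $\ell^d=\mathrm{ind}(\alpha)$, and similarly $s_1+js_2\equiv 0 \pmod{\ell^{d+1}}$ on the $\eta_2$-side. Writing $is_1+s_2=r\ell$ then forces $r\equiv 0\pmod{\ell^{d}}$, so $r\alpha=0$ outright (as $\ell^d\alpha=0$), and $r\alpha\otimes E_{\gamma_1}=0$ needs no local analysis at all; combined with the automatic index drop over $P$ this puts $\gamma_1$ (and $\gamma_m$) in type 6. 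So the fix is not a finer local normal-form computation, as you anticipate in your "main obstacle" paragraph, but simply demanding divisibility by $\ell\cdot\mathrm{ind}(\alpha)$ rather than by $\ell$, which is achievable precisely because $s_1,s_2$ are units mod $\ell$.
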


\begin{proof} Let $\XX_P \to \XX$ be the blow-up of $\XX$ at $P$ and $\gamma$ the 
 exceptional curve in $\XX_P$.  
 Let $\tilde{\eta}_i$ be the   strict transform of  $\eta_i$.
 Then $\tilde{\eta}_1$ intersects $ \gamma$  only at one point $P_0$ and $\tilde{\eta}_2$ intersects $\gamma$ 
  at only one point $P_1$.  Since $\eta_2$ is of type 2, by (\ref{blowing_up}), 
  $\gamma$ is of type 2, 4 or 6 and hence $P_1$ is not a special point.
   
Let  $s_1 = \nu_{\eta_1}(\lambda)$,
 $s_2 = \nu_{\eta_2}(\lambda)$. Then   $\nu_{\gamma}(\lambda) = s_1 + s_2$. 
Suppose $s_1+ s_2 = \ell^{d+1}r_0$  for some integer $r_0$, where $\ell^d = $ ind$(\alpha)$. 
Since $\ell^d \alpha = 0$,   $\ell^{d}r_0 \alpha  = 0$. Thus, $\gamma$ is of type 6. Hence $P_0$ is not a special 
point and $\XX_P$ has all  the required properties.

Suppose $s_1 + s_2  =    \ell^t r_0$ with   $ t \leq d $ and $r_0$ coprime to $\ell$. 
Then, blow-up the points $P_0$ and $P_1$ and let $\gamma_1$ and $\gamma_2$ be the 
exceptional curves in this blow-up. Then we have $\eta_{\gamma_1}(\lambda) = 2r + s$
and $\eta_{\gamma_2}(\lambda) = r + 2s$.  If     $2r + s$ is  
not of the form $\ell^{d+1}r_1$  for some $r_1 \geq 1$, then blow-up,  the point of intersection of 
the strict transform of  $\eta_1$ and $\gamma_1$. If  
$r + 2s$ is not of the form 
$\ell^{d+1}r_2$ for some  $r_ 2 \geq 1$,  then blow-up,  the point of intersection of 
the strict transform of   $\eta_2$ and $\gamma_2$.  
Since $r$ and $s$ are coprime to $\ell$, there exist $i$ and $j$ such that 
$ir  + s = \ell^{d+1}r$ and $r + js = \ell^{d+1}r'$ for some $r, r' \geq 1$. Thus,
  we get the required finite sequence of blow-ups.   
\end{proof}

\begin{prop} 
\label{choice_of_model_at_closed_points}
There exists a regular proper model  of $F$ with no special points. 
\end{prop}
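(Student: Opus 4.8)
The plan is to remove special points one at a time by a suitable sequence of blow-ups, controlling that no new special points are created by the process. There are four types of special points (I, II, III, IV), and the key observation is that a special point is always a point of intersection of two components whose types are constrained: one of the two components is ``good'' (type 1, 3, or 5, i.e. index-preserving) and the other is either type 2 or type 4, or is type 5 with $M_{\eta_2}\otimes F_{P,\eta_2}$ not a field. First I would recall that blowing up $\XX$ at a closed point $P$ lying on a codimension zero point $\eta$ creates an exceptional curve $\gamma$ with $F_P\subset F_\gamma$, so $\mathrm{ind}(\alpha\otimes F_\gamma)\le\mathrm{ind}(\alpha\otimes F_P)$; together with (\ref{local_index}), (\ref{hot_point}) and (\ref{blowing_up}) this shows that whenever $E_\eta\otimes F_{P,\eta}$ fails to be a field, or $\eta$ is of type 2, 4 or 6, the exceptional curve is automatically of type 2, 4 or 6, hence cannot be the ``good'' half of a special point.

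Next I would treat the special points by cases according to which pair of types occurs, running blow-ups at the point $P$ and then iteratively at the points where the strict transforms of the original components meet the new exceptional curves, exactly in the style of (\ref{blowing_up_at_122}). For a special point of type I (a type 1 component $\eta_1$ meeting a type 2 component $\eta_2$), (\ref{blowing_up_at_122}) already produces a chain $\gamma_1,\dots,\gamma_m$ of exceptional curves with $\gamma_1,\gamma_m$ of type 6 and the intermediate ones of type 2, 4 or 6, so that the resulting locus over $P$ has no special points. For type II (a type 1 component meeting a type 4 component), type III (a type 3 or 5 component meeting a type 4 component), and type IV (a type 1, 3 or 5 component meeting a type 5 component with $M_{\eta_2}\otimes F_{P,\eta_2}$ not a field), the strategy is the same: blow up and use the arithmetic of $\nu_\gamma(\lambda)=\nu_{\eta_1}(\lambda)+\nu_{\eta_2}(\lambda)$ together with $\ell^d\alpha=0$ to force the exceptional curves to fall in types $\{2,4,6\}$, and use (\ref{hot_point}) for type IV to see that once $M_{\eta_2}\otimes F_{P,\eta_2}$ splits the index already drops at $P$ so the exceptional curve is of type 2, 4 or 6. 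In each case one verifies directly from the definitions that a point where a type-$\{2,4,6\}$ curve meets either the strict transform of a component or another type-$\{2,4,6\}$ curve is never a special point, because a special point requires one of the two branches to have index equal to $\mathrm{ind}(\alpha)$ and $M$ behaving appropriately.

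The main obstacle, and the step requiring genuine bookkeeping, is to guarantee \emph{termination}: after finitely many blow-ups the total number of special points strictly decreases, and the process does not introduce new special points elsewhere on $\XX$. Since blow-ups are local, resolving a special point $P$ only affects $\psi^{-1}(P)$ and the strict transforms of the two components through $P$; away from $P$ the model is unchanged, so one only has to check that the new intersection points introduced on $\psi^{-1}(P)$ are not special. This is handled, as above, by the type-$\{2,4,6\}$ dichotomy for exceptional curves, so the count of special points is strictly reduced. Iterating over the finitely many special points of $\XX$ (there are finitely many because $\PP$ is finite), and noting that the required normal crossing and regularity hypotheses are preserved under blow-ups at closed points, we arrive after finitely many steps at a regular proper model of $F$ with no special points. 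Throughout, one uses that each exceptional curve, being one-dimensional over a finite field, has function field a global field, so the type classification and the index computations of \S 5 apply verbatim.
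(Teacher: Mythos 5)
Your overall strategy (resolve special points by blow-ups, keep track of the types of exceptional curves, iterate) is the paper's strategy, but the key claim on which your termination argument rests is false, and the genuinely hard case is missing. You assert that ``a point where a type-$\{2,4,6\}$ curve meets either the strict transform of a component or another type-$\{2,4,6\}$ curve is never a special point.'' By definition, special points of types I, II and III are exactly intersections of a type 1, 3 or 5 curve with a type 2 or type 4 curve; so an exceptional curve of type 2 or 4 meeting the strict transform of the surviving type 1, 3 or 5 component is precisely how new special points arise. Consequently ``the count of special points is strictly reduced'' does not follow from knowing the exceptional curves lie in $\{2,4,6\}$. In the paper's proof of the type II case, the blow-up in fact \emph{creates} a new special point of type I (the exceptional curve is of type 2 because $\nu_\gamma(\lambda)=\nu_{\eta_1}(\lambda)+\nu_{\eta_2}(\lambda)$ is coprime to $\ell$), and one must then re-invoke (\ref{blowing_up_at_122}) to remove it; the process terminates because of the ordering of cases (first eliminate type I, then II, III, IV), not because no new special points ever appear.

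The second, more substantial omission is the type III case (a type 3 or 5 curve meeting a type 4 curve). Here both valuations of $\lambda$ are divisible by $\ell$, so the exceptional curve is of type 4 or 6; if it were merely of type 4 you would recreate a type III special point at its intersection with the strict transform of the type 3 or 5 curve, and your suggested tools ($\nu_\gamma(\lambda)=\nu_{\eta_1}(\lambda)+\nu_{\eta_2}(\lambda)$ and $\ell^d\alpha=0$) cannot rule this out, since the trick of forcing $\ell^{d+1}\mid\nu_\gamma(\lambda)$ used in (\ref{blowing_up_at_122}) needs the two valuations to be coprime to $\ell$. The paper instead writes $\alpha\otimes F_P=(E_P,\sigma,u\pi_{\eta_1}^{d_1}\pi_{\eta_2}^{d_2})$ via (\ref{local-cyclic}) with some $d_i$ coprime to $\ell$, and shows that either $d_1+d_2$ is coprime to $\ell$ and the exceptional curve is split by a totally ramified extension, hence of type 6 by (\ref{not_type_6}), or, after one more blow-up, $2d_1+d_2$ is coprime to $\ell$ and the new exceptional curve is of type 6. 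This explicit cyclic-algebra argument is what prevents type III points from reproducing themselves, and it is absent from your sketch; without it (and without the correct bookkeeping for type II and IV, which reduce to the earlier cases rather than to ``no new special points''), the proof is incomplete.
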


\begin{proof}  Let $P \in  \PP$. Then there exist two codimension zero 
points  $\eta_1$ and $\eta_2$   of $X_0$ intersecting at $P$.

Suppose that $P$ is a special point of type I.
Let $\psi: \XX'  \to \XX$ be a sequence of   blow-ups as in (\ref{blowing_up_at_122}).
Then there are no special points in $\psi^{-1}( P )$.
Since there are only finitely many special points in $\XX$,  replacing 
$\XX$ be a finite sequence of blow ups at all special points of type I,
we assume that $\XX$ has no special points of type I.

Suppose $P$ is a special point of type II.
Without loss of generality we assume that, 
 $\eta_1$ is of type 1 and $\eta_2$ is of type 4.
Let $\XX_P \to \XX$ be the blow-up of $\XX$ at $P$ and $\gamma$ the 
 exceptional curve in $\XX_P$.  Since $\eta_2$ is of type 4,  by (\ref{blowing_up}), 
 $\gamma$ is of type 2, 4 or 6.  Since $\eta_1$ is of type 1 and $\eta_2$ is of type 4, 
 $\nu_{\eta_1}(\lambda) $ is coprime to $\ell$ and 
$ \nu_{\eta_2}(\lambda)$ is divisible by $\ell$.
 Since $\nu_\gamma(\lambda) = \nu_{\eta_1}(\lambda) 
 + \nu_{\eta_2}(\lambda)$, $\nu_\gamma(\lambda)$ is coprime to $\ell$ and hence 
 $\gamma$ is of type 2.
  Let $\tilde{\eta}_i$ be the   
 strict transform of $\eta_i$ in $\XX_P$.
 Then $\tilde{\eta}_i$ and $\gamma$ intersect at only one point $Q_i$. 
  Since   $\gamma$ is of type 2, $Q_1$ is a special point of type I and $Q_2$ is
not a special point.  Thus, 
as above, by replacing 
$\XX$ by a sequence of  blow-ups   of $\XX$, we assume that 
$\XX$ has no special points of type I or II.

 Suppose $P$ is a special point of type III. 
 Without loss of generality assume that 
$\eta_1$ is of  type 3 or  5 and $\eta_2$ of type 4. 
 Let $\XX_P \to \XX$ be the blow-up of $\XX$ at $P$, $\gamma$,     $\tilde{\eta}_i$,   
 and   $Q_i$ be as above.  Since $\eta_2$ is of type 4,  by (\ref{blowing_up}),
 $\gamma$ is of type 2, 4 or 6.
 Since $\nu_{\eta_1}(\lambda)$ and $\nu_{\eta_2}(\lambda)$ are divisible by $\ell$,
 $\nu_{\gamma}(\lambda) = \nu_{\eta_1}(\lambda) + \nu_{\eta_2}(\lambda)$ is divisible by $\ell$. 
 Thus $\gamma$ is of type 4 or 6. Hence  $Q_2$ is not a  special point.
By (\ref{local-cyclic}), $\alpha \otimes F_P = (E_P, \sigma, u\pi_{\eta_1}^{d_1}\pi_{\eta_2}^{d_2})$  
for some cyclic extension $E_P/F_P$ and $u \in \hat{A}_P$ a unit and at least one of $d_i$ is 
coprime to $\ell$ (in fact equal to 1). 
In particular, $\alpha \otimes F_P $ is split by the extension $F_P(\sqrt[m]{u\pi_{\eta_1}^{d_1}\pi_{\eta_2}^{d_2}} ~)$,
 where $m$ is the degree of $E_P/F_P$ which is a power of $\ell$.
Suppose $d_1 + d_2$ is coprime to $\ell$. Since $\nu_\gamma(\pi_{\eta_1}^{d_1}\pi_{\eta_2}^{d_2}) =   d _1 + d_2$,
$F_P(\sqrt[m]{u\pi_{\eta_1}^{d_1}\pi_{\eta_2}^{d_2}}~)$  is totally ramified at $\gamma$. Thus, by (\ref{not_type_6}), 
$\gamma$ is  of type 6. Hence $Q_1$ is not a special point. 
Suppose that $d_1 + d_2$ is divisible by  $\ell$.
Let $\pi_\gamma$ be a prime defining $\gamma$ at $Q_1$.
Then, we have $u\pi_{\eta_1}^{d_1} \pi_{\eta_2}^{d_2} = w_1\pi_{\eta_1}^{d_1}\pi_\gamma^{d_1 + d_2}$ 
 for some unit  $w_1$  at  $Q_1$. Since one of $d_i$ is coprime to $\ell$ and $d_1  + d_2$ is 
 divisible by $\ell$, $d_i$ are not divisible by $\ell$. In particular $2d_1  + d_2$ is coprime to 
 $\ell$. Let $\XX_{Q_1}$ be the blow-up of $\XX_P$ at $Q_1$ and $\gamma'$ be the generic point 
 of the exceptional curve in $\XX_{Q_1}$. Then $ 
\nu_{\gamma'}( u\pi_{\eta_1}^{d_1} \pi_{\eta_2}^{d_2}) = \nu_{\gamma'}(w_1\pi_{\eta_1}^{d_1}\pi_\gamma^{d_1 + d_2}) = 2d_1 + d_2$.
Since $2d_1 + d_2$ is coprime to $\ell$,   once again by (\ref{not_type_6}), $\gamma'$ is  of type 6. 
In particular  no  point on the exceptional curve in $\XX_{Q_1}$  is  a special point.
Thus,  replacing $\XX$ by a sequence of blow-ups, we assume that $\XX$ has no special 
points of type I, II or III.

 Suppose $P$ is  a  special point of type IV. 
Without loss of generality assume that, 
 $\eta_1$ is of type 1, 3 or 5  and $\eta_2$ is of type  5, with   
 $M_{\eta_2} \otimes F_{P, \eta_2}$ is not a field.
 Let $\XX_P \to \XX$ be the blow-up of $\XX$ at $P$ and    $\gamma$,   $\tilde{\eta}_i$,
 $Q_i$ be as above. 
 Since $M_{\eta_2} \otimes F_{P, \eta_2}$ is not a field, by (\ref{blowing_up}), $\gamma$ 
 is  of  type 2, 4 or 6.  If $\gamma$ is of type 6, then $Q_1$ and $Q_2$ are not  special points. 
  Suppose $\gamma$ is of type 2 or 4.
  Then $Q_1$ and $Q_2$ are special points of type I, II or III.  Thus, 
as above, by replacing 
$\XX$ by a sequence of  blow-ups   of $\XX$, we assume that 
$\XX$ has no special points. 
 \end{proof}

Let $\eta$ and $\eta'$ be two codimension zero points  of $X_0$ (may not be 
distinct). We say that  there is a {\bf type 2 connection} from $\eta$ to $\eta'$  if  one of the
following holds \\
$\bullet$  one of $\eta$ or $\eta'$ is of type 2 \\
$\bullet$ there exist distinct codimension zero points  $\eta_1, \cdots , \eta_n$ of $X_0$ of type 2
such that $\eta$ intersects  $\eta_1$,  $\eta'$ intersects $\eta_n$,   $\eta_{i} $ intersects 
$\eta_{i + 1}$ for all $1 \leq i \leq n-1$, $\eta$ does not intersect $\eta_i$ for $i > 1$,
$\eta'$ does not intersect $\eta_i$ for $i < n$ and $\eta_i$ does not intersect $\eta_j$
for $j \neq i +1$.

\begin{prop}
\label{choice_of_model}
There exists a regular proper model  $\XX$ of $F$ such that  \\
1) $\XX$ has   no special points \\
2)  if $\eta_1$ and $\eta_2$ are   two (not necessarily  distinct)  codimension 
zero points  of $X_0$ with $\eta_1$ is of  type 3 or 5 and $\eta_2$ is of type 3, 4 or 5, then 
there is no   type 2 connection between $\eta_1$ and $\eta_2$. 
\end{prop}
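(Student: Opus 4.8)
The plan is to begin with a regular proper model of $F$ having no special points---such a model exists by \ref{choice_of_model_at_closed_points}---and then to destroy the forbidden type~$2$ connections one at a time, by blowing up so as to interpose a type~$6$ curve inside the offending chain. The engine is \ref{blowing_up_at_122}: whenever two type~$2$ curves meet at a closed point $P$, a suitable sequence of blow-ups supported over $P$ replaces the intersection by a linear chain $\gamma_1,\dots,\gamma_m$ of regular curves, with $\gamma_1$ and $\gamma_m$ of type~$6$ and the interior members of type~$2$, $4$ or~$6$, and with no special points in the exceptional locus. Since a type~$2$ connection is, by definition, a path all of whose internal components are of type~$2$, the presence of a type~$6$ curve on such a path destroys it. We shall use freely that blowing up a closed point of a regular surface preserves regularity, preserves the normal-crossings hypothesis on $\mathrm{ram}_\XX(\alpha)\cup\supp_\XX(\lambda)\cup X_0$, keeps distinct components of $X_0$ meeting in at most one point, and creates no new intersection among strict transforms of old components.

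Call a \emph{forbidden connection} a type~$2$ connection realized by a chain $\mu_1,\dots,\mu_n$ (with $n\ge1$) between a codimension zero point $\eta_1$ of type~$3$ or~$5$ and a codimension zero point $\eta_2$ (possibly equal to $\eta_1$) of type~$3$, $4$ or~$5$; since $X_0$ has finitely many components, there are only finitely many of them. Suppose our model has no special points but admits such a forbidden connection. Put $P=\eta_1\cap\mu_1$ and blow up $P$, with exceptional curve $\gamma$. Because $\mu_1$ is of type~$2$, \ref{blowing_up} gives that $\gamma$ is of type~$2$, $4$ or~$6$; but $\nu_\gamma(\lambda)=\nu_{\eta_1}(\lambda)+\nu_{\mu_1}(\lambda)$ is coprime to $\ell$ (as $\ell\mid\nu_{\eta_1}(\lambda)$ and $\ell\nmid\nu_{\mu_1}(\lambda)$), while types~$4$ and~$6$ require $\nu(\lambda)$ divisible by $\ell$, so $\gamma$ is of type~$2$. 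Now $\gamma$ and the strict transform of $\mu_1$ are two type~$2$ curves meeting at a closed point; applying \ref{blowing_up_at_122} to this pair interposes a type~$6$ curve between (the strict transforms of) $\gamma$ and $\mu_1$. In the resulting model the strict transform of $\gamma$ meets only the strict transform of $\eta_1$ (of type~$3$ or~$5$) and a type~$6$ curve, so no type~$2$ chain passes through it; hence no type~$2$ chain joins $\eta_1$ to $\eta_2$ through $\mu_1$, and the chosen forbidden connection no longer exists.

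What remains is the bookkeeping showing that this step strictly decreases the (finite) number of forbidden connections, and this is where the argument must be run with care. No special point is introduced: the exceptional locus of \ref{blowing_up_at_122} has none by its part~(5); the point where the strict transform of $\gamma$ meets that of $\eta_1$ is an intersection of a type~$3$ or~$5$ curve with a type~$2$ curve, and the points where the new type~$6$ curves meet the strict transforms of $\gamma$ and $\mu_1$ are intersections of a type~$2$ curve with a type~$6$ curve---none of these occurs in the list of special point types---and the strict transforms of the old components keep their types. No new forbidden connection is introduced: every type~$2$ curve produced lies on the chain $\gamma_1,\dots,\gamma_m$ strictly between the type~$6$ curves $\gamma_1$ and $\gamma_m$, or is the strict transform of $\gamma$, whose neighbours are not of type~$2$; hence no type~$2$ chain through such a curve can reach a component outside the blown-up region. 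Finally, any forbidden connection in the new model other than the one just destroyed is the strict transform of one present before. Therefore the count of forbidden connections drops, and after finitely many steps we reach a regular proper model of $F$ with no special points and with no type~$2$ connection between a type~$3$ or~$5$ component and a type~$3$, $4$ or~$5$ component, as required.
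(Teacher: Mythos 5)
Your proposal is correct and follows essentially the same route as the paper: start from a model with no special points (\ref{choice_of_model_at_closed_points}), use \ref{blowing_up} plus the valuation count $\nu_\gamma(\lambda)=\nu_{\eta_1}(\lambda)+\nu_{\mu_1}(\lambda)$ to see the exceptional curve over the type 3/5--type 2 junction is of type 2, interpose type 6 curves via \ref{blowing_up_at_122}, and terminate by induction on the number of offending type 2 connections. The only (cosmetic) difference is that you always perform the preliminary blow-up at the junction with the type 3 or 5 endpoint, whereas the paper does this only when the chain has length one and otherwise applies \ref{blowing_up_at_122} directly at the intersection point of the first two type 2 curves of the chain; your bookkeeping for the strict decrease of the count is at the same level of detail as the paper's claim that $m(\XX')=m(\XX)-1$.
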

 
 \begin{proof}  
 Let $\XX$ be a regular proper model with no special points (\ref{choice_of_model_at_closed_points}).
 Let $m(\XX)$ be the number of  type 2 connections between a point of type 3  or 5  and 
 a point of  type   3, 4 or 5. 
 We prove the proposition by  induction on $m(\XX)$.  
  Suppose $m(\XX)  \geq 1$. We show that there is a sequence of  blow-ups $\XX'$ of 
 $\XX$ with no special points and $m(\XX') < m(\XX)$.

Let $\eta$ be a codimension zero point of $X_0$ of type 3 or 5  and 
$\eta'$ a  codimension zero point of $X_0$ of types 3, 4 or 5. 
Suppose $\eta$ and $\eta'$ have a type 2 connection.
Then there exist   $\eta_1, \cdots , \eta_n$ codimension zero points of $X_0$ 
of type 2  with $\eta$ intersecting $\eta_1$, $\eta'$ intersecting $\eta_n$ and
$\eta_i$ intersecting  $\eta_{i +1}$ for $i = 1, \cdots, n-1$. 

Suppose $n = 1$. Let $Q$ be the point of 
the intersection of $\eta$ and $\eta_1$. 
Let $\XX_Q \to \XX$ be the blow-up of $\XX$ at $Q$ and $\gamma$ the 
 exceptional curve in $\XX_Q$.  
  Since $\eta_1$ is of type 2,   by (\ref{blowing_up}), $\gamma$ is of type 2, 4 or 6.
   Since  $\eta$ is  of type 3  or  5 and $\eta_1$ is of type 2, 
 $\ell$  divides $\nu_{\eta}(\lambda)$ 
and $\ell$ does not divide  $\nu_{\eta_1}(\lambda)$. 
Since $\nu_{\gamma}(\lambda) = \nu_{\eta}(\lambda) + \nu_{\eta_1}(\lambda)$, 
$\nu_{\gamma}(\lambda)$ is  not divisible by $\ell$ and hence  $\gamma$ is  of  type 2.
Let $\tilde{\eta}$ and $\tilde{\eta}_1$ be the strict transform of $\eta$  and $\eta_1$
 in $\XX_Q$.   Since $\gamma$ is a point of type 2,  the points of intersection of  $\tilde{\eta}$ and  $\tilde{\eta}_1$
 with  $\gamma$  are not special points.  Hence $\XX_Q$ has no special points.  
By replacing $\XX$ by $\XX_Q$ we assume that $ n \geq 2$ and $\XX$ has no special points.

 Let $P$ be the point of intersection of $\eta_1$ and $\eta_2$.
 Let $\XX'$ be as in (\ref{blowing_up_at_122}).  Then $\XX'$ has no special points and 
 all the exceptional curves in $\XX'$ are of 
 type 2, 4 or 6 and the exceptional curves which intersect the strict transforms of $\eta_1$ and $\eta_2$
 are of type 6. In particular the number of type 2 connections between the 
 strict transforms of $\eta$ and $\eta'$ is one less than the number of type 2 connections between 
 $\eta$ and $\eta'$.   Since all the exceptional curves in $\XX'$ are of type 2, 4 or  6,  
 $m(\XX')  = m(\XX) -1$.  Thus, by induction, we have a regular proper model with required properties. 
\end{proof}

\begin{lemma}
\label{type2_intersection}
Let $\XX$ be as in (\ref{choice_of_model}) and $X_0$ the special fibre of $\XX$.
Let $\eta$ be a codimension zero point of $X_0$ of type 2 and  $\eta'$  a codimension  zero point 
 of $X_0$ of type  3 or 5. Suppose there is a type 2 connection from $\eta$ to $\eta'$. 
If there is a type 2 connection from $\eta$ to a type 3 or 5 point $\eta''$, then 
$\eta' = \eta''$.  Further, if  $\eta_1, \cdots , \eta_n$  codimension zero 
 points of  $X_0$ of type 2 giving a type 2 connection from $\eta$ to $\eta'$ and 
 $\gamma_1, \cdots , \gamma_m$  codimension zero 
 points of  $X_0$ of type 2 giving  another  type 2 connection from $\eta$ to $\eta'$, 
then  $n = m$ and $\eta_i = \gamma_i$ for all $i$. 
\end{lemma}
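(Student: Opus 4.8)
The plan is to read the statement off the combinatorics of the dual graph $\Gamma$ of $X_0$: its vertices are the codimension zero points of $X_0$, and two of them are joined by an edge exactly when the corresponding curves meet (in the single closed point allowed by the standing hypotheses on $\XX$). First I would record the translation that a type $2$ connection from $\eta$ to $\eta'$ is the same thing as a walk in $\Gamma$ from $\eta$ to $\eta'$ all of whose interior vertices are of type $2$: from any such walk, one of minimal length with this property is automatically a simple path in which no two non-consecutive vertices meet apart from (possibly) its two endpoints, so its interior is a type $2$ connection in the literal sense of the definition; similarly, a non-degenerate closed walk based at $\eta'$ with type $2$ interior gives, after minimization, a cycle through $\eta'$, that is, a type $2$ connection of $\eta'$ to itself. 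Throughout I use the one feature of the model guaranteed by (\ref{choice_of_model}): $X_0$ carries no type $2$ connection between a point of type $3$ or $5$ and a point of type $3$, $4$ or $5$ --- in particular none from a type $3$ or $5$ point to itself.

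For the first assertion I argue by contradiction, assuming $\eta' \neq \eta''$. Writing the first chain as $\eta_1,\dots,\eta_n$ and the second as $\gamma_1,\dots,\gamma_m$, the sequence $\eta',\eta_n,\dots,\eta_1,\eta,\gamma_1,\dots,\gamma_m,\eta''$ is a walk in $\Gamma$ from $\eta'$ to $\eta''$ whose interior vertices are all of type $2$ and whose length is at least $2$, since it runs through the type $2$ vertex $\eta$. Choosing such a walk of minimal length, say $\eta' = w_0,w_1,\dots,w_t = \eta''$ with $t \geq 2$, minimality forces the $w_i$ to be distinct and $w_i \cap w_j = \emptyset$ whenever $|i-j| \geq 2$, except possibly for the pair $\{0,t\}$ (any other shortcut produces a strictly shorter walk still of length $\geq 2$ with type $2$ interior). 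Hence $w_1,\dots,w_{t-1}$ is a type $2$ connection from $\eta'$ to $\eta''$; as $\eta'$ and $\eta''$ are each of type $3$ or $5$, hence of type $3$, $4$ or $5$, this contradicts (\ref{choice_of_model}). So $\eta' = \eta''$.

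For the second assertion, suppose the chains $\eta_1,\dots,\eta_n$ and $\gamma_1,\dots,\gamma_m$ differ; set $\eta_0 = \gamma_0 = \eta$. Neither is a proper prefix of the other (if $n < m$ and $\eta_i = \gamma_i$ for $i \leq n$, then $\eta'$ meets $\eta_n = \gamma_n$, contradicting that $\eta'$ meets no $\gamma_i$ with $i < m$), so there is a least $k$ with $1 \leq k \leq \min(n,m)$ and $\eta_k \neq \gamma_k$, and $\delta := \eta_{k-1} = \gamma_{k-1}$ is a type $2$ curve (or $\eta$ itself) meeting the two distinct type $2$ curves $\eta_k$ and $\gamma_k$. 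Then $\delta,\eta_k,\dots,\eta_n,\eta'$ and $\delta,\gamma_k,\dots,\gamma_m,\eta'$ are two distinct walks from $\delta$ to $\eta'$ whose only non-type-$2$ vertex is $\eta'$, and gluing them produces a closed walk based at $\eta'$ with type $2$ interior that does not retrace its edge at $\eta'$ nor its edge at $\delta$. From this I aim to extract a cycle through $\eta'$ with all other vertices of type $2$, that is, a type $2$ connection of $\eta'$ to itself, contradicting (\ref{choice_of_model}); this forces the two chains to coincide, so $n = m$ and $\eta_i = \gamma_i$ for all $i$.

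The step I expect to be the main obstacle is precisely this last extraction: a closed walk with type $2$ interior could, under blind minimization, collapse to a degenerate (edge-retracing) walk or to an auxiliary cycle of type $2$ curves missing $\eta'$, neither of which gives a contradiction. The way I would control this is to track the first vertex $v$ after $\delta$ that the two sub-chains have in common. If $v = \eta'$, the two sub-chains from $\delta$ to $\eta'$ are chord-free simple paths (using the no-chord conditions built into each original chain) meeting only at $\delta$ and $\eta'$, so their union is a single cycle through $\eta'$ and one is done. If $v$ is a type $2$ curve, one splits off the common part and iterates, decreasing the total length $n+m$; the delicate bookkeeping is to check that each splitting step really does reduce the problem to a genuinely shorter instance of the same shape (in particular when the two chains share several intermediate vertices), and this is where I expect to spend the most effort. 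The ancillary passage back and forth between ``walk with type $2$ interior'' and ``literal type $2$ connection'', always available by shortening to a minimal representative, I would package as a single combinatorial sublemma at the outset.
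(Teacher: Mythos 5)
Your first assertion is handled correctly and in essentially the same way as the paper: concatenate the two chains through the type 2 curve $\eta$, pass to a minimal walk with type 2 interior, and invoke property (2) of (\ref{choice_of_model}). Your minimization argument supplies details the paper leaves implicit, and it is sound because the interior vertices, being of type 2, can never coincide with the type 3 or 5 endpoints, so shortcutting can never collapse the walk below length two.

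The gap is in the second assertion, at exactly the step you flagged. Your glued closed walk based at $\eta'$ need not avoid retracing its edge at $\eta'$: nothing prevents the two chains from having the same last curve, $\eta_n=\gamma_m$ (or more generally a common tail), and then the walk enters and leaves $\eta'$ along the same edge. Worse, in that situation no cycle through $\eta'$ need exist inside the union of the two chains at all, so no amount of splitting and iterating can extract the forbidden type 2 connection from $\eta'$ to itself. Concretely, take type 2 curves $\eta, a, b, c$ and $\eta'$ of type 3 or 5 with $\eta$ meeting $a$ and $c$, both $a$ and $c$ meeting $b$, $b$ meeting $\eta'$, and $\eta\cap b=\eta'\cap a=\eta'\cap c=\emptyset$: then $(a,b)$ and $(c,b)$ are two distinct type 2 connections from $\eta$ to $\eta'$, the only cycle in the configuration is $\eta,a,b,c$, which avoids $\eta'$, and none of the properties secured by (\ref{choice_of_model}) (no special points, no type 2 connection between a type 3 or 5 point and a type 3, 4 or 5 point) excludes this configuration. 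What your route does prove — and this is worth isolating — is that the two chains must end in the same curve: if $\eta_n\neq\gamma_m$, then walking back from $\eta'$ along the first chain to the first vertex lying on the second chain produces a genuine cycle through $\eta'$ with all other vertices of type 2, contradicting (\ref{choice_of_model}). That weaker statement (uniqueness of $\eta_n$, hence of the closed point $\eta_n\cap\eta'$) is all the paper uses afterwards to define the point of type 2 intersection. For what it is worth, the paper's own proof of the full claim $n=m$, $\eta_i=\gamma_i$ is the same one-line assertion that distinct chains yield a type 2 connection from $\eta'$ to itself, and it is open to the same objection; but as a standalone proof of the statement as written, your proposal does not close this step, and I do not see how the proposed contradiction can be made to work in the shared-tail case.
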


\begin{proof} Suppose $\eta''$ is a codimension zero point of $X_0$ of type 3 or 5
with type 2 connection to $\eta$. Since $\eta$ is of type 2 and there is a type 2 connection 
from $\eta'$ to $\eta''$. Since no two points of type 3 or 5 have a type 2 connection (cf. \ref{choice_of_model}),
$\eta' = \eta''$.  Suppose $\gamma_1, \cdots , \gamma_m$ is of type 2 connection from 
$\eta$ to $\eta'$. If $\eta_i $ is not equal to $\gamma_i$, then we will have 
type 2 connection from $\eta'$ to $\eta'$ and hence a contradiction to the choice of 
$\XX$ (cf. \ref{choice_of_model}). Thus $n = m$ and  $\eta_i = \gamma_i$ for all $i$.
\end{proof}

Let $\eta$ be a codimension zero point of $X_0$ of type 2 and 
$\eta'$ be a codimension zero point of $X_0$ of type 3 or 5. 
Suppose there is a type 2 connection $\eta_1, \cdots, \eta_n$ from $\eta$ to
$\eta'$. Then, by (\ref{type2_intersection}),  $\eta'$ and $\eta_n$ are uniquely defined by $\eta$.
We call this point of intersection of $\eta_n$ with $\eta'$ as {\bf the point of type 2 intersection of $\eta$ and $\eta'$}.
Once again note that such a closed point is uniquely defined by $\eta$.

\section{Choice of $L_P$ and $\mu_P$ at closed points} 
\label{choice_at_closed_points}

Let $F$, $\alpha \in H^2(F, \mu_n)$, $\lambda \in F^*$ with $\alpha \cdot (\lambda) = 0 \in H^3(F, \mu_n^{\otimes 2})$, 
$\XX$ and $X_0$ be as in (\S \ref{patching} and \S \ref{types_of_points}).  Throughout this section we 
assume that $\XX$ has  no  special points and 
  if $\eta_1$ and $\eta_2$ are   two (not necessarily  distinct)  codimension 
zero points  of $X_0$ with $\eta_1$ is of  type 3 or 5 and $\eta_2$ is of type 3, 4 or 5, then 
there is no   type 2 connection between $\eta_1$ and $\eta_2$.  

Let $\eta$ be a codimension zero point of $X_0$ of type 5.
Then we call $\eta$ of {\bf type 5a} if $\alpha$ is unramified at $\eta$
and of {\bf type 5b} if $\alpha$ is ramified at $\eta$. 
Suppose $\eta$ is of type 5b. Then $\alpha$ is ramified and hence  $M_\eta$ is 
the unique subextension of $E_\eta$ of degree $\ell$, where   $(E_\eta, \sigma_\eta)$ is  the lift of the residue
of $\alpha$.  

For the rest of the paper we assume that $\kappa$ is a finite field.
 
\begin{lemma} 
\label{7_norm} Let $\eta$ be a codimension zero point of $X_0$ of type 5b.
Then ind$(\alpha \otimes M_\eta) < $ ind$(\alpha)$ and 
there exists $\mu_\eta \in M_\eta$ such that $N_{M_\eta/F_\eta}(\mu_\eta) = 
\lambda$ and $\alpha \cdot (\mu_\eta) = 0 \in H^3(M_\eta, \mu_n^{\otimes 2})$.
\end{lemma}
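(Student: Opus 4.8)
The statement concerns a type 5b point $\eta$: here $\nu_\eta(\lambda) = r\ell$, the quantity $r\alpha \otimes E_\eta = 0$, $\operatorname{ind}(\alpha \otimes F_\eta) = \operatorname{ind}(\alpha)$, and $\alpha$ is ramified at $\eta$ so that $M_\eta$ is the unique degree-$\ell$ subextension of the lift $E_\eta$ of the residue. The plan is to work entirely over the complete discretely valued field $F_\eta$, whose residue field $\kappa(\eta)$ is a global field (a function field of a curve over the finite field $\kappa$). First I would record that $\operatorname{ind}(\alpha \otimes M_\eta) < \operatorname{ind}(\alpha)$: this is exactly Remark~\ref{index_M}, since $M_\eta \subset E_\eta$ and $\operatorname{ind}(\alpha \otimes F_\eta) = \operatorname{ind}(\alpha \otimes E_\eta)[E_\eta : F_\eta]$ by Lemma~\ref{dvr_ind}, so passing to the proper subextension $M_\eta$ strictly drops the index. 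That disposes of the first assertion with no real work.

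The substance is producing $\mu_\eta \in M_\eta$ with $N_{M_\eta/F_\eta}(\mu_\eta) = \lambda$ and $\alpha \cdot (\mu_\eta) = 0$ in $H^3(M_\eta, \mu_n^{\otimes 2})$. Write $\alpha \otimes F_\eta = \alpha' + (E_\eta, \sigma_\eta, \pi_\eta)$ as in Lemma~\ref{rbc}, pass to residues, and set $\theta_0$ to be the residue of the unit part of $\lambda$ (so $\lambda = \theta \pi_\eta^{r\ell}$ with $\theta$ a unit). Since $\alpha \cdot (\lambda) = 0$ in $H^3(F_\eta, \mu_n^{\otimes 2})$, Lemma~\ref{dot-zero} gives $r\ell\,\overline{\alpha}' = (E_0, \sigma_0, \theta_0)$ in $H^2(\kappa(\eta), \mu_n)$, where $E_0$ is the residue field of $E_\eta$. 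The hypothesis $r\alpha \otimes E_\eta = 0$ translates (again via Lemma~\ref{rbc}, $\alpha \otimes E_\eta = \alpha' \otimes E_\eta$) to $r\overline{\alpha}' \otimes E_0 = 0$. This is precisely the setup of Proposition~\ref{global_field_zero} with $k = \kappa(\eta)$, $\beta = \overline{\alpha}'$, $E_0$ the cyclic extension in question, and $L_0 = \overline{M_\eta}$ the unique degree-$\ell$ subfield of $E_0$: that proposition yields $\mu_0 \in \overline{M_\eta}$ with $N_{\overline{M_\eta}/\kappa(\eta)}(\mu_0) = \theta_0$ and $r\overline{\alpha}' \otimes \overline{M_\eta} = (E_0 \otimes \overline{M_\eta}, \sigma_0 \otimes 1, \mu_0)$.

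Now I would lift this residual datum back up to $M_\eta$. Note that $M_\eta/F_\eta$ is unramified (it sits inside the unramified lift $E_\eta$), with residue field $\overline{M_\eta}$. The natural tool is Lemma~\ref{type-other}: with $\lambda = \theta \pi_\eta^{\ell r}$, the residual conditions $N_{\overline{M_\eta}/\kappa(\eta)}(\mu_0) = \overline\theta$ and $r\overline{\alpha}' \otimes \overline{M_\eta} = (E_0 \otimes \overline{M_\eta}, \sigma_0 \otimes 1, \mu_0)$ are exactly its hypotheses, so it produces an unramified degree-$\ell$ extension $L$ of $F_\eta$ with residue field $\overline{M_\eta}$, hence $L \simeq M_\eta$, together with $\mu \in M_\eta$ a unit with $N_{M_\eta/F_\eta}(\mu) = \theta$ and $\alpha \cdot (\mu\,\pi_\eta^r) \in H^3(M_\eta, \mu_n^{\otimes 2})$ unramified. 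One then takes $\mu_\eta = \mu\,\pi_\eta^r$: directly $N_{M_\eta/F_\eta}(\mu_\eta) = N_{M_\eta/F_\eta}(\mu)\,\pi_\eta^{\ell r} = \theta\pi_\eta^{\ell r} = \lambda$, and $\alpha \cdot (\mu_\eta)$ is unramified at the valuation of $M_\eta$. To conclude $\alpha \cdot (\mu_\eta) = 0$ I would invoke that its residue lies in $H^2$ of the residue field $\overline{M_\eta}$ of $M_\eta$, combined with the vanishing of the corestriction of $\alpha \cdot (\mu_\eta)$ down to $F_\eta$ (which equals $\alpha \cdot (\lambda) = 0$): since $\alpha \cdot (\mu_\eta)$ is unramified and $M_\eta/F_\eta$ is unramified, transferring along the (injective) corestriction on $H^3$ of complete discretely valued fields with global residue field forces it to vanish.

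\textbf{Main obstacle.} The delicate point is the very last step — deducing $\alpha \cdot (\mu_\eta) = 0$ rather than merely "unramified". Over a complete discretely valued field with global residue field $\kappa(\eta)$, an unramified class in $H^3$ is governed by $H^3(\kappa(\eta), \mu_n^{\otimes 2})$, and one must check it is killed; here the corestriction identity $\cores_{M_\eta/F_\eta}(\alpha \cdot (\mu_\eta)) = \alpha \cdot (N_{M_\eta/F_\eta}(\mu_\eta)) = \alpha \cdot (\lambda) = 0$ together with the analogous global-field class-field-theoretic injectivity (as used repeatedly in Section~3, e.g. in the proof of Proposition~\ref{global_field_zero} and Corollary~\ref{global_field_zero_cor}) should close it; alternatively one re-examines Lemma~\ref{type-other} more carefully, since in our situation the residual identity came from Proposition~\ref{global_field_zero} which gives an \emph{equality} of classes, not just of residues, and that equality lifts. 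I expect this bookkeeping to be the only nontrivial part; everything else is assembling Remark~\ref{index_M}, Lemma~\ref{dot-zero}, Proposition~\ref{global_field_zero}, and Lemma~\ref{type-other} in sequence.
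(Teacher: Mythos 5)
Your overall route is exactly the paper's: the index drop is (\ref{index_M}), and the chain (\ref{rbc}), (\ref{dot-zero}), (\ref{global_field_zero}), (\ref{type-other}) is precisely how the proof of (\ref{7_norm}) proceeds, with $L\simeq M_\eta$ by uniqueness of the unramified extension with residue field the degree-$\ell$ subfield of $E(\eta)$, and $\mu_\eta=\mu\pi_\eta^r$. The one step that does not hold up as you wrote it is the final vanishing, which you yourself flag as the obstacle. Injectivity of the corestriction on $H^3$ of a complete discretely valued field, Proposition (\ref{corestriction}), is available only when the residue field is a \emph{local} field: there the residue maps identify the corestriction with the corestriction on Brauer groups of local fields, which is injective. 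Here the residue field $\kappa(\eta)$ is a \emph{global} field; since $H^3$ of a global function field vanishes, the residue maps identify $\mathrm{cores}\colon H^3(M_\eta,\mu_n^{\otimes 2})\to H^3(F_\eta,\mu_n^{\otimes 2})$ with $\mathrm{cores}\colon H^2(M(\eta),\mu_n)\to H^2(\kappa(\eta),\mu_n)$, the corestriction on ($n$-torsion of) Brauer groups of global fields, and this is \emph{not} injective (a class over $M(\eta)$ whose only nontrivial local invariants sit at two places above one place of $\kappa(\eta)$ and sum to zero has trivial corestriction). So ``unramified plus trivial corestriction'' does not force the class to die by that mechanism. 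Your fallback suggestion, that the residual equality coming from (\ref{global_field_zero}) ``lifts,'' is not a proof of vanishing either: that equality is exactly what makes the residue of $\alpha\cdot(\mu\pi_\eta^r)$ vanish in the proof of (\ref{type-other}), i.e.\ it only re-proves unramifiedness.

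The repair is immediate, and it is what the paper does (explicitly in the proof of (\ref{codimension_345a}), implicitly in (\ref{7_norm})): $M_\eta$ is a complete discretely valued field whose residue field $M(\eta)$ is a global field of positive characteristic (a function field of a curve over the finite field $\kappa$), so $H^3_{nr}(M_\eta,\mu_n^{\otimes 2})\simeq H^3(M(\eta),\mu_n^{\otimes 2})=0$ because such a field has cohomological dimension $2$ for primes different from its characteristic (\cite{Serre_Gal_coh}). Hence the unramified class $\alpha\cdot(\mu\pi_\eta^r)$ is already zero, and $\mu_\eta=\mu\pi_\eta^r$ has all the required properties. With that substitution for your last paragraph, your argument coincides with the paper's proof.
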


\begin{proof}  Since $\eta$ is of type 5b, $\alpha$ is ramified at $\eta$,  
$\nu_\eta(\lambda) = r\ell$ and $r\alpha \otimes E_\eta = 0$.
By (\ref{index_M}),  ind$(\alpha \otimes M_\eta) < $ ind$(\alpha)$. 
Write $\alpha \otimes F_\eta = \alpha' + (E_\eta, \sigma, \pi_\eta)$  as in (\ref{rbc})
and $\lambda = \theta_\eta \pi_\eta^{r\ell}$ where $\theta_\eta$ is a unit at $\eta$ and 
$\pi_\eta$ is a parameter at $\eta$.  
Let $\beta_0$ be the image of $\alpha' $ in $H^2(\kappa(\eta), \mu_n)$.
Since $\alpha' \otimes E_\eta = \alpha \otimes E_\eta$ and 
$r\alpha \otimes E_\eta = 0$,  $r\beta_0 \otimes E(\eta) = 0$. Let $\theta_0$ be the image of $\theta_\eta$ in
$\kappa(\eta)$.
Then, by (\ref{global_field_zero}),  there exists $\mu_0 \in M_\eta(\eta)$,
such that $N_{M_\eta(\eta)/\kappa(\eta)}(\mu_0) = \theta_0$  and $r\beta_0 \otimes M_\eta(\eta) = (E_\eta(\eta), \sigma, \mu_0)$.
Thus, by (\ref{type-other}),  there exists $\mu_\eta \in M_\eta$ with the required properties. 
\end{proof}
 
\begin{lemma}
 \label{choice_at_P_55}
  Let $P \in \PP$,  $\eta_1$ and $\eta_2$ be    
codimension zero points of  $X_0$  containing   $P$.
Suppose that $\eta_1$ and $\eta_2$ are of type 5.   Then  
there exist a cyclic field extension $L_P/F_P$  of degree $\ell$ and
$\mu_P \in L_P$ such that \\
$1)$ $N_{L_P/F_P}(\mu_P) = \lambda$, \\
$2)$ ind$(\alpha \otimes L_P) < $ ind$(\alpha)$, \\
$3)$  $\alpha \cdot (\mu_P) = 0 \in H^3(L_P, \mu_n^{\otimes 2}),$\\
$4)$ if $\eta_i$ is of type 5a, then  $L_P  \otimes F_{P, \eta_i} /F_{P, \eta_i}$ is an unramified field extension,\\
$5)$  if $\eta_i$ is of type 5b, then $L_P \otimes F_{P, \eta_i} \simeq M_{\eta_i} \otimes F_{P, \eta_i}$.  
  \end{lemma}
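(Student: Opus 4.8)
The plan is to construct, at each of the two branches $(P,\eta_i)$, an unramified cyclic degree $\ell$ extension $L_i/F_{P,\eta_i}$ together with an element $\mu_i\in L_i$ of norm $\lambda$ with $\alpha\cdot(\mu_i)=0$ and ind$(\alpha\otimes L_i)<{}$ind$(\alpha)$, and then to patch these data by Lemma~\ref{patching_at_P}. Write $A=\hat{\OO}_{\XX,P}$, a complete regular local ring of dimension $2$ with finite residue field $\kappa$ and maximal ideal $(\pi_1,\pi_2)$, where $\pi_i$ is a prime defining $\eta_i$; thus $F_P=\Frac(A)$ and $F_{P,\eta_i}$ is the field denoted $F_{\pi_i}$ in \S\ref{Preliminaries}. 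Since $\XX$ is a normal crossings model, $\alpha$ is unramified on $A$ except possibly at $\pi_1$ and $\pi_2$, and since both $\eta_i$ are of type $5$ we may write $\lambda=w\pi_1^{r_1\ell}\pi_2^{r_2\ell}$ with $w\in A^*$. Because $P$ is not a special point it is in particular not a special point of type IV, so for every $i$ with $\eta_i$ of type $5b$ the $F_{P,\eta_i}$-algebra $M_{\eta_i}\otimes F_{P,\eta_i}$ is a field; being a subextension of the lift $E_{\eta_i}\otimes F_{P,\eta_i}$ of a residue, it is moreover an unramified cyclic degree $\ell$ extension of $F_{P,\eta_i}$.

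Fix $i\in\{1,2\}$. If $\eta_i$ is of type $5b$, put $L_i=M_{\eta_i}\otimes F_{P,\eta_i}$ and let $\mu_i\in L_i$ be the image of the element $\mu_{\eta_i}\in M_{\eta_i}$ furnished by Lemma~\ref{7_norm}; then $N_{L_i/F_{P,\eta_i}}(\mu_i)=\lambda$, $\alpha\cdot(\mu_i)=0$, and by Remark~\ref{index_M} together with the fact that base change does not raise the index, ind$(\alpha\otimes L_i)\le{}$ind$(\alpha\otimes M_{\eta_i})<{}$ind$(\alpha)$. If $\eta_i$ is of type $5a$, then $\alpha$ is unramified at $\eta_i$, so $\alpha\otimes F_{P,\eta_i}=\iota(\bar\alpha)$ for a class $\bar\alpha$ over the local field $\kappa(\eta_i)_P$; choose for $L_i$ an unramified cyclic degree $\ell$ extension of $F_{P,\eta_i}$, with residue field $L_{i,0}$ a local field. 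Writing $\lambda=w_i\pi_i^{r_i\ell}$ with $w_i\in A^*$ a unit at $\eta_i$ and choosing a unit $w_{i,0}\in L_i$ with $N_{L_i/F_{P,\eta_i}}(w_{i,0})=w_i$ (possible because $L_i/F_{P,\eta_i}$ is unramified and $w_i$ is a unit), set $\mu_i=w_{i,0}\pi_i^{r_i}$. Then $N_{L_i/F_{P,\eta_i}}(\mu_i)=\lambda$, and $\alpha\cdot(\mu_i)=\alpha\cdot(w_{i,0})+r_i\,\alpha\cdot(\pi_i)$ vanishes: the first summand is unramified over $L_i$, hence lies in $H^3_{nr}(L_i,\mu_n^{\otimes 2})\simeq H^3(L_{i,0},\mu_n^{\otimes 2})=0$, while $r_i\,\alpha\otimes L_i=0$ since $r_i\,\alpha\otimes F_{\eta_i}=0$ (type $5$). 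Finally $\alpha\otimes L_i$ is unramified, so ind$(\alpha\otimes L_i)={}$ind$(\bar\alpha\otimes L_{i,0})$, and since passing from the local field $\kappa(\eta_i)_P$ to its degree $\ell$ extension $L_{i,0}$ divides the index by $\ell$ whenever $\bar\alpha\neq0$, we get ind$(\alpha\otimes L_i)<{}$ind$(\alpha)$ in every case.

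These $L_1,L_2,\mu_1,\mu_2$ verify all the hypotheses of Lemma~\ref{patching_at_P} with $d_0={}$ind$(\alpha)$, save possibly its last bullet. For that, suppose $\lambda\in F_P^{*\ell}$ and, writing one of the primes as $\pi$ and the other as $\delta$, that $\alpha=(E/F,\sigma,v\pi)$ for a cyclic $E/F$ unramified on $A$ away from $\delta$. Then $E\neq F$ (else $\alpha=0$) and $\alpha$ is ramified at $\pi$, so $\eta_\pi$ is of type $5b$ and carries the branch datum of the previous paragraph; at $\delta$ we instead take $L_\delta=F_\delta(\sqrt[\ell]{v\pi})$, which is unramified at $\delta$ because $v\pi$ is a unit there, and of degree $\ell$ over $F_\delta$ (were it not, $\alpha\otimes F_\delta=0$, forcing ind$(\alpha\otimes F_P)=1$ by Proposition~\ref{local_index}, contrary to the nonvanishing of $\alpha$ at the type $5b$ branch), with $\mu_\delta=\sqrt[\ell]{\lambda}\in F_\delta$, so that $N_{L_\delta/F_\delta}(\mu_\delta)=\lambda$ and $\alpha\cdot(\mu_\delta)=0$ by Proposition~\ref{corestriction}. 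Applying Lemma~\ref{patching_at_P} now produces a cyclic degree $\ell$ field extension $L_P/F_P$ and $\mu_P\in L_P$ with $N_{L_P/F_P}(\mu_P)=\lambda$, $\alpha\cdot(\mu_P)=0$, ind$(\alpha\otimes L_P)<{}$ind$(\alpha)$ and $L_P\otimes F_{P,\eta_i}\simeq L_i$; the last isomorphism records exactly conditions $4$ and $5$ (an unramified field extension when $\eta_i$ is of type $5a$, and an isomorphism with $M_{\eta_i}\otimes F_{P,\eta_i}$ when $\eta_i$ is of type $5b$), so conditions $1$--$5$ all hold.

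The step I expect to be the main obstacle is precisely the bookkeeping attached to the last bullet of Lemma~\ref{patching_at_P}, which singles out the branch $F_\delta$: one must make sure that whenever its hypothesis $\lambda\in F_P^{*\ell}$, $\alpha=(E/F,\sigma,v\pi)$ is in force the prescribed extension $F_\delta(\sqrt[\ell]{v\pi})$ is compatible with conditions $4$ and $5$ at $\eta_\delta$, which I would settle by a short analysis of the residue of $\alpha$ at $\delta$ using Corollary~\ref{local-cyclic} and Lemma~\ref{extns_2_local}; the rest is a matter of keeping the two branch types straight.
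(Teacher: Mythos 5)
Your overall strategy is the same as the paper's: make a branch-wise choice $(L_{P,\eta_i},\mu_{P,\eta_i})$ at the two type~5 branches (using $M_{\eta_i}\otimes F_{P,\eta_i}$ and Lemma~\ref{7_norm} at a 5b branch, which is fine and is exactly what the paper does) and then glue with Lemma~\ref{patching_at_P}. The problem is your construction at a type 5a branch. You pick an \emph{arbitrary} unramified cyclic degree-$\ell$ extension $L_i/F_{P,\eta_i}$ and assert you can "choose a unit $w_{i,0}\in L_i$ with $N_{L_i/F_{P,\eta_i}}(w_{i,0})=w_i$ (possible because $L_i/F_{P,\eta_i}$ is unramified and $w_i$ is a unit)". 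That is false here: for an unramified extension, a unit of the base is a norm of a unit precisely when its residue is a norm from the residue extension $L_{i,0}/\kappa(\eta_i)_P$, and since $\kappa(\eta_i)_P$ is a \emph{local} field (not finite), the norm group of a cyclic degree-$\ell$ extension has index $\ell$ in $\kappa(\eta_i)_P^{*}$. So for a generic choice of $L_i$ the element $\mu_i$ you need does not exist. This is exactly why the paper routes the 5a case through Lemma~\ref{local-local}, whose proof splits according to whether $\lambda$ is an $\ell$-th power in $F_{P,\eta_i}$: if not, it takes $L_i=F_{P,\eta_i}(\sqrt[\ell]{\lambda})$ (so $\lambda$ is a norm by construction), and if so, it takes $\mu_i\in F_{P,\eta_i}$ with $\mu_i^{\ell}=\lambda$ (so the norm condition is automatic for any $L_i$) and gets $\alpha\cdot(\mu_i)=0$ from injectivity of corestriction. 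Your argument can be repaired along those lines, but as written this step fails, and the index-drop argument must then be rechecked for the constrained choice of $L_i$.

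Second, the hypothesis in the last bullet of Lemma~\ref{patching_at_P} (when $\lambda\in F_P^{*\ell}$ and $\alpha=(E/F,\sigma,v\pi)$ with $E$ unramified on $A$ away from $\delta$, one must take $L_\delta=F_\delta(\sqrt[\ell]{v\pi})$) is the point where your proposal does not close: you override the choice at the branch $\delta$ to be $F_\delta(\sqrt[\ell]{v\pi})$, but then conclusions $4)$ and $5)$ of the lemma at $\eta_\delta$ (unramified field extension if $\eta_\delta$ is 5a, and $\simeq M_{\eta_\delta}\otimes F_{P,\eta_\delta}$ if 5b) are no longer automatic, and you explicitly defer this verification ("which I would settle by a short analysis\dots"). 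In addition, your argument that $F_\delta(\sqrt[\ell]{v\pi})$ has degree $\ell$ is not sound: from $v\pi\in F_\delta^{*\ell}$ one only gets that $\alpha\otimes F_\delta$ is $\ell$ times a class split by $E\otimes F_\delta$, not $\alpha\otimes F_\delta=0$, unless $[E:F]=\ell$; and vanishing of the \emph{local} class at the branch would not by itself contradict $\eta_\pi$ being of type 5b, which is a condition on the global residue over $\kappa(\eta_\pi)$. So the compatibility analysis you postpone is genuinely needed and nontrivial, and together with the norm-surjectivity error above it leaves the proposal with real gaps rather than a complete proof.
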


\begin{proof}  Since $\XX$ has no special points, $P$ is not a special point of type IV.
Since $\eta_1$ and $\eta_2$ are of type 5 intersecting at $P$,  $M_{\eta_1} \otimes F_{P, \eta_1}$  and $M_{\eta_2} \otimes F_{P, \eta_2}$ are fields. 
Suppose $\eta_i$ is of type 5a.  If $\alpha \otimes F_{P, \eta_i}  = 0$, then  
let $L_{P, \eta_i}/F_{P, \eta_i}$ be any cyclic unramified field extension with $\lambda$ a norm 
and $\mu_{\eta_i} \in L_{P, \eta_i}$ with $N_{L_{P, \eta_i}/F_{P, \eta_i}}(\mu_{\eta_i}) = \lambda$..
If $\alpha \otimes F_{P, \eta_i} \neq 0$,  then  let $L_{P, \eta_i}/F_{P, \eta_i}$ be a cyclic  unramified field extension of 
degree $\ell$ and $\mu_{\eta_i}$ be as in (\ref{local-local}).
Suppose $\eta_i$ is of type 5b. Let  $L_{P, \eta_i} = M_{\eta_i} \otimes F_{P, \eta_i}$ 
and $\mu_{\eta_i} \in M_{\eta_i}$ be as in (\ref{7_norm}).
Then, by the choice $L_{P, \eta_i}/F_{P, \eta_i}$ are unramified field extensions.
By applying  (\ref{patching_at_P}) to $L_{P, \eta_i}$ and $\mu_{\eta_i}$, 
there exist a cyclic field extension  $L_P/F_P$ and $\mu_P \in L_P$ with required properties. 
\end{proof}

\begin{lemma}
\label{type3_square}
Let $\eta$ be a   codimension zero point of $X_0$ of type 3 and $P$
a closed point on the closure of $\eta$.     
Then, there exists a cyclic field extension 
$L_{P, \eta}/F_{P, \eta}$ of degree $\ell$  such that 
 if $\alpha \otimes E_\eta \otimes F_{P, \eta} \neq 0$, then  ind$(\alpha \otimes E_\eta \otimes L_{P, \eta}) < $ 
 ind$(\alpha \otimes E_{\eta} \otimes F_{P, \eta})$.
\end{lemma}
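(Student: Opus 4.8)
The goal is to produce a degree $\ell$ cyclic extension $L_{P,\eta}$ of $F_{P,\eta}$ which, whenever $\alpha\otimes E_\eta\otimes F_{P,\eta}\neq 0$, strictly drops the index of $\alpha\otimes E_\eta\otimes F_{P,\eta}$. The plan is to work over the complete discretely valued field $F_{P,\eta}$, whose residue field $\kappa(\eta)_P$ is a (complete, hence local) field since $\kappa(\eta)$ is a global function field and $P$ gives a place of it. First I would recall that $\eta$ being of type 3 means $\nu_\eta(\lambda)=r\ell$, $r\alpha\otimes E_\eta\neq 0$, and $\mathrm{ind}(\alpha\otimes F_\eta)=\mathrm{ind}(\alpha)$; in particular $\alpha$ is ramified at $\eta$, so $E_\eta\neq F_\eta$ and $E_\eta/F_\eta$ is the (nontrivial) unramified cyclic lift of the residue. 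Passing to $E_\eta\otimes F_{P,\eta}$: if this is not a field, then by (\ref{hot_point}) applied over the two-dimensional complete local ring at $P$ the index already drops at $F_P$, and I can take $L_{P,\eta}$ to be any unramified cyclic degree $\ell$ extension, so the conclusion is vacuous-or-easy. Hence assume $E_\eta\otimes F_{P,\eta}$ is a field; then it is again a complete discretely valued field with residue field a finite extension of the local field $\kappa(\eta)_P$, i.e.\ again a local field.

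The heart of the argument is then an application of (\ref{local-local}) over the complete discretely valued field $E_\eta\otimes F_{P,\eta}$ (whose residue field is local and, after adjoining a primitive $\ell$th root of unity which costs an extension of degree coprime to $\ell$, contains $\mu_\ell$) to the class $\alpha\otimes E_\eta\otimes F_{P,\eta}$ together with the element $\lambda$ — noting $\alpha\cdot(\lambda)=0$ pulls back, so $(\alpha\otimes E_\eta\otimes F_{P,\eta})\cdot(\lambda)=0$. This yields a degree $\ell$ extension of $E_\eta\otimes F_{P,\eta}$ dropping the index. What I actually need, though, is an extension of $F_{P,\eta}$ itself, not of $E_\eta\otimes F_{P,\eta}$. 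So instead I would directly construct $L_{P,\eta}/F_{P,\eta}$ unramified of degree $\ell$ by choosing its residue field: I want an unramified cyclic degree $\ell$ extension $L_{P,\eta}$ of $F_{P,\eta}$ whose residue field $L(P,\eta)$ is chosen so that $E_\eta(P,\eta)\otimes_{\kappa(\eta)_P} L(P,\eta)$ is a field over which the residue algebra of $\alpha\otimes E_\eta\otimes F_{P,\eta}$ has smaller index — this is possible exactly as in the proof of (\ref{local-local}): over the local field $\kappa(\eta)_P$ there are at least three distinct cyclic degree $\ell$ extensions once $\mu_\ell$ is present, only one of which can sit inside the cyclic residue field $E_\eta(P,\eta)$, so a third one linearly disjoint from it exists, and by (\cite[p.~131]{CFANT}) the index over a local field drops after any nontrivial field extension. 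Lifting that residue extension via the canonical unramified correspondence gives $L_{P,\eta}$. Since $L_{P,\eta}$ is unramified, (\ref{dvr_ind}) together with the fact that $E_\eta\otimes L_{P,\eta}$ is still a field computes $\mathrm{ind}(\alpha\otimes E_\eta\otimes L_{P,\eta}) = \mathrm{ind}(\overline{\alpha'}\otimes E_\eta(P,\eta)\otimes L(P,\eta))\,[E_\eta:F_\eta]$, which is strictly less than $\mathrm{ind}(\alpha\otimes E_\eta\otimes F_{P,\eta})$ by the residue-field index drop.

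The step I expect to be the main obstacle is the root-of-unity reduction: (\ref{local-local}) and the "at least three cyclic degree $\ell$ extensions" count both require $\mu_\ell\subset$ (residue field), whereas the statement of (\ref{type3_square}) has no such hypothesis, and unlike in (\ref{thm_dvr}) I cannot freely enlarge $F_{P,\eta}$ because the conclusion is a statement about $F_{P,\eta}$ exactly. I would handle this by observing that $[F_{P,\eta}(\rho):F_{P,\eta}]$ is coprime to $\ell$ (where $\rho$ is a primitive $\ell$th root of unity), constructing $L'$ over $F_{P,\eta}(\rho)$ with the index-dropping property, and then descending: the corestriction or a norm/transfer argument, or more simply the fact that for an extension of degree coprime to $\ell$ the index of an $\ell$-primary class is unchanged under base change and one can find a sub-extension of the compositum defined over $F_{P,\eta}$ of the right degree — concretely, taking $L_{P,\eta}$ to be the unramified degree $\ell$ extension of $F_{P,\eta}$ whose residue field is the unique one whose base change to $\kappa(\eta)_P(\rho)$ realizes the chosen residue extension. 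One must check this is well-defined and cyclic; since all these extensions are unramified and determined by their residue fields, and the residue field of $F_{P,\eta}$ is a local field (so has a unique unramified extension of each degree and Galois theory behaves well), this descent is routine. The remaining bookkeeping — that $E_\eta\otimes L_{P,\eta}$ stays a field and that $[E_\eta:F_\eta]$ is a common factor in both index formulas — follows from (\ref{dvr_ind}), (\ref{local_index}) and (\ref{ur_ext}) as used throughout \S 5.
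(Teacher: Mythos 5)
Your main-case construction is essentially the paper's proof (pick an unramified cyclic degree $\ell$ extension of $F_{P,\eta}$ whose residue field is a cyclic degree $\ell$ extension of the local field $\kappa(\eta)_P$ not sitting inside the residue field of $E$, lift it, and use the index drop over local fields from \cite[p.~131]{CFANT}), but your case split has a genuine gap. When $E_\eta\otimes F_{P,\eta}$ is not a field, the conclusion is neither vacuous nor obtainable from an arbitrary choice: writing $E_\eta\otimes F_{P,\eta}\simeq\prod E_{P,\eta}$, the class $\alpha\otimes E_\eta\otimes F_{P,\eta}$ is a product of copies of $\alpha\otimes E_{P,\eta}$ and can perfectly well be nonzero, and what the lemma demands is that its index drop after $\otimes L_{P,\eta}$. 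Lemma \ref{hot_point} only gives ind$(\alpha\otimes F_P)<$ ind$(\alpha)$ (and requires $P$ to be a nodal point, which is not assumed here); it says nothing about ind$(\alpha\otimes E_\eta\otimes L_{P,\eta})$ versus ind$(\alpha\otimes E_\eta\otimes F_{P,\eta})$. Moreover ``any'' unramified cyclic degree $\ell$ extension can fail: if its residue field lies inside the residue field $E(\eta)_P$ of the factor $E_{P,\eta}$, then $E_{P,\eta}\otimes L_{P,\eta}$ splits and the index does not move. The paper avoids the case split entirely by making the choice relative to one factor $E_{P,\eta}$: choose $L(\eta)_P/\kappa(\eta)_P$ cyclic of degree $\ell$ not isomorphic to a subfield of $E(\eta)_P$; this works uniformly whether or not $E_\eta\otimes F_{P,\eta}$ is a field, and is how you should repair your argument. (Also, your formula ind$(\alpha\otimes E_\eta\otimes L_{P,\eta})=$ ind$(\overline{\alpha}'\otimes E(\eta)_P\otimes L(\eta)_P)\,[E_\eta:F_\eta]$ has a spurious factor $[E_\eta:F_\eta]$: since $\alpha\otimes E_\eta$ is unramified, its index over $E_\eta\otimes L_{P,\eta}$ equals the index of the residue class, with no factor; the inequality you want follows directly.)

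The second problem is the root-of-unity descent, which is both unnecessary and unworkable. It is unnecessary because in the paper's setting the primitive $\ell^{\rm th}$ root of unity in $F$ (hence in $\kappa(\eta)_P$) is a standing hypothesis, arranged once and for all in the reduction at the start of the proof of the main theorem (as in \S 6); the paper's proof of this lemma simply invokes it. It is unworkable because without it the statement can actually fail: if $\mu_\ell\not\subset\kappa(\eta)_P$ and the residue characteristic is prime to $\ell$, then $\kappa(\eta)_P$ has exactly one cyclic degree $\ell$ extension (the unramified one), and every cyclic degree $\ell$ extension of $F_{P,\eta}$ is unramified with that residue field; if this extension lies inside $E(\eta)_P$, no cyclic choice makes $E_{P,\eta}\otimes L_{P,\eta}$ a field, and since $\alpha\otimes E_\eta$ is unramified a ramified extension cannot lower the index either. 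So no corestriction or descent from $F_{P,\eta}(\rho)$ can manufacture the required cyclic $L_{P,\eta}$: the cyclic degree $\ell$ extension of $\kappa(\eta)_P(\rho)$ that drops the index need not be of the form $M\otimes\kappa(\eta)_P(\rho)$ with $[M:\kappa(\eta)_P]=\ell$, and when it is, $M$ need not be cyclic. The correct move is to drop the descent and quote the standing root-of-unity assumption.
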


\begin{proof} Since $E_{\eta}/ F_{\eta}$ is a cyclic 
 unramified field extension of degree a  power of $\ell$,   
 $E_\eta \otimes F_{P, \eta} \simeq \prod E_{P,\eta}$
 for some cyclic field extension $E_{P, \eta}/F_{P, \eta}$ of degree a power of $\ell$.
 Let $E(\eta)_P$ be the  residue field of $E_{P, \eta}$.
Then $E(\eta)_P/\kappa(\eta)_P$ is a cyclic extension of degree a power of $\ell$.
 Note that either $E(\eta)_P = \kappa(\eta)_P$ or $E(\eta)_P/\kappa(\eta)_P$ is a
 cyclic extension of degree a positive  power of $\ell$.
 If $E(\eta)_P = \kappa(\eta)_P$, then let $L(\eta)_P/\kappa(\eta)_P$ be any cyclic extension of 
 degree $\ell$. Suppose $E(\eta)_P \neq \kappa(\eta)_P$. Since  $E(\eta)_P/\kappa(\eta)_P$ is 
 cyclic extension of degree a positive  power of $\ell$,  there is only  one   subextension of $E(\eta)_P$ 
  which is cyclic over $\kappa(\eta)_P$ of degree $\ell$. 
 Since $\kappa(\eta)_P$ is a local field containing primitive  $\ell^{\rm th}$   root of unity, 
 there are at least 2  non-isomorphic  cyclic field extensions of $\kappa(\eta)_P$
of degree $\ell$.  Thus there exists a cyclic field extension $L(\eta)_P/\kappa(\eta)_P$ of degree 
$\ell$ which is not isomorphic to a subfield of  $E(\eta)_P$. 
Let $L_{P, \eta}/F_{P, \eta}$ be the unramified extension of degree $\ell$ with residue field 
$L(\eta)_P$.

Suppose $\alpha \otimes E_{\eta}  \otimes F_{P, \eta} \neq 0$.   
Then $\alpha \otimes E_{P, \eta} \neq 0$.
Since $E_{P, \eta}$ and $L_{P,\eta}$ are cyclic field extensions of $F_{P, \eta}$
and $L_{P, \eta}$ is not isomorphic to a subfield of $E_{P, \eta}$, $E_{P, \eta} \otimes 
L_{P, \eta}$ is a field and $[E_{P, \eta} \otimes L_{P, \eta} : E_{P, \eta}] = [L_{P, \eta} : F_{P, \eta}] = \ell$.
In particular $E(\eta)_P \otimes L(\eta)_P$ is  field and $[E(\eta)_P \otimes L(\eta)_P :  E(\eta)_P] = \ell$.
Write $\alpha \otimes F_\eta = \alpha'+ (E_\eta, \sigma_\eta, \pi_\eta)$ as in (\ref{rbc}).
Since $\alpha \otimes E_{\eta} =  \alpha' \otimes E_{\eta}$, $\alpha \otimes E_\eta$ is unramified at $\eta$.
Let $\beta_P$ be the image of $\alpha \otimes E_{\eta} \otimes F_{P, \eta}$ in $H^2(E(\eta)_P, \mu_n)$.
Since  $\alpha \otimes E_{P, \eta}  \neq 0$ and $E_{P, \eta}$ is a completely discretely valued field with 
residue field $E(\eta)_P$, $\beta_P  \neq 0$. Since $E(\eta)_P$ is a local field, 
ind$(\beta_P \otimes E(\eta)_P \otimes L(\eta)_P) < $ ind$(\beta_P)$ and hence 
ind$(\alpha \otimes E_ \eta  \otimes L_{P, \eta}) = $ ind$(\alpha \otimes E_{P, \eta} \otimes L_{P, \eta}) < $ ind$(\alpha \otimes E_{P, \eta})  = $ ind$(\alpha \otimes E_\eta \otimes F_{P, \eta})$.
  \end{proof}

\begin{lemma}
 \label{choice_at_P_2_56}
 Let $P \in \PP$,  $\eta_1$ and $\eta_2$ be    
codimension zero points of  $X_0$  containing   $P$.
Suppose that $\eta_1$ is of type 2 and $\eta_2$ is of type 5 or 6.
Then there exist  $\mu_i \in F_P$, $1 \leq i \leq \ell$, such that  \\
$1)$ $\mu_1 \cdots \mu_\ell = \lambda$,\\
$2) $ $\nu_{\eta_1}(\mu_1) = \nu_{\eta_1}(\lambda)$, $\nu_{\eta_1}(\mu_i) = 0$ for $i \geq 2$, \\
$3)$  $\nu_{\eta_2}(\mu_i) = \nu_{\eta_2}(\lambda)/\ell$ for all $i \geq 1$, \\
$4)$   $\alpha \cdot (\mu_i) = 0 \in H^3(F_P, \mu_n^{\otimes 2})$.\\ 
 \end{lemma}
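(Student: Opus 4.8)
\textbf{Proof plan for Lemma~\ref{choice_at_P_2_56}.}
The plan is to build the $\mu_i$ at the local ring $\hat A_P$ (with maximal ideal $(\pi_{\eta_1},\pi_{\eta_2})$, writing $\pi_1=\pi_{\eta_1}$, $\pi_2=\pi_{\eta_2}$) by separating the valuation data along the two branches and then correcting by unit factors so that each $\alpha\cdot(\mu_i)$ vanishes. Since $\lambda=w\pi_1^{s_1}\pi_2^{s_2}$ with $w$ a unit in $\hat A_P$, and since $\eta_2$ is of type $5$ or $6$ we have $\ell\mid s_2$, say $s_2=\ell t_2$; write $s_1=\nu_{\eta_1}(\lambda)$. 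The naive first attempt is $\mu_1=w\pi_1^{s_1}\pi_2^{t_2}$, $\mu_i=\pi_2^{t_2}$ for $2\le i\le \ell$, which already satisfies conditions $(1)$, $(2)$, $(3)$; the whole content of the lemma is achieving $(4)$ simultaneously. First I would record that $\alpha\cdot(\lambda)=0$ over $F$ (hypothesis), and more usefully over $F_P$, so $\alpha\cdot(w\pi_1^{s_1}\pi_2^{\ell t_2})=0$.

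The key step is to kill $\alpha\cdot(\pi_2)$ after a harmless modification. Because $\eta_2$ is type $5$ or $6$, the hypothesis on the lift $E_{\eta_2}$ of the residue of $\alpha$ is $r\alpha\otimes E_{\eta_2}=0$ where $\nu_{\eta_2}(\lambda)=r\ell$, i.e. $r=t_2$; I would invoke Lemma~\ref{norm} (after using Corollary~\ref{local-cyclic} to write $\alpha\otimes F_P=(E/F_P,\sigma,u\pi_2\pi_1^{\ell m'})$-type form, or more directly Lemma~\ref{type2_56} if $s_1$ is coprime to $\ell$, which it is since $\eta_1$ is type $2$) to produce a unit-adjusted element whose class traces the vanishing. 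Concretely: since $s_1$ is coprime to $\ell$ and $r\alpha\otimes E_{\eta_2}=0$, Lemma~\ref{type2_56} (applied with the roles of $\pi,\delta$ played by $\pi_2,\pi_1$ — note $\lambda=w\pi_2^{\ell t_2}\pi_1^{s_1}$ in that notation) gives $\theta\in\hat A_P$ with $\alpha\cdot(\theta)=0$, $\nu_{\eta_2}(\theta)=t_2$, $\nu_{\eta_1}(\theta)=0$. Set $\mu_i=\theta$ for $2\le i\le\ell$ and $\mu_1=\lambda\theta^{-(\ell-1)}$. Then $(1)$ holds by construction; $(2)$ holds since $\nu_{\eta_1}(\theta)=0$ forces $\nu_{\eta_1}(\mu_1)=\nu_{\eta_1}(\lambda)=s_1$; $(3)$ holds since $\nu_{\eta_2}(\mu_i)=t_2$ for $i\ge2$ and $\nu_{\eta_2}(\mu_1)=s_2-(\ell-1)t_2=t_2$; and $(4)$ for $i\ge2$ is exactly $\alpha\cdot(\theta)=0$, while for $i=1$ it follows from $\alpha\cdot(\mu_1)=\alpha\cdot(\lambda)-(\ell-1)\alpha\cdot(\theta)=0$ in $H^3(F_P,\mu_n^{\otimes2})$.

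The main obstacle is making sure the application of Lemma~\ref{type2_56} is legitimate in the present local setup: that lemma is phrased for the complete two-dimensional regular local ring $A$ of \S6 with its fixed $\pi,\delta$, whereas here $\hat A_P$ is such a ring with $\pi_1,\pi_2$ as a regular system of parameters, and one must check that $\alpha$ is unramified on $\hat A_P$ except possibly along $(\pi_1)$ and $(\pi_2)$ — which holds because $\mathrm{ram}_\XX(\alpha)\cup\mathrm{supp}_\XX(\lambda)\cup X_0$ has normal crossings and $P\in\PP$ lies only on the components $\eta_1,\eta_2$. One also needs $F_P$ (equivalently $\hat A_P$) to contain a primitive $\ell^{\mathrm{th}}$ root of unity; I would reduce to that case exactly as in \S6, or note that if the ambient construction has already passed to such a field this is automatic. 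A secondary point is the case distinction hidden in Lemma~\ref{type2_56}'s hypothesis $s\alpha\otimes E_\delta=0$: here $\delta$ plays the role of $\pi_2$, and the needed hypothesis $t_2\,\alpha\otimes E_{\eta_2}=0$ is precisely the defining condition of types $5$ and $6$, so there is nothing extra to prove. If $\alpha\otimes F_P=0$ (which can happen, e.g. when $\eta_2$ is type $6$ and the index has already dropped at $P$), the lemma is trivial: take $\mu_i=\pi_2^{t_2}$ for $i\ge2$ and $\mu_1=\lambda\pi_2^{-(\ell-1)t_2}$, since then every $\alpha\cdot(\mu_i)=0$ automatically.
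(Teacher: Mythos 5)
Your proposal is correct and is essentially the paper's own proof: both apply Lemma \ref{type2_56} over $\hat A_P$ with $\pi=\pi_{\eta_1}$ and $\delta=\pi_{\eta_2}$ (your parenthetical reverses this assignment, but the hypothesis check and the conclusion you actually use are the correct ones) to produce $\theta$ with $\alpha\cdot(\theta)=0$, $\nu_{\eta_1}(\theta)=0$, $\nu_{\eta_2}(\theta)=\nu_{\eta_2}(\lambda)/\ell$, and then set $\mu_i=\theta$ for $i\ge 2$ and $\mu_1=\lambda\theta^{1-\ell}$. The side conditions you flag (ramification of $\alpha$ confined to $(\pi_{\eta_1})$ and $(\pi_{\eta_2})$, the primitive $\ell^{\rm th}$ root of unity) are part of the standing hypotheses of \S 6 and \S 9, so nothing further is required.
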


\begin{proof}  Since $\eta_1$ is of type 2 and $\eta_2$ is of type 5 or 6, 
we have $\lambda = w\pi_{\eta_1}^{r_1} \pi_{\eta_2}^{r_2\ell}$ with
$r_1$ coprime to $\ell$ and $r_2\alpha \otimes E_{\eta_2} = 0$.
Hence, by (\ref{type2_56}), there exists $\theta \in F_P$ such that 
$\alpha \cdot (\theta) = 0$, $\nu_{\eta_1}(\theta) = 0$ and $\nu_{\eta_2}(\theta) = r_2$.
For $i \geq 2$, let $\mu_i = \theta$ and $\mu_1 = \lambda \theta^{1-\ell}$.
Then $\mu_i$ have the required properties. 
 \end{proof}

\begin{lemma}
 \label{choice_at_P_5_6}
 Let $P \in \PP$,  $\eta_1$ and $\eta_2$ be    
codimension zero points of  $X_0$  containing   $P$.
Suppose that $\eta_1$ and $\eta_2$ are  of type 5 or 6.
Then there exist  $\mu_i \in F_P$, $1 \leq i \leq \ell$, such that  \\
$1)$ $\mu_1 \cdots \mu_\ell = \lambda$,\\
 $2)$  $\nu_{\eta_j}(\mu_i) = \nu_{\eta_j}(\lambda)/\ell$ for all $i \geq 0$ and $j = 1, 2$, \\
$4)$   $\alpha \cdot (\mu_i) = 0 \in H^3(F_P, \mu_n^{\otimes 2})$.\\ 
 \end{lemma}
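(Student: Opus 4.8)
\textbf{Proof proposal for Lemma~\ref{choice_at_P_5_6}.}
The plan is to reduce this to the work already done at points where one of the curves is of type~2, but since here both $\eta_1$ and $\eta_2$ are of type 5 or 6, no type~2 curve is available, so the strategy instead is to split $\lambda$ into $\ell$ factors each of which individually kills $\alpha$. Write $\lambda = w\pi_{\eta_1}^{r_1\ell}\pi_{\eta_2}^{r_2\ell}$ with $w\in\hat A_P$ a unit (the valuations are divisible by $\ell$ because for type 5 or 6 points $\nu_{\eta_j}(\lambda)=r_j\ell$ with $r_j\alpha\otimes E_{\eta_j}=0$). The idea is to produce a single $\theta\in F_P$ with $\alpha\cdot(\theta)=0$, $\nu_{\eta_1}(\theta)=r_1$ and $\nu_{\eta_2}(\theta)=r_2$; then setting $\mu_i=\theta$ for $2\le i\le\ell$ and $\mu_1=\lambda\theta^{-(\ell-1)}$ gives $\mu_1\cdots\mu_\ell=\lambda$, the prescribed valuations at $\eta_1$ and $\eta_2$, and $\alpha\cdot(\mu_1)=\alpha\cdot(\lambda)-(\ell-1)\alpha\cdot(\theta)=\alpha\cdot(\lambda)\otimes F_P=0$ (by (\ref{zero}), since $\alpha\cdot(\lambda)$ is unramified on $A$ except possibly at $\pi_{\eta_1},\pi_{\eta_2}$), as required.

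So the whole lemma comes down to constructing such a $\theta$, and this is the step I expect to be the main obstacle. The model to imitate is (\ref{type56}), which produces exactly $\theta\in A$ with $\alpha\cdot(\theta)=0$, $\nu_\pi(\theta)=r$, $\nu_\delta(\theta)=s$, under the hypotheses $\alpha\cdot(\lambda)=0$, $\lambda=w\pi^{r\ell}\delta^{s\ell}$, $r\alpha\otimes E_\pi=0$ and $s\alpha\otimes E_\delta=0$ — which are precisely the hypotheses available here once we work over the complete two-dimensional regular local ring $\hat A_P$ with the two primes $\pi_{\eta_1},\pi_{\eta_2}$ playing the role of $\pi,\delta$, using that both $\eta_j$ are of type 5 or 6 to get $r_j\alpha\otimes E_{\eta_j}=0$. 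Thus the first step is to verify that $\hat A_P$ together with $\alpha\otimes F_P$ and $\lambda$ satisfies the standing hypotheses of \S\ref{choice_at_closed_points}/the type56 setup: $\alpha\otimes F_P$ is unramified on $\hat A_P$ except possibly at the two branches through $P$ (true since $\XX$ is regular with normal crossings ramification), and the residue-of-$\alpha$ data at each branch agrees with $E_{\eta_j}\otimes F_{P,\eta_j}$; here one uses that $\XX$ has no special points so the relevant $E_{\eta_j}\otimes F_{P,\eta_j}$ behave well. Then invoke (\ref{type56}) directly to get $\theta$.

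One subtlety is whether (\ref{type56}) applies verbatim when one of the $\eta_j$ is of type 5a versus 5b versus 6, i.e. whether $\alpha\otimes F_P$ is ramified or unramified along each branch; (\ref{type56}) is stated for $\alpha$ unramified on $A$ except possibly at $\pi$ and $\delta$ and allows either branch to be unramified (the case where $E$ or the ramification is trivial is covered since one may take $[E:F]$-powers and $m=0$). If $\alpha\otimes F_P=0$ outright the statement is trivial (take $\theta=w_0\pi_{\eta_1}^{r_1}\pi_{\eta_2}^{r_2}$ for any unit $w_0$ making valuations right). Finally I would record that $\nu_{\eta_j}(\mu_i)=r_j=\nu_{\eta_j}(\lambda)/\ell$ for all $i$ and $j$, which is immediate from $\mu_i=\theta$ (for $i\ge2$) and $\nu_{\eta_j}(\mu_1)=\nu_{\eta_j}(\lambda)-(\ell-1)r_j=r_j$, completing conditions (1), (2) and (4).
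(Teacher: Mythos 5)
Your proposal is correct and follows essentially the same route as the paper: both reduce the lemma to Lemma~\ref{type56} applied over $\hat A_P$ (using that type 5 or 6 gives $\nu_{\eta_j}(\lambda)=r_j\ell$ with $r_j\alpha\otimes E_{\eta_j}=0$, hence the hypotheses there), obtaining $\theta$ with $\alpha\cdot(\theta)=0$ and the prescribed valuations, and then setting $\mu_i=\theta$ for $i\ge 2$ and $\mu_1=\lambda\theta^{1-\ell}$. The extra checks you record (unramifiedness of $\alpha$ outside the two branches, compatibility of the residue data at the branches, vanishing of $\alpha\cdot(\mu_1)$) are fine, though the no-special-points hypothesis is not actually needed for them.
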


\begin{proof}  Since $\eta_1$ and $\eta_2$ are of type 5 or 6, by (\ref{type56}), 
there exists $\theta \in F_P$ such that $\alpha \cdot (\theta) = 0$ and
$\nu_{\eta_i}(\theta) = \nu_{\eta_i}(\lambda)/\ell$ for $i  = 1, 2$.
For $i \geq 2$, let $\mu_i = \theta \in F_P$
and $\mu_1 = \lambda \theta^{1-\ell} \in F_P$.
Then $\mu_i$ have the required properties. 
  \end{proof}

\begin{lemma}
 \label{choice_at_P_3_5}
 Let $P \in \PP$,  $\eta_1$ be a codimension zero point of  $X_0$ of type 3 
 and $\eta_2$    a  codimension zero point of  $X_0$ of type  5.
 Suppose $\eta_1$ and $\eta_2$ intersect at  $P$. 
Then there exist a cyclic field extension  $L_P/F_P$  
 of degree $\ell$ and
$\mu_P  \in L_P$ such that  \\ 
$1)$ $N_{L_P/F_P}(\mu_P) = \lambda$ \\
$2)$ ind$(\alpha \otimes L_P) < $ ind$(\alpha)$ \\
$3)$  $\alpha \cdot (\mu_P) = 0 \in H^3(L_P, \mu_n^{\otimes 2})$\\
$4)$ $L_P \otimes F_{P, \eta_i}/F_{P, \eta_i}$ is an unramified field extension,\\ 
$5)$  if  $\lambda \in F_P^{*\ell}$ and $\alpha \otimes E_{\eta_1} \otimes F_{P, \eta_1} \neq 0$, 
then ind$(\alpha \otimes (E_{\eta_1} \otimes F_{P, \eta_1}) \otimes (L_P \otimes F_{P, \eta_1})) <  $ 
ind$(\alpha \otimes E_{\eta_1} \otimes F_{P, \eta_1})$,\\
$6)$  if $\eta_2$ is of type 5b, then $L_P \otimes F_{P, \eta_2} \simeq M_{\eta_2} \otimes F_{P, \eta_2}$.
 \end{lemma}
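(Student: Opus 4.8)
\textbf{Proof plan for Lemma~\ref{choice_at_P_3_5}.}
The strategy is to build the local data $(L_{P,\eta_1},\mu_{\eta_1})$ along the type~3 branch and $(L_{P,\eta_2},\mu_{\eta_2})$ along the type~5 branch separately, and then invoke the local patching result~(\ref{patching_at_P}) to glue them into a single $(L_P,\mu_P)$ over $F_P$. Since the ambient model $\XX$ has no special points, $P$ is not a special point of type~III or~IV; in particular $M_{\eta_2}\otimes F_{P,\eta_2}$ is a field when $\eta_2$ is of type~5b, and the type~3/type~5 configuration is ``compatible'' in the sense needed below.

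First, along the branch $\eta_2$ (type~5): if $\eta_2$ is of type~5a, then $\alpha$ is unramified at $\eta_2$, and I would apply~(\ref{local-local}) to $\alpha\otimes F_{P,\eta_2}$ and $\lambda$, choosing $L_{P,\eta_2}/F_{P,\eta_2}$ to be an \emph{unramified} cyclic field extension of degree $\ell$ (the final clause of~(\ref{local-local}) guarantees this is possible when $\lambda\in F_{P,\eta_2}^{*\ell}$, and otherwise $\nu_{\eta_2}(\lambda)=r\ell$ forces a careful but routine argument via~(\ref{lambda_non_square_local}) that the index drops over some unramified degree~$\ell$ extension); this yields $\mu_{\eta_2}$ with $N(\mu_{\eta_2})=\lambda$, $\alpha\cdot(\mu_{\eta_2})=0$, and $\mathrm{ind}(\alpha\otimes L_{P,\eta_2})<\mathrm{ind}(\alpha)$. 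If $\eta_2$ is of type~5b, I take $L_{P,\eta_2}=M_{\eta_2}\otimes F_{P,\eta_2}$, which is a field and an \emph{unramified} extension of $F_{P,\eta_2}$ since $M_{\eta_2}\subset E_{\eta_2}$ and $E_{\eta_2}/F_{\eta_2}$ is unramified; by~(\ref{7_norm}) there is $\mu_{\eta_2}\in M_{\eta_2}$ with $N_{M_{\eta_2}/F_{\eta_2}}(\mu_{\eta_2})=\lambda$ and $\alpha\cdot(\mu_{\eta_2})=0$, and~(\ref{index_M}) gives the index drop. This secures clauses (4) and (6).

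Next, along the branch $\eta_1$ (type~3): here $\nu_{\eta_1}(\lambda)=r\ell$ and $r\alpha\otimes E_{\eta_1}\ne0$, and the subtlety is clause (5) — I want an unramified degree~$\ell$ extension $L_{P,\eta_1}/F_{P,\eta_1}$ that both admits $\lambda$ as a norm (with a compatible $\mu_{\eta_1}$ satisfying $\alpha\cdot(\mu_{\eta_1})=0$) and, when $\lambda\in F_P^{*\ell}$ and $\alpha\otimes E_{\eta_1}\otimes F_{P,\eta_1}\ne0$, lowers the index of $\alpha\otimes E_{\eta_1}\otimes F_{P,\eta_1}$. The extension produced by~(\ref{type3_square}) does exactly this last job: its residue field is a cyclic degree~$\ell$ extension of the local field $\kappa(\eta_1)_P$ not embedding in $E(\eta_1)_P$, so $\mathrm{ind}(\alpha\otimes E_{\eta_1}\otimes L_{P,\eta_1})<\mathrm{ind}(\alpha\otimes E_{\eta_1}\otimes F_{P,\eta_1})$; combined with~(\ref{dvr_ind}) this also forces $\mathrm{ind}(\alpha\otimes L_{P,\eta_1})<\mathrm{ind}(\alpha)$. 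I then need to arrange that $\lambda$ is a norm from this particular $L_{P,\eta_1}$: when $\lambda\notin F_{P,\eta_1}^{*\ell}$ one uses~(\ref{dvr_local_field_norm}) to identify $L_{P,\eta_1}$ with $F_{P,\eta_1}(\sqrt[\ell]{\lambda})$ (forcing the choice, and then re-running~(\ref{type3_square})'s index computation for this fixed extension, which works because $\nu_{\eta_1}(\lambda)=r\ell$ makes $F_{P,\eta_1}(\sqrt[\ell]{\lambda})$ unramified over $F_{P,\eta_1}$); when $\lambda\in F_{P,\eta_1}^{*\ell}$, say $\lambda=\mu^\ell$, one takes $\mu_{\eta_1}=\mu\in F_{P,\eta_1}\subset L_{P,\eta_1}$ with the extension from~(\ref{type3_square}), and $\alpha\cdot(\mu_{\eta_1})=0$ follows from~(\ref{corestriction}) since $\mathrm{cor}(\alpha\cdot(\mu))=\alpha\cdot(\lambda)=0$ over $F_{P,\eta_1}$. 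The main obstacle is precisely this reconciliation — simultaneously meeting the norm condition, the $\alpha\cdot(\mu_{\eta_1})=0$ condition, and the clause~(5) index-drop for the same unramified $L_{P,\eta_1}$ — and it is handled by splitting into the cases $\lambda\notin F_P^{*\ell}$ (where $L_{P,\eta_1}$ is pinned down by~(\ref{dvr_local_field_norm}) and one checks compatibility directly) and $\lambda\in F_P^{*\ell}$ (where the norm element is a scalar $\ell$-th root and one has full freedom in choosing the unramified extension).

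Finally, both $L_{P,\eta_1}$ and $L_{P,\eta_2}$ are unramified cyclic field extensions of degree $\ell$, so~(\ref{patching_at_P}), applied with $d_0=\mathrm{ind}(\alpha)$ (and noting that the extra hypothesis of~(\ref{patching_at_P}) about the $\lambda\in F_P^{*\ell}$, $\alpha=(E/F,\sigma,v\pi)$ case is consistent with the type~3 structure, since a type~3 point is in particular ramified with $r\alpha\otimes E_\eta\neq0$), yields a cyclic field extension $L_P/F_P$ of degree $\ell$ and $\mu_P\in L_P$ with $N_{L_P/F_P}(\mu_P)=\lambda$, $\alpha\cdot(\mu_P)=0$, $\mathrm{ind}(\alpha\otimes L_P)<\mathrm{ind}(\alpha)$, and $L_P\otimes F_{P,\eta_i}\simeq L_{P,\eta_i}$ for $i=1,2$. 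The last isomorphisms give clauses (4) and (6) directly, and clause~(5) is inherited from the construction of $L_{P,\eta_1}$ via~(\ref{type3_square}) together with $L_P\otimes F_{P,\eta_1}\simeq L_{P,\eta_1}$.
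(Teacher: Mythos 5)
Your proposal is correct and is essentially the paper's argument: in the case $\lambda\in F_P^{*\ell}$ the paper makes exactly your branch-wise choices (the extension of (\ref{type3_square}) along $\eta_1$, $M_{\eta_2}\otimes F_{P,\eta_2}$ along $\eta_2$ --- with (\ref{local-local}) covering the type 5a subcase as in (\ref{choice_at_P_55}) --- and the scalar $\mu_P=\sqrt[\ell]{\lambda}\in F_P$) and glues them by (\ref{patching_at_P}), while in the case $\lambda\notin F_P^{*\ell}$ it simply takes $L_P=F_P(\sqrt[\ell]{\lambda})$ via (\ref{local_nonsquare}) and (\ref{dvr_local_field_norm}), which is what your patching step produces in that case anyway. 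The one dubious remark in your write-up --- ``re-running'' the index computation of (\ref{type3_square}) for the forced extension $F_{P,\eta_1}(\sqrt[\ell]{\lambda})$ --- is unjustified but harmless: since $\ell$ divides $\nu_{\eta_i}(\lambda)$ on both branches, $\lambda\notin F_{P,\eta_1}^{*\ell}$ occurs exactly when $\lambda\notin F_P^{*\ell}$, and in that situation clause (5) is vacuous, so only the index drop of (\ref{lambda_non_square_local}) is needed there.
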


\begin{proof} Suppose $\lambda \not\in F_P^{*\ell}$. 
Let  $L_P = F_P(\sqrt[\ell]{\lambda})$ and
$\mu_P = \sqrt[\ell]{\lambda}$.  Then  $N_{L_P/F_P}(\mu_P) = \lambda$ and 
by (\ref{local_nonsquare}) 2) and 3) are satisfied.  Since $\eta_i$ is of type 3 or 5, 
$\nu_{\eta_i}(\lambda) $ is divisible by $\ell$ and hence 4) is satisfied. 
Since $\lambda \not\in F_P^{*\ell}$, the case 5) does not arise. 
Suppose that $\eta_2$ is of type 5b.  Since $\XX$ has no special points, 
$M_{\eta_2} \otimes F_{P, \eta_2}$ is a field.  Since $\lambda$ is a norm from 
$M_{\eta_2}$ (\ref{7_norm}), by (\ref{dvr_local_field_norm}), we have $L_P \otimes F_{P, \eta_2} \simeq M_{\eta_2} \otimes F_{P, \eta_2}$.

Suppose that $\lambda \in F_P^{*\ell}$.
Let $L_{P, \eta_1}$ be as in (\ref{type3_square}) and $\mu_{P, \eta_1} = \sqrt[\ell]{\lambda}$. 
Write $\alpha \otimes F_{\eta_1} = \alpha_1 \otimes (E_{\eta_1}, \sigma_1, \pi_{\eta_1})$ as in (\ref{rbc}).
Then by (\ref{dvr_ind}), we have ind$(\alpha \otimes F_{\eta_1}) = $ ind$(\alpha \otimes E_{\eta_1})[E_{\eta_1} : 
F_{\eta_1}]$. Since 
$\eta_1$ is of type 3, ind$(\alpha) = $ ind$(\alpha \otimes F_{\eta_1})$ and $r_1 \alpha \otimes E_{\eta_1} \neq 0$, 
where $\nu_{\eta_1}(\lambda) = r_1\ell$. In particular $\alpha \otimes E_{\eta_1} \neq 0$.
By the choice of $L_{P, \eta_1}$ as in (\ref{type3_square}), we have either 
  $\alpha \otimes E_{\eta_1}  \otimes F_{P, \eta_1} = 0$  or 
 ind$(\alpha \otimes E_{\eta_1} \otimes L_{P, \eta_1}) < $ ind$(\alpha \otimes E_{\eta_1} \otimes F_{P, \eta_1})$.
Thus ind$(\alpha \otimes E_{\eta_1}  \otimes F_{P, \eta_1} )  <  $ ind$(\alpha \otimes E_{\eta_1})$. 
We have ${\rm ind}(\alpha \otimes L_{P, \eta_1}) \leq {\rm ind}(\alpha \otimes E_{\eta_1} \otimes L_{P, \eta_1})
[E_{\eta_1} \otimes L_{P, \eta_1} : L_{P, \eta_1}]  < $ ind$(\alpha \otimes E_{\eta_1}) [E_{\eta_1} : F_{\eta_1}]
= $ ind$(\alpha)$. Since $L_{P, \eta_1}$ is a field and cores$_{L_{P,\eta_1}/F_{P, \eta_1}}
(\alpha \cdot (\mu_{P, \eta_1})) = \alpha \cdot (\lambda) = 0$, by (\ref{corestriction}), 
$\alpha \cdot (\mu_{P, \eta_1}) = 0 \in H^3(L_{P, \eta_1}, \mu_n^{\otimes 2})$.
 Since  $\XX$ has no special points,
 $M_{\eta_2} \otimes F_{P, \eta_2}$ is a field.  Let $L_{P, \eta_2} = M_{\eta_2} \otimes F_{P, \eta_2}$
 and $\mu_{P, \eta_2} = \sqrt[\ell]{\lambda}$. Then $N_{L_{P, \eta_2}/F_{P, \eta_2}}(\mu_{P, \eta_2}) = 
 \lambda$ and by (\ref{7_norm}), ind$(\alpha \otimes L_{P, \eta_2}) < $ ind$(\alpha)$ and 
 $\alpha \cdot (\mu_{P, \eta_2}) =0$. 
 Then, by (\ref{patching_at_P}),  there exist $L_P$ and $\mu_P$ with required properties. 
 \end{proof}

\begin{lemma}
 \label{choice_at_P_346}
 Let $P \in \PP$,  $\eta_1$ and $\eta_2$ be  codimension zero points of  $X_0$ of type 3, 4 or 6.
  Suppose $\eta_1$ and $\eta_2$ intersect at  $P$. 
Then there exist a cyclic field extension  $L_P/F_P$  
 of degree $\ell$ and
$\mu_P  \in L_P$ such that  \\ 
$1)$ $N_{L_P/F_P}(\mu_P) = \lambda$ \\
$2)$ ind$(\alpha \otimes L_P) < $ ind$(\alpha)$ \\
$3)$  $\alpha \cdot (\mu_P) = 0 \in H^3(L_P, \mu_n^{\otimes 2})$\\
$4)$ $L_P \otimes F_{P, \eta_i}/F_{P, \eta_i}$ is an unramified field extension,\\
$5)$  if $\eta_i$ is of type 3, $\lambda \in F_P^{*\ell}$ and $\alpha \otimes E_{\eta_i} \otimes F_{P, \eta_i} \neq 0$, 
then ind$(\alpha \otimes (E_{\eta_i} \otimes F_{P, \eta_i}) \otimes (L_P \otimes F_{P, \eta_i})) <  $ 
ind$(\alpha \otimes E_{\eta_i} \otimes F_{P, \eta_i})$.\\
 \end{lemma}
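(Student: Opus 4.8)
The plan is to present $\hat{A}_P$ as a complete two\nobreakdash-dimensional regular local ring whose maximal ideal is generated by $\pi_1:=\pi_{\eta_1}$ and $\pi_2:=\pi_{\eta_2}$, and to reduce the whole statement to (\ref{patching_at_P}) with $d_0=\mathrm{ind}(\alpha)$, much as in the proof of (\ref{choice_at_P_3_5}). Since each $\eta_i$ is of type $3$, $4$ or $6$, the integer $\nu_{\eta_i}(\lambda)$ is divisible by $\ell$; write $\lambda=w\pi_1^s\pi_2^t$ with $w\in\hat{A}_P$ a unit and $\ell\mid s$, $\ell\mid t$, and note that restricting the hypothesis $\alpha\cdot(\lambda)=0$ gives $\alpha\cdot(\lambda)=0\in H^3(F_P,\mu_n^{\otimes 2})$. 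I would then split according to whether $\lambda$ is an $\ell$-th power in $F_P$.

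If $\lambda\notin F_P^{*\ell}$, then $F_P(\sqrt[\ell]{\lambda})=F_P(\sqrt[\ell]{w})$ because $s$ and $t$ are divisible by $\ell$. I would take $L_P=F_P(\sqrt[\ell]{\lambda})$ and $\mu_P=\sqrt[\ell]{\lambda}$; then $(1)$ is immediate, $(2)$ and $(3)$ follow from (\ref{local_nonsquare}), and $(5)$ does not arise. For $(4)$ one notes that $w$ is a unit in the complete regular local ring $\hat{A}_P$ which is not an $\ell$-th power in $F_P$, so its image in $\kappa$ is not an $\ell$-th power, whence $w$ is not an $\ell$-th power in the complete discretely valued field $F_{P,\eta_i}$ (same residue field $\kappa$); thus $L_P\otimes F_{P,\eta_i}=F_{P,\eta_i}(\sqrt[\ell]{w})$ is an unramified field extension of degree $\ell$, as required.

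If $\lambda\in F_P^{*\ell}$, then $w\in F_P^{*\ell}$, and since $w$ is a unit and $\hat{A}_P$ is integrally closed, $w=w_0^\ell$ for a unit $w_0\in\hat{A}_P$; hence $\lambda=\mu^\ell$ with $\mu=w_0\pi_1^{s/\ell}\pi_2^{t/\ell}\in F_P$. For each $i$ I would construct an unramified cyclic field extension $L_{P,\eta_i}/F_{P,\eta_i}$ of degree $\ell$ with $\mathrm{ind}(\alpha\otimes L_{P,\eta_i})<\mathrm{ind}(\alpha)$, and with the additional index drop of $(5)$ when $\eta_i$ is of type $3$: if $\eta_i$ is of type $4$ or $6$ then $\mathrm{ind}(\alpha\otimes F_{\eta_i})<\mathrm{ind}(\alpha)$, hence $\mathrm{ind}(\alpha\otimes F_{P,\eta_i})<\mathrm{ind}(\alpha)$, and any unramified cyclic field extension of $F_{P,\eta_i}$ of degree $\ell$ does the job (one exists because $\kappa(\eta_i)_P$ is a local field); if $\eta_i$ is of type $3$ I would take $L_{P,\eta_i}$ from (\ref{type3_square}) and repeat the index computation of (\ref{choice_at_P_3_5}), using (\ref{dvr_ind}), the type $3$ equality $\mathrm{ind}(\alpha\otimes F_{\eta_i})=\mathrm{ind}(\alpha)$, and $\alpha\otimes E_{\eta_i}\ne 0$, to obtain both $\mathrm{ind}(\alpha\otimes L_{P,\eta_i})<\mathrm{ind}(\alpha)$ and $(5)$. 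In every case set $\mu_{P,\eta_i}=\mu$, so that $N_{L_{P,\eta_i}/F_{P,\eta_i}}(\mu)=\mu^\ell=\lambda$; since $\mathrm{cores}_{L_{P,\eta_i}/F_{P,\eta_i}}(\alpha\cdot(\mu))=\alpha\cdot(\lambda)=0$ over $F_{P,\eta_i}$ and corestriction is injective on $H^3$ by (\ref{corestriction}), we get $\alpha\cdot(\mu)=0\in H^3(L_{P,\eta_i},\mu_n^{\otimes 2})$. After verifying the last hypothesis of (\ref{patching_at_P}), applying that lemma with $d_0=\mathrm{ind}(\alpha)$ produces a cyclic field extension $L_P/F_P$ of degree $\ell$ and $\mu_P\in L_P$ satisfying $(1)$, $(2)$, $(3)$ with $L_P\otimes F_{P,\eta_i}\simeq L_{P,\eta_i}$, hence $(4)$; and $(5)$ is inherited from the $L_{P,\eta_i}$.

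The hard part will be checking the fourth hypothesis of (\ref{patching_at_P}) and choosing correctly which of $\eta_1$, $\eta_2$ plays the role of the parameter ``$\pi$'' there. I would argue that this hypothesis can only fail to be vacuous when, after writing $\alpha$ in the normal form of (\ref{local-cyclic}), $\alpha=(E/F_P,\sigma,v\pi_1)$ with $E$ unramified on $\hat{A}_P$ except possibly at $\eta_2$; then $E$ is the lift of the residue of $\alpha$ at $\eta_1$ and $\alpha\otimes E_{\eta_1}=0$, which (by the definitions of types $3$, $4$, $6$ together with (\ref{dvr_ind_per})) forces $\eta_1$ to be of type $6$. Interchanging $\eta_1$ and $\eta_2$ if necessary, one can then arrange the $\delta$-point of (\ref{patching_at_P}) in this residual case to be of type $4$ or $6$, where the prescribed extension $F_{P,\eta_2}(\sqrt[\ell]{v\pi_1})$ — after absorbing $\ell$-th powers of $\pi_{\eta_2}$ so that it is unramified at $\eta_2$ — is an admissible choice of $L_{P,\eta_2}$ and still lowers the index because $\mathrm{ind}(\alpha\otimes F_{\eta_2})<\mathrm{ind}(\alpha)$. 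Everything else is a direct transcription of the bookkeeping already carried out in (\ref{choice_at_P_55}) and (\ref{choice_at_P_3_5}).
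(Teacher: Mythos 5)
Your main construction is the same as the paper's: for $\lambda \notin F_P^{*\ell}$ take $L_P = F_P(\sqrt[\ell]{\lambda})$, $\mu_P = \sqrt[\ell]{\lambda}$ and quote (\ref{local_nonsquare}), with $4)$ coming from $\ell \mid \nu_{\eta_i}(\lambda)$; for $\lambda \in F_P^{*\ell}$ choose on each branch an unramified cyclic degree-$\ell$ extension (via (\ref{type3_square}) at a type $3$ branch, and at a type $4$ or $6$ branch an unramified extension -- the paper takes the one produced by (\ref{local-local}) rather than an arbitrary one), take $\mu_{P,\eta_i} = \sqrt[\ell]{\lambda} \in F_P$, get $\alpha\cdot(\mu)=0$ from (\ref{corestriction}), and finish with (\ref{patching_at_P}). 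Up to that point your write-up and the paper's proof agree.

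The gap is in your final paragraph, where you attempt to verify the fourth hypothesis of (\ref{patching_at_P}). From a presentation $\alpha = (E/F_P,\sigma,v\pi_1)$ with $E/F_P$ unramified on $\hat{A}_P$ except possibly at $\pi_2$ you infer that ``$E$ is the lift of the residue of $\alpha$ at $\eta_1$ and $\alpha \otimes E_{\eta_1} = 0$,'' and hence that $\eta_1$ must be of type $6$. This conflates the branch field $F_{P,\eta_1}$ with the completion $F_{\eta_1}$ at the divisor: the local presentation at $P$ only yields $\alpha \otimes E_{\eta_1} \otimes F_{P,\eta_1} = 0$, i.e.\ vanishing at the one place of the global field $\kappa(\eta_1)$ determined by $P$, and says nothing about $r\alpha \otimes E_{\eta_1}$ over $F_{\eta_1}$, which is the condition (with $r = \nu_{\eta_1}(\lambda)/\ell$, not $\alpha$ itself) that separates types $3$, $4$ from $5$, $6$. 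So a type $3$ branch is entirely compatible with your ``residual case,'' and the proposed interchange of $\eta_1$ and $\eta_2$ so that the constrained $\delta$-branch is of type $4$ or $6$ need not be available; worse, when the constrained branch is of type $3$, the extension prescribed by hypothesis $4$ is the degree-$\ell$ subfield of the lift of the residue at that branch, which is precisely what the choice from (\ref{type3_square}) is designed to avoid, so a genuine argument is needed there rather than the type-$6$ reduction you give. Two smaller points: in your residual case with the $\delta$-branch of type $4$ or $6$ there is nothing to ``absorb,'' since $v\pi_1$ is already a unit at $\eta_2$, but you must check $v\pi_1 \notin F_{P,\eta_2}^{*\ell}$ for the prescribed extension to be a field; and replacing the paper's choice via (\ref{local-local}) by an arbitrary unramified cyclic extension loses the index drop over $F_{P,\eta_i}$ itself, which the internal case analysis of (\ref{patching_at_P}) relies on.
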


\begin{proof} Suppose $\lambda \not\in F_P^{*\ell}$. 
Let  $L_P = F_P(\sqrt[\ell]{\lambda})$ and
$\mu_P = \sqrt[\ell]{\lambda}$.  Then  $N_{L_P/F_P}(\mu_P) = \lambda$ and 
by (\ref{local_nonsquare}),  2) and 3) are satisfied.  Since $\eta_i$ are of   type 3, 4 or 6, 
$\nu_{\eta_i}(\lambda) $ is divisible by $\ell$ and hence 4) is satisfied.  Since $\lambda \not\in F_P^{*\ell}$,
5) does not arise.

Suppose that $\lambda \in F_P^{*\ell}$.
If $\eta_i$ is of type 3, then let $L_{P, \eta_1}$ be as in (\ref{type3_square}) and $\mu_{P, \eta_1} = \sqrt[\ell]{\lambda}$. 
Then, as in (\ref{choice_at_P_3_5}),  $N_{L_P/F_P}(\mu_P) = \lambda$,
ind$(\alpha \otimes L_{P, \eta_i})< $ ind$(\alpha)$ and $\alpha \cdot (\mu_P) = 0 \in H^3(L_P, \mu_n^{\otimes 2})$.
Suppose that $\eta_i$ is of type 4 or 6. Let $L_{P, \eta_i}/F_{P, \eta_i}$ be a cyclic unramified field extension of degree
$\ell$ and $\mu_{P, \eta_i}$ be as in (\ref{local-local}).

Then, by (\ref{patching_at_P}),  there exist $L_P$ and $\mu_P$ with required properties. 
 \end{proof}

\begin{prop}
\label{choice_at_P}  Let $P \in \PP$. Then there exist a cyclic field  extension or split extension 
$L_P/F_P$ of degree $\ell$ and $\mu_P \in L_P$ such that \\
$1)$ $N_{L_P/F_P}(\mu_P) = \lambda$ \\
$2)$ ind$(\alpha \otimes L_P) < $ ind$(\alpha)$ \\
$3)$  $\alpha \cdot (\mu_P) = 0 \in H^3(L_P, \mu_n^{\otimes 2})$\\
Further, suppose  $\eta$ is a   codimension zero point of $X_0$ containing  $P$. \\
$4)$ If $\eta$ is of type 1, then $L_P  = 
F_P(\sqrt[\ell]{\lambda})$ and $\mu_P = \sqrt[\ell]{\lambda}$. \\
$5)$ Suppose  $\eta$ is of type 2  with a type 2 connection to a type 5 point  $\eta'$.
Let $Q$ be the type 2 intersection of $\eta$ and $\eta'$.  If  $M_{\eta'} \otimes F_{Q, \eta'}$ is
not a field,  then $L_P  =  \prod F_P$ and $\mu_P = (\theta_1, \cdots, \theta_\ell)$ with 
$\theta_i \in F_P$,  $\nu_{\eta}(\theta_1) = \nu_{\eta}(\lambda)$ and
$\nu_{\eta}(\theta_i) = 0$ for $ i\geq 2$.\\
$6)$ Suppose  $\eta$ is of type 2  with a type 2 connection to a type 5 point  $\eta'$.
Let $Q$ be the type 2 intersection of $\eta$ and $\eta'$.  If  $M_{\eta'} \otimes F_{Q, \eta'}$ is
a field, then $L_P  = F_P(\sqrt[\ell]{\lambda})$ and $\mu_P = \sqrt[\ell]{\lambda}$. \\
$7)$ Suppose $\eta$ is of type 2 and  there is no  type 2 connection  from   $\eta$ to any type 5 point. 
 Then $L_P  = F_P(\sqrt[\ell]{\lambda})$ and $\mu_P = \sqrt[\ell]{\lambda}$.  \\
$8)$ If $\eta$ is of type 3,  then $L_P\otimes F_{P, \eta}/F_{P, \eta}$ is an unramified field extension
and further   if  $\lambda \in F_P^{*\ell}$ and $\alpha \otimes E_\eta \otimes  F_{P,\eta} \neq 0$,
then ind$(\alpha \otimes (E_\eta \otimes F_{P, \eta}) \otimes (L_P \otimes F_{P, \eta})) < $ ind$(\alpha \otimes E_\eta \otimes  F_{P,\eta}).$\\  
$9)$ If $\eta$ is of type 4, then $L_P \otimes F_{P, \eta}/F_{P, \eta}$ is an unramified field extension.\\
$10)$ If $\eta$ is of  type 5a, then $L_P \otimes F_{P, \eta}/F_{P, \eta}$ is an unramified field extension.\\
 $11)$ If $\eta$ is of type 5b, then $L_P \otimes F_{P, \eta}
 \simeq M_{\eta} \otimes F_{P, \eta}$ and if $L_P = \prod F_P$, then 
 $\mu_P = (\theta_1, \cdots, \theta_\ell)$ with   $\nu_{\eta}(\theta_i) = \nu_{\eta}(\lambda)/\ell$.\\
 $12)$ If $\eta$ is of type  6,  then either $L_P \otimes F_{P, \eta}/F_{P, \eta}$ is an unramified field extension  
 or  $L_P = \prod F_P$,   
  with  $\mu_P = (\theta_1, \cdots , \theta_\ell)$  and  $\nu_{\eta}(\theta_i) = \nu_{\eta}(\lambda)/\ell$. \\ 
\end{prop}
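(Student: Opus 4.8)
The statement is a case-by-case construction of local data $(L_P,\mu_P)$ at each point $P\in\PP$, and the plan is to run through the possible pairs of components $\eta_1,\eta_2$ through $P$, invoking the lemmas of \S\ref{choice_at_closed_points} that have already been proved. First I would recall the standing hypotheses: $\XX$ has no special points and no type 2 connection between a type 3 or 5 point and a type 3, 4 or 5 point (the model produced in (\ref{choice_of_model})), and $\kappa$ is finite so that the local analysis of \S 6 applies. Since $P$ lies on at most two components of $X_0$ (the intersection of distinct irreducible curves is one point and $\XX$ is regular with normal crossings), the construction splits according to the types of $\eta_1$ and $\eta_2$; if $P$ lies on a single component $\eta$, treat it as $\eta_1=\eta_2=\eta$.

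\textbf{Main case division.} I would organize the cases as follows. (i) If some $\eta_i$ is of type 1, then $\nu_{\eta_i}(\lambda)$ is coprime to $\ell$, so $\lambda\notin F_P^{*\ell}$; set $L_P=F_P(\sqrt[\ell]{\lambda})$, $\mu_P=\sqrt[\ell]{\lambda}$, and use (\ref{local_nonsquare}) and (\ref{dot-zero}) to get 2) and 3); since the other component cannot be of type 2, 4 or 5 (no special points) it is of type 1, 3 (handled via (\ref{residue}), unramified), or 6, so 4), 8), 9), 12) hold, giving clauses 4)--(relevant). (ii) If $\eta_1$ is of type 2: by (\ref{choice_of_model}) the partner $\eta_2$ is of type 1, 2, 4, 5 or 6; the type 1 subcase is (i); for $\eta_2$ of type 2, 4 or 6 we are in a ``no type 5 in sight'' situation and I would take $L_P=F_P(\sqrt[\ell]{\lambda})$ when $\lambda\notin F_P^{*\ell}$, giving 7), and otherwise split $\mu_P$ as a product over the split extension using (\ref{choice_at_P_2_56}); the delicate subcase is $\eta_2$ of type 5, where whether $M_{\eta'}\otimes F_{Q,\eta'}$ is a field (at the type 2 intersection $Q$ determined by (\ref{type2_intersection})) decides between clause 5) (split extension, via (\ref{choice_at_P_2_56})) and clause 6) (take $F_P(\sqrt[\ell]{\lambda})$, using that $\lambda$ is a norm from $M_{\eta'}$ by (\ref{7_norm}) and (\ref{dvr_local_field_norm})). (iii) Both $\eta_i$ of type 3, 4 or 6: apply (\ref{choice_at_P_346}) directly, which gives 1)--3), 4), 8), 9). (iv) $\eta_1$ type 3 and $\eta_2$ type 5: apply (\ref{choice_at_P_3_5}) directly, giving 1)--3), 4), 8) (clause 5)$=$clause 8) here), 6) (via clause 6) of that lemma for 11)), 10)/11). (v) Both $\eta_i$ of type 5: apply (\ref{choice_at_P_55}) for 1)--3), 10), 11) when the relevant $M_{\eta_i}\otimes F_{P,\eta_i}$ are fields (true since no special points), and use (\ref{choice_at_P_5_6}) with a split $\mu_P$ when needed for clause 11). (vi) $\eta_1$ type 5 or 6, $\eta_2$ type 6: use (\ref{choice_at_P_5_6}) and (\ref{choice_at_P_2_56}) to produce split data, giving 12) and the relevant part of 11).

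\textbf{Verification of the branch clauses.} For each constructed $(L_P,\mu_P)$ I would then check the component-wise clauses 4)--12): these follow because the patching lemma (\ref{patching_at_P}) was applied with prescribed unramified local extensions $L_{P,\eta_i}$ (so $L_P\otimes F_{P,\eta_i}$ is unramified of degree $\ell$, matching the constructed $L_{P,\eta_i}$), and in the type 5b case with $L_{P,\eta_2}=M_{\eta_2}\otimes F_{P,\eta_2}$; the valuations of the components of $\mu_P$ in the split cases are recorded in (\ref{choice_at_P_2_56}) and (\ref{choice_at_P_5_6}). The main obstacle is purely bookkeeping: making sure every pair $(\text{type }\eta_1,\text{type }\eta_2)$ actually occurring under the hypotheses of (\ref{choice_of_model}) is covered and that the lemma invoked supplies exactly the branch condition demanded (e.g.\ that ``no special point of type IV'' is what makes $M_{\eta_2}\otimes F_{P,\eta_2}$ a field in clauses 10), 11), and that the type 2 intersection point in clauses 5), 6) is well-defined by (\ref{type2_intersection})). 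There is no genuinely new mathematical content here beyond assembling the earlier lemmas; the one place requiring care is clause 5) versus 6), where one must use that $\eta'$ and the point $Q$ are uniquely determined by $\eta$ (hence by $P$'s component), so the prescription is consistent across all $P$ on $\eta$.
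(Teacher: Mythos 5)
Your overall strategy (case analysis over the types of the two components through $P$, feeding the lemmas of \S 6 and \S 9 into (\ref{patching_at_P})) is the same as the paper's, but three of your case reductions contain genuine gaps. First, in your case (i) you assert that the partner of a type 1 component cannot be of type 5; this is false. A special point of type IV only excludes a type 5 neighbour $\eta_2$ with $M_{\eta_2}\otimes F_{P,\eta_2}$ not a field, so the configuration (type 1, type 5) does occur, and clause 11) there is not automatic: one must check that $F_P(\sqrt[\ell]{\lambda})\otimes F_{P,\eta_2}\simeq M_{\eta_2}\otimes F_{P,\eta_2}$, which the paper does by combining (\ref{7_norm}) ($\lambda$ is a norm from $M_{\eta_2}$) with (\ref{dvr_local_field_norm}); your proposal omits this verification entirely. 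Second, your treatment of the type 2 case mislocates where the dichotomy 5)/6)/7) comes from. You decide according to the type of the partner curve $\eta_2$ at $P$ (``no type 5 in sight'' when $\eta_2$ is of type 2, 4 or 6), but clauses 5) and 6) concern a type 5 curve $\eta'$ connected to $\eta$ by a possibly long chain of type 2 curves, with $Q$ a point far from $P$. If $\eta$ has such a connection and $M_{\eta'}\otimes F_{Q,\eta'}$ is not a field, clause 5) forces the split extension $\prod F_P$ at \emph{every} nodal point of $\eta$, including those where the partner is of type 2 or 6; your prescription $F_P(\sqrt[\ell]{\lambda})$ at such points violates 5) and would destroy the compatibility with the split choice $(\prod F_\eta,\mu_\eta)$ made along $\eta$ in (\ref{codimension_2_splitp}), which is the whole reason these clauses are formulated this way.

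Third, in your case (vi) (a type 5 component meeting a type 6 component) you propose split data via (\ref{choice_at_P_5_6}) throughout. When $\eta_1$ is of type 5b and $M_{\eta_1}\otimes F_{P,\eta_1}$ is a field, clause 11) demands $L_P\otimes F_{P,\eta_1}\simeq M_{\eta_1}\otimes F_{P,\eta_1}$, so a split $L_P$ is not admissible; the paper handles this by taking $\mu_{\eta_1}\in M_{\eta_1}$ from (\ref{7_norm}) and then invoking (\ref{curve_point}) to produce a field extension $L_P/F_P$ with prescribed restriction $M_{\eta_1}\otimes F_{P,\eta_1}$ along that branch, a lemma your proposal never uses. (Smaller slips: (\ref{residue}) plays no role in your case (i); in a (5,5) point the split option is neither needed nor available, since both components have full index and the index drop in 2) would fail for $\prod F_P$.) Until these three configurations are argued correctly, the proof is incomplete.
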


%

\begin{proof}  Let $\eta_1$ and $\eta_2$ be  two codimension zero points  
 intersecting at   $P$.  By  the choice of $\XX$, $X_0$ is a union of regular curves with 
normal crossings and hence there are no other codimension zero points of $X_0$ passing through $P$. \\

\noindent
{\bf Case I.} 
Suppose that either $\eta_1$ or $\eta_2$, say $\eta_1$,  is of type 1. 
Then $\nu_{\eta_1}(\lambda)$ is coprime to $\ell$ and hence $\lambda \not\in F_P^{* \ell}$. 
Let $L_P = F_P(\sqrt[\ell]{\lambda})$ and $\mu_P = \sqrt[\ell]{\lambda}$. 
Then,  by (\ref{local_nonsquare}), $L_P$ and $\mu_P$ satisfy $1)$, $2)$ and $3)$.
By  choice  $4)$ is satisfied. 
Since $\XX$ has no special points, $\eta_2$  is not   of type 2 or 4. Thus  5), 6), 7) and 9) do not arise. 
Suppose  $\eta_2$ is of type 3,  5 or  6. Then $\nu_{\eta_2}(\lambda)$ is divisible by $\ell$ and hence 
$L_P \otimes F_{P,\eta_2}/F_{P, \eta_2}$ is an unramified field extension.  Thus 8),   10)  
and 12) are satisfied.  Suppose $\eta_2$ is of type  5b.  Since $\XX$ has no special points and $\eta_1$
is of type 1, $M_{\eta_2} \otimes F_{P, \eta_2}$ is a field. Since $\lambda$ is a norm from the
extension $M_{\eta_2}/F_{\eta_2}$ (\ref{7_norm}) and $\lambda \not \in F_{P, \eta_2}^{*\ell}$, 
by (\ref{dvr_local_field_norm} ), $M_{\eta_2} \otimes F_{P, \eta_2} \simeq F_{P, \eta_2}(\sqrt[\ell]{\lambda})$ 
and hence 11) is satisfied. \\

\noindent
{\bf Case II.}   Suppose neither $\eta_1$ nor $\eta_2$ is of type 1.
Suppose either  $\eta_1$ or $\eta_2$ is of type 2, say $\eta_1$ is of type 2.  
Then   $\nu_{\eta_1}(\lambda)$ is coprime to $\ell$ and hence
$\lambda \not\in F_P^{*\ell}$.

Suppose that $\eta_1$ has type 2 connection to   a codimension zero point $\eta'$ of $X_0$ of 
type  5.   Let $Q$ be the closed point on $\eta'$ which is the type 2 intersection point of $\eta_1$
and $\eta'$.  
By  the choice of $\XX$, $\eta_2$ is of type 2, 5 or 6.
Note that if $\eta_2$ is also of type 2,  then $Q$ is also  the point of
type 2 intersection of $\eta_2$ and $\eta'$. Thus if both $\eta_1$ and $\eta_2$ are of type 2, 
$\eta'$ and $Q$ do not depend on whether we start with $\eta_1$ or $\eta_2$.

Suppose that $M_{\eta'} \otimes F_{Q, \eta'}$ is not a field. 
Let $L_P  = \prod F_P$. 
Suppose $\eta_2$ is of type 2. Then, let $\mu_P = (\lambda, 1, \cdots , 1) \in L_P =\prod F_P$.
 Suppose $\eta_2$ is of type 5. Then  by the assumption on 
$\XX$, $\eta_2 = \eta'$, $Q = P$.  Thus $M_{\eta_2} \otimes F_{P, \eta_2} = M_{\eta'} \otimes F_{Q, \eta'}$
is not a field and hence $\eta_2$ is of type 5b. 
Let $\mu_i \in F_P$ be as in (\ref{choice_at_P_2_56}) and $\mu_P = (\mu_1, \cdots , \mu_\ell)$.
Suppose $\eta_2$ is of type 6. Let $\mu_i \in F_P$ be as in (\ref{choice_at_P_2_56}) and 
$\mu_P = (\mu_1, \cdots , \mu_\ell) \in L_P$. 
Then, $L_P$ and $\mu_P$ satisfy 1)  and 3). 
Since $\eta_1$ is of type 2, ind$(\alpha \otimes F_{\eta_1}) < $ ind$(\alpha)$
and hence, by (\ref{local_index}),  ind$(\alpha \otimes F_P) < $ ind$(\alpha)$ and 2) is satisfied.
Since neither $\eta_1$ nor  $\eta_2$ is of type 1, 
the case 4) does not arise. By  choice  $L_P$ satisfies 5). 
  Since there is only one type 5 point with a type 2 connection  to $\eta_1$ or $\eta_2$, 
the case 6) does not arise.  Clearly the case 7) does not arise.
Since $\eta_2$ is not of  type 3, 4 or 5a, the cases 8),  9) and 10)  do not arise. 
 By choice of $L_P$ and $\mu_P$, 11) and 12) are  satisfied.

Suppose  $M_{\eta'} \otimes F_{Q, \eta'}$ is a field.  
Let $L_P = F_P(\sqrt[\ell]{\lambda})$ and $\mu_P = \sqrt[\ell]{\lambda}$.
Since $\lambda \not\in F_P^{*\ell}$, 
 by (\ref{local_nonsquare}),  $L_P$ and $\mu_P$ satisfy $1)$, $2)$ and $3)$.
As above  the cases  4), 5), 7)  and 8)    do not arise.
By choice  6) is satisfied.  Suppose $\eta_2$ is of type 5. Then $\eta_2 =\eta'$,
$Q = P$ and  $\nu_{\eta_2}(\lambda)$ is divisible by $\ell$ and hence 9) is  satisfied.
Suppose $\eta_2$ is of type 5b. Since $M_{\eta_2} \otimes F_{P, \eta_2}$ is a field, 
as in case I, $M_{\eta_2} \otimes F_{P, \eta_2} \simeq L_P\otimes F_{P, \eta_2}$ and hence
10) is satisfied.  Since $\lambda \not\in F_P^{*\ell}$ and if $\eta_2$ is of type 6, 
$\nu_{\eta_2}(\lambda)$ is divisible by $\ell$,  $L_P \otimes F_{P, \eta_2}/F_{P, \eta_2}$ is an 
unramified field extension  and  hence  11) is satisfied.

Suppose that  neither $\eta_1$ nor $\eta_2$  have a type 2 connection to  a point of type 5. 
In particular $\eta_2$ is not of type 5. Then, let $L_P = F_P(\sqrt[\ell]{\lambda})$ 
and $\mu_P = \sqrt[\ell]{\lambda}$. Then, by (\ref{local_nonsquare}), $L_P$ and $\mu_P$ satisfy 1), 2) and 3). 
Since neither $\eta_1$ nor $\eta_2$ is of type 1, the case 4) does not arise.
Since neither $\eta_1$ nor $\eta_2$ has type 2 connection  to a point of type 5,   5) and 6) do  not arise.
By the  choice of $L_P$ and $\mu_P$, 7) is satisfied.    
If $\eta_2$ is of type 4 or 6, $\nu_{\eta_2}(\lambda)$ is divisible by  $\ell$,  
   8),  9)  and 12) are satisfied. 
 Since neither $\eta_1$ nor $\eta_2$ is of type 5,  10)  and 11) do  not arise.\\

\noindent
{\bf Case III.} Suppose neither of $\eta_i$ is of type 1 or 2. 
 Suppose that one of the $\eta_i$, say $\eta_1$, is of type 3.  
Since $\XX$ has no special points,  $\eta_2$ is not of type 4
and  hence  $\eta_2$ is of type 3, 5 or 6. If $\eta_2$ is of type  5, 
 let $L_P$ and $\mu_P$ be as in (\ref{choice_at_P_3_5}).
 If $\eta_2$ is of type 3 or 6,  let $L_P$ and $\mu_P$ be as in (\ref{choice_at_P_346}).
 Then, 1), 2), 3), 8), 9), 10), 11) and 12) are satisfied and other cases do not arise. \\

\noindent
{\bf Case IV.}  Suppose neither of $\eta_i$ is of type 1, 2 or 3. 
 Suppose that one of the $\eta_i$, say $\eta_1$, is of type 4.  
Since $\XX$ has no special points,  $\eta_2$ is not of type 5.
Hence $\eta_2$ is of type 4 or 6.  Let $L_P$ and $\mu_P$ be as in (\ref{choice_at_P_346}).
Then $L_P$ and $\mu_P$ have the required properties.  
\\

\noindent
{\bf Case V.}  Suppose neither of $\eta_i$ is of type 1, 2, 3 or 4. 
Suppose that one of the $\eta_i$ is of type 5, say $\eta_1$  is of type 5.
Then $\eta_2$ is of type 5 or 6.
Suppose that $\eta_2$ is of type 5. 
Since $\XX$ has no special points,   $M_{\eta_i} \otimes F_{P, \eta_i}$ are fields for 
$i = 1, 2$. Let $L_P$ and $\mu_P$ be as in (\ref{choice_at_P_55}).
Then $L_P$ and $\mu_P$ have the required properties. 
Suppose that $\eta_2$ is of type 6.   Suppose  $M_{\eta_1} \otimes F_{P, \eta_1}$ is a field.
Let $L_{P, \eta_1} = M_{\eta_1} \otimes F_{P, \eta_1}$ and $\mu_{\eta_1} \in M_{\eta_1}$ with $N_{M_{\eta_1}/F_{\eta_1}}(\mu_{\eta_1}) = \lambda$
(cf., \ref{7_norm}). Let $L_P$ and $\mu_P$ be as in (\ref{curve_point}) with 
$L_P \otimes F_{P, \eta_1} \simeq L_{P, \eta_1}$. Then $L_P$ and $\mu_P$ have the required properties.
Suppose that $M_{\eta_1} \otimes F_{P, \eta_1}$ is not a field. Let $L_P = \prod F_P$ and 
$\mu_i \in F_P$ be as in (\ref{choice_at_P_5_6}) and $\mu_P = (\mu_1, \cdots , \mu_\ell) \in L_P$.
Then $L_P$ and $\mu_P$ have the required properties.  \\

\noindent
{\bf Case VI.}  Suppose neither of $\eta_i$ is of type 1, 2, 3, 4 or 5.
Then, $\eta_1$ and $t_P\eta_2$ are of type 6.  Let $L_P$ and $\mu_P$ be as in 
(\ref{choice_at_P_346}). 
Then $L_P$ and $\mu_P$ have the required properties.  
\end{proof}

\section{Choice of $L_\eta$ and $\mu_\eta$ at codimension zero points.}
\label{choice_at_codimension_zero_points}
Let $F$, $n = \ell^d$, $\alpha \in H^2(F, \mu_n)$, $\lambda \in F^*$ with 
$\alpha \neq 0$, $\alpha \cdot (\lambda) = 0 \in H^3(F, \mu_n^{\otimes 2})$, 
$\XX$, $X_0$ and $\PP$ be as in (\S \ref{patching}, \S \ref{types_of_points} and \S \ref{choice_at_closed_points}). 
Assume that $\XX$ has  no  special points and there is no type 2 connection between  a
codimension zero  point of $X_0$ of  type  3 or 5  and  a codimension zero point of $X_0$ of  type 3, 4 or 5.
Further we assume that for every closed point $P$ of $X_0$, the residue field $\kappa(P)$ at $P$  
is a finite field. 
Let $P$ be a  closed point $P$ of $X_0$. Then there exists $t_P \geq d$ such that there is no primitive 
$\ell^{t_P^{\rm th}}$ root of unity in $\kappa(P)$.

For a codimension zero point $\eta$ of $X_0$, let $\PP_\eta = \eta \cap \PP$.

\begin{prop}
\label{codimension_1}
 Let $\eta$ be a codimension zero point of $X_0$ of type 1.  
For each $P \in \PP_\eta$,   let  $(L_P, \mu_P)$  be chosen as in (\ref{choice_at_P}) and 
$L_\eta = F_\eta(\sqrt[\ell]{\lambda})$ and $\mu_\eta = \sqrt[\ell]{\lambda} \in L_\eta$.
Then  \\
$1)$ $N_{L_\eta/F_\eta}(\mu_\eta) = \lambda$ \\
$2)$ $\alpha \cdot (\mu_\eta) = 0 \in H^3(L_\eta, \mu_n^{\otimes 2})$ \\
$3)$ ind$(\alpha \otimes L_\eta) < $ ind$(\alpha)$ \\
$4)$  for  $P \in \PP_\eta$,   there is an isomorphism $\phi_{P,\eta} : L_\eta \otimes F_{P, \eta} \to L_P  \otimes F_{P, \eta}$ 
and $$\phi_{P, \eta}(\mu_\eta \otimes 1) (\mu_P \otimes 1)^{-1} = 1.$$ 
\end{prop}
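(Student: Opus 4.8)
The plan is to establish the four assertions by unwinding the definitions and invoking earlier results, since the choices made for $\eta$ of type 1 are particularly simple. Assertion $1)$ is immediate: $\mu_\eta=\sqrt[\ell]{\lambda}$ has norm $N_{L_\eta/F_\eta}(\mu_\eta)=(\sqrt[\ell]{\lambda})^\ell=\lambda$ provided $L_\eta/F_\eta$ is a field extension of degree $\ell$; and since $\eta$ is of type 1, $\nu_\eta(\lambda)$ is coprime to $\ell$, so $\lambda\notin F_\eta^{*\ell}$ and hence $L_\eta=F_\eta(\sqrt[\ell]{\lambda})$ is genuinely a degree $\ell$ field extension. (If $\lambda$ were an $\ell$-th power the norm statement would still hold with the split interpretation, but this case does not occur here.)

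For assertions $2)$ and $3)$ I would invoke (\ref{dot-zero}). By hypothesis $\alpha\cdot(\lambda)=0\in H^3(F,\mu_n^{\otimes 2})$, so $\alpha\otimes F_\eta\cdot(\lambda)=0\in H^3(F_\eta,\mu_n^{\otimes 2})$, and in particular $\partial(\alpha\otimes F_\eta\cdot(\lambda))=0$. Since $r=\nu_\eta(\lambda)$ is coprime to $\ell$, the second half of (\ref{dot-zero}) applies directly over the complete discretely valued field $F_\eta$: it gives $\mathrm{ind}(\alpha\otimes F_\eta(\sqrt[\ell]{\lambda}))<\mathrm{ind}(\alpha\otimes F_\eta)$ and $\alpha\cdot(\sqrt[\ell]{\lambda})=0\in H^3(F_\eta(\sqrt[\ell]{\lambda}),\mu_n^{\otimes 2})$, i.e.\ $\alpha\cdot(\mu_\eta)=0\in H^3(L_\eta,\mu_n^{\otimes 2})$. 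Since $\eta$ is of type 1 we have $\mathrm{ind}(\alpha\otimes F_\eta)=\mathrm{ind}(\alpha)$, so $\mathrm{ind}(\alpha\otimes L_\eta)<\mathrm{ind}(\alpha)$, giving $2)$ and $3)$. Alternatively, one could cite (\ref{lambda_non_square_local}) applied to $\alpha\otimes F_\eta$, noting $\lambda\notin F_\eta^{*\ell}$ and $\alpha\otimes F_\eta\neq 0$ (which holds because $\mathrm{ind}(\alpha\otimes F_\eta)=\mathrm{ind}(\alpha)\geq 2$).

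For assertion $4)$: by the construction in (\ref{choice_at_P}) part $4)$, for each $P\in\PP_\eta$ (which lies on the type 1 curve $\eta$) we have $L_P=F_P(\sqrt[\ell]{\lambda})$ and $\mu_P=\sqrt[\ell]{\lambda}$. Hence $L_P\otimes F_{P,\eta}=F_{P,\eta}(\sqrt[\ell]{\lambda})$ and $L_\eta\otimes F_{P,\eta}=F_{P,\eta}(\sqrt[\ell]{\lambda})$ as well, since both are obtained by adjoining an $\ell$-th root of the same element $\lambda\in F_{P,\eta}$. The natural identification $\phi_{P,\eta}:L_\eta\otimes F_{P,\eta}\to L_P\otimes F_{P,\eta}$ sends $\sqrt[\ell]{\lambda}$ to $\sqrt[\ell]{\lambda}$, so $\phi_{P,\eta}(\mu_\eta\otimes 1)(\mu_P\otimes 1)^{-1}=1$. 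The only mild subtlety is that the $\ell$-th root in $F_{P,\eta}$ should be chosen consistently, but since all the rings involved are built by literally adjoining a root of $X^\ell-\lambda$, this compatibility is built into the definitions; I expect no real obstacle here. Overall the hardest part is simply checking that the various base-changed algebras are fields of the expected degree, which follows from $\nu_\eta(\lambda)$ being coprime to $\ell$ together with the fact that each $P\in\PP_\eta$ lies on $\eta$.
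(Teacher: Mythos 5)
Your proof is correct and takes essentially the same route as the paper: assertion 1) is immediate from the definition, 2) and 3) follow from (\ref{dot-zero}) applied over $F_\eta$ together with the type 1 equality $\mathrm{ind}(\alpha\otimes F_\eta)=\mathrm{ind}(\alpha)$, and 4) follows from the choice $L_P=F_P(\sqrt[\ell]{\lambda})$, $\mu_P=\sqrt[\ell]{\lambda}$ in (\ref{choice_at_P})(4). One caveat: your suggested alternative via (\ref{lambda_non_square_local}) would not apply, since the residue field of $F_\eta$ is a global field rather than a local one, but your primary argument does not rely on it.
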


\begin{proof} By  choice, we have $N_{L_\eta/F_\eta}(\mu_\eta) = \lambda$.
Since $\eta$ is of type 1, $\nu_\eta(\lambda)$ is coprime to $\ell$
and hence by (\ref{dot-zero}), $L_\eta$ and $\mu_\eta$ satisfies  2) and 3).
Let $P \in \PP_\eta$. 
Since $\eta$ is of type 1, by the choice of $L_P$  and $\mu_P$  (cf.  \ref{choice_at_P}(4)), we
have $L_P = F_P(\sqrt[\ell]{\lambda})$ and $\mu_P = \sqrt[\ell]{\lambda}$.  Hence $L_\eta$ and $\mu_\eta$
satisfy 4). 
\end{proof}

\begin{lemma}
\label{codimension_2_split}
 Let $\eta$ be a codimension zero point of $X_0$.  
For each $P \in \PP_\eta$,   let $\theta_P \in F_P$ with $\alpha \cdot (\theta_P) = 0 \in H^3(F_{P, \eta}, \mu_n^{\otimes 2})$. 
Suppose   $\nu_\eta(\theta_P) = 0$ for all $P \in \PP_\eta$.
Then  there exists $\theta_\eta   \in F_\eta$ such that  \\
$1)$  $\alpha \cdot (\theta_\eta) = 0 \in H^3(F_\eta, \mu_n^{\otimes 2})$ \\
$2)$  for  $P \in \PP_\eta$,     $\theta_P^{-1} \theta_\eta \in   F_{P, \eta}^{\ell^{2t_P}}$.
\end{lemma}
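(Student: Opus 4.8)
The plan is to realize $\theta_\eta$ as an element of $F_\eta$ by approximating the finitely many local data $\theta_P$ (for $P \in \PP_\eta$) simultaneously, using the fact that $F_\eta$ is complete with respect to the $\eta$-adic valuation and the residue field $\kappa(\eta)$ is a global field. The key point is that each $\theta_P$ lies in $F_{P,\eta}$, which is the completion of $F_\eta$ at the closed point $P$ of the curve $\eta$; since $\nu_\eta(\theta_P) = 0$, each $\theta_P$ is a unit at $\eta$, so its image $\bar\theta_P$ lands in the residue field $\kappa(\eta)_P$, the completion of $\kappa(\eta)$ at $P$. First I would reduce the problem to the residue field: write $\alpha \otimes F_\eta = \alpha' + (E_\eta, \sigma_\eta, \pi_\eta)$ as in (\ref{rbc}), and translate the condition $\alpha \cdot (\theta_P) = 0$ into a condition on the residue $\overline{\alpha}'$ over $\kappa(\eta)_P$ via (\ref{dot-zero}) — namely $r_P \overline{\alpha}' = (E_\eta(\eta), \sigma_0, \overline{\theta}_P)$-type relations, or more simply (since $\nu_\eta(\theta_P) = 0$) the cup product $\overline{\alpha}' \cdot (\bar\theta_P) = 0$ in $H^3(\kappa(\eta)_P, \mu_n^{\otimes 2})$, together with the vanishing at $\eta$ coming from $\partial_\eta(\alpha \cdot (\theta_P)) = 0$.

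Then I would invoke Hasse–Brauer–Noether / a global approximation statement over the global field $\kappa(\eta)$: given local units $\bar\theta_P \in \kappa(\eta)_P$ at finitely many places $P \in \PP_\eta$ with $\overline{\alpha}' \cdot (\bar\theta_P) = 0$ locally, and noting that $\overline{\alpha}' \cdot (\text{anything})$ is automatically controlled at all other places (it is unramified away from the support of $\alpha$, and $H^3$ of a finite field vanishes), one produces a global element $\bar\theta_\eta \in \kappa(\eta)^*$ with $\overline{\alpha}' \cdot (\bar\theta_\eta) = 0 \in H^3(\kappa(\eta), \mu_n^{\otimes 2})$ and $\bar\theta_\eta$ as close as desired to $\bar\theta_P$ at each $P \in \PP_\eta$ — close enough that $\bar\theta_P^{-1} \bar\theta_\eta$ is an $\ell^{2t_P}$-th power in $\kappa(\eta)_P^*$. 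This is essentially the mechanism already used in (\ref{uglylemma}) and (\ref{global_field_zero_cor}): strong approximation in the global field plus Chebotarev to force splitting at an auxiliary place, applied here to the simpler situation of a $1$-dimensional cup product $\overline{\alpha}' \cdot (\bar\theta_\eta)$. Lift $\bar\theta_\eta$ to a unit $\theta_\eta \in F_\eta$ (possible since $F_\eta$ is complete discretely valued with residue field $\kappa(\eta)$); then $\alpha \cdot (\theta_\eta)$ is unramified at $\eta$ with residue $\overline{\alpha}' \cdot (\bar\theta_\eta) = 0$, hence $\alpha \cdot (\theta_\eta) = 0 \in H^3(F_\eta, \mu_n^{\otimes 2})$, giving (1).

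For (2), I would use the closeness: if $\bar\theta_P^{-1}\bar\theta_\eta \in \kappa(\eta)_P^{*\ell^{2t_P}}$ and both $\theta_P, \theta_\eta$ are units at $\eta$, then by Hensel's lemma in the complete field $F_{P,\eta}$ (whose residue field contains no primitive $\ell^{t_P}$-th root of unity, so that $\ell$ is prime to the residue characteristic and $\ell^{2t_P}$-th powers lift), we get $\theta_P^{-1}\theta_\eta \in F_{P,\eta}^{*\ell^{2t_P}}$. Here the bound $t_P$ and the form $\ell^{2t_P}$ are exactly calibrated so that later applications of (\ref{hilbert90}) go through.

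\textbf{Main obstacle.} The delicate point will be controlling $\alpha \cdot (\theta_\eta)$ at \emph{all} places of the curve $\eta$, not just at $\PP_\eta$: away from $\PP_\eta$ the element $\alpha$ may still be ramified (along $\eta$ itself we have extracted the $\pi_\eta$-part, but $\overline{\alpha}'$ can be ramified at other closed points of $\eta$), so one must either arrange $\bar\theta_\eta$ to be a unit at those points (which it is, being a global element chosen away from the relevant support, modulo controlling the auxiliary place from Chebotarev) or verify the cup product vanishes there because $H^3$ of the residue field — a finite field — is zero. Threading this through cleanly, i.e. choosing the set $S$ of places over which to approximate to include the support of $\alpha'$ and handling the Chebotarev place so it does not interfere with the $\PP_\eta$-closeness, is where the real work lies; it mirrors the bookkeeping in the proof of (\ref{uglylemma}).
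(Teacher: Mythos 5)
There is a genuine gap in your construction: the global condition you arrange for $\overline{\theta}_\eta$ is vacuous, while the condition that is actually needed is never imposed. Write $\alpha \otimes F_\eta = \alpha' + (E_\eta, \sigma_\eta, \pi_\eta)$ as in (\ref{rbc}) and let $E(\eta)$ be the residue field of $E_\eta$. For a unit $\theta_\eta$ at $\eta$, the residue of $\alpha \cdot (\theta_\eta)$ at $\eta$ is \emph{not} $\overline{\alpha}' \cdot (\overline{\theta}_\eta)$; it is the Brauer class $(E(\eta), \sigma_0, \overline{\theta}_\eta^{-1}) \in H^2(\kappa(\eta), \mu_n)$ coming from the ramified part $(E_\eta, \sigma_\eta, \pi_\eta)\cdot(\theta_\eta)$ — this is exactly what (\ref{dot-zero}) with $r=0$ records. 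So statement (1) amounts (after using $H^3_{nr}(F_\eta, \mu_n^{\otimes 2}) \simeq H^3(\kappa(\eta), \mu_n^{\otimes 2}) = 0$) to $\overline{\theta}_\eta$ being a norm from the global cyclic extension $E(\eta)/\kappa(\eta)$. The conditions you propose to secure — $\overline{\alpha}' \cdot (\overline{\theta}_P) = 0$ in $H^3(\kappa(\eta)_P, \mu_n^{\otimes 2})$ and $\overline{\alpha}' \cdot (\overline{\theta}_\eta) = 0$ in $H^3(\kappa(\eta), \mu_n^{\otimes 2})$ — hold automatically, since local fields and global function fields over finite fields have cohomological dimension $2$; and closeness to $\overline{\theta}_P$ at the finitely many places in $\PP_\eta$ only makes $\overline{\theta}_\eta$ a \emph{local} norm there, which gives neither the global norm condition nor the unramifiedness of $\alpha\cdot(\theta_\eta)$ at $\eta$ that your last step asserts. (You did translate the hypothesis correctly on the $\theta_P$ side via the residue at $\eta$, but that condition is dropped when you construct $\overline{\theta}_\eta$.) Your ``main obstacle'' is also misdirected: the conclusion is a statement over $F_\eta$ alone, so once the residue at $\eta$ vanishes the class lies in $H^3_{nr}(F_\eta, \mu_n^{\otimes 2}) \simeq H^3(\kappa(\eta), \mu_n^{\otimes 2}) = 0$, and nothing at the other closed points of $\eta$ needs to be controlled; no Hasse--Brauer--Noether, strong approximation or Chebotarev input of the kind used in (\ref{uglylemma}) is required.

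The missing idea, which is how the paper argues, is to force the norm condition by construction rather than by a local-global principle: by (\ref{dot-zero}) the hypothesis gives $(E(\eta)\otimes \kappa(\eta)_P, \sigma_0, \overline{\theta}_P) = 0$, i.e.\ each $\overline{\theta}_P$ is a local norm; choose $\tilde{\theta}_P \in E(\eta)\otimes\kappa(\eta)_P$ with $N(\tilde{\theta}_P) = \overline{\theta}_P$, use weak approximation in the global field $E(\eta)$ to find $\tilde{\theta} \in E(\eta)$ close to all $\tilde{\theta}_P$, and set $\theta_0 = N_{E(\eta)/\kappa(\eta)}(\tilde{\theta})$. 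Then $\theta_0$ is a global norm, so any unit lift $\theta_\eta \in F_\eta$ of $\theta_0$ satisfies $(E_\eta, \sigma_\eta, \theta_\eta) = 0$, hence $\alpha\cdot(\theta_\eta)$ is unramified at $\eta$ and therefore zero; and by continuity of the norm $\theta_0$ is close to $\overline{\theta}_P$, so $\theta_0^{-1}\overline{\theta}_P \in \kappa(\eta)_P^{*\ell^{2t_P}}$ and your Hensel-lifting argument for (2) goes through unchanged.
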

 
\begin{proof} Let $\pi_\eta \in F_\eta$ be a parameter. 
Write $\alpha \otimes F_\eta = \alpha' + (E_\eta, \sigma_\eta, \pi_\eta)$ as in (\ref{rbc}).
Let  $E(\eta)$ be  the residue field of $E_\eta$.
Since $\alpha \cdot (\theta_P) = 0 \in H^3( F_{P,\eta}, \mu_n^{\otimes 2})$ and $\nu_\eta(\theta_P) = 0$,
 by (\ref{dot-zero}), we have  $(E(\eta)\otimes \kappa(\eta)_P, \sigma_0,  \overline{\theta}_P) = 0 \in H^2(\kappa(\eta)_P, \mu_n)$,
 where  $\overline{\theta}_P$ is the image of $\theta_P \in \kappa(\eta)_P$.
  Hence
$\overline{\theta}_P$ is a norm from $E(\eta) \otimes \kappa(\eta)_P$ for all $P \in \PP_\eta$.
For $P \in \PP_\eta$, let $\tilde{\theta}_P \in E(\eta) \otimes \kappa(\eta)_P$ with 
$N_{E(\eta) \otimes \kappa(\eta)_P/\kappa(\eta)_P}(\tilde{\theta}_P) = \overline{\theta}_P$.
By weak approximation, there exists $\tilde{\theta} \in \kappa(\eta)$  which is sufficiently close to 
$\tilde{\theta}_P$ for all $P \in \PP_\eta$. 
Let $\theta_0  = N_{E(\eta)/\kappa(\eta)}(\tilde{\theta}) \in \kappa(\eta)$.
 Then $\theta_0 $ is sufficiently close to $\overline{\theta}_P$ for all $P \in \PP_\eta$.
In particular, $\theta_0^{-1}\overline{\theta}_P \in \kappa(\eta)_P^{\ell^{2t_P}}$.  
Let $\theta_\eta \in F_\eta$ have  image $\theta_0$ in $\kappa(\eta)$.
Then $(E_\eta, \sigma_\eta, \theta_\eta)  = 0 $ and hence, by (\ref{dot-zero}), 
$\alpha \cdot (\theta_\eta) = 0$. Since $\theta_0^{-1}\overline{\theta}_P \in \kappa(\eta)_P^{\ell^{2t_P}}$
and $F_{P, \eta}$ is a complete discretely valid field with residue field $\kappa(\eta)_P$, 
 it follows that $\theta_\eta^{-1}\theta_P \in F_{P, \eta}^{\ell^{2t_P}}$.
\end{proof}

\begin{prop}
\label{codimension_2_splitp}
 Let $\eta$ be a codimension zero point of $X_0$ of type 2.  
 Suppose there is  a type 2 connection between  $\eta$  and  a codimension zero point 
$\eta'$ of $X_0$ of type 5. Let   $Q$  be the point of type 2 intersection of $\eta$ and $\eta'$.
Suppose that  $M_{\eta'} \otimes F_{Q, \eta'}$ is not a field. 
For each $P \in \PP_\eta$, let $\mu_P = (\theta_1^P, \cdots , \theta_\ell^P) \in L_P = \prod F_P$
 be as in (\ref{choice_at_P}).
 Let   $L_\eta = \prod F_\eta$.
Then  there exists $\mu_\eta  = (\theta^{\eta}_1, \cdots , \theta^{\eta}_\ell) \in L_\eta$ such that  \\
$1)$ $N_{L_\eta/F_\eta}(\mu_\eta) = \lambda$ \\
$2)$ $\alpha \cdot (\mu_\eta) = 0 \in H^3(L_\eta, \mu_n^{\otimes 2})$ \\
$3)$ ind$(\alpha \otimes L_\eta) < $ ind$(\alpha)$,\\
$4)$  $\mu_P^{-1} \mu_\eta \in (L_\eta \otimes F_{P, \eta})^{\ell^{2t_P}}$
for all $P \in \PP_\eta$. 
\end{prop}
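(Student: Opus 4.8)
The plan is to reduce everything to the lemma (\ref{codimension_2_split}) applied coordinatewise. First I would record the structure of the local data supplied by (\ref{choice_at_P}): since $\eta$ is of type 2 with a type 2 connection to the type 5 point $\eta'$ and $M_{\eta'}\otimes F_{Q,\eta'}$ is not a field, we are in case 5) of (\ref{choice_at_P}), so for each $P\in\PP_\eta$ we have $L_P=\prod_1^\ell F_P$ and $\mu_P=(\theta_1^P,\dots,\theta_\ell^P)$ with $\theta_1^P\cdots\theta_\ell^P=\lambda$, $\nu_\eta(\theta_1^P)=\nu_\eta(\lambda)$, $\nu_\eta(\theta_i^P)=0$ for $i\ge 2$, and $\alpha\cdot(\theta_i^P)=0\in H^3(F_P,\mu_n^{\otimes 2})$ for every $i$ (the last being condition 3) of (\ref{choice_at_P}) read inside the split algebra $L_P$).

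For each index $2\le i\le\ell$ I would then apply (\ref{codimension_2_split}) to the family $\{\theta_i^P\}_{P\in\PP_\eta}$: its hypotheses hold because $\nu_\eta(\theta_i^P)=0$ and, by restriction along $F_P\hookrightarrow F_{P,\eta}$, $\alpha\cdot(\theta_i^P)=0\in H^3(F_{P,\eta},\mu_n^{\otimes 2})$. This produces $\theta_i^\eta\in F_\eta$ with $\alpha\cdot(\theta_i^\eta)=0\in H^3(F_\eta,\mu_n^{\otimes 2})$ and $(\theta_i^P)^{-1}\theta_i^\eta\in F_{P,\eta}^{\ell^{2t_P}}$ for all $P\in\PP_\eta$. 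I would then \emph{define} $\theta_1^\eta=\lambda\,(\theta_2^\eta\cdots\theta_\ell^\eta)^{-1}$ and set $\mu_\eta=(\theta_1^\eta,\dots,\theta_\ell^\eta)\in L_\eta=\prod_1^\ell F_\eta$.

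It then remains to check the four conclusions. Condition 1) is immediate: $N_{L_\eta/F_\eta}(\mu_\eta)=\theta_1^\eta\cdots\theta_\ell^\eta=\lambda$. For 2), since $\alpha\cdot(\lambda)=0$ over $F$ and hence over $F_\eta$, we get $\alpha\cdot(\theta_1^\eta)=\alpha\cdot(\lambda)-\sum_{i\ge 2}\alpha\cdot(\theta_i^\eta)=0$, which together with the vanishing for $i\ge 2$ gives $\alpha\cdot(\mu_\eta)=0\in H^3(L_\eta,\mu_n^{\otimes 2})$. Condition 3) is automatic, as $\mathrm{ind}(\alpha\otimes L_\eta)=\mathrm{ind}(\alpha\otimes F_\eta)<\mathrm{ind}(\alpha)$ because $\eta$ is of type 2. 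For 4), writing $L_\eta\otimes F_{P,\eta}=\prod_1^\ell F_{P,\eta}$, the components of index $i\ge 2$ of $\mu_P^{-1}\mu_\eta$ lie in $F_{P,\eta}^{\ell^{2t_P}}$ by construction, and for the first component $(\theta_1^P)^{-1}\theta_1^\eta=(\theta_1^P)^{-1}\lambda\prod_{i\ge 2}(\theta_i^\eta)^{-1}=\prod_{i\ge 2}\theta_i^P\prod_{i\ge 2}(\theta_i^\eta)^{-1}=\prod_{i\ge 2}\bigl(\theta_i^P(\theta_i^\eta)^{-1}\bigr)\in F_{P,\eta}^{\ell^{2t_P}}$, using $\theta_1^P\cdots\theta_\ell^P=\lambda$.

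I do not expect a genuine obstacle here: once (\ref{codimension_2_split}) and the bookkeeping of (\ref{choice_at_P}) are available, the construction is essentially a product-formula manipulation. The only step requiring care is verifying that the local pieces $\mu_P$ genuinely have the claimed split shape and valuation data, i.e.\ that we are in case 5) of (\ref{choice_at_P}) rather than case 6) or 7); this is exactly guaranteed by the standing hypothesis that $M_{\eta'}\otimes F_{Q,\eta'}$ is not a field, together with (\ref{type2_intersection}) which makes $\eta'$ and $Q$ well defined from $\eta$.
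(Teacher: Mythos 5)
Your proposal is correct and follows essentially the same route as the paper: apply (\ref{codimension_2_split}) coordinatewise to the units $\theta_i^P$, $i\ge 2$, furnished by case 5) of (\ref{choice_at_P}), define $\theta_1^\eta=\lambda(\theta_2^\eta\cdots\theta_\ell^\eta)^{-1}$, and use $\alpha\cdot(\lambda)=0$ together with the type 2 hypothesis for conclusions 2) and 3). Your explicit product computation $(\theta_1^P)^{-1}\theta_1^\eta=\prod_{i\ge 2}\theta_i^P(\theta_i^\eta)^{-1}$ for the first component of condition 4) is exactly the verification the paper leaves implicit.
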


\begin{proof}  Let $i \geq 2$.
By  choice (cf. \ref{choice_at_P}(5)), we have $\nu_\eta(\theta_i^P) = 0$ and
$\alpha \cdot (\theta_i^P) = 0  \in H^3(F_P, \mu_n^{\otimes 2})$ for all $P \in \PP_\eta$.  By (\ref{codimension_2_split}), 
there exists $\theta_i^{\eta} \in F_{\eta}$ such that $\alpha \cdot (\theta_i^\eta) = 0  \in H^3(F_\eta, \mu_n^{\otimes 2})$ 
and $(\theta_i^P)^{-1}\theta_i^{\eta} \in  F_{P, \eta}^{\ell^{2t_P}}$
for all $P \in\PP_\eta$.
Let $\theta_1^{\eta} = \lambda (\theta_2^\eta \cdots \theta_\ell^\eta)^{-1}$.
Then $\theta_1^\eta \cdots \theta^\eta_\ell = \lambda$ and 
$(\theta_1^P)^{-1}\theta_1^{\eta} \in  F_{P, \eta}^{\ell^{2t_P}}$. Since $\alpha \cdot (\lambda) = 0$ and 
$\alpha \cdot (\theta_i^{\eta}) = 0 \in H^3(F_\eta, \mu_n^{\otimes 2})$ for $i \geq 2$,
we have $\alpha \cdot (\theta_1) = 0  \in H^3(F_\eta, \mu_n^{\otimes 2})$.
 Hence $L_\eta = \prod F_\eta$ and 
 $\mu_\eta = (\theta_1^\eta, \cdots, \theta_\ell^\eta) \in L_\eta$.
 Since $\eta$ is of type 2, ind$(\alpha \otimes F_\eta) < $ ind$(\alpha)$ and hence 
 $L_\eta$, $\mu_\eta$ have the required properties.   
\end{proof}

\begin{prop}
\label{codimension_2} 
 Let $\eta$ be a codimension zero point of $X_0$ of type 2.  
 For each $P \in \PP_\eta$,   let  $(L_P, \mu_P)$  be chosen as in (\ref{choice_at_P}).
Suppose one of the following holds. \\
$\bullet$ There is  a type 2 connection between  $\eta$  and  codimension zero point 
$\eta'$ of $X_0$ of type 5 with  $Q$   the point of type 2 intersection of $\eta$ and $\eta'$ and 
$M_{\eta'} \otimes F_{Q, \eta'}$ is  a field.\\
$\bullet$ There is no type 2 connection between  $\eta$ and  
any  codimension zero point of $X_0$ of type 5.\\
Let $L_\eta =  F_\eta(\sqrt[\ell]{\lambda})$ and $\mu_\eta = \sqrt[\ell]{\lambda}$.
Then    \\
$1)$ $N_{L_\eta/F_\eta}(\mu_\eta) = \lambda$ \\
$2)$ $\alpha \cdot (\mu_\eta) = 0 \in H^3(L_\eta, \mu_n^{\otimes 2})$ \\
$3)$ ind$(\alpha \otimes L_\eta) < $ ind$(\alpha)$ \\
$4)$ for  $P \in \PP_\eta$,   there is an isomorphism 
$\phi_{P,\eta} : L_\eta \otimes F_{P, \eta} \to L_P  \otimes F_{P, \eta}$ 
and 
$$\phi_{P, \eta}(\mu_\eta \otimes 1) (\mu_P \otimes 1)^{-1} = 1.$$ 

\end{prop}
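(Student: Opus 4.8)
The plan is to reduce everything to a statement over the residue field $\kappa(\eta)$ (a global field), solve it there by the class-field-theory results of \S 3, and lift back using the structure theory of \S 4. First observe that since $\eta$ is of type 2, we have $\nu_\eta(\lambda)$ coprime to $\ell$, so $\lambda \notin F_\eta^{*\ell}$ and $L_\eta = F_\eta(\sqrt[\ell]{\lambda})$ is a field extension of degree $\ell$ with $\mu_\eta = \sqrt[\ell]{\lambda}$ automatically satisfying $N_{L_\eta/F_\eta}(\mu_\eta) = \lambda$, giving 1). For 2) and 3): since $\nu_\eta(\lambda)$ is coprime to $\ell$, by Lemma~\ref{dot-zero} applied with $r = \nu_\eta(\lambda)$ we get both $\mathrm{ind}(\alpha \otimes L_\eta) < \mathrm{ind}(\alpha)$ and $\alpha \cdot (\sqrt[\ell]{\lambda}) = 0 \in H^3(L_\eta, \mu_n^{\otimes 2})$ directly. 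This is the easy part and needs essentially no new work beyond invoking the earlier lemma.

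The substance is in 4). Here one uses the hypothesis on $\eta$: either there is a type 2 connection from $\eta$ to a type 5 point $\eta'$ with $M_{\eta'} \otimes F_{Q,\eta'}$ a field at the type 2 intersection point $Q$, or there is no type 2 connection from $\eta$ to any type 5 point. I would argue that in both cases, for every $P \in \PP_\eta$, the local choice $(L_P, \mu_P)$ made in Proposition~\ref{choice_at_P} is $L_P = F_P(\sqrt[\ell]{\lambda})$ and $\mu_P = \sqrt[\ell]{\lambda}$. This is exactly what cases 6) and 7) of Proposition~\ref{choice_at_P} guarantee: if $\eta = \eta_1$ is of type 2 and passes through $P$, the other component $\eta_2$ through $P$ is (since $\XX$ has no special points) of type 2, 5, or 6; the relevant alternatives in the proof of \ref{choice_at_P} force $L_P = F_P(\sqrt[\ell]{\lambda})$, $\mu_P = \sqrt[\ell]{\lambda}$ precisely under the two bulleted hypotheses. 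So I would spell out that for each $P \in \PP_\eta$ we are in subcase 6) or 7), hence $L_P = F_P(\sqrt[\ell]{\lambda})$ and $\mu_P = \sqrt[\ell]{\lambda}$. Then $L_\eta \otimes F_{P,\eta} = F_{P,\eta}(\sqrt[\ell]{\lambda}) = L_P \otimes F_{P,\eta}$ canonically, giving the isomorphism $\phi_{P,\eta}$, and under it $\phi_{P,\eta}(\mu_\eta \otimes 1) = \sqrt[\ell]{\lambda} = \mu_P \otimes 1$, so the ratio is $1$, which is 4).

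The main obstacle is bookkeeping rather than mathematics: one must carefully check that \emph{every} $P \in \PP_\eta$ falls into subcase 6) or 7) of Proposition~\ref{choice_at_P} under the stated hypotheses on $\eta$, and in particular that the point $Q$ of type 2 intersection referred to in the hypothesis coincides with the point $Q$ appearing in \ref{choice_at_P}(6) when we apply that proposition at a point $P$ of $\eta$. Lemma~\ref{type2_intersection} is the tool here: it shows $\eta'$ and $Q$ are uniquely determined by $\eta$, so the hypothesis that $M_{\eta'} \otimes F_{Q,\eta'}$ is (or is not) a field is unambiguous and matches the dichotomy in \ref{choice_at_P}. One also needs: if there is no type 2 connection from $\eta$ to any type 5 point, then for each $P \in \PP_\eta$ the other branch $\eta_2$ is not of type 5 either (a type 2 component through $P$ meeting a type 5 component would itself be a length-1 type 2 connection), so we land in case 7). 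Once this matching is made explicit the proof is immediate.
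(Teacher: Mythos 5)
Your proposal is correct and follows essentially the same route as the paper: parts 1)--3) come from $\nu_\eta(\lambda)$ being coprime to $\ell$ via Lemma~\ref{dot-zero}, and part 4) is exactly the paper's appeal to the choice made in Proposition~\ref{choice_at_P}, whose items 6) and 7) (with uniqueness of $\eta'$ and $Q$ from Lemma~\ref{type2_intersection}) force $L_P = F_P(\sqrt[\ell]{\lambda})$ and $\mu_P = \sqrt[\ell]{\lambda}$ at every $P \in \PP_\eta$. Your extra bookkeeping just makes explicit what the paper's one-line citation of (\ref{choice_at_P}) leaves implicit.
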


\begin{proof} Since $\nu_\eta(\lambda)$ is coprime to $\ell$,
 by (\ref{dot-zero}), 
 $\alpha \cdot (\mu_\eta) = 0 \in H^3(L_\eta, \mu_n^{\otimes 2})$ and 
 ind$(\alpha \otimes L_\eta) < $ ind$(\alpha)$. Clearly $N_{L_\eta/F_\eta}(\mu_\eta) = \lambda$.
 By the choice of $(L_P, \mu_P)$ (cf. \ref{choice_at_P}), for $P \in \PP_\eta$,
 we have   $L_P = F_P(\sqrt[\ell]{\lambda})$ and $\mu_P = \sqrt[\ell]{\lambda}$.
Thus $L_\eta $ and $\mu_\eta$ have the required properties. 
\end{proof}

\begin{lemma}
\label{type345a_ind}
 Let $\eta$ be a codimension zero point of $X_0$ of type 3, 4 or 5a. 
 Let $P \in \eta$. Suppose there exists   $L_{P, \eta}/F_{P, \eta}$ a degree $\ell$ unramified  field extension and
 $\mu_{P, \eta} \in L_{P, \eta}$ such that \\
 1) $N_{L_{P,\eta}/F_{P,\eta}}(\mu_{P,\eta}) = \lambda$, \\
 2) ind$(\alpha \otimes L_{P, \eta}) < $ ind$(\alpha)$,\\
 3) $\alpha \cdot (\mu_{P, \eta}) = 0 \in H^3(L_{P, \eta}, \mu_n^{\otimes 2})$, \\ 
 4)  If $\eta$ is of type 3, $\lambda \in F_P^{*\ell}$ and $\alpha \otimes E_\eta \otimes  F_{P,\eta} \neq 0$,
then ind$(\alpha \otimes (E_\eta \otimes F_{P, \eta}) \otimes (L_{P, \eta})) < $ ind$(\alpha \otimes E_\eta \otimes  F_{P,\eta}).$\\ 
   Then   ind$(\alpha \otimes (E_\eta \otimes F_{P, \eta}) \otimes (L_{P, \eta})) < $ ind$(\alpha)/ [E_\eta : F_\eta]$.
 \end{lemma}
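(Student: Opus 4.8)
The statement to be proved is Lemma~\ref{type345a_ind}: under the hypotheses on $L_{P,\eta}/F_{P,\eta}$ (an unramified degree $\ell$ field extension with $\lambda$ a norm, dropping the index of $\alpha$, killing $\alpha\cdot(\mu_{P,\eta})$, and — in the type~3 case — dropping the index of $\alpha\otimes E_\eta\otimes F_{P,\eta}$ when $\lambda\in F_P^{*\ell}$), one has
$${\rm ind}(\alpha\otimes E_\eta\otimes F_{P,\eta}\otimes L_{P,\eta})<{\rm ind}(\alpha)/[E_\eta:F_\eta].$$
The plan is to reduce everything to the decomposition $\alpha\otimes F_\eta=\alpha'+(E_\eta,\sigma_\eta,\pi_\eta)$ of Lemma~\ref{rbc} and the index formula ${\rm ind}(\alpha\otimes F_\eta)={\rm ind}(\alpha'\otimes E_\eta)[E_\eta:F_\eta]={\rm ind}(\alpha\otimes E_\eta)[E_\eta:F_\eta]$ of Lemma~\ref{dvr_ind}. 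Since $\eta$ is of type 3, 4 or 5a, $\nu_\eta(\lambda)$ is divisible by $\ell$, and since $L_{P,\eta}/F_{P,\eta}$ is unramified the extension $E_\eta\otimes L_{P,\eta}$ over $F_{P,\eta}$ is again unramified, so its residue considerations are governed by the residue fields, which are local.

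First I would dispose of the type 4 and type 5a cases together with the "easy" part of type 3. For type 4, by definition ${\rm ind}(\alpha\otimes F_\eta)<{\rm ind}(\alpha)$, so ${\rm ind}(\alpha\otimes E_\eta\otimes F_{P,\eta})\le{\rm ind}(\alpha\otimes E_\eta)={\rm ind}(\alpha\otimes F_\eta)/[E_\eta:F_\eta]<{\rm ind}(\alpha)/[E_\eta:F_\eta]$, and then tensoring with $L_{P,\eta}$ only decreases the index, giving the claim. For type 5a, $\alpha$ is unramified at $\eta$ so $E_\eta=F_\eta$, $[E_\eta:F_\eta]=1$, and the claim is just ${\rm ind}(\alpha\otimes L_{P,\eta})<{\rm ind}(\alpha)$, which is hypothesis (2). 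So the real content is the type 3 case. Here ${\rm ind}(\alpha\otimes F_\eta)={\rm ind}(\alpha)$ and $r\alpha\otimes E_\eta\ne0$ where $\nu_\eta(\lambda)=r\ell$; in particular $\alpha\otimes E_\eta\ne0$, hence ${\rm ind}(\alpha\otimes E_\eta)>1$ and ${\rm ind}(\alpha)=[E_\eta:F_\eta]\cdot{\rm ind}(\alpha\otimes E_\eta)$.

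For type 3, the plan is to split into two subcases according to whether $\lambda\in F_P^{*\ell}$ or not. If $\lambda\notin F_P^{*\ell}$: since $\lambda$ is a norm from the unramified degree $\ell$ extension $L_{P,\eta}$, and $\nu_\eta(\lambda)$ is divisible by $\ell$, the residue $\overline\lambda$ at $\eta$ (a unit times a power of $\pi_\eta$ with exponent divisible by $\ell$) lies in the local residue field $\kappa(\eta)_P$ and is a norm from the residue field $L(\eta)_P$ of $L_{P,\eta}$, but is not an $\ell$th power in $\kappa(\eta)_P$ once one tracks that $\lambda\notin F_P^{*\ell}$ forces a unit part that is a non-$\ell$th-power; by Lemma~\ref{local_field_norm} this pins down $L(\eta)_P=\kappa(\eta)_P(\sqrt[\ell]{\cdot})$, and then the argument of Lemma~\ref{type3_square}/Lemma~\ref{local-local} shows $\alpha\otimes E_\eta\otimes F_{P,\eta}\otimes L_{P,\eta}$ has index strictly below ${\rm ind}(\alpha\otimes E_\eta\otimes F_{P,\eta})\le{\rm ind}(\alpha\otimes E_\eta)={\rm ind}(\alpha)/[E_\eta:F_\eta]$. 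Actually the cleanest route is: $\alpha\otimes F_{P,\eta}\otimes L_{P,\eta}$ has index strictly below ${\rm ind}(\alpha\otimes F_{P,\eta})\le{\rm ind}(\alpha)$ by hypothesis (2), and then dividing by $[E_\eta\otimes L_{P,\eta}:L_{P,\eta}]=[E_\eta:F_\eta]$ (the extension stays of full degree since it is unramified and $E_\eta\otimes L_{P,\eta}$ is a field — using that $L(\eta)_P$ is not a subfield of $E(\eta)_P$, which is exactly how $L_{P,\eta}$ was chosen in Lemma~\ref{type3_square}) via Lemma~\ref{dvr_ind} gives the bound. If $\lambda\in F_P^{*\ell}$ and $\alpha\otimes E_\eta\otimes F_{P,\eta}=0$ the conclusion is immediate; if $\lambda\in F_P^{*\ell}$ and $\alpha\otimes E_\eta\otimes F_{P,\eta}\ne0$, that is precisely hypothesis (4), which says ${\rm ind}(\alpha\otimes E_\eta\otimes F_{P,\eta}\otimes L_{P,\eta})<{\rm ind}(\alpha\otimes E_\eta\otimes F_{P,\eta})\le{\rm ind}(\alpha\otimes E_\eta)={\rm ind}(\alpha)/[E_\eta:F_\eta]$, so we are done.

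\textbf{Main obstacle.} The delicate point is the bookkeeping in the type 3, $\lambda\notin F_P^{*\ell}$ subcase: one must verify that $E_\eta\otimes L_{P,\eta}$ is still a field of degree $[E_\eta:F_\eta]$ over $L_{P,\eta}$ (so that no "accidental" splitting inflates the index bound) and simultaneously that the index genuinely drops. The first is handled by the choice of $L_{P,\eta}$ as in Lemma~\ref{type3_square} — its residue field is chosen not isomorphic to a subfield of the residue field of $E_\eta\otimes F_{P,\eta}$, hence the compositum is a field over the local residue field, hence over $F_{P,\eta}$ by completeness; the second follows because a degree $\ell$ field extension of a local field always drops the index of a nonzero Brauer class of $\ell$-power index (\cite[p.~131]{CFANT}), applied to the residue class over the local field $E(\eta)_P$. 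Assembling these via the index formula of Lemma~\ref{dvr_ind} (which applies since $E_\eta\otimes L_{P,\eta}/L_{P,\eta}$ is unramified, so the decomposition of $\alpha\otimes L_{P,\eta}$ is still of the form in Lemma~\ref{rbc}) yields the stated strict inequality.
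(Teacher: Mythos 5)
Your reduction to the decomposition of Lemma \ref{rbc} and the index formula of Lemma \ref{dvr_ind} is the right frame, and your handling of type 4, of type 5a (where the conclusion is literally hypothesis (2), a small simplification of the paper's argument), and of type 3 with $\lambda\in F_P^{*\ell}$ (via hypothesis (4)) agrees with the paper. But the type 3 subcase $\lambda\notin F_P^{*\ell}$ — which is the real content of the lemma — has a genuine gap. There, $L_{P,\eta}$ is \emph{not} the extension of Lemma \ref{type3_square}: the hypotheses only give an unramified degree $\ell$ field extension with $\lambda$ a norm, and by Lemma \ref{dvr_local_field_norm} this forces $L_{P,\eta}\simeq F_{P,\eta}(\sqrt[\ell]{\lambda})$, whose residue field is $\kappa(\eta)_P(\sqrt[\ell]{\overline{\theta}_\eta})$. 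Nothing prevents $\sqrt[\ell]{\overline{\theta}_\eta}$ from lying in $E(\eta)\otimes\kappa(\eta)_P$; in that case $E_\eta\otimes F_{P,\eta}\otimes L_{P,\eta}$ does not change the algebra over $E_\eta\otimes F_{P,\eta}$ at all, so your claimed strict drop below ${\rm ind}(\alpha\otimes E_\eta\otimes F_{P,\eta})$ fails, and your "cleanest route" also fails: you cannot assume $E_\eta\otimes L_{P,\eta}$ is a field of full degree $[E_\eta:F_\eta]$ over $L_{P,\eta}$ (it may not be, either because $L_{P,\eta}\subset E_{P,\eta}$ or because $E_\eta\otimes F_{P,\eta}$ already splits into several factors), and hypothesis (2) only bounds ${\rm ind}(\alpha\otimes L_{P,\eta})$ by ${\rm ind}(\alpha)$, not by ${\rm ind}(\alpha\otimes F_{P,\eta})$; dividing by the actual local degree $[E_{P,\eta}\otimes L_{P,\eta}:L_{P,\eta}]$, which can be smaller than $[E_\eta:F_\eta]$, only yields a weaker bound than ${\rm ind}(\alpha)/[E_\eta:F_\eta]$.

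The missing ingredient is Lemma \ref{local_index_cal}, which is exactly what the paper uses at this point: from $\alpha\cdot(\lambda)=0$ and Lemma \ref{dot-zero} one gets $r\ell\beta=(E(\eta),\sigma_0,\overline{\theta}_\eta)$ over the \emph{global} residue field $\kappa(\eta)$, with $r\beta\otimes E(\eta)\neq 0$ because $\eta$ is of type 3, and then Lemma \ref{local_index_cal} gives ${\rm ind}(\beta\otimes E(\eta)_P\otimes\kappa(\eta)_P(\sqrt[\ell]{\overline{\theta}_\eta}))<{\rm ind}(\beta\otimes E(\eta))$ — a comparison with the global index, not the local one. Its proof handles precisely the troublesome case $\sqrt[\ell]{\overline{\theta}_\eta}\in E(\eta)\otimes\kappa(\eta)_P$ by a period computation with norm groups over local fields, not by the generic fact that a degree $\ell$ extension of a local field drops the index. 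Without this (or an equivalent argument), your chain of inequalities does not close, since ${\rm ind}(\alpha\otimes E_\eta\otimes F_{P,\eta})$ need not be strictly less than ${\rm ind}(\alpha\otimes E_\eta)={\rm ind}(\alpha)/[E_\eta:F_\eta]$ and need not drop further over $L_{P,\eta}$.
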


\begin{proof}
Write $\alpha \otimes F_\eta = \alpha' + (E_\eta, \sigma_\eta, \pi_\eta)$ as in (\ref{rbc}).
Then, by (\ref{dvr_ind}), ind$(\alpha \otimes F_\eta) = $ ind$(\alpha'\otimes E_\eta) [E_\eta : F_\eta]
 = $ ind$(\alpha \otimes E_\eta)[E_\eta : F_\eta]$. Let $t = [E_\eta : F_\eta]$ and $\beta$
 be the image of $\alpha'$ in $H^2(\kappa(\eta), \mu_n)$.

 Suppose $\eta$ is of type 4. Then ind$(\alpha \otimes F_\eta) < $ ind$(\alpha)$ and
 hence ind$(\alpha \otimes  E_\eta )  = $ ind$(\alpha \otimes F_\eta)/t < $ ind$(\alpha) / t$.
 We have ind$(\alpha \otimes (E_\eta \otimes F_{P, \eta}) \otimes (L_{P, \eta})) \leq $ ind$(\alpha \otimes E_\eta)
 < $ ind$(\alpha)/ t$.

Suppose that $\eta$ is of type  5a.  Then $\alpha$ is unramified at $\eta$ and hence $E_\eta = F_\eta$
 and $t = 1$.  The lemma is clear  if $\alpha \otimes F_{P, \eta} = 0$. Suppose
$\alpha \otimes F_{P, \eta} \neq 0$.  Then $\beta \neq 0$.  Since $L_{P, \eta}$ is a
an unramified field extension, 
the residue field 
$L_P(\eta)$ of $L_{P, \eta}$ is a field extension of $\kappa(\eta)_P$ of degree $\ell$.
Since $\kappa(\eta)_P$ is a local field and ind$(\beta)$ is divisible by $\ell$, 
ind$(\beta \otimes L_P(\eta)) < $ ind$(\beta)$ (\cite[p. 131]{CFANT}). In particular ind$(\alpha \otimes L_{P, \eta}) < $
ind$(\alpha)$.

Suppose that $\eta$ is of type 3. Then $r\alpha \otimes E_{\eta} \neq 0$ and 
hence $\alpha' \otimes E_\eta = \alpha  \otimes E_\eta \neq 0$.
In particular ind$(\alpha \otimes F_\eta) > t$ and  $\beta \otimes E(\eta) \neq 0$.  
If $\alpha \otimes  E_\eta \otimes F_{P, \eta}  = 0$, 
then ind$(\alpha \otimes  (E_\eta \otimes F_{P, \eta})   \otimes (L_{P, \eta}) ) = 1 < $ ind$(\alpha) /t$. 
Suppose that $\alpha \otimes  E_\eta \otimes F_{P, \eta} \neq 0$.
Suppose $\lambda \in F_{P} ^{*\ell}$. Then, by the  choice of $L_{P, \eta}$,
ind$(\alpha \otimes  (E_\eta \otimes F_{P, \eta})   \otimes (L_{P, \eta}) )
< $ ind$(\alpha \otimes E_\eta \otimes F_{P, \eta}) \leq $ ind$(\alpha \otimes E_\eta) = $ ind$(\alpha) /t$.
Suppose $\lambda \not\in F_{P} ^{*\ell}$.
Then $\lambda \not\in F_{P, \eta}^{*\ell}$. 
Since $L_{P, \eta}$ is a field extension of degree $\ell$ and $\lambda$ is a norm from 
$L_{P, \eta}$, by (\ref{dvr_local_field_norm}),  $L_{P, \eta} \simeq F_{P, \eta}(\sqrt[\ell]{\lambda})$.
Since $\eta$ is of type 3, $\nu_\eta(\lambda) =  r\ell$
and $\lambda = \theta_\eta \pi_\eta^{r\ell}$ with $\theta_\eta \in F_\eta$ a unit at $\eta$.
Let   $\overline{\theta}_\eta$ be the image  of $\theta_\eta$ in $\kappa(\eta)$.  
Then $\overline{\theta}_\eta \not\in \kappa(\eta)_P^{\ell}$ and $L_P(\eta) = \kappa(\eta)_P(\sqrt[\ell]{\overline{\theta}_\eta})$. 
Since $\alpha \cdot (\lambda) = 0$, by (\ref{dot-zero}),
$r\ell \alpha'  = (E_\eta, \sigma_\eta, \theta_\eta)$ and hence $r\ell \beta = (E(\eta), \sigma_0, \overline{\theta}_\eta)$.
 Thus, by (\ref{local_index_cal}), 
ind$(\beta \otimes E(\eta)_P  \otimes L_P(\eta)) < $ ind$(\beta \otimes  E(\eta))$.
 Thus
$$
\begin{array}{rcl}
 {\rm ind}(\alpha \otimes (E_\eta \otimes F_{P, \eta}) \otimes (L_{P, \eta})) 
& = & {\rm ind}(\alpha' \otimes (E_\eta \otimes F_{P, \eta}) \otimes (L_{P, \eta})) \\
&  = &     {\rm ind}(\beta \otimes E(\eta)_P \otimes L_P(\eta))  \\
& < &{\rm ind}(\beta \otimes E(\eta))   =  {\rm ind}(\alpha' \otimes E_\eta) \\
& = & {\rm ind}(\alpha \otimes E_\eta) = {\rm ind}(\alpha)/t.
\end{array}
$$
\end{proof}

\begin{prop}
\label{codimension_345a}
 Let $\eta$ be a codimension zero point of $X_0$ of type 3, 4 or 5a.  
For each $P \in \PP_\eta$,  let   $(L_P,  \mu_P) $ be chosen as in (\ref{choice_at_P}). 
Then there exist a field   extension $L_\eta/F_\eta$
of degree $\ell$ and $\mu_\eta \in L_\eta$ such that \\
$1)$ $N_{L_\eta/F_\eta}(\mu_\eta) = \lambda$ \\
$2)$ $\alpha \cdot (\mu_\eta) = 0 \in H^3(L_\eta, \mu_n^{\otimes 2})$ \\
$3)$ ind$(\alpha \otimes L_\eta) < $ ind$(\alpha)$ \\
$4)$  for  $P \in \PP_\eta$,   there is an isomorphism $\phi_{P,\eta} : L_\eta \otimes F_{P, \eta} \to L_P  \otimes F_{P, \eta}$  and 
$$\phi_{P, \eta}(\mu_\eta \otimes 1) (\mu_P \otimes 1)^{-1}  \in (L_P \otimes F_{P, \eta})^{\ell^{2t_P}}.$$ 
\end{prop}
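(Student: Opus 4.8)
\textbf{Proof proposal for Proposition \ref{codimension_345a}.}

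The plan is to construct, for each $P\in\PP_\eta$, a local datum at the branch $(P,\eta)$ built from the already-chosen $(L_P,\mu_P)$, and then glue these to a single $(L_\eta,\mu_\eta)$ over $F_\eta$ using the global-field results of \S3 applied to the residue field $\kappa(\eta)$, which is a global field. More precisely, write $\alpha\otimes F_\eta=\alpha'+(E_\eta,\sigma_\eta,\pi_\eta)$ as in (\ref{rbc}), let $\beta$ be the image of $\alpha'$ in $H^2(\kappa(\eta),\mu_n)$, write $\lambda=\theta_\eta\pi_\eta^{r}$ with $\nu_\eta(\theta_\eta)=0$ (so $r$ is coprime to $\ell$ if $\eta$ is of type 3 or 4, and $r=0$ if $\eta$ is of type 5a, up to adjusting parameters), and record that by (\ref{dot-zero}) the hypothesis $\alpha\cdot(\lambda)=0$ translates into $r\beta=(E(\eta),\sigma_0,\theta_0)$ in $H^2(\kappa(\eta),\mu_n)$, where $\theta_0$ is the image of $\theta_\eta$ and $E(\eta)$ the residue field of $E_\eta$. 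First I would dispose of the case $\lambda\notin F_\eta^{*\ell}$ (equivalently $\theta_0$ is not an $\ell$th power, using that $r$ is coprime to $\ell$ when $\eta$ is of type 3 or 4): then $L_\eta=F_\eta(\sqrt[\ell]{\lambda})$, $\mu_\eta=\sqrt[\ell]{\lambda}$, and by (\ref{local_nonsquare}) (together with (\ref{dvr_ind}) when $\eta$ is of type 3 to bound the index through $E_\eta$) properties 1)--3) hold; property 4) holds because for each $P\in\PP_\eta$ the choice in (\ref{choice_at_P})(8),(9),(10) and (\ref{dvr_local_field_norm}) forces $L_P\otimes F_{P,\eta}\simeq F_{P,\eta}(\sqrt[\ell]{\lambda})$ and $\mu_P$ can be taken to be $\sqrt[\ell]{\lambda}$ up to an $\ell^{2t_P}$th power.

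The substantive case is $\lambda\in F_\eta^{*\ell}$, equivalently $\theta_0\in\kappa(\eta)^{*\ell}$ up to the $\pi_\eta$-adic part. Here the strategy is: for each $P\in\PP_\eta$, the residue field $L_P(\eta)$ of $L_{P}\otimes F_{P,\eta}$ is an unramified cyclic (or split) degree $\ell$ extension $L_{0,P}$ of $\kappa(\eta)_P$, and $\mu_P$ has a well-defined residue $\mu_{0,P}\in L_{0,P}$ with $N_{L_{0,P}/\kappa(\eta)_P}(\mu_{0,P})=\theta_0$ and $r\beta\otimes L_{0,P}=(E(\eta)\otimes L_{0,P},\sigma_0,\mu_{0,P})$; moreover by (\ref{type345a_ind}) we have $\mathrm{ind}(\beta\otimes E(\eta)_P\otimes L_{0,P})<\mathrm{ind}(\beta\otimes E(\eta))$. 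These are exactly the local hypotheses of the \S3 global-field lemmas with $k=\kappa(\eta)$, $E_0=E(\eta)$, $\theta_0$ as above and the finite set $S=\PP_\eta$ enlarged to contain all places where $\beta\neq0$. I would then split according to whether $r\beta\otimes E(\eta)\neq0$ (apply (\ref{uglylemma}) via (\ref{global_field_non_zero}) to get a degree $\ell$ field extension $L_0/\kappa(\eta)$ and $\mu_0\in L_0$ with $N(\mu_0)=\theta_0$, dropping the index of $\beta\otimes E(\eta)$, agreeing with each $\mu_{0,P}$ closely — hence up to $\ell^{2t_P}$th powers), or $r\beta\otimes E(\eta)=0$ (take $L_0$ the degree $\ell$ subfield of $E(\eta)$ and apply (\ref{global_field_zero_cor}) to match the $\mu_{0,P}$). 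Lift $(L_0,\mu_0)$ to an unramified $(L_\eta,\mu_\eta)$ over $F_\eta$ via (\ref{type-other}): this yields $N_{L_\eta/F_\eta}(\mu_\eta)=\lambda$ (adjusting by the explicit $\ell$th power $\pi_\eta^{r}$), $\alpha\cdot(\mu_\eta)=0$, and $\mathrm{ind}(\alpha\otimes L_\eta)=\mathrm{ind}(\alpha\otimes E_\eta\otimes L_\eta)[E_\eta:F_\eta]<\mathrm{ind}(\alpha)$ by (\ref{dvr_ind}) combined with the index drop over $E(\eta)$; the closeness of $\mu_0$ to $\mu_{0,P}$ at each $P\in\PP_\eta$ gives the congruence in 4) modulo $(L_P\otimes F_{P,\eta})^{\ell^{2t_P}}$, since $F_{P,\eta}$ is complete with residue field $\kappa(\eta)_P$ and $t_P\geq d$.

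The main obstacle I anticipate is bookkeeping the index bound: I need $\mathrm{ind}(\alpha\otimes L_\eta)<\mathrm{ind}(\alpha)$ in all three subcases, and when $\eta$ is of type 3 the inequality must be squeezed out of the index drop of $\beta$ \emph{over} $E(\eta)$ combined with the fixed multiplicative factor $[E_\eta:F_\eta]$, exactly as packaged in (\ref{type345a_ind}) — so the real work is checking that the $(L_{0,P},\mu_{0,P})$ genuinely satisfy hypothesis 3) of (\ref{uglylemma})/(\ref{global_field_non_zero}) at \emph{every} $P\in\PP_\eta$, not just generically, and that enlarging $S$ to absorb the bad places of $\beta$ does not destroy the compatibility. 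A secondary subtlety is the split case: some $L_P\otimes F_{P,\eta}$ may be $\prod F_{P,\eta}$, and one must check (\ref{uglylemma}) still outputs a \emph{field} $L_0$ (guaranteed because at least one $P$ gives a genuine field extension, or else $\eta$ would be a type 2 or type 6 situation excluded here) while still matching the split branches up to $\ell^{2t_P}$th powers; this is handled by the same weak/strong approximation as in (\ref{codimension_2_split}). Once these local-to-global inputs are lined up, assembling $(L_\eta,\mu_\eta)$ and verifying 1)--4) is routine.
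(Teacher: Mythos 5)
Your second ("substantive") case is essentially the paper's argument — pass to the residue field $\kappa(\eta)$, feed the branch data into the global-field machinery of \S 3 with the index hypothesis supplied by (\ref{type345a_ind}), and lift back with (\ref{type-other}) — but the way you frame and split the proof contains a genuine gap. First, the premise is misread: for curves of type 3, 4 or 5a one has $\nu_\eta(\lambda)=r\ell$ \emph{divisible} by $\ell$ (coprimality to $\ell$ is what defines types 1 and 2), so "$r$ coprime to $\ell$ for type 3 or 4, $r=0$ for 5a" is false, and the whole point of the construction is to write $\lambda=\theta_\eta\pi_\eta^{r\ell}$ and work with the unit part $\theta_\eta$. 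Second, and more seriously, your opening reduction "if $\lambda\notin F_\eta^{*\ell}$, take $L_\eta=F_\eta(\sqrt[\ell]{\lambda})$, $\mu_\eta=\sqrt[\ell]{\lambda}$" does not work. (a) Condition 4) can fail: $\lambda\notin F_\eta^{*\ell}$ does not prevent $\lambda\in F_P^{*\ell}$ at some $P\in\PP_\eta$ (the unit part can become an $\ell$-th power in $\kappa(P)$, hence in $F_P$); at such a $P$, (\ref{choice_at_P})(8)--(10) make $L_P\otimes F_{P,\eta}$ an unramified \emph{field} extension, while $L_\eta\otimes F_{P,\eta}=F_{P,\eta}(\sqrt[\ell]{\lambda})$ is split, so no isomorphism $\phi_{P,\eta}$ exists, and (\ref{dvr_local_field_norm}) is not applicable since $\lambda\in F_{P,\eta}^{*\ell}$ there. (b) Condition 3) is not justified either: (\ref{local_nonsquare}) and (\ref{lambda_non_square_local}) concern the two-dimensional complete local situation and a local-field residue field, whereas $F_\eta$ has global residue field $\kappa(\eta)$; for a type 3 curve (where $\mathrm{ind}(\alpha\otimes F_\eta)=\mathrm{ind}(\alpha)$) the index of $\beta\otimes E(\eta)$ need not drop over $\kappa(\eta)(\sqrt[\ell]{\theta_0})$, since over a global field the index is the lcm of local indices and there is no drop at places where $\theta_0$ is locally an $\ell$-th power. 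This is exactly why the paper makes no such case split: it treats all of types 3, 4, 5a uniformly by taking $L_{P,\eta}=L_P\otimes F_{P,\eta}$ at $P\in\PP_\eta$ (unramified field extensions by construction), adding auxiliary choices at the remaining places where $\beta\neq 0$, and letting (\ref{uglylemma}) (with $E_0=E(\eta)$, $d=\mathrm{ind}(\alpha)/[E_\eta:F_\eta]$) build an $L_0$ that matches all the prescribed residues; the compatibility in 4) then comes from closeness, not from a global formula for $L_\eta$.

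Two smaller points in your main case. The sub-split into $r\beta\otimes E(\eta)\neq 0$ versus $=0$ with appeals to (\ref{global_field_non_zero}) and (\ref{global_field_zero_cor}) is both unnecessary and slightly off: (\ref{global_field_non_zero}) produces its own local data and imposes no matching at $\PP_\eta$, so it cannot be used as a black box for condition 4), and (\ref{global_field_zero_cor}) forces $L_0$ to be the degree-$\ell$ subfield of $E(\eta)$, which need not agree with the residue fields $L_P(\eta)$ already fixed by (\ref{choice_at_P}) (and does not even exist for type 5a, where $E(\eta)=\kappa(\eta)$). The correct move is to apply (\ref{uglylemma}) directly with the local data, as the paper does. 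Finally, your worry about split branches at points of $\PP_\eta$ does not arise here: for $\eta$ of type 3, 4 or 5a, (\ref{choice_at_P})(8)--(10) guarantee $L_P\otimes F_{P,\eta}$ is an unramified field extension; the split branches occur only in the propositions handling types 2, 5b and 6.
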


\begin{proof} Write $\alpha \otimes F_\eta = \alpha' + (E_\eta, \sigma_\eta, \pi_\eta)$ as in (\ref{rbc}).
 By (\ref{dot-zero}),  $r\ell\alpha' = (E_\eta, \sigma_\eta, \theta_\eta)$.
Let $\beta$ be the image of 
$\alpha' $ in $H^2(\kappa(\eta), \mu_n)$ and $E(\eta)$ the residue field of $E_\eta$.
Then   $r\ell \beta = (E(\eta), \sigma_0, \theta_0) 
\in H^2(\kappa(\eta), \mu_n)$, where $\sigma_0$ is the automorphism of $E(\eta)$ induced by $\sigma_\eta$
and $\theta_0$ is the image of $\theta_\eta$ in $\kappa(\eta)$.

Let $S$ be a finite   set of places of $\kappa(\eta)$ containing   the places given by closed points of $\PP_\eta$
and places $\nu$ of $\kappa(\eta)$ with  $\beta \otimes \kappa(\eta)_\nu  \neq 0$. For each $\nu \in S$, we now give
a cyclic field extension $L_\nu/\kappa(\eta)_\nu$ of degree $\ell$ and $\mu_\nu \in L_\nu$ satisfying 
the conditions of (\ref{uglylemma}) with $E_0 = E(\eta)$ and $d = $ ind$(\alpha)/t$.

Let $\nu \in S$. Then $\nu$ is given by a closed point $P$ of $\eta$.
If $P \in \PP$, let $L_{P, \eta} = L_P \otimes F_{P, \eta}$ and $\mu_{P,\eta} = \mu_P \otimes 1 
\in L_{P, \eta}$. Suppose $P \not\in \PP$.  
Suppose that $\lambda \not\in F_P^{*\ell}$. Then $\lambda \not\in F_{P, \eta}^{*\ell}$.
Let $L_{P,\eta} =  F_{P, \eta}(\sqrt[\ell]{\lambda})$ and $\mu_{P, \eta} =
\sqrt[\ell]{\lambda}$. Suppose that $\lambda \in F_P^{*\ell}$. 
If $\eta$ is of type 3, then  let $L_{P,\eta}/F_{P,\eta}$ be a cyclic unramified field extension of degree $\ell$
as in (\ref{type3_square}) and $\mu_{P, \eta} =
\sqrt[\ell]{\lambda}$. If   $\eta$ is of type 4 or 5a, then let $L_{P, \eta}/F_{P, \eta}$
be a  cyclic unramified field extension of degree $\ell$ as in (\ref{local-local}) and $\mu_{P, \eta} =
\sqrt[\ell]{\lambda}$.

Since  $L_{P, \eta}/F_{P,\eta}$ is an unramified field extension of degree $\ell$,
the rescue field $L_P(\eta)$ is a degree $\ell$ field extension of $\kappa(\eta)_P$.
Let $L_\nu = L_P(\eta)$.
  We have $\nu_\eta(\lambda) = r\ell$ for some integer $r$ and
   $\lambda = \theta_\eta \pi_\eta^{r\ell}$ for some parameter $\pi_\eta$
at $\eta$ and $\theta_\eta \in F_\eta$ a unit at $\eta$. 
Further   $\pi_\eta$ is a parameter in $L_{P, \eta}$.
Since $N_{L_{P, \eta}/F_{P, \eta}}(\mu_{P, \eta}) = \lambda$,  $\mu_{P,\eta} = \theta_{P, \eta} \pi^{r}_\eta$
for some $\theta_{P, \eta} \in L_P \otimes F_{P, \eta}$ which is a unit at $\eta$. 
Let   $\mu_\nu$ be the image of $\theta_{P, \eta}$ in 
$L_\nu = L_P(\eta)$.  Then    
$N_{L_\nu/\kappa(\eta)_\nu}(\mu_\nu) = \theta_0$.  
Since the corestriction map $H^2(L_\nu, \mu_n) \to H^2(\kappa(\eta)_\nu, \mu_n)$ is 
injective,    $r\beta \otimes L_{\nu}  =  (E_0\otimes L_\nu, \sigma_0\otimes 1, \mu_\nu).$ 
Let $t = [E_\eta : F_\eta]$. By (\ref{type345a_ind}), we have 
ind$(\alpha \otimes (E_\eta \otimes F_{P, \eta}) \otimes L_{P, \eta}) < $ ind$(\alpha) /t$.
Since $\alpha \otimes E_\eta = \alpha' \otimes E_\eta$, we have 
ind$(\alpha' \otimes (E_\eta \otimes F_{P, \eta}) \otimes L_{P, \eta}) < $ ind$(\alpha)/\ell^d$.
Since ind$(\beta \otimes E_0 \otimes L_\mu) = $ ind$(\alpha' \otimes (E_\eta \otimes F_{P, \eta}) 
\otimes (L_P \otimes F_{P, \eta}))$, ind$(\beta \otimes E_0 \otimes L_\nu) < $ ind$(\alpha) /t$.

Since $\kappa(\eta)$ is a global-field,  by  (\ref{uglylemma}),  there exist 
 a field extension $L_0/\kappa(\eta)$ of  degree $\ell$ and $\mu_0 \in L_0$ such that 
  
   1) $N_{L_0/k}(\mu_0) = \theta_0$ 
   
   2) $r\beta \otimes L_0  =  (E(\eta) \otimes L_0, \sigma_0 \otimes 1, \mu_0 )  $ 
   
   3)  ind$(\beta \otimes E(\eta)  \otimes L_0) <   $ ind$(\alpha)/t$
   
   4) $L_0 \otimes \kappa(\eta)_P  \simeq L_P(\eta)$ for all $P \in \PP_\eta$ 
   
   5) $\mu_0$ is close to $\overline{\theta}_P$ for all $P \in \PP_\eta$.\\
  Then, by (\ref{type-other}), there exist a field extension $L_\eta/F_\eta$ of degree $\ell$ and $\mu  \in L_\eta$ such that \\ 
  $\bullet$ residue field of $L_\eta$ is  $L_0$, \\
 $\bullet$ $\mu $ a unit in the valuation ring of $L_\eta$, \\
 $\bullet$ $\overline{\mu}  = \mu_0$, \\
 $\bullet$    $N_{L_\eta/F_\eta} (\mu) = \theta_\eta$,  \\
 $\bullet$  $\alpha \cdot (\mu  \pi_\eta^r) \in H^3(L_\eta, \mu_n^{\otimes 2})$ is unramified. \\
 Since $L_\eta$ is a complete discretely valued field with residue field $L_0$ a global field, 
 $H^3_{nr}(L_\eta, \mu_n^{\otimes 2}) = 0$ (\cite[p. 85]{Serre_Gal_coh}) and hence $\alpha \cdot (\mu \pi_\eta^r) =0$. 
 Since $L_\eta/F_\eta$ is unramified and 
  $\alpha \otimes L_\eta = \alpha' \otimes L_\eta + (E_\eta \otimes L_\eta, \sigma_\eta, \pi_\eta)$,
    ind$(\alpha \otimes L_\eta) \leq $ ind$(\alpha'\otimes E_\eta \otimes L_\eta) [E_\eta\otimes  L_\eta
  : L_\eta] = $ ind$(\beta \otimes E(\eta)\otimes L_0) t < $ ind$(\alpha)$ . 
   Thus  $L_\eta$ and $\mu_\eta =\mu \pi_\eta^r \in L_\eta$  have the required properties. 
  \end{proof}

\begin{prop}
\label{codimension_5b}
 Let $\eta$ be a codimension zero point of $X_0$ of type  5b.  
  Let $(E_\eta, \sigma_\eta)$ be the residue of $\alpha$ at $\eta$
and $M_\eta$ be the unique subfield of $E_\eta$ with $M_\eta/F_\eta$ a cyclic extension of degree $\ell$. 
For each $P \in \PP_\eta$,  let $L_P$ and $\mu_P$ be as in (\ref{choice_at_P}).   
Then there exists  $\mu_\eta \in M_\eta$ such that \\
$1)$ $N_{M_\eta/F_\eta}(\mu_\eta) = \lambda$ \\
$2)$ $\alpha \cdot (\mu_\eta) = 0 \in H^3(M_\eta, \mu_n^{\otimes 2})$ \\
$3)$ ind$(\alpha \otimes  M_\eta) < $ ind$(\alpha)$ \\
 $4)$  for  $P \in \PP_\eta$,   there is an isomorphism $\phi_{P,\eta} : M_\eta \otimes F_{P, \eta} \to L_P  \otimes F_{P, \eta}$  and 
 $$\phi_{P, \eta}(\mu_\eta \otimes 1) (\mu_P \otimes 1)^{-1}  \in (L_P \otimes F_{P, \eta})^{\ell^{2t_P}}.$$
 \end{prop}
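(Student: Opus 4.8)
\textbf{Proof plan for Proposition~\ref{codimension_5b}.}
The plan is to mimic the structure of the proof of Proposition~\ref{codimension_345a}, but working inside the degree $\ell$ subextension $M_\eta$ of $E_\eta$ rather than constructing a new degree $\ell$ extension of $F_\eta$. First I would record the basic numerical data: since $\eta$ is of type~5b, $\alpha$ is ramified at $\eta$, $\nu_\eta(\lambda)=r\ell$ and $r\alpha\otimes E_\eta = 0$; write $\alpha\otimes F_\eta = \alpha' + (E_\eta,\sigma_\eta,\pi_\eta)$ as in (\ref{rbc}) and $\lambda = \theta_\eta\pi_\eta^{r\ell}$ with $\theta_\eta$ a unit at $\eta$. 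By (\ref{index_M}) we already know ind$(\alpha\otimes M_\eta) < $ ind$(\alpha)$, so item~3) is free; and $M_\eta/F_\eta$ is unramified, so it has residue field $M(\eta)$, the unique degree $\ell$ subextension of $E(\eta)/\kappa(\eta)$. Passing to residues, $\beta := $ image of $\alpha'$ in $H^2(\kappa(\eta),\mu_n)$ satisfies $r\ell\beta = (E(\eta),\sigma_0,\theta_0)$ with $\theta_0$ the image of $\theta_\eta$, and $r\beta\otimes E(\eta) = 0$ since $r\alpha\otimes E_\eta = \alpha'$-part vanishes there.

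The heart of the argument is then a global-field construction over $\kappa(\eta)$ using Corollary~\ref{global_field_zero_cor} (rather than (\ref{global_field_non_zero})/(\ref{uglylemma}), because here $r\beta\otimes E(\eta)=0$), applied with $E_0 = E(\eta)$, $L_0 = M(\eta)$, $\beta$, $\theta_0$. For each $P\in\PP_\eta$ I must supply a local element $\mu_\nu \in M(\eta)\otimes\kappa(\eta)_P$ with $N(\mu_\nu) = \theta_0$ and $r\beta\otimes M(\eta)\otimes\kappa(\eta)_P = (E(\eta)\otimes M(\eta)\otimes\kappa(\eta)_P,\sigma_0\otimes 1,\mu_\nu)$. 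This is where the compatibility with the chosen $(L_P,\mu_P)$ enters: by (\ref{choice_at_P})(11), for $P\in\PP_\eta$ we have $L_P\otimes F_{P,\eta} \simeq M_\eta\otimes F_{P,\eta}$ (or $L_P = \prod F_P$ with the $\theta_i$ having the right valuations), so $\mu_P$ already furnishes, via its image in the residue field, a candidate $\mu_\nu$; I would check that its norm is $\theta_0$ and that the cyclic-algebra relation holds over $\kappa(\eta)_P$, using that corestriction $H^2(M(\eta)\otimes\kappa(\eta)_P,\mu_n)\to H^2(\kappa(\eta)_P,\mu_n)$ is injective (local fields) together with $\alpha\cdot(\mu_P) = 0$. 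Corollary~\ref{global_field_zero_cor} then produces $\mu_0\in M(\eta)$ with $N_{M(\eta)/\kappa(\eta)}(\mu_0) = \theta_0$, $r\beta\otimes M(\eta) = (E(\eta)\otimes M(\eta),\sigma_0,\mu_0)$, and $\mu_0$ close to $\mu_\nu$ for all $\nu\in S$.

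Next I would lift: take $\mu\in M_\eta$ a unit at $\eta$ with residue $\mu_0$, set $\mu_\eta = \mu\pi_\eta^r$, so $N_{M_\eta/F_\eta}(\mu_\eta) = \theta_\eta\cdot N(\pi_\eta)^r$; adjusting $\mu$ within its residue class (using that $M_\eta$ is complete and $\theta_\eta/N(\mu_0\text{-lift})$ is a unit with trivial residue, hence an $\ell$-th power to high order) I arrange $N_{M_\eta/F_\eta}(\mu_\eta) = \lambda$ exactly, giving item~1). For item~2), the argument of (\ref{type-other}) shows $\alpha\cdot(\mu_\eta)$ is unramified at $\eta$; since $M_\eta$ is a complete discretely valued field with residue field $M(\eta)$ a global field, $H^3_{nr}(M_\eta,\mu_n^{\otimes 2}) = 0$ (as in the proof of (\ref{codimension_345a}), via \cite[p.~85]{Serre_Gal_coh}), so $\alpha\cdot(\mu_\eta) = 0$. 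Finally item~4): for $P\in\PP_\eta$, the isomorphism $\phi_{P,\eta}: M_\eta\otimes F_{P,\eta}\to L_P\otimes F_{P,\eta}$ is the one from (\ref{choice_at_P})(11); since $\mu_0$ is close to the residue of $\mu_P$, the difference $\phi_{P,\eta}(\mu_\eta)\mu_P^{-1}$ is a unit at the branch with residue $1$, hence lies in $(L_P\otimes F_{P,\eta})^{\ell^{2t_P}}$ because $F_{P,\eta}$ is complete with residue field $\kappa(\eta)_P$ and we chose the approximation in $\kappa(\eta)_P^{\ell^{2t_P}}$ (here I use that $t_P\ge d$).

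The main obstacle I anticipate is the $\PP_\eta$-local matching step: I must be careful that the element $\mu_P$ (or the tuple $(\theta_1,\dots,\theta_\ell)$ when $L_P$ is split — which can occur when $M_{\eta}\otimes F_{P,\eta}$ fails to be a field, i.e. at special points of type IV, but those were eliminated by the hypothesis on $\XX$, so actually $L_P\otimes F_{P,\eta}$ and $M_\eta\otimes F_{P,\eta}$ are genuinely isomorphic fields when $P\in\PP_\eta$) genuinely restricts to an element of $M(\eta)\otimes\kappa(\eta)_P$ with the two required properties, and that the weak-approximation in (\ref{global_field_zero_cor}) can be taken simultaneously close enough at all $P\in\PP_\eta$ to land in the $\ell^{2t_P}$-th powers. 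Verifying $N(\mu_\nu) = \theta_0$ requires tracking valuations (the $\pi_\eta^r$ factors) carefully, and the cyclic-algebra identity over $\kappa(\eta)_P$ needs the injectivity of corestriction plus the hypothesis $\alpha\cdot(\mu_P) = 0$; everything else is routine lifting and completeness arguments parallel to (\ref{codimension_345a}).
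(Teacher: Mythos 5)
Your overall route is the same as the paper's: reduce to the residue field $\kappa(\eta)$, apply (\ref{global_field_zero_cor}) with $E_0=E(\eta)$ and $L_0=M(\eta)$, feed in local data at the places coming from $\PP_\eta$ extracted from the chosen $\mu_P$, lift by completeness of $M_\eta$, and get condition 4) from the closeness of the approximation. However, there is one genuine error in your local matching step: you dismiss the case where $M_\eta\otimes F_{P,\eta}$ is not a field (so that $L_P=\prod F_P$ is split) by asserting that such a $P$ would be a special point of type IV and hence has been removed by the choice of $\XX$. This is false. A point is special of type IV only when the \emph{other} component through $P$ is of type 1, 3 or 5. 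If the second component through $P$ is of type 2 (at the point of type 2 intersection; see case 5) of (\ref{choice_at_P}) and Case II of its proof) or of type 6 (Case V of that proof), the point is not special, $M_\eta\otimes F_{P,\eta}$ can indeed fail to be a field, and the chosen $L_P$ is then the split extension $\prod F_P$ with $\mu_P=(\theta_1,\cdots,\theta_\ell)$ and $\nu_\eta(\theta_i)=\nu_\eta(\lambda)/\ell$; this is precisely why statement (11) of (\ref{choice_at_P}) makes explicit provision for $L_P=\prod F_P$. As written, your construction supplies no local datum $\mu_\nu$ at such places, and your corestriction-injectivity argument (which presupposes a field extension of local fields) does not apply there.

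The repair is exactly what the paper does, and it costs only a few lines: in the split case use the valuation normalization $\nu_\eta(\theta_i)=\nu_\eta(\lambda)/\ell$ to write $\mu_P=\mu'_P\pi_\eta^{r}$ with $\mu'_P=(\theta'_1,\cdots,\theta'_\ell)$ a tuple of units at $\eta$, take its residue $\overline{\mu'}_P$ in $M(\eta)\otimes\kappa(\eta)_P\simeq\prod\kappa(\eta)_P$, and verify the relation $r\beta\otimes\kappa(\eta)_P=(E(\eta)\otimes M(\eta)\otimes\kappa(\eta)_P,\sigma_0\otimes 1,\overline{\mu'}_P)$ componentwise from $\alpha\cdot(\theta'_i\pi_\eta^{r})=0$ together with (\ref{dot-zero}) (compare the analogous componentwise treatment in the proof of (\ref{codimension_6})); the norm condition $N(\overline{\mu'}_P)=\overline{\theta}_\eta$ follows since $\theta'_1\cdots\theta'_\ell=\theta_\eta$ up to the valuation bookkeeping you already track. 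In the field case your argument agrees with the paper's (the paper deduces the local relation directly from (\ref{dot-zero}) over $M_\eta\otimes F_{P,\eta}$ rather than via corestriction, but both work). With the split case included, the remainder of your plan — the application of (\ref{global_field_zero_cor}), the lift using completeness of $M_\eta$, the vanishing of $\alpha\cdot(\mu_\eta)$ via unramifiedness and $H^3_{nr}(M_\eta,\mu_n^{\otimes 2})=0$, and closeness giving $\phi_{P,\eta}(\mu_\eta\otimes 1)(\mu_P\otimes 1)^{-1}\in(L_P\otimes F_{P,\eta})^{\ell^{2t_P}}$ — goes through as in the paper.
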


\begin{proof} Let $E(\eta)$ and $M(\eta)$ be the residue fields of $E_\eta$ and
$M_\eta$ at $\eta$.  Since $\eta$ is of type 5b,   $M(\eta)$ is the unique subfield of $E(\eta)$  with 
$M(\eta)/\kappa(\eta)$ a cyclic  field extension of degree $\ell$. 
Let $\pi_\eta$ be a parameter at $\eta$. 
Since $\eta$ is  of type 5,   $\nu_\eta(\lambda) = r\ell$ and $\lambda = \theta_\eta \pi_\eta^{r\ell}$
for some $\theta_\eta \in F$ a unit at $\eta$.  Let $\overline{\theta}_\eta$ be the image of $\theta_\eta$ 
in $\kappa(\eta)$.  Let $P \in \PP_\eta$. Suppose $M_\eta \otimes F_{P, \eta}$ is a field.
Since $N_{M_\eta \otimes F_{P, \eta}/ F_{P, \eta}}(\mu_P) = 
\lambda = \theta_\eta \pi_\eta^{r\ell}$,  we have 
$\mu_P = \mu'_P \pi_\eta^r$ with $\mu'_P \in M_\eta \otimes F_{P, \eta}$ 
a unit at $\eta$ and $N_{M_\eta \otimes F_{P, \eta}/ F_{P, \eta}}(\mu'_P) = \theta_\eta$. 
Suppose $M_\eta \otimes F_{P, \eta}$ is not a field.  Then, by the choice of $\mu_P$ (cf. \ref{choice_at_P}(10)),
we have $\mu_P = \mu'_P\pi_\eta^r$, where $\mu'_P = (\theta_1', \cdots , \theta_\ell') \in 
M_\eta \otimes F_{P, \eta'} = \prod F_{P, \eta}$
with each $\theta'_i \in F_{P, \eta}$ is a unit at $\eta$.  
Let   $\overline{\mu'}_P$ be the image of $\mu'_P$ in the residue field  $M(\eta) \otimes \kappa(\eta)_P$
of $M_\eta \otimes F_{P,\eta}$ at $\eta$. 
Write 
$\alpha \otimes F_\eta = \alpha' + (E_\eta, \sigma_\eta, \pi_\eta)$ as in (\ref{rbc}).  
Let $\beta$ 
be the image of $\alpha'$ in $H^2(\kappa(\eta), \mu_n)$. Since $\alpha \cdot (\lambda) = 0$, by 
(\ref{dot-zero}),   $r\ell \beta = (E(\eta), \sigma_\eta, \overline{\theta}_\eta)$. 
 Since $\alpha \cdot (\mu_P)  = 0$ in $H^3(M_\eta \otimes F_{P, \eta}, \mu_n^{\otimes 2})$,
once again by (\ref{dot-zero}),   $r\beta  \otimes \kappa(\eta)_P 
= (E(\eta)  \otimes 
M(\eta) \otimes \kappa(\eta)_P,  \sigma_\eta, \overline{\mu'}_P)$. Since $\kappa(\eta)$ is a
global field, by (\ref{global_field_zero_cor}), there exists $\mu'_\eta \in M(\eta)$ such that \\
$1)$ $N_{M(\eta)/\kappa(\eta)}(\mu'_\eta) = \overline{\theta}_\eta$ \\
$2)$ $r\beta  \otimes M(\eta)
= (E(\eta)  \otimes 
M(\eta),  \sigma_\eta, {\mu'}_\eta)$\\
$3)$ $\overline{\mu'}_P$ is close to $\mu'_\eta$ for all $P \in \PP_\eta$.\\
Since $M_\eta$ is complete, there exists $\tilde{\mu_\eta} \in M_\eta$
such that $N_{M_\eta/F_\eta}(\tilde{\mu'_\eta}) = \theta_\eta$ and 
the image of $\tilde{\mu'_\eta}$ in $M(\eta)$ is $\mu'_\eta$. Let 
$\mu_\eta = \tilde{\mu'_\eta}\pi_\eta^r$.  Since $M_\eta/F_\eta$ is of degree $\ell$,
ind$(\alpha \otimes M_\eta) < $ ind$(\alpha \otimes F_\eta)$ (cf. \ref{index_M} ).
Thus  $ \mu_\eta$ has the required properties. 
\end{proof}

\begin{prop}
\label{codimension_6}
 Let $\eta$ be a codimension zero point of $X_0$ of type   6. 
For each $P \in \PP_\eta$,  let   $L_P $ and $\mu_P$ be as in  (\ref{choice_at_P}).
 Then there exist a field  extension $L_\eta/F_\eta$
of degree $\ell$ and $\mu_\eta \in L_\eta$ such that \\
$1)$ $N_{L_\eta/F_\eta}(\mu_\eta) = \lambda$ \\
$2)$ $\alpha \cdot (\mu_\eta) = 0 \in H^3(L_\eta, \mu_n^{\otimes 2})$ \\
$3)$ ind$(\alpha \otimes L_\eta) < $ ind$(\alpha)$ \\
$4)$   for  $P \in \PP_\eta$,   there is an isomorphism $\phi_{P,\eta} : M_\eta \otimes F_{P, \eta} \to L_P  \otimes F_{P, \eta}$  and 
$$\phi_{P, \eta}(\mu_\eta \otimes 1) (\mu_P \otimes 1)^{-1}  \in (L_P \otimes F_{P, \eta})^{\ell^{2t_P}}.$$
\end{prop}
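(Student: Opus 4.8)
The plan is to mimic the proof of Proposition~\ref{codimension_345a}, replacing the field extension $L_\eta/F_\eta$ by the unique degree $\ell$ subextension $M_\eta$ of $E_\eta$ when $\alpha$ is ramified at $\eta$, and keeping an unramified extension when $\alpha$ is unramified at $\eta$. First I would write $\alpha\otimes F_\eta = \alpha' + (E_\eta,\sigma_\eta,\pi_\eta)$ as in (\ref{rbc}), let $\beta$ be the image of $\alpha'$ in $H^2(\kappa(\eta),\mu_n)$, and record that since $\eta$ is of type~6 we have $\nu_\eta(\lambda)=r\ell$, $r\alpha\otimes E_\eta = 0$ and $\mathrm{ind}(\alpha\otimes F_\eta)<\mathrm{ind}(\alpha)$; writing $\lambda = \theta_\eta\pi_\eta^{r\ell}$ with $\theta_\eta$ a unit at $\eta$ and $\overline\theta_\eta$ its image in $\kappa(\eta)$, (\ref{dot-zero}) gives $r\ell\beta = (E(\eta),\sigma_0,\overline\theta_\eta)$ and $r\beta\otimes E(\eta)=0$.

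Next I would construct local data over $\kappa(\eta)$ at a finite set $S$ of places containing the closed points of $\PP_\eta$ and all places where $\beta$ is nonzero. At a place $\nu$ given by $P\in\PP_\eta$, I take $L_{P,\eta} = L_P\otimes F_{P,\eta}$ (which by (\ref{choice_at_P}), parts~10--12, is either an unramified field extension or split, with the norm and triviality conditions on $\mu_P$ already in place). At places $\nu=P\notin\PP$, I use $L_{P,\eta} = F_{P,\eta}(\sqrt[\ell]{\lambda})$ if $\lambda\notin F_P^{*\ell}$, and otherwise a cyclic unramified degree $\ell$ extension as in (\ref{local-local}); in all cases I set $\mu_{P,\eta}$ accordingly and pass to residues $\mu_\nu$ and $L_\nu$. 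The index drop $\mathrm{ind}(\beta\otimes E(\eta)\otimes L_\nu) < \mathrm{ind}(\alpha)/[E_\eta:F_\eta]$ at each $\nu$ needs to be checked; since $\mathrm{ind}(\alpha\otimes F_\eta)<\mathrm{ind}(\alpha)$ already, this should follow from $\mathrm{ind}(\alpha\otimes E_\eta)=\mathrm{ind}(\alpha\otimes F_\eta)/[E_\eta:F_\eta]$ together with the elementary fact that base change can only decrease the index, without needing the sharper local computation of (\ref{type345a_ind}). Then I invoke the global construction: if $\beta = 0$, use (\ref{globalfield_unramified}) (or simply take the trivial $E_0=\kappa(\eta)$ case of (\ref{uglylemma})); if $r\beta\otimes E(\eta)=0$ with $E(\eta)\ne\kappa(\eta)$, use (\ref{global_field_zero}) to get $L_0$ the subextension of $E(\eta)$ of degree $\ell$ with $\mu_0\in L_0$, $N_{L_0/\kappa(\eta)}(\mu_0)=\overline\theta_\eta$ and $r\beta\otimes L_0 = (E(\eta)\otimes L_0,\sigma_0,\mu_0)$; in both cases the compatibility with the chosen $\mu_\nu$ at $\nu\in S$ comes from (\ref{global_field_zero_cor}) or the approximation built into (\ref{uglylemma}).

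Having produced $L_0/\kappa(\eta)$ and $\mu_0\in L_0$, I lift via (\ref{type-other}) to an unramified degree $\ell$ extension $L_\eta/F_\eta$ with residue field $L_0$ and a unit $\mu\in L_\eta$ with $\overline\mu=\mu_0$, $N_{L_\eta/F_\eta}(\mu)=\theta_\eta$ and $\alpha\cdot(\mu\pi_\eta^r)$ unramified in $H^3(L_\eta,\mu_n^{\otimes 2})$. Since $L_\eta$ is a complete discretely valued field with residue field $L_0$ a global field, $H^3_{nr}(L_\eta,\mu_n^{\otimes 2})=0$, so $\alpha\cdot(\mu\pi_\eta^r)=0$, and setting $\mu_\eta = \mu\pi_\eta^r$ gives $N_{L_\eta/F_\eta}(\mu_\eta)=\lambda$ and condition~2). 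For the index bound~3), since $L_\eta/F_\eta$ is unramified the decomposition $\alpha\otimes L_\eta = \alpha'\otimes L_\eta + (E_\eta\otimes L_\eta,\sigma_\eta,\pi_\eta)$ is as in (\ref{rbc}), so by (\ref{dvr_ind}) $\mathrm{ind}(\alpha\otimes L_\eta) = \mathrm{ind}(\beta\otimes E(\eta)\otimes L_0)\,[E_\eta:F_\eta] < \mathrm{ind}(\alpha)$. Finally, condition~4) follows because at each $P\in\PP_\eta$ we have $L_\eta\otimes F_{P,\eta}\simeq L_P\otimes F_{P,\eta}$ by construction, and $\mu_0$ is close to the residue of $\mu_P$ so that $\phi_{P,\eta}(\mu_\eta\otimes 1)(\mu_P\otimes 1)^{-1}$ is a unit congruent to $1$ modulo a high power of the maximal ideal, hence an $\ell^{2t_P}$-th power in $(L_P\otimes F_{P,\eta})^*$.

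The main obstacle I anticipate is handling the case where $L_P\otimes F_{P,\eta}$ is \emph{split} (which happens for $P$ with $\eta$ of type~6 when $\mu_P=(\theta_1,\dots,\theta_\ell)$ with $\nu_\eta(\theta_i)=\nu_\eta(\lambda)/\ell$, cf. (\ref{choice_at_P})(12)): there the compatibility at the branch forces $L_\eta$ to be compatible with a split extension over $F_{P,\eta}$, so one must arrange $L_0\otimes\kappa(\eta)_P$ to split, which requires the place $P$ to be among those where the Chebotarev/approximation choice in (\ref{uglylemma}) (or the $\mu_\nu$ data fed to (\ref{global_field_zero_cor})) is set to be split, and then matching the individual components $\theta_i$ up to $\ell^{2t_P}$-th powers using a Hilbert~90 argument as in (\ref{hilbert90}). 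This bookkeeping across split and non-split branches, together with ensuring the global $\mu_\eta$ simultaneously satisfies all the local congruences, is the delicate part; everything else is a routine transcription of the type~3/4/5a argument.
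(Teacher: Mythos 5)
Your overall architecture (local data over $\kappa(\eta)$ at the branch places, a global-field lemma, lifting by (\ref{type-other}), the index estimate via (\ref{rbc}) and (\ref{dvr_ind}), and closeness of $\mu$'s giving the $\ell^{2t_P}$-th power condition) is the same as the paper's, and your simplification of the index step is correct: since $\eta$ is of type 6 one has ind$(\alpha \otimes F_\eta) < $ ind$(\alpha)$, so ind$(\beta \otimes E(\eta) \otimes L_\nu) \leq $ ind$(\alpha \otimes E_\eta) = $ ind$(\alpha \otimes F_\eta)/[E_\eta : F_\eta] < $ ind$(\alpha)/[E_\eta : F_\eta]$, exactly as in the type 4 case of (\ref{type345a_ind}). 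The genuine gap is in your choice of global lemma when $\alpha$ is ramified at $\eta$. You propose to use (\ref{global_field_zero}) together with (\ref{global_field_zero_cor}), which pins $L_0$ down to be the unique degree $\ell$ subextension of $E(\eta)$. That is the right move for type 5b, where (\ref{choice_at_P})(11) forces $L_P \otimes F_{P, \eta} \simeq M_\eta \otimes F_{P, \eta}$ at every branch, but for a type 6 component it cannot deliver condition 4): by (\ref{choice_at_P})(12) the extensions $L_P$ at points of $\PP_\eta$ are unrelated to $E_\eta$ --- for instance when the other component through $P$ is of type 1 or 2 one has $L_P = F_P(\sqrt[\ell]{\lambda})$, whose residue field along the branch is $\kappa(\eta)_P(\sqrt[\ell]{\overline{\theta}_\eta})$ and need not embed in $E(\eta) \otimes \kappa(\eta)_P$, and $L_P$ may also be split --- so $L_0 \otimes \kappa(\eta)_P$ will in general not be isomorphic to the prescribed $L_P(\eta)$. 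Moreover the hypotheses of (\ref{global_field_zero_cor}) require the local elements $\mu_\nu$ to lie in $L_0 \otimes \kappa(\eta)_P$ for the already fixed $L_0$, which is not your data, and one cannot ``arrange $L_0 \otimes \kappa(\eta)_P$ to split'' as your last paragraph suggests, since $L_0$ is determined by $E(\eta)$.

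The paper avoids this by running (\ref{uglylemma}) in all cases (it allows $E_0 \neq k$, does not require $r\beta \otimes E_0 \neq 0$, and explicitly allows split local extensions), feeding it at each branch the residue data of $L_P \otimes F_{P, \eta}$ and $\mu_P$; its conclusions 4) and 5) then give simultaneously the isomorphism $L_\eta \otimes F_{P, \eta} \simeq L_P \otimes F_{P, \eta}$ and the congruence of $\mu$'s. The only point specific to type 6 is checking hypothesis 2) of (\ref{uglylemma}) at a split branch: if $L_P = \prod F_P$ and $\mu_P = (\theta_1 \pi_\eta^r, \dots, \theta_\ell \pi_\eta^r)$ as in (\ref{choice_at_P})(12), then $\alpha \cdot (\theta_i \pi_\eta^r) = 0$ componentwise, and (\ref{dot-zero}) gives $r\beta \otimes \kappa(\eta)_P = (E(\eta) \otimes \kappa(\eta)_P, \sigma_0, \overline{\theta}_i)$ for each $i$, hence $r\beta \otimes L_P(\eta) = (E(\eta) \otimes L_P(\eta), \sigma_0 \otimes 1, \overline{\theta}_P)$ with $\overline{\theta}_P = (\overline{\theta}_1, \dots, \overline{\theta}_\ell)$; no Chebotarev bookkeeping or separate Hilbert 90 matching of components is needed beyond this, the rest being literally the argument of (\ref{codimension_345a}). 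Note also that your case split ``$\beta = 0$ or $E(\eta) \neq \kappa(\eta)$'' omits the possibility that $\alpha$ is unramified at $\eta$ with $\beta \neq 0$ (only $r\beta = 0$), and (\ref{globalfield_unramified}) in fact assumes $\beta \neq 0$; applying (\ref{uglylemma}) with $E_0 = \kappa(\eta)$ covers that case uniformly as well.
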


\begin{proof}  Let $P \in \PP_\eta$.
Suppose $L_P \otimes F_{P, \eta}$ is a field. 
Let $L_P(\eta)$, $\overline{\theta}_P \in L_P(\eta)$, 
$\theta_0 \in \kappa(\eta)$  and   $\beta$  be as in the proof of  
(\ref{codimension_345a}).  
Then, as in the proof of  (\ref{codimension_345a}), we have $N_{L_P(\eta)/\kappa(\eta)_P}(\overline{\theta}_P) 
= \theta_0$ and ind$(\beta \otimes E_0 \otimes L_P(\eta)) < $ ind$(\alpha)/[E_\eta : F_\eta]$.
As in the proof of (\ref{codimension_5b}), we have 
$r\beta \otimes L_P(\eta) = (E_0 \otimes L_P(\eta), \sigma_0 \otimes 1, \overline{\theta}_P)$.

If   $L_P/F_P$ is not a field, by  choice (cf. \ref{choice_at_P}(11)), we have 
$\mu_P  = (\theta_1\pi_\eta^r, \cdots , \theta_\ell\pi_\eta^{r})$.
Since $\alpha \cdot (\mu_P) = 0$ in $H^3(L_P, \mu_n^{\otimes}) = \prod H^3(F_P, \mu_n^{\otimes 2})$, 
we have $\alpha \cdot (\theta_i\pi_\eta^{r}) = 0 \in H^3(F_P, \mu_n^{\otimes 2})$.
Thus,  by (\ref{dot-zero}),  we have $r\beta \otimes \kappa(\eta)_P = (E_0, \sigma_0 \otimes 1, \overline{\theta}_i)$
for all $i$.  Since $L_P(\eta) = \prod \kappa(\eta)_P$ and $\overline{\theta}_P = (\overline{\theta}_1, 
\cdots ,  \overline{\theta}_\ell)$,  we have 
$r\beta \otimes L_P(\eta) = (E_0 \otimes L_P(\eta), \sigma_0 \otimes 1, \overline{\theta}_P)$.

As in the proof of  (\ref{codimension_345a}), we construct   $L_\eta$ and $\mu_\eta$ with the required 
properties. 
\end{proof}

\begin{lemma}
\label{type2_curvepoint}
Let $\eta$ be  a codimension one point of $X_0$ and $P$ a closed point on $\eta$.
Suppose there exist  $\theta_\eta \in F_\eta $  such that 
$\alpha \cdot (\theta_\eta) = 0 \in H^3(F_\eta, \mu_n^{\otimes 2})$.
Then there exists $\theta_P \in F_P$ such that $\alpha \cdot (\theta_P) = 0 \in 
 H^3(F_P, \mu_n^{\otimes 2})$, $\nu_\eta(\theta_P) = \nu_\eta(\theta_\eta)$ and
 $\theta_P^{-1}\theta_\eta  \in  F_{P, \eta}^{*\ell^{2t_P}}$. 
\end{lemma}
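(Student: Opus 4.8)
The statement to be proved, Lemma~\ref{type2_curvepoint}, is essentially a local-to-local approximation statement: given a class $\theta_\eta \in F_\eta^*$ that kills $\alpha$ at the generic point $\eta$, we want a nearby class $\theta_P \in F_P^*$ that still kills $\alpha$ at the closed point $P$, has the same valuation along $\eta$, and agrees with $\theta_\eta$ to very high $\ell$-adic precision in the two-completion $F_{P,\eta}$. My plan is to run through the residue description of the condition $\alpha\cdot(\theta)=0$ that was set up in Lemma~\ref{dot-zero}, exactly as in the proof of Lemma~\ref{codimension_2_split}, but now in the reverse direction (passing from the curve $\eta$ down to a closed point $P$ on it, rather than from closed points up to the curve).

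\textbf{First step.} Fix a parameter $\pi_\eta \in F_\eta$ at $\eta$, write $\alpha\otimes F_\eta = \alpha' + (E_\eta,\sigma_\eta,\pi_\eta)$ as in Lemma~\ref{rbc}, and let $E(\eta)$ be the residue field of $E_\eta$, with $\beta$ the image of $\alpha'$ in $H^2(\kappa(\eta),\mu_n)$. Write $\theta_\eta = u_\eta\pi_\eta^{\,s}$ with $u_\eta$ a unit at $\eta$ and $s = \nu_\eta(\theta_\eta)$; let $\overline{u}_\eta$ be the image of $u_\eta$ in $\kappa(\eta)$. By Lemma~\ref{dot-zero} the hypothesis $\alpha\cdot(\theta_\eta)=0$ translates into an identity in $H^2(\kappa(\eta),\mu_n)$ of the form $s\beta = (E(\eta),\sigma_0,\overline{u}_\eta)$ (up to the sign/notational bookkeeping in that lemma). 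Restricting to the completion $\kappa(\eta)_P$ at the place given by $P$, we get $s\,(\beta\otimes\kappa(\eta)_P) = (E(\eta)\otimes\kappa(\eta)_P, \sigma_0, \overline{u}_\eta)$, i.e. $\overline{u}_\eta$ is, after raising to the appropriate power, a norm from $E(\eta)\otimes\kappa(\eta)_P$; in any case $(E(\eta)\otimes\kappa(\eta)_P,\sigma_0,\overline u_\eta)$ lies in the image of $s\cdot$.

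\textbf{Second step.} Now approximate: since $\kappa(\eta)_P$ is a local field (the residue field $\kappa(P)$ being finite, $\kappa(\eta)_P$ is a complete discretely valued field with finite residue field), and $F_{P,\eta}$ is its natural two-completion, choose $u_P \in F_P^*$ whose image $\overline{u}_P$ in $\kappa(\eta)_P$ is $\ell$-adically so close to $\overline{u}_\eta$ that $\overline u_P^{-1}\overline u_\eta \in \kappa(\eta)_P^{\,*\ell^{2t_P}}$; this is possible because $\overline{u}_\eta$ already lies in $\kappa(\eta)$ (dense in $\kappa(\eta)_P$) and raising a fixed element to a high power of $\ell$ can be absorbed once we are congruent to $1$ modulo a high power of the maximal ideal. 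Since $u_P^{-1}u_\eta$ is then a unit at $\eta$ whose residue is an $\ell^{2t_P}$-th power and $F_{P,\eta}$ is complete with residue field $\kappa(\eta)_P$, Hensel's lemma gives $u_P^{-1}u_\eta \in F_{P,\eta}^{\,*\ell^{2t_P}}$. Set $\theta_P = u_P\pi_\eta^{\,s} \in F_P^*$. Then $\nu_\eta(\theta_P) = s = \nu_\eta(\theta_\eta)$ and $\theta_P^{-1}\theta_\eta = u_P^{-1}u_\eta \in F_{P,\eta}^{\,*\ell^{2t_P}}$, giving the last two assertions. Finally, because $\overline u_P$ and $\overline u_\eta$ have the same class modulo $\ell$-th powers (they are congruent to well beyond the first power), the identity $(E(\eta)\otimes\kappa(\eta)_P,\sigma_0,\overline u_\eta)\in s\cdot H^2$ persists with $\overline u_P$ in place of $\overline u_\eta$, hence by Lemma~\ref{dot-zero} again $\alpha\cdot(\theta_P) = 0 \in H^3(F_P,\mu_n^{\otimes 2})$.

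\textbf{Main obstacle.} The only delicate point is getting the two-adic precision exactly right so that all three conclusions hold \emph{simultaneously} with the single congruence $\overline u_P^{-1}\overline u_\eta \in \kappa(\eta)_P^{\,*\ell^{2t_P}}$: one must check that the precision needed to preserve $\alpha\cdot(\theta_P)=0$ (which only needs agreement modulo $\ell$-th powers, or a bounded power depending on the period $n=\ell^d$ and on whether $\kappa(P)$ contains enough roots of unity — this is where $t_P$ enters) is no worse than the precision $\ell^{2t_P}$ demanded by conclusion~(3). Here the choice $t_P\ge d$ with $\kappa(P)$ having no primitive $\ell^{t_P}$-th root of unity, together with Lemma~\ref{hilbert90}, is exactly what makes the bookkeeping close: the factor $\ell^{2t_P}$ is chosen generously so that a single high-precision approximation controls both the cohomological vanishing and the later patching argument (where $\theta_P^{-1}\theta_\eta$ will need to be written as $b^{-\ell^d}\sigma(b^{\ell^d})$ for a norm-one element, via Lemma~\ref{hilbert90}). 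So the plan is really to observe that the construction of Lemma~\ref{codimension_2_split} runs verbatim in the reverse direction, using weak approximation on $\kappa(\eta)$ replaced by density of $\kappa(\eta)$ in the single completion $\kappa(\eta)_P$, and to verify that one common precision threshold suffices.
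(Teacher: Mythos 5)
Your construction of $\theta_P$ and the verification of the last two conclusions (same valuation along $\eta$, and $\theta_P^{-1}\theta_\eta\in F_{P,\eta}^{*\ell^{2t_P}}$ via Hensel) are fine in outline, but the final step — ``hence by Lemma~\ref{dot-zero} again $\alpha\cdot(\theta_P)=0\in H^3(F_P,\mu_n^{\otimes 2})$'' — is a genuine gap, and it is exactly where the real content of the lemma lies. Lemma~\ref{dot-zero} is a statement about a complete discretely valued field: applied over $F_{P,\eta}$ it only tells you that the residue of $\alpha\cdot(\theta_P)$ at the single valuation $\nu_\eta$ vanishes (equivalently, after using that $\kappa(\eta)_P$ is a local field, that $\alpha\cdot(\theta_P)=0$ in $H^3(F_{P,\eta})$, a step you also leave implicit). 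It says nothing about $H^3(F_P)$, where $F_P$ is the fraction field of the two-dimensional complete local ring $\hat{A}_P$. With your choice of $u_P$ as an \emph{arbitrary} element of $F_P^*$ approximating $\overline{u}_\eta$, the conclusion can actually fail: $u_P$ may acquire zeros or poles along other height-one primes $\gamma$ of $\hat{A}_P$ through $P$, and then $\partial_\gamma(\alpha\cdot(\theta_P))$ need not vanish, so $\alpha\cdot(\theta_P)\neq 0$ in $H^3(F_P)$ even though its residue at $\eta$ is trivial.

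The repair is the route the paper takes: choose a second regular parameter $\delta$ with $m_P=(\pi,\delta)$ (and, using the standing normal-crossings assumption, with $\alpha$ unramified on $\hat{A}_P$ outside $\pi$ and $\delta$), write the residue $\overline{\theta_\eta\pi^{-s}}\in\kappa(\eta)_P$ as $\overline{u}\,\overline{\delta}^r$ with $u\in\hat{A}_P$ a unit, and take the \emph{monomial} representative $\theta_P=u\delta^r\pi^s$. Then $\theta_P^{-1}\theta_\eta$ is a unit of $F_{P,\eta}$ with residue $1$, giving conclusion (3) and also $\alpha\cdot(\theta_P)=\alpha\cdot(\theta_\eta)=0$ in $H^3(F_{P,\eta},\mu_n^{\otimes 2})$; and because $\mathrm{div}(\theta_P)$ is supported in $\{\pi,\delta\}$, the class $\alpha\cdot(\theta_P)$ is unramified on $\hat{A}_P$ except possibly at $\pi,\delta$, so Corollary~\ref{zero} (the two-dimensional injectivity statement, which your argument never invokes) upgrades the vanishing over $F_{P,\eta}$ to vanishing over $F_P$. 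Without both ingredients — the controlled support of $\theta_P$ and the appeal to (\ref{zero}) — the passage from the curve $\eta$ to the point $P$ is unproved. (Two smaller points: for $\theta_P=u_P\pi_\eta^s$ to lie in $F_P$ you must take $\pi_\eta$ to be a local equation of $\eta$ at $P$, not an arbitrary parameter of $F_\eta$; and the unramifiedness of $\alpha$ outside $\pi,\delta$ at $P$ comes from the standing hypotheses on $\XX$ and should be stated, since it is used essentially.)
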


\begin{proof}  Let $\pi$ be a prime  representing $\eta$ at $P$.
Since $\eta$ is regular on $\XX$, there exists a prime $\delta$ at $P$ such that 
the maximal ideal at $P$ is generated by $\pi$ and $\delta$.
Since $F_{P, \eta}$ is a complete discrete valued field with $\pi$ as a parameter,
$\theta_\eta = w\pi^s$ for some $w\in F_\eta$ unit at $\eta$.
Since  the residue field $\kappa(\eta)_P$ of $F_{P, \eta}$ is a complete discrete valued field
with $\overline{\delta}$ as a parameter, we have $\overline{w}  = \overline{u}\overline{\delta}^r$ for some 
$u \in F_P$ unit at $P$.  Let $\theta_P = u\delta^r\pi^s$.
Then clearly $\nu_\eta(\theta_\eta) = \nu_{\eta}(\theta_P)$ and $\theta_P^{-1}\theta_\eta \in F_{P, \eta}^{\ell^{2t_P}}$.
Since $\alpha \cdot (\theta_P)$ is unramified at $P$ except possibly at $\pi$ and $\delta$
and $\alpha \cdot (\theta_P) = \alpha \cdot (\mu_P) = 0 \in H^3(F_{P, \eta}, \mu_n^{\otimes 2})$,
by (\ref{zero}), $\alpha \cdot (\theta_P) =  0 \in H^3(F_P, \mu_n^{\otimes 2})$.
\end{proof}

\section{The main theorem}
\begin{theorem}
\label{main_theorem}
 Let $K$ be a  local field with residue field $\kappa$ and   $F$ the function field
of a curve over $K$.  
Let  $D$ be a central simple algebra over $F$ of exponent $n$, 
 $\alpha$ its class in $H^2(F, \mu_n)$,
 and $\lambda \in F^*$.  If $ \alpha \cdot (\lambda)= 0$ and $n$ is coprime to char$(\kappa)$, 
 then $\lambda$ is a reduced norm from $D^*$. 
\end{theorem}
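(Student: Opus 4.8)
The strategy is an induction on $\mathrm{ind}(D)$, reducing everything to the patching machinery of \S\ref{patching} together with the local constructions of \S\S\ref{types_of_points}--\ref{choice_at_codimension_zero_points}. First I would perform the standard reductions. Writing $n = \ell_1^{a_1}\cdots \ell_r^{a_r}$ with the $\ell_i$ distinct primes (necessarily all coprime to $\mathrm{char}(\kappa)$), we have $D = D_1\otimes\cdots\otimes D_r$ with $D_i$ of period a power of $\ell_i$ (\cite[Ch. V, Theorem 18]{Albert}); since $\alpha\cdot(\lambda)=0$ is equivalent to $\alpha_i\cdot(\lambda)=0$ for each $i$, and $\lambda$ is a reduced norm from $D$ iff it is a reduced norm from each $D_i$, we may assume $\mathrm{per}(D)=\ell^d$ for a single prime $\ell\neq\mathrm{char}(\kappa)$. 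Replacing $F$ by $F(\rho)$ for $\rho$ a primitive $\ell^{\mathrm{th}}$ root of unity (an extension of degree coprime to $\ell$, hence harmless for the reduced-norm question and for $\mathrm{ind}$) we may assume $\rho\in F$. We then induct on $\mathrm{ind}(D)$: if $\mathrm{ind}(D)=1$ every element of $F^*$ is a reduced norm, so assume $\mathrm{ind}(D)=\ell^e\geq 2$; if $\alpha = 0$ the statement is trivial, so assume $\alpha\neq 0$.

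The heart of the argument is to produce a degree-$\ell$ extension $L/F$ and $\mu\in L^*$ with $N_{L/F}(\mu)=\lambda$, $\alpha\cdot(\mu)=0$ in $H^3(L,\mu_n^{\otimes 2})$, and $\mathrm{ind}(\alpha\otimes L)<\mathrm{ind}(\alpha)$; once this is done, the inductive hypothesis applied to $D\otimes L$ shows $\mu$ is a reduced norm from $D\otimes L$, whence $\lambda = N_{L/F}(\mu)$ is a reduced norm from $D$. To build $(L,\mu)$ I would first choose, using resolution of singularities / repeated blow-ups, a regular proper model $\XX$ of $F$ over the ring of integers of $K$ such that $\mathrm{ram}_\XX(\alpha)\cup\mathrm{supp}_\XX(\lambda)\cup X_0$ is a union of regular curves with normal crossings, and then apply (\ref{choice_of_model}) to replace $\XX$ by a model with no special points and no type 2 connection between a type 3 or 5 component and a type 3, 4 or 5 component. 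On such a model, (\ref{choice_at_P}) provides compatible local data $(L_P,\mu_P)$ at every closed point $P\in\PP$, and (\ref{codimension_1}), (\ref{codimension_2_splitp}), (\ref{codimension_2}), (\ref{codimension_345a}), (\ref{codimension_5b}), (\ref{codimension_6}) provide $(L_\eta,\mu_\eta)$ at every codimension zero point $\eta$, all matching along the branches in the precise sense required (the branch-compatibility being expressed as a product $\theta^{-\ell^d}\sigma(\theta)^{\ell^d}$, exactly the form needed because $\kappa(P)$, being finite, contains no $\ell^{t_P^{\mathrm{th}}}$ root of unity for suitable $t_P\geq d$, so (\ref{hilbert90}) applies). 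Feeding this data into (\ref{patching_L_mu}) yields a degree coprime to $\ell$ extension $N/F$, a degree $\ell$ extension $L/F$, and $\mu\in (L\otimes N)^*$ with $N_{L\otimes N/N}(\mu)=\lambda$, $\alpha\cdot(\mu)=0$ in $H^3(L\otimes N,\mu_n^{\otimes 2})$, and $\mathrm{ind}(\alpha\otimes L)<\mathrm{ind}(\alpha)$.

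It remains to descend from $L\otimes N$ to $L$. Since $[N:F]$ is coprime to $\ell$ and also coprime to $n$, the restriction map on reduced-norm questions behaves well: by the inductive hypothesis applied over $N$ (valid because $N$ is again the function field of a curve over a local field with residue characteristic coprime to $n$, and $\mathrm{ind}(D\otimes L\otimes N)\le \mathrm{ind}(\alpha\otimes L)<\mathrm{ind}(\alpha)$), the element $\mu$ is a reduced norm from $D\otimes L\otimes N$, so $\lambda = N_{L\otimes N/N}(\mu)$ is a reduced norm from $D\otimes N$; since $\lambda$ is a reduced norm from $D\otimes N$ and $[N:F]$ is coprime to $n=\mathrm{per}(D)$, a standard corestriction/transfer argument (using that the index of $D$ equals its period after each degree-$\ell$ descent, cf. the period–index results of \S\ref{types_of_points}, or simply the classical fact that $\mathrm{SK}_1$ and the reduced-norm-one question commute with coprime-degree extensions) shows $\lambda$ is a reduced norm from $D$ itself. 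Finally, to invoke the inductive hypothesis one must check that after the degree $\ell$ step we really are in the same situation: $D\otimes L$ has period a power of $\ell$ coprime to $\mathrm{char}(\kappa)$, $L$ is the function field of a curve over the local field $K$ (or a finite extension thereof), $\alpha\otimes L\cdot(\mu)=0$, and $\mathrm{ind}(\alpha\otimes L)<\mathrm{ind}(\alpha)$ — all of which have been arranged.

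\textbf{Main obstacle.} The genuinely hard part is the construction of a good model and of the branch-compatible local pairs $(L_x,\mu_x)$ — that is, everything hidden in (\ref{choice_of_model}), (\ref{choice_at_P}) and the propositions of \S\ref{choice_at_codimension_zero_points}; the present theorem is largely an orchestration of those results plus the patching theorem (\ref{patching_L_mu}) and the inductive bookkeeping. Within the proof as written here, the subtle point to get right is the passage between $L\otimes N$ and $L$: one must ensure that the degree of $N$ can be taken coprime to $\ell$ (so that $L\otimes N/N$ becomes cyclic, as needed to even state the compatibility condition and apply (\ref{patching_L})), and simultaneously coprime to $n$ (so that the reduced-norm property descends from $D\otimes N$ to $D$). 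Both are achievable since a cyclic-by-solvable resolution of the Galois closure of $L/F$ can be chosen of $\ell'$-degree, but the argument must be stated carefully to avoid circularity in the induction.
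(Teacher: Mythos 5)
Your overall route is the paper's route: the same prime-by-prime and root-of-unity reductions, induction on $\mathrm{ind}(D)$, choice of a model via (\ref{choice_of_model}), local pairs from (\ref{choice_at_P}) and the propositions of \S\ref{choice_at_codimension_zero_points}, conversion of the $\ell^{2t_P}$-power compatibility into the $\theta^{-\ell^d}\sigma(\theta)^{\ell^d}$ form via (\ref{hilbert90}), patching via (\ref{patching_L_mu}), and the final coprime-degree descent (reduced norm from $D\otimes N$, corestrict to get $\lambda^{[N:F]}\in\mathrm{Nrd}(D^*)$, then use that $[N:F]$ is prime to $\ell$ while $F^*/\mathrm{Nrd}(D^*)$ has $\ell$-power exponent). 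Your worry at the end about arranging $[N:F]$ coprime to $\ell$ and to $n$ is moot: after the first reduction $n$ is a power of $\ell$, and (\ref{patching_L_mu}) itself furnishes $N$ of degree coprime to $\ell$, so no auxiliary "cyclic-by-solvable" construction is needed.

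There is, however, one concrete gap. Proposition (\ref{patching_L_mu}) requires a cyclic or split pair $(L_P,\mu_P)$, branch-compatible with the $(L_\eta,\mu_\eta)$, at \emph{every} closed point $P$ of $X_0$, not only at the nodal points $\PP$; one cannot assume the open sets $U_\eta$ produced by (\ref{openest}) exhaust $\eta\setminus\PP$, so the finite exceptional set in the patching step may contain non-nodal points. Your plan supplies data only at $\PP$ (via (\ref{choice_at_P})) and at the generic points, so the hypotheses of (\ref{patching_L_mu}) are not yet verified. The paper devotes a substantial part of the proof of (\ref{main_theorem}) to exactly this: for $P\notin\PP$ lying on a single component $\eta$, it builds $(L_P,\mu_P)$ according to the type of $\eta$ — directly by $F_P(\sqrt[\ell]{\lambda})$ for type 1 and (in the relevant subcase) for type 2, by (\ref{type2_curvepoint}) in the split type 2 subcase, and by (\ref{curve_point}) for the remaining types — and it is (\ref{curve_point}) that guarantees the crucial condition $\phi_{P,\eta}(\mu_\eta\otimes 1)(\mu_P\otimes 1)^{-1}\in (L_P\otimes F_{P,\eta})^{\ell^{2t_P}}$ needed before (\ref{hilbert90}) can be applied. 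Adding this step (and noting that the choices in \S\ref{choice_at_codimension_zero_points} were made unramified or split precisely so that (\ref{curve_point}) applies) completes your argument and brings it in line with the paper.
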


\begin{proof}   As in the proof of  (\ref{thm_dvr}), we assume that $n = \ell^d$ for   prime $\ell$ with $\ell \neq $char$(\kappa)$ 
and  $F$ contains a   primitive $\ell^{\rm th}$ root of unity.
We prove the theorem by induction on ind$(D)$. 

Suppose that ind$(D) = 1$.Then $D$ is a matrix algebra and hence every element of $F$
is a reduced norm.  Assume that  ind$(D) > 1$.

Without loss of generality we assume that $K$ is algebraically closed in $F$. 
Let $X$ be a smooth projective geometrically integral curve over $K$ with $K(X) = F$. Let $R$ be the
ring of integers in $K$ and $\kappa$ its residue field. Let $\XX$ be
a regular proper model of $F$ over  $R$
such that the union of ram$_\XX(\alpha)$,  supp$_\XX(\lambda)$ and the special fibre $X_0$ of $\XX$
is a union of regular curves with normal crossings.   
By (\ref{choice_of_model}), we assume that  $\XX$ has no special points, 
and there is no type 2 connection between a   codimension zero point of $X_0$ of type 3,  or 5  
 and   codimension zero point of $X_0$ of type 3, 4 or 5.

Let $\PP$ be the set of nodal points of $X_0$. 
For each $P \in \PP$, let $L_P$ and $\mu_P$ be as in (\ref{choice_at_P}).
Let $\eta$ be a codimension zero point of $X_0$ and $\PP_\eta = \PP \cap   \eta  $. 
Let $L_\eta$ and $\mu_\eta$ be as in \ref{codimension_1}, \ref{codimension_2_splitp},
\ref{codimension_2}, \ref{codimension_345a},  
\ref{codimension_5b} or \ref{codimension_6} depending on the type of $\eta$.
Then  $L_\eta/F_\eta$ is an extension of degree $\ell$ and $\mu_\eta \in L_\eta$
such that \\
$1)$ $N_{L_\eta/F_\eta}(\mu_\eta) = \lambda$ \\
$2)$ $\alpha \cdot (\mu_\eta) = 0 \in H^3(L_\eta, \mu_n^{\otimes 2})$ \\
$3)$ ind$(\alpha \otimes L_\eta) < $ ind$(\alpha)$ \\
$4)$   for  $P \in \PP_\eta$,   there is an isomorphism $\phi_{P,\eta} : L_\eta \otimes F_{P, \eta} \to L_P  \otimes F_{P, \eta}$ 
 and $$\phi_{P, \eta}(\mu_\eta \otimes 1) (\mu_P \otimes 1)^{-1}  \in (L_P \otimes F_{P, \eta})^{\ell^{2t_P}}.$$

Let $P \in \XX$ be a closed point with $P \not \in \PP$.
Then there is a unique codimension zero point $\eta$ of $X_0$ with $P \in  \eta $.
We give a choice of a cyclic or split extension  $L_P/F_P$ of degree $\ell$ and $\mu_P \in L_P^*$ such that \\ 
1) $N_{L_P/F_P}(\mu_P) = \lambda$, \\
 2) ind$(\alpha \otimes L_P) <  $ ind$(\alpha)$, \\
 3) $\alpha \cdot (\mu_P) = 0 \in H^3(L_P, \mu_n^{\otimes 2})$,\\
4) there is an isomorphism $\phi_{P,\eta} : L_\eta \otimes F_{P, \eta} \to L_P  \otimes F_{P, \eta}$  and 
$$\phi_{P, \eta}(\mu_\eta \otimes 1) (\mu_P \otimes 1)^{-1}  \in (L_P \otimes F_{P, \eta})^{\ell^{2t_P}}.$$
  
Suppose that $\eta$ is of type 1. Then, by the choice of $L_\eta$ and $\mu_\eta$ ( \ref{codimension_1}),
   $L_P = F_P(\sqrt[\ell]{\lambda})$ and $\mu_P  = \sqrt[\ell]{\lambda}$ have the required properties.  
 
 Suppose that $\eta$ is of type 2. Suppose that  there is a  type 2 connection  
 to a codimension zero point $\eta'$ of $X_0$ of type 5. Let   
  $Q$ be the point of type 2 intersection $\eta$ and $\eta'$. Suppose that  
   $M_{\eta'} \otimes F_{Q, \eta'}$ not a field.  
  Then, by choice (cf. \ref{codimension_2_splitp}),  we have $L_\eta = \prod F_\eta$
   and $\mu_\eta = (\theta_1, \cdots, \theta_\ell)$.
   Since $\alpha \cdot (\mu_\eta) = 0$, we have $\alpha \cdot (\theta_i) = 0$.
   For each $i$, $ 2 \leq i \leq \ell$, by (\ref{type2_curvepoint}), there exists   $\theta^P_i \in F_P$
   such that $\alpha \cdot (\theta_i^P) = 0 \in H^3(F_P, \mu_n^{\otimes 2})$
   and $\theta_i^{-1}\theta_i^P  \in F_{P, \eta}^{*\ell^{2t_P}}$.  
   Let  $\theta_1^P = \lambda (\theta_2^P \cdots \theta_\ell^P)^{-1}$.   
   Then $L_P = \prod F_P $ and $\mu_P = (\theta_1^P, \cdots  , \theta_\ell^P)$ have the required properties.
   Suppose that $M_{\eta'} \otimes F_{Q, \eta'}$ is a field    or  there is no type 2 connection from $\eta$ to any point of type 5.
Then, by the choice (\ref{codimension_2}), we have   $L_\eta = F_\eta(\sqrt[\ell]{\lambda})$ and $\mu_\eta = \sqrt[\ell]{\lambda}$.
Hence  $L_P = F_P(\sqrt[\ell]{\lambda})$ and  $\mu_P  = \sqrt[\ell]{\lambda} \in L_P$ have the required properties.

 Suppose that $\eta$ is not of type 1 or 2.
 Then, by choice $L_\eta/F_\eta$ is  an unramified field extension of degree $\ell$ or 
 the split extension of degree $\ell$. 
 Let $\hat{A}_P$ be the completion of the local ting at $P$ and $\pi$ a prime in $\hat{A}_P$
 defining $\eta$ at $P$.  Since $P \not\in \PP$ and ram$_{\XX}(\alpha)$ is union of regular curves with 
 normal crossings, there exists a prime $\delta \in \hat{A}_P$ such that 
  $\alpha$ is unramified on $\hat{A}_P$ except possibly at $\pi$ and $\delta$.
  Further,  $\lambda = w\pi^r\delta^s$ for some unit $u \in \hat{A}_P$.  Since $\eta$ is not of type 1 or 2,
 $\nu_\eta(\lambda) = r$ is divisible by $\ell$. Thus, 
 by (\ref{curve_point}),  there exist a cyclic extension $L_P/F_P$ and $\mu_P \in L_P$
 such that  \\
 1) $L_P \otimes F_{P,\eta} \simeq L_\eta \otimes F_{P, \eta}$,\\
 2) ind$(\alpha \otimes L_P) <  $ ind$(\alpha)$, \\
 3) $\alpha \cdot (\mu_P) = 0 \in H^3(L_P, \mu_n^{\otimes 2})$, \\
 4) there is an isomorphism $\phi_{P,\eta} : L_\eta \otimes F_{P, \eta} \to L_P  \otimes F_{P, \eta}$  and 
$$\phi_{P, \eta}(\mu_\eta \otimes 1) (\mu_P \otimes 1)^{-1}  \in (L_P \otimes F_{P, \eta})^{\ell^{2t_P}}.$$

Thus for every $x \in X_0$, we have chosen an  extension $L_x/F_x$ of degree 
$\ell$ and $\mu_x \in L_x$ such that \\
1) $N_{L_x / F_x }(\mu_x ) = \lambda$ \\
2) $ \alpha \cdot (\mu_x )= 0 \in H^3(L _ x , \mu_x ^{\otimes 2})$ \\
3)  ind$(\alpha \otimes L_x) < $ ind$(\alpha)$ \\
4)   for any branch  $(P , \eta)$,  
 there is an isomorphism $\phi_{P,\eta} : L_\eta \otimes F_{P, \eta} \to L_P  
 \otimes F_{P, \eta}$  and $\phi_{P, \eta}(\mu_\eta \otimes 1) (\mu_P \otimes 1)^{-1}  \in (L_P \otimes F_{P, \eta})^{\ell^{2t_P}}$.
Further if $P$ is a closed point of $X_0$, then $L_P/F_P$ is cyclic or the split  extension.

Let $(P,  \eta)$ be a branch. Since $\kappa(P)$ has no $\ell^{2t_P^{\rm th}}$ primitive root of unity
and $\kappa(\eta)_P$ is a complete discretely valued field with residue field $\kappa(P)$, 
$\kappa(\eta)_P$ has no $\ell^{2t_P^{\rm th}}$ primitive root of unity.
Since $F_{P, \eta}$ is a complete discretely valued field with residue field $\kappa(\eta)_P$, 
$F_{P,\eta}$ has no $\ell^{2t_P^{\rm th}}$ primitive root of unity.
Since $\phi_{P, \eta}(\mu_\eta \otimes 1) (\mu_P \otimes 1)^{-1}  \in (L_P \otimes F_{P, \eta})^{\ell^{2t_P}}$
and $t_P \geq d$, by (\ref{hilbert90}), for a generator $\sigma$ of Gal$(L_P \otimes F_{P, \eta}/F_{P, \eta})$,
there exists $\theta_{P,\eta} \in L_P \otimes F_{P, \eta}$ such that 
$\phi_{P, \eta}(\mu_\eta \otimes 1) (\mu_P \otimes 1)^{-1} = \theta_{P, \eta}^{-\ell^d} \sigma(\theta_{P, \eta})^{\ell^d}$.

By (\ref{patching_L_mu}), there exist  extensions $L/F$ of degree $\ell$, $N/F$ of degree coprime to $\ell$,
  and $\mu \in L \otimes N $ such that \\
$\bullet$ $N_{L\otimes N/F}(\mu) = \lambda$ and \\
$\bullet$ $  \alpha \cdot (\mu) =  0 \in H^3(L\otimes N, \mu_n^{\otimes 2})$  \\
 $\bullet$ ind$(\alpha \otimes L) <  $ ind$(\alpha)$. 
  
   Since $L\otimes N$ is also a function field of a curve over $p$-adic field,
by induction hypotheses,  $\mu$ is a reduced norm from $D \otimes L \otimes N$ and hence 
$\lambda = N_{L\otimes N/N}(\mu)$ is a reduced norm from $D$. 
Since $N_{N/F}(\lambda) = \lambda^{[N : F]}$,  $\lambda^{[N : F]}$ is a norm from $D$. 
Since $[N : F]$ is coprime to $\ell$, $\lambda$ is a reduced norm from $D$. 
 \end{proof}

 \begin{cor} 
 \label{main_cor}Let $K$ be a local field  with residue field $\kappa$ 
 and  $F$ the function field of a curve 
over $K$.  Let $\Omega$ be the set of  divisorial discrete valuations of $F$.
 Let $D$ be a central simple algebra over $F$ of index coprime to  char$(\kappa)$ and 
 $\lambda \in F$. If $\lambda$ is a reduced norm from $D \otimes F_\nu$ for all 
 $\nu \in \Omega$, then $\lambda$ is a reduced norm from $D$.
  \end{cor}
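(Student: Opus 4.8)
\textbf{Proof proposal for Corollary \ref{main_cor}.}
The plan is to deduce the local-global statement from the cohomological criterion of Theorem \ref{main_theorem} together with Kato's Hasse principle for $H^3$. First I would reduce, exactly as in the proofs of Theorems \ref{thm_dvr} and \ref{main_theorem}, to the case where $D$ is a division algebra of period $n=\ell^d$ for a single prime $\ell\neq\operatorname{char}(\kappa)$: writing $D=D_1\otimes\cdots\otimes D_r$ with $D_i$ of period a power of $\ell_i$, the element $\lambda$ is a reduced norm from $D$ iff it is a reduced norm from each $D_i$, and likewise locally at each $\nu\in\Omega$; and $\alpha\cdot(\lambda)=0$ decomposes componentwise. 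So assume $\alpha\in H^2(F,\mu_n)$ with $n=\ell^d$ is the class of $D$, and $\lambda$ is a reduced norm from $D\otimes F_\nu$ for all divisorial $\nu$.

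The key step is to show $\alpha\cdot(\lambda)=0$ in $H^3(F,\mu_n^{\otimes 2})$. For each divisorial valuation $\nu$, since $\lambda$ is a reduced norm from $D\otimes F_\nu$, the cup product $\alpha\cdot(\lambda)$ vanishes in $H^3(F_\nu,\mu_n^{\otimes 2})$; this is the standard fact that if $\lambda=\operatorname{Nrd}(x)$ then $\lambda$ becomes a norm from a maximal subfield (after passing to a splitting field of the appropriate degree), so $\alpha\cdot(\lambda)$ is a corestriction of a class that is already trivial, hence zero — more directly, $\alpha\cdot(\lambda)=0$ is equivalent to $\lambda$ being a reduced norm after any finite separable base change splitting $D$, and reduced norms are detected this way. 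Thus $\alpha\cdot(\lambda)$ is a class in $H^3(F,\mu_n^{\otimes 2})$ that is locally trivial at every divisorial discrete valuation of $F$. Now I invoke Kato's Hasse principle (\cite{Ka}): for $F$ the function field of a curve over a local field and $n$ coprime to the residue characteristic, the map $H^3(F,\mu_n^{\otimes 2})\to\prod_{\nu}H^3(F_\nu,\mu_n^{\otimes 2})$ is injective when $\nu$ ranges over the divisorial valuations (those centered on a regular proper model over the ring of integers of $K$). Hence $\alpha\cdot(\lambda)=0$ in $H^3(F,\mu_n^{\otimes 2})$.

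Finally, applying Theorem \ref{main_theorem} to $D$ and $\lambda$ — whose hypotheses ($n$ coprime to $\operatorname{char}(\kappa)$, $\alpha\cdot(\lambda)=0$) are now met — yields that $\lambda$ is a reduced norm from $D$. This completes the proof. The main obstacle, and the only point requiring care, is pinning down the precise statement of Kato's theorem in the form needed: that injectivity holds with respect to exactly the set $\Omega$ of divisorial discrete valuations (rather than all discrete valuations), and that the torsion coefficients $\mu_n^{\otimes 2}$ are admissible for $n$ a prime power coprime to the residue characteristic. Once that is cited correctly the argument is immediate; no further geometry or patching is needed here, since all the hard work has been absorbed into Theorem \ref{main_theorem}.
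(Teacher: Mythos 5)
Your argument is correct and is essentially the paper's own proof: local vanishing of $\alpha\cdot(\lambda)$ at each divisorial valuation (via the norm-from-a-maximal-subfield/projection-formula fact), then Kato's Hasse principle for $H^3(F,\mu_n^{\otimes 2})$ with respect to divisorial valuations, then Theorem \ref{main_theorem}. The preliminary reduction to period a prime power is unnecessary (Theorem \ref{main_theorem} already covers general $n$ coprime to char$(\kappa)$), and your aside that $\alpha\cdot(\lambda)=0$ is ``equivalent'' to a reduced-norm condition after splitting base change overstates what is needed, but neither point affects the validity of the proof.
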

  
  \begin{proof} Since $\lambda$ is a reduced norm from $F_\nu$ for all $\nu \in \Omega_F$,
  $\alpha \cdot (\lambda) = 0$ in $H^3(F_\nu, \mu_n^{\otimes 2})$ for all $\nu \in 
  \Omega$. Thus, by (\cite[Proposition 5.2]{Ka}), $\alpha \cdot (\lambda) = 0$ in $H^3(F, \mu_n^{\otimes 2})$ and 
  by (\ref{main_theorem}), $\lambda$ is a reduced norm from $D$.  
  \end{proof}

\vskip 5mm

\noindent
Acknowledgments:   We thank Nivedita for the wonderful conversations during the preparation
of the paper. The first author is   partially supported by National Science Foundation grants DMS-1401319
and  DMS-1463882 and the third author  is partially supported by National Science Foundation grants DMS-1301785 and  DMS-1463882.

\providecommand{\bysame}{\leavevmode\hbox to3em{\hrulefill}\thinspace}

\end{document}